\documentclass[a4paper,12pt]{book}

\topmargin=-1cm
\oddsidemargin=-0.5cm
\textwidth=16.6cm
\textheight=23cm
\usepackage{tikz}
\usepackage{amsthm}
\usepackage[all]{xy}
\usepackage{graphicx}
\usepackage{amsfonts}
\usepackage{enumerate}
\usepackage{amsmath}
\usepackage[hang,small,bf]{caption}
\usepackage{amssymb}
\usepackage{verbatim}
\usepackage{longtable}
\usepackage{hyperref}

\newtheorem{defn}{Definition}[section]
\newtheorem{teo}[defn]{Theorem}
\newtheorem*{teo*}{Theorem}
\newtheorem*{prop*}{Proposition}
\newtheorem{prop}[defn]{Proposition}
\newtheorem{oss}[defn]{Remark}
\newtheorem{lemma}[defn]{Lemma}
\newtheorem{cor}[defn]{Corollary}

\renewenvironment{proof}{\textit{Proof.}}{\hfill $\Box$}
\renewcommand{\theta}{\vartheta}
\setlength{\textwidth}{152mm}
\setlength{\evensidemargin}{3mm}
\setlength{\oddsidemargin}{5mm}
\newcommand{\T}{{\mathbb T}}
\newcommand{\Z}{{\mathbb Z}}
\newcommand{\R}{{\mathbb R}}
\newcommand{\N}{{\mathbb N}}

\newcommand{\A}{{\mathbb A}}
\newcommand{\HH}{{\mathbb H}}

\newcommand{\dt}{\frac{d}{dt}}

\renewcommand{\phi}{\varphi}
\newcommand{\bigslant}[2]{{\raisebox{.2em}{$#1$}\left/\raisebox{-.2em}{$#2$}\right.}}

%%%%%%%%%%%%%%%%%%%%%%%%%%%%%%%%%%%%%%%%%%%%%%%%%%%%%%%%%%
%%%%%%%%%%%%%%%%%%%%%%%%%%%%%%%%%%%%%%%%%%%%%%%%%%%%%%%%%%
%%%%%%%%%%%%%%%%%%%%%%%%%%%%%%%%%%%%%%%%%%%%%%%%%%%%%%%%%%

\begin{document}

%%%%%%%%%%%%%%%%%%%%%%%%%%%%%%%%%%%%%%%%%%%%%%%%%%%%%%%%%%

\frontmatter 

\begin{titlepage}
\begin{center}
\large
\textsc{\Large Ruhr Universit\"at Bochum\\}
\vspace{0.2cm}
\includegraphics[width=3.8cm]{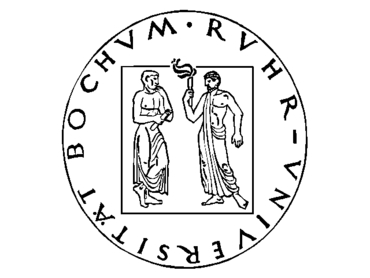}

\vspace{0.2cm}
\textsc{\large Fakult\"at f\"ur Mathematik}\\ \vspace{0.1cm}

\vspace{3cm}

{\LARGE \textbf{On the existence of orbits satisfying periodic or conormal boundary conditions for Euler-Lagrange flows}}\\[0.2cm]
{\Large 29.06.2015}\\[0.2cm]
\textsc{\LARGE Ph.D. Thesis}\\
\vspace{2.5cm}

\begin{center}
\textbf{{\Large Luca Asselle}}\\ \vspace{-0.0cm} {\normalsize
\ttfamily luca.asselle@rub.de}
\end{center}
\vspace{0.5cm}

\centering {\Large Advisor}\\ \vspace{-0.0cm} \textbf{{\Large Prof. Dr. Alberto Abbondandolo}} 
\end{center}

\vspace{1cm}
\begin{center}
\textsc{\Large Academic Year 2014/2015}
\end{center}
\newpage
\thispagestyle{empty} \phantom{}
\end{titlepage}

%%%%%%%%%%%%%%%%%%%%%%%%%%%%%%%%%%%%%%%%%%%%%%
%%%%%%%%%%%%%%%%%%%%%%%%%%%%%%%%%%%%%%%%%%%%%%
%%%%%%%%%%%%%%%%%%%%%%%%%%%%%%%%%%%%%%%%%%%%%%

$$ $$

\vspace{70mm}

\begin{verse}
\textit{\hspace{10mm}This dissertation is the result of my own work and includes\\
\hspace{10mm}nothing that is the outcome of work done in collaboration \\
\hspace{10mm}except where specifically indicated in the text.\\
\hspace{10mm}This dissertation is not substantially the same as any \\
\hspace{10mm}that I have submitted for a degree or diploma or any \\
\hspace{10mm}other qualification at any other university.}\\
\quad \\
\quad  \\
\quad \\
\quad \hspace{100mm}Luca Asselle\\
\hspace{100mm} June 29, 2015.
\end{verse}

%%%%%%%%%%%%%%%%%%%%%%%%%%%%%%%%%%%%%%%%%%%%%%
%%%%%%%%%%%%%%%%%%%%%%%%%%%%%%%%%%%%%%%%%%%%%%
%%%%%%%%%%%%%%%%%%%%%%%%%%%%%%%%%%%%%%%%%%%%%%

\newpage
\cleardoublepage

\begin{center}
\textbf{ACKNOWLEDGMENTS}\\

\vspace{2mm}

I express my deep gratitude to the following people, things and places.
\end{center}

\vspace{-2mm}

To my PhD (and Master as well) advisor Alberto Abbondandolo, for many helpful discussions, for his invaluable support in preparing this thesis, but most of all for all the dinners at his place I was invited to. 

\vspace{2mm}

To all my colleagues at the ``Ruhr Universit\"at Bochum'', for having been great friends during the past three years.

\vspace{2mm}

To Stephan Mescher, for kindly having told me all his best jokes and also for having called me at 3 am completely drunk the day before easter.

\vspace{2mm}

To Verena Gr\"af, for having been coorganizer of the \#game4good-project providing humanitarian help in Ethiopia and allowing me and my girlfriend to fly towards Sardinia for a 4-days VIP trip in occasion 
of the Sardinia World Rally Championship. 

\vspace{2mm}

To the ``Stadt Bochum'' and to the whole ``Nord-Rhein-Westfalen'' region, for having been (despite the not idyllic weather) a great and calm place to be.

\vspace{2mm}

To the ``Fitness Gym'' Bochum, for having let me train very hard almost every day of the week in the last year. In fact, half of this work is there arisen.

\vspace{2mm}

To the ``Universit\`a degli studi di Pisa'', where I really felt home during my graduation period, to all the professors and the dear friends I met there.

\vspace{2mm}

To all my teenage friends, with whom I spent really crazy moments and in particular to Giuseppe Ivaldi, for kindly having organized the go kart race where I won the precious Super Mario Kart trophy.

\vspace{2mm}

To my hometown Imperia, where I was born and grown safely, and to all my teachers and friends of the primary and high school, in particular to the great "Prof. Merlo", who deeply encouraged me to study mathematics.

\vspace{2mm}

To my father, my sister, my brother, my nephews and all my family, for having supported me through all these years and for letting me never stop believe in myself.

\vspace{2mm}

To my girlfriend Claudia Bonanno, for making my days wonderful, for patiently tolerating me and for steadily pushing me work hard and never give up.

\vspace{2mm}

Finally, to my mum, who passed away 4 years ago. You were a real example of purity, kindness, devotion, humility, diligence and, most of all, you believed in me. I swear you, you will always be proud of me. 

%%%%%%%%%%%%%%%%%%%%%%%%%%%%%%%%%%%%%%%%%%%%%%
%%%%%%%%%%%%%%%%%%%%%%%%%%%%%%%%%%%%%%%%%%%%%%
%%%%%%%%%%%%%%%%%%%%%%%%%%%%%%%%%%%%%%%%%%%%%%

\tableofcontents
\mainmatter 

\chapter{Introduction}
\label{chapter0}

\section{Overview of the problem.}

In this chapter we give a brief overview of the problems we are interested in and state the main results of this thesis. 
Throughout the whole work $M$ will be a boundaryless compact manifold. Given a Tonelli Lagrangian, i.e. a smooth function $L:TM\rightarrow \R$ which is $C^2$-strictly convex and superlinear in each fiber, 
we consider the Euler-Lagrange flow $\phi_t:TM\rightarrow TM$, that is the flow defined by the Euler-Lagrange equation, which in local coordinates can be written as 
$$\frac{d}{dt} \frac{\partial L}{\partial v} (q,v) \ +\ \frac{\partial L}{\partial q}(q,v) \ = \ 0\, .$$

Since the Lagrangian is time-independent, the energy $E$ associated to $L$ is a first integral of the motion, meaning that it is constant along solutions of the Euler-Lagrange equation. Therefore, it makes sense to study the dynamics of the Euler-Lagrange 
flow $\phi_t$ restricted to a given energy level set $E^{-1}(k)$, $k\in \R$.

We will be mainly interested in the existence of orbits connecting two given submanifolds $Q_0,Q_1\subseteq M$ and satisfying suitable boundary conditions, known as \textit{conormal boundary conditions}, and of periodic orbits on 
a given energy level. 

The method of attack that will be used is that the desired Euler-Lagrange orbits are in one to one correspondence with the critical points of a suitable action functional (or, more generally, with the zeros of a suitable 1-form).

\begin{oss}
This approach has a nice functional setting only under the additional assumption that the Tonelli Lagrangian is quadratic at infinity in each fiber. However, this is not a problem for our purposes since the energy levels of a Tonelli 
Lagrangian are always compact and hence we can modify $L$ outside a compact set to achieve the desired quadratic growth condition. 
Hereafter all the Lagrangians will be therefore supposed quadratic at infinity. 
\end{oss}

For the ``connecting $Q_0$ with $Q_1$'' problem, this action functional is given by the so called free-time Lagrangian action functional 
$$\A_k: \bigcup_{T>0} \ H^1_Q([0,T],M)\longrightarrow \R\, , \ \ \ \ \A_k(\gamma) := \int_0^T \Big [L(\gamma(t),\dot \gamma(t))+k\Big ]\, dt\, ,$$
where $H^1_Q([0,T],M)$ is the Hilbert manifold of $H^1$-paths in $M$ defined on $[0,T]$ and connecting $Q_0$ to $Q_1$. In fact, variations of $\gamma$ with fixed $T$ yield that a critical point of $\A_k$ is an Euler-Lagrange orbit connecting $Q_0$ to $Q_1$ and 
satisfying the conormal boundary conditions; variations on $T$ yield then the energy $k$ condition.

The domain of definition of $\A_k$ can be endowed with a structure of Hilbert manifold by identifying  it with the product manifold $\mathcal M_Q = H^1_Q([0,1],M)\times (0,+\infty)$. Here $\gamma\in H^1_Q([0,T],M)$ is identified with 
the pair $(x,T)$, where $x:[0,1]\rightarrow M$ is given by $x(s):=\gamma(T\, s)$. Using this identification we can write  
$$\A_k:\mathcal M_Q \longrightarrow \R\, , \ \ \ \ \A_k(x,T) := \ T\, \int_0^1 \Big [L\Big (x(s),\frac{x'(s)}{T}\Big ) + k \Big ]\, ds\, ,$$

A very careful study of the properties of $\A_k$ will be needed, since $\mathcal M_Q$ is infinite dimensional and non-complete; these turn to be 
influenced by the value $k$ of the energy. In particular they change drastically when crossing a special energy value $c(L;Q_0,Q_1)$, which depends on $L$ and 
on the topology of $Q_0$ and $Q_1$ as in $M$ embedded submanifolds. This is actually no surprise, since also the dynamical and geometric properties of the system depend on the energy; see e.g. \cite{Con06} or \cite{Abb13}.
Therefore, we will have to distinguish between ``supercritical'' and ``subcritical'' energies and we will get different existence and multiplicity results accordingly. 

\vspace{3mm}

The existence of periodic orbits on a given energy level set has already been intensively studied in the last decades and many existence results in this direction have already been obtained. The interested reader may find a beautiful overview 
in \cite{Abb13} (and also references therein); other references will be provided later on. 

We will first study the existence of periodic orbits for the flow of the pair $(L,\sigma)$, with $L$ Tonelli-Lagrangian and $\sigma$ a closed 2-form; namely we prove an almost everywhere existence result of 
periodic orbits, which generalizes the well-known Lusternik and Fet theorem \cite{FL51} about the existence of one contractible closed geodesic on every closed Riemannian manifold $M$ with $\pi_l(M)\neq 0$ for some $l\geq 2$. 

We then focus on oscillating magnetic fields on $\T^2$ and show that almost every sufficiently low energy level set carries infinitely many periodic orbits. The result in \cite{AB15a}, where oscillating magnetic fields on surfaces of 
genus larger than one were considered, is therefore extended here to the case of the 2-Torus. Extending this to $S^2$ represents a challenging open problem. 
Both of the results build on ideas contained in \cite{AMMP14}, where the exact case was treated. 

%%%%%%%%%%%%%%%%%%%%%%%%%%%%%%%%%%%%%%%%%%%%%%

\section{Orbits ``connecting'' $Q_0$ with $Q_1$.}

Consider a Tonelli Hamiltonian $H:T^*M\rightarrow \R$ (i.e. strictly convex and superlinear in each fiber) and let $Q_0,Q_1\subseteq M$ be two closed submanifolds. The question we are interested in is the following: For which $k\in \R$ does  $H^{-1}(k)$ contain
orbits $x:[0,T]\rightarrow T^*M$ of the Hamiltonian flow defined by $H$ and satisfying
\begin{equation}
x(0) \in N^*Q_0\,, \ \ \ \ x(T)\in N^*Q_1\, ?
\label{conormalintroham}
\end{equation}

Here, for a given submanifold $A\subseteq M$, $N^*A\subseteq T^*M$ denotes the subbundle of the cotangent bundle defined by 
$$N^*A := \ \Big \{(q,p)\in T^*M\ \Big | \ q\in A\, , \ \ker p\supseteq T_qA\Big \}$$
and is called the \textit{conormal bundle of A}. The boundary conditions \eqref{conormalintroham} are then called \textit{conormal boundary conditions}. 
For generalities and properties of conormal bundles we refer to the appendix and to \cite{Dui76}, \cite[page 149]{Hör90} or \cite{AS09}. 

This problem admits an equivalent reformulation in the Lagrangian setting. Let $L:TM\rightarrow \R$ be the Tonelli Lagrangian given as the Fenchel dual of $H$. For which $k\in \R$ does $E^{-1}(k)$ carry Euler-Lagrange orbits 
$\gamma:[0,T]\rightarrow M$ connecting $Q_0$ with $Q_1$ and satisfying the conormal boundary conditions 
\begin{equation}
d_v L(\gamma(0),\dot \gamma(0)) \ \Big |_{T_{\gamma(0)}Q_0}\ = \ d_v L(\gamma(T),\dot \gamma(T)) \ \Big |_{T_{\gamma(T)}Q_1} \ = \ 0 \ ?
\label{conormalintro}
\end{equation}

As already pointed out in the introduction to this chapter, this equivalent reformulation allows to put the problem into a nice functional analytical setting, since Euler-Lagrange orbits with energy $k$ satisfying the conormal boundary conditions 
are in correspondence with the critical points of the functional
$$\A_k:\mathcal M_Q \longrightarrow \R\, , \ \ \ \ \A_k(x,T) := \ T\, \int_0^1 \Big [L\Big (x(s),\frac{x'(s)}{T}\Big ) + k \Big ]\, ds\, ,$$
where $Q:=Q_0\times Q_1$ and $\mathcal M_Q$ is the space of $H^1$-paths connecting $Q_0$ with $Q_1$ with arbitrary interval of definition. It is clear that the properties of $\A_k$ have to depend on the topology of the 
space $\mathcal M_Q$. What it is not so clear at this moment is that the properties of $\A_k$ also depend on the value of the energy $k$ and change drastically when crossing a suitable Ma\~n\'e critical value, 
which depends on $L$ and on the topology of $Q_0$ and $Q_1$ as embedded submanifolds. 

It is worth to observe already at this point that the problem we are interested in need not have solutions. In other words, $\A_k$ need not have critical points in general. 

Consider for instance the geodesic flow of a Riemannian metric $g$ on $M$ and suppose $Q_0=M$. 
This flow can be seen as the Euler-Lagrange flow associated to the kinetic energy. The conormal boundary conditions \eqref{conormalintro} are then given by
$$\dot \gamma(0) \ \perp \ T_{\gamma(0)}Q_0\, ,\ \ \ \ \dot \gamma(T)\ \perp \ T_{\gamma(T)}Q_1\, .$$ 

Being $Q_0=M$, we necessarily have $\dot \gamma(0)=0$. This implies that Euler-Lagrange orbits satisfying the conormal boundary conditions exist only at energy $k=0$.

Consider now the geodesic flow on $(\T^2,g_{\text{flat}})$, where $g_{\text{flat}}$ denotes the flat metric and let $Q_0,Q_1$ be as in the figure below

\begin{center}
\includegraphics[height=30mm]{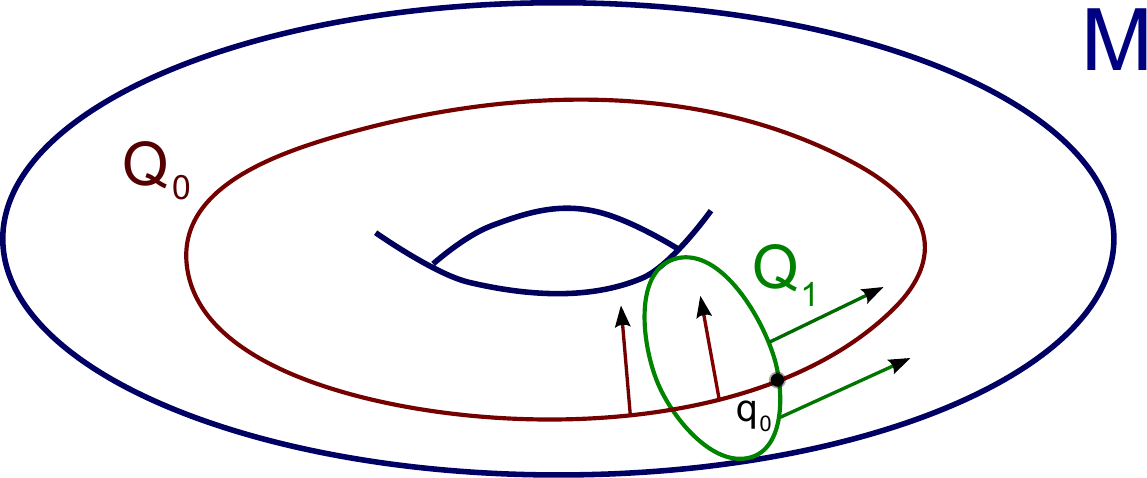}
\end{center}

In this case it is easy to see that the only Euler-Lagrange orbit which satisfies the conormal boundary conditions is the constant orbit through the intersection point $q_0$; in particular
for $k>0$ the energy level $E^{-1}(k)$ carries no Euler-Lagrange orbits satisfying the conormal boundary conditions. 
This counterexample shows that the existence fails also up to small perturbations of $Q_0$ and $Q_1$. 

\begin{center}
\includegraphics[height=30mm]{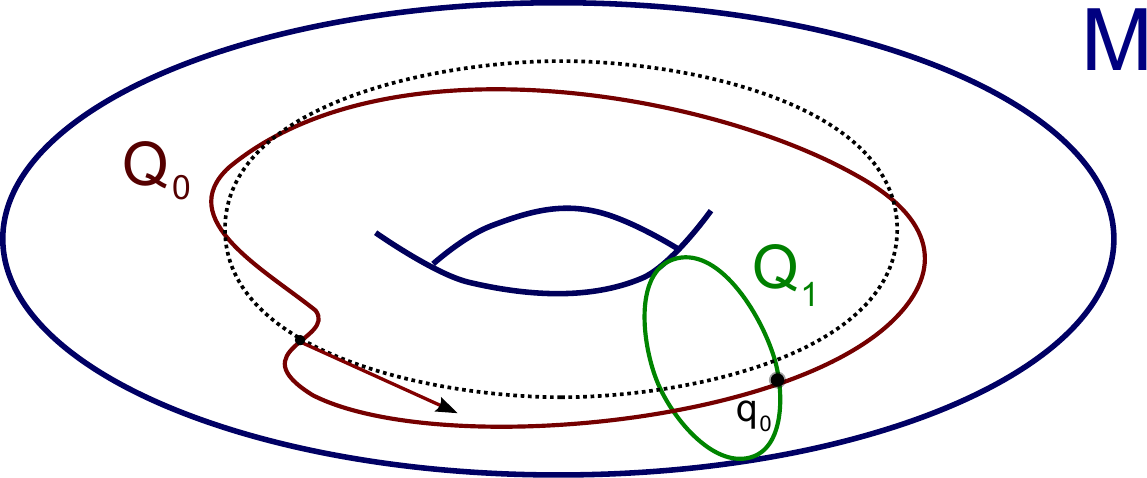}
\end{center}

What goes wrong in these examples is that the space $\mathcal M_Q$ is connected, contractible, contains constant paths and the infimum of $\A_k$ on $\mathcal M_Q$ is zero for every $k$. Therefore, one cannot expect to prove the existence 
of Euler-Lagrange orbits satisfying the conormal boundary conditions by minimizing $\A_k$ on the space $\mathcal M_Q$.  Also, one might not expect to apply minimax 
arguments, being $\mathcal M_Q$ contractible. 

We will show in Theorems \ref{teorema2} and \ref{teorema3} that these are the only cases in which the existence of the desired Euler-Lagrange orbits fails, at least when the energy is sufficiently high in a sense that we now explain. 

First let us recall that, with any cover $N\rightarrow M$ is associated a \textit{Ma\~n\'e critical value} $c(L^N)$. Consider the lift $L^N:TN\rightarrow \R$ of $L$ to the cover $N$ and define 
\begin{equation}
c(L^N) := \ \inf \Big \{k\in \R\ \Big |\ \A_k^N(\gamma)\geq 0\, , \ \forall \ \gamma \text{ loop in} \ N\Big \}\, ,
\label{maneintro}
\end{equation}
where $\A_k^N$ is the action functional associated with $L^N$. When $N=\widetilde M$, $N=\widehat M$ universal cover, resp. Abelian cover of $M$ one denotes the corresponding Ma\~n\'e critical value with $c_u(L)$, resp. $c_0(L)$
and calls it the \textit{Ma\~n\'e critical value of the universal cover}, resp. \textit{of the Abelian cover}. We will get back to the relations of these two energy values with the dynamical and geometric properties of the Euler-Lagrange flow later on.

Let now $H$ be the smallest normal subgroup in $\pi_1(M)$ containing both $\imath_*(\pi_1(Q_0))$ and $\imath_*(\pi_1(Q_1))$, where $\imath :Q_i\rightarrow M$ is the canonical inclusion. Consider the cover 
$$M_1:= \ \bigslant{\widetilde M}{H}$$
and define the Ma\~n\'e critical value 
$$c(L;Q_0,Q_1) := \ c(L_1)\, ,$$
where $L_1:TM_1\rightarrow \R$ is the lift of $L$ to the cover $M_1$. It is possible to show that, if $k\geq c(L;Q_0,Q_1)$, then $\A_k$ is bounded from below on every connected component of $\mathcal M_Q$ and it is unbounded 
from below on every connected component otherwise. 

Moreover, for every $k>c(L;Q_0,Q_1)$, every Palais-Smale sequence for $\A_k$ with times bounded away from zero (see Sections \ref{palaissmalesequences} 
and \ref{manecriticalvalues} for further details) has converging subsequences. These facts will enable us to prove the following

\begin{teo*}
Let $L:TM\rightarrow \R$ be a Tonelli Lagrangian and let $Q_0,Q_1\subseteq M$ be closed submanifolds of $M$. Then the following hold:

\begin{enumerate}
\item For every $k>c(L;Q_0,Q_1)$, each connected component $\mathcal N$ of $\mathcal M_Q$ not containing constant paths carries an Euler-Lagrange orbit with energy $k$ satisfying the conormal boundary conditions, 
which is a global minimizer of $\A_k$ on $\mathcal N$.

\item Let $\mathcal N$ be a component of $\mathcal M_Q$ containing constant paths and define 
\begin{eqnarray*}
k_{\mathcal N}(L) &:=& \inf \ \Big \{k\in \R \ \Big |\ \inf_{\mathcal N} \ \A_k \geq 0\Big \} \ = \\
                             &=& \sup  \Big \{k\in \R \ \Big | \ \inf_{\mathcal N} \ \A_k < 0 \Big \}\  \geq \ c(L;Q_0,Q_1)\, .
\end{eqnarray*}
For all $k\in (c(L;Q_0,Q_1),k_{\mathcal N}(L))$, there exists an Euler-Lagrange orbit with energy $k$ satisfying the conormal boundary conditions, which is a global minimizer of $\A_k$ on $\mathcal N$. 
Furthermore, $\mathcal N$ carries an Euler-Lagrange orbit with energy $k$ satisfying the conormal boundary conditions also for every $k>k_{\mathcal N}(L)$, provided that $\pi_l(\mathcal N)\neq 0$ for some $l\geq 1$.
\end{enumerate}
\end{teo*}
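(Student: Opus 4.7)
The plan is to obtain critical points of $\mathcal{A}_k$ on each connected component $\mathcal{N}$ of $\mathcal{M}_Q$ using the two facts recalled just before the theorem: for $k\geq c(L;Q_0,Q_1)$ the functional $\mathcal{A}_k$ is bounded from below on $\mathcal{N}$, and for $k>c(L;Q_0,Q_1)$ every Palais--Smale sequence whose times stay away from $0$ admits a convergent subsequence. The whole game is therefore to run variational schemes (pure minimization in part (1) and in part of (2), minimax in the rest of (2)) in such a way that the approximating sequences can be shown to have $T$ bounded away from zero, so that the compactness statement applies and the limit, being a critical point of $\mathcal{A}_k$, is automatically an Euler--Lagrange orbit with energy $k$ satisfying the conormal boundary conditions.

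For part (1), fix $k>c(L;Q_0,Q_1)$ and a component $\mathcal{N}$ without constant paths. I would take a minimizing sequence $(x_n,T_n)$ and apply Ekeland's variational principle to obtain a Palais--Smale sequence at level $\inf_{\mathcal{N}}\mathcal{A}_k$. The key observation is that, since $\mathcal{N}$ contains no constant paths, any sequence in $\mathcal{N}$ along which $T_n\to 0$ would have $H^1$-length of $x_n$ bounded away from zero (by continuity of the evaluation maps $(x,T)\mapsto x(0),x(1)$ on the complete factor $H^1_Q([0,1],M)$), and thus $\mathcal{A}_k(x_n,T_n)\to +\infty$ by the superlinearity of $L$, contradicting the minimizing property. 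Hence $T_n$ stays away from $0$, the compactness result applies, and the limit yields a minimizer of $\mathcal{A}_k$ on $\mathcal{N}$.

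For part (2), let $\mathcal{N}$ contain constant paths. In the range $c(L;Q_0,Q_1)<k<k_{\mathcal{N}}(L)$ the infimum of $\mathcal{A}_k$ on $\mathcal{N}$ is strictly negative by definition of $k_{\mathcal{N}}(L)$; since on constant paths $\mathcal{A}_k(q,T)=kT\geq 0$, any minimizing sequence is separated from the set of constant paths and in particular has $T_n$ bounded away from $0$, so the same minimization argument as in (1) produces a global minimizer. For $k\geq k_{\mathcal{N}}(L)$ the infimum is $0$ and is approached only by degenerate configurations with $T\to 0$, so minimization fails; here I would use the hypothesis $\pi_l(\mathcal{N})\neq 0$ to set up a minimax of mountain-pass or linking type. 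Pick a non-trivial class $\alpha\in\pi_l(\mathcal{N})$ and define
\begin{equation*}
c_k := \inf_{f\in\alpha}\ \sup_{z\in S^l} \mathcal{A}_k(f(z)).
\end{equation*}
The ``mountain-pass geometry'' comes from the fact that $\mathcal{A}_k$ is bounded below (and $\geq 0$) on a neighbourhood of the set of constant paths while, on the other hand, reparametrisation arguments show that $\mathcal{A}_k$ goes to $+\infty$ on suitable directions inside $\mathcal{N}$, forcing $c_k>0$ and $c_k>\inf_{\mathcal{N}}\mathcal{A}_k$. A standard pseudo-gradient deformation, combined with the observation that along sequences with $\mathcal{A}_k\geq c_k/2>0$ the times $T_n$ are bounded away from zero (as $T_n\to 0$ forces $\mathcal{A}_k\to 0$ or $+\infty$), produces a Palais--Smale sequence at level $c_k$ satisfying the time lower bound, and hence a critical point by the quoted compactness.

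The main obstacle I expect is precisely the control of the time variable $T$: the manifold $\mathcal{M}_Q$ is non-complete at $T=0$, and critical sequences can escape towards $T=0$ by shrinking to constants. This is exactly why the available compactness statement is conditional on $T$ being bounded away from zero, and the real content of the proof is to show, in each of the three regimes (no constant paths, negative infimum, minimax at positive level), that the variational sequences we construct do not exhibit this pathological behaviour; the strict inequality $k>c(L;Q_0,Q_1)$ is what makes this possible.
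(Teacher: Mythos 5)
Your parts (1) and the subcritical--of--$k_{\mathcal N}$ part of (2) follow the paper's route (boundedness from below for $k\geq c(L;Q_0,Q_1)$, completeness of the relevant sublevels, and compactness of Palais--Smale sequences with times bounded away from zero), but the minimax step for $k>k_{\mathcal N}(L)$ has a genuine gap: you never actually prove that the minimax level $c_k$ is strictly positive, and the mechanism you invoke cannot give it. Saying that $\A_k\geq 0$ near constant paths and that $\A_k\to+\infty$ ``in suitable directions'' does not prevent a representative $f$ of the class $\alpha$ from having $\sup_{S^l}\A_k\circ f$ arbitrarily close to $0$ (the infimum of $\A_k$ on $\mathcal N$ is $0$ in this regime, approached as $T\to 0$), so no positive mountain barrier is exhibited. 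The paper's proof of $c>0$ rests on two ingredients you are missing: (i) a Klingenberg-type lemma (non-triviality of the class forces $\max_{\zeta}l(x(\zeta))\geq\lambda>0$ for \emph{every} representative), which combined with the quadratic lower bound $\A_k\geq \tfrac{a}{T}l(x)^2+T(k-b)$ gives a uniform $T\geq T_0$ at the point of maximal length whenever the action is $\leq c+1$; and (ii) the splitting $\A_k=\A_{k_{\mathcal N}(L)}+(k-k_{\mathcal N}(L))T$ together with $\inf_{\mathcal N}\A_{k_{\mathcal N}(L)}\geq 0$, which then yields $\A_k\geq (k-k_{\mathcal N}(L))T_0>0$ at that point, hence $c\geq(k-k_{\mathcal N}(L))T_0$. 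Positivity of $c$ is exactly what makes Lemma \ref{yesmodification} applicable to rule out $T_h\to 0$ along the Palais--Smale sequence, so without it the whole step collapses. Relatedly, your deformation argument needs to be legitimate on the non-complete manifold: the paper uses the truncated negative gradient flow of Remark \ref{flussotroncato} together with Lemma \ref{convergenza} (flow lines with $T\to 0$ have action tending to $0$, hence cannot cross a positive level) to keep the minimax class invariant; merely acknowledging the non-completeness does not address this.

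Two smaller inaccuracies: on constant paths $\A_k(q,T)=T\,[k-E(q,0)]$, not $kT$ (it is nonnegative here only because $k>c(L;Q_0,Q_1)\geq e_0(L)$), and the inference ``minimizing sequences are separated from the constant paths, hence $T_n$ is bounded away from zero'' is a non sequitur --- a sequence with $T_n\to 0$ is not approaching the set of constant paths inside $\mathcal M_Q$. The correct (and one-line) argument, which the paper uses, is that $T_n\to 0$ forces $\liminf\A_k(x_n,T_n)\geq 0$, contradicting the strictly negative infimum when $k<k_{\mathcal N}(L)$; the same estimate is what you should also quote in part (1) in place of the ``continuity of the evaluation maps'', since the real point there is that a component without constant paths has lengths bounded away from zero.
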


\vspace{3mm}

Existence results for subcritical energies are harder to achieve than the corresponding ones for supercritical energies and the reasons for that are of various nature. 

\vspace{2mm}

First, the action functional $\A_k$ is unbounded from below on each connected component of $\mathcal M_Q$; therefore, we cannot expect to find solutions by minimizing the free-time action functional $\A_k$. Furthermore, when $k$ is 
subcritical, $\A_k$ might have Palais-Smale sequences $(x_h,T_h)$ with $T_h\rightarrow +\infty$. The convergence issues for Palais-Smale sequences for $\A_k$ are ultimately responsible of the fact that one is able to prove existence results
 only on dense subsets of subcritical energies, using for instance an argument due to Struwe \cite{Str90}, called the \textit{Struwe monotonicity argument}, to overcome the lack of the Palais-Smale 
condition for $\A_k$. This method has been already intensively applied to the existence of periodic orbits; see for instance \cite{Con06,Abb13,AMP13,AMMP14,AB14,AB15a}. 
We will see in Section \ref{subcriticalorbits} how to apply this method in our context.  
A possible way to overcome the lack of the Palais-Smale condition for $\A_k$ would be to prove that subcritical energy levels are \textit{stable} \cite[Page 122]{HZ94}, at least for a certain range of energies; this would allow to extend the known 
results about almost every energy to results which hold for all energies (see \cite[Corollary 8.2]{Abb13} for further details). However, only partial answers to this question and in very particular cases are known so far: for instance,
very low energy levels of symplectic magnetic flows on surfaces different from $\T^2$ are of contact type (in particular, stable) and a clear geometric description of their dynamics has been recently given by Benedetti in \cite{Ben14a, Ben14b}.
What makes the stability condition more difficult to study than the contact condition is what actually makes it more flexible and general. In the Tonelli setting, it is not difficult to
characterise contact energy levels in terms of the Lagrangian action: for instance, McDuff's criterion from \cite{McD87} implies that the energy level $E^{-1}(k)$ is of contact type if and only if every invariant measure on it with 
vanishing asymptotic cycle has positive $k$-action. The characterisations of stability coming from Wadsley's and Sullivan's works (see \cite[Theorem 2.1 and 2.2]{CFP10}) are more difficult to use in this context.

\vspace{2mm}

Second, low energy levels of the Hamiltonian $H$ associated to $L$ could be in general disjoint from the conormal bundle of a given submanifold, so one can hope to find solutions only above a certain value of the energy.
We explain this problematic with an example: Suppose $L$ is a magnetic Lagrangian, i.e. of the form
\begin{equation}
L(q,v) \ = \ \frac12 \, \|v\|_q^2 \ + \ \theta_q(v)\, ,
\label{magneticintro}
\end{equation}
where $\|\cdot\|_q$ is the norm induced by a Riemannian metric $g$ on $M$ and $\theta$ is a smooth 1-form on $M$. In this case the energy is given by 
$$E(q,v) \ =\ \frac12 \, \|v\|_q^2\, ,$$ 
so that  Euler-Lagrange orbits are parametrized proportional to arc-length. The conormal boundary conditions (\ref{conormalintro}) can be rewritten as 
\begin{equation}
g_{\gamma(i)}( \dot \gamma(i), \cdot )  \ + \ \theta_{\gamma(i)}(\cdot) \Big |_{T_{\gamma(i)}Q_i} = \ 0\, , \ \ \ \ i = 0,1\, .
\label{conormalmagnetic1}
\end{equation}
For $i=0,1$ denote by $w_i\in T_{\gamma(i)}M$ the unique vector representing $\theta_i$, that is 
$$g_{\gamma(i)}( w_i, \cdot ) \ = \ \theta_i(\cdot)\, ,$$
and assume for sake of simplicity that $g_{\gamma(i)}( \cdot,\cdot)$ is the Euclidean scalar product on $T_{\gamma(i)}M$. Then (\ref{conormalmagnetic1}) is equivalent to 
$$g_{\gamma(i)}(\dot \gamma(i) + w_i, \cdot ) \Big |_{T_{\gamma(i)}Q_i} = \ 0\, , \ \ \ \ i=0,1\, ,$$
which necessarily implies $\|\dot \gamma(i)\|\geq \|\mathcal P_i w_i\|$ for $i=0,1$, where $\mathcal P_i:TM|_{Q_i}\rightarrow TQ_i$ denotes the orthogonal projection. It follows that Euler-Lagrange orbits satisfying the 
conormal boundary conditions (\ref{conormalmagnetic1}) migth exist only for energies 
\begin{equation}
k \ \geq\ \max \ \left \{ \min \, \left \{ \left. \frac 12 \, \|\mathcal P_0 w_{q_0}\|^2 \ \right | \ q_0\in Q_0\right \}\!,\, \min \, \left \{ \left. \frac 12 \, \|\mathcal P_1 w_{q_1}\|^2 \ \right | \ q_1\in Q_1\right \}\right \}
\label{obstruction}
\end{equation}
where $w_{q_i}\in T_{q_i}M$ is the unique tangent vector representing $\theta_{q_i}$. In the Hamiltonian setting, the right-hand side of (\ref{obstruction}) is the lowest energy value for which the energy level set $H^{-1}(k)$ intersects
both the conormal bundles of $Q_0$ and $Q_1$. If it is positive, then there are no Euler-Lagrange orbits satisfying the conormal boundary conditions with energy less than it, even if the submanifolds intersect or if $Q_0=Q_1$.

\vspace{2mm}

Finally, the problem becomes even harder if $Q_0\cap Q_1 = \emptyset$, since it contains as a very special case the famous open problem of finding the energy levels for which any pair of points in $M$ can be joined by an Euler-Lagrange orbit.
This question has an easy answer in the case of mechanical Lagrangians, that is functions of the form
$$L(q,v) \ = \ \frac12 \, \|v\|_q^2\, -\, V(q)\, ,$$
but is made extremely hard by the presence of a magnetic potential $\theta$  (see e.g. \cite[Chapter I.3 and Appendix F]{Gli97}). In this sense a very claryfing example is provided by the magnetic flow of the standard area form 
$\sigma$ on $(S^2,g_{\text{std}})$, even though this is an Euler-Lagrange flow only locally, that is the Hamiltonian flow defined by the kinetic energy $E(q,v)= \|v\|_q^2/2$ and by the twisted symplectic form 
$$\omega_\sigma \ =\ \omega \ + \ \pi^*\sigma\, ,$$ 
where $\omega$ is the pull-back of $dp\wedge dq$ on $TS^2$ via the Riemannian metric. For every $k>0$ the flow on $E^{-1}(k)$ is periodic and projected orbits are circles on $S^2$ which can be 
seen as the intersection of $S^2$ with suitable affine planes in $\R^3$. One can also prove that they converge to great circles for $k\rightarrow +\infty$; it follows that there is no energy level for which the south pole can be joined with the north pole. 

In the case of ``global'' Euler-Lagrange flows on $M$ it has been  proven by Ma\~n\'e in \cite{Man97} that, for every $k>c_0(L)$, every pair of points $q_0,q_1\in M$ can be joined by an Euler-Lagrange orbit with energy $k$. This result has been then strengthen 
by Contreras in \cite{Con06} to every $k>c_u(L)$. We will show in Section \ref{counterexamples} that Contreras' result is sharp exhibiting, for every $\epsilon >0$, examples of magnetic Lagrangians $L_\epsilon$ on compact connected orientable surfaces 
and points $q_0,q_1$ that cannot be joined by Euler-Lagrange orbits with energy less than $c_u(L_\epsilon) -\epsilon$.

\vspace{2mm}

We therefore assume $Q_0\cap Q_1\neq \emptyset$ (for the moment say also connected) and show that in a (possibly empty, but in general not) certain energy range, which depends only on $L$ and on the intersection
$Q_0\cap Q_1$, the free-time action functional $\A_k$ has a mountain-pass geometry on the connected component of $\mathcal M_Q$ containing constant paths. 
Here the two valleys are represented by the set of constant paths and by the set of paths with negative $k$-action. 
We get therefore a minimax class just by considering paths starting from a constant path and going to paths with negative action, and a relative minimax function, which depends monotonically on $k$. 
An analogue of the Struwe monotonicity argument (cf. Lemma \ref{struwe}) will allow us to show the existence of compact Palais-Smale sequences for almost every energy in this energy range. 

In the statement of the following theorem we suppose, for sake of simplicity, that $L$ is of the form \eqref{magneticintro}, though the result holds more generally for every autonomous Tonelli Lagrangian.
This assumption allows at this moment an easier definition of the energy value $k_{Q_0\cap Q_1}^-$; the general one will be given in Chapter \ref{chapter3}.

\begin{teo*}
Let $L:TM\rightarrow \R$ be as in \eqref{magneticintro}. Suppose $Q_0\cap Q_1\neq \emptyset$ connected and let $\mathcal N$ be the connected component of $\mathcal M_Q$ containing the constant paths. Define 
$$k_{Q_0\cap Q_1}^- := \ \max_{q\in Q_0\cap Q_1} \ \frac12 \, \|\theta_q\|^2\ \leq\ c(L;Q_0,Q_1)\, .$$
Then, for almost every $k\in (k_{Q_0\cap Q_1}^-,c(L;Q_0,Q_1))$, there exists an Euler-Lagrange orbit $\gamma \in \mathcal N$ with energy $k$ satisfying the conormal boundary conditions.
\end{teo*}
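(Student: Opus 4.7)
\textit{Proof plan.}

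The plan is to realise the desired orbit as a critical point of $\mathcal A_k$ on $\mathcal N$ through a mountain-pass minimax, combined with an analogue of the Struwe monotonicity argument (Lemma \ref{struwe}) to recover compactness of Palais-Smale sequences for almost every $k$ in the subcritical window.

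First I would verify that $\mathcal A_k$ has a mountain-pass geometry on $\mathcal N$ whenever $k\in(k_{Q_0\cap Q_1}^-,c(L;Q_0,Q_1))$. The two valleys are identified as follows: the subfamily of constant paths $(q,T)$ with $q\in Q_0\cap Q_1$, on which $\mathcal A_k(q,T)=kT$, is one valley with $\inf_{T\to 0^+}\mathcal A_k=0$; the second valley is the set $\{\mathcal A_k<0\}$, which is non-empty because $k<c(L;Q_0,Q_1)$ and hence $\mathcal A_k$ is unbounded from below on $\mathcal N$. The crucial point is the existence of a positive barrier separating the two. Optimising in the time variable yields
$$\min_{T>0}\mathcal A_k(x,T)\ =\ \int_0^1\theta_x(x')\,ds\ +\ \sqrt{2k\int_0^1|x'(s)|^2\,ds}\, ,$$
and, using $k_{Q_0\cap Q_1}^-=\max_{q\in Q_0\cap Q_1}\frac12\|\theta_q\|^2$, for $k>k_{Q_0\cap Q_1}^-$ one can choose a tubular neighbourhood $U$ of $Q_0\cap Q_1$ and a small $\rho>0$ such that every $(x,T)\in\mathcal N$ with $x([0,1])\subseteq U$ and Dirichlet energy $\int_0^1|x'|^2\,ds=\rho$ satisfies $\min_T\mathcal A_k(x,T)\geq\delta\sqrt{\rho}$ for some $\delta=\delta(k)>0$; paths whose image leaves $U$ pick up an additional positive cost controlled by the length of the excursion. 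Combining these two estimates produces a genuinely positive barrier around the constant-path valley.

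Next I would introduce the minimax class $\Gamma$ of continuous maps $u:[0,1]\to\mathcal N$ joining a short constant path at a point of $Q_0\cap Q_1$ to a fixed element of $\{\mathcal A_k<0\}$, and set $c(k):=\inf_{u\in\Gamma}\max_{s\in[0,1]}\mathcal A_k(u(s))$; by the previous step $c(k)>0$. Since $\mathcal A_k$ depends affinely and strictly increasingly on $k$ (its $k$-derivative along a fixed $(x,T)$ is $T>0$), the function $k\mapsto c(k)$ is monotone nondecreasing, hence differentiable at almost every $k\in(k_{Q_0\cap Q_1}^-,c(L;Q_0,Q_1))$. At every such differentiability point, the analogue of Struwe's monotonicity argument furnishes a Palais-Smale sequence $(x_h,T_h)$ for $\mathcal A_k$ at level $c(k)$ whose times $T_h$ stay in a bounded interval away from zero. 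The convergence results of Sections \ref{palaissmalesequences} and \ref{manecriticalvalues} then apply: a subsequence converges to a critical point of $\mathcal A_k$ in $\mathcal N$, which is an Euler-Lagrange orbit of energy $k$ satisfying the conormal boundary conditions.

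The main obstacle is the failure of the Palais-Smale condition for $\mathcal A_k$ at subcritical energies: minimax sequences may a priori degenerate either by $T_h\to 0$ (collapsing onto a constant) or by $T_h\to+\infty$ (spreading the action along a lifted loop whose cost becomes negative only in a cover of $M$). The Struwe monotonicity argument is designed precisely to bound $T_h$ for a.e.\ $k$ at the price of a null set of excluded energies, which is why the statement yields existence only almost everywhere. The bulk of the technical work lies in adapting this scheme to the non-complete Hilbert manifold $\mathcal M_Q$ and to conormal boundary conditions, and in extracting the a priori control of $T_h$ from the differentiability of $c$ together with the positive barrier established in the mountain-pass step.
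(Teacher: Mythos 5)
Your plan reproduces the paper's own proof of Theorem \ref{teobasseenergie1} (which, for $L$ as in \eqref{magneticintro}, is exactly the stated result since $E(q,0)\equiv 0$ makes the two thresholds $k_{Q_0\cap Q_1}^-$ and $k_{Q_0\cap Q_1}$ coincide): the mountain-pass barrier of Lemma \ref{lemmaminimax1}, obtained as you propose from $L(q,v)\geq -E(q,0)+\theta_q(v)+a\,\|v\|_q^2$ and optimisation in $T$ on paths confined to a neighbourhood $\mathcal U$ of $Q_0\cap Q_1$ where $\|\theta\|<\sqrt{2k}$, then the monotone minimax function $c(k)\geq\alpha>0$, the Struwe-type Lemma \ref{struwe} producing bounded Palais-Smale sequences at almost every $k$, Lemma \ref{yesmodification} to keep the times away from zero because the minimax level is positive, and Lemma \ref{lemmalimitatezza} for compactness. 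The one inaccurate point is your side remark that a connecting path whose image leaves $\mathcal U$ ``picks up an additional positive cost'': for subcritical $k$ such paths can have arbitrarily negative action (this is precisely why $\A_k$ is unbounded from below), and what is actually needed — and what the paper proves with a triangle-inequality argument — is that any path joining $Q_0$ to $Q_1$ which exits $\mathcal U$ has length, hence Dirichlet energy, bounded away from zero, so the intermediate-value crossing of the level set $\{e=\rho\}$ (the level $\{l(x)=\epsilon\}$ in the paper) occurs while the path is still inside $\mathcal U$, where your barrier estimate applies.
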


A very special case of intersecting submanifolds is given by the choice $Q_0=Q_1$, which corresponds to (a particular case of) the \textit{Arnold chord conjecture} about the existence of a Reeb orbit starting and ending at a 
given Legendrian submanifold of a contact manifold, see \cite{Arn86, Moh01}, but in a possibly non-contact situation.
As a trivial corollary of the theorem above we get existence results of \textit{Arnold chords} for subcritical energies (cf. Corollary \ref{arnoldchord}).

In Section \ref{counterexamples} we complement the theorems above with some explicit counterexamples, which show that all the results are optimal.

%%%%%%%%%%%%%%%%%%%%%%%%%%%%%%%%%%%%%%%%%%%%%%

\section{A generalization of the Lusternik-Fet theorem} 

Let $(M,g)$ be a closed connected Riemannian manifold, $L:TM\rightarrow \R$ be a Tonelli Lagrangian and $\sigma \in \Omega^2(M)$ be a closed 2-form. 
Associated with the pair $(L,\sigma)$ is a flow on $TM$, for which the energy $E$ defined by $L$ is a prime integral; it is defined
by gluing together all the local Euler-Lagrange flows of the Lagrangians $L+\theta_i$, where $\theta_i$ are local primitives of $\sigma$. This flow is conjugated via the Legendre transform 
to the Hamiltonian flow on $T^*M$ defined by $H$, the Fenchel dual of $L$, and by the twisted symplectic form 
$$\omega_\sigma := \ dp\wedge dq + \pi^*\sigma\, .$$

This class of flows contains the class of (possibly non-exact) \textit{magnetic flows} on $TM$; these are given as flow of the pair $(L,\sigma)$ by choosing 
$$L(q,v) \ =\ E_{kin}(q,v) \ =\ \frac12 \, \|v\|_q^2$$
kinetic energy associated with a Riemannian metric on $M$. The reason for this terminology is that this flow can be thought of as modelling the motion of a particle of unit mass and charge under the effect of a magnetic field represented by the 2-form $\sigma$. 
Periodic orbits of the flow of $(E_{kin},\sigma)$ are then called  \textit{closed magnetic geodesics}.

In Chapter \ref{chapter5} we prove a generalization of the celebrated Lusternik and Fet theorem \cite{FL51} about the existence of a contractible closed geodesic on every closed Riemannian manifold $M$ with $\pi_l(M)\neq 0$ for some $l\geq 2$. In the statement of 
the following theorem we set 
$$e_0(L):= \ \max_{q\in M} \ E(q,0)\,.$$
Observe that, in case of magnetic flows, $e_0(L)=0$.

\begin{teo*}[Generalized Lusternik-Fet theorem]
Let $(M,g)$ be a closed connected Riemannian manifold, $L:TM\rightarrow \R$ be a Tonelli Lagrangian and $\sigma$ be a closed 2-form. If $\pi_l(M)\neq0$ for some $l\geq 2$, then for 
almost every $k>e_0(L)$ there exists a contractible periodic orbit for the flow of the pair $(L,\sigma)$ with energy $k$.
\end{teo*}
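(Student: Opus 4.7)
The plan is to adapt the classical proof of the Lusternik-Fet theorem -- a minimax argument exploiting the non-triviality of $\pi_l(M)$ -- to the free-time action functional naturally associated with the pair $(L,\sigma)$, and to compensate the failure of the Palais-Smale condition by means of Struwe's monotonicity trick, very much in the spirit of the references \cite{AMP13, AMMP14, AB14, AB15a} cited earlier in the excerpt.

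First I set up the variational framework. Let $\mathcal{N}$ denote the connected component of contractible loops in $H^1(S^1,M)\times(0,+\infty)$. Since $\sigma$ need not be exact, the naive integral has no globally well-defined value, but on contractible loops one can use a capping disk $u:D^2\to M$ with $u|_{\partial D^2}=x$ and put
$$\mathcal{S}_k(x,T) \ := \ T\int_0^1 \Big[L\Big(x(s),\frac{x'(s)}{T}\Big) + k\Big]\, ds \ - \ \int_{D^2} u^*\sigma.$$
This is well-defined modulo the group of spherical periods of $\sigma$; equivalently, $d\mathcal{S}_k$ is a globally well-defined closed $1$-form on $\mathcal{N}$, and its zeros correspond bijectively to contractible closed orbits of the $(L,\sigma)$-flow with energy $k$.

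Second, the classical construction converts the non-triviality of $\pi_l(M)$ into topological non-triviality of $\mathcal{N}$ relative to the subset of constants. Let $f:S^l\to M$ be an essential map; by slicing $S^l$ by a pencil of parallels one obtains a continuous map $\Psi:S^{l-1}\to\mathcal{N}$ which is not homotopic, relative to its boundary on the set of constant loops, to any map taking values in the constant loops themselves. Setting
$$c_k \ := \ \inf_{\Psi'\sim\Psi} \ \sup_{\xi\in S^{l-1}} \ \mathcal{S}_k\big(\Psi'(\xi)\big),$$
where the infimum runs over admissible homotopies of $\Psi$, yields the candidate minimax value.

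Third, I need two estimates on $c_k$. Upper bound: the supremum of $\mathcal{S}_k$ on a fixed $\Psi(S^{l-1})$ (with fixed cappings) is finite for every $k$. Lower bound: since $k>e_0(L)$, every constant loop $(q_0,T)$ has action $T\,(k-E(q_0,0))>0$, and vanishing action is possible only as $T\to 0$; combined with the topological non-triviality of $\Psi$, this forces $c_k>0$. Moreover, $\mathcal{S}_k$ depends linearly and monotonically on $k$, so $k\mapsto c_k$ is monotonically non-decreasing on $(e_0(L),+\infty)$ and hence differentiable almost everywhere. At any point of differentiability, Struwe's monotonicity argument (the version used in Section \ref{subcriticalorbits} and in \cite{AMMP14}) produces a Palais-Smale sequence $(x_h,T_h)$ for $\mathcal{S}_k$ with $\mathcal{S}_k(x_h,T_h)\to c_k$ and with $T_h$ bounded from above.

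Finally, I establish compactness. The upper bound on $T_h$ and on the action, together with the superlinearity of $L$, yields an $H^1$-bound on the loops $x_h$; the strict positivity of $c_k$ prevents $T_h$ from collapsing to zero, since otherwise the action would tend to $0$ along constants. A standard Palais-Smale extraction then produces a limit $(x_\infty,T_\infty)\in\mathcal{N}$ which is a zero of $d\mathcal{S}_k$, hence a contractible periodic orbit of energy $k$ for the $(L,\sigma)$-flow.

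The main obstacle is the twofold degeneracy of Palais-Smale sequences for $\mathcal{S}_k$: the period $T_h$ may a priori blow up, reflecting the well-known failure of the Palais-Smale condition below the Ma\~n\'e critical value, or it may collapse to zero, in which case the limit is a constant loop and no geometric orbit is produced. The upper bound on $T_h$ is precisely what the monotonicity argument supplies -- at the price of losing a set of energies of measure zero -- and the lower bound is furnished by the strict positivity $c_k>0$, which in turn rests on the Lusternik-Fet type topological obstruction and on the hypothesis $k>e_0(L)$.
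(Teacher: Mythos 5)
Your overall strategy coincides with the paper's (minimax built from $\pi_l(M)\neq 0$, Struwe monotonicity in $k$, compactness for critical sequences with periods bounded and bounded away from zero), but there is a genuine gap at the central technical point, namely the very first step of your setup. The capped action $\mathcal S_k$ is only defined modulo the group of spherical periods of $\sigma$, and you then use it as if it were a single-valued functional: the minimax value $c_k=\inf\sup\mathcal S_k$ is not a well-defined real number unless $\sigma$ vanishes on $\pi_2(M)$ (i.e.\ unless $\sigma$ is weakly exact, which is exactly the case already covered by Merry and explicitly excluded as ``the easy case'' here). Note that the period group can even be dense in $\R$ (e.g.\ $M=S^2\times S^2$ with $\sigma=a\omega_1+b\omega_2$, $a/b$ irrational), so ``$\mathcal S_k$ mod periods'' carries no usable inf/sup information; saying ``equivalently $d\mathcal S_k$ is a well-defined closed $1$-form'' acknowledges the problem but does not resolve it, because all the subsequent steps (monotonicity of $c_k$, the Struwe lemma of Section \ref{subcriticalorbits}, Palais--Smale extraction at level $c_k$) are statements about an actual functional and do not apply verbatim to a non-exact closed $1$-form.

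The paper's proof is devoted precisely to repairing this. It never uses a global primitive: it works with the action $1$-form $\eta_k$, replaces Palais--Smale sequences by critical sequences ($\|\eta_k(x_h,T_h)\|\to 0$, Proposition \ref{prp:ps2}), and makes the minimax single-valued by anchoring. Concretely, the parameter domain is the ball $B^{l-1}$ with $S^{l-2}$ mapped into constant loops of a fixed small period $T_0$; on a neighbourhood $\mathcal V_\delta$ of the constant loops a genuine local primitive $S_k$ exists (small capping discs, with the estimate $|\int_{D_x}\sigma|\leq\Theta_0\,l(x)^2$), it satisfies $\inf_{\partial\mathcal V_\delta}S_k\geq\varepsilon_{k,\delta}>0$ for $k>e_0(L)$ (Lemma \ref{lem:low}), and the ``value'' at $u(\xi)$ is defined as $\widetilde S_k(u,\xi)=S_k(u_\xi(0))+\Delta S_k(u_\xi)(1)$, the variation of $\eta_k$ along a path in $B^{l-1}$ from the boundary to $\xi$ (path-independent for $l\geq 3$; for $l=2$ a base point is fixed, and Lemma \ref{lem:notinwgeneral} compensates). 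The Struwe monotonicity argument and the deformation are then redone at this level, using a truncated pseudo-gradient flow to restore positive completeness near the constant loops and Lemmas \ref{lem:dec}, \ref{lem:ac-distper}, \ref{lem:notinwgeneral} to keep the minimax critical sequence away from period collapse and with bounded periods (Proposition \ref{Struwe}). Your compactness and ``$T_h\not\to 0$ because $c_k>0$'' heuristics are morally the right ones, but they too must be phrased through this local-primitive machinery, since ``the action'' of a critical sequence is only defined near the constants. In short: your outline is the correct one, but the missing ingredient is exactly the content of Sections \ref{theaction1form}--\ref{theminimaxclass}, without which the minimax scheme is not even well posed for a general closed $\sigma$.
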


This result generalizes the corresponding statements in \cite{Con06} (see also \cite[theorem 8.2]{Abb13}) and in \cite{Mer10} (see also the forthcoming corrigendum \cite{Mer15}), where respectively the cases $\sigma$ exact, $\sigma$ weakly-exact are treated. 
This theorem is the outcome of joint work with Gabriele Benedetti and is contained in the preprint \cite{AB14}. There a slightly different proof is given, since the cases $l=2$ and $l\geq 2$ are considered separately; here we use a
construction which allows to treat both cases at once.

A result of this kind for simply connected manifolds and for $l=2$ appears for the first time in \cite[Theorem 7]{Koz85}, where it is claimed to hold for \textit{every} $k>e_0(L)$. The author gives only 
a sketch of the proof and does not take into account some crucial convergence problems, which are today only partially solved and are also ultimately responsible for the fact that with our method we do not 
get a contractible periodic orbit for every energy. In the case of magnetic flows, the existence of a periodic orbit was already proven, for $\sigma \neq 0$,  by Schlenk \cite{Sch06} for almost every $k$ in the energy range $(0,d_1(g,\sigma))$, where
\begin{equation*}
d_1(g,\sigma):=\sup\left\{k>0\ \left|\ \Big \{(q,p)\in T^*M\ \Big | \ \frac12\, \|p\|_q^2\leq k\Big \} \mbox{ stably displaceable}\right\}\right.\,.
\end{equation*}

\noindent It follows from results in \cite{LS94} and in \cite{Pol95} that $d_1(g,\sigma)$ is positive. Finally, this result for $M=S^2$ and $\sigma$ non-exact has concrete applications  to the motion of rigid bodies (see \cite[Theorem 8]{Koz85} and \cite{Nov82}). 

\vspace{3mm}

In general, our methods yields existence results only for almost every $k$ but, when a particular energy level set is \textit{stable} \cite[Page 122]{HZ94} we can upgrade such almost existence results to show that there is a contractible periodic orbit of energy $k$
(we refer to \cite[Corollary 8.2]{Abb13} for the details). This is for instance the case for low energy levels of symplectic magnetic flows on surfaces (i.e. with $\sigma$ a symplectic form). 
%Proving that low energy levels of symplectic magnetic flows are stable is instead an open problem if the dimension of $M$ is bigger than 2 (see \cite[page 1883]{CFP10} and \cite[proposition 6.19]{CFP10} for a proof in the homogeneous case). 
%Nevertheless, existence of contractible periodic orbits for every low energy levels for symplectic magnetic flows still holds as Usher proves in \cite{Ush09} using results of \cite{GG09} (see also \cite{Ker99} for multiplicity results, generalizing 
%\cite{Gin87} when $\sigma$ is a K\"ahler form). 
 
\vspace{3mm}

We now give an account of the tools we use to prove the aforementioned theorem. We denote by $\mathcal M:= H^1(\T,M)\times (0,+\infty)$ the space of $H^1$-loops in $M$ with arbitrary period and with $\mathcal M_0$ the 
connected component given by contractible loops. 

Notice that a free-period Lagrangian action functional is not available in this generality, since the 2-form $\sigma$ is by assumption only closed. 
However, its differential $\eta_k$ is still well-defined and its zeros are in one to one correspondence with the periodic orbits of the flow defined by $(L,\sigma)$ contained in $E^{-1}(k)$. We call $\eta_k\in \Omega^1(\mathcal M)$ the \textit{action 1-form}; it is given by
$$\eta_k(x,T) := \ d\A_k^L(x,T)  \ + \ \int_0^1 \sigma_{x(s)}(x'(s),\cdot ) \, ds\, ,$$
where $\A_k^L$ is the free-period action functional associated with $L$. The action 1-form turns out to be locally Lipschitz continuous and (in a suitable sense) closed. Moreover, it satisfies a crucial compactness property 
for critical sequences (namely, sequences $(x_h,T_h)$ such that $\|\eta_k(x_h,T_h)\|\rightarrow 0$). More precisely, every critical sequence with periods bounded and bounded away from zero admits a converging subsequence. 

The assumption that $\pi_l(M)\neq 0$ for some $l\geq 2$ will be used to define a suitable minimax class $\mathfrak U$ of maps 
$$(B^{l-1},S^{l-2})\ \longrightarrow \ (\mathcal M_0,M_0)\, ,$$ 
where $M_0$ is the submanifold of $\mathcal M_0$ of constant loops, and an associated minimax function $k\longmapsto c^{\mathfrak u}(k)$. The monotonicity of $c^{\mathfrak u}$  
allows to prove the existence of critical sequences for $\eta_k$ with periods bounded and bounded away from zero for almost every $k>e_0(L)$ by generalizing the \textit{Struwe monotonicity argument} to this setting.

%%%%%%%%%%%%%%%%%%%%%%%%%%%%%%%%%%%%%%%%%%%%%%

\section{Oscillating magnetic fields on $\T^2$}

In this section we restrict our attention to the class of magnetic flows on $T\T^2$ defined by oscillating forms. Recall that a closed 2-form $\sigma$ is said to be \textit{oscillating} if its density\footnote{The density of $\sigma$ with respect to $\mu_g$ is the (unique) function $f:M\rightarrow \R$ such that $\sigma = f\, \mu_g$.} with respect to the area form takes both positive and negative values. 
Notice that oscillating forms are the natural generalization of exact forms, since we can think of exact forms as ``balanced'' oscillating forms, being their integral over $M$ zero. 

The aim of chapter \ref{chapter6} will be to generalize the main theorem of \cite{AMMP14} (for $M=\T^2$) to the non-exact case, thus proving the following

\begin{teo*}
Let $\sigma$ be a non-exact oscillating 2-form on $(\T^2,g)$. Then there exists a constant $\tau_+(g,\sigma)>0$ such that for almost every $k\in (0,\tau_+(g,\sigma))$ 
the energy level $E^{-1}(k)$ carries infinitely many geometrically distinct closed magnetic geodesics.
\end{teo*}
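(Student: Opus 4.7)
The strategy will follow the blueprint of \cite{AMMP14} (where the exact case on $\T^2$ was handled) and of \cite{AB15a} (where the non-exact oscillating case on surfaces of higher genus was handled), with adaptations required by the fact that $\pi_1(\T^2)=\Z^2$ is abelian and $\pi_l(\T^2)=0$ for $l\geq 2$. The first step is to pass to the Abelian/universal cover $\widehat{\T^2}=\R^2$; since $H^2(\R^2)=0$, the lift $\widehat\sigma$ is exact, $\widehat\sigma=d\theta$, with $\theta$ a bounded primitive (here boundedness of $\theta$ fails in general on $\R^2$, but on the component $\mathcal M_0$ of contractible loops in $\T^2$ the primitive yields a well-defined free-period action functional $\A_k$ via any disk filling, because $\sigma$ is closed and its flux through any contractible loop is intrinsic). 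The first periodic orbit is then found, for a.e.\ $k$, by the generalized Lusternik-Fet scheme of the previous section applied to $\mathcal M_0$: specifically, $\tau_+(g,\sigma)$ is defined as the supremum of those $k>0$ for which $\A_k$ still has a mountain-pass geometry on $\mathcal M_0$, the two wells being $\T^2\subseteq\mathcal M_0$ (constant loops) and the sublevel $\{\A_k<0\}$, which is non-empty precisely because $\sigma$ is \emph{oscillating} (so one can locally concentrate a long slow loop in a region where the density $f$ has definite sign to push the action below zero).

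Next I would run the Struwe monotonicity argument (in the form of Lemma \ref{struwe} adapted to the action 1-form $\eta_k=d\A_k^L+\int_0^1\sigma(x'(s),\cdot)ds$) on this minimax class. Monotonicity of the minimax value $k\mapsto c(k)$ provides, for almost every $k\in(0,\tau_+(g,\sigma))$, a critical sequence $(x_h,T_h)$ with $T_h$ bounded and bounded away from zero; the compactness property of $\eta_k$ (paragraph just before the Lusternik-Fet theorem in the excerpt) then yields one contractible closed magnetic geodesic $\gamma_k$ with energy $k$.

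To upgrade from one to \emph{infinitely many} geometrically distinct orbits, the plan is to iterate the minimax construction in a way that exploits the oscillation of $\sigma$. Because the density $f$ takes values of both signs, one can, given finitely many already-found closed orbits $\gamma_1,\ldots,\gamma_N$, choose a small embedded disk $D$ in a region where $f$ is (say) negative and disjoint from the supports of all $\gamma_i$, and use a loop supported in $D$ to construct a new mountain-pass class whose minimax value is distinct from, and geometrically not represented by, any iterate of the $\gamma_i$. Alternatively, one can lift to the Abelian cover $\R^2$ and produce orbits distinguished by their asymptotic direction (the ``rotation vector'' in $H_1(\T^2;\R)=\R^2$), running the variational scheme in components of $\mathcal M$ indexed by $\Z^2$ and noting that an orbit with fixed rotation vector on $\T^2$ accounts for only finitely many closed orbits on each finite cover. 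A Bangert-type recurrence argument, as used in \cite{AMMP14} for the exact case, then forces the conclusion: if only finitely many geometrically distinct closed magnetic geodesics existed on $E^{-1}(k)$, their iterates would miss a whole open family of minimax values, giving a contradiction with the denseness of regular values of $c(k)$ produced by Struwe.

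The main obstacle, and the step where the genus-one case genuinely differs from \cite{AB15a}, is the lack of hyperbolicity of the universal cover: on higher genus surfaces one uses the exponential growth of balls in $\widetilde M$ (equivalently, the non-amenability of $\pi_1$) to ensure that the short orbits produced by the minimax have arbitrarily long period, which is the key mechanism producing geometric distinctness. On $\T^2$ the universal cover is Euclidean and $\pi_1$ is amenable, so the control on periods has to come from a quantitatively different source, namely the quantitative use of the oscillation of $\sigma$ (pockets where $f$ is negative, with controlled size and flux) to force minimax values to accumulate. Extracting infinitely many orbits while keeping the Palais-Smale compactness — that is, keeping the periods $T_h$ bounded — is the part that will require the most care and is what prevents a straightforward transposition of the higher-genus argument to $\T^2$ and, a fortiori, to $S^2$.
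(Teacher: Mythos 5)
There is a genuine gap, and it sits exactly at the two places where your plan is vaguest. First, the engine of the paper's proof is not a Lusternik--Fet type minimax on $\mathcal M_0$ (which is in any case unavailable here, since $\pi_l(\T^2)=0$ for all $l\geq 2$), but Taimanov's theorem (Theorem \ref{Taimanov92}, from \cite{Tai93}, see also \cite{CMP04}): for every $k<\tau_+(g,\sigma)$ — where $\tau_+$ is defined through the Taimanov functional \eqref{Taimanovfunctional} on embedded films, supped over finite covers, and is positive precisely because $\sigma$ is oscillating — there exists a closed magnetic geodesic $\alpha_k$ which is a \emph{local minimizer} of the action. Your proposal never produces such a local minimizer, and your $\tau_+$ (defined via a mountain-pass geometry of $\A_k$ on $\mathcal M_0$) is not the constant of the statement. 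If $\alpha_k$ is contractible one can indeed argue as in \cite{AB15a} using the global primitive of $\eta_k$ on $\mathcal M_0$; but in the genuinely new case $\alpha_k$ is non-contractible, and the minimax classes are built from paths based at (compact sets $M_k^n$ of local minimizers near) the iterates $\alpha_{k^*}^n$, in the direction of the generator $\mathfrak b_n$ of $\pi_1(\mathcal M,\alpha_{k^*}^n)$ on which $\eta_k$ fails to be exact. The replacement of $\alpha_k^n$ by the compact sets $M_k^n$ is not cosmetic: Taimanov's minimizer need not depend continuously on $k$, and without this modification the minimax functions need not be monotone in $k$ (Lemma \ref{monotonicityetakexact}), so the Struwe argument — the only tool compensating the failure of Palais--Smale — would not even start.

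Second, and more seriously, your mechanism for \emph{infinitely many geometrically distinct} orbits does not work as stated. Producing a new minimax class from a small disk in a region where the density of $\sigma$ is negative and disjoint from the finitely many known orbits gives no control whatsoever on whether the resulting critical point is an iterate of one of them — this coincidence of minimax critical points with iterates is precisely the difficulty in this circle of problems. The paper excludes it through the local iteration theory: persistence of (strict) local minimizers under iteration (Proposition \ref{persistenceoflocalminimizers}, special to orientable surfaces) and, crucially, the fact that sufficiently high iterates of an isolated periodic orbit cannot be mountain-pass critical points (Proposition \ref{iterationofmountainpasses}, from \cite{AMMP14}); combined with a free-homotopy-class argument (choosing $n$ so that $\alpha_{k^*}^n$ is not freely homotopic to any low iterate $\gamma_j^m$, $m<n_*$, of the finitely many putative orbits), a zero of $\eta_k$ of mountain-pass type in the component of $\alpha_{k^*}^n$ and outside $M_k^n$ cannot be any $\gamma_{j}^{m}$, a contradiction. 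Bangert's technique \cite{Ban80} enters only in the contractible case, where the action on $\mathcal M_0$ is available. Finally, your explanation of why higher genus is easier is off the mark: in \cite{AB15a} the point is not non-amenability forcing long periods, but that the lift of $\sigma$ to the hyperbolic universal cover admits a bounded primitive, so $\eta_k$ is exact on all of $\mathcal M$ with an explicit primitive $S_k$, and the proof of \cite{AMMP14} carries over verbatim; on $\T^2$ it is exactly the non-exactness of $\eta_k$ outside $\mathcal M_0$ that forces the path-based minimax described above.
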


By ``geometrically distinct'' we mean that the closed magnetic geodesics are not iterates of each other. This theorem is the result of joint work with Gabriele Benedetti 
and complements our previous result in \cite{AB15a}, where we consider the case of surfaces with genus larger than one. 
The high genus case is actually much easier than the case $M=\T^2$, since the action 1-form $\eta_k$  is exact on the whole $\mathcal M$ and a primitive $S_k$ can be explicitly written down
(cf. \cite{Mer10}); the proof follows then roughly from the one in \cite{AMMP14} replacing the free-period Lagrangian action functional by $S_k$.

The case $M=\T^2$ is harder and requires methods similar to the ones used in the proof of the generalized Lusternik-Fet theorem. 
The proof will therefore consist  in showing the existence of infinitely many zeros of  $\eta_k$ via a minimax method. 

\vspace{3mm}

Now we briefly explain the main ideas involved in the proof of the theorem above.
Since the action 1-form $\eta_k$ is locally exact (in particular near a critical point) and local primitives of $\eta_k$ have the same structure as a Lagrangian action functional 
(with a primitive $\theta$ of $\sigma$ not defined on the whole $\T^2$), the local theory is the same as in the exact case: iterates of (strict) local minimizers are still (strict) local minimizers (cf. Proposition \ref{persistenceoflocalminimizers}) and the Morse 
index of the critical points satisfies the same iteration properties as described in \cite[Section 1]{AMP13} and in \cite{AMMP14}. In particular, as shown in \cite{AMMP14} for the exact case, a sufficiently high iterate of a periodic orbit cannot be a 
mountain pass critical point (see Proposition \ref{iterationofmountainpasses} for further details).

Also, it follows from results by Taimanov \cite{Tai92b,Tai92a,Tai93} and indipendently by Contreras, Macarini and Paternain \cite{CMP04} that there is $\tau_+(g,\sigma)>0$ such that for all $k\in (0,\tau_+(g,\sigma))$ there exists a closed magnetic geodesic 
$\alpha_k$ which is a local minimizer of the action. Now one has two cases: either $\alpha_k$ is contractible or it is not contractible.
If $\alpha_k$ is contractible, then one can run the same proof as in \cite{AB15a} and the theorem follows. 

Therefore, we may assume $\alpha_k$ to be non contractible. Being $\eta_k$ exact only on $\mathcal M_0$, for every other connected 
component $\mathcal M'$ of $\mathcal M$ there exists a generator of $\pi_1(\mathcal M')$, say $\beta$, on which $\eta_k$ is non-zero.
One now gets minimax classes by considering, for every $n\in \N$, the class of loops in $\mathcal M$ based at $\alpha_k^n$ which are homotopic to $\beta$. 

However, this natural choice might yield non-monotone minimax functions, since the Taimanov's local minimizer might not depend continuously on $k$. We therefore modify the minimax classes in order to achieve the desired monotonicity, 
which will be crucial to prove the existence of infinitely many zeros for $\eta_k$ for almost every $k\in (0,\tau_+(g,\sigma))$ by generalizing the Struwe monotonicity argument to this setting. 

Excluding that these zeros are iterates of only finitely many zeros  with the argument given by Proposition \ref{iterationofmountainpasses} will yield infinitely many geometrically distinct closed magnetic geodesics for almost every $k\in (0,\tau_+(g,\sigma))$.

\vspace{3mm}

The same proof would a priori run also for $M=S^2$; the problem is that in this case one has no tools to show that the infinitely many zeros of the action are not iterates of each other. More precisely, one can exclude (with the same argument 
used for the case $M\neq S^2$) that the zeros are ``large'' iterates of each other, but one cannot exclude that they are ``low'' iterates of each other (or even that they are all equal). 

This difficulty can be easily overcome in case $M=\T^2$, since the action 1-form $\eta_k$ is exact on $\mathcal M_0$ (cf. \cite{Mer10}), the connected component of $\mathcal M$ given by the contractible loops. Therefore, if the mountain-pass 
critical points are contractible, then one can use the action to show that they cannot be ``low'' iterates of each other proving that the action tends to $-\infty$; to do this one uses the so-called Bangert's technique \cite{Ban80} of 
\textit{pulling one loop at a time}. In case of non-contractible mountain-passes, this can be excluded by a simple topological argument.

In the case $M=S^2$, combining Taimanov's result \cite{Tai92a} with Theorem \ref{generalizedlyusternikfettheorem} of Chapter \ref{chapter5}, we get the following

\begin{prop*}
Consider a non-exact oscillating form $\sigma$ on $(S^2,g)$. Then there exists a constant $\tau_+(g,\sigma)>0$ such that for almost every $k\in (0,\tau_+(g,\sigma))$ 
the energy level $E^{-1}(k)$ carries at least two geometrically distinct closed magnetic geodesics.
\end{prop*}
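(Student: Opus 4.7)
The plan is to combine Taimanov's local minimizer with the contractible critical point produced by the generalized Lusternik--Fet theorem of Chapter \ref{chapter5}, and to distinguish them via the persistence of local minimizers under iteration.

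First, by Taimanov's result \cite{Tai92a} there is a constant $\tau_+(g,\sigma) > 0$ such that, for every $k \in (0,\tau_+(g,\sigma))$, $E^{-1}(k)$ carries a primitive closed magnetic geodesic $\alpha_k$ which is a strict local minimizer of a local primitive $S_k$ of the action $1$-form $\eta_k$. Next, since $\pi_2(S^2) = \Z \neq 0$ and $e_0(L) = 0$ for the magnetic Lagrangian, Theorem \ref{generalizedlyusternikfettheorem} applies with $l = 2$ and produces, for almost every $k > 0$, a closed magnetic geodesic $\beta_k$ on $E^{-1}(k)$ obtained as a zero of $\eta_k$ corresponding to a mountain-pass minimax value $c^{\mathfrak{u}}(k)$ taken over maps $(B^1,S^0) \to (\mathcal{M}_0, M_0)$ realising a nontrivial element of $\pi_2(S^2)$. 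Intersecting ranges yields a full-measure set of $k \in (0,\tau_+(g,\sigma))$ on which both orbits are available.

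Geometric distinctness is then argued as follows. If $\beta_k$ were an iterate $\alpha_k^n$, then by Proposition \ref{persistenceoflocalminimizers} it would be a local minimizer of a local primitive of $\eta_k$, contradicting the strict mountain-pass character of $c^{\mathfrak{u}}(k)$. Conversely, if $\alpha_k$ were an iterate $\beta_k^m$, then primitivity of $\alpha_k$ forces $m = 1$ and hence $\alpha_k = \beta_k$, again contradicting the different variational type. Therefore $\alpha_k$ and $\beta_k$ are two geometrically distinct closed magnetic geodesics on $E^{-1}(k)$.

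The main obstacle I expect is rigorously establishing the strict mountain-pass character of $\beta_k$ in the non-exact setting, where $\eta_k$ admits only local primitives: one has to certify that $c^{\mathfrak{u}}(k)$ lies strictly above the action over the base $M_0$ of constant loops along admissible minimaxing families, so that $\beta_k$ genuinely fails to be a local minimizer of any local primitive of $\eta_k$. This requires phrasing the standard mountain-pass comparison through local primitives and combining it with the compactness of critical sequences established in Chapter \ref{chapter5}. Once this strict inequality is secured, the exclusion of $\beta_k$ from the iterates of $\alpha_k$ becomes a formal consequence of Proposition \ref{persistenceoflocalminimizers}, and the proposition follows for almost every $k \in (0,\tau_+(g,\sigma))$.
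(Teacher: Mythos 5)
Your two ingredients are exactly the ones the paper uses: the proposition is stated there as an immediate consequence of Taimanov's theorem together with Theorem \ref{generalizedlyusternikfettheorem}, and the distinctness of the two orbits is not spelled out. The point at issue is therefore the argument you add, and it is precisely the step you flag that does not go through as cited. Theorem \ref{generalizedlyusternikfettheorem} (via Propositions \ref{Struwe} and \ref{prp:ps2}) produces $\beta_k$ only as a limit point of critical sequences of $\eta_k$ with periods bounded and bounded away from zero, lying outside the neighbourhood $\mathcal W_\delta$ of the constant loops; it carries no information on the variational type of this zero. Moreover, since $\sigma$ is non-exact on $S^2$ we have $\int_{S^2}\sigma\neq 0$, so $\eta_k$ has no primitive on $\mathcal M_0$: the ``value'' attached to a zero depends on the path of loops used to reach it (two cappings differ by multiples of $\int_{S^2}\sigma$), hence there is no well-defined critical level at which to run the usual ``a mountain-pass point is not a local minimizer'' comparison. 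As written, nothing excludes that the zero detected by $c^{\mathfrak u}(k)$ is itself a local minimizer of a local primitive, or even equal to some iterate $\alpha_k^n$; this is exactly the difficulty the thesis acknowledges on $S^2$ when explaining why the infinitely-many statement is open there. So the contradiction with Proposition \ref{persistenceoflocalminimizers} is not available from the quoted results: this is a genuine gap, not a routine verification to be filled later.

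The gap can be closed, but it needs an explicit extra argument of Pucci--Serrin type phrased through local primitives. First reduce to the case in which the zeros of $\eta_k$ with period below the Struwe bound form finitely many isolated circles (otherwise one already has infinitely many geometrically distinct orbits, since only finitely many iterates of finitely many prime orbits have bounded period). If all these circles consisted of local minimizers, upgrade them to strict ones (a non-strict minimizer produces zeros off the circles, a contradiction), and then rerun the deformation argument of Proposition \ref{Struwe}, excluding by Proposition \ref{strictlocalminimizer} and an argument modelled on Lemma \ref{lem:notinwgeneral} that almost-maximal points of the minimaxing families enter small neighbourhoods of these circles; this contradicts Proposition \ref{prp:ps2} and shows that some zero with bounded period is not a local minimizer. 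Only at that point do Proposition \ref{persistenceoflocalminimizers} (transposed from the non-contractible setting of Chapter \ref{chapter6} to local primitives near contractible orbits on $S^2$) and the primitivity of $\alpha_k$ -- which you use tacitly and which comes from the embeddedness of Taimanov's minimizer, not from the mere statement that $\alpha_k$ is a local minimizer -- yield two orbits that are not iterates of each other.
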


This discrepance between $S^2$ and genus $\geq 1$ is actually not a huge surpise, since also in the case in which $\sigma$ is a symplectic form the strongest known result is that 
\textit{every} sufficiently low energy level carries either two or infinitely many closed magnetic geodesics \cite{Ben14b}. In this setting, Benedetti recently showed an example of ``low'' energy level with exactly two closed magnetic geodesics.

%%%%%%%%%%%%%%%%%%%%%%%%%%%%%%%%%%%%%%%%%%%%%%
%%%%%%%%%%%%%%%%%%%%%%%%%%%%%%%%%%%%%%%%%%%%%%
%%%%%%%%%%%%%%%%%%%%%%%%%%%%%%%%%%%%%%%%%%%%%%

\chapter{Preliminaries}
\label{Preliminaries}

In this chapter we recall the basic tools that will be needed in the rest of the thesis. Throughout the whole work we will assume $(M,g)$ to be a closed connected Riemannian manifold and $Q\subseteq M\times M$ to be a connected 
boundaryless submanifold of $M\times M$. We will be mainly interested in the cases $Q=\Delta$ diagonal in $M\times M$ and $Q=Q_0\times Q_1$ product of two closed submanifolds of $M$.

\vspace{2mm}

In Section \ref{lagrangianandhamiltoniandynamics} we introduce the so-called \textit{Lagrangian} and \textit{Hamiltonian formalisms}: we define Tonelli Lagrangians, the Euler-Lagrange 
equation, the Euler-Lagrange flow, the energy function and the Hamiltonian associated to a Tonelli Lagrangian, the Hamiltonian flow and briefly discuss their properties and relations.

In Section \ref{ahilbertmanifoldofloops} we define the Hilbert manifold 
$$H^1_Q([0,1],M)$$
of paths  ``starting at'' and ``ending in'' $Q$ and study its topology, with particular attention to its connected components, in the two aforementioned cases. 
In the first one we readily see that the connected components of 
$$H^1_\Delta([0,1],M)\ =\ H^1(\T,M)$$
correspond to the conjugacy classes in $\pi_1(M)$, whilst in the latter one we show that there exists an equivalence relation $\sim_{Q_0,Q_1}$ 
on $\pi_1(M)$, which depends only on the topology of $Q_0$ and $Q_1$ as embedded submanifolds of $M$, such that the connected components of $H^1_Q([0,1],M)$ are in one to one correspondence with the set of equivalence classes
$$\bigslant{\pi_1(M)}{\sim_{Q_0,Q_1}}\, .$$

In Sections \ref{thelagrangianactionfunctional} and \ref{conormalboundaryconditions} we move to the study of the Lagrangian action functional. We first define it and discuss its regularity properties: 
we show that $\A_L$ is continuously differentiable with locally Lipschitz and Gateaux-differentiable differential. We then show that the critical points of 
$\A_L$ restricted to $H^1_Q([0,1],M)$ correspond to the Euler-Lagrange orbits that satisfy the conormal boundary conditions (\ref{lagrangianformulation1}). 

Finally, in Section \ref{theminimaxprinciple}, we recall the celebrated \textit{minimax theorem} \ref{minimaxtheorem}, which provides a very powerful tool to detect critical points (of functionals on Hilbert manifolds) that are not necessarily global or local minimizers. 

%%%%%%%%%%%%%%%%%%%%%%%%%%%%%%%%%%%%%%%%%%%%%%%%%%%%%%%%%%

\section{Lagrangian and Hamiltonian dynamics.}
\label{lagrangianandhamiltoniandynamics}

\begin{defn}
A (\textit{autonomous}) $\mathsf{Tonelli\ Lagrangian}$ on $M$ is a smooth function $L:TM\rightarrow \R$ satisfying the following conditions:
\begin{enumerate}
\item $L$ is fiberwise $C^2$-$\mathsf{strictly\ convex}$, i.e. 
$$d_{vv} L(q,v)\ >\ 0 \, , \ \ \ \ \forall \ (q,v)\in TM\, ,$$
where $d_{vv}L$ denotes the fiberwise second differential of $L$.
\item $L$ has $\mathsf{superlinear\ growth}$ on each fiber, meaning
$$\lim_{\|v\|_q\rightarrow +\infty} \frac{L(q,v)}{\|v\|_q}\ =\ +\infty\, .$$  
\end{enumerate}
\label{definitiontonellilagrangian}
\end{defn}

As usual $\pi:TM\rightarrow M$ denotes the tangent bundle of $M$. From the superlinearity condition it readily follows that Tonelli Lagrangians are bounded from below. We will use equivalently the notations
$$\frac{\partial L}{\partial q}\ =\ L_q\, ,\ \ \ \ \frac{\partial L}{\partial v}\ =\ L_v$$ 
for the partial derivatives of $L$ with respect to $q$ and $v$ in local coordinates. The \textit{Euler-Lagrange equation} associated to $L$ is, in local coordinates, given by
$$\dt \frac{\partial L}{\partial v}(q,\dot{q})\ =\ \frac{\partial L}{\partial q}(q,\dot{q})\, .$$

The convexity hypothesis ($L_{vv}$ invertible) implies that the Euler-Lagrange equation can also be seen as a first order differential equation on $TM$
$$\left\{ \begin{array}{l} \dot{q}=v; \\ \\ \dot{v}= (L_{vv})^{-1}(L_q-L_{vq}\cdot v); \end{array}\right.$$

Hence, the convexity hypothesis allows to define a vector field $X_L$ on $TM$, called the \textit{Euler-Lagrange vector field}, such that the solutions of 
$$\dot{u}(t)=X_L(u(t))\, , \ \ \ \ \ \ u(t)=(q(t),\dot{q}(t))$$ 
are precisely the curves satisfying the Euler-Lagrange equation. The flow of $X_L$ is called the \textit{Euler-Lagrange flow}. To any Tonelli Lagrangian $L$ we can associate an energy function defined by 
\begin{equation}
E:TM\longrightarrow \R\, , \ \ \ \ E(q,v):=\ \frac{\partial L}{\partial v}(q,v)\cdot v - L(q,v) \, ,
\label{energyfunction}
\end{equation}
which is an integral of the motion, i.e. an invariant function for the Euler-Lagrange flow. Indeed, if $q(t)$ satisfies the Euler-Lagrange equation, then 
\begin{equation*}
\dt \ E(q(t),\dot{q}(t))\ = \ \left [\dt \left (\frac{\partial L}{\partial v}\right ) - \frac{\partial L}{\partial q}\right ] (q(t),\dot{q}(t))\ =\ 0\, .
\end{equation*}

Therefore, the energy level sets of $E$ are invariant under the Euler-Lagrange flow. Furthermore, the function $E$ satisfies the following properties:
\begin{itemize}
\item $E(q,v)$ is fiberwise $C^2$-strictly convex and superlinear; in particular, since $M$ is compact, the energy level sets $E^{-1}(k)$ are compact.
\item For any $q\in M$, the restriction of $E$ to $T_qM$ achieves its minimum at $v=0$.
\item The point $(\bar{q},0)$ is singular for the Euler-Lagrange flow  if and only if $(\bar{q},0)$ is a critical point of $E$.
\end{itemize}

Since the energy level sets are compact, the Euler-Lagrange flow is complete, meaning that every maximal integral curve for $X_L$  has $\R$ as domain of definition.

The main example of Tonelli Lagrangians is given by the so called \textit{electro-magnetic Lagrangians}, that is functions of the form
\begin{equation}
L(q,v)\ =\ \frac{1}{2}\|v\|_q^2 + \theta_q(v) - V(q)\, ,
\label{magneticlagrangian}
\end{equation}
with $\theta$ smooth $1$-form on $M$ and $V$ smooth function. The reason of this name is that it models the motion of a unity mass and charge particle under the effect of the magnetic field $\sigma=d\theta$ and the potential energy $V(q)$. 
When $V=0$, the Euler-Lagrange flow associated to the Lagrangian $L$ in (\ref{magneticlagrangian}) is called the \textit{magnetic flow} of the pair $(g,\sigma)$. This modern dynamical approach to magnetic flows was first introduced by Arnold (cf. \cite{Arn61}); 
magnetic flows present many interesting phenomena that have been intensively studied by various mathematicians, as for instance Novikov and Taimanov (cf. \cite{Nov82,Tai83,Tai92a,Tai92b,Tai93}), and are still today object of ongoing research 
(see \cite{CMP04,Mer10,Schn11,Schn12a,Schn12b,AMP13,AMMP14,GGM14,AB14,AB15a} for recent developments in this context).

It is easy to see that for electro-magnetic Lagrangians the energy is given by
\begin{equation}
E(q,v) \ = \ \frac12 \|v\|_q^2 + V(q)\, .
\label{magneticenergy}
\end{equation}

Given a Tonelli Lagrangian $L:TM\rightarrow \R$, we define the corresponding Hamiltonian $H:T^*M\rightarrow \R$ as the Fenchel transform of $L$, that is
\begin{equation}
H(q,p):=\ \max_{v\in T_qM} \ \Big [ \langle p,v\rangle_q- \ L(q,v) \Big ]
\label{definitiontonellihamiltonian}
\end{equation}
where $\langle\cdot,\cdot\rangle_q$ denotes the duality pairing between the tangent and the cotangent space. One can prove (cf. \cite[Section 31]{Roc70}) that the Hamiltonian defined above  is 
a smooth function, finite everywhere, superlinear and $C^2$-strictly convex in each fiber; we call such a function a \textit{Tonelli Hamiltonian}.
Recall that the cotangent bundle $T^*M$ is naturally equipped with a structure of symplectic manifold given by the canonical symplectic form $\omega:=d\lambda$, where $\lambda$ is the \textit{Liouville form} on $T^*M$ defined by
$$\lambda_p(\zeta)=p\, \big [d\pi^* (\zeta,p)[\zeta]\big ]\,    \ \ \ \ \forall \ \zeta\in T_p(T^*M)\, .$$

Here $\pi^*:T^*M\rightarrow M$ is the canonical projection; a local chart $q=(q_1,...,q_n)$ of $M$ induces a local chart 
$(q,p)=(q_1,..,q_n,p_1,..,p_n)$  of $T^*M$ writing $p\in T^*M$ as $p=\sum_i p_idq_i$. In these coordinates the forms $\lambda$ and $\omega$ are given by 
$$\lambda \ = \  p\, dq \ =\ \sum_{i=1}^n p_i\, dq_i\, ,  \ \ \ \ \omega \ = \ dp\wedge dq \ =\  \sum_{i=1}^n dp_i \wedge dq_i\, .$$
The \textit{Hamiltonian vector field} $X_H$ associated to $H$ is defined by
\begin{equation}
\imath_\omega X_H=\omega(X_H,\cdot )=-dH\, ,
\label{eq1.2.1}
\end{equation}
where as usual $\imath_\omega X$ denotes the contraction of the form $\omega$ along the vector field $X$. In local charts, $X_H$ defines the system of differential equations
\begin{equation*}
\left \{\begin{array}{l} \dot{q}=\ \  H_p; \\ \\ \dot{p}=- H_q; \end{array}\right .
\end{equation*}
where $H_p$ and $H_q$ are the partial derivatives of $H$ with respect to $q$ and $p$. The Hamiltonian flow (that is the flow of the vector field $X_H$) preserves $H$, since
$$\dt H \ = \  H_q \, \dot{q} + H_p \, \dot{p}\ =\ 0\, . $$

One can also show that the Hamiltonian flow preserves the symplectic form $\omega$ and it is therefore a flow of symplectomorphisms (see for instance \cite{HZ94}). It is clear from the definition of $H$  that the Fenchel inequality
$$\langle p,v\rangle_q \ \leq \ L(q,v) + H(q,p) \, , \ \ \ \ \forall\, (q,v)\in TM, \ \ \forall \, (q,p)\in T^*M$$
holds. This inequality plays a crucial role in the study of Lagrangian and Hamiltonian dynamics; in particular, equality holds if and only if $p=L_v(q,v)$. Therefore one can define the \textit{Legendre transform} as
$$\mathcal L:TM \longrightarrow T^*M, \ \ \ (q,v) \longmapsto \big ( q, L_v(q,v)\big )$$
which is a diffeomorphism between the tangent and the cotangent bundle (cf. \cite{Roc70}). A simple computation using the Legendre transform shows that the Hamiltonian associated to the electro-magnetic Lagrangian in (\ref{magneticlagrangian}) is given by 
\begin{equation}
H(q,p)\ =\ \frac{1}{2}\, \|p-\theta_q\|^2 \ + \ V(q)\, . 
\label{magnetichamiltonian}
\end{equation}
The importance of the Legendre transform is explained by the following

\begin{lemma}
The Euler-Lagrange flow on $TM$ associated to $L$ and the Hamiltonian flow on $T^*M$ associated to $H$ are congiugated via the Legendre transform. 
\end{lemma}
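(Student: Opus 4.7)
The plan is to show that the Legendre transform $\mathcal{L}:TM\to T^*M$ intertwines the two vector fields, i.e. $d\mathcal{L}(X_L) = X_H \circ \mathcal{L}$; since $\mathcal{L}$ is a diffeomorphism, uniqueness of integral curves then forces the flows to be conjugate. Working in local coordinates, where $\mathcal{L}(q,v) = (q, L_v(q,v))$, I would first unpack $X_L$ explicitly from the Euler-Lagrange equation rewritten as a first-order system: $X_L(q,v) = \bigl(v, (L_{vv})^{-1}(L_q - L_{vq}\cdot v)\bigr)$.

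Next I would compute $d\mathcal{L}(X_L)$. Since the $q$-component of $d\mathcal{L}$ is the identity and the $p$-component is $L_{vq}\cdot (\cdot)_q + L_{vv}\cdot (\cdot)_v$, a direct substitution gives
$$d\mathcal{L}(X_L)(q,v) \;=\; \bigl(v,\; L_{vq}\cdot v + L_q - L_{vq}\cdot v\bigr) \;=\; (v, L_q(q,v)).$$
On the other hand, $X_H(q,p) = (H_p(q,p), -H_q(q,p))$. So the matching reduces to proving, at points related by $p = L_v(q,v)$, the two identities
$$H_p(q,p) = v, \qquad H_q(q,p) = -L_q(q,v).$$

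These are the Legendre–Fenchel reciprocal relations. The natural tool is the Fenchel equality, which holds precisely at the pairing $p = L_v(q,v)$:
$$H(q, L_v(q,v)) \;=\; L_v(q,v)\cdot v - L(q,v).$$
Differentiating this identity with respect to $v$ (with $q$ frozen) yields $H_p(q,L_v(q,v))\cdot L_{vv}(q,v) = L_{vv}(q,v)\cdot v$, and invertibility of $L_{vv}$ (the strict convexity in Definition \ref{definitiontonellilagrangian}) gives $H_p = v$. Differentiating the same identity with respect to $q$ (with $v$ frozen), substituting the just-established $H_p = v$, and cancelling the $L_{vq}\cdot v$ terms, produces $H_q = -L_q$.

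The main technical step is the differentiation of the Fenchel identity, but nothing beyond strict convexity is needed: once $L_{vv}$ is known to be invertible, both reciprocal relations fall out by direct chain rule. A cleaner, coordinate-free variant of the same argument would go through the Poincaré–Cartan form $\mathcal{L}^*\lambda$, but the local computation above is more than adequate here and already shows $d\mathcal{L}(X_L) = X_H\circ \mathcal{L}$, whence $\mathcal{L}\circ \phi^L_t = \phi^H_t\circ \mathcal{L}$.
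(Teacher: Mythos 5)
Your proof is correct. Note that the paper itself gives no argument for this lemma (it is stated as a classical fact, with the Legendre transform's diffeomorphism property credited to \cite{Roc70}), so there is nothing to compare against; your computation is the standard one. The two reciprocity relations $H_p(q,L_v(q,v))=v$ and $H_q(q,L_v(q,v))=-L_q(q,v)$, obtained by differentiating the Fenchel equality and using the invertibility of $L_{vv}$, do exactly the required work, and together with $d\mathcal L(X_L)=(v,L_q)$ and the sign convention $\imath_\omega X_H=-dH$ (so $X_H=(H_p,-H_q)$, as in the paper) they show the two vector fields are $\mathcal L$-related; conjugacy of the flows then follows from uniqueness of integral curves, since $\mathcal L$ is a diffeomorphism.
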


\noindent By the very definition of the Legendre transform $\mathcal L$ and (\ref{definitiontonellihamiltonian}) we also have 
$$H\circ \mathcal L(q,v) \ = \ \langle L_v(q,v),v\rangle _q -\ L(q,v) \ =\ E(q,v)\, .$$

Therefore one can equivalently study the Euler-Lagrange flow or the Hamiltonian flow, obtaining in both cases information on the dynamics of the system. 

Each of these equivalent approaches will provide different tools and advantages, which may be very useful to understand the dynamical properties of the system. 

For instance, the tangent space is the natural setting for the classical calculus of variations (see \cite{Con06} and \cite{Abb13}) and for Mather's and Ma$\tilde{\text{n}}$\'e's theories
(see \cite{Mat91,Mat04,FM94,Man92,Man96,Man97,Car95} and also the book \cite{CI99} and the beautiful overview paper \cite{Sor10}). 

On the other hand, the cotangent bundle is equipped with a canonical symplectic structure which allows one to use several symplectic topological tools, coming from the study of Lagrangian graphs, Hofer's geometry, Floer homology, etc. 
A particularly fruitful approach is the so called Hamilton-Jacobi method or \textit{Weak KAM theory}, which is concerned with the study of existence of (sub)solutions of the Hamilton-Jacobi equation (see for instance 
\cite{Fat97a,Fat97b,Fat98,Fat09} and \cite[chapter 6]{Sor10}) and represents, in a certain sense, the functional analytical counterpart of the aforementioned variational approach. 

\vspace{3mm}

In this thesis we will be interested into proving the existence of periodic orbits of the Euler-Lagrange flow and, more generally, of orbits connecting two given submanifolds of $M$ and satisfying suitable boundary conditions, called \textit{conormal boundary conditions},
on a given energy level $E^{-1}(k)$. The first step in this direction is to introduce the tools we need, namely the Hilbert manifold of paths $H^1_Q([0,1],M)$, the Lagrangian action functional $\A_L$ and the minimax principle.

%%%%%%%%%%%%%%%%%%%%%%%%%%%%%%%%%%%%%%%%%%%%%%%%%%%%%%%%%%

\section{A Hilbert manifold of paths.}
\label{ahilbertmanifoldofloops}

In this section we introduce the Hilbert manifold of paths we will need in the following chapters and study its properties, with particular attention to its connected components. 
Let us denote by $H^1([0,1],M)$ the set of absolutely continuous curves $x:[0,1]\rightarrow M$ with square-integrable weak derivative 
$$H^1([0,1],M) := \ \left \{ x:[0,1]\rightarrow M \, \Big |\, x \ \text{abs. continuous}\ ,\ \int_0^1 \|x'(s)\|^2 ds \ < \ +\infty\right \}\, .$$

It is a well-known fact that this set has a natural structure of Hilbert manifold modelled over the Hilbert space $H^1([0,1],\R^n)$; for further reference it is useful to 
recall here the construction of this structure (see [AS09] for the details). Let 
$$\phi : [0,1]\times U\longrightarrow M$$
be a time-depending local coordinates system, that is a smooth function $\phi$ defined on $[0,1]\times U$, where $U\subseteq \R^n$ is an open subset, such that for any $t\in [0,1]$ 
the map $\phi(t,\cdot)$ is a diffeomorphism on the open subset $\phi(\{t\}\times U)$ of $M$. It is often useful to assume the element $(U,\phi)$ to be \textit{bi-bounded}, meaning that $U$ is bounded and all the derivatives of 
$\phi$ and of the map $(t,q)\longmapsto \phi(t,\cdot)^{-1}(q)$ are bounded. 

Observe that the continuity of the inclusion $H^1\hookrightarrow C^0$ implies that the set $H^1([0,1],U)$ (that is the set of curves $x\in H^1([0,1],\R^n)$ whose image is contained in
$U$) is open in $H^1([0,1],\R^n)$. Hereafter we assume all local coordinate systems $(U,\phi)$ to be bi-bounded and time-depending; any such $(U,\phi)$ induces an injective map 
$$\phi_* : H^1([0,1],U)\longrightarrow H^1([0,1],M)\, ,\ \ \ \ \phi_*(x):= \phi(\cdot,x(\cdot))\, .$$ 

The Hilbert manifold structure on $H^1([0,1],M)$ is defined by declaring the family of maps $\phi_*$ to be an atlas.
The tangent space of $H^1([0,1],M)$ at $x$ is naturally identified with the space of $H^1$-sections of $x^*(TM)$; therefore, we can define a Riemannian metric on $H^1([0,1],M)$ by setting 
\begin{equation}
(g_{H^1})_x(\zeta,\eta) := \int_0^1 \Big [ g(\zeta,\eta)+g(\nabla_t \zeta,\nabla_t \eta) \Big ]\, dt
\label{riemannianmetric}
\end{equation}
for all $\zeta,\eta\in T_xH^1([0,1],M)$, where $\nabla_t$ denotes the Levi-Civita covariant derivative along $x$. The distance induced by this Riemannian metric is compatible with the topology of $H^1([0,1],M)$ and 
 $H^1([0,1],M)$ is complete with respect to it. 

If $Q\subseteq M\times M$ is a smooth submanifold then the set 
$$H^1_Q([0,1],M) := \Big \{x\in H^1([0,1],M)\ \Big |\  (x(0),x(1))\in Q\Big \}$$
is a smooth submanifold, being the inverse image of $Q$ by the smooth submersion
$$H^1([0,1],M)\longrightarrow M\times M\, ,\ \ \ \ x\longmapsto (x(0),x(1))\, .$$ 

Actually, a smooth atlas for $H^1_Q([0,1],M)$ can be build by fixing a linear subspace $W$ of $\R^n\times \R^n$ with $\dim W=\dim Q$, by considering time-depending local coordinate systems $(U,\phi)$ such that $0\in U$ and 
$(\phi(0,q),\phi(1,q))\in Q$ for every $q\in U\cap W$, and by restricting the map $\phi_*$ to the intersection of the open set $H^1([0,1],U)$ with the closed linear subspace 
$$H^1_W([0,1],\R^n) = \Big \{x\in H^1([0,1],\R^n)\ \Big |\ (x(0),x(1))\in W\Big \}\, .$$

In the present work we are interested mainly in the particular case $Q=Q_0\times Q_1$, with $Q_0,Q_1\subseteq M$ closed submanifolds; in this case the Hilbert manifold $H^1_Q([0,1],M)$ is nothing else but the space 
of $H^1$-paths in $M$ connecting $Q_0$ to $Q_1$. Later on we will also deal with $Q=\Delta$ diagonal in $M\times M$; in this case we clearly have that
$$H^1_\Delta ([0,1],M) \ = \ H^1(\T,M)$$ 
is the space of 1-periodic $H^1$-loops on $M$. For our purposes we need to know more about the topology of the Hilbert manifold $H^1_Q([0,1],M)$, in particular about its connected components. It is a well known fact that the inclusions 
$$C^\infty_Q ([0,1],M) \ \hookrightarrow \ H^1_Q([0,1],M) \ \hookrightarrow \ C^0_Q([0,1],M)$$ 
are dense homotopy equivalences (cf. \cite{Abb13}). Here the indices $Q$ mean that we are only considering paths ``starting at'' and ``ending in'' $Q$. 
This implies that the connected components of $H^1_\Delta([0,1],M)= H^1(\T,M)$ are in one to one correspondence with the conjugacy classes in $\pi_1(M)$. 

Let us now consider two closed submanifolds $Q_0,Q_1\subseteq M$. Without loss of generality we may suppose $Q_0,Q_1$ connected, as otherwise we just repeat the complete procedure componentwise.
To underline the particular nature of the submanifolds $Q\subseteq M\times M$ we are looking at, let us denote $C^0_Q([0,1],M)$ with

\begin{equation}
\Omega_{Q_0,Q_1}(M):= \ \Big \{x\in C^0([0,1],M) \ \Big |\ x(0)\in Q_0\, ,\ x(1)\in Q_1\Big \}\, .
\label{Q0Q1}
\end{equation}

For any pair of points $(q_0,q_1)\in Q_0\times Q_1$ we also define $\Omega_{q_0,q_1}(M)$ to be the subspace of $\Omega_{Q_0,Q_1}(M)$ 
given by paths in $M$ which start at $q_0$ and end in $q_1$.

\begin{center}
\includegraphics[height=35mm]{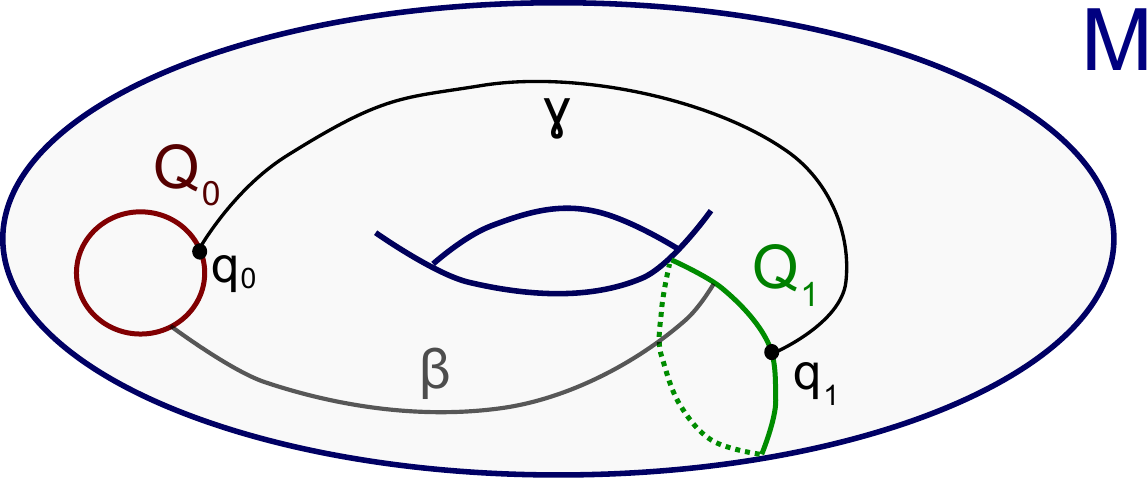}
\end{center}

The space $\Omega_{q_0,q_1}(M)$ is homotopy equivalent to the space of continuous loops based at $q_0$; in fact, given any path $\gamma:[0,1]\rightarrow M$ connecting $q_0$ to $q_1$, the map 
$$\Omega_{q_0}(M)\longrightarrow \Omega_{q_0,q_1}(M)\, ,\ \ \ \ \alpha \longmapsto \gamma\# \alpha$$ 
is a homotopy equivalence with homotopy inverse given by 
$$\Omega_{q_0,q_1}(M) \longrightarrow \Omega_{q_0}(M)\, ,\ \ \ \ \beta \longmapsto \gamma^{-1}\# \beta\, .$$
In particular we have
\begin{equation}
\pi_0(\Omega_{q_0,q_1}(M))\ \cong \ \pi_0(\Omega_{q_0}(M))\ \cong \ \pi_1(M,q_0)\, .
\label{legameconp1}
\end{equation}

Now we want to study the homotopy type of $\Omega_{Q_0,Q_1}(M)$. Hereafter we suppose $M$ fixed and write simply $\Omega_{q_0,q_1}$, $\Omega_{Q_0,Q_1}$ instead of $\Omega_{q_0,q_1}(M)$, $\Omega_{Q_0,Q_1}(M)$ respectively.

To this purpose we define the following equivalence relation on $\Omega_{Q_0,Q_1}$:  
$$\sigma \sim_{Q_0,Q_1} \sigma' \ \ \Longleftrightarrow \ \ \exists \ \alpha \in \imath_*(\pi_1(Q_0,q_0))\, ,\ \beta \in \imath_*(\pi_1(Q_1,q_1)) \ \ \text{s.t.} \ \ \sigma' \ \sim \ \beta \# \sigma \# \alpha\, ,$$
where $\imath:Q_0\rightarrow M$, $\imath:Q_1\rightarrow M$ denote the canonical inclusions.

\begin{lemma}
Let $M$ be a closed connected manifold, $Q_0,Q_1\subseteq M$ be two closed connected submanifolds, $q_0\in Q_0$ and $q_1\in Q_1$. Then
\begin{equation}
\pi_0(\Omega_{Q_0,Q_1}) \ \cong \ \bigslant{\pi_0(\Omega_{q_0,q_1})}{\sim_{Q_0,Q_1}}\, .
\label{primaformulazione}
\end{equation}
\label{componenticonnesseomegaqoq1}
\end{lemma}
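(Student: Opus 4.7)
The plan is to prove the isomorphism by constructing a natural bijection from the quotient on the right-hand side to $\pi_0(\Omega_{Q_0,Q_1})$ induced by the inclusion $\imath:\Omega_{q_0,q_1}\hookrightarrow\Omega_{Q_0,Q_1}$. More precisely, I define
\[
\Phi : \bigslant{\pi_0(\Omega_{q_0,q_1})}{\sim_{Q_0,Q_1}} \longrightarrow \pi_0(\Omega_{Q_0,Q_1}), \qquad [\sigma]_{\sim_{Q_0,Q_1}} \longmapsto [\imath\circ\sigma],
\]
and then verify that $\Phi$ is well-defined, surjective and injective.

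For surjectivity I would argue as follows. Pick any $\tau\in\Omega_{Q_0,Q_1}$. Since $Q_0$ is path-connected, there exists a continuous path $\alpha_0:[0,1]\to Q_0$ with $\alpha_0(0)=q_0$ and $\alpha_0(1)=\tau(0)$; similarly choose $\alpha_1:[0,1]\to Q_1$ from $\tau(1)$ to $q_1$. The concatenation $\alpha_0\#\tau\#\alpha_1$ lies in $\Omega_{q_0,q_1}$, and the obvious affine reparametrisation gives a continuous path in $\Omega_{Q_0,Q_1}$ from $\tau$ to this concatenation (slide the endpoints of $\tau$ along $\alpha_0$ and $\alpha_1$). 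Hence $[\tau]\in\mathrm{Image}(\Phi)$.

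For well-definedness and injectivity I would show the key equivalence: two paths $\sigma,\sigma'\in\Omega_{q_0,q_1}$ lie in the same component of $\Omega_{Q_0,Q_1}$ if and only if $\sigma\sim_{Q_0,Q_1}\sigma'$. The ``$\Leftarrow$'' direction uses the sliding construction again: any representative of $\alpha\in\imath_*\pi_1(Q_0,q_0)$ respectively $\beta\in\imath_*\pi_1(Q_1,q_1)$ realises, via the endpoint sliding along the loop, a path in $\Omega_{Q_0,Q_1}$ connecting $\sigma$ to $\beta\#\sigma\#\alpha$, and any fixed-endpoint homotopy in $\Omega_{q_0,q_1}\subseteq\Omega_{Q_0,Q_1}$ takes care of the remaining relation $\sigma'\sim\beta\#\sigma\#\alpha$. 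For ``$\Rightarrow$'', suppose $H:[0,1]\times[0,1]\to M$ is a homotopy with $H(\cdot,0)=\sigma$, $H(\cdot,1)=\sigma'$, $H(0,s)\in Q_0$ and $H(1,s)\in Q_1$ for all $s$. The trace $\alpha(s):=H(0,s)$ is a path in $Q_0$ from $q_0$ to $q_0$, hence represents a class in $\imath_*\pi_1(Q_0,q_0)$; analogously $\beta(s):=H(1,s)$ represents a class in $\imath_*\pi_1(Q_1,q_1)$. A standard reparametrisation argument on the square (the ``corner-pushing'' lemma for homotopies of paths with moving endpoints) then produces a fixed-endpoint homotopy in $M$ between $\sigma'$ and $\beta\#\sigma\#\alpha$, giving $\sigma\sim_{Q_0,Q_1}\sigma'$.

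The main obstacle is the last step, namely extracting an actual fixed-endpoint homotopy between $\sigma'$ and the concatenation $\beta\#\sigma\#\alpha$ from the given homotopy $H$ with moving endpoints. One has to write down an explicit reparametrisation of the unit square that reads off, in the right order, the three paths $\beta$, $\sigma$ and $\alpha$ along three sides and the path $\sigma'$ along the fourth, and check continuity carefully at the corners; this is a purely combinatorial but slightly tedious piece of point-set topology. Once this is done, combining the two directions shows that the fibres of the map $\pi_0(\Omega_{q_0,q_1})\to\pi_0(\Omega_{Q_0,Q_1})$ induced by inclusion are exactly the $\sim_{Q_0,Q_1}$-equivalence classes, yielding the claimed isomorphism \eqref{primaformulazione}.
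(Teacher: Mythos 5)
Your proof is correct, but it takes a genuinely different route from the paper. The paper proves the lemma by observing that the endpoint evaluation map $p:\Omega_{Q_0,Q_1}\to Q_0\times Q_1$, $x\mapsto(x(0),x(1))$, is a fibration with fiber $\Omega_{q_0,q_1}$, and then reads the statement off the tail of the associated exact homotopy sequence: path-connectedness of $Q_0\times Q_1$ gives surjectivity of $i_*$ on $\pi_0$, and the image of the connecting map $\pi_1(Q_0\times Q_1)\to\pi_0(\Omega_{q_0,q_1})$ is identified geometrically with the classes $\beta\#\delta\#\alpha$, $\alpha\in\imath_*\pi_1(Q_0,q_0)$, $\beta\in\imath_*\pi_1(Q_1,q_1)$, so exactness yields the quotient description. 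Your argument replaces the fibration machinery by hands-on constructions: surjectivity by sliding the endpoints of an arbitrary $\tau$ along paths in $Q_0$ and $Q_1$, and the identification of the fibres of $\pi_0(\Omega_{q_0,q_1})\to\pi_0(\Omega_{Q_0,Q_1})$ by the standard moving-endpoint lemma (a homotopy with endpoints tracing loops $\alpha\subseteq Q_0$, $\beta\subseteq Q_1$ yields a fixed-endpoint homotopy between $\sigma'$ and the appropriate concatenation of $\alpha$, $\sigma$, $\beta$). This is precisely the geometric content of the paper's description of $\operatorname{Im}\tilde\partial$, so the two proofs overlap where it matters; the difference is packaging. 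Your version is more elementary and self-contained (it needs only path-connectedness of $Q_0,Q_1$ and the reparametrisation-of-the-square lemma, which is standard and poses no real obstacle, cf.\ the change-of-basepoint argument in homotopy theory), whereas the paper's version requires knowing that $p$ is a fibration but then gets the whole long exact sequence, hence information in all degrees, essentially for free. One small bookkeeping remark: depending on orientation conventions your traced loops may enter as $\alpha^{-1}$ or $\beta^{-1}$ rather than $\alpha$, $\beta$, but since $\imath_*\pi_1(Q_0,q_0)$ and $\imath_*\pi_1(Q_1,q_1)$ are subgroups, hence closed under inversion, this does not affect the equivalence classes and your conclusion stands.
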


\vspace{-6mm}

\begin{proof}
Fix any path  $\delta\in \Omega_{q_0,q_1}$. Associated to the pair $(\Omega_{Q_0,Q_1},\Omega_{q_0,q_1})$  we have an exact sequence in relative homotopy (we refer to the appendix \ref{homotopytheory} for a quick reminder on the general facts 
about homotopy theory needed here)
$$
\begin{array}{c} ... \rightarrow \pi_n(\Omega_{q_0,q_1},\delta) \stackrel{i_*}{\longrightarrow} \pi_n(\Omega_{Q_0,Q_1},\delta) \stackrel{j_*}{\longrightarrow} \pi_n(\Omega_{Q_0,Q_1},\Omega_{q_0,q_1},\delta) \stackrel{\partial}{\longrightarrow} 
\pi_{n-1}(\Omega_{q_0,q_1},\delta ) \rightarrow ...\\ \\
... \rightarrow \pi_1(\Omega_{Q_0,Q_1},\Omega_{q_0,q_1},\delta) \stackrel{\partial}{\longrightarrow} \pi_0(\Omega_{q_0,q_1}) \stackrel{i_*}{\longrightarrow} \pi_0(\Omega_{Q_0,Q_1})\ \ \ \ \ \ \ 
\end{array}$$
where $i_*,j_*$ are the maps induced respectively by the natural inclusions 
$$i: (\Omega_{q_0,q_1},\delta) \longrightarrow (\Omega_{Q_0,Q_1},\delta)\, ,\ \ \ j:(\Omega_{Q_0,Q_1},\{\delta\},\delta) \longrightarrow (\Omega_{Q_0,Q_1},\Omega_{q_0,q_1},\delta)$$
while $\partial$ comes from restricting maps 
$$(I^n,\partial I^n,J^{n-1})\longrightarrow (\Omega_{Q_0,Q_1},\Omega_{q_0,q_1},\delta)$$
to $I^{n-1}$. Here $I^n=[0,1]^n$ denotes the $n$-dimensional cube, $\partial I^n$ its boundary, $I^{n-1}$ the face of $I^n$ with last coordinate equal to zero and $J^{n-1}$ is as in (\ref{jn-1}) the closure of the remaining 
faces of $I^n$. Moreover, the function 
$$p:\Omega_{Q_0,Q_1} \longrightarrow Q_0\times Q_1\, ,\ \ \ \ x \longmapsto (x(0),x(1))\, ,$$
which maps any path $\gamma\in \Omega_{Q_0,Q_1}$ into the pair $(x(0),x(1))$ given by its starting and ending points, is a fibration with fiber $p^{-1}\big ( (q_0,q_1)\big ) = \Omega_{q_0,q_1}$
$$
\xymatrix{
\Omega_{q_0,q_1} \ \ar@{^{(}->}[r] & \ \Omega_{Q_0,Q_1}\ar[d]^p & \  \\ 
& Q_0\times Q_1 & \ } 
$$

Therefore, fixed a base point $(q_0,q_1)\in Q_0\times Q_1$ and a path $\delta\in \Omega_{q_0,q_1}=p^{-1}((q_0,q_1))$  in the corresponding fiber, we have an exact sequence 
$$\begin{array}{c}
... \rightarrow \pi_{n}(\Omega_{q_0,q_1},\delta)\stackrel{i_*}{\longrightarrow} \pi_{n}(\Omega_{Q_0,Q_1},\delta) \stackrel{\tilde{\jmath}_*}{\longrightarrow} \pi_{n}(Q_0\times Q_1,(q_0,q_1))\stackrel{\tilde{\partial}}{\longrightarrow} \pi_{n-1}(\Omega_{q_0,q_1},\delta) 
\rightarrow ...  \\ \\
... \rightarrow \pi_1(Q_0\times Q_1,(q_0,q_1)) \stackrel{\tilde{\partial}}{\longrightarrow} \pi_0(\Omega_{q_0,q_1},\delta) \stackrel{i_*}{\longrightarrow} \pi_0(\Omega_{Q_0,Q_1},\delta)\rightarrow 0
\end{array}$$
induced by the exact sequence in relative homotopy of the pair $(\Omega_{q_0,q_1},\Omega_{Q_0,Q_1})$. The zero at the end comes from the fact that the base space $Q_0\times Q_1$ is path-connected.
To obtain this new exact sequence we have used the fact that 
$$p_*:\pi_n(\Omega_{Q_0,Q_1},\Omega_{q_0,q_1},\delta)\longrightarrow \pi_n(Q_0\times Q_1,(q_0,q_1))\, ,\ \ \ [f]\longmapsto [p\circ f]$$ 
is an isomorphism for any $n\geq 1$. In other words, for any $f\in \pi_n(Q_0\times Q_1,(q_0,q_1))$ there exists a unique $\tilde{f}$ in the relative homotopy group 
$\pi_n(\Omega_{Q_0,Q_1},\Omega_{q_0,q_1},\delta)$ such that $p_*(\tilde{f})=f$; hence $\tilde{\partial}$ is defined by restricting $\tilde{f}$ to $I^{n-1}$
$$\tilde{\partial} [f] := \ \partial [\tilde{f}] : (I^{n-1},\partial I^{n-1})\longrightarrow (\Omega_{q_0,q_1},\delta)\, .$$ 
In the particular case $n=1$ an element $f\in \pi_1(Q_0\times Q_1,(q_0,q_1))$ is represented by 
$$f:(I,\partial I)\longrightarrow (Q_0\times Q_1,(q_0,q_1))$$
and there exists a unique $\tilde{f}\in \pi_1(\Omega_{Q_0,Q_1},\Omega_{q_0,q_1},\delta)$, represented by 
$$\tilde{f}:(I,\partial I,0)\longrightarrow (\Omega_{Q_0,Q_1},\Omega_{q_0,q_1},\delta)\, ,$$
such that $p_*(\tilde{f})=f$. In this case 
$$\tilde{\partial}[f] \ =\ \partial[\tilde{f}]: \{1\}\longrightarrow \Omega_{q_0,q_1}$$ 
is an element $\gamma\in \Omega_{q_0,q_1}$, that is a path from $q_0$ to $q_1$; the map $\tilde{f}$ can be seen as 
$$\tilde{f}:[0,1]\times [0,1] \longrightarrow M$$
such that $\tilde{f}(0,t)=\delta(t)$, $\tilde{f}(1,t)=\gamma(t)$, while 
$$\alpha(s):= \ \tilde{f}(s,0) \ \subseteq \ Q_0\, ,\ \ \ \beta(s):= \tilde{f}(s,1)\ \subseteq \ Q_1\, .$$

\begin{center}
\includegraphics[height=35mm]{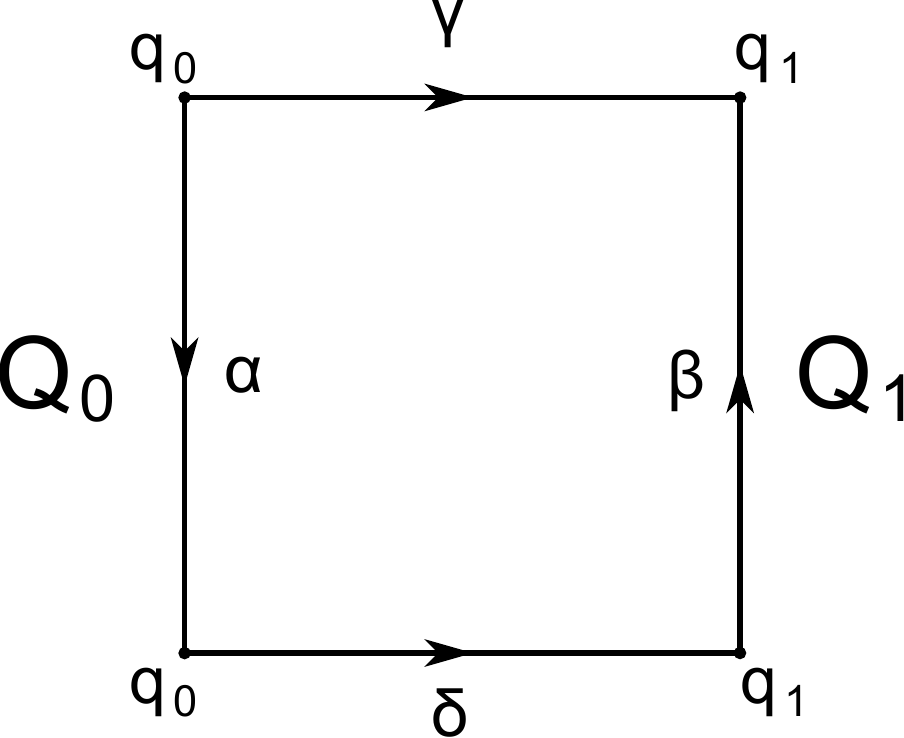}
\end{center}

Therefore we get that the path $\gamma$ is homotopic to the path $\beta\# \delta \# \alpha$, through a homotopy with values in $M$; in particular we get 
$$\text{Im}\, \tilde{\partial}\ =\ \Big \{\beta \# \delta \# \alpha \ \Big | \ \alpha \in \imath_*(\pi_1(Q_0,q_0))\, ,\ \beta \in \imath_*(\pi_1(Q_1,q_1))\Big \}$$ 
where $\imath:Q_0\hookrightarrow M$, $\imath:Q_1\hookrightarrow M$ denote the natural injections. Since the sequence is exact, the zero at  the end implies that the map
$$i_*:\pi_0(\Omega_{q_0,q_1},\delta)\longrightarrow \pi_0 (\Omega_{Q_0,Q_1},\delta)$$
is surjective; hence, again by the exactness of the sequence, we get 
$$\pi_0(\Omega_{Q_0,Q_1},\delta) \ \cong \ \bigslant{\pi_0(\Omega_{q_0,q_1},\delta)}{\sim_{Q_0,Q_1}}\, ,$$
where the equivalence relation is defined by $\, \sigma \sim_{Q_0,Q_1} \sigma'\, $ if and only if 
$$\exists \ \alpha \in \imath_*(\pi_1(Q_0,q_0))\, ,\ \beta \in \imath_*(\pi_1(Q_1,q_1)) \ \ \ \ \text{s.t.}\ \ \ \ \sigma' \ \sim \ \beta \# \sigma \# \alpha\, ,$$
exactly as we wished to show.
\end{proof}

\vspace{5mm}

We already know from (\ref{legameconp1}) that $\pi_0(\Omega_{q_0,q_1},\delta)$ coincides with $\pi_1(M,q_0)$; therefore we would like to investigate how (\ref{primaformulazione})
can be expressed in terms of the fundamental group of $M$. In order to do that we have to write any loop in $i_*(\pi_1(Q_1,q_1))$ as a loop with base point $q_0$; thus, let $\beta \in i_*(\pi_1(Q_1,q_1))$ and let 
$\gamma$ be any path connecting $q_0$ to $q_1$. The loop $\gamma^{-1}\#\beta \#\gamma$ represents then $\beta$ as a closed loop based at $q_0$. In particular 
\begin{equation}
\pi_0 (\Omega_{Q_0,Q_1},\delta) \ \cong \ \bigslant{\pi_1(M,q_0)}{\sim_{Q_0,Q_1}}\, ,
\label{secondaformulazione}
\end{equation}
where in this case the relation $\sim_{Q_0,Q_1}$ on $\pi_1(M,q_0)$  is defined by $\sigma \sim_{Q_0,Q_1} \sigma'$ if and only if there exist $\alpha \in i_*(\pi_1(Q_0,q_0))$ and $\beta\in i_*(\pi_1(Q_1,q_1))$ such that 
$$\sigma' \ \sim \ (\gamma^{-1}\# \beta \#\gamma) \# \sigma \# \alpha\, .$$

We end this section with some easy example, which may help the reader to understand the general picture explained above. We  suppose $M=\T^2$ and we consider submanifolds  $Q_0 \cong Q_1 \cong S^1$. 
Since in this case $\pi_1(M)$ is abelian, the subgroups  $H_0:=i_*(\pi_1(Q_0))$, $H_1:=i_*(\pi_1(Q_1))$ are normal and we may rewrite (\ref{secondaformulazione}) as 
$$\pi_0(\Omega_{Q_0,Q_1},\delta) \ \cong \ \bigslant{\pi_1(M,q_0)}{\langle H_0,H_1\rangle}\, ,$$
where $\langle H_0,H_1\rangle $ denotes the subgroup generated by $H_0,H_1$. We shall keep the same notation (i.e. $H_0,H_1$) also later on in the general setting to denote the smallest normal subgroups which contain respectively $i_*(\pi_1(Q_0))$, $i_*(\pi_1(Q_1))$.

Let now $\sigma_0, \, \sigma_1$ be the standard generators of $\pi_1(M)=\Z\times \Z$. Consider first the case $H_0=\langle \sigma_0\rangle$, $H_1=\langle \sigma_1\rangle$, which is
represented by Figure (\ref{primo}) below. In this case we clearly have $\langle H_0,H_1\rangle = \pi_1(M)$; hence, the space $\Omega_{Q_0,Q_1}$ is connected. In fact, $Q_0$ and $Q_1$ necessarily intersect, so any path in $\Omega_{Q_0,Q_1}$ 
is homotopic (in $\Omega_{Q_0,Q_1}$) to a constant path, i.e. the space $\Omega_{Q_0,Q_1}$ is contractible, while $\pi_0(\Omega_{q_0,q_1})\cong \Z\times \Z$. 

\begin{figure}[h]
\begin{center}
\includegraphics[height=32mm]{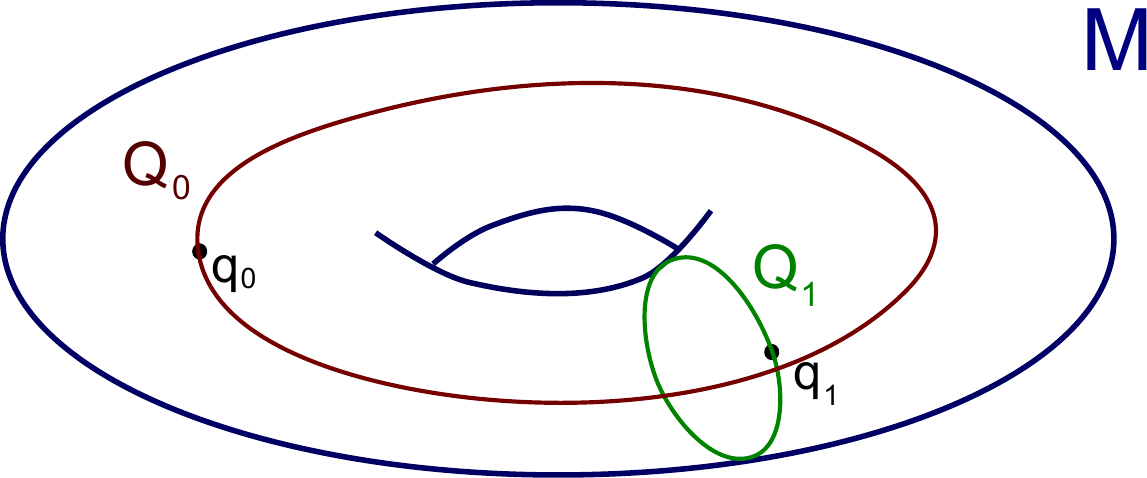}
\end{center}
\caption{An example where $H_0=\langle \sigma_0 \rangle$, $H_1=\langle \sigma_1\rangle$.}
\label{primo}
\end{figure}

Consider now the case $H_0=H_1=\langle \sigma_1\rangle$ as represented in Figure (\ref{secondo}). Here we have $\langle H_0,H_1\rangle = \langle \sigma_1 \rangle$; therefore $\pi_0(\Omega_{Q_0,Q_1}) \cong \Z$ and any connected component is uniquely determined by the winding number around any meridian. 

\begin{figure}[h]
\begin{center}
\includegraphics[height=35mm]{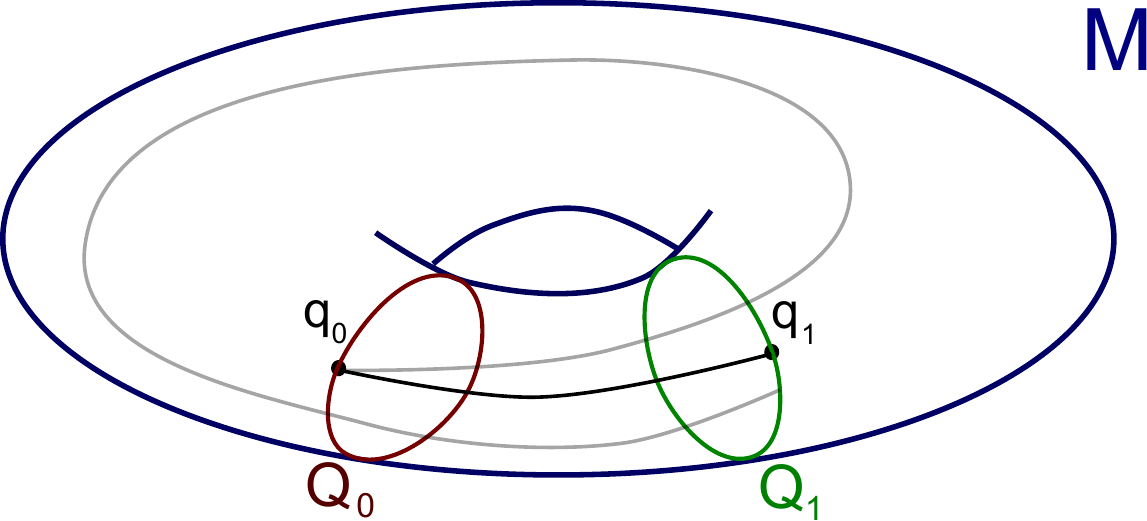}
\end{center}
\caption{An example where $H_0=H_1=\langle \sigma_1\rangle$.}
\label{secondo}
\end{figure}

In other words, the connected components of $\Omega_{Q_0,Q_1}(M)$ are in one to one correspondence with the powers of $\sigma_0$. In the figure above the black path and the grey path are in different connected components, being their winding numbers around any meridian different. Finally, observe that in the case $H_0=H_1=0$ we have 
$$\pi_0(\Omega_{Q_0,Q_1},\delta )\ \cong \ \pi_1(M,q_0)\ \cong\  \Z\times \Z\, .$$

\begin{figure}[h]
\begin{center}
\includegraphics[height=35mm]{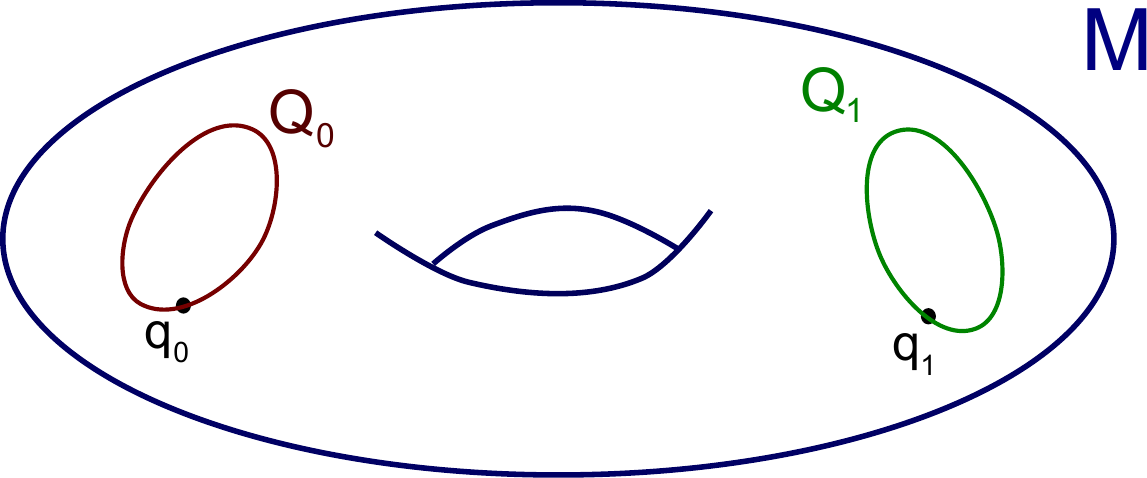}
\end{center}
\caption{An example where $H_0=H_1=\{0\}$.}
\label{terzo}
\end{figure}

%%%%%%%%%%%%%%%%%%%%%%%%%%%%%%%%%%%%%%%%%%%%%%

\section{The Lagrangian action functional}
\label{thelagrangianactionfunctional}

In this section, following \cite{AS09}, we define the Lagrangian action functional $\A_L$ and check its regularity properties.
The metric $g$ on $M$ induces a metric on the tangent bundle $TM$, covariant derivatives  on $M$ and on $TM$, the horizontal subbundle $T^h TM$ of $TTM$ and isomorphisms 
$$T_{(q,v)}TM \ =\ T^h_{(q,v)} TM \oplus T^v_{(q,v)} TM\ \cong \ T_q M\oplus T_q M$$
where $T^v_{(q,v)} TM$ is the vertical subspace. We denote with $\nabla_q$, $\nabla_v$ respectively the horizontal, vertical components of the gradient of a function defined on $TM$; we use similar notations for higher 
derivatives. Even though in this work we will be interested only in the autonomous case, in order to prove the required regularity properties of the Lagrangian action functional it will be convenient to work in the 
more general setting of non-autonomous (i.e. time-depending) Lagrangians. To get a well-defined functional we will need however some additional growth-condition on the Lagrangian. Namely, throughout this and the next section we consider 
smooth Lagrangians $L:[0,1]\times TM \longrightarrow \R$ satisfying the following growth conditions:

\begin{description}
\item $(L1)\ $ there exists a constant $l_1\in \R$ such that  
$$\begin{array}{l}
\|\nabla_{vv} L(t,q,v)\| \ \leq \ l_1\, ,\\
\|\nabla_{vq} L(t,q,v)\| \ \leq \ l_1\, (1+ \|v\|_q),\\
\|\nabla_{qq} L(t,q,v)\| \ \leq \ l_1\, (1+\|v\|_q^2),
\end{array}$$
for any $(t,q,v)\in [0,1]\times TM$.

\item $(L2)\ $ there exists a positive constant $l_2\in \R$ such that 
$$\nabla_{vv} L(t,q,v)\ \geq \ l_2\cdot  Id$$
for any $(t,q,v)\in [0,1]\times TM$.
\end{description}

\vspace{3mm}

Condition $(L1)$ implies that $L$ grows at most quadratically on each fiber, while condition $(L2)$ implies that $L$ grows at least quadratically on each fiber; thus all the Lagrangians that we consider 
are supposed to be quadratic at infinity on each fiber. These conditions are independent on the choice of the metric $g$, meaning that if $L$ satisfies these conditions with respect to a suitable metric $g$ 
then $L$ satisfies the same conditions (with different constants $l_1,l_2$) with respect to any other metric.

\begin{oss}
A (autonomous) Tonelli Lagrangian is not necessarily quadratic at infinity. However, this is not much a problem for our purposes. In fact, when looking for periodic orbits or for orbits connecting two submanifolds satisfying conormal boundary conditions
on a given (compact) energy level $E^{-1}(k)$, we can always modify the Lagrangian outside a compact set to achieve the desired growth-conditions.
\end{oss}

It is interesting to see how conditions $(L1)$, $(L2)$ can be expressed in local charts. Observe that a bi-bounded time-depending local coordinate system $(U,\phi)$ for $M$ (cf. Section \ref{ahilbertmanifoldofloops}) induces a time-depending coordinate system on $TM$
$$[0,1]\times U\times \R^n \longrightarrow TM\, ,\ \ \ (t,q,v)\longmapsto \big (\phi(t,q), D_q\phi(t,q)[v]\big )\, .$$
The pull-back of $L$ by such a coordinate system is the function 
$$(\phi^* L) (t,q,v) \ =\ L(t,\phi(t,q),D_q\phi(t,q)[v])\, , \ \ \ \ \forall \ (t,q,v)\in [0,1]\times U\times \R^n\, .$$

When no confusion is possible, we denote $(\phi^* L)$ simply by $L$. Conditions $(L1)$, $(L2)$ can be then restated by saying that for every $(U,\phi)$ as above 

\begin{description}
\item $(L1')\ $ there exists a positive number $l_1$ such that 
$$\left |\frac{\partial^2 L}{\partial v^2} (t,q,v)\right | \leq\ l_1\, ,\ \ \ \left |\frac{\partial^2 L}{\partial q\partial v}(t,q,v)\right | \leq\ l_1(1+|v|)\, ,\ \ \ \left |\frac{\partial^2 L}{\partial q^2}(t,q,v)\right |\leq\ l_1 (1+|v|^2)$$ 
for any $(t,q,v)\in [0,1]\times U\times \R^n$.

\item $(L2')\ $ there exists a positive number $l_2$ such that 
$$\frac{\partial^2 L}{\partial v^2} (t,q,v)\ \geq \ l_2 \cdot Id$$
for any $(t,q,v)\in [0,1]\times U\times \R^n$.
\end{description}

\noindent If integrated along the fiber, condition $(L1')$ implies the growth conditions
\begin{eqnarray}
\left |\frac{\partial L}{\partial q}(t,q,v)\right | &\leq& l_3 \, (1+|v|^2)\, ,\ \ \ \ \left |\frac{\partial L}{\partial v}(t,q,v)\right | \ \ \leq \ \ l_3 \, (1+|v|) \label{boundsconvergenza} \\
L(t,q,v) &\leq& l_4 \, (1+|v|^2)
\end{eqnarray}
for suitable constants $l_3,l_4\in \R$. Let now $L:[0,1]\times TM\rightarrow \R$ be a Lagrangian which satisfies the condition $(L1)$, then the \textit{Lagrangian action functional}
\begin{equation}
\A_L(x) := \int_0^1 L(t,x(t),x'(t))\, dt
\label{actionfunctional}
\end{equation}
is well-defined on $H^1([0,1],M)$. Observe that for every $(U,\phi)$ as above we have
$$\A_L(\phi_*(x))\ =\ \A_{\phi^*L} (x)\, , \ \ \ \ \forall \ x\in H^1([0,1],U)\, .$$

Therefore, the study of the local properties of $\A_L$ is reduced to the study of the functional $\A_{\phi^* L}$, which is defined on an open subset of a Hilbert space.

\begin{teo} 
Suppose that $L:[0,1]\times TM \rightarrow \R$ satisfies the condition $(L1)$; then the Lagrangian action functional $\A_L$ is continuously differentiable on $H^1([0,1],M)$. Also, its differential $D\A_L$ is locally Lipschitz continuous and Gateaux-differentiable.
\label{regularity}
\end{teo}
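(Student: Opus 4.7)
The plan is to reduce the global statement on the Hilbert manifold $H^1([0,1],M)$ to a local statement on an open subset of a Hilbert space and then verify the three properties (continuous differentiability, local Lipschitz continuity of the differential, Gateaux-differentiability of the differential) directly via the growth conditions $(L1)$, integrated in the form of $(L1')$ and the bounds \eqref{boundsconvergenza}.

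First I would reduce to local charts. Given any bi-bounded time-depending local coordinate system $(U,\phi)$, one has $\A_L\circ \phi_* = \A_{\phi^*L}$, and $\phi^*L$ inherits the growth properties $(L1')$, $(L2')$ on $[0,1]\times U\times \R^n$. Since the atlas $\{\phi_*\}$ covers $H^1([0,1],M)$, it is enough to prove that $\A_L:H^1([0,1],U)\to \R$ is $C^1$ with locally Lipschitz, Gateaux-differentiable differential, where now $H^1([0,1],U)$ is an open subset of the Hilbert space $H^1([0,1],\R^n)$.

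Next I would guess the derivative and justify its existence. The natural candidate is
$$D\A_L(x)[\xi] \ = \ \int_0^1 \Big[\tfrac{\partial L}{\partial q}(t,x,x')\cdot \xi + \tfrac{\partial L}{\partial v}(t,x,x')\cdot \xi'\Big]\, dt.$$
By \eqref{boundsconvergenza}, $|L_q(t,x,x')| \le l_3(1+|x'|^2) \in L^1$ and $|L_v(t,x,x')| \le l_3(1+|x'|) \in L^2$; combined with the continuous embedding $H^1\hookrightarrow C^0$ (so $\xi\in L^\infty$ and $\xi'\in L^2$), each integrand is integrable and the formula defines a bounded linear functional on $H^1$. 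To show it is actually the Fréchet differential at $x$, I would write, for $\|\xi\|_{H^1}$ small,
$$\A_L(x+\xi) - \A_L(x) - D\A_L(x)[\xi] \ = \ \int_0^1 \int_0^1 (1-s)\,\partial_s^2 L(t,x+s\xi, x'+s\xi')\, ds\, dt,$$
expand the inner derivative using $L_{qq}$, $L_{qv}$, $L_{vv}$, and bound each term via $(L1')$. The key estimate is that $\int_0^1 |\xi'|^2\,|L_{vv}|\,dt \le l_1\|\xi\|_{H^1}^2$, while the mixed and pure-position terms are controlled by $(1+\|x'\|_{L^2}^2)\|\xi\|_{C^0}\|\xi\|_{H^1}$, which yields $o(\|\xi\|_{H^1})$.

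Continuity of $x\mapsto D\A_L(x)$ would follow from dominated convergence: if $x_n\to x$ in $H^1$, then $x_n\to x$ in $C^0$ and, along a subsequence, $x_n'\to x'$ almost everywhere with $|x_n'|^2$ dominated in $L^1$; thus $L_q(t,x_n,x_n')\to L_q(t,x,x')$ and $L_v(t,x_n,x_n')\to L_v(t,x,x')$ in $L^1$, respectively $L^2$, using the growth bounds as dominating envelopes. A standard Urysohn-type argument promotes subsequential convergence to full convergence, giving continuity of $D\A_L$ as a map $H^1\to (H^1)^*$.

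Finally, for the Gateaux-differentiability and local Lipschitz continuity of $D\A_L$, I would compute the directional derivative
$$D^2\A_L(x)[\xi,\eta] \ = \ \int_0^1 \Big[L_{qq}\,\xi\eta + L_{qv}\,\xi\eta' + L_{vq}\,\xi'\eta + L_{vv}\,\xi'\eta'\Big]\, dt,$$
where the derivatives of $L$ are evaluated at $(t,x,x')$. Using $(L1')$ and the embedding $H^1\hookrightarrow C^0$, each of the four terms is bounded by a constant depending only on $\|x\|_{H^1}$, times $\|\xi\|_{H^1}\|\eta\|_{H^1}$; thus the bilinear form is continuous, and differentiating the quotient $t^{-1}(D\A_L(x+t\xi) - D\A_L(x))[\eta]$ under the integral sign (justified again by dominated convergence with the quadratic envelope in $(L1')$) shows it converges as $t\to 0$. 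Local Lipschitz continuity then follows: on any ball $\{\|x\|_{H^1}\le R\}$ the operator norm $\|D^2\A_L(x)\|$ is bounded by a constant $C(R)$, and the fundamental theorem of calculus along the segment between two points gives $\|D\A_L(x_1)-D\A_L(x_2)\|_{(H^1)^*}\le C(R)\|x_1-x_2\|_{H^1}$. The main technical obstacle throughout is the passage to the pointwise convergence of derivatives from $H^1$-convergence, which is why the quadratic-at-infinity hypothesis $(L1)$ is essential: it provides the dominating integrable envelopes needed for every application of dominated convergence.
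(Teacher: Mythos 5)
Your proposal is correct and takes essentially the same route as the paper: reduction to bi-bounded local charts, the explicit formula \eqref{differential} for $D\A_L$, dominated convergence with the envelopes from $(L1')$ and \eqref{boundsconvergenza} for continuity of the differential, the second Gateaux differential \eqref{seconddifferential}, and boundedness of $D^2\A_L$ plus the mean value theorem for the local Lipschitz property. The only deviation is cosmetic — you get Fr\'ech\'et differentiability at each point directly from a second-order Taylor remainder of size $O(\|\xi\|_{H^1}^2)$, whereas the paper first establishes Gateaux differentiability and upgrades via continuity of the Gateaux differential — and in the last step you should take the convex sets to be small balls contained in the open set $H^1([0,1],U)$ (as the paper's ``convex subsets'' phrasing implicitly does), since an arbitrary $H^1$-ball need not lie in the chart.
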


\begin{proof}
Since the statement is of local nature, by using the diffoemorphism $\phi_*$ induced by $(U,\phi)$ we may assume that $L$ is defined on $[0,1]\times U\times \R^n$, 
with $U$ open subset of $\R^n$, and satisfies $(L1')$. Thus, let $x\in H^1([0,1],U)$, $\xi\in H^1([0,1],\R^n)$ and $\delta\in \R\setminus \{0\}$ small; then by the dominated convergence theorem the quantity 
\begin{eqnarray*}
& & \frac{1}{\delta} \Big (\A_L(x+\delta\xi)-\A_L(x) \Big )\ = \\
&=& \frac{1}{\delta} \int_0^1 \Big [L(t,x+\delta\xi,x'+\delta\xi')-L(t,x,x')\Big ]\, dt \ =\\
&=& \int_0^1 dt \int_0^1 \Big [ \frac{\partial L}{\partial q}(t,x+s\delta\xi,x'+s\delta\xi')\cdot \xi + \frac{\partial L}{\partial v} (t,x+s\delta\xi,x'+s\delta\xi')\cdot \xi' \Big ]\, ds
\end{eqnarray*}
converges as $\delta\longrightarrow 0$  to 
\begin{equation}
D\A_L(x)[\xi] := \int_0^1 \left [\frac{\partial L}{\partial q}(t,x(t),x'(t)) \cdot \xi + \frac{\partial L}{\partial v}(t,x(t),x'(t))\cdot \xi' \right ]\, dt\, .
\label{differential}
\end{equation}
Indeed, the bounds in (\ref{boundsconvergenza}) imply that 
$$\begin{array}{l}
\displaystyle \frac{\partial L}{\partial q} (\cdot,x+s\delta\xi,x'+s\delta\xi') \ \ \stackrel{\delta\rightarrow 0}{\longrightarrow} \ \ \frac{\partial L}{\partial q}(\cdot,x,x') \ \ \ \ \text{in}\ \ L^1([0,1]) \\ \\
\displaystyle \frac{\partial L}{\partial v} (\cdot,x+s\delta\xi,x'+s\delta\xi') \ \ \stackrel{\delta\rightarrow 0}{\longrightarrow} \ \ \frac{\partial L}{\partial v}(\cdot,x,x') \ \ \ \ \text{in}\ \ L^2([0,1])
\end{array}$$
and hence in particular
\begin{eqnarray*}
& & \int_0^1 \left |\left [ \frac{\partial L}{\partial q}(t,x+s\delta\xi,x'+s\delta\xi')-\frac{\partial L}{\partial q}(t,x,x') \right ]\cdot \xi \right |\, ds \ \leq \\
&\leq& \|\xi\|_\infty \cdot \left \|\frac{\partial L}{\partial q}(\cdot,x+s\delta\xi,x'+s\delta\xi')-\frac{\partial L}{\partial q}(\cdot,x,x')\right \|_1 \ \ \longrightarrow \ \ 0
\end{eqnarray*}

\begin{eqnarray*}
& & \int_0^1 \left |\left [ \frac{\partial L}{\partial v}(t,x+s\delta\xi,x'+s\delta\xi')-\frac{\partial L}{\partial v}(t,x,x') \right ]\cdot \xi' \right |\, ds \ \leq \\
&\leq& \|\xi'\|_2 \cdot \left \|\frac{\partial L}{\partial v}(\cdot,x+s\delta\xi,x'+s\delta\xi')-\frac{\partial L}{\partial v}(\cdot,x,x')\right \|_2 \ \ \longrightarrow \ \ 0\, .
\end{eqnarray*}

Since $D\A_L(x)$ is a bounded linear functional on $H^1([0,1],\R^n)$, $\A_L$ is Gateaux differentiable and $D\A_L(x)$ is its Gateaux differential at $x$. In order to prove that 
$D\A_L(\cdot)$ is continuous at $x$, we must show that 
$$x_h \ \stackrel{H^1}{\longrightarrow}\ x \ \ \ \Longrightarrow \ \ \ D\A_L(x_h)\ \stackrel{(H^1)^*}{\longrightarrow}\ D\A_L(x)\, .$$

So let us assume that $x_h$ converges to $x$ in $H^1([0,1],U)$; in particular, by the continuity of the inclusion $H^1\hookrightarrow L^\infty$ it follows that 
$x_h$ converges to $x$ uniformly. Moreover, there is a function $f\in L^2([0,1])$ such that $|x_h'|\leq f$ almost everywhere for every $h\in \N$ (here we have used the fact that 
a sequence of real-valued functions which converges in $L^1$ is dominated almost everywhere by an $L^1$ function). By a standard argument involving subsequences\footnote{A sequence $\{x_h\}$
in a metric space converges to $x$ if and only if every subsequence of $\{x_h\}$ has a subsequence which converges to $x$.}, we may also assume that $x_h'\rightarrow x'$ a.e.
The bounds in (\ref{boundsconvergenza}) and the dominated convergence theorem imply then that 
$$\begin{array}{l}
\displaystyle \frac{\partial L}{\partial q}(\cdot,x_h,x_h')\ \longrightarrow \ \frac{\partial L}{\partial q}(\cdot,x,x')\ \ \ \ \text{in}\ L^1([0,1])\\ \\
\displaystyle \frac{\partial L}{\partial v}(\cdot,x_h,x_h')\ \longrightarrow \ \frac{\partial L}{\partial v}(\cdot,x,x')\ \ \ \ \text{in}\ L^2([0,1])
\end{array}$$ 
Thus the convergence of $D\A_L(x_h)$ to $D\A_L(x)$ in $(H^1)^*$ follows. In fact, 
\begin{eqnarray*}
& & \big \| D\A_L(x_h)-D\A_L(x) \big \| \ =\ \sup_{\|\xi\|=1} \big | [D\A_L(x_h)-D\A_L(x)]\cdot \xi \big | \ \leq \\ 
&\leq& \int_0^1 \left (\left | \Big [\frac{\partial L}{\partial q}(t,x_h,x_h')-\frac{\partial L}{\partial q}(t,x,x') \Big ] \xi \right | + 
\left | \Big [\frac{\partial L}{\partial v}(t,x_h,x_h')-\frac{\partial L}{\partial v}(t,x,x') \Big ] \xi' \right |\right )\, dt \ \leq\\
&\leq& \|\xi\|_\infty \cdot \left \| \frac{\partial L}{\partial q}(\cdot,x_h,x_h')-\frac{\partial L}{\partial q}(\cdot,x,x') \right \|_1  +
\ \|\xi'\|_2 \cdot  \left \|\frac{\partial L}{\partial v}(\cdot,x_h,x_h')-\frac{\partial L}{\partial v}(\cdot,x,x')\right \|_1
\end{eqnarray*}
goes to zero as $h\rightarrow +\infty$. Here we have used Cauchy-Schwarz inequality and, again, the continuity of the immersion $H^1\hookrightarrow C^0$.
The Gateaux-differential $D\A_L(\cdot)$ depends therefore continuously on $x$ and this implies that $\A_L$ is Fr\'ech\'et-differentiable and $D\A_L(x)$ 
is its Fr\'ech\'et differential at $x$. Now we want to prove that the differential $D\A_L$ is Lipschitz continuous and Gateaux-differentiable; in order to do that 
let us consider $x$, $\xi$ as above and let $\eta\in H^1([0,1],\R^n)$; the property $(L1')$ and the dominated convergence theorem imply that the quantity 
\begin{eqnarray*}
& & \frac{1}{\delta}\, \Big (D\A_L(x+\delta \eta)[\xi]-D\A_L(x)[\xi] \Big) \ =\\
&=& \int_0^1  \int_0^1 \left [\frac{\partial^2 L}{\partial v^2} (t,x+s\delta\eta,x'+s\delta\eta')\, \xi'\cdot \eta ' + \frac{\partial^2 L}{\partial q\partial v}(t,x+s\delta\eta,x'+s\delta\eta')\, \xi\cdot \eta' + \right. \\
&+& \left. \frac{\partial^2 L}{\partial v\partial q} (t,x+s\delta\eta,x'+s\delta\eta')\, \xi'\cdot \eta + \frac{\partial^2 L}{\partial q^2}(t,x+s\delta\eta,x'\!+\!s\delta\eta')\, \xi\cdot \eta \right ]\, ds\, dt
\end{eqnarray*}
converges, as $\delta\rightarrow 0$, to
\begin{eqnarray}
d^2 \A_L(x)[\xi,\eta] &:=& \int_0^1 \Big [ \frac{\partial^2 L}{\partial v^2}(t,x,x')\, \xi'\cdot \eta' + \frac{\partial^2 L}{\partial q \partial v} (t,x,x')\, \xi\cdot \eta' + \nonumber \\
&+& \frac{\partial^2 L}{\partial v \partial q}(t,x,x')\, \xi'\cdot \eta + \frac{\partial^2 L}{\partial q^2}(t,x,x')\, \xi\cdot \eta\Big ]\, dt\, .
\label{seconddifferential}
\end{eqnarray}

Since $d^2\A_L(x)$ is a bounded symmetric bilinear form on $H^1([0,1],\R^n)$, $D\A_L$ is Gateaux-differentiable at $x$ and its Gateux differential at $x$ is the bounded linear operator 
$D^2\A_L(x):H^1([0,1],\R^n)\longrightarrow H^1([0,1],\R^n)^*$ defined by
$$\big ( D^2\A_L(x)\cdot \xi\big )[\eta] := d^2\A_L(x)[\xi,\eta] \ \ \ \ \forall \xi,\eta \in H^1([0,1],\R^n)\, .$$

By $(L1')$ the map $x\mapsto D^2\A_L(x)$ is bounded with respect to the norm-topology on the space of bounded self-adjoint operators, so the mean value theorem implies that $D\A_L$ is Lipschitz 
on convex subsets of $H^1([0,1],U)$.
\end{proof}

\vspace{5mm}

The Lagrangian action functional is not of class $C^2$, unless $L$ is a polynomial of degree at most two on each fiber of $TM$; in this case, $\A_L$
is actually smooth on $H^1([0,1],M)$. In particular, electro-magnetic Lagrangians are the only Lagrangians which satisfy the condition $(L2)$ and induce a smooth action functional.
In general, the action functional $\A_L$ even fails to be twice differentiable, as the following proposition states (cf. \cite[Proposition 3.2]{AS09}).

\begin{prop}
Assume that the Lagrangian $L$ satisfies $(L1)$; if the functional $\A_L$ is twice differentiable at $x\in H^1([0,1],M)$, then for every $t\in [0,1]$ the function 
$$T_{x(t)} M \longrightarrow \R\, ,\ \ \ \ v\longmapsto L(t,x(t),v)$$ 
is a polynomial of degree at most two.
\label{nonregularity}
\end{prop}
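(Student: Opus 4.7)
The plan is to work in a local chart and exploit concentration-rescaling of oscillating test functions to extract pointwise information on $L(t,x(t),\cdot)$ from the Fréchet bound on the Taylor remainder of $D\A_L$. Since the statement is local, pass to a bi-bounded time-dependent chart $(U,\phi)$ so that $x \in H^1([0,1],U)$ with $U \subset \R^n$ and $L$ satisfies $(L1')$--$(L2')$. By Theorem \ref{regularity}, $D\A_L$ is Gateaux differentiable everywhere with Gateaux derivative given by the bilinear form $d^2\A_L(x)[\cdot,\cdot]$ of \eqref{seconddifferential}. If $\A_L$ is twice Fréchet differentiable at $x$, the Fréchet and Gateaux derivatives of $D\A_L$ at $x$ must agree, so the remainder
$$R_\eta(\xi)\ :=\ D\A_L(x+\eta)[\xi]\,-\,D\A_L(x)[\xi]\,-\,d^2\A_L(x)[\eta,\xi]$$
satisfies $\sup_{\|\xi\|_{H^1}\leq 1}|R_\eta(\xi)|\,=\,o(\|\eta\|_{H^1})$ as $\|\eta\|_{H^1}\to 0$.

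Next, fix any $t_0\in(0,1)$ that is a Lebesgue point of $x'$ and set $v_0:=x'(t_0)$; for arbitrary $u\in\R^n$ and $\phi,\psi\in C^\infty_c((0,1),\R^n)$, I would use the concentrated variations
$$\eta_\epsilon(t)\,:=\,u\epsilon\,\phi\bigl((t-t_0)/\epsilon\bigr)\,,\qquad \xi_\epsilon(t)\,:=\,\epsilon\,\psi\bigl((t-t_0)/\epsilon\bigr)\,,$$
extended by zero. A direct computation gives $\|\eta_\epsilon\|_{H^1},\|\xi_\epsilon\|_{H^1}=O(\sqrt\epsilon)$, while $\eta_\epsilon',\xi_\epsilon'$ stay bounded and $\eta_\epsilon,\xi_\epsilon$ are $O(\epsilon)$ in $L^\infty$. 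Substituting into $R_{\eta_\epsilon}(\xi_\epsilon)$, the growth bounds \eqref{boundsconvergenza} show that all terms involving $\eta_\epsilon$ or $\xi_\epsilon$ (rather than their derivatives) contribute at most $O(\epsilon^{3/2})$; after the change of variable $s=(t-t_0)/\epsilon$, the remaining $v$-term reads
$$R_{\eta_\epsilon}(\xi_\epsilon)\ =\ \epsilon\int_0^1 \Phi_{t_0+\epsilon s}\bigl(u\phi'(s)\bigr)\cdot \psi'(s)\, ds\ +\ o(\epsilon)\,,$$
where $\Phi_t(y):=\partial_v L(t,x(t),x'(t)+y)-\partial_v L(t,x(t),x'(t))-\partial^2_{vv}L(t,x(t),x'(t))\,y$. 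Using the Lebesgue point property at $t_0$ together with the uniform bound on $\partial^2_{vv}L$ from $(L1')$, dominated convergence shows that the integral tends as $\epsilon\to 0$ to $\int_0^1\Phi(u\phi'(s))\psi'(s)\,ds$ with $\Phi:=\Phi_{t_0}$.

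Combining the first two steps, $|R_{\eta_\epsilon}(\xi_\epsilon)|\leq o(\|\eta_\epsilon\|_{H^1})\,\|\xi_\epsilon\|_{H^1}=o(\epsilon)$, which forces
$$\int_0^1 \Phi\bigl(u\phi'(s)\bigr)\cdot \psi'(s)\, ds\ =\ 0$$
for every $u\in\R^n$ and $\phi,\psi\in C^\infty_c((0,1),\R^n)$. Fixing $u,\phi$ and letting $\psi$ vary, the functions $\psi'$ are dense in the mean-zero subspace of $L^2$, whose orthogonal complement in $L^2$ is the line of constants; hence $s\mapsto\Phi(u\phi'(s))$ is a.e. constant, and by continuity of $\Phi$ and $\phi'$, constant everywhere. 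Choosing a scalar $\phi$ whose derivative vanishes at some point while attaining any prescribed value at another, this constant must equal $\Phi(0)=0$; varying $u\in\R^n$ then forces $\Phi\equiv 0$ on $\R^n$. Thus $\partial_v L(t_0,x(t_0),v)$ is affine in $v$, so $v\mapsto L(t_0,x(t_0),v)$ is a polynomial of degree at most two. Since this holds at every Lebesgue point of $x'$, and hence for a.e. $t\in[0,1]$, and since the map
$$t\ \longmapsto\ L(t,x(t),v)\,-\,L(t,x(t),0)\,-\,\partial_v L(t,x(t),0)[v]\,-\,\tfrac12\,\partial^2_{vv}L(t,x(t),0)[v,v]$$
is continuous in $t$ for each fixed $v$ and vanishes a.e., it vanishes identically, extending the polynomial property to every $t\in[0,1]$.

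The main technical obstacle is the dominated-convergence step in paragraph two: $\partial_v L$ grows linearly in $v$ while $x'$ is only in $L^2$ and unbounded, so the pointwise convergence $\Phi_{t_0+\epsilon s}(u\phi'(s))\to\Phi(u\phi'(s))$ as $\epsilon\to 0$ must be drawn from the $L^1$-convergence of $x'(t_0+\epsilon s)$ to $v_0$ that is provided by the Lebesgue point property, and the required uniform dominating function must be built from the boundedness of $\partial^2_{vv}L$ and of $\phi'$, together with the $L^2$-control of $x'$.
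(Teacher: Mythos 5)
Note first that the thesis does not prove this proposition: it is stated with a reference to \cite[Proposition 3.2]{AS09}, so there is no internal proof to compare against. Your blind argument is correct and is essentially the standard concentration argument used for results of this type: you identify the would-be second Fr\'echet differential of $\A_L$ at $x$ with the Gateaux bilinear form \eqref{seconddifferential}, test the Taylor remainder of $D\A_L$ with variations $\eta_\epsilon,\xi_\epsilon$ concentrated near a Lebesgue point $t_0$ of $x'$ (so that $\|\eta_\epsilon\|_{H^1},\|\xi_\epsilon\|_{H^1}=O(\sqrt{\epsilon})$ while their derivatives remain of size one), pass to the limit to obtain $\int_0^1\Phi(u\phi'(s))\cdot\psi'(s)\,ds=0$ for all test data, and deduce that $v\mapsto d_vL(t_0,x(t_0),v)$ is affine; the final upgrade from Lebesgue points to every $t\in[0,1]$ via continuity of the second-order Taylor defect is also handled correctly. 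A few minor points, none affecting validity: $(L2')$ is neither assumed in the statement nor used in your argument; $\phi$ should be scalar throughout (you form $u\phi'(s)$ with $u\in\R^n$); the claimed $O(\epsilon^{3/2})$ for the lower-order terms is slightly too strong, since the pieces controlled by $\int_{I_\epsilon}(1+|x'|^2)\,dt$ are in general only $o(\epsilon)$ --- which is all you need; and in the limit step no $L^2$-control of $x'$ is needed for domination, because writing $\Phi_t(y)=\int_0^1\big[d_{vv}L(t,x(t),x'(t)+\sigma y)-d_{vv}L(t,x(t),x'(t))\big]\,y\,d\sigma$ and using $\|d_{vv}L\|\leq l_1$ from $(L1')$ gives the uniform bound $|\Phi_t(y)|\leq 2l_1|y|$, while the Lebesgue-point property yields convergence in measure of $s\mapsto x'(t_0+\epsilon s)$ to the constant $x'(t_0)$, so continuity of $d_{vv}L$ plus bounded convergence closes exactly the step you flagged.
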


\section{Conormal boundary conditions.}
\label{conormalboundaryconditions}

In this section we introduce the boundary conditions that we are going to consider in the next chapters. To do this it will be again convenient to consider the more general class of non-autonomous Tonelli Lagrangians.
Throughout this section we shall furthermore assume that the Lagrangian $L:[0,1]\times TM\rightarrow \R$ satisfies both the growth-conditions $(L1)$ and $(L2)$ as in Section \ref{thelagrangianactionfunctional}. 
By $(L2)$ the Euler-Lagrange equation associated to $L$, which in local coordinates can be written as 
\begin{equation}
\frac{d}{dt}\left [\frac{\partial L}{\partial v}(t,x(t),x'(t))\right ] \ =\ \frac{\partial L}{\partial q}(t,x(t),x'(t))\, ,
\label{eulerlagrange1}
\end{equation}
defines a locally well-posed second order Cauchy problem. We treat different boundary conditions  in a unified way by considering a non-empty, boundaryless smooth submanifold 
$Q\subseteq M\times M$ and by imposing \textit{conormal boundary conditions}
\begin{equation}
\left \{\begin{array}{l}
(x(0),x(1)) \in Q\, ;\\ \\ 
d_vL(0,x(0),x'(0)) [\zeta_0] \! =\! d_v L(1,x(1),x'(1))[\zeta_1]\, , \forall \ (\zeta_0,\zeta_1)\in T_{(x(0),x(1))}Q\, ;
\end{array}\right.
\label{nonlocalboundaryconditions}
\end{equation}
where $d_v L$ denotes the fiberwise differential of $L$. If $H:[0,1]\times T^*M\rightarrow \R$ 
is the Fenchel dual of $L$, then the boundary value problem (\ref{eulerlagrange1}), (\ref{nonlocalboundaryconditions}) is equivalent to the problem of finding Hamiltonian orbits $\eta:[0,1]\rightarrow T^*M$ such that 
\begin{equation}
(\eta(0),-\eta(1))\in N^*Q\, ,
\label{hamiltoniannonlocalboundaryconditions}
\end{equation}
where $N^*Q$ denotes as usual the conormal bundle of $Q$ (for generalities about conormal bundles see Appendix \ref{conormalbundles}). 
In fact, (\ref{hamiltoniannonlocalboundaryconditions}) is equivalent to 
\begin{equation*}
\left \{\begin{array}{l}
\big (\pi^*(\eta(0)),\pi^*(\eta(1))\big )\in Q\, ;\\ \\ 
\eta(0)\, [\zeta_0] \ =\ \eta(1)\, [\zeta_1] \ \ \ \ \forall\ (\zeta_0,\zeta_1)\in T_{(\eta(0),\eta(1))}Q\, ;
\end{array}\right.
\end{equation*}
which is exactly the reformulation of (\ref{nonlocalboundaryconditions}) through the Legendre transform. We will be interested in the two particular cases  $Q=\Delta$ diagonal in $M\times M$ and $Q=Q_0\times Q_1$, with $Q_0,Q_1\subseteq M$ smooth closed connected 
submanifolds. In the first case (\ref{nonlocalboundaryconditions}) means that we are looking for periodic orbits of the Euler-Lagrange flow, while in the latter one (\ref{nonlocalboundaryconditions})  can be rewritten as 
\begin{equation}
\left \{\begin{array}{l}
x(0)\in Q_0\, ,\ x(1)\in Q_1\, ;\\ \\
d_vL(0,x(0),x'(0))\big |_{T_{x(0)}Q_0} \equiv  \ \ d_vL(1,x(1),x'(1))\big |_{T_{x(1)}Q_1} \equiv \ 0\, ;
\end{array}\right.
\label{lagrangianformulation1}
\end{equation}

We will get back to this in the next chapters. Recall that, in Section \ref{ahilbertmanifoldofloops}, for any smooth submanifold $Q\subseteq M\times M$ we defined the space
$$H^1_Q([0,1],M) := \Big \{x\in H^1([0,1],M)\ \Big |\  (x(0),x(1))\in Q\Big \}\, ,$$
which is a smooth submanifold of $H^1([0,1],M)$. We denote by $\A_L^Q$ the restriction of the Lagrangian action functional $\A_L$ defined in (\ref{actionfunctional}) to $H^1_Q([0,1],M)$. 
Theorem \ref{criticalpoints} below states that critical points of  $\A_L^Q$ correspond to the (smooth) solutions of the Euler-Lagrange equation (\ref{eulerlagrange1}) that satisfies the boundary 
conditions (\ref{nonlocalboundaryconditions}).

\begin{teo}
Let $L:[0,1]\times TM\rightarrow \R$ be a Lagrangian that satisfies the conditions $(L1)$, $(L2)$ and let $Q\subseteq M\times M$ be a smooth submanifold. Then:

\begin{enumerate}
\item The critical points of $\A_L^Q$ are precisely the (smooth) solutions of (\ref{eulerlagrange1}), (\ref{nonlocalboundaryconditions}).
\item For every critical point $x$ of $\A_L^Q$, the second Gateaux differential $d^2\A_L^Q(x)$ of $\A_L^Q$ at $x$ is Fredholm and has finite Morse index.
\end{enumerate}
\label{criticalpoints}
\end{teo}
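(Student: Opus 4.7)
I would split the proof into the two claims, handling them via a standard but careful variational computation.

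\textbf{Part 1: Critical points are smooth EL solutions with conormal boundary conditions.} My plan is to work in a bi-bounded chart $(U,\phi)$ around a critical point $x$, so that $\A_L^Q$ is replaced by $\A_{\phi^*L}$ and $H^1_Q$ is replaced by the closed affine subspace of $H^1([0,1],\R^n)$ with endpoint pairs in the linear subspace $W \cong T_{(x(0),x(1))}Q$ (cf.\ Section \ref{ahilbertmanifoldofloops}). First, I would test $D\A_L(x)[\xi] = 0$ against all $\xi \in C^\infty_c((0,1),\R^n)$, which lie in the tangent space regardless of $Q$. Using the formula (\ref{differential}), a standard Du Bois--Reymond / fundamental lemma argument shows that the map $t \mapsto \partial_v L(t,x(t),x'(t))$ is absolutely continuous and that the Euler--Lagrange equation (\ref{eulerlagrange1}) holds a.e. Next I would bootstrap regularity: by $(L2)$ the map $v \mapsto \partial_v L(t,q,v)$ is a diffeomorphism of each fiber, so from $\partial_v L(t,x(t),x'(t)) \in H^1$ and $x \in H^1$ one can solve for $x'$ and deduce $x' \in H^1$, hence $x \in H^2$; iterating this argument (using smoothness of $L$) yields $x \in C^\infty$. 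Once $x$ is smooth and satisfies EL, I would integrate by parts in (\ref{differential}) against an arbitrary $\xi \in T_x H^1_Q$: the interior term vanishes by EL, and what remains is
\begin{equation*}
\partial_v L(1,x(1),x'(1))\cdot \xi(1) \ -\ \partial_v L(0,x(0),x'(0))\cdot \xi(0) \ =\ 0
\end{equation*}
for every $(\xi(0),\xi(1)) \in T_{(x(0),x(1))}Q$, which is exactly the conormal condition in (\ref{nonlocalboundaryconditions}). Conversely, any smooth solution of (\ref{eulerlagrange1})--(\ref{nonlocalboundaryconditions}) annihilates $D\A_L(x)$ on $T_x H^1_Q$ by the same integration by parts, so it is a critical point of $\A_L^Q$.

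\textbf{Part 2: Fredholmness and finite Morse index of $d^2\A_L^Q(x)$.} My plan is to represent $D^2\A_L^Q(x)$ through the Riesz isomorphism induced by the $H^1$-metric (\ref{riemannianmetric}) and show it is of the form identity-plus-compact (up to a sign on a finite-dimensional piece). Working again in a chart, by (\ref{seconddifferential}) the Hessian quadratic form is
\begin{equation*}
d^2\A_L(x)[\xi,\eta] \ =\ \int_0^1 \bigl[\partial_{vv}L\,\xi'\!\cdot\eta' + \partial_{qv}L\,\xi\!\cdot\eta' + \partial_{vq}L\,\xi'\!\cdot\eta + \partial_{qq}L\,\xi\!\cdot\eta\bigr]\,dt.
\end{equation*}
I would split this into the leading term $B(\xi,\eta) := \int_0^1 \partial_{vv}L\,\xi'\cdot\eta'\,dt$ and the lower order terms $R(\xi,\eta)$. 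By $(L1')$ the terms involving $\xi$ or $\eta$ without a derivative define, via Riesz, operators that factor through the compact inclusion $H^1 \hookrightarrow C^0$ (or $H^1 \hookrightarrow L^2$ for the mixed term), and are therefore compact. The leading term $B$ differs from the $H^1$-inner product in the derivative part by multiplication by the uniformly positive matrix $\partial_{vv}L$ (by $(L2)$), so by a further compact perturbation (absorbing the missing $\int \xi\cdot\eta\,dt$) the Riesz representative of $B$ is a self-adjoint, positive-definite operator with bounded inverse. Restricting to the closed subspace $T_xH^1_Q$ preserves this decomposition, so $D^2\A_L^Q(x) = P + K$ with $P$ invertible self-adjoint and $K$ compact; hence it is Fredholm of index zero. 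Finite Morse index follows because the spectrum of $P+K$ consists of $P$'s spectrum (bounded below away from zero) perturbed by a compact operator, yielding at most finitely many eigenvalues below any given negative threshold.

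\textbf{Expected obstacle.} The only genuinely delicate point is the regularity bootstrap in Part 1: one must carefully exploit the invertibility of $v \mapsto \partial_v L(t,q,v)$ provided by $(L2)$ to pass from the a.e. EL equation to a smooth solution. The Fredholm part is essentially formal once one checks, using $(L1')$ and the compactness of $H^1 \hookrightarrow C^0$, that the non-leading terms in $d^2\A_L$ indeed give rise to a compact operator.
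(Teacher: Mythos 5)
Your proposal is correct and follows essentially the same route as the paper: Part 1 via testing against compactly supported variations, a Du Bois--Reymond argument, inversion of the fiberwise map $v\mapsto \partial_v L(t,q,v)$ (guaranteed by $(L2)$) to bootstrap regularity, and a final integration by parts against arbitrary $\xi\in T_xH^1_Q$ to extract the conormal conditions; Part 2 via splitting the Hessian \eqref{seconddifferential} into the leading $\partial_{vv}L$-term plus lower-order terms made compact by the embeddings $H^1\hookrightarrow C^0, L^2$. The only (harmless) imprecision is that the first pass of the bootstrap yields $\partial_v L(\cdot,x,x')$ absolutely continuous (its a.e.\ derivative $\partial_q L(\cdot,x,x')$ is only $L^1$ at that stage), not $H^1$; continuity of $x'$ is what one gets first, after which the upgrade to $C^\infty$ proceeds exactly as you and the paper describe.
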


\begin{proof}
The statements above are both of local nature, so by using a diffeomorphism $\phi_*$ induced by a local coordinate system $(U,\phi)$ for $M$ as in Section \ref{ahilbertmanifoldofloops} 
we may assume that $L$ is defined on $[0,1]\times U\times \R^n$, with $U\subseteq \R^n$ open, and satisfies $(L2')$. We already know by Theorem \ref{regularity}
that $\A_L$ is Fr\'ech\'et-differentiable with locally Lipschitz continuous and Gateaux-differentiable differential $D\A_L$.

\vspace{5mm}

$1.\ $ Let $x$ be a critical point for $\A_L^Q$; we want to prove that $x$ is actually a smooth curve and a solution of the Euler-Lagrange equation (\ref{eulerlagrange1}) 
satisfying the boundary conditions (\ref{nonlocalboundaryconditions}). By choosing a local chart $(U,\phi)$ as in the definition of the atlas of $H^1_Q([0,1],M)$ (cf. Section \ref{ahilbertmanifoldofloops}), 
we may assume that $x$ is a critical point of $\A_L^W$, the restriction of $\A_L:H^1([0,1],U)\rightarrow \R$ to the intersection of $H^1([0,1],U)$ with the closed linear subspace 
$$H^1_W([0,1],\R^n) \ =\ \Big \{x\in H^1([0,1],\R^n) \ \Big |\ (x(0),x(1))\in W \Big \}\, ,$$
where $W\subseteq \R^n\times \R^n$ is a suitable linear subspace. The first condition in (\ref{nonlocalboundaryconditions}) can be rewritten as 
$$(x(0),x(1))\in W$$
and it is satisfied by any element in $H^1_W([0,1],\R^n)$. Differentiating the condition $(\phi(0,q),\phi(1,q))\in Q$ for every $q\in U\cap W$, we obtain that the linear map 
$$(\xi_0,\xi_1)\ \longmapsto \ \big (D_q\phi(0,q_0)[\xi_0],D_q\phi (1,q_1)[\xi_1]\big )$$
maps $W$ isomorphically onto the tangent space of $Q$ at $(\phi(0,q_0),\phi(1,q_1))$. Therefore, the second condition in (\ref{nonlocalboundaryconditions}) is equivalent to 
\begin{equation}
\frac{\partial L}{\partial v}\big (0,x(0),x'(0)\big ) [\xi_0] \ =\ \frac{\partial L}{\partial v}\big (1,x(1),x'(1)\big ) [\xi_1]\, ,  \ \ \ \ \forall \ (\xi_0,\xi_1)\in W\, .
\label{equivalentboundarycondition}
\end{equation}

Identity (\ref{differential}) and an integration by parts produce for every smooth curve $\xi:[0,1]\rightarrow \R^n$ with compact support in $(0,1)$ the identity 
\begin{eqnarray}
0 &=&  D\A_L^W(x)[\xi] \ =\ \int_0^1 \left [\frac{\partial L}{\partial q}(t,x,x')\cdot \xi + \frac{\partial L}{\partial v}(t,x,x')\cdot \xi'\right ]\, dt \nonumber \\
&=& \int_0^1 \! \left [\frac{\partial L}{\partial v}(t,x,x')\cdot \xi' + \Big [\xi \cdot \int_0^t \frac{\partial L}{\partial q}(s,x,x') ds \Big ]_0^1 - \int_0^t \frac{\partial L}{\partial q}(s,x,x')\cdot \xi'  ds  \right ] dt \nonumber \\
&=& \int_0^1 \! \left [\frac{\partial L}{\partial v}(t,x(t),x'(t)) - \int_0^t \frac{\partial L}{\partial q}(s,x(s),x'(s))\, ds \right ]\cdot \xi' \, dt\, .
\end{eqnarray}
Then the Du Bois-Reymond Lemma implies that there is a vector $u\in \R^n$ such that 
\begin{equation}
\frac{\partial L}{\partial v}(t,x(t),x'(t)) -\int_0^t \frac{\partial L}{\partial q}(s,x(s),x'(s))\, ds \ =\  u\, , \ \ \ \ \text{a.e. in } [0,1]\, .
\label{Dubois}
\end{equation}
Observe that the function
\begin{equation}
f_x(t):=\int_0^t \frac{\partial L}{\partial q} (s,x(s),x'(s))\, ds
\label{function}
\end{equation}
is continuous on $[0,1]$, indeed if $t_n\rightarrow t$ then the bound in (\ref{boundsconvergenza}) implies 
\begin{eqnarray*}
|f_x(t_n)-f_x(t)| &=&  \Big |\int_t^{t_n} \frac{\partial L}{\partial q}(s,x(s),x'(s))\, ds \Big | \ \leq \\
                                                 &\leq& \int_t^{t_n} \Big |\frac{\partial L}{\partial q}(s,x(s),x'(s)) \Big |\, ds\ \leq \\
                                                 &\leq&  l_3 \int_t^{t_n} \big (1+|x'(s)|^2 )\, ds \ = \  l_3 \, (t_n-t) + l_3\, \|x'\|_{L^2([t,t_n])}^2\, .
\end{eqnarray*}
At the same time, condition $(L2')$ implies that the map 
\begin{equation}
[0,1]\times U\times \R^n\longrightarrow [0,1]\times U\times \R^n\, ,\ \ \ (t,q,v)\longmapsto \left (t,q,\frac{\partial L}{\partial v}(t,q,v)\right )
\label{map}
\end{equation}
is a surjective smooth diffeomorphism. If we denote by $(t,q,p)\longrightarrow (t,q,\psi(t,q,p))$ its inverse, we have that 
$$x'(t) \ =\ \psi\left (t,x(t), \frac{\partial L}{\partial v}(t,x(t),x'(t))\right )$$ 
and hence (\ref{Dubois}) implies that 
\begin{equation}
x'(t)\ =\ \psi \big (t,x(t), u +f_x(t) \big ) \ \ \ \ \ \text{a.e. in } [0,1]\, .
\label{boot}
\end{equation}

In particular, $x'$ coincides almost everywhere with a continuous function; therefore $x\in C^1$ and now a boot-strap argument shows that $x$ is actually smooth. Therefore, we can apply 
a different integration by parts to the identity $D\A_L^W(x)[\xi]=0$ obtaining 
\begin{eqnarray}
 0 &=& D\A_L^W(x)[\xi] \ = \int_0^1 \Big [\frac{\partial L}{\partial q}(t,x,x') - \frac{d}{dt} \Big (\frac{\partial L}{\partial v}(t,x,x')\Big )\Big ]\cdot \xi \, dt +\nonumber \\
&+& \frac{\partial L}{\partial v}(1,x(1),x'(1))\cdot \xi(1) - \frac{\partial L}{\partial v}(0,x(0),x'(0))\cdot \xi(0)\, ,
\end{eqnarray}
where $\xi:[0,1]\rightarrow \R^n$ is any regular curve with $(\xi(0),\xi(1))\in W$. By taking curves $\xi$ with compact support in $(0,1)$ we get that $x$ satisfies the Euler-Lagrange equation (\ref{eulerlagrange1}); 
then, letting $\xi$ vary among all the smooth curves such that $(\xi(0),\xi(1))\in W$ we find that also (\ref{equivalentboundarycondition}) holds. This shows that every critical point of $\A_L^Q$ is a smooth solution of (\ref{eulerlagrange1}) 
satisfying the boundary conditions (\ref{nonlocalboundaryconditions}). 

Conversely, the fact that 
$$\frac{\partial^2 L}{\partial v^2} (t,q,v)$$ 
is invertible for every $(t,q,v)\in [0,1]\times U\times \R^n$ and the differentiable dependence of solutions of ordinary differential equations on the coefficients imply that every solution of (\ref{eulerlagrange1}) is smooth. If the 
boundary conditions (\ref{nonlocalboundaryconditions}) are also satisfied, then by integrating by parts the identity $D\A_L^Q(x)[\xi]=0$ as done above, one immediately sees that $x$ is a critical 
point of $\A_L^Q$ and this concludes the proof.

\vspace{5mm}

$2.\ $ Let $x$ be a critical point for $\A_L^Q$. By Theorem \ref{regularity}, $\A_L^Q$ is twice Gateaux-differentiable and its second Gateaux differential 
$$d^2\A_L(x): T_x H^1_Q([0,1],M) \times T_x H^1_Q([0,1],M) \longrightarrow \R$$
is a symmetric continuous bilinear form. Using the above localization argument, we may identify $x$ with a critical point of $\A_L^W$ in $H^1_W([0,1],\R^n)$ and 
$d^2\A_L^Q(x)$ with $d^2\A_L^W(x)$,  the restriction of the simmetric bilinear form (\ref{seconddifferential}) to  $H^1_W([0,1],\R^n)$. By $(L2')$, the self-adjoint operator $A$ on $H^1_W([0,1],\R^n)$ representing 
$$\alpha(x)[\xi,\eta] := \int_0^1 \frac{\partial^2 L}{\partial v^2}(t,x,x')\, \xi'\cdot \eta' \, dt\ =\ \langle A\xi,\eta\rangle_{H^1}$$ 
with respect to the Hilbert product is Fredholm and non-negative. In fact, if we consider the orthogonal decomposition of $H^1_W$ in 
$$H^1_W([0,1],\R^n)\ =\ \{\text{constants}\}\oplus \{\text{null average}\}\ =\ E_1\oplus E_2$$
we get that $\ker A =E_1$ while $A|_{E_2}$ is positive; in particular, $A$ is Fredholm with zero Morse index. The remaining three terms in (\ref{seconddifferential}) are continuous bilinear forms  
respectively on $L^2\times H^1$, $H^1\times L^2$ and $L^2\times L^2$. Therefore, the compactness of the embedding $H^1\hookrightarrow L^2$ implies that the self-adjoint operator representing 
$d^2\A_L^Q(x)-\alpha(x)$ is compact. More precisely, the bilinear form 
$$\int_0^1 \frac{\partial^2 L}{\partial q^2}(t,x,x')\, \xi\cdot \eta\, dt$$
is continuous on $L^2\times L^2$ and hence compact on $H^1\times H^1$. The bilinear form 
$$\beta(x)[\xi,\eta] := \int_0^1 \frac{\partial^2 L}{\partial q\partial v}(t,x,x')\, \xi\cdot \eta'\, dt \ = \ \langle B\xi,\eta\rangle_{H^1}$$ 
is instead continuous on $L^2\times H^1$. Thus $\beta$ is compact on $H^1\times H^1$ if and only if $B\xi_n \rightarrow 0$ whenever $\xi_n \rightharpoonup 0$; the latter fact is implied by
\begin{equation}
\xi_n \rightharpoonup 0 \ \ \ \Longrightarrow \ \ \ \langle B\xi_n,\eta_n\rangle_{H^1} \longrightarrow 0\, , \ \ \ \forall \eta_n\rightharpoonup 0\, .
\label{property}
\end{equation}

Indeed, if (\ref{property}) holds, then we can choose $\eta_n=B\xi_n$ obtaining $\|B\xi_n\|_{H^1}\rightarrow 0$. Observe that if $\xi_n\rightharpoonup 0$ then $\xi_n$ converges strongly to zero in $L^2$, because of 
the compactness of the embedding $H^1\hookrightarrow L^2$; therefore 
$$\langle B\xi_n,\eta_n\rangle_{H^1}\ = \int_0^1 \left [\frac{\partial^2 L}{\partial q\partial v} (t,x,x') \, \xi_n\right ] \cdot \eta_n'\, dt \ \longrightarrow \ 0$$ 
since $\eta_n\rightharpoonup 0$ and the other term in the integral tends to zero in $L^2$. This proves that the bilinear form $d^2\A_L^Q(x)$ can be written as 
$$d^2\A_L^Q(x) \ =\ \alpha(x) + \big [d^2\A_L^Q(x)-\alpha(x) \big ]$$ 
a compact perturbation of a Fredholm non negative operator; therefore, the second Gateaux differential $d^2\A_L^Q(x)$ is itself Fredholm with finite Morse index, since compact perturbations of a given operator modify the Spectrum only by adding 
a finite number of negative eingenvalues, each of which of finite mulipilicity.
\end{proof}

%%%%%%%%%%%%%%%%%%%%%%%%%%%%%%%%%%%%%%%%%%%%%%%%%%%%%%

\section{The minimax principle.}
\label{theminimaxprinciple}

In the previous section we showed that the critical points of the Lagrangian action functional on $H^1_Q([0,1],M)$ correspond to the solutions of the Euler-Lagrange 
equation (\ref{eulerlagrange1}) that satisfy the boundary conditions (\ref{nonlocalboundaryconditions}). 

The goal of the next chapters will be therefore to prove the existence of critical points of the Lagrangian action functional, more precisely of the free-time Lagrangian action functional $\A_k$ (see Section \ref{thefreetimeactionfunctional} for the definition
and for more details), which detects the solutions of the Euler-Lagrange equation that satisfy the conormal boundary conditions and are contained on the energy level $E^{-1}(k)$. Clearly the easiest thing to try would be to look for global/local minimizers; this is however 
not possible in general, since the free-time Lagrangian action functional might be unbounded from below and, even if bounded from below, might not attain its infimum. 

Thus we will need a method to detect critical points which are not necessarily global or local minimizer. This will be provided from the so-called \textit{minimax principle}. 

\begin{defn}
Let $(\mathcal H,g)$ be a Riemannian Hilbert manifold and let $f\in C^1(\mathcal H)$. A sequence $\{x_n\}_{n\in\N}\subseteq \mathcal H$ is said to be a $\mathsf{Palais}$-$\mathsf{Smale\ sequence}$ at level $c$ if 
$$\lim_{n\rightarrow +\infty} f(x_n)\ =\ c\, ,\ \ \ \ \lim_{n\rightarrow +\infty} \|df(x_n)\| \ = \ 0$$
where $\|\cdot \|$ denotes the dual norm induced by $g$. 
\end{defn}

One would like to know if Palais-Smale sequences for $f$ admit converging subsequences, since limiting points are automatically critical points of the functional $f$. However, this is unfortunately not always the case
as simple counterexamples for $\mathcal H=\R^2$ already show (cf. [Abb13]). Therefore, we will need the following

\begin{defn}
Let $(\mathcal H,g)$ be a Riemannian Hilbert manifold. The functional $f\in C^1(\mathcal H)$ is said to satisfy the $\mathsf{Palais}$-$\mathsf{Smale\ condition\ at\ level\ c}$ if any Palais-Smale sequence at level $c$ is compact, 
meaning that it admits converging subsequences. 

More generally, $f$ is said to satisfy the $\mathsf{Palais}$-$\mathsf{Smale\ condition}$ if it satisfies the Palais-Smale condition at level $c$, for every $c\in \R$.
\end{defn}

Notice that the Palais-Smale condition and the completeness of $g$ are somehow antagonist requirements: one may achieve completeness multiplying $g$ by a positive function which diverges at infinity 
(thus reducing the set of Cauchy sequences), while the Palais-Smale condition could be achieved multiplying $g$ by a positive function which is infinitesimal at infinity (since the dual norm is multiplied by the inverse of this function, 
this operation reduces the set of Palais-Smale sequences). 

Here we do not assume any completeness for $g$, since in the following chapters we will have to deal with non-complete Hilbert manifolds. The completeness will be replaced by the weaker condition  that the sublevel sets
of $f$ are complete. 

\vspace{3mm}

Now, let us assume that $f\in C^{1,1}(\mathcal H)$, where $C^{1,1}(\mathcal H)$ denotes the space of $C^1$-functionals on $\mathcal H$ with locally Lipschitz differential. Assume furthermore that the sublevel sets of $f$ are complete.
Denote with $\nabla f$ the gradient of $f$ with respect to $g$. Since $-\nabla f$ is only locally Lipschitz, it need not define a positively complete flow. 

To avoid this problem we consider the conformally equivalent bounded vector field 
$$X_f:= -\frac{\nabla f}{\sqrt{1+\|\nabla f\|^2}}\, .$$
With the vector field $X_f$ is associated the flow $\phi$, given by the solutions of 
$$\left \{\begin{array}{l}
\displaystyle \frac{\partial}{\partial t} \, \phi_t (u) = X_f(\phi_t(u));
\\ \\
\phi_0(u)=u;
\end{array}\right.$$

Since for every $u\in \mathcal H$ the maximal solution to the Cauchy-problem above is defined on the whole $[0,+\infty)$, we say that $\phi$ is positively complete on $\mathcal H$ and refer to it
as the \textit{negative gradient flow of f}.

\begin{teo}[General minimax principle]
Let $f$ be a $C^{1,1}$-functional on a Riemannian Hilbert manifold $(\mathcal H,g)$ such that the sublevel sets $\{f\leq c\}$ are complete and let $\Gamma$ be a set of subsets of $\mathcal H$ which is positively invariant with 
respect to the negative gradient flow of $f$. If the number 
\begin{equation}
c:=\ \inf_{\gamma\in \Gamma}\  \sup_{x\in \gamma} \ f(x)
\label{definizionedic}
\end{equation}
is finite, then $f$ admits a Palais-Smale sequence at level $c$. In particular, if $f$ satisfies the Palais-Smale condition at level $c$, then $c$ is a critical value for $f$.
\label{minimaxtheorem}
\end{teo}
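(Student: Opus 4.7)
The plan is to argue by contradiction using a quantitative deformation lemma. Suppose no Palais-Smale sequence at level $c$ exists; unpacking the definition, this means there are numbers $\epsilon_{0},\delta>0$ such that
$$\|df(x)\|\ \geq\ \delta\quad\text{whenever}\quad |f(x)-c|\leq \epsilon_{0}.$$
The idea is to exploit the negative gradient flow $\phi_{t}$ to push a near-minimax set $\gamma\in\Gamma$ with $\sup_{\gamma}f\leq c+\epsilon$ into a new set $\phi_{T}(\gamma)$ whose supremum is strictly less than $c$; by positive invariance $\phi_{T}(\gamma)\in\Gamma$, contradicting the infimum definition of $c$.

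First I would record the basic monotonicity along the flow. Because $f\in C^{1,1}$, the vector field $X_{f}=-\nabla f/\sqrt{1+\|\nabla f\|^{2}}$ is locally Lipschitz and satisfies $\|X_{f}\|_{g}<1$, so $\phi$ is well defined locally in time, and a direct calculation gives
$$\frac{d}{dt}f(\phi_{t}(u))\ =\ df(\phi_{t}(u))\bigl[X_{f}(\phi_{t}(u))\bigr]\ =\ -\frac{\|\nabla f(\phi_{t}(u))\|^{2}}{\sqrt{1+\|\nabla f(\phi_{t}(u))\|^{2}}}\ \leq\ 0.$$
Thus $f$ is non-increasing along orbits, so every forward orbit stays in $\{f\leq f(u)\}$. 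Combined with $\|X_{f}\|<1$, each orbit has length bounded by the elapsed time, hence it is Cauchy inside a complete sublevel; the completeness assumption on sublevels therefore guarantees forward completeness of $\phi$.

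Next I would produce the quantitative descent. Under the contradiction hypothesis, as long as $\phi_{s}(u)$ stays in the slab $S:=\{|f-c|\leq \epsilon_{0}\}$, the derivative above is bounded by $-\kappa$, where $\kappa:=\delta^{2}/\sqrt{1+\delta^{2}}>0$. Pick $\epsilon\in(0,\epsilon_{0})$, set $T:=2\epsilon/\kappa$, and take $\gamma\in\Gamma$ with $\sup_{\gamma}f\leq c+\epsilon$ coming from the definition of $c$. Fix $u\in\gamma$. If the orbit $\phi_{s}(u)$ leaves $S$ at some time $s\in[0,T]$, then by monotonicity of $f$ this exit must occur downward, through $\{f\leq c-\epsilon_{0}\}\subseteq\{f\leq c-\epsilon\}$, and the orbit stays there afterwards. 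Otherwise $\phi_{s}(u)\in S$ for all $s\in[0,T]$ and the inequality $\dot f\leq -\kappa$ integrates to $f(\phi_{T}(u))\leq f(u)-\kappa T\leq c+\epsilon-2\epsilon=c-\epsilon$. Either way $f(\phi_{T}(u))\leq c-\epsilon$.

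It follows that $\sup_{\phi_{T}(\gamma)}f\leq c-\epsilon<c$, while positive invariance of $\Gamma$ under $\phi$ places $\phi_{T}(\gamma)$ in $\Gamma$, contradicting the definition of $c$; hence a Palais-Smale sequence at level $c$ must exist, and the second conclusion follows from the Palais-Smale condition. The main technical point I expect to handle with care is the deformation step itself: tracking orbits that may leave the slab $S$ requires the monotonicity of $f$ along $\phi$, which in turn rests on the completeness of sublevel sets (to justify that $\phi$ is defined for all $t\in[0,T]$ on the region we deform) — this is where the hypothesis on $\{f\leq c\}$, rather than completeness of $(\mathcal H,g)$ itself, enters in an essential way.
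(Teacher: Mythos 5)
Your argument is correct and follows essentially the same route as the paper's proof: assume no Palais--Smale sequence at level $c$, get a uniform lower bound for the gradient on a slab around level $c$, integrate the decrease of $f$ along the bounded vector field $X_f$ to push a near-optimal $\gamma\in\Gamma$ strictly below $c-\epsilon$ in finite time, and contradict the definition of $c$ via positive invariance. The only cosmetic differences are that you phrase the contradiction hypothesis with two constants $(\epsilon_0,\delta)$ instead of one and you spell out the forward completeness of the flow on complete sublevels, which the paper records just before the theorem.
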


\begin{proof} By contradiction, suppose that there exists $\epsilon >0$ such that 
$$\|X_f\|\geq \epsilon\, , \ \ \ \ \text{on} \ \ \ \ \Big \{|f-c|\leq \epsilon\Big \}\, .$$
Notice that 
\begin{equation}
\frac{d}{dt} f(\phi_t(u))\ =\ df(\phi_t(u))\big[X_f(\phi_t(u))\big ]\ =-\frac{\|df(\phi_t(u))\|^2}{\sqrt{1+\|df(\phi_t(u))\|^2}}
\label{decreasing}
\end{equation}
so the function $t\longmapsto f(\phi_t(u))$ is decreasing. Suppose that 
$$|f(\phi_t(u))-c|\ \leq\ \epsilon\, , \ \ \ \ \forall \ t\in [0,T]\, ,$$
then we have 
\begin{eqnarray*}
2\epsilon &\geq& f(u)-f(\phi_T(u))\ = -\int_0^T \frac{d}{dt}\, f(\phi_t(u))\, dt \ = \\ 
                &=& \int_0^T \frac{\|df(\phi_t(u))\|^2}{\sqrt{1+\|df(\phi_t(u))\|^2}}\, dt \ \geq\ \int_0^T \|X_k(\phi_t(u))\|^2 \, dt \ \geq \ \epsilon^2 T
\end{eqnarray*}
from which we deduce that $T\leq 2/\epsilon$. Now choose $\gamma\in \Gamma$ such that 
$$\max_{x\in \gamma} \ f(x) \ \leq  \ c+\epsilon$$
(such a $\gamma$ exists because of the definition of $c$) and set $\tilde{\gamma} := \phi_T (\gamma)$, for some $T> 2/\epsilon$.
Observe that $\tilde{\gamma}\in \Gamma$, since by assumption $\Gamma$ is positively invariant under $\phi$. Moreover, since $f\leq c+\epsilon$ on $\gamma$, any $x\in \gamma$ satisfies exactly one of the following properties:
\begin{enumerate}
\item $|f(x)-c|\leq \epsilon$.
\item $f(x)<c-\epsilon$.
\end{enumerate}
If $x\in \gamma$ satisfies 1, then the choice of $T>2/\epsilon$ implies
\begin{equation}
f(\phi_T(x)) \ <\ c-\epsilon\, .
\label{dec}
\end{equation}
Clearly (\ref{dec}) holds also if $x$ satisfies 2, since $f$ decreases along the orbits of $\phi$. It follows that $\tilde{\gamma} \subseteq \{f<c-\epsilon\},$ which contradicts the definition of $c$.
\end{proof}

\vspace{3mm}

\begin{oss}
\label{flussotroncato}
It is sometimes useful to replace the negative gradient flow by a flow which fixes a certain sublevel of $f$. Let $\rho:\R\rightarrow \R^+$ be smooth, bounded and such that
$$\rho\ \equiv \ 0\  \ \ \text{on} \ (-\infty,b]\, , \ \ \ \ \rho\ >\ 0 \ \  \ \text{on} \ \ (b,+\infty)\, .$$

Consider the vector field $\rho(f)\, X_f$ and denote its flow with $\phi$. It is a negative gradient flow truncated below level $b$. The function  $t\longmapsto f(\phi_t(u))$ is constant if 
$$u\ \in \ \text{Crit}\, f \, \cup\, \big \{f\leq b\big \}$$
and it is strictly decreasing otherwise. If $\Gamma$ is positively invariant with respect to this negative gradient flow truncated below level $b$ 
and the minimax value $c$ is strictly larger than $b$, then $f$ has a Palais-Smale sequence at level $c$.
\end{oss}

We end this section discussing some interesting particular cases of the theorem above. First assume $f\in C^{1,1}(\mathcal H)$ is such that $\{f<a\}$ is not connected, say 
$\{f<a\} = A\cup B$ with $A,B$ disjoint non-empty open sets. We may think of $A$ and $B$ as two valleys, consider the set of paths going from one valley to the other
$$\Gamma:=\  \Big \{\text{curves in}\ H\ \text{with one end in}\ A\ \text{and the other in}\ B\Big \}\, ,$$
and define the minimax value $c$ of $f$ on $\Gamma$ as in (\ref{definizionedic}).  Observe that necessarily $c\in [a,+\infty)$, since $\Gamma$ is non empty and each of its elements intersects 
$$H\setminus (A\cup B)\ =\ \{f\geq a\}\, ,$$
so that $c$ is finite. Moreover, $\Gamma$ is positively invariant under the negative gradient flow, since $f$ is decreasing along the orbits of $\phi$. As a particular case of the above theorem we then 
get the celebrated mountain pass theorem of Ambrosetti and Rabinowitz.

\begin{teo}[Mountain pass theorem]
Let $f$ be a $C^{1,1}$-functional on a Riemannian Hilbert manifold $(\mathcal H,g)$ such that the sublevel sets are complete. Suppose that the sublevel $\{f<a\}$ is not connected and define $c$ as in 
(\ref{definizionedic}); then $f$ admits a Palais-Smale sequence at level $c$. In particular, if $f$ satisfies the Palais-Smale condition at level $c$, then $c$ is a critical value for $f$.
\label{mountainpasstheorem}
\end{teo}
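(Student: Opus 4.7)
The plan is to deduce this statement directly from the General Minimax Principle (Theorem \ref{minimaxtheorem}) applied to the family $\Gamma$ introduced just before the statement, namely the set of continuous curves in $\mathcal H$ having one endpoint in $A$ and the other in $B$, where $\{f<a\} = A \cup B$ is the decomposition into disjoint non-empty open sets. Two things have to be checked: that the minimax value $c$ defined in \eqref{definizionedic} is finite, and that $\Gamma$ is positively invariant under the negative gradient flow $\phi$ of $f$.

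First I would verify that $c$ is finite. For any $\gamma \in \Gamma$, the image of $\gamma$ is a continuous image of the compact interval $[0,1]$, and $f$ is continuous, so $\sup_{x \in \gamma} f(x) < +\infty$; hence $c < +\infty$. On the other hand, any $\gamma\in\Gamma$ joins a point of $A$ to a point of $B$, and since $A$ and $B$ are disjoint open sets covering $\{f<a\}$, the connected image of $\gamma$ cannot lie entirely inside $\{f<a\}$; it must therefore meet $\{f\ge a\}$, giving $\sup_{x \in \gamma} f(x) \geq a$. This yields $c \in [a,+\infty)$. Next I would show that $\Gamma$ is positively invariant under $\phi$. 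Fix $\gamma\in\Gamma$ with endpoints $u_0 \in A$ and $u_1 \in B$. By \eqref{decreasing} the map $t\mapsto f(\phi_t(u))$ is non-increasing, so $f(\phi_t(u_j)) < a$ for every $t \geq 0$ and $j=0,1$. The continuous curves $t\mapsto \phi_t(u_j)$ therefore take values in $\{f<a\} = A \cup B$, and since $A$, $B$ are open and disjoint while $[0,+\infty)$ is connected, each such curve remains in the component where it starts. Thus $\phi_t(u_0)\in A$ and $\phi_t(u_1)\in B$, and $\phi_t(\gamma)\in\Gamma$.

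With both hypotheses verified, Theorem \ref{minimaxtheorem} applies and produces a Palais-Smale sequence for $f$ at level $c$; if the Palais-Smale condition holds at level $c$, a convergent subsequence yields a critical point of $f$ with critical value $c$. The only substantive point is the positive invariance, which reduces to the topological observation that the flow lines of $\phi$ cannot jump between the connected components of $\{f<a\}$; everything else is a routine specialization of the general minimax principle.
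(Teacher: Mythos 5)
Your proposal is correct and follows essentially the same route as the paper, which also obtains the mountain pass theorem as a direct specialization of the general minimax principle (Theorem \ref{minimaxtheorem}) to the class $\Gamma$ of curves joining the two valleys, noting that $c\in[a,+\infty)$ because every such curve meets $\{f\geq a\}$ and that $\Gamma$ is positively invariant since $f$ decreases along the flow. The only difference is that you spell out the component-preservation argument for the endpoints (openness and disjointness of $A,B$ plus connectedness of the flow line), which the paper leaves implicit.
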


If we choose for $\Gamma$ the class of all one-point sets in $\mathcal H$, then $c$ as in (\ref{definizionedic}) is nothing but the infimum of $f$ on $\mathcal H$. 
Therefore, the general minimax principle has as a particular case the following

\begin{cor}
Assume that $f$ is a $C^{1,1}$-functional on a Riemannian Hilbert manifold $(\mathcal H,g)$ such that the sublevel sets are complete. If $f$ is bounded from below and satisfies the Palais-Smale condition at the level $c = \inf f$, then $f$ has a minimizer.
\end{cor}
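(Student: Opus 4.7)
The plan is to apply the general minimax principle (Theorem \ref{minimaxtheorem}) with the simplest possible choice of minimax family, namely
\[
\Gamma \ := \ \Big\{\{x\}\ \Big|\ x\in\mathcal H\Big\}\, ,
\]
the collection of all one-point subsets of $\mathcal H$. For any singleton $\gamma = \{x\}$ one has $\sup_{y\in \gamma} f(y) = f(x)$, and hence the associated minimax value is
\[
c \ = \ \inf_{\gamma\in\Gamma}\ \sup_{y\in\gamma}\ f(y) \ = \ \inf_{x\in\mathcal H}\ f(x)\, ,
\]
which coincides with $\inf f$. The assumption that $f$ is bounded from below guarantees that $c$ is finite, so one of the two numerical hypotheses of the minimax principle is automatic.

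Next I would check that $\Gamma$ is positively invariant under the negative gradient flow $\phi$ of $f$. Since $\phi_t$ is a (well-defined, positively complete) map $\mathcal H\to\mathcal H$, the image of a singleton is a singleton: $\phi_t(\{x\}) = \{\phi_t(x)\}\in\Gamma$. Hence $\Gamma$ is positively invariant, as required. Together with the standing assumption that sublevels $\{f\leq c\}$ are complete and that $f\in C^{1,1}(\mathcal H)$, this means that Theorem \ref{minimaxtheorem} applies and produces a Palais--Smale sequence $\{x_n\}\subseteq\mathcal H$ for $f$ at the level $c=\inf f$.

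Finally, I would invoke the Palais--Smale condition at level $c$ (which is part of the hypothesis) to extract from $\{x_n\}$ a subsequence $x_{n_k}\to x^*\in\mathcal H$. Continuity of $f$ yields $f(x^*) = \lim_k f(x_{n_k}) = c = \inf f$, so $x^*$ is a global minimizer (and, since $df$ is continuous, necessarily a critical point of $f$). I do not expect any real obstacle here: the whole point is that the minimax principle upgrades the naive minimizing sequence, which need not a priori satisfy $\|df(x_n)\|\to 0$, to a genuine Palais--Smale sequence at the infimum via the gradient flow deformation argument already carried out in the proof of Theorem \ref{minimaxtheorem}. The one thing to keep in mind is that the Palais--Smale hypothesis is assumed at the specific level $c=\inf f$, which is exactly what is needed to guarantee convergence of the sequence produced by the minimax principle.
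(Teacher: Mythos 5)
Your proposal is correct and follows exactly the route the paper takes: the corollary is obtained as the special case of the general minimax principle (Theorem \ref{minimaxtheorem}) in which $\Gamma$ is the class of all one-point subsets of $\mathcal H$, so that the minimax value is $\inf f$, and the Palais--Smale condition at that level upgrades the resulting Palais--Smale sequence to a convergent one whose limit is a global minimizer. No gaps; the invariance of $\Gamma$ under the (normalized, positively complete) negative gradient flow and the finiteness of $c$ are verified just as in the paper.
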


%%%%%%%%%%%%%%%%%%%%%%%%%%%%%%%%%%%%%%%%%%%%%%
%%%%%%%%%%%%%%%%%%%%%%%%%%%%%%%%%%%%%%%%%%%%%%
%%%%%%%%%%%%%%%%%%%%%%%%%%%%%%%%%%%%%%%%%%%%%%

\chapter{The variational setting}
\label{chapter2}

Let $M$ be a closed connected Riemannian manifold, $Q_0,Q_1\subseteq M$ be closed connected submanifolds and $L:TM\rightarrow \R$ be a Tonelli Lagrangian.
Being $L$ autonomous, the energy $E$ in (\ref{energyfunction}) is constant along the solutions of the Euler-Lagrange equation (\ref{eulerlagrange1}). Therefore, it makes sense to look at 
Euler-Lagrange orbits that satisfy the conormal boundary conditions (\ref{lagrangianformulation1}) and are contained in a given energy level $E^{-1}(k)$. Goal of this chapter will be to 
provide the tools needed to attack this problem.

As already explained in Sections \ref{thelagrangianactionfunctional} and \ref{conormalboundaryconditions}, this can be interpreted as the problem of finding critical points of a suitable functional defined 
on the Hilbert manifold $H^1_Q([0,1],M)$ of $H^1$-paths connecting the submanifolds $Q_0$ and $Q_1$. The ``energy $k$'' condition can be then achieved by considering a slightly different functional, namely the free-time action functional $\A_k$,
on the product manifold 
$$\mathcal M_Q := \ H^1_Q([0,1],M) \times (0,+\infty)\, .$$

This approach brings however several complications, since the manifold $\mathcal M_Q$ is not complete anymore. In this sense, a very careful study of the Palais-Smale sequences for $\A_k$ will be needed. 
Furthermore, the properties of $\A_k$ (as well as those of the Euler-Lagrange flow associated to $L$) depend essentially on $k$ and change drastically when crossing some special energy values, called the \textit{Ma\~n\'e critical values}.

\vspace{4mm}

In Section \ref{thefreetimeactionfunctional} we define the free-time Lagrangian action functional $\A_k$ and discuss its regularity properties. We show that $\A_k\in C^{1,1}(\mathcal M_Q)$ and that $\A_k$ is twice differentiable 
if and only if $L$ is electro-magnetic as in (\ref{magneticlagrangian}). We prove then that critical points of $\A_k$ on $\mathcal M_Q$ correspond to the Euler-Lagrange orbits that satisfy the conormal boundary 
conditions (\ref{lagrangianformulation1}) and are contained in the energy level $E^{-1}(k)$. 

\vspace{1mm}

In Section \ref{palaissmalesequences} we proceed to the study of the Palais-Smale sequences for $\A_k$. We show that Palais-Smale sequences $(x_h,T_h)$ with $T_h\rightarrow 0$ may occur only on connected components of $\mathcal M_Q$ that 
contain constant paths and only at level zero, meaning that necessarily $\A_k(x_h,T_h)\rightarrow 0$. We then prove that Palais-Smale sequences with times bounded and bounded away from zero always admit converging subsequences. The two results 
combined imply that the only Palais-Smale sequences for $\A_k$ that might cause difficulties are the ones for which the times are unbounded. 

\vspace{1mm}

In Section \ref{manecriticalvalues} we recall the definition of the Ma\~n\'e critical values $c(L),c_0(L),c_u(L)$ and briefly discuss their relations with the dynamical and geometric properties of the Euler-Lagrange flow. 
We then move to the definition of the critical value $c(L;Q_0,Q_1)$ which is relevant for our purposes. We show that for all $k<c(L;Q_0,Q_1)$ the action functional $\A_k$ is unbounded from below on any connected 
component of $\mathcal M_Q$, whilst it will turn to be bounded from below on each connected component of $\mathcal M_Q$ for every $k\geq c(L;Q_0,Q_1)$. The latter fact implies that, for all $k>c(L;Q_0,Q_1)$,
the free-time action functional $\A_k$ satisfies the Palais-Smale condition on Palais-Smale sequences with times bounded away from zero; in particular, $\A_k$ satisfies the Palais-Smale condition on the connected components of 
$\mathcal M_Q$ not containing constant paths.

%%%%%%%%%%%%%%%%%%%%%%%%%%%%%%%%%%%%%%%%%%%%%%%%%%%%%%%%%%%%%%%%%%%%%

\section{The free-time action functional.}
\label{thefreetimeactionfunctional}

For any given absolutely continuous curve $\gamma:[0,T]\rightarrow M$ we define $x:[0,1]\rightarrow M$ as $x(s):=\gamma(s\, T)$. Throughout the whole work we will identify $\gamma$ with the pair $(x,T)$.

To avoid confusion we will always denote with a \textit{dot} the derivative with respect to $t$ and with a \textit{prime} the derivative with respect to $s$.

Fix a real number $k$, the value of the energy for which we would like to find solutions of the Euler-Lagrange equation (\ref{eulerlagrange1}) satisfying the conormal boundary conditions (\ref{lagrangianformulation1}).
Recall that, since the energy level $E^{-1}(k)$ is compact, up to the modification of $L$ outside it, we may assume the Tonelli Lagrangian $L$ to be electro-magnetic for $\|v\|_q$ large enough. In particular
\begin{eqnarray}
L(q,v) &\geq& \ a \, \|v\|_q^2 - b\, , \ \ \ \ \forall (q,v)\in TM\, , \label{firstinequality} \\ && \nonumber \\ 
d_{vv}L(q,v)[u,u] &\geq& 2a \, \|u\|_q^2 \, \ \ \ \forall (q,v)\in TM\, ,\ \forall u\in T_qM \label{secondinequality}
\end{eqnarray}
for suitable numbers $a >0$, $b\in \R$ and
\begin{equation}
\A_k(x,T) := \ \int_0^T \Big[L(\gamma(t),\dot{\gamma}(t))+k \Big ]\, dt \ = \ T\int_0^1 \Big[ L\Big(x(s),\frac{x'(s)}{T} \Big ) + k\Big ]\, ds \label{freetimeactionfunctional}
\end{equation}
is well-defined for every $x\in H^1([0,1],M)$. Hence, we get a well-defined functional
$$\A_k : H^1([0,1],M) \times (0,+\infty) \longrightarrow \R\, ,$$
called the \textit{free-time action functional}. We denote the space  $H^1([0,1],M)\times (0,+\infty)$ simply with $\mathcal M$. Clearly $\mathcal M$ can be interpreted as the space of Sobolev paths in $M$ 
with arbitrary interval of definition through the identification $\gamma=(x,T)$ above. Furthermore, $\mathcal M$ is a product Hilbert manifold; we endow $\mathcal M$ with the product metric 
\begin{equation}
g_{\mathcal M} := \ g_{H^1}\ + \ dT^2\, ,
\label{productmetric}
\end{equation}
where $g_{H^1}$ is, as in (\ref{riemannianmetric}), the standard metric on $H^1([0,1],M)$ induced by the given Riemannian metric $g$ on $M$. Obviously, $(\mathcal M,g_{\mathcal M})$ is not complete as the factor $(0,+\infty)$ is 
not complete with respect to the Euclidean metric. The following proposition is about the regularity of the free-time action functional $\A_k$.

\begin{prop}
The following hold:
\begin{enumerate}
\item $\A_k \in C^{1,1}(\mathcal M)$ and it has second Gateaux differential at every point.
\item $\A_k$ is twice Fr\'ech\'et differentiable at every point if and only if $L$ is electromagnetic on the whole $TM$; in this case, $\A_k$ is actually smooth.
\end{enumerate}
\label{propregularity}
\end{prop}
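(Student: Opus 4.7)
The plan is to reduce the first statement to Theorem \ref{regularity} by freezing the time variable, and to handle the $T$-direction by direct computation. For each $T>0$, introduce the auxiliary autonomous Lagrangian
$$L_T(q,v) := T\Big[L(q,v/T)+k\Big],$$
so that $\A_k(x,T)=\A_{L_T}(x)$ in the sense of Section \ref{thelagrangianactionfunctional}. Differentiating $L_T$ once and twice in $(q,v)$ and using that $L$ satisfies $(L1)$ and $(L2)$ (being electromagnetic outside a compact set of $TM$), one checks that $L_T$ also satisfies $(L1)$ and $(L2)$, with constants that depend continuously on $T$ and remain bounded on every compact subset of $(0,+\infty)$. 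Theorem \ref{regularity} therefore applies and yields a partial differential $D_x\A_k(x,T)$ which is continuous and locally Lipschitz in $x$, with estimates uniform in $T$ on compact subsets of $(0,+\infty)$.

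Next I would compute the partial derivative in $T$ by differentiating under the integral sign, justified by the dominated convergence theorem and the quadratic growth of $L$. A short manipulation using the definition of the energy $E$ gives
$$\partial_T\A_k(x,T) = k - \int_0^1 E\Big(x(s),x'(s)/T\Big)\, ds.$$
From the growth bounds on $L$ and $d_vL$ one checks that this partial is continuous on $\mathcal M$ and locally Lipschitz jointly in $(x,T)$. Combined with the $x$-regularity this proves $\A_k\in C^{1,1}(\mathcal M)$. The existence of the second Gateaux differential at every point is obtained by the same scheme: the $xx$-block is provided by Theorem \ref{regularity} applied to $L_T$, while the $TT$-block and the mixed block follow by differentiating the explicit expressions under the integral sign (again by dominated convergence).

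For part 2, suppose $\A_k$ is twice Fr\'echet differentiable at every $(x_0,T_0)\in\mathcal M$. Then in particular $\A_k(\cdot,T_0)=\A_{L_{T_0}}$ is twice Fr\'echet differentiable at $x_0$, so Proposition \ref{nonregularity} forces the fiber $v\mapsto L_{T_0}(x_0(t),v)$, and hence $v\mapsto L(x_0(t),v)$, to be a polynomial of degree at most two for every $t\in[0,1]$. Letting $x_0$ run over all of $H^1([0,1],M)$ propagates this property to every point of $M$; combined with the Tonelli hypothesis, $L$ must have the form $\frac{1}{2}\|v\|_q^2+\theta_q(v)-V(q)$ for some Riemannian metric on $M$, a smooth $1$-form $\theta$ and a smooth function $V$, i.e.\ electromagnetic. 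Conversely, when $L$ is electromagnetic one computes
$$\A_k(x,T) = \frac{1}{2T}\int_0^1 \|x'(s)\|_{x(s)}^2\, ds + \int_0^1 \theta_{x(s)}(x'(s))\, ds + T\int_0^1\big[k-V(x(s))\big]\, ds,$$
which is smooth on $\mathcal M$ because $T>0$ and each integrand depends smoothly on its arguments, with polynomial (of degree at most two) dependence on $x'$.

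The main technical obstacle is the joint regularity in $(x,T)$: one must maintain uniform control on the constants in $(L1)$ and $(L2)$ for the family $\{L_T\}$ as $T$ ranges in a bounded subset of $(0,+\infty)$, and propagate this uniformity through the estimates used in the proof of Theorem \ref{regularity}, so as to conclude that $D\A_k$ is locally Lipschitz jointly in $(x,T)$ and not merely separately in each variable.
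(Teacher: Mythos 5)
Your argument is correct and follows essentially the same route as the paper: Theorem \ref{regularity} supplies the regularity in the $x$-direction, the $T$-derivative is computed explicitly as $\int_0^1\big[k-E(x,x'/T)\big]\,ds$, the joint $C^{1,1}$ regularity and the second Gateaux differential are obtained by dominated convergence together with the mean value theorem, and part 2 reduces to Proposition \ref{nonregularity} plus the explicit smooth formula in the electromagnetic case. Your auxiliary family $L_T$ is only a repackaging of the same estimates (the paper instead splits the difference quotient of $d\A_k$ into four blocks and passes to the limit term by term), so the content is identical.
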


\begin{proof}
Statement 2 is an obvious consequence of Proposition  \ref{nonregularity}; observe that condition $(L1)$ is satisfied by $L$ since $L$ is electromagnetic outside a compact set. We also already know from 
Theorem \ref{regularity} that the fixed-time action functional is continuously differentiable on $H^1([0,1],M)$ and its differential is locally Lipschitz continuous and Gateaux-differentiable.
Thus, $\A_k$ is Fr\'ech\'et-differentiable in the $x$-direction, that is there exists the partial differential $d_x\A_k(x,T)$ and 
\begin{equation}
d_x\A_k(x,T)[(\zeta,0)]\ = \int_0^1 \Big [d_qL\Big (x(s),\frac{x'(s)}{T}\Big )[\zeta] + d_vL\Big (x(s),\frac{x'(s)}{T}\Big ) [\zeta'] \Big ]\, ds
\label{differenzialelungox}
\end{equation}
is Lipschitz-continuous and Gateaux-differentiable. On the other hand we have
\begin{eqnarray*}
&& \frac{\A_k(x,T+h)-\A_k(x,T)}{h}\ =\\
&=& \frac{T+h}{h} \int_0^1 \Big [L\Big (x(s),\frac{x'(s)}{T+h}\Big ) + k \Big ] ds \ - \ \frac{T}{h} \int_0^1 \Big [L\Big (x(s),\frac{x'(s)}{T}\Big ) + k \Big ] ds \ =\\
&=& \!\!\! \int_0^1 \! \Big [L\Big (x(s),\frac{x'(s)}{T+h}\Big )+k\Big ]ds \ + \ \frac{T}{h}\int_0^1 \! \Big [ L \Big (x(s),\frac{x'(s)}{T+h}\Big ) - L\Big (x(s),\frac{x'(s)}{T}\Big )\Big ]ds 
\end{eqnarray*}
and then, taking the limit for $h\rightarrow 0$, we get 
\begin{eqnarray}
\frac{\partial \A_k}{\partial T}(x,T) &=& \int_0^1 \Big [L\Big (x(s),\frac{x'(s)}{T}\Big ) + k + T\cdot d_vL\Big (x(s),\frac{x'(s)}{T}\Big ) \Big [-\frac{x'(s)}{T^2}\Big ]\Big )ds\ =\nonumber \\
&=&  \int_0^1 \Big [L\Big (x(s),\frac{x'(s)}{T}\Big ) + k - d_vL\Big (x(s),\frac{x'(s)}{T}\Big )\Big [\frac{x'(s)}{T}\Big ] \Big ] ds  \ =\nonumber \\
&=&  \int_0^1 \Big [k-E\Big (x(s),\frac{x'(s)}{T}\Big ) \Big ] ds\, .  \label{derivatainT}
\end{eqnarray}

Therefore $\A_k$ is Fr\'ech\'et-differentiable in both the $x$ and $T$-direction with continuous partial differentials and hence, by the total differential theorem,
it is continuously Fr\'ech\'et-differentiable at $(x,T)$ with
$$d\A_k(x,T)[(\zeta,H)]\ =\ d_x\A_k(x,T)[(\zeta,0)] \ +\ \frac{\partial \A_k}{\partial T}(x,T)\, H\, .$$

Now we want to prove that the Fr\'ech\'et-differential is locally Lipschitz-continuous and Gateaux-differentiable; thus, we consider the quantity

\begin{eqnarray}
&&  \frac{1}{\delta} \Big [ d\A_k(x+\delta \eta,T+\delta W)-d\A_k(x,T)\Big ]\big [(\zeta,H)\big ]\ =\nonumber \\
&=& \frac{1}{\delta} \Big [d_x\A_k(x+\delta \eta,T+\delta W) - d_x\A_k(x,T)\Big ]\big [(\zeta,0)\big ] + \nonumber \\
&+& \frac{1}{\delta}\Big [\frac{\partial \A_k}{\partial T}(x+\delta\eta,T+\delta W)-\frac{\partial \A_k}{\partial T}(x,T)\Big ]\, H\ =\nonumber \\
&=& \frac{1}{\delta} \Big [d_x\A_k(x+\delta\eta,T+\delta W) -d_x\A_k(x,T+\delta W) \Big ]\big [(\zeta,0)\big ]\ +\label{primaquantita}\\
&+& \frac{1}{\delta} \Big [d_x\A_k(x,T+\delta W)-d_x\A_k(x,T)\Big ]\big [(\zeta,0)\big ]\ + \label{secondaquantita} \\
&+& \frac{1}{\delta} \Big [\frac{\partial \A_k}{\partial T}(x+\delta\eta,T+\delta W)-\frac{\partial \A_k}{\partial T}(x,T+\delta W)\Big ]\, H \ +\label{terzaquantita} \\
&+& \frac{1}{\delta} \Big [\frac{\partial \A_k}{\partial T}(x,T+\delta W)-\frac{\partial \A_k}{\partial T}(x,T)\Big ]\, H \label{quartaquantita}
\end{eqnarray}
The expression in (\ref{primaquantita}) converges for $\delta\rightarrow 0$ to 
$$d^2_{xx}\A_k (x,T)\big [(\zeta,0),(\eta,0)\big ]$$
which we already know from Theorem \ref{regularity} to be a bounded symmetric bilinear form on $H^1([0,1],M)$ (thus on $\mathcal M$). The quantity in (\ref{secondaquantita}) is instead equal to 
$$\frac{1}{\delta} \int_0^1 \!\!\left [ \! \left (\! d_qL\Big (x,\frac{x'}{T\!+\!\delta W}\Big )\! - d_qL\Big (x,\frac{x'}{T}\Big )\! \right )\!\zeta + \! \left (\! d_vL\Big (x,\frac{x'}{T\!+\!\delta W}\Big )\! - d_vL\Big (x,\frac{x'}{T}\Big )\! \right )\!\zeta'\right ]\, ds$$
and  converges for $\delta \rightarrow 0$  by the Lebesgue dominated convergence theorem to 
$$\int_0^1 \left [d^2_{vq} L\Big (x(s),\frac{x'(s)}{T}\Big )\left [\zeta,-\frac{x'(s)}{T^2}\right ] + d^2_{vv}L\Big (x(s),\frac{x'(s)}{T}\Big ) \left [\zeta',-\frac{x'(s)}{T^2}\right ]\right]\, W\, ds$$
which is a bounded bilinear operator on $\mathcal M$. Analogously one can prove that the quantities in (\ref{terzaquantita}) and in (\ref{quartaquantita}) converge for $\delta \rightarrow 0$ to bounded bilinear operators on $\mathcal M$. Therefore,
$d\A_k$ is Gateaux-differentiable at every point $(x,T)$; the locally Lipschitz-continuity follows now from the mean value theorem.
\end{proof}

\vspace{5mm}

Since we want to get solutions of the Euler-Lagrange equation satisfying the conormal boundary conditions (\ref{nonlocalboundaryconditions}), we shall consider the restriction of the free-time action functional to the smooth submanifold 
$$\mathcal M_Q:= \ H^1_Q([0,1],M) \times (0,+\infty)\, ,$$
with $Q=Q_0\times Q_1$ or $Q=\Delta$ diagonal in $M\times M$. In the latter case we call $\A_k|_{\mathcal M_\Delta}$ the \textit{free-period Lagrangian action functional}.
For the sake of simplicity we denote the restriction of $\A_k$ to $\mathcal M_Q$ again with $\A_k$. Observe that $\mathcal M_Q$ is homotopy equivalent to $\Omega_{Q_0,Q_1}(M)$ (cf. Section \ref{ahilbertmanifoldofloops}); 
in particular its connected components are as explained in Lemma \ref{componenticonnesseomegaqoq1}. Also, Proposition \ref{regularity} implies the following

\begin{teo}
A curve $\gamma=(x,T)$ is a (smooth) solution of (\ref{eulerlagrange1}) satisfying the conormal boundary conditions (\ref{nonlocalboundaryconditions}) and with energy $\, E(\gamma,\dot{\gamma})\equiv k\, $ 
if and only if $(x,T)$ is a critical point of the free-time action functional $\A_k$ restricted to $\mathcal M_Q$.
\label{equivalence}
\end{teo}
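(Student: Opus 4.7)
The plan is to reduce the theorem to Theorem \ref{criticalpoints} (on the fixed-time action functional) by exploiting the product structure of $\mathcal M_Q = H^1_Q([0,1],M) \times (0,+\infty)$. Since the tangent space at $(x,T)$ decomposes as $T_xH^1_Q([0,1],M)\oplus \R$, the condition $d\A_k(x,T)=0$ is equivalent to the simultaneous vanishing of the two partial differentials. I would therefore handle the $x$-variation and the $T$-variation separately.

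For the $x$-variation, I would fix $T>0$ and observe that the map $\gamma\longmapsto(x,T)$, $x(s):=\gamma(sT)$, is a bijection between $H^1_Q([0,T],M)$ and $H^1_Q([0,1],M)\times\{T\}$ under which the fixed-time functional $\A_k(\cdot,T)$ corresponds exactly to the Lagrangian action of $L+k$ on the interval $[0,T]$. Adding the constant $k$ does not affect the Euler-Lagrange equation nor the conormal boundary conditions (the term $kT$ depends only on $T$), so Theorem \ref{criticalpoints} applies: $d_x\A_k(x,T)[(\zeta,0)]=0$ for every $\zeta\in T_xH^1_Q([0,1],M)$ if and only if the rescaled curve $\gamma$ is a smooth solution of the Euler-Lagrange equation \eqref{eulerlagrange1} satisfying the conormal boundary conditions \eqref{nonlocalboundaryconditions}. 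Smoothness of $x$ follows from smoothness of $\gamma$ by the rescaling.

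For the $T$-variation, I would use the explicit formula \eqref{derivatainT}, which gives
\[
\frac{\partial \A_k}{\partial T}(x,T) \ = \ \int_0^1 \Big[k-E\Big(x(s),\frac{x'(s)}{T}\Big)\Big]\, ds.
\]
If the $x$-variation has already forced $\gamma$ to solve the Euler-Lagrange equation, then the energy $t\longmapsto E(\gamma(t),\dot\gamma(t))$ is constant on $[0,T]$, so the integrand is the constant $k-E(\gamma,\dot\gamma)$; thus $\partial_T\A_k(x,T)=0$ if and only if $E(\gamma,\dot\gamma)\equiv k$. Conversely, starting from a smooth Euler-Lagrange solution with energy $k$ satisfying the conormal boundary conditions, the same two computations show that both $d_x\A_k(x,T)$ and $\partial_T\A_k(x,T)$ vanish, hence $(x,T)$ is a critical point of $\A_k|_{\mathcal M_Q}$.

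There is no serious obstacle here: the non-trivial content is already in Theorem \ref{criticalpoints}, which provides both the smoothness of critical points and the derivation of the conormal boundary conditions from the variational equation. The only point requiring a bit of attention is the interplay between the rescaling $s\longmapsto sT$ and the growth conditions $(L1)$, $(L2)$: this is transparent because the modification of $L$ to be quadratic at infinity (cf.\ the Remark preceding \eqref{firstinequality}) makes the rescaled Lagrangian $L_T(q,v):=T[L(q,v/T)+k]$ satisfy the same hypotheses used in Theorem \ref{criticalpoints}, so no regularity subtlety arises.
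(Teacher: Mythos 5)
Your proposal is correct and follows essentially the same route as the paper: split $d\A_k(x,T)=0$ into the partial differential in $x$ (handled by Theorem \ref{criticalpoints} after the rescaling $\gamma(t)=x(t/T)$) and the partial derivative in $T$ (handled via \eqref{derivatainT} together with the constancy of the energy along Euler-Lagrange orbits). Your extra remark on the behaviour of the growth conditions $(L1)$, $(L2)$ under the rescaling is a point the paper leaves implicit, but it does not change the argument.
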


\begin{proof}
The pair $(x,T)$ is a critical point for $\A_k$ if and only if 
$$d\A_k(x,T)[(\zeta,H)]\ =\ 0$$ 
for any choice of $(\zeta,H)$. From Theorem \ref{criticalpoints} it follows that the condition 
$$d_x\A_k(x,T)[(\zeta,0)]\ =\ 0$$ 
is equivalent to $\gamma(t):=x(t/T)\in H^1_Q([0,T],M)$ to be a solution of (\ref{eulerlagrange1}) satisfying the conormal boundary conditions (\ref{nonlocalboundaryconditions}). Furthermore, using (\ref{derivatainT}), we get that 
$$0\ =\ \frac{\partial \A_k}{\partial T}(x,T)\ =\ \frac{1}{T}\int_0^T \Big [k-E(\gamma(t),\dot{\gamma}(t))\Big ]\, dt$$
and hence $E(\gamma,\dot{\gamma})\equiv k$, since the energy is constant along $\gamma$.
\end{proof}

\vspace{5mm} 

The Hilbert manifold $\mathcal M_Q$ is clearly not complete. Therefore it is useful to know whether sublevel sets of the free-time action functional $\A_k$ are complete or not. It turns out hat the completeness of sublevel sets of $\A_k$
neither depends on the value $k$ of the energy nor on the topological property of $Q_0$ and $Q_1$, but only on the fact that the submanifolds intersect or not. This is in strong contrast with what happens for 
the geometric and analytical properties of $\A_k$, as we will see later on.

\begin{lemma}
If $\, Q_0\cap Q_1=\emptyset\, $ then the sublevels 
$$\Big \{ (x,T) \in \mathcal M_Q \ \Big | \ \A_k(x,T) \leq c\Big \}$$
are complete.
\label{completeness0}
\end{lemma}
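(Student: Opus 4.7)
The plan is to show that any Cauchy sequence $(x_h, T_h)$ in a sublevel $\{\A_k \leq c\}$ converges to a point with strictly positive time coordinate; completeness then follows since $H^1_Q([0,1],M)$ is closed in the complete manifold $H^1([0,1],M)$ and $\A_k$ is continuous by Proposition \ref{propregularity}. The only danger is that $T_h \to 0$, so the entire content of the lemma is to rule this out by exploiting the hypothesis $Q_0 \cap Q_1 = \emptyset$.

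First I would combine the lower bound \eqref{firstinequality} with the definition of $\A_k$ to obtain
$$\A_k(x,T) \ \geq \ \frac{a}{T}\int_0^1 \|x'(s)\|^2\, ds \ +\ (k-b)\, T$$
for every $(x,T)\in \mathcal M_Q$. Next, because $Q_0$ and $Q_1$ are disjoint closed subsets of the compact manifold $M$, we have $\delta := d(Q_0,Q_1) > 0$, and the Cauchy--Schwarz inequality gives
$$\delta \ \leq \ d(x(0),x(1)) \ \leq \ \int_0^1 \|x'(s)\|\, ds \ \leq \ \left(\int_0^1 \|x'(s)\|^2\, ds\right)^{1/2}$$
for every $x\in H^1_Q([0,1],M)$. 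Inserting this into the previous inequality yields the a priori estimate
$$\A_k(x,T) \ \geq\ \frac{a\, \delta^2}{T} \ +\ (k-b)\, T\, , \qquad \forall\ (x,T)\in \mathcal M_Q\, .$$

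Now let $(x_h,T_h)$ be a Cauchy sequence in $\{\A_k\leq c\}$. The product structure of $g_{\mathcal M}$ ensures $x_h$ is Cauchy in $H^1_Q([0,1],M)$, and hence converges to some $x\in H^1_Q([0,1],M)$; similarly $T_h$ is Cauchy in $\R$ and converges to some $T_\infty\in [0,+\infty)$. The key point is that the estimate above forces
$$c \ \geq\ \A_k(x_h,T_h)\ \geq \ \frac{a\delta^2}{T_h}\ +\ (k-b)\, T_h\, ,$$
so if $T_\infty = 0$ the right-hand side diverges to $+\infty$ (the first term blows up while the second stays bounded, regardless of the sign of $k-b$). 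This contradiction forces $T_\infty > 0$, so $(x,T_\infty)\in \mathcal M_Q$, and the continuity of $\A_k$ (Proposition \ref{propregularity}) gives $\A_k(x,T_\infty)\leq c$. The only nontrivial step is the a priori bound above; the rest is bookkeeping. No separate treatment of the sign of $k-b$ is needed, which is precisely the advantage of using $Q_0\cap Q_1=\emptyset$.
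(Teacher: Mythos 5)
Your proposal is correct and follows essentially the same route as the paper: the lower bound $\A_k(x,T)\geq \frac{a}{T}l(x)^2+T(k-b)$ from \eqref{firstinequality}, together with the fact that $Q_0\cap Q_1=\emptyset$ forces the length (equivalently, $d(Q_0,Q_1)>0$) of admissible paths to be bounded away from zero, shows that $T$ is bounded away from zero on each sublevel, which is exactly the paper's argument. Your explicit Cauchy-sequence bookkeeping at the end is just a spelled-out version of the paper's one-line conclusion.
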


\begin{proof}
By (\ref{firstinequality}) we have the chain of inequalities
\begin{eqnarray}
\A_k(x,T) &=& T \int_0^1 \Big  [L\Big (x(s),\frac{x'(s)}{T}\Big ) + k\Big ] ds  \ \geq \  T\int_0^1 \Big [ a \, \frac{\|x'(s)\|^2}{T^2} - b + k \Big ] ds  \nonumber \\
                &=& \frac{a}{T} \int_0^1 \|x'(s)\|^2 \, ds +T(k-b) \ \geq \ \frac{a}{T} \, l(x)^2 + T(k-b) \label{stimaazione}
\end{eqnarray}
where $l(x)$ denotes the length of the path $x$. Since $Q_0\cap Q_1=\emptyset$, the length of any path connecting $Q_0$ to $Q_1$ is bounded away from zero 
by a suitable positive constant. Therefore, $T$ is bounded away from zero on
$$\Big \{(x,T) \in \mathcal M_Q\  \Big | \ \A_k(x,T) \leq c\Big \}$$
for any $c\in \R$, proving the statement.
\end{proof}

\vspace{5mm}

If $\, Q_0\cap Q_1\neq \emptyset\, $ then the length of paths connecting $Q_0$ to $Q_1$ is not any more bounded away from zero. In this case we have to distinguish 
between connected components of $\mathcal M_Q$ that do, respectively do not, contain constant paths.

\begin{lemma}
If $Q_0\cap Q_1\neq \emptyset$ then:

\begin{enumerate}
\item The sublevel sets of $\A_k$  in each connected component of $\mathcal M_Q$ that does not contain any constant path are complete.
\item If $(x_h,T_h)$ is such that $T_h\rightarrow 0$, then
\begin{equation}
\liminf_{h\rightarrow +\infty}\ \A_k(x_h,T_h)\ \geq \ 0\, .
\label{liminf}
\end{equation}
\end{enumerate}
\label{completeness1}
\end{lemma}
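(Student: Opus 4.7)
Both items are quick consequences of the quadratic lower bound \eqref{stimaazione},
$$\A_k(x,T)\ \geq\ \frac{a}{T}\, l(x)^2 \ +\ T(k-b),$$
already exploited in the proof of the previous lemma.

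For item 2 I would just note that the first summand on the right-hand side is nonnegative, while the second tends to $0$ as $T_h\to 0$. Passing to the $\liminf$ immediately gives $\liminf_h \A_k(x_h,T_h)\geq 0$, for any value of $k-b$. This part is really two lines and does not need the paths $x_h$ to be constrained in any way.

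For item 1, let $\mathcal N$ be a connected component of $\mathcal M_Q$ containing no constant path, fix $c\in\R$, and take a Cauchy sequence $(x_h,T_h)$ in $\mathcal N\cap\{\A_k\leq c\}$. The first task is to pass to the limit in the $H^1$-factor: since $H^1_Q([0,1],M)$ is closed inside the complete Riemannian Hilbert manifold $H^1([0,1],M)$, it is itself complete, so $x_h\to x^*$ in $H^1$. Because connected components of a Hilbert manifold are both open and closed, $x^*$ lies in $\mathcal N$, and by the standing assumption on $\mathcal N$ cannot be a constant path; from the continuity of the length functional on $H^1$ (which follows from the Sobolev embedding $H^1\hookrightarrow C^0$ together with $L^2$-convergence of the derivatives) one then obtains $l(x_h)\to l(x^*)>0$.

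The remaining task is to exclude $T_h\to 0$. The sequence $T_h$ is Cauchy in $(0,+\infty)$, hence convergent in $[0,+\infty)$; if the limit were $0$, the estimate \eqref{stimaazione} combined with $l(x_h)\to l(x^*)>0$ would force $\A_k(x_h,T_h)\to +\infty$, contradicting $\A_k(x_h,T_h)\leq c$. Therefore $T_h\to T^*>0$, so $(x_h,T_h)\to(x^*,T^*)$ in $\mathcal M_Q$, and continuity of $\A_k$ gives $\A_k(x^*,T^*)\leq c$. The genuinely nontrivial step is the use of the topological hypothesis on $\mathcal N$ to guarantee that an $H^1$-limit of paths lying in $\mathcal N$ cannot be constant; without this, the limit path could degenerate to a point and nothing would prevent $T_h$ from collapsing to $0$.
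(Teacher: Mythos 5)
Your proof is correct, and item 2 is exactly the paper's argument: positivity of $\frac{a}{T}\,l(x)^2$ plus $T_h(k-b)\to 0$ in \eqref{stimaazione}. For item 1, however, you take a genuinely different route. The paper simply says the proof is ``analogous to Lemma \ref{completeness0}'', which means: on a connected component $\mathcal N$ containing no constant paths the length of \emph{every} path is bounded away from zero by a uniform constant $\lambda>0$ (a topological fact the paper uses again later, e.g.\ in Lemma \ref{lemmaboundedfromzero}), and then \eqref{stimaazione} forces $T$ to be uniformly bounded away from zero on each sublevel set, which is therefore a closed subset of the complete space $H^1_Q([0,1],M)\times[T_*,+\infty)$. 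You instead argue sequence by sequence: completeness of $H^1_Q([0,1],M)$ gives a limit path $x^*$, closedness of the component gives $x^*\in\mathcal N$, hence $x^*$ is non-constant and $l(x^*)>0$ by continuity of the length functional, and then \eqref{stimaazione} rules out $T_h\to 0$ for that particular sequence. Your version buys something: it avoids having to justify the uniform lower bound on length over the whole component (which requires a ``short paths near $Q_0\cap Q_1$ are homotopic to constants'' argument that the paper leaves implicit). The paper's version buys something else: the uniform bound $T\geq T_*>0$ on the entire sublevel set, a stronger statement that is reused when handling Palais--Smale sequences (Corollary \ref{compattezza}), whereas your localized argument would have to be repeated there. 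Both arguments are complete and correct for the lemma as stated.
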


\begin{proof}
The proof of the first statement is analogous to that of Lemma \ref{completeness0} above. Inequality (\ref{stimaazione}) also proves the second statement. In fact, if $\, T_h\rightarrow 0\, $ then 
$$\, T_h(k-b)\ \longrightarrow \ 0\, , \ \ \ \ \text{for} \ \  h\longrightarrow +\infty$$
and hence the action $\A_k(x_h,T_h)$ is eventually bigger than $-\epsilon$, for arbitrary $\epsilon>0$.
\end{proof}

\vspace{4mm}

We end this section studying the possible sources of non-completeness of the negative gradient flow of $\A_k$ on $\mathcal M_Q$.  Up to replacing $-\nabla \A_k$ by
$$- \frac{\nabla \A_k}{\sqrt{1+\|\nabla \A_k\|^2}}\, $$
we may assume the negative gradient flow to be complete on every connected component of $\mathcal M_Q$ that does not contain constant paths. A similar statement when looking for periodic orbits (in a even more
general setting) can be found in Section \ref{ageneralizedpseudogradient}. Also, on the connected components of $\mathcal M_Q$ that do contain constant paths, 
the only source of incompleteness is represented by flow-lines for which $T(\cdot) \rightarrow 0$ in finite time. The next lemma ensures that, for such flow lines, $\A_k$ necessarily goes to zero.

\vspace{1mm}

\begin{lemma}
Let $\big (x(\cdot),T(\cdot)\big ):[0,\sigma^*)\rightarrow \mathcal M_Q$ be a negative gradient flow-line with 
$$\liminf_{\sigma\rightarrow \sigma^*}\ T(\sigma)\ = \ 0\, .$$
\noindent Then 
$$\lim_{\sigma \rightarrow \sigma^*} \ \A_k\big (x(\sigma),T(\sigma)\big )\ =\ 0\, .$$
\label{convergenza}
\end{lemma}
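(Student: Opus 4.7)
The plan is to exploit the monotonicity of $\A_k$ along the flow together with a rate-of-descent estimate near the boundary $T=0$. By \eqref{decreasing} the function $\sigma\mapsto\A_k(x(\sigma),T(\sigma))$ is non-increasing, so the limit
$$\ell \;:=\; \lim_{\sigma\to\sigma^*}\A_k(x(\sigma),T(\sigma))$$
exists in $[-\infty,\A_k(x(0),T(0))]$. Picking any sequence $\sigma_n\nearrow\sigma^*$ with $T_n := T(\sigma_n)\to 0$ (which exists by the hypothesis $\liminf T=0$) and applying Lemma \ref{completeness1}(2) to the points $(x(\sigma_n),T(\sigma_n))$ yields $\ell\geq 0$. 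Thus the claim reduces to proving the reverse inequality $\ell\leq 0$, which I do by contradiction assuming $\ell>0$.

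Set $A(\sigma):=\int_0^1\|x(\sigma)'\|_{x(\sigma)}^2\,ds$ and $C:=\A_k(x(0),T(0))$. The computation leading to \eqref{stimaazione} shows $\A_k(x,T)\geq \tfrac{a}{T}A + T(k-b)$; on the other hand, since $L$ is electromagnetic outside a compact set, there is also a matching upper quadratic bound $L(q,v)\leq a''\|v\|_q^2+b''$, hence $\A_k(x,T)\leq \tfrac{a''}{T}A + T(k+b'')$. Applied at $\sigma_n$ together with $\A_k(x(\sigma_n),T(\sigma_n))\in[\ell,C]$, these two estimates force $A_n := A(\sigma_n) = \Theta(T_n)$. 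The strong convexity $d_{vv}L\geq 2aI$ from \eqref{secondinequality} gives, via Taylor at $v=0$, the pointwise lower bound $E(q,v)\geq a\|v\|_q^2 - M$ with $M:=\max_q|L(q,0)|$. Inserting this in \eqref{derivatainT} yields
$$\partial_T\A_k(x(\sigma_n),T(\sigma_n)) \;\leq\; k+M-\frac{a\,A_n}{T_n^2} \;\leq\; -\frac{c\,\ell}{T_n}$$
for some $c>0$ and $n$ large; since $g_{\mathcal M}$ has $dT^2$ as its $T$-factor, this in turn forces $\|d\A_k(x(\sigma_n),T(\sigma_n))\|\geq c\ell/T_n$.

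To finish I propagate this lower bound over the short interval $J_n := [\sigma_n,\sigma_n+T_n/4]$ and integrate \eqref{decreasing}. The normalized flow has speed $\leq 1$ in $g_{\mathcal M}$, so in particular $|\dot T|\leq 1$, forcing $T(\sigma)\in[3T_n/4,\,5T_n/4]$ on $J_n$; monotonicity with limit $\ell$ confines $\A_k$ on $J_n$ to $[\ell,\A_k(x(\sigma_n),T(\sigma_n))]$. The two-sided estimate above, re-run on $J_n$, then gives $A(\sigma)=\Theta(T_n)$ and hence $\|d\A_k(x(\sigma),T(\sigma))\|\geq c'\ell/T_n$ uniformly on $J_n$, for some $c'>0$. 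For $n$ large this exceeds $1$, so $\|d\A_k\|^2/\sqrt{1+\|d\A_k\|^2}\geq \tfrac12\|d\A_k\|$ on $J_n$, and integrating \eqref{decreasing} over $J_n$ of length $T_n/4$ produces a decrement of $\A_k$ of at least $c'\ell/8$. Letting $n\to\infty$, the monotone limit of $\A_k$ forces $\ell\leq \ell - c'\ell/8$, contradicting $\ell>0$. The hard part will be the uniform propagation of $\|d\A_k\|\geq c'\ell/T_n$ over $J_n$, which rests on the joint control of $T$ via $|\dot T|\leq 1$ and of $A$ via the monotonicity of $\A_k$.
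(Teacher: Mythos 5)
Your argument is correct, but it follows a genuinely different route from the paper's. The paper's proof is a direct squeeze: using $E\geq c_0L-c_1$ in \eqref{derivatainT} it derives the pointwise inequality $\A_k(x,T)\leq \tfrac{T}{c_0}\bigl[C-\tfrac{\partial \A_k}{\partial T}(x,T)\bigr]$, then picks a sequence $\sigma_h\uparrow\sigma^*$ with $T(\sigma_h)\to0$ \emph{and} $T'(\sigma_h)\leq0$; along the flow $T'\leq0$ means $\tfrac{\partial\A_k}{\partial T}\geq0$, so the inequality collapses to $\A_k(\sigma_h)\leq \tfrac{C}{c_0}T(\sigma_h)\to0$, and the conclusion follows from monotonicity together with statement 2 of Lemma \ref{completeness1}, with no contradiction argument and no need to control the flow on an interval. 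You instead assume $\ell:=\lim\A_k>0$, use the two-sided quadratic bounds on $L$ to force $e(x(\sigma))=\Theta(T_n)$ near $\sigma_n$, deduce from \eqref{derivatainT} that $\|d\A_k\|\gtrsim\ell/T_n$ on $J_n=[\sigma_n,\sigma_n+T_n/4]$, and integrate \eqref{decreasing} to extract a fixed action drop $\geq c'\ell/8$, contradicting convergence to $\ell$; this checks out, including the propagation step you flagged, since $|\dot T|\leq1$ and the monotone bounds $\ell\leq\A_k\leq C$ do give uniform control of $T(\sigma)$ and $e(x(\sigma))$ on $J_n$. What your route costs relative to the paper's: you need the upper bound $L\leq\tilde a\|v\|^2+\tilde b$ (available because $L$ is made electromagnetic outside a compact set and $M$ is compact), and you should justify that $J_n\subseteq[0,\sigma^*)$ — this is true but unstated: if $\sigma^*<\infty$, then $T$ is $1$-Lipschitz on a bounded interval, hence extends continuously to $\sigma^*$ with limit $0$, so $T(\sigma_n)\leq\sigma^*-\sigma_n$ and the interval fits. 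What it buys: a quantitative statement (a definite descent rate when the action stays bounded away from $0$ while $T\to0$) rather than the paper's purely qualitative squeeze, at the price of roughly twice the length.
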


\begin{proof}
The proof is analogous to \cite[Lemma 3.3]{Abb13}, where the case of periodic orbits is considered.
Since both $E$ and $L$ are quadratic in $v$ for $\|v\|_q$ large, we have
$$E(q,v)\ \geq \ c_0 \, L(q,v)-c_1$$
for some $c_0>0$ and $c_1\in \R$. Therefore from (\ref{derivatainT}) it follows that
\begin{eqnarray*}
\frac{\partial \A_k}{\partial T} (x,T) &=& \frac{1}{T}\int_0^T \Big [k-E(\gamma(t),\dot{\gamma}(t))\Big ]\, dt \ \leq \\
&\leq& \frac{1}{T} \int_0^T \Big [k-c_0\, L(\gamma(t),\dot{\gamma}(t))+c_1\Big ]\, dt\ =\\
&=& (c_0+1) k + c_1 - \frac{c_0}{T} \, \A_k(x,T)
\end{eqnarray*}
and hence 
\begin{equation}
\A_k(x,T) \ \leq \ \frac{T}{c_0}\, \Big [(c_0+1) k + c_1 - \frac{\partial \A_k}{\partial T}(x,T)\Big ]\ =\ \frac{T}{c_0} \, \Big [C-\frac{\partial \A_k}{\partial T}(x,T)\Big ]\, ,
\label{stima}
\end{equation}
where $C$ is a suitable constant. By assumption, there is a sequence $\sigma_h\uparrow \sigma^*$ with 
$$T'(\sigma_h)\ \leq \ 0\, , \ \ \ \  T(\sigma_h)\ \longrightarrow \ 0 \, .$$
Since $\sigma\mapsto (x(\sigma),T(\sigma))$ is a negative gradient flow-line, we have
$$0\ \geq \ T'(\sigma_h)\ =-\frac{\partial \A_k}{\partial T}\big (x(\sigma_h),T(\sigma_h)\big )$$ 
and hence 
$$\A_k\big (x(\sigma_h),T(\sigma_h)\big ) \ \leq \ \frac{T(\sigma_h)}{c_0}\, \Big [C-\frac{\partial \A_k}{\partial T}\big ( x(\sigma_h),T(\sigma_h)\big )\Big ] \ \leq \ \frac{C}{c_0}\, T(\sigma_h)\, .$$
Since $T(\sigma_h)\rightarrow 0$, from the inequality above we deduce that
$$\limsup_{h\rightarrow +\infty} \ \A_k\big (x(\sigma_h),T(\sigma_h)\big )\ \leq \ 0\, .$$ 

The assertion follows now from statement 2 of Lemma \ref{completeness1} and from the monotonicity of the function $\sigma \mapsto \A_k(x(\sigma),T(\sigma))$.
\end{proof}

%%%%%%%%%%%%%%%%%%%%%%%%%%%%%%%%%%%%%%%%%%%%%%%%%%%%%%

\section{Palais-Smale sequences.}
\label{palaissmalesequences}

When looking for critical points of a given functional defined on a Hilbert manifold, it is natural to consider Palais-Smale sequences as a ``source of critical points'', being their limit points critical points of the considered functional. However, as already observed in Section \ref{theminimaxprinciple}, it is in general not true that Palais-Smale sequences admit converging subsequences. 
Therefore, it is worth to look for necessary and sufficient conditions for a Palais-Smale sequence to admit converging subsequences. 

In this section we investigate this problematic in the case we are interested in, namely when the Hilbert manifold is the space $\mathcal M_Q$ of paths connecting the submanifolds $Q_0$ and $Q_1$ with arbitrary interval of 
definition and the functional is the free-time action functional $\A_k$. Palais-Smale sequences with times going to zero are a possible source of non-completeness but, as Lemma \ref{completeness0} states, they might occur only in 
connected components of $\mathcal M_Q$ that contain constant paths. The next lemma ensures also that such Palais-Smale sequences may appear only at level zero. 

This property turns out to be particularly fruitful when looking for global minimizers or for minimax critical points. More precisely, if one is able to prove that the infimum of $\A_k$ or a certain minimax value for $\A_k$ is finite and not zero, 
then the related Palais-Smale sequences have automatically times bounded away from zero. 

Finally, Lemma \ref{yesmodification} combined with Lemma \ref{lemmalimitatezza} shows that the only Palais-Smale sequences which may cause troubles are those for which the times diverge.

\begin{lemma}
Let $\gamma_h=(x_h,T_h)$ be a Palais-Smale sequence at level $c\in \R$ for $\A_k$ such that $T_h\rightarrow 0$. Then necessarily $c=0$.
\label{yesmodification}
\end{lemma}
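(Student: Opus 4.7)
The plan is to sandwich the level $c$ between $0$ and $0$ using two already-established pieces. The lower bound $c \geq 0$ is essentially free: since $T_h \to 0$, statement 2 of Lemma \ref{completeness1} gives
$$c \ = \ \lim_{h\to+\infty} \A_k(x_h,T_h) \ \geq \ \liminf_{h\to+\infty} \A_k(x_h,T_h) \ \geq \ 0.$$

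For the upper bound $c \leq 0$, I would reuse the key inequality derived inside the proof of Lemma \ref{convergenza}. Namely, since both $L$ and $E$ are quadratic at infinity in the fiber, one has $E(q,v)\geq c_0 L(q,v)-c_1$ for some $c_0>0$ and $c_1\in \R$, and combining this with the identity \eqref{derivatainT} for $\partial_T \A_k$ gives
$$\A_k(x,T) \ \leq \ \frac{T}{c_0}\, \Big[C-\frac{\partial \A_k}{\partial T}(x,T)\Big], \qquad C := (c_0+1)k+c_1.$$
Apply this to $(x_h,T_h)$. Because $\gamma_h$ is a Palais--Smale sequence and the product metric $g_{\mathcal M}=g_{H^1}+dT^2$ makes the $T$-component orthogonal and of unit length, one has
$$\Big|\frac{\partial \A_k}{\partial T}(x_h,T_h)\Big| \ = \ \big|d\A_k(x_h,T_h)[(0,1)]\big| \ \leq \ \|d\A_k(x_h,T_h)\| \ \longrightarrow \ 0.$$
Hence $C-\partial_T \A_k(x_h,T_h)$ is bounded, while $T_h\to 0$ by hypothesis, so the right-hand side tends to $0$. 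Therefore
$$c \ = \ \limsup_{h\to+\infty} \A_k(x_h,T_h) \ \leq \ 0.$$

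Putting the two bounds together yields $c=0$, which is what we wanted. There is no real obstacle here: the whole argument is a direct consequence of the a priori estimate already established in Lemma \ref{convergenza} together with the fact that in a product Riemannian structure the partial $T$-derivative is dominated by the full gradient norm; the only delicate point worth stating carefully is that the Palais--Smale condition controls \emph{both} partial differentials, which is precisely why $\partial_T \A_k(x_h,T_h)\to 0$ and the $T_h/c_0$ prefactor is enough to drive the upper bound to zero.
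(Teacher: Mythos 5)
Your proof is correct and rests on the same mechanism as the paper's: the Palais--Smale hypothesis is used only through the $\partial/\partial T$-component (which, as you note, is dominated by the full dual norm in the product metric $g_{\mathcal M}=g_{H^1}+dT^2$), and the quadratic-at-infinity bounds on $L$ and $E$ then force the action to be $\mathcal O(T_h)$. The paper derives both bounds directly (first showing $\int_0^{T_h}\|\dot\gamma_h\|^2\,dt=\mathcal O(T_h)$ from the vanishing of $\partial_T\A_k$, then squeezing $\A_k(x_h,T_h)$ between two $\mathcal O(T_h)$ quantities), whereas you recycle statement 2 of Lemma \ref{completeness1} for the lower bound and the estimate \eqref{stima} from the proof of Lemma \ref{convergenza} for the upper bound; this is a repackaging rather than a genuinely different route, and it is logically sound since neither of those results relies on Lemma \ref{yesmodification}.
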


\begin{proof}
First  we prove that
\begin{equation}
\int_0^{T_h} \|\dot \gamma_h(t)\|^2 \, dt \ =\ \mathcal O(T_h)\, ,\ \ \ \ \text{for} \ h\rightarrow +\infty\, .
\label{stimacont}
\end{equation}
Being $(x_h,T_h)$ a Palais-Smale sequence for $\A_k$, we have 
$$\|d\A_k(x_h,T_h)\|\ =\ o(1)\, ,\ \ \ \ \text{for}\ h\rightarrow +\infty$$
where $\|\cdot\|$ denotes the dual norm. In particular, using (\ref{derivatainT}) we get that 
$$\left | d\A_k(x_h,T_h)\Big [\frac{\partial}{\partial T}\Big ] \right | \ =\ \left | \frac{\partial \A_k}{\partial T}(x_h,T_h) \right |\ =\ \left | \frac{1}{T_h} \int_0^{T_h} \Big [E\big (\gamma_h(t),\dot \gamma_h(t)\big ) - k \Big ]\, dt \right |\ = \ o(1)$$
and hence 
\begin{equation}
\alpha_h:= \ \frac{1}{T_h}\int_0^{T_h} \Big [E\big (\gamma_h (t),\dot \gamma_h(t)\big ) -k\Big ]\, dt \ \longrightarrow \ 0\, .
\label{convenergia}
\end{equation}

Now by assumption $E$ is quadratic for $\|v\|_q$ large and hence $E(q,v)\geq a' \|v\|_q^2 - b',$ for some $a'>0$ and $b'\in \R$. Using this in (\ref{convenergia}) we get that 
$$\alpha_h \ =\ \frac{1}{T_h}\int_0^{T_h} \Big [E\big (\gamma_h(t),\dot \gamma_h(t)\big )-k\Big ]\, dt\ \geq \ \frac{1}{T_h} \int_0^{T_h} \Big [a'\, \|\dot \gamma_h(t)\|^2 - b'-k\Big ]\, dt$$
and hence 
$$\int_0^{T_h} \|\dot \gamma_h(t)\|^2 \, dt \ \leq \ \frac{T_h}{a'} \, \big [\alpha_h + b'+k\big ]$$
which implies (\ref{stimacont}). Since also $L$ is quadratic for $\|v\|_q$ large we have 
$$a\, \|v\|_q^2 - b \ \leq \ L(q,v)\ \leq \ \tilde{a}\, \|v\|_q^2 + \tilde{b}$$ 
for some constants $a,\tilde{a} >0$ and $b,\tilde{b}\in \R$. The first inequality implies 
\begin{eqnarray*}
\A_k(x_h,T_h) &=&  \int_0^{T_h} \Big [ L(\gamma_h(t),\dot \gamma_h(t))+k \Big ]\, dt \ \geq \\ 
                         &\geq& a \int_0^{T_h} \|\dot \gamma_h(t)\|^2\, dt + T_h(k-b) \ =\ O(T_h)
\end{eqnarray*}
while the second
\begin{eqnarray*}
\A_k(x_h,T_h) &=&  \int_0^{T_h} \Big [ L(\gamma_h(t),\dot \gamma_h(t))+k \Big ]\, dt \ \leq \\ 
                        &\leq& \tilde{a} \int_0^{T_h} \|\dot \gamma_h(t)\|^2\, dt + T_h(k+\tilde{b}) \ =\ O(T_h)
\end{eqnarray*}
and hence obviously $\A_k(x_h,T_h) \rightarrow 0$.
\end{proof}

\vspace{5mm}

The following lemma ensures the existence of converging subsequences for any Palais-Smale sequence with times bounded and bounded away from zero. The proof is analogous (with some minor adjustments) to the one of 
\cite[Proposition 3.12]{Con06} (see also \cite[Lemma 5.3]{Abb13}), where the case of periodic orbits is considered. 

\begin{lemma}
Let $(x_h,T_h)$ be a Palais-Smale sequence at level $c\in \R$ for $\A_k$ in some connected component of $\mathcal M_Q$ with $\, 0<T_*\leq T_h\leq T^*<+\infty$. Then, $(x_h,T_h)$ is compact in $\mathcal M_Q$, meaning that it admits a converging subsequence. 
\label{lemmalimitatezza}
\end{lemma}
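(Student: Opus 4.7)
The plan is to extract a weak subsequential limit from $(x_h, T_h)$ and then upgrade it to a strong $H^1$ limit using the uniform convexity of $L$ in the velocity variable granted by $(L2)$.

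Since $T_h$ ranges in the compact interval $[T_*, T^*]$, I would first pass to a subsequence with $T_h \to T_\infty > 0$. The Palais-Smale assumption gives $\A_k(x_h, T_h) \to c$, and the coercivity estimate \eqref{stimaazione} together with $T_h \leq T^*$ produces a uniform bound on $\int_0^1 \|x_h'\|^2 \, ds$. Since $x_h(0)$ lies in the compact set $Q_0$, this bounds $\|x_h\|_{H^1}$; by reflexivity and the compact embedding $H^1 \hookrightarrow C^0$ one may further assume $x_h \rightharpoonup x_\infty$ weakly in $H^1$ and $x_h \to x_\infty$ uniformly on $[0,1]$. Closedness of $Q$ ensures $(x_\infty, T_\infty) \in \mathcal M_Q$.

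Next I would localize. Since $x_h \to x_\infty$ uniformly, for $h$ large the image of $x_h$ lies in a small tubular neighborhood of $x_\infty([0,1])$. Choose a subdivision $0 = s_0 < \dots < s_N = 1$ so that each $x_\infty([s_{j-1}, s_j])$ sits in the image of a bi-bounded chart $(U_j, \phi_j)$, with the endpoint charts chosen compatibly with the linear-subspace description of $Q$ from Section~\ref{ahilbertmanifoldofloops}. The problem then reduces to showing that $\phi_j^{-1} \circ x_h|_{[s_{j-1}, s_j]}$ is Cauchy in $H^1([s_{j-1}, s_j], \R^n)$ for each $j$, with $L$ replaced by a pull-back satisfying $(L1')$--$(L2')$.

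The decisive step, which I expect to be the main obstacle, is promoting weak to strong convergence in $H^1$. In a chart, the variation $(x_h - x_k, 0)$ respects the linear boundary subspace $W$ because $x_h, x_k$ both do, and it is uniformly bounded in $H^1$; hence the Palais-Smale property forces
$$\bigl[d_x\A_k(x_h, T_h) - d_x\A_k(x_k, T_k)\bigr]\bigl[(x_h - x_k, 0)\bigr] \longrightarrow 0 \quad \text{as } h, k \to \infty.$$
Expanding this via \eqref{differenzialelungox} and adding/subtracting the intermediate term involving $d_v L(x_h, x_k'/T_h)$, the uniform convexity \eqref{secondinequality} bounds the principal contribution from below by $(2a/T^*)\|x_h' - x_k'\|_{L^2}^2$. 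The remaining error terms are of two kinds: those proportional to $\|x_h - x_k\|_\infty$, which are controlled via the quadratic growth $|d_q L| \leq l_3(1+|v|^2)$ from \eqref{boundsconvergenza} combined with the $L^2$-boundedness of $x_h'$; and those of the form $[d_v L(x_h, x_k'/T_h) - d_v L(x_k, x_k'/T_k)] \cdot (x_h' - x_k')$, which by Cauchy-Schwarz and the locally Lipschitz dependence of $d_v L$ on $(q, T)$ are bounded by $(\|x_h - x_k\|_\infty + |T_h - T_k|)\|x_h' - x_k'\|_{L^2}$. The hard part is absorbing these mixed terms into the convexity lower bound via a Young-type inequality, so as to conclude $\|x_h' - x_k'\|_{L^2} \to 0$; this yields that $x_h$ is Cauchy in $H^1$ on each chart, whence $x_h \to x_\infty$ strongly in $\mathcal M_Q$.
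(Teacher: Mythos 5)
Your proposal is correct and follows the same overall strategy as the paper: the action bound together with \eqref{stimaazione} and $T_h\leq T^*$ gives a uniform $L^2$ bound on $x_h'$, Ascoli--Arzel\`a gives uniform convergence up to a subsequence, one localizes in a chart of the $H^1_Q$-atlas, and the Palais--Smale condition combined with the fiberwise convexity coming from $(L2')$ upgrades weak to strong $H^1$ convergence. The only genuine divergence is in the last step: you run a double-index Cauchy argument, testing $d_x\A_k(x_h,T_h)-d_x\A_k(x_k,T_k)$ against $(x_h-x_k,0)$ and inserting the intermediate term $d_vL(x_h,x_k'/T_h)$, whereas the paper tests the single differential $d\widehat{\A}(\zeta_h,T_h)$ against $(\zeta_h-\zeta,0)$, with $\zeta$ the weak limit, and disposes of the remaining term by pairing the strongly $L^2$-convergent sequence $d_v\widehat{L}(\cdot,\zeta_h,\zeta'/T_h)$ with $\zeta_h'-\zeta'\rightharpoonup 0$. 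Both routes work; yours avoids the weak--strong pairing at the price of heavier bookkeeping, and two of your complications are unnecessary: (i) no subdivision into several charts is needed (and hence no cutoff functions, which you would otherwise have to introduce in order to restrict the test variation to a subinterval while preserving the boundary subspace $W$), because the parametrizations $\phi_*$ of the time-dependent atlas have $C^0$-open image, so uniform convergence places the whole tail of the sequence in a single chart, which is exactly what the paper exploits; (ii) no Young-type absorption is required at the end, since your error terms are of the form $o(1)\cdot\|x_h'-x_k'\|_2$ with the second factor bounded, so the convexity inequality $\frac{2a}{T^*}\|x_h'-x_k'\|_2^2\leq o(1)$ is immediate, and what you single out as ``the main obstacle'' is in fact the routine part of the argument.
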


\begin{proof}
From (\ref{firstinequality}) it follows that 
\begin{eqnarray*}
c+o(1) &\geq& \A_k(x_h,T_h)\ \geq \  T_h \int_0^1 \Big [a \, \frac{\|x_h'(s)\|^2}{T_h^2} - (b-k)\Big ]\, ds \ = \\
&=& \frac{a}{T_h} \int_0^1 \|x_h'(s)\|^2 \, ds - T_h\, (b-k)\ \geq \\
&\geq& \frac{a}{T^*} \, \|x'_h\|_2^2 - T^*\, |b-k|
\end{eqnarray*}
where $\|\cdot \|_2$ denotes the $L^2$-norm with respect to the fixed Riemannian metric $g$ on $M$. Therefore $\|x'_h\|_2$ is uniformly bounded 
$$\|x'_h\|_2^2 \ \leq \ \frac{T^*}{a} \, \Big ( c+o(1)+T^*\, |b-k|\Big )$$ 
and hence $\{x_h\}$ is $1/2$-equi-H\"older-continuous 
$$\text{dist}\, \big (x_h(s),x_h(s')\big ) \ \leq \ \int_s^{s'} |x'_h(r)|\, dr \ \leq \ |s-s'|^{1/2} \|x'_h\|_2\, .$$

By the Ascoli-Arzel\'a theorem, up to subsequences $x_h$ converges uniformly to some $x\in C_Q([0,1],M)$; in particular, $x_h$ eventually belongs to the image of the parametrization $\phi_*$ induced by a smooth time-depending local coordinate system 
as in the definition of the atlas for $H^1_Q([0,1],M)$ (recall that the image of this parametrization is $C^0$-open). Then we have $x_h=\phi_*(\zeta_h)$ for all $h\in \N$, where $\{\zeta_h\} \subseteq H^1_W([0,1],B_r)$ is a Palais-Smale sequence for the functional
$$\widehat{\A}(\zeta,T) := \ T\int_0^1 \widehat{L} \Big ( s, \zeta(s), \frac{\zeta'(s)}{T}\Big )\, ds$$ 
with respect to the standard Hilbert product on $H^1_W([0,1],\R^n)$, where the Lagrangian 
$$\widehat{L}(s,q,v)\ \in C^\infty\big ([0,1]\times B_r\times \R^n\big )$$
is obtained by pulling back $L+k$ using $\phi$. Moreover, $\zeta_h$ converges uniformly and, since $\|\zeta_h'\|_2$ is bounded, weakly in $H^1$ to some $\zeta\in H^1_W([0,1],M)$; 
we must prove that this convergence is actually strong in $H^1$. Since $\widehat{L}$ is electromagnetic for $|v|$ large,
\begin{eqnarray}
\big |d_q\widehat{L}(t,q,v)\big | &\leq & C\, \big (1+|v|^2\big )\label{primastima1}\\
\big |d_v\widehat{L}(t,q,v)\big | &\leq & C\, \big  (1+|v|\, \big )\label{secondastima1}
\end{eqnarray}
for a suitable constant $C>0$. Since $(\zeta_h,T_h)$ is a Palais-Smale sequence with $T_h$ bounded away from zero and $\zeta_h$ is bounded in $H^1$, we also have 

\begin{eqnarray*}
o(1) &=& d\widehat{\A}(\zeta_h,T_h)[(\zeta_h-\zeta,0)]\ =\\
&=& T_h \int_0^1 d_q\widehat{L}\Big (s,\zeta_h,\frac{\zeta_h'}{T_h}\Big ) \big [\zeta_h - \zeta \big ]\, ds \ + \ T_h \int_0^1 d_v\widehat{L}\Big (s,\zeta_h,\frac{\zeta'_h}{T_h}\Big ) \Big [\frac{\zeta_h'-\zeta'}{T_h}\Big ]\, ds\, .
\end{eqnarray*}

By the bound in (\ref{primastima1}), the boundedness of $T_h$ and $\|\zeta_h'\|_2$ and the uniform convergence of $\zeta_h$ to $\zeta$, the first integral in the last expression is infinitesimal; indeed 
\begin{eqnarray*}
\left  | T_h \int_0^1 d_q \widehat{L} \Big (s,\zeta_h,\frac{\zeta'_h}{T_h}\Big ) \big [\zeta_h-\zeta\big ]\, ds\right | &\leq&  CT_h \int_0^1 \left (1 + \frac{|\zeta_h'|^2}{T_h^2}\right) \big \|\zeta_h-\zeta\big \|_\infty \, ds \\
&=&  \big \|\zeta_h-\zeta\big \|_\infty \, \left ( CT_h + \frac{C}{T_h}\big \|\zeta_h'\big \|_2^2\right )\, .
\end{eqnarray*}
It follows that 
\begin{equation}
\int_0^1  d_v\widehat{L} \Big (s,\zeta_h,\frac{\zeta_h'}{T_h}\Big ) \Big [\frac{\zeta_h'-\zeta'}{T_h}\Big ]\, ds \ = \ o(1)\, .
\label{integraleinfinitesimo}
\end{equation}
Moreover, by the fiberwise convexity of $\widehat{L}$, we have that 
$$d_{vv}\widehat{L}(s,q,v) \big [u,u\big ] \ \geq \ \delta \, |u|^2\, ,  \ \ \ \ \forall \ (s,q,v)\in [0,1]\times B_r \times \R^n\, ,\ \ \forall \ u\in \R^n\, ,$$
where $\delta>0$ is a suitable number. It follows that 
\begin{eqnarray*}
d_v\widehat{L} \Big ( s, \!\!\!\! &\zeta_h&\!\!\!\!\! ,\frac{\zeta_h'}{T_h}\Big ) \Big [\frac{\zeta_h'-\zeta'}{T_h}\Big ] - d_v \widehat{L}\Big (s,\zeta_h,\frac{\zeta'}{T_h}\Big )\Big [\frac{\zeta_h'-\zeta'}{T_h}\Big ]\ =\\
&=& \int_0^1 d_{vv}\widehat{L} \Big (s,\zeta_h, \frac{\zeta'}{T_h} + \sigma \cdot \frac{\zeta_h'-\zeta'}{T_h}\Big ) \Big [\frac{\zeta_h'-\zeta'}{T_h}, \frac{\zeta_h'-\zeta'}{T_h}\Big ]\, d\sigma \ \geq \ \frac{\delta}{T_h^2}\cdot \big |\zeta_h'-\zeta'\big |^2\, .
\end{eqnarray*}
Now integrating this inequality over $s\in [0,1]$ and using (\ref{integraleinfinitesimo}) we obtain
$$o(1) - \int_0^1 d_v\widehat{L}\Big (s,\zeta_h,\frac{\zeta'}{T_h}\Big ) \Big[\frac{\zeta_h'-\zeta'}{T_h}\Big ]\, ds \ \geq \ \frac{\delta}{T_h^2}\cdot \big \|\zeta_h'-\zeta'\big \|_2^2\, .$$
Since $\, \zeta_h'-\zeta' \rightharpoonup 0$ and since by the bound in (\ref{secondastima1}) the sequence 
$$d_v\widehat{L}\Big (s,\zeta_h,\frac{\zeta'}{T_h}\Big )$$ 
converges strongly in $L^2$, the integral on the left-hand side of the above inequality is infinitesimal and hence we conclude that $\zeta_h$ converges to $\zeta$ strongly.
\end{proof}

%%%%%%%%%%%%%%%%%%%%%%%%%%%%%%%%%%%%%%%%%%%%%%%%%%%%%%%%%%%%%%%%%%%%%%%%%%%%%%%%%%%%}

\section{Ma\~{n}\'e critical values.}
\label{manecriticalvalues}

The following numbers should be interpreted as energy levels and mark important dynamical and geometric changes for the Euler-Lagrange flow induced by the Tonelli Lagrangian $L$. The reader may take a look at the 
expository article \cite{Abb13} for a survey on the relevance of these energy values and on their relation with the geometric and dynamical properties of the Euler-Lagrange flow. 
First, let us define the \textit{Ma\~n\'e critical value} associated to $L$ as 
\begin{equation}
c(L) := \ \inf \Big \{ k\in \R \ \Big |\ \A_k(\gamma)\geq 0 \, ,\ \forall \ \gamma \text{ closed loop} \Big \}\, .
\label{c(L)}
\end{equation}
      
We refer to \cite{CI99} and references therein for its relation with the geometric and dynamical properties of the system and for other equivalent definitions. Second, we recall the definition of the \textit{Ma\~n\'e critical value of the Abelian cover} 
\begin{equation}
c_0(L) :=\ \inf \Big \{ k\in \R \ \Big |\ \A_k(\gamma)\geq 0 \, ,\ \forall \ \gamma \text{ closed loop homologous to zero} \Big \}\, .
\label{c0(L)}
\end{equation}

This is the relevant energy value, for instance, when trying to use methods coming from Finsler theory. In fact, for every 
$k>c_0(L)$ the Euler-Lagrange flow restricted to the energy level $E^{-1}(k)$ is conjugated with the geodesic flow defined by a suitable Finsler metric. We refer to \cite{Abb13} for the details. 
The value $c_0(L)$ is also related to the existence of periodic orbits for exact magnetic flows (i.e. Euler-Lagrange flows associated to a Lagrangian $L$ as in (\ref{magneticlagrangian}) with $V\equiv 0$)
on surfaces which are local minimizers of the free-period Lagrangian action functional, as explained in \cite{CMP04} and in \cite{AMMP14}.
When looking for periodic orbits, the energy value which turns out to be relevant for the properties of the free-period action functional (cf. \cite{Con06} and \cite{Abb13}) is however the so-called \textit{Ma\~n\'e critical value of the universal cover}
\begin{equation}
c_u(L) :=\ \inf \Big \{ k\in \R \ \Big |\ \A_k(\gamma)\geq 0 \, ,\ \forall \ \gamma \text{ closed contractible loop} \Big \}\, .
\label{cu(L)}
\end{equation}
We also define 
\begin{equation}
e_0(L):= \ \max_{q\in M} \ E(q,0)
\label{e0(L)}
\end{equation}
to be the maximum of the energy on the zero section of $TM$. The topology of the energy level sets changes when crossing the value $e_0(L)$. In fact, for any $k>e_0(L)$, the energy level sets $E^{-1}(k)$ have all the same topology,
namely of a sphere bundle over $M$. This is instead false for $k<e_0(L)$, being the projection $E^{-1}(k) \rightarrow M$  not surjective any more. We will get back to these critical values in Chapters \ref{chapter5} and \ref{chapter6} 
when we will focus on the existence of periodic orbits. Notice that 
\begin{equation}
\min \ E \ \leq \ e_0(L)\ \leq\ c_u(L)\ \leq \ c_0(L)\ \leq \ c(L)\, .
\label{relazionemane}
\end{equation}

When $L$ is electro-magnetic as in (\ref{magneticlagrangian}), $\min E$ is the minimum of the scalar potential $V$ and $e_0(L)$  its maximum. When the magnetic potential $\theta$ vanishes we have
$$e_0(L)\ =\ c_u(L)\ =\ c_0(L) \ = \ c(L)\, ,$$ 
but in general the inequalities in (\ref{relazionemane}) are strict. See for instance \cite{Man97} (or Section \ref{counterexamples}) for an example where $e_0(L)< c_0(L)<c(L)$ and \cite{PP97} for an example where $c_u(L)<c_0(L)$. 
The values $c_u(L)$ and $c_0(L)$ clearly coincide when $\pi_1(M)$ is abelian; more generally, they coincide whenever $\pi_1(M)$ is ameanable (cf. \cite{FM07}).

When the fundamental group of $M$ is rich, there are other Ma\~{n}\'e critical values, which are associated to the different coverings of $M$. We now show which one is relevant for our purposes. 
Given a covering $p:M_1\rightarrow M$, consider the lifted Lagrangian
$$L_1 := dp\circ L:TM_1 \longrightarrow \R$$ 
and the associated critical value $c(L_1)$ as defined in (\ref{c(L)}). 

\begin{lemma}
$c(L_1)\leq c(L)$. If the covering is finite, then $\, c(L_1)=c(L)$.
\label{coveringandmane}
\end{lemma}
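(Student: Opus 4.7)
The plan is to prove the two inequalities separately. For the inequality $c(L_1)\leq c(L)$, I would exploit the fact that projections of loops are loops: given any closed loop $\tilde\gamma:[0,T]\to M_1$, the curve $\gamma:=p\circ\tilde\gamma$ is a closed loop in $M$, and since $L_1=L\circ dp$ the actions coincide,
\[
\A_k^{L_1}(\tilde\gamma)\ =\ \int_0^T\!\big[L_1(\tilde\gamma,\dot{\tilde\gamma})+k\big]\,dt\ =\ \int_0^T\!\big[L(\gamma,\dot\gamma)+k\big]\,dt\ =\ \A_k^L(\gamma).
\]
Hence, for every $k\geq c(L)$ we have $\A_k^{L_1}(\tilde\gamma)\geq 0$ for all loops $\tilde\gamma$ in $M_1$, which by definition gives $c(L_1)\leq k$ and therefore $c(L_1)\leq c(L)$.

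For the reverse inequality in the finite case, I would argue by contrapositive: if $k<c(L)$, I want to exhibit a closed loop in $M_1$ with negative $k$-action. By definition of $c(L)$ there exists a loop $\gamma$ in $M$ with $\A_k^L(\gamma)<0$. The loop $\gamma$ need not lift to a closed loop in $M_1$, but since the covering is finite of, say, $n$ sheets, the monodromy of $\gamma$ is a permutation of the fiber $p^{-1}(\gamma(0))$; letting $m$ be its order (so $m\leq n!$), the iterate $\gamma^m$ does lift to a closed loop $\widetilde{\gamma^m}$ in $M_1$. The same identity as above gives
\[
\A_k^{L_1}(\widetilde{\gamma^m})\ =\ \A_k^L(\gamma^m)\ =\ m\cdot \A_k^L(\gamma)\ <\ 0,
\]
so $c(L_1)>k$. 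Letting $k\uparrow c(L)$ yields $c(L_1)\geq c(L)$, hence equality.

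The essential point is really the identity between actions of a loop and its projection (respectively its closed lift), which is a direct consequence of $L_1=L\circ dp$ together with $dp$ being a linear isomorphism on each fiber; no serious obstacle is expected. The only substantial ingredient is the closing-up of a sufficiently large iterate, which is precisely where the finiteness of the covering is used — in the infinite case a loop in $M$ may admit no closed lift of any finite iterate (e.g.\ a generator of the deck group of an abelian cover), and the inequality $c(L_1)\geq c(L)$ can genuinely fail, which matches the motivation for introducing the critical values $c_0(L),c_u(L)$ of the Abelian and universal covers discussed earlier.
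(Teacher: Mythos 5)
Your proof is correct and follows essentially the same route as the paper: the inequality $c(L_1)\leq c(L)$ comes from projecting loops (the actions agree since $L_1=L\circ dp$), and the equality for finite covers comes from the fact that some iterate of a negative-action loop in $M$ lifts to a closed loop in $M_1$ with negative action. The only difference is cosmetic — you argue by contrapositive and spell out the monodromy/permutation-order reason why an iterate closes up, whereas the paper argues by contradiction with an intermediate $k$ and merely asserts that fact.
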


\begin{proof}
The first part of the statement is obvious because closed curves on $M_1$ project to closed curves in $M$. Now suppose by contradiction 
that the strict inequality holds and pick $k\in \R$ such that $c(L_1)< k < c(L)$. By definition there exists a closed curve $\gamma$ in $M$ with negative 
$(L+k)$-action and since $M_1$ is a finite covering of $M$ some iterate of $\gamma$ lifts to a closed curve on $M_1$ with negative $(L_1+k)$-action.
\end{proof}

\vspace{5mm}

It is well known that coverings correspond to normal subgroups of $\pi_1(M)$, i.e. for any covering $p:M_1\rightarrow M$ there is a unique normal subgroup $H < \pi_1(M)$ with
$$M_1 \ \cong \ \bigslant{\widetilde{M}}{H}\, ,$$
where $\widetilde M$ denotes the universal cover of $M$. We denote the Ma\~{n}\'e critical value $c(L_1)$ of the lifted Lagrangian by
$$ c(L;H):= \ c(L_1)\, .$$

\begin{lemma}
Let $H,K< \pi_1(M)$ be two normal subgroups; then 
$$c(L; \langle H,K\rangle ) \ =\ \max \big \{ c(L;H),c(L;K)\big \}\, ,$$
where $\langle H,K \rangle$ denotes the normal subgroup generated by $H$ and $K$.
\end{lemma}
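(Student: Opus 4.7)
The plan is to prove the two inequalities $c(L;\langle H,K\rangle)\geq\max\{c(L;H),c(L;K)\}$ and $c(L;\langle H,K\rangle)\leq\max\{c(L;H),c(L;K)\}$ separately.

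The first inequality is immediate from Lemma~\ref{coveringandmane}. Since $H$ and $K$ are normal in $\pi_1(M)$, the normal subgroup $\langle H,K\rangle$ coincides with the product $HK$, which is itself normal and contains both $H$ and $K$. Hence the natural projections
$$M_H=\widetilde M/H\ \longrightarrow\ M_{\langle H,K\rangle}=\widetilde M/\langle H,K\rangle\, ,\qquad M_K=\widetilde M/K\ \longrightarrow\ M_{\langle H,K\rangle}$$
are regular coverings. Applying the first assertion of Lemma~\ref{coveringandmane} to each of them (with $M_{\langle H,K\rangle}$ in place of the base manifold and $L_{\langle H,K\rangle}$ in place of $L$) yields $c(L;H)\leq c(L;\langle H,K\rangle)$ and $c(L;K)\leq c(L;\langle H,K\rangle)$, and thus the claimed lower bound on $c(L;\langle H,K\rangle)$.

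For the reverse inequality I would argue by contradiction. Fix $k>\max\{c(L;H),c(L;K)\}$ and suppose that some closed loop $\gamma$ in $M$ with $[\gamma]\in HK$ satisfies $\A_k(\gamma)<0$. Write $[\gamma]=hk$ with $h\in H, k\in K$. For every $n\in\N$ we have $[\gamma^n]=(hk)^n\in HK$, so we may factor $(hk)^n=h_nk_n$ with $h_n\in H$, $k_n\in K$. Let $\alpha_n$ be a based loop at the basepoint of $\gamma$ with $[\alpha_n]=k_n^{-1}\in K$; then $\gamma^n\ast\alpha_n$ is a based loop with class $h_n\in H$ and, since the $k$-action is additive under concatenation,
$$
\A_k(\gamma^n\ast\alpha_n)\ =\ n\,\A_k(\gamma)+\A_k(\alpha_n)\, .
$$
By $k>c(L;H)$ the left-hand side is $\geq 0$, which forces $\A_k(\alpha_n)\geq n\,|\A_k(\gamma)|$ for every $n$. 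The goal is now to choose each $\alpha_n$ so that this lower bound is violated for some $n$, providing the contradiction.

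The main obstacle is precisely this quantitative step: the class $k_n^{-1}$ has word-length in $\pi_1(M)$ that generically grows with $n$, so a naive choice of $\alpha_n$ has $\A_k(\alpha_n)$ scaling linearly in $n$ rather than sublinearly. To close the gap one must bring in the assumption $k>c(L;K)$ in a sharp, not merely pointwise, form; the natural way to do this is to pass to the dual Fathi-type characterisation of the Ma\~n\'e critical value of a cover, namely $c(L;N)=\inf\{k\,:\,\exists\,u\in C^\infty(N)\ \text{with}\ H(x,d_xu)\leq k\}$, and to build a subsolution of $H\leq k$ on $M_{\langle H,K\rangle}$ out of subsolutions on $M_H$ and $M_K$ (for instance by an averaging procedure compatible with the deck transformation groups). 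Once such a subsolution is constructed, integration along any closed loop in $M_{\langle H,K\rangle}$ gives the required nonnegativity of $\A_k$, yielding $c(L;\langle H,K\rangle)\leq k$ for every $k>\max\{c(L;H),c(L;K)\}$ and completing the proof.
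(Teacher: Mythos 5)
Your lower bound is exactly the paper's: applying Lemma \ref{coveringandmane} to the coverings $\widetilde M/H\rightarrow\widetilde M/\langle H,K\rangle$ and $\widetilde M/K\rightarrow\widetilde M/\langle H,K\rangle$ gives $\max\{c(L;H),c(L;K)\}\leq c(L;\langle H,K\rangle)$, and that part is fine. The reverse inequality, however, you do not prove. Your contradiction scheme with $(hk)^n=h_nk_n$ only produces the lower bound $\A_k(\alpha_n)\geq n\,|\A_k(\gamma)|$, as you yourself observe, and the proposed repair is a plan rather than an argument: a Hamilton--Jacobi subsolution $u_H$ on $\widetilde M/H$ descends to $\widetilde M/\langle H,K\rangle$ only if it is invariant under the deck group $\langle H,K\rangle/H$, which is in general infinite, and no averaging procedure is available (subsolutions typically have linear growth, so averaging their deck translates diverges even when the deck group is amenable); moreover $u_H$ and $u_K$ live on different covers, so there is no common domain on which the fiberwise convexity of the Hamiltonian would let you combine them. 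So the hard half of the statement is left entirely open; this is a genuine gap.

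It is worth adding that the step you got stuck on cannot be completed, because the inequality $c(L;\langle H,K\rangle)\leq\max\{c(L;H),c(L;K)\}$ is false in general. Take the flat square torus, $L(q,v)=\tfrac12\|v\|^2+v_x+v_y$ (i.e. $\theta=dx+dy$), $H=\langle(1,0)\rangle$, $K=\langle(0,1)\rangle$, both normal in $\pi_1(\T^2)=\Z\times\Z$, so that $\langle H,K\rangle=\Z\times\Z$. On the cylinder $\widetilde M/H$ every closed loop satisfies $\int_\gamma dy=0$, hence $\A_k(\gamma)\geq\sqrt{2k}\,|m|+m$ with $m$ the winding number, which gives $c(L;H)=\tfrac12$, and likewise $c(L;K)=\tfrac12$; but the diagonal loop of class $(-m,-m)$ traversed at speed $\sqrt{2k}$ has action $2m(\sqrt{k}-1)<0$ for every $k<1$, so $c(L;\langle H,K\rangle)=c(L)=1$. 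The paper's own proof of this direction takes, for $k<c(L;\langle H,K\rangle)$, a negative-action loop already written as a concatenation $\alpha_1\#\beta_1\#\dots\#\alpha_n\#\beta_n$ with $[\alpha_i]\in H$, $[\beta_i]\in K$ and splits the action additively; but an arbitrary negative-action loop with class in $HK$ is only \emph{homotopic} to such a concatenation, and the action is not a homotopy invariant -- in the example above every such concatenation has nonnegative action for $k\geq\tfrac12$ while the diagonal loops stay negative up to $k=1$. So your instinct that the quantitative step is a real obstruction was sound: only the inequality $c(L;\langle H,K\rangle)\geq\max\{c(L;H),c(L;K)\}$ can be established.
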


\begin{proof}
Since $H < \langle H,K\rangle$ is a normal subgroup, we have a covering 
$$p: \bigslant{\widetilde{M}}{H} \longrightarrow \bigslant{\widetilde{M}}{\langle H,K\rangle}$$ 
and hence by the Lemma above $c(L;H)\leq c(L;\langle H,K\rangle)$. The same holds clearly also when considering $K$ instead of $H$ and hence we get 
$$\max \big \{ c(L;H),c(L;K)\big \} \ \leq \ c(L;\langle H,K\rangle )\, .$$
Conversely,  let $k<c(L;\langle H,K\rangle )$. By definition there exists 
$$\gamma \ =\ \alpha_1 \# \beta_1 \# ... \# \alpha_n \# \beta_n$$ 
with $\alpha_i\in H$, $\beta_i\in K$ for all $i=1,...,n$, such that $\A_k(\gamma)<0$. It follows
$$\A_k(\gamma)\ =\ \A_k(\alpha_1)+\A_k(\beta_1) + ... +\A_k(\alpha_n) + \A_k(\beta_n) \ < \ 0\, .$$ 

In particular there is one loop, say $\alpha_1$, such that $\A_k(\alpha_1)<0$; hence, by definition we have  $k<c(L;H)$. This implies the opposite inequality.
\end{proof}

\vspace{5mm}

Let $Q_0,Q_1\subseteq M$ be two closed connected submanifolds and set $Q:=Q_0\times Q_1$. Clearly $\mathcal M_Q$ is homotopically equivalent to $H^1_Q([0,1],M)$  and hence to $\Omega_{Q_0,Q_1}(M)$. Lemma 
\ref{componenticonnesseomegaqoq1} implies then that the connected components of $\mathcal M_Q$ are given by
$$\pi_0(\mathcal M_Q)\ \cong \ \pi_0(\Omega_{Q_0,Q_1}(M))\ \cong \ \bigslant{\pi_1(M,q_0)}{\sim_{Q_0,Q_1}}$$
where $\sigma \sim_{Q_0,Q_1} \sigma'$ iff there exist $\alpha \in i_*(\pi_1(Q_0,q_0))$, $\beta\in i_*(\pi_1(Q_1,q_1))$ such that 
$$\sigma' \ \sim \ (\eta^{-1}\# \beta \# \eta) \# \sigma \# \alpha\, .$$

Here $i:Q_0\hookrightarrow M$, $i:Q_1\rightarrow M$ denote the inclusion maps, while $\eta:[0,1]\rightarrow M$ is any path connecting $q_0\in Q_0$ to $q_1\in Q_1$. We denote by
\begin{equation}
H_0 :=\ N \big \langle i_*(\pi_1(Q_0,q_0))\big \rangle\, ,\ \ \ \ H_1 :=\ N \big \langle i_*(\pi_1(Q_1,q_1))\big \rangle 
\end{equation}
the smallest normal subgroups in $\pi_1(M)$ which contain $i_*(\pi_1(Q_0))$, $i_*(\pi_1(Q_1))$ respectively. Now we want to understand when the action functional $\A_k$ is bounded from below on each connected component of 
$\mathcal M_Q$. Suppose that there exists a loop $\delta$ freely-homotopic (i.e. homotopic via a homotopy in which 
the base point of each loop is free to vary) to an element in $i_*(\pi_1(Q_0,q_0))$ and with negative $(L+k)$-action. Without loss of generality we may suppose that the loop $\delta$ is based at $q_0$, as otherwise we can choose 
a path $\eta$ from $q_0$ to $\delta(0)$ and since $\A_k(\delta)<0$ there is $n\in \N$ such that 
$$\A_k( \eta^{-1}\# \delta^n \# \eta) \ <\ 0\, ,$$
with $\eta^{-1}\# \delta^n \# \eta$ free-homotopic to an element of $i_*(\pi_1(Q_0,q_0))$. Thus we suppose there exists $\delta\in \pi_1(M,q_0)$ with negative $(L+k)$-action and freely-homotopic to an element of $i_*(\pi_1(Q_0,q_0))$; 
this condition may be better restated by saying that 
$$\delta \in \ H_0:= N\big \langle i_*(\pi_1(Q_0,q_0))\big \rangle$$ 
and $\A_k(\delta) < 0$. Since $\delta \in H_0$ there exists $\eta\in \pi_1(M,q_0)$ such that 
$$\eta^{-1} \# \delta \#\eta  \ \in i_*(\pi_1(Q_0,q_0))$$ 
and hence we get that for any $\sigma \in \mathcal M_Q$ the path $\sigma \# (\eta^{-1}\# \delta^n \# \eta)$ lies in the same connected component as $\sigma$ and 
$$\lim_{n\rightarrow +\infty} \ \A_k\big (\sigma \# \eta^{-1}\# \delta^n \# \eta \big )\ = \ -\infty\, .$$ 

Therefore if such a loop $\delta$ exists, that is if $k<c(L;H_0)$, then the free-time action functional $\A_k$ is unbounded from below on each connected component of $\mathcal M_Q$. Clearly the same 
holds when considering $Q_1$ instead of $Q_0$. Therefore we define the \textit{Ma\~n\'e critical value of the pair} $Q_0,Q_1$ as 
\begin{equation}
c(L;Q_0,Q_1) := \ c(L;\langle H_0,H_1\rangle)\ = \ \max \big \{c(L;H_0),c(L;H_1)\big \} \, .
\label{clh0h1}
\end{equation}
We can sum up the discussion above in the following

\begin{lemma}
For every $k< c(L;Q_0,Q_1)$, the free-time action functional $\A_k$ is unbounded from below on each connected component of $\mathcal M_Q$.
\label{unboundedness}
\end{lemma}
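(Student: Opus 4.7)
The plan is to formalize the construction sketched in the paragraph preceding the statement, producing in any given connected component $\mathcal N$ of $\mathcal M_Q$ a sequence $\sigma_n\in\mathcal N$ whose action $\A_k$ diverges to $-\infty$.

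First I would reduce to the case $k<c(L;H_0)$, using the identity $c(L;Q_0,Q_1)=\max\{c(L;H_0),c(L;H_1)\}$ from (\ref{clh0h1}) and (after possibly swapping $Q_0$ and $Q_1$) choosing $i=0$. Since $c(L;H_0)=c(L_1)$ is the Ma\~n\'e critical value of the lifted Lagrangian $L_1$ on the cover $M_1:=\widetilde M/H_0$, the hypothesis provides a closed loop $\tilde\delta\subset M_1$ with $\A_k^{L_1}(\tilde\delta)<0$; projecting via the local isometry $p:M_1\to M$ yields a loop $\delta$ in $M$ with $\A_k(\delta)<0$ and free-homotopy class in $H_0\subset\pi_1(M)$.

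Next, following the argument preceding the lemma, after possibly replacing $\delta$ by a sufficiently high iterate and conjugating by a path $\eta$ from $q_0\in Q_0$ to $\delta(0)$, I obtain loops $\gamma_n:=\eta^{-1}\#\delta^n\#\eta$ based at $q_0$ whose free-homotopy classes lie in $i_*(\pi_1(Q_0,q_0))$ and whose actions $\A_k(\gamma_n)=n\A_k(\delta)+\A_k(\eta)+\A_k(\eta^{-1})\to-\infty$. Given any $\sigma\in\mathcal N$, after a preliminary homotopy inside $\mathcal N$ (slid along a path in the connected submanifold $Q_0$) I may assume $\sigma(0)=q_0$ and set $\sigma_n:=\gamma_n\#\sigma\in\mathcal M_Q$; by Lemma \ref{componenticonnesseomegaqoq1} and the defining form of the relation $\sim_{Q_0,Q_1}$, prepending a loop whose class lies in $i_*(\pi_1(Q_0,q_0))$ preserves the equivalence class, so $\sigma_n\in\mathcal N$ while $\A_k(\sigma_n)=\A_k(\gamma_n)+\A_k(\sigma)\to-\infty$, establishing the unboundedness of $\A_k$ on $\mathcal N$.

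The main obstacle is verifying the claim in the second paragraph that $\delta$ (or some iterate) is truly freely-homotopic to an element of $i_*(\pi_1(Q_0))$ rather than merely having class in the normal closure $H_0$. Since $H_0$ consists of arbitrary products of conjugates $g_j\alpha_j g_j^{-1}$ with $\alpha_j\in i_*(\pi_1(Q_0))$, not just of single such conjugates, this step is addressed by decomposing $\delta$ as a literal concatenation realizing the factorization of $[\delta]$ and invoking a pigeon-hole argument on the actions of the pieces: after iteration $\delta^n$, the total action $n\A_k(\delta)$ becomes arbitrarily negative, so at least one conjugate piece $\xi_j\#\tilde\alpha_j\#\xi_j^{-1}$ must carry negative action; since its free-homotopy class is the single conjugate of $\alpha_j\in i_*(\pi_1(Q_0))$, further iteration and base-point conjugation as above yields the desired sequence $\gamma_n$.
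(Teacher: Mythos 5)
Your overall route is the paper's: reduce to $k<c(L;H_0)$ via $c(L;Q_0,Q_1)=\max\{c(L;H_0),c(L;H_1)\}$, produce a loop $\delta$ with $\A_k(\delta)<0$ and class in $H_0$, conjugate it to a base point $q_0\in Q_0$, append its iterates to an arbitrary $\sigma$ in the given component, and use additivity of $\A_k$ under concatenation together with the fact that right multiplication by elements of $i_*(\pi_1(Q_0,q_0))$ preserves the class modulo $\sim_{Q_0,Q_1}$. All of that matches the discussion preceding the lemma and is fine. The genuine gap is in your patch for the normal-closure issue. The factorization $[\delta]=\prod_j g_j\alpha_jg_j^{-1}$ is an identity in $\pi_1(M,q_0)$ only: the curve $\delta$, which is what carries the negative action, is not a concatenation of loops $\xi_j\#\tilde\alpha_j\#\xi_j^{-1}$, and to realize the factorization by a ``literal concatenation'' you must replace $\delta$ by a homotopic loop $\delta'$. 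The action is not a homotopy invariant, so $\A_k(\delta')=\sum_j\A_k\bigl(\xi_j\#\tilde\alpha_j\#\xi_j^{-1}\bigr)$ bears no relation to $\A_k(\delta)$ and may well be positive, in which case the pigeon-hole yields nothing. Iteration does not rescue this: $(\delta')^n$ consists of the same pieces repeated $n$ times, so their individual actions are unchanged, while $\delta^n$, whose action is indeed $n\A_k(\delta)\to-\infty$, admits no decomposition into conjugate pieces to which a pigeon-hole could be applied. Hence the key step ``at least one conjugate piece must carry negative action'' is unjustified.

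What you need at that point is that, for every $k<c(L;H_0)$, there is a loop of negative $\A_k$-action which is freely homotopic to an element of $i_*(\pi_1(Q_0,q_0))$, i.e.\ which lifts to a closed loop in the cover of $M$ associated with the subgroup $i_*(\pi_1(Q_0,q_0))$ itself; this is a comparison between the Ma\~n\'e critical value of that cover and $c(L;H_0)$, the value of the cover associated with its normal closure, and it cannot be obtained from a counting argument on homotopy-theoretic pieces, precisely because the action does not descend to homotopy classes. Be aware that the paper's own discussion does not prove this either: it asserts that any $\delta\in H_0$ can be conjugated into $i_*(\pi_1(Q_0,q_0))$, which is false for a general element of a normal closure (in a free group, $a\,(bab^{-1})$ lies in the normal closure of $\langle a\rangle$ but is conjugate to no power of $a$). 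So you have correctly isolated the delicate point, but your pigeon-hole does not close it. If instead you read the hypothesis as the paper does, namely as directly providing a negative-action loop freely homotopic to an element of $i_*(\pi_1(Q_0,q_0))$ or of $i_*(\pi_1(Q_1,q_1))$, then the remainder of your argument coincides with the paper's proof and is correct.
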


The next lemma ensures instead that, if we consider $k\geq c(L;Q_0,Q_1)$, then the free-time action functional $\A_k$ is bounded from below on each connected component of $\mathcal M_Q$.
The proof is analogous to the one of \cite[Lemma 4.1]{Abb13}, where the case of periodic orbits is treated and $c(L;Q_0,Q_1)$ is replaced by $c_u(L)$.

\begin{lemma}
For every $k\geq c(L;Q_0,Q_1)$ the free-time action functional $\A_k$ is bounded from below on every connected component of $\mathcal M_Q$.
\label{boundedness}
\end{lemma}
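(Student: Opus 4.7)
The plan is to pass to the cover $p\colon M_1 \to M$ with $M_1 = \widetilde{M}/\langle H_0, H_1\rangle$ and $L_1 := L\circ dp$. Since $\A_k$ is preserved under lifts, it is enough to bound $\A_k^{L_1}$ from below on the corresponding lifted set of paths, and the key tool is that by definition of $c(L_1) = c(L;Q_0,Q_1)$ (together with the linearity of $\A_k^{L_1}$ in $k$ at fixed curve to handle the equality case $k = c(L;Q_0,Q_1)$), every closed loop in $M_1$ has non-negative $(L_1+k)$-action. Since $i_*(\pi_1(Q_j))$ is contained in the kernel of $\pi_1(M)\to \pi_1(M_1)$ for $j=0,1$, the preimage $p^{-1}(Q_j)$ decomposes as a disjoint union of connected components each mapped diffeomorphically onto $Q_j$ by $p$, hence compact. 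Using Lemma \ref{componenticonnesseomegaqoq1} together with homotopy lifting applied to continuous deformations inside a fixed connected component $\mathcal N \subset \mathcal M_Q$, one checks that $\mathcal N$ singles out connected components $\widetilde{Q}_0^{(0)} \subset p^{-1}(Q_0)$ and $\widetilde{Q}_1^{(0)} \subset p^{-1}(Q_1)$ such that every $\gamma \in \mathcal N$ admits a lift $\widetilde\gamma$ with $\widetilde\gamma(0) \in \widetilde{Q}_0^{(0)}$ and $\widetilde\gamma(T) \in \widetilde{Q}_1^{(0)}$.

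The second step is the standard close-up-to-a-loop trick. I fix once and for all a smooth reference lift $\widetilde\gamma_0$ in this class (non-empty since $\mathcal N \neq \emptyset$). For any $\widetilde\gamma$ with endpoints $\tilde q_0 \in \widetilde{Q}_0^{(0)}$ and $\tilde q_1 \in \widetilde{Q}_1^{(0)}$, I form the closed loop
$$\lambda \ := \ \widetilde\gamma \, \# \, \mu_1 \, \# \, \overline{\widetilde\gamma_0} \, \# \, \mu_0$$
in $M_1$, where $\overline{\widetilde\gamma_0}$ is the time-reversal of $\widetilde\gamma_0$, $\mu_1$ is a minimizing geodesic in $\widetilde{Q}_1^{(0)}$ from $\tilde q_1$ to the endpoint of $\widetilde\gamma_0$, and $\mu_0$ is a minimizing geodesic in $\widetilde{Q}_0^{(0)}$ from the starting point of $\widetilde\gamma_0$ to $\tilde q_0$, each parametrized on $[0,1]$ at constant speed. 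The inequality $\A_k^{L_1}(\lambda) \geq 0$ then rearranges into
$$\A_k(\gamma) \ = \ \A_k^{L_1}(\widetilde\gamma) \ \geq \ -\A_k^{L_1}\bigl(\overline{\widetilde\gamma_0}\bigr) \, - \, \A_k^{L_1}(\mu_0) \, - \, \A_k^{L_1}(\mu_1).$$

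The third step is to bound the right-hand side uniformly over $\gamma\in\mathcal N$. The term $\A_k^{L_1}(\overline{\widetilde\gamma_0})$ is a fixed real number once $\widetilde\gamma_0$ is chosen, and the fact that $L$ need not be symmetric under $v\mapsto -v$ is harmless, as it only affects this fixed constant. For the auxiliary paths, compactness of $\widetilde{Q}_j^{(0)}$ bounds the lengths of $\mu_0,\mu_1$ by the diameters $D_j := \mathrm{diam}(\widetilde{Q}_j^{(0)})$, so their constant-speed parametrizations have $\|v\|_q \leq D_j$; continuity of $L_1$ on the compact set $\{(q,v)\in TM_1 : q\in \widetilde{Q}_j^{(0)},\ \|v\|_q \leq D_j\}$ then produces universal upper bounds $C_0,C_1$ for $\A_k^{L_1}(\mu_0),\A_k^{L_1}(\mu_1)$, and we conclude $\A_k(\gamma) \geq -\A_k^{L_1}(\overline{\widetilde\gamma_0}) - C_0 - C_1$ for every $\gamma\in\mathcal N$. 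The only genuinely delicate point is the lifting setup of the first paragraph, where the choice of cover $M_1$ must be matched to the quotient description of $\pi_0(\mathcal M_Q)$ so that all of $\mathcal N$ lifts to paths connecting the same pair of lifted submanifolds; once this is established, the rest is a routine closing-up argument.
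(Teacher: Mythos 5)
Your argument is correct and is essentially the paper's own proof: both pass to the cover $M_1=\widetilde M/\langle H_0,H_1\rangle$, close the lifted path up to a loop in $M_1$ by a connecting path of uniformly bounded action (the paper joins the lifted endpoints directly, exploiting that their distance is uniformly bounded on a fixed component of $\mathcal M_Q$; you go through the lifted reference path and short geodesics in the compact lifts of $Q_0,Q_1$, which is the same mechanism and also justifies that bounded-distance claim), and then invoke the non-negativity of the $(L_1+k)$-action of closed loops for $k\geq c(L_1)=c(L;Q_0,Q_1)$. The only imprecision is the phrase ``$i_*(\pi_1(Q_j))$ is contained in the kernel of $\pi_1(M)\to\pi_1(M_1)$'': what you actually use (and what is true) is $i_*(\pi_1(Q_j))\subseteq \langle H_0,H_1\rangle = p_*\pi_1(M_1)$, i.e.\ loops in $Q_j$ lift to closed loops in $M_1$, which is exactly what makes each component of $p^{-1}(Q_j)$ project diffeomorphically onto $Q_j$.
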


\begin{proof}
Consider $\sigma:[0,T]\rightarrow M$ in some connected component of $\mathcal M_Q$ and
\begin{equation}
M_1:=\bigslant{\widetilde{M}}{\langle H_0,H_1\rangle}\  \stackrel{p}{\longrightarrow}\  M\, ,
\label{rivestimento}
\end{equation}
where $\widetilde{M}$ is the universal cover. Denote by $\sigma_1$ the lift of $\sigma$ to $M_1$; we lift the metric of $M$ to $M_1$ 
and notice that, having fixed the connected component of $\mathcal M_Q$, 
$$\text{dist}\, (\sigma_1(0),\sigma_1(T))$$
is uniformly bounded. Therefore, there exists a path $\eta_1:[0,1]\rightarrow M_1$ which joins $\sigma_1(T)$ with $\sigma_1(0)$ and has uniformly bounded action 
$$\tilde{\A}_k(\eta_1)\ =\ \int_0^1 \Big [L_1(\eta_1(t),\dot \eta_1(t))+k \Big ]\, dt \ \leq \ C\, ,$$
where $L_1$ denotes the lifted Lagrangian on $M_1$. If $\eta:=p\, \circ \, \eta_1$, then the juxtaposition $\sigma \# \eta\in \langle H_0,H_1\rangle$ and, since 
by assumption $k\geq c(L;Q_0,Q_1)$, we get 
$$0\ \leq \ \A_k(\sigma \# \eta) \ =\ \A_k(\sigma) + \A_k(\eta)\ =\ \A_k(\sigma) + \tilde{\A}_k(\eta_1)\ \leq \ \A_k(\sigma) + C$$
from which we deduce that $\A_k(\sigma) \geq -C$.
\end{proof}

\begin{cor}
If $k>c(L;Q_0,Q_1)$, then any Palais-Smale sequence $(x_h,T_h)$ for $\A_k$  in a given connected component of $\mathcal M_Q$ with $\, T_h\geq T_*>0$ is compact. As a corollary, the free-time Lagrangian action functional $\A_k$ 
satisfies the Palais-Smale condition on every connected component of $\mathcal M_Q$ that does not contain constant paths.
\label{compattezza}
\end{cor}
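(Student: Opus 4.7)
The plan is to first establish the main compactness statement using the strict inequality $k > c(L;Q_0,Q_1)$ combined with Lemma \ref{lemmalimitatezza}, and then to obtain the corollary by ruling out the degenerate case $T_h \to 0$ on components without constant paths.

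\textbf{Main statement.} Let $(x_h, T_h)$ be a Palais--Smale sequence at some level $c \in \R$ with $T_h \geq T_* > 0$ inside a fixed component $\mathcal{N}$ of $\mathcal{M}_Q$. By Lemma \ref{lemmalimitatezza} it suffices to show that $T_h$ is also bounded from above. The idea is to use the strict inequality by picking $\epsilon > 0$ small enough that $k - \epsilon \geq c(L;Q_0,Q_1)$ and applying Lemma \ref{boundedness} to $\A_{k-\epsilon}$: this yields a constant $C_{\mathcal{N}}$ with $\A_{k-\epsilon}(x,T) \geq -C_{\mathcal{N}}$ throughout $\mathcal{N}$. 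Writing $\A_k = \A_{k-\epsilon} + \epsilon T$, we obtain $\A_k(x_h, T_h) \geq -C_{\mathcal{N}} + \epsilon T_h$, and the convergence $\A_k(x_h, T_h) \to c$ then forces
$$T_h \;\leq\; \frac{c + C_{\mathcal{N}} + o(1)}{\epsilon},$$
so that Lemma \ref{lemmalimitatezza} applies and produces a converging subsequence.

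\textbf{Corollary.} Let $\mathcal{N}$ be a component of $\mathcal{M}_Q$ containing no constant paths, and let $(x_h, T_h) \subseteq \mathcal{N}$ be Palais--Smale at some level $c$. I would show $\liminf_h T_h > 0$ and then invoke the main statement. Suppose for contradiction that $T_h \to 0$ along a subsequence. Repeating the first step in the proof of Lemma \ref{yesmodification}, we get $\|\dot\gamma_h\|_{L^2}^2 = O(T_h)$, whence by Cauchy--Schwarz
$$l(x_h) \;\leq\; \sqrt{T_h}\,\|\dot\gamma_h\|_{L^2} \;=\; O(T_h) \;\longrightarrow\; 0.$$
If $Q_0 \cap Q_1 = \emptyset$, this contradicts $l(x_h) \geq d(Q_0, Q_1) > 0$ (both submanifolds being compact). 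Otherwise, after extracting a further subsequence, the endpoints $x_h(0), x_h(1)$ collapse to a common limit $q_\infty \in Q_0 \cap Q_1$ and $x_h$ converges uniformly to the constant path $c_{q_\infty}$. Since the inclusion $H^1_Q([0,1], M) \hookrightarrow C^0_Q([0,1], M)$ is a homotopy equivalence and hence induces a bijection on $\pi_0$, for large $h$ the path $x_h$ lies in the same $C^0$-component, and therefore in the same $H^1$-component, as $c_{q_\infty}$; this contradicts $x_h \in \mathcal{N}$, since that component contains constant paths.

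The delicate point I anticipate is precisely this last topological step: uniform convergence $x_h \to c_{q_\infty}$ does not give $H^1$-convergence inside $\mathcal{M}_Q$, so one must rely on the homotopy equivalence between $H^1_Q([0,1],M)$ and $C^0_Q([0,1],M)$ recalled in Section \ref{ahilbertmanifoldofloops} to transfer path-component information from the $C^0$- back to the $H^1$-level.
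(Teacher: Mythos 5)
Your proof is correct and follows essentially the same route as the paper: for the main statement you exploit the affine dependence $\A_k=\A_{k-\epsilon}+\epsilon T$ together with the lower bound from Lemma \ref{boundedness} at a (sub)level $\geq c(L;Q_0,Q_1)$ to bound the periods from above, and then invoke Lemma \ref{lemmalimitatezza}, exactly as in the paper (which simply takes $\epsilon=k-c(L;Q_0,Q_1)$). For the corollary the paper just asserts that the times are automatically bounded away from zero on components without constant paths; your argument via $l(x_h)=O(T_h)$ and the $C^0$--$H^1$ homotopy equivalence is a valid way of filling in that same underlying topological fact.
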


\begin{proof}
In virtue of Lemma \ref{lemmalimitatezza} it is enough to show that the $(T_h)$'s are uniformly bounded from above. Since 
$$\A_k(x,T)\ =\ \A_{c(L;Q_0,Q_1)} (x,T)+\Big (k-c(L;Q_0,Q_1)\Big )T$$
for any $(x,T)\in \mathcal M_Q$, the period 
$$T_h\ =\ \frac{1}{k-c(L;Q_0,Q_1 )} \, \Big [\A_k(x_h,T_h)-\A_{c(L;Q_0,Q_1)}(x_h,T_h)\Big ]$$ 
is clearly uniformly bounded from above, being $\A_k$ bounded on the Palais-Smale sequence $(x_h,T_h)$ and being $\A_{c(L;Q_0,Q_1)}(x_h,T_h)$ bounded from below 
by Lemma \ref{boundedness}. 

The second assertion follows trivially from the first one, since on connected components of $\mathcal M_Q$ that do not contain constant paths
the times on a Palais-Smale sequence are automatically bounded away from zero.
\end{proof}

\vspace{3mm}

When looking for Euler-Lagrange orbits satisfying conormal boundary conditions in case $Q_0,Q_1$ intersect, there is another relevant energy value which we now define. 

We saw that the only Palais-Smale sequences for $\A_k$ that may cause troubles are either the ones for which the times are not bounded or the ones for which the times tend to zero; 
in the latter case the Palais-Smale sequence lies in a connected component of $\mathcal M_Q$ that contains constant paths and the action necessarily goes to zero (cf. Lemma \ref{yesmodification}). 
This fact will enable us to prove in Theorem \ref{teorema2} that, for all $k>c(L;Q_0,Q_1)$, there is an Euler-Lagrange orbit with energy $k$ satisfying the conormal boundary 
conditions (\ref{lagrangianformulation1}) in every connected component of $\mathcal M_Q$ that does not contain constant paths, even if the submanifolds $Q_0$ and $Q_1$ intersect.

Thus, the following question arises naturally: what happens for the connected components of $\mathcal M_Q$ containing constant paths? The situation is clearly more complicated and indeed in general we may not expect 
that they contain solutions of the problem. However, positive existence results are possible also for these connected components. 
More precisely, for every connected component $\mathcal N$ of $\mathcal M_Q$ containing constant paths we introduce the following energy value
\begin{equation}
k_{\mathcal N}(L) :=\  \inf \ \Big \{k\in \R \ \Big |\ \A_k(\gamma)\geq 0\, ,\ \ \forall \ \gamma \in \mathcal N\Big \}\, .
\label{komega(L)}
\end{equation}

By definition we readily see that $c(L;Q_0,Q_1 )\leq k_{\mathcal N}(L)$. In Section \ref{supercriticalorbits} we show that in the (possibly, but in general not, empty) interval 
$(c(L;Q_0,Q_1 ),k_{\mathcal N}(L))$ we find Euler-Lagrange orbits in $\mathcal N$ satisfying the conormal boundary conditions (\ref{lagrangianformulation1}). Existence results above $k_{\mathcal N}(L)$ are in general achievable
only under the additional assumption that $\pi_l(\mathcal N)\neq 0$ for some $l\geq 1$ (cf. Theorem \ref{teorema3}). Another ``natural'' energy value is given by 
$$k_0(L) := \  \inf \ \Big \{k\in \R \ \Big |\ \A_k(\gamma)\geq 0\, ,\ \ \forall \ \gamma \in \mathcal M_Q\Big \}\, .$$

It is interesting to study the relation of $k_0(L)$ with the critical value $c(L;Q_0,Q_1)$ and more generally with the other critical values we introduced in this session; this will also give us an estimate on how much the 
various critical values can differ. Clearly there holds $c(L;Q_0,Q_1) \leq k_0(L)$, since the action functional $\A_{k_0(L)}$ is bounded from below on every connected component of $\mathcal M_Q$. 

We claim that actually $c(L) \leq  k_0(L).$ Thus, assume that $k<c(L)$; by definition there exists a loop $\delta$ such that $\A_k(\delta)<0$. It is now easy to construct a path from $Q_0$ to $Q_1$ with negative action 
by going around $\delta$ a sufficiently large amount of time. More precisely, we construct the desired path connecting $Q_0$ and $Q_1$ as follows: 
pick a path $\eta$ from a point $q_0\in Q_0$ to the base point $\delta(0)$, then wind $n$-times around $\delta$ and finally join $\delta(0)$ with a point $q_1\in Q_1$ by a path $\mu$. If $n$ is large enough then 
$$\A_k(\mu\# \delta^n \# \eta) \ =\ \A_k (\mu) + n\, \A_k(\delta) + \A_k(\eta)\ <\ 0\, ,$$
which implies  $k<k_0(L)$ and the claim follows. Therefore, we have 

$$e_0(L) \ \leq \ c_u(L)\ \leq \ c(L;Q_0,Q_1) \ \leq \ c(L)\ \leq \ k_0(L)\, ,$$
where the second and third inequalities follow from Lemma \ref{coveringandmane}. In general there is no relation between $c_0(L)$ and $c(L;Q_0,Q_1)$, as the examples in Section \ref{ahilbertmanifoldofloops} show. 
In fact, in the situation represented by Figure \ref{primo} at page 22 we have $c(L; Q_0,Q_1) = c(L)$, whilst in the case illustrated by Figure \ref{terzo} at page 23 we have $c(L;Q_0,Q_1) = c_u(L)$. 

In order to estimate how much the various Ma\~n\'e critical values can differ, one can measure the difference $k_0(L)-e_0(L)$. Consider the smooth one-form 
$$\theta(q)[v]:= \ d_vL(q,0)[v]\, ;$$
by taking a Taylor expansion and by using (\ref{secondinequality}), we get that
\begin{eqnarray*}
L(q,v)&=& L(q,0)+d_vL(q,0)[v] + \frac{1}{2}\, d_{vv}L(q,sv)[v,v]\ \geq \\ 
          &\geq& - E(q,0)+ \theta(q)[v] + a\, \big \|v\big \|^2\, .
\end{eqnarray*}
If we set as usual $\gamma(t):=x(t/T)$, then for every $k>e_0(L)$ we obtain
\begin{eqnarray*}
\A_k(x,T) &=& \A_k(\gamma)\ \geq \int_0^T \Big [-E(\gamma(t),0)+\theta(\gamma(t))[\dot \gamma(t)] + a\, \big \|\dot \gamma(t)\big \|^2 + k \Big ]\, dt \ =\\ 
   &=& \int_0^T \Big [k-E(\gamma(t),0)\Big ]\, dt \ + \int_0^T \gamma^*\theta\ + \ a\, \int_0^T \big \|\dot \gamma(t)\big \|^2 \, dt\ \geq \\
   &\geq& \Big [k-e_0(L)\Big ]\, T  \ + \ \frac{a}{T}\, l(\gamma)^2 \ -\  \|\theta\|_\infty \,  l(\gamma) \, .
\end{eqnarray*}
For $T$ fixed, the latter expression is a parabola in $l(\gamma)$ with minimum
$$(k-e_0(L)) \, T \ -\  \frac{\|\theta\|_\infty^2}{4a}\, T\ =\ \left (k-e_0(L) - \frac{\|\theta\|_\infty^2}{4a}\right )\, T\, .$$
In particular, if 
$$k \ >\ e_0(L) + \frac{\|\theta\|_\infty^2}{4a}\, ,$$ 
then $\A_k(\gamma)\geq 0$ for any path $\gamma$ connecting $Q_0$ with $Q_1$ and this implies, by the definition of $k_0(L)$, that $k>k_0(L)$. Therefore we get
$$k_0(L) \ \leq \ e_0(L) + \frac{\|\theta\|_\infty^2}{4a}\, .$$ 
We sum up the discussion above in the following

\begin{prop}
Let $L:TM\rightarrow \R$ be a Tonelli Lagrangian, $Q_0,Q_1\subseteq M$ closed submanifolds. Then the following chain of inequalities holds 
\begin{equation}
e_0(L)\ \leq \ c_u(L)\ \leq \ c(L;Q_0,Q_1) \ \leq \ c(L) \ \leq \ k_0(L)\ \leq \ e_0(L) + \frac{\|\theta\|_\infty^2}{4a}\, .
\label{chainofcriticalvalues}
\end{equation}
\end{prop}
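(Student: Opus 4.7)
The statement is essentially a bookkeeping proposition: each individual inequality has either been proved in the paragraphs immediately preceding the proposition or is a direct consequence of the material already collected in this section. My plan is therefore to verify the five links of the chain one by one, invoking the previously established results.

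For the first inequality $e_0(L)\le c_u(L)$, I would use constant loops as a test class. Since $E(q,0)=-L(q,0)$ for every $q\in M$, the free-period action of the constant loop $\gamma\equiv q$ defined on $[0,T]$ equals $T\bigl(k-E(q,0)\bigr)$. This loop is contractible, so if $k<e_0(L)$, choosing $q$ where $E(q,0)$ is maximized yields a contractible loop with negative $(L+k)$-action, whence $k<c_u(L)$. This gives $c_u(L)\ge e_0(L)$.

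For the next two links $c_u(L)\le c(L;Q_0,Q_1)\le c(L)$, I would apply Lemma \ref{coveringandmane} along the tower of covers
$$\widetilde M \ \longrightarrow\ M_1 \ =\ \bigslant{\widetilde M}{\langle H_0,H_1\rangle} \ \longrightarrow\ M.$$
Regarding $\widetilde M\to M_1$ as a covering and using the lemma gives $c_u(L)=c(\widetilde L)\le c(L_1)=c(L;Q_0,Q_1)$, while applying the lemma to $M_1\to M$ gives $c(L;Q_0,Q_1)=c(L_1)\le c(L)$.

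The remaining two inequalities $c(L)\le k_0(L)\le e_0(L)+\|\theta\|_\infty^2/(4a)$ have just been established in the discussion preceding the proposition: the first by the juxtaposition trick $\mu\#\delta^n\#\eta$ that converts a loop with negative action into a path from $Q_0$ to $Q_1$ with negative action, and the second by the Taylor expansion $L(q,v)\ge -E(q,0)+\theta(q)[v]+a\|v\|^2$ combined with the elementary minimization in $l(\gamma)$ of the resulting parabola, which shows that $\A_k\ge 0$ on all of $\mathcal M_Q$ as soon as $k>e_0(L)+\|\theta\|_\infty^2/(4a)$. Since no step presents any genuine difficulty once the preceding lemmas are in hand, the only thing to be careful about is making the direction of the covering inequalities explicit, so that the middle block of the chain is obtained by applying Lemma \ref{coveringandmane} to the two legs of the tower separately rather than all at once.
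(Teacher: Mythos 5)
Your proposal is correct and follows essentially the same route as the paper: the two middle inequalities via Lemma \ref{coveringandmane} applied to the covering tower $\widetilde M\rightarrow M_1\rightarrow M$, the inequality $c(L)\le k_0(L)$ via the winding construction $\mu\#\delta^n\#\eta$, and the final bound via the Taylor expansion $L(q,v)\ge -E(q,0)+\theta(q)[v]+a\|v\|^2$ and minimization of the resulting parabola in $l(\gamma)$. The only (harmless) difference is that you prove $e_0(L)\le c_u(L)$ directly by testing constant loops, whereas the paper simply quotes this as part of the standard chain \eqref{relazionemane}.
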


Observe that, when $\theta\equiv 0$ (in particular when $L$ is a mechanical Lagrangian, i.e. of the form (\ref{magneticlagrangian}) with vanishing magnetic potential) we retrieve 
$$e_0(L)\ =\  c_u(L)\ =\ c(L;Q_0,Q_1)\ =\ c(L)\ =\ k_0(L)\, .$$

%%%%%%%%%%%%%%%%%%%%%%%%%%%%%%%%%%%%%%%%%%%%%%%%%%%%%%%%%%%%%%%%
%%%%%%%%%%%%%%%%%%%%%%%%%%%%%%%%%%%%%%%%%%%%%%%%%%%%%%%%%%%%%%%%
%%%%%%%%%%%%%%%%%%%%%%%%%%%%%%%%%%%%%%%%%%%%%%%%%%%%%%%%%%%%%%%%

\chapter{Orbits with conormal boundary conditions}
\label{chapter3}

In this chapter, building on the analytical background given in the previous chapters, we prove the main results about the existence of Euler-Lagrange orbits connecting to given submanifolds $Q_0,Q_1\subseteq M$ and
satisfying the conormal boundary conditions.

In Section \ref{supercriticalorbits} we deal with supercritical energies, whilst in Section \ref{subcriticalorbits} we move to the study of existence for subcritical energies. 

Finally, in Section \ref{counterexamples} we provide counterexamples showing that all the results obtained are sharp.

%%%%%%%%%%%%%%%%%%%%%%%%%%%%%%%%%%%%%%%%%%%%%%%%%%%%%%%%%%%%%%%%%

\section{Existence for supercritical energies.}
\label{supercriticalorbits}

Throughout this section we suppose $k>c(L;Q_0,Q_1)$ and prove existence results of Euler-Lagrange orbits satisfying the conormal boundary conditions (\ref{lagrangianformulation1}) with energy $k$. 
We first suppose that $Q_0\cap Q_1=\emptyset$ and show that, in every connected component $\mathcal N$ of $\mathcal M_Q$, there is an Euler-Lagrange orbit with energy $k$ which satisfies the conormal boundary conditions 
and which is a global minimizer of $\A_k$ on $\mathcal N$. 

\begin{teo}
Suppose $Q_0\cap Q_1 =\emptyset$ and let $k>c(L;Q_0,Q_1)$. Then, there is an Euler-Lagrange orbit with energy $k$ satisfying the conormal boundary conditions (\ref{lagrangianformulation1}) in every connected component 
$\mathcal N$ of $\mathcal M_Q$, which is furthermore a global minimizers of $\A_k$ among the connected component $\mathcal N$.
\label{teorema1}
\end{teo}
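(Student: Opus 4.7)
The plan is to apply the direct method of the calculus of variations, using the Palais--Smale machinery developed in Chapter~\ref{chapter2}. First I would fix a connected component $\mathcal N$ of $\mathcal M_Q$ and set
$$c_{\mathcal N} := \inf_{(x,T)\in \mathcal N} \A_k(x,T).$$
By Lemma~\ref{boundedness}, since $k>c(L;Q_0,Q_1)$, the value $c_{\mathcal N}$ is finite. Applying the general minimax principle (Theorem~\ref{minimaxtheorem}) with $\Gamma$ equal to the family of one-point subsets of $\mathcal N$ (which is trivially positively invariant under the negative gradient flow of $\A_k$, since $\A_k$ is decreasing along its flow lines) yields a Palais--Smale sequence $(x_h,T_h)\subseteq \mathcal N$ at level $c_{\mathcal N}$.

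Next, I would show that this Palais--Smale sequence has times bounded away from zero and from above. Since $Q_0\cap Q_1=\emptyset$, Lemma~\ref{completeness0} gives that the sublevel set $\{\A_k\leq c_{\mathcal N}+1\}$ is complete; inspecting the proof (or reading off the lower bound \eqref{stimaazione} together with the fact that $l(x_h)$ is bounded below by the positive Riemannian distance between $Q_0$ and $Q_1$) one obtains $T_h\geq T_*>0$ for some constant $T_*$. The upper bound $T_h\leq T^*$ is then exactly the argument used in Corollary~\ref{compattezza}: from
$$T_h = \frac{1}{k-c(L;Q_0,Q_1)}\,\Big[\A_k(x_h,T_h) - \A_{c(L;Q_0,Q_1)}(x_h,T_h)\Big],$$
the numerator is bounded because $\A_k(x_h,T_h)\to c_{\mathcal N}$ and $\A_{c(L;Q_0,Q_1)}$ is bounded below on $\mathcal N$ by Lemma~\ref{boundedness}.

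With $0<T_*\leq T_h\leq T^*<+\infty$ in hand, Lemma~\ref{lemmalimitatezza} applies directly and gives a subsequence $(x_h,T_h)$ converging in $\mathcal M_Q$ to some $(x,T)\in \overline{\mathcal N}$; since the limit has $T>0$ and connected components of $\mathcal M_Q$ are open, in fact $(x,T)\in \mathcal N$. By the continuity of $\A_k$ and $d\A_k$, the pair $(x,T)$ is a critical point of $\A_k$ on $\mathcal M_Q$ with $\A_k(x,T)=c_{\mathcal N}$, hence a global minimizer on $\mathcal N$. Theorem~\ref{equivalence} then translates this into the desired Euler--Lagrange orbit with energy $k$ satisfying the conormal boundary conditions \eqref{lagrangianformulation1}.

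The main technical obstacle is really the compactness step: one must simultaneously rule out $T_h\to 0$ (handled by the disjointness assumption via Lemma~\ref{completeness0}) and $T_h\to +\infty$ (handled by the supercriticality $k>c(L;Q_0,Q_1)$ via Lemma~\ref{boundedness}). These two ingredients are exactly what conspire to make the minimization work despite the non-completeness of $\mathcal M_Q$; each would fail without the corresponding hypothesis, as the counterexamples in Section~\ref{counterexamples} will confirm.
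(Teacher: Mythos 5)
Your proof is correct and follows essentially the same route as the paper: completeness of sublevel sets from Lemma~\ref{completeness0}, the Palais--Smale condition for $k>c(L;Q_0,Q_1)$ as in Corollary~\ref{compattezza}, and the minimization corollary of the general minimax principle, concluding via Theorem~\ref{equivalence}. The only difference is that you unpack the period bounds and the one-point-set minimax argument explicitly, whereas the paper simply cites Lemma~\ref{completeness0} and Corollary~\ref{compattezza}.
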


\begin{proof}
Since $Q_0\cap Q_1=\emptyset$, Lemma \ref{completeness0} implies that the sublevels of $\A_k$ in $\mathcal N$
$$\big \{(x,T)\in \mathcal N\ \big | \A_k(x,T)\leq c\big \}$$
are complete. Moreover, Corollary \ref{compattezza} implies that $\A_k$ satisfies the Palais-Smale condition on $\mathcal N$ for every $k>c(L;Q_0,Q_1)$. 
We may then conclude that $\A_k$ has a global minimizer on $\mathcal N$, as we wished to prove.
\end{proof}

\vspace{5mm}

We move now to study the case of non-empty intersection; for the sake of simplicity we first suppose the intersection to be connected. Before stating and proving the main result in this context we shall introduce the concept of \textit{degenerate orbit}.

\begin{defn}
Let $Q_0, Q_1\subseteq M$ be two closed connected submanifolds such that $Q_0\cap Q_1$ is non-empty. An Euler-Lagrange orbit $\gamma:(-\epsilon,\epsilon)\rightarrow M$ is called a $\mathsf{degenerate}$ 
Euler-Lagrange orbit satisfying the conormal boundary conditions if there hold
$$\gamma(0)\ \in \ Q_0\cap Q_1 \, ,\ \ \ \  d_v L(\gamma(0),\dot \gamma(0))\Big |_{T_{\gamma(0)}Q_0 \cup  T_{\gamma(0)}Q_1} \ \equiv \ 0 \, .$$
\end{defn}

In order to have an explicit picture of what a degenerate orbit is, let us again consider the geodesic flow of a Riemannian manifold $(M,g)$. Let for instance $Q_0,Q_1$ be two in $M$ embedded circles which intersect in exactly one point, say $q$; 
a degenerate solution is then an Euler-Lagrange orbit $\gamma:(-\epsilon,\epsilon) \rightarrow M$ (in this case, a geodesic) through the point $q$ which is orthogonal to both $Q_0$ and $Q_1$.

\begin{center}
\includegraphics[height=40mm]{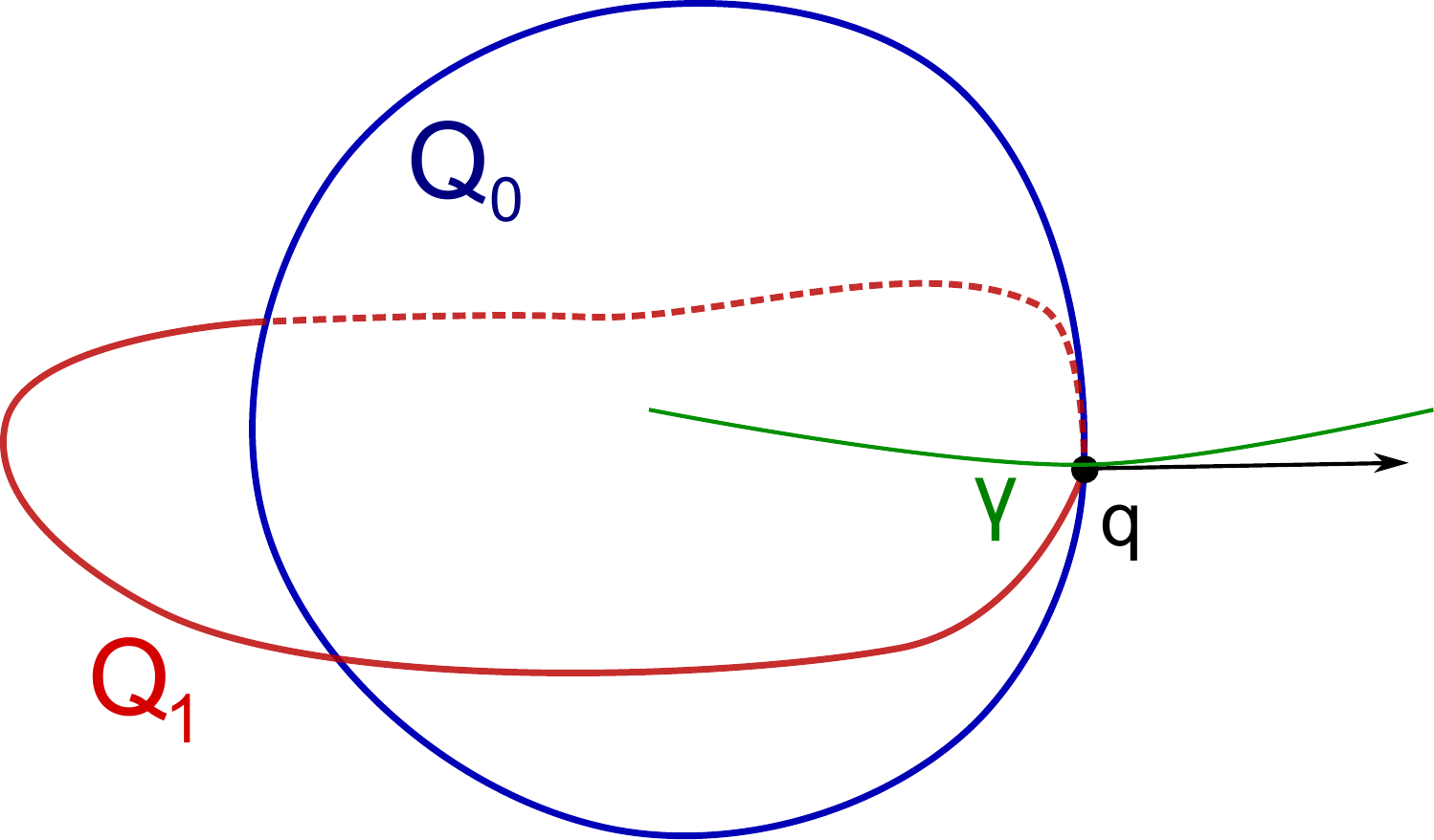}
\end{center}

\begin{teo}
Suppose $\, Q_0\cap Q_1 \neq \emptyset$ connected; then the following hold:

\begin{enumerate}
\item For every $k>c(L;Q_0,Q_1)$ and for every connected component of $\mathcal M_Q$ that does not contain constant paths there exists an Euler-Lagrange orbit with energy 
$k$ satisfying the conormal boundary conditions (\ref{lagrangianformulation1}) which is a global minimizer of $\A_k$ on this connected component.

\item Let $\mathcal N$ be the connected component of $\mathcal M_Q$ containing constant paths and let $k_{\mathcal N}(L)$ as in \eqref{komega(L)}. Then, for every $k\in (c(L;Q_0,Q_1), k_{\mathcal N}(L))$ there exists an Euler-Lagrange orbit with energy 
$k$ satisfying the conormal boundary conditions (\ref{lagrangianformulation1}) which is a global minimizer of $\A_k$ on $\mathcal N$. 

\item If $\pi_l(\mathcal N)\neq 0$ for some $l\geq 1$, then for every $k> k_{\mathcal N}(L)$ there exists an Euler-Lagrange orbit in $\mathcal N$ with energy $k$ satisfying the conormal boundary conditions (\ref{lagrangianformulation1}). 
If in addition there holds $k_{\mathcal N}(L)>c(L;Q_0,Q_1)$, then there is such an orbit (possibly degenerate) also with energy $k_{\mathcal N}(L)$. 
\end{enumerate}
\label{teorema2}
\end{teo}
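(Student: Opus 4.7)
My plan is to treat each part in turn, using the variational machinery established in Chapter \ref{chapter2}.

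Part 1 is essentially a repetition of the argument for Theorem \ref{teorema1}: on a component $\mathcal{N}$ of $\mathcal{M}_Q$ containing no constant path, the sublevels of $\A_k$ are complete by Lemma \ref{completeness1}, the functional is bounded from below on $\mathcal{N}$ by Lemma \ref{boundedness}, and Corollary \ref{compattezza} delivers the Palais-Smale condition on $\mathcal{N}$. A direct minimization yields a critical point, which by Theorem \ref{equivalence} is the required Euler-Lagrange orbit.

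For Part 2, I would fix $k \in (c(L;Q_0,Q_1),k_{\mathcal{N}}(L))$ and set $c_\ast := \inf_{\mathcal{N}} \A_k$. By Lemma \ref{boundedness} the value $c_\ast$ is finite; by the definition of $k_{\mathcal{N}}(L)$ it is \emph{strictly negative}. Applying Ekeland's variational principle (on a suitable complete sublevel as in Lemma \ref{completeness1}) I would produce a Palais-Smale minimizing sequence $(x_h,T_h)$ at level $c_\ast$. Since $c_\ast < 0$, Lemma \ref{yesmodification} forbids $T_h \to 0$, so $T_h$ is bounded away from zero; the upper bound on $T_h$ follows exactly as in Corollary \ref{compattezza} from the identity $\A_k = \A_{c(L;Q_0,Q_1)} + (k-c(L;Q_0,Q_1))T$. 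Lemma \ref{lemmalimitatezza} then furnishes a convergent subsequence, and its limit is the required global minimizer of $\A_k$ on $\mathcal N$.

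For Part 3, assume $\pi_l(\mathcal{N}) \neq 0$ for some $l \geq 1$, denote by $M_0 \subset \mathcal{N}$ the submanifold of constant paths, and fix a non-trivial $f \colon S^l \to \mathcal{N}$. For $k > k_{\mathcal{N}}(L)$, I would consider the minimax class $\Gamma = \{g \colon S^l \to \mathcal{N} \ | \ g \simeq f\}$ and the associated minimax value
$$c(k) := \inf_{g \in \Gamma}\ \sup_{z \in S^l}\ \A_k(g(z)).$$
The class $\Gamma$ is positively invariant under the (truncated) negative gradient flow of $\A_k$, which does not lose completeness inside the strict sublevel $\{\A_k > 0\}$ in view of Lemma \ref{convergenza}. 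A deformation argument along this flow, coupled with the fact that $\A_k$ restricted to $M_0$ is controlled by the period $T$, would show that $c(k) > 0$: otherwise, one could homotope $f$ into an arbitrarily thin neighborhood of $M_0$ and then into $M_0$ itself, contradicting its non-triviality in $\pi_l(\mathcal{N})$. Theorem \ref{minimaxtheorem} then produces a Palais-Smale sequence at level $c(k) > 0$; Lemma \ref{yesmodification} excludes $T_h \to 0$, and the remaining compactness follows exactly as in Part 2. For the critical case $k = k_{\mathcal{N}}(L) > c(L;Q_0,Q_1)$, I would take a decreasing sequence $k_n \downarrow k_{\mathcal{N}}(L)$, extract orbits $\gamma_n$ from the previous step, and pass to a limit; the bounds on the periods persist except possibly at $T = 0$, which accounts for the possibly degenerate orbit admitted in the conclusion.

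The main obstacle I anticipate is the \emph{strict positivity of $c(k)$}. Because $\A_k$ is everywhere non-negative on $\mathcal{N}$ for $k > k_{\mathcal{N}}(L)$ but can be made arbitrarily small along $M_0$ as $T \to 0$, a naive sublevel argument fails. The resolution should combine a careful choice of the minimax class (possibly phrased relative to a small tubular neighborhood of $M_0$), the monotonicity of $c(k)$ in $k$, and a Struwe-type deformation that decouples the descent of $\A_k$ from the collapse $T \to 0$, in the spirit of the analysis carried out in Section \ref{palaissmalesequences}. A closely related delicate point is the limit procedure at $k = k_{\mathcal{N}}(L)$: ruling out $T_n \to 0$ or $T_n \to +\infty$ too rapidly is what forces the appearance of degenerate orbits in the boundary statement.
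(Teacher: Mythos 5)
Your parts 1 and 2 follow the paper's proof essentially verbatim (minimization on complete negative sublevels, Lemma \ref{yesmodification} to keep the periods away from zero, the identity $\A_k=\A_{c(L;Q_0,Q_1)}+(k-c(L;Q_0,Q_1))T$ for the upper period bound, Lemma \ref{lemmalimitatezza} for compactness); invoking Ekeland instead of the truncated negative gradient flow is immaterial. The genuine gap is in part 3, exactly at the point you flag: the strict positivity of $c(k)$. Your proposed contradiction -- if $c(k)$ were not positive one could homotope $f$ into a thin neighborhood of $M_0$ and then into $M_0$ itself, ``contradicting its non-triviality in $\pi_l(\mathcal N)$'' -- does not work, because $M_0\cong (Q_0\cap Q_1)\times(0,+\infty)$ may itself carry nontrivial $l$-dimensional homotopy: the hypothesis is only $\pi_l(\mathcal N)\neq 0$, so a class representable by spheres of constant paths is not excluded, and being homotopic into $M_0$ contradicts nothing. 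Moreover your fallback (a Struwe-type monotonicity argument) would at best give critical points for almost every $k$, strictly weaker than the assertion ``for every $k>k_{\mathcal N}(L)$''; Struwe's argument is what Section \ref{subcriticalorbits} needs, not this case, since for $k>k_{\mathcal N}(L)\geq c(L;Q_0,Q_1)$ the Palais-Smale condition is already available (Corollary \ref{compattezza}). The paper's mechanism is quantitative: nontriviality of the class forces, via Klingenberg's theorem, a uniform $\lambda>0$ with $\max_{\zeta} l(x(\zeta))\geq \lambda$ for every representative $h=(x,T)$; combined with $\A_k(x,T)\geq \frac{a}{T}\,l(x)^2+T(k-b)$ this yields a period lower bound $T\geq T_0$ at the long paths of any sphere with action at most $c+1$, and then $\A_k(x,T)=\A_{k_{\mathcal N}(L)}(x,T)+(k-k_{\mathcal N}(L))T\geq (k-k_{\mathcal N}(L))T_0>0$, using that $\A_{k_{\mathcal N}(L)}\geq 0$ on $\mathcal N$ by the very definition of $k_{\mathcal N}(L)$. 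This decomposition of the action, not a deformation onto $M_0$, is the missing idea.

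The second gap is the case $k=k_{\mathcal N}(L)$. Taking $k_n\downarrow k_{\mathcal N}(L)$ and the orbits $\gamma_n=(x_n,T_n)$ from the previous step does give a Palais-Smale sequence for $\A_{k_{\mathcal N}(L)}$, and if the $T_n$ stay bounded away from zero one concludes as before; but when $T_n\rightarrow 0$ the limit object is a single point $q\in Q_0\cap Q_1$ (by Lemma \ref{yesmodification} the level is then $0$), not a curve, and ``accounting for the possibly degenerate orbit'' requires an actual construction. In the paper this is done by extending each $\gamma_n$ as an Euler-Lagrange orbit to $[-\epsilon_m,T_n+\epsilon_m]$, choosing auxiliary closed submanifolds $Q_0^{(m)},Q_1^{(m)}$ for which the extended curves satisfy conormal conditions, applying the compactness results to these new boundary problems (whose periods are bounded away from zero) to obtain limit orbits $\gamma^{(m)}$ through $q$, and finally extending the $\gamma^{(m)}$ to a fixed interval $[-\epsilon,\epsilon]$ and passing to the limit $m\rightarrow\infty$, producing an orbit $\gamma^\infty$ through $q$ with $d_vL(\gamma^\infty(0),\dot\gamma^\infty(0))$ vanishing on $T_qQ_0\cup T_qQ_1$. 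None of this construction appears, even in outline, in your proposal.
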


\begin{proof}
From what concerning statement 1, since $\, Q_0\cap Q_1\, $ is connected, there is only one component of $\mathcal M_Q$ containing constant paths. For any other connected component,
the same argument as in the proof of Theorem \ref{teorema1} goes through and gives us the desired Euler-Lagrange orbit.

Now we prove statement 2. We can assume that the interval $(c(L;Q_0,Q_1), k_{\mathcal N}(L))$ is non-empty and fix $k$ in it; since $c:= \inf \A_k <0\, $, the sublevels
$$\Big\{(x,T)\in \mathcal M_Q \ \Big |\ \A_k(x,T) \leq c+\epsilon \Big \}$$ 
are complete for every $\epsilon >0$ small. Indeed, from (\ref{firstinequality}) we get that
\begin{equation*}
0 \ > \ c + \epsilon \ \geq \  \A_k(x,T) \ \geq \ \frac{a}{T} \, \|x'\|_2^2 + T(k-b)\ \geq \ T(k-b)
\end{equation*}
with the last quantity going to zero for $T\rightarrow 0$. Moreover, Lemma \ref{yesmodification} implies that all the Palais-Smale sequences at level $c$ have $T_h$'s bounded away from zero and hence
$\A_k$ satisfies the Palais-Smale condition at level $c$ by Lemma \ref{boundedness}. We now retrieve the existence of a  global minimizer for $\A_k$ exactly as in the proof of Theorem \ref{teorema1}.

We finally focus on the more interesting statement 3. We suppose for the moment $k>k_{\mathcal N}(L)$; in this case, using the existence of at least one non-trivial $\pi_l(\mathcal N)$ for $l\geq 1$, we retrieve the Euler-Lagrange orbit using a minimax argument 
analogous to that used by Lusternik and Fet \cite{FL51} in their proof of the existence of one closed geodesic on a simply connected manifold.
By assumption there exists a non-trivial element $\mathcal H \in  \big [S^{l},\mathcal N\big ]$ and therefore we can consider the minimax value 
$$c:=\ \inf_{h\in \mathcal H} \ \max_{\zeta \in S^{l}} \ \A_k(h(\zeta))\, .$$ 

Let us show that $c>0$; since $\mathcal H$ is non-trivial, there exists a positive number $\lambda$ such that for every map $h=(x,T):S^{l}\rightarrow \mathcal N$ belonging to the class $\mathcal H$ there holds 
$$\max_{\zeta\in S^{l}} \ l(x(\zeta))\ \geq \ \lambda\, ,$$ 
where as usual $l(x(\zeta))$ denotes the length of the path $x(\zeta)$ (cf. \cite[Theorem 2.1.8]{Kli78}). If $(x,T)\in \mathcal N$ has length $l(x)\geq \lambda$, then (\ref{firstinequality}) implies that 
\begin{eqnarray*}
\A_k(x,T) &=& T\int_0^1 \Big [L\Big ( x(s),\frac{x'(s)}{T}\Big ) + k \Big ]\, ds \ \geq\\ 
                &\geq& \frac{a}{T}\int_0^1 \|x'(s)\|^2\, ds \ +\  T(k-b)\ \geq \\
                &\geq& \frac{a}{T} \, l(x)^2\ +\ T(k-b) \ \geq \\ \
                &\geq& \frac{a}{T} \, \lambda^2\ +\ T(k-b)\, .
\end{eqnarray*}

Since $\lambda >0$, the above inequality implies that if $(x,T)\in \mathcal N$ has length $l(x)\geq \lambda$ and action $\A_k(x,T)\leq c+1$ then 
$$c+1 \ \geq \ \frac{a}{T}\, \lambda^2 \ + \ T(k-b)$$ 
and hence $T\geq T_0$ for some $T_0>0$, because the quantity on the righthand-side goes to infinity as $T\rightarrow 0$. Now let $h\in \mathcal H$ be such that 
$$\max_{\zeta \in S^{l}}\ \A_k(h(\zeta))\ \leq\ c+1\, ;$$
then by the above considerations there exists $(x,T)\in h(S^{l})$ with $T\geq T_0$ and 
$$\A_k(x,T) \ = \  \A_{k_{\mathcal N}(L)} (x,T) + \big (k-k_{\mathcal N}(L)\big )T \ \geq \ \big (k-k_{\mathcal N}(L) \big ) T_0\ >\ 0\, .$$

The argument above shows that the minimax value $c$ is strictly positive. Combining Lemma \ref{convergenza} with Remark \ref{flussotroncato} we get the existence of a Palais-Smale sequence at level $c$. 
Since $c>0$ we also get from Lemma \ref{yesmodification} that the $T_h$'s are bounded away from zero, so that by Corollary \ref{compattezza} the Palais-Smale sequence has a limiting point in $\mathcal N$, which gives us the 
required Euler-Lagrange orbit.

We are left now with the case $k=k_{\mathcal N}(L)$; by assumption $k_{\mathcal N}(L)>c(L;Q_0,Q_1)$. Consider a sequence $k_n \downarrow k_{\mathcal N}(L)$ and the corresponding $\{(x_n,T_n)\}$, where 
$(x_n,T_n)$ is the Euler-Lagrange orbit with energy $k_n$ satisfying the conormal boundary conditions, whose existence is guaranteed by the discussion above. Notice that
$$c(k_n) := \inf_{h\in \mathcal H}\ \max_{\zeta \in S^{l}}\ \A_{k_n}(h(\zeta))\ =\ \A_{k_n}(x_n,T_n)$$ 
is a decreasing sequence bounded from below by zero and therefore converging to a value $c(k_{\mathcal N}(L))\in [0,+\infty)$. At the same time 
$$\frac{\partial \A_{k_{\mathcal N}(L)}}{\partial T} (x_n,T_n)\ =\ \int_0^1 \Big [k_{\mathcal N}(L)-E\Big (x_n(s),\frac{ x_n'(s)}{T_n}\Big )\Big ]\, ds \ =\ k_{\mathcal N}(L)-k_n$$
and
$$d_x\A_{k_{\mathcal N}(L)}(x_n,T_n)\big [(\cdot,0)]\ =\ \int_0^{T_n} \Big [ d_q L(\gamma_n(t),\dot \gamma_n(t))[\cdot] + d_v L (\gamma_n(t),\dot \gamma_n(t))[\cdot]\Big ]\, dt \ =\ 0\, ,$$
where as usual we use the identification $\gamma_n=(x_n,T_n)$. It follows that $(x_n,T_n)$ is a Palais-Smale sequence for $\A_{k_{\mathcal N}(L)}$ at level $c(k_{\mathcal N}(L))$. Therefore, if the 
$T_n$'s are known to be bounded away from zero, then using Lemma \ref{lemmalimitatezza} and Corollary \ref{compattezza} we get the existence of a subsequence converging in $H^1$ to an element $(x,T)$, 
which gives us the required Euler-Lagrange orbit with conormal boundary conditions at level $k_{\mathcal N}(L)$. 

Thus, we may suppose after passing to a subsequence if necessary that $T_n\rightarrow 0$. Lemma \ref{yesmodification} ensures then that $c(k_{\mathcal N}(L))=0$ and that the sequence $x_n$, after passing to another
subsequence if necessary, converges in $H^1$ to a point $q\in Q_0\cap Q_1$. Observe that $(q,0)$ cannot be a constant Euler-Lagrange orbit, since by assumption $k_{\mathcal N}(L)>e_0(L)$. 
We choose now a strictly decreasing sequence $\{\epsilon_m\}_{m\in \N}\subseteq (0,+\infty)$ such that $\epsilon_m\downarrow 0$ as $m\rightarrow \infty$ and such that, for every $m\in \N$, there holds:

\begin{enumerate}
\item Each $\gamma_n$ can be extended to an orbit $\gamma_n^{(m)}:[-\epsilon_m, T_n + \epsilon_m]\rightarrow M$ with
$$\left \{ \begin{array}{l}
d_v L(\gamma_n^{(m)}(0),\dot {\gamma}^{(m)}_n (0)) \Big |_{T_{\gamma^{(m)}_n(0)}Q_0} \! \! = \ \ 0 \, ,\\ \\
d_v L(\gamma_n^{(m)}(T_n),\dot {\gamma}^{(m)}_n (T_n)) \Big |_{T_{\gamma^{(m)}_n(T_n)}Q_1} \! \! = \ \ 0 \,.
\end{array}\right.$$
\item There exist two closed connected submanifolds $Q_0^{(m)},Q_1^{(m)}\subseteq M$ such that
$$\left \{\begin{array}{l}
d_v L(\gamma^{(m)}_n(-\epsilon_m),\dot {\gamma}^{(m)}_n (-\epsilon_m)) \Big |_{T_{\gamma^{(m)}_n(-\epsilon_m)}Q_0^{(m)}} \! \! = \ \ 0 \, , \\ \\
d_v L(\gamma^{(m)}_n(T_n +\epsilon_m),\dot {\gamma}^{(m)}_n (T_n + \epsilon_m)) \Big |_{T_{\gamma^{(m)}_n(T_n + \epsilon_m)}Q_1^{(m)}} \! \! = \ \ 0 \, .
\end{array}\right.$$
\end{enumerate}

Define $Q^{(m)} =Q_0^{(m)} \times Q^{(m)}_1$ and  $T^{(m)}_n=T_n+2\epsilon_m$; then  $(x^{(m)}_n,T^{(m)}_n)$ is a Palais-Smale sequence for $\A_{k_{\mathcal N}(L)}$ on $\mathcal M_{Q^{(m)}}$ with times bounded away from zero.

\begin{center}
\includegraphics[height=47mm]{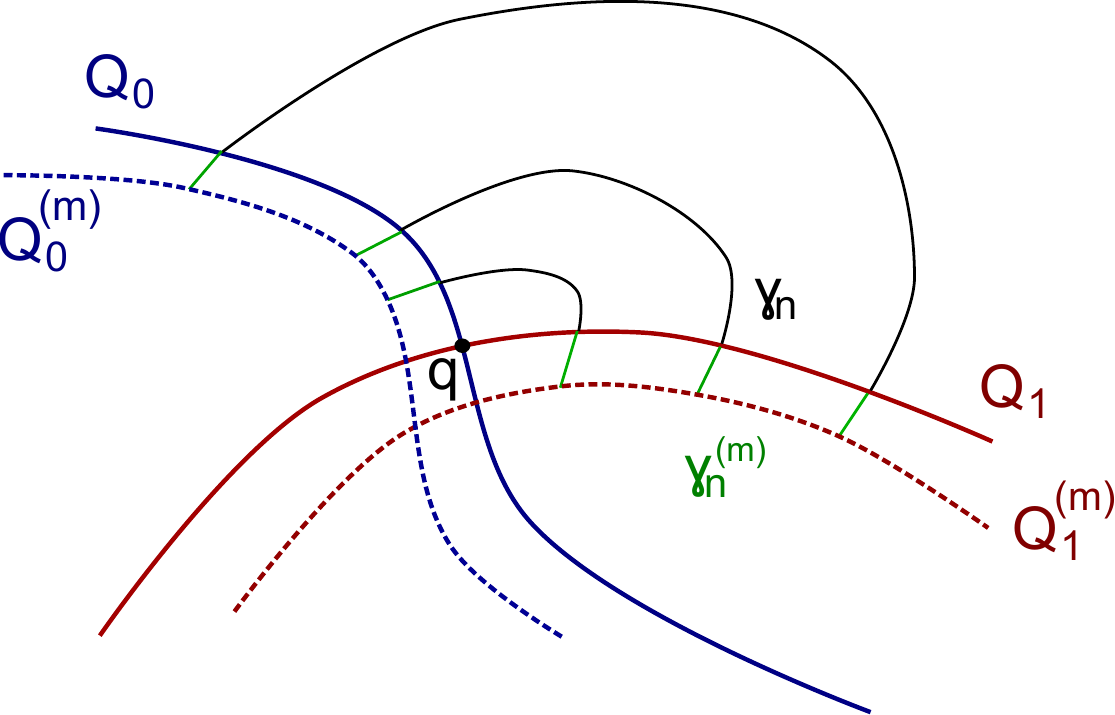}
\end{center}

Therefore, by Corollary \ref{compattezza}, $(x_n^{(m)},T_n^{(m)})$ has a subsequence converging to some $\gamma^{(m)} = (x^{(m)},T^{(m)})$, with $T^{(m)}=2\epsilon_m$, 
which is an Euler-Lagrange orbit with energy $k_{\mathcal N}(L)$ satisfying the conormal boundary conditions for $Q^{(m)}$, namely 
\begin{equation}
\left \{\begin{array}{l}
d_v L(\gamma^{(m)}(-\epsilon_m),\dot {\gamma}^{(m)}(-\epsilon_m))\Big |_{T_{\gamma^{(m)}(-\epsilon_m)}Q_0^{(m)}} \!\!\!\!\!\! = \ \  0 \,  \, ,\\ \\
d_v L(\gamma^{(m)}(\epsilon_m),\dot {\gamma}^{(m)}(\epsilon_m))\Big |_{T_{\gamma^{(m)}(\epsilon_m)}Q_1^{(m)}} \!\!\!\!\!\! = \ \  0 \, .
\end{array}\right.
\label{gru}
\end{equation}

Furthermore, the fact that the $\gamma^{(m)}_n$'s were obtained by extending the $\gamma_n$'s implies that $\gamma^{(m)}(0) = q$, for every $m\in \N$. Now we would like to let $m$ go to infinity; indeed, by \eqref{gru}, the ``limit curve'' $\gamma^\infty$ would satisfy 
$\gamma^\infty(0)=q$ and 
$$d_v L(\gamma^\infty(0),\dot {\gamma}^\infty (0))\Big |_{T_{q}Q_0 \, \cup \, T_{q}Q_1}  \!\!\!\!\!\!\!\! = \ \ 0\, ,$$ 
which is exactly what we want. However, since the intervals of definition of $\gamma^{(m)}$ shrink to a point, this limiting process would not produce a curve. Therefore, before taking the limit we have to extend the $\gamma^{(m)}$'s further.

\begin{center}
\includegraphics[height=40mm]{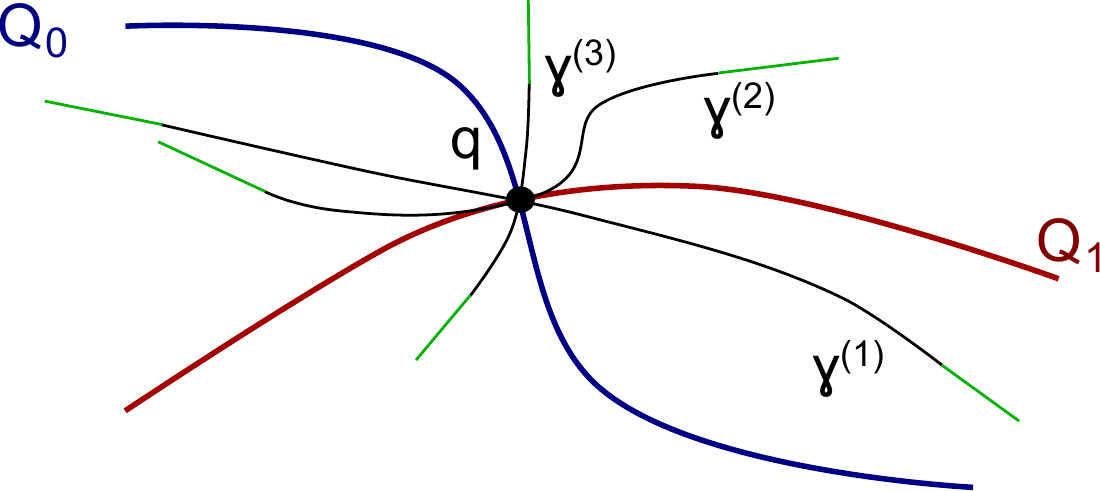}
\end{center}

Thus, consider $\epsilon>0$ and extend the $\gamma^{(m)}$'s to get Euler-Lagrange orbits defined on $[-(\epsilon+\epsilon_m),\epsilon+\epsilon_m]$ still satisfying \eqref{gru}; with a slight abuse of notation we denote the extended orbits again with 
$\gamma^{(m)}: [-(\epsilon+\epsilon_m),\epsilon+\epsilon_m]\rightarrow M.$ The fact that $\gamma^{(m)}=(x^{(m)},2(\epsilon+\epsilon_m))$ is an Euler-Lagrange orbit with energy $k_{\mathcal N}(L)$ combined with the fact that the energy is quadratic at infinity implies that 
\begin{eqnarray*}
0 &=& -\frac{\partial \A_{k_{\mathcal N}(L)}}{\partial T}(x^{(m)},2(\epsilon+\epsilon_m))\ = \ \int_0^1 E\Big (x^{(m)}(s), \frac{x^{(m)}{'}(s)}{2(\epsilon+\epsilon_m)}\Big ) \, ds \ - \ k_{\mathcal N}(L)\ \geq \\ 
   &\geq& \frac{a}{4(\epsilon+\epsilon_m)^2} \int_0^1 \|x^{(m)}{'}(s)\|^2\, ds \ - \ \big (b+k_{\mathcal N}(L)\big )
\end{eqnarray*}
for suitable constants $a>0$ and $b\in \R$. In particular  
$$\int_0^1 \|x^{(m)}{'}(s)\|^2\, ds \ \leq \ \frac{4(\epsilon+\epsilon_m)^2}{a} \big (b+k_{\mathcal N}(L)\big )$$
and hence the family $\{x^{(m)}\}$ is $1/2$-equi-H\"older-continuous. By an argument anologous to the one used in Lemma \ref{lemmalimitatezza} one can now prove that $\{x^{(m)}\}$ has a subsequence converging strongly in $H^1$
to $x^\infty$. Hence, $\gamma^\infty =(x^\infty,2\epsilon)$ is an Euler-Lagrange orbit with energy $k_{\mathcal N}(L)$ such that $\gamma^\infty (0)=q$ and 
$$d_v L(\gamma^\infty(0),\dot {\gamma}^\infty (0))\Big |_{T_{q}Q_0 \, \cup \, T_qQ_1}  \!\!\!\!\!\!\!\! = \ \ 0\, ,$$ 
that is a degenerate solution.
\end{proof}

\vspace{5mm}

Notice that the interval $(c(L;Q_0,Q_1), k_{\mathcal N}(L))$ in statement 2 of Theorem \ref{teorema2} might be empty, but in general is not; we will see an example of that in Section \ref{counterexamples}.

A very particular case of non-empty connected intersection is given by the choice $Q_0=Q_1$, which corresponds to the Arnold chord conjecture. In this case, we call an Euler-Lagrange orbit satisfying the conormal boundary conditions 
an \textit{Arnold chord} for $Q_0$. Notice that in this context it does not make sense to talk about degenerate solutions, since they may correspond to trivial chords one is not interested in. 

Theorem \ref{teorema2} implies  immediately the following

\begin{cor}
Let $Q_0\subseteq M$ be a closed connected submanifold and define $c(L;Q_0)$ as in (\ref{clh0h1}) just by setting $Q_0=Q_1$. Then the following hold:

\begin{enumerate}
\item For every $k>c(L;Q_0)$ and for every connected component of $\mathcal M_Q$ that does not contain constant paths there exists an Arnold chord for $Q_0$
with energy $k$ which is a global minimizer of $\A_k$ in its connected component.

\item Let $\mathcal N$ be the connected component of $\mathcal M_Q$ containing the constant paths. 
For every $k\in (c(L;Q_0), k_{\mathcal N} (L))$ there is an Arnold chord for $Q_0$ with energy $k$ which is a global minimizer of $\A_k$ on $\mathcal N$. 
If in addition $\pi_l(\mathcal N)\neq 0$ for some $l\geq 1$, then for every $k> k_{\mathcal N}(L)$ there exists an Arnold chord with energy $k$.
\end{enumerate}
\label{arnoldchordsupercritical}
\end{cor}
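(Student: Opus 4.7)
The plan is to deduce this corollary directly from Theorem \ref{teorema2} by specializing to the case $Q_1 = Q_0$. First I would verify that the hypotheses of Theorem \ref{teorema2} are met: since $Q_0$ is a closed connected submanifold of $M$, the product $Q := Q_0 \times Q_0 \subseteq M \times M$ is a closed submanifold, and the intersection $Q_0 \cap Q_1 = Q_0$ is non-empty and connected, which is precisely the setting of Theorem \ref{teorema2}. Moreover, with the choice $Q_1 = Q_0$ the critical value $c(L;Q_0,Q_1)$ as in \eqref{clh0h1} reduces to $c(L;Q_0) = c(L; H_0)$, where $H_0$ is the smallest normal subgroup of $\pi_1(M)$ containing $\imath_*(\pi_1(Q_0))$.

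Next I would match up the conclusions. Statement 1 of the corollary is just the restriction of statement 1 of Theorem \ref{teorema2} to connected components of $\mathcal M_Q$ not containing constant paths: for every $k > c(L;Q_0)$ the functional $\A_k$ admits a global minimizer there, and by Theorem \ref{equivalence} this minimizer corresponds to an Euler-Lagrange orbit with energy $k$ satisfying the conormal boundary conditions, i.e. an Arnold chord for $Q_0$. For the first half of statement 2, the interval $(c(L;Q_0), k_{\mathcal N}(L))$ (possibly empty) gives a range on which $\inf_{\mathcal N} \A_k < 0$, and statement 2 of Theorem \ref{teorema2} produces an Euler-Lagrange orbit minimizing $\A_k$ on $\mathcal N$, which again translates to an Arnold chord. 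For the second half, the nontriviality of $\pi_l(\mathcal N)$ for some $l \geq 1$ allows us to invoke statement 3 of Theorem \ref{teorema2}, yielding an Arnold chord for every $k > k_{\mathcal N}(L)$ via the Lusternik--Fet type minimax argument.

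The only subtle point is that the corollary deliberately omits the borderline energy $k = k_{\mathcal N}(L)$ covered in statement 3 of Theorem \ref{teorema2}. This is because in that case the theorem only guarantees an orbit which may be degenerate, i.e. a curve through a point of $Q_0 \cap Q_1 = Q_0$ whose initial velocity lies in the kernel of $d_v L(\cdot)|_{T Q_0}$; in the Arnold chord setting this does not meaningfully correspond to a non-trivial chord and should therefore be excluded from the statement. Aside from this minor point of interpretation, the proof is immediate and presents no obstacle since all the analytic work (completeness of sublevel sets, Palais--Smale compactness above $c(L;Q_0)$, the minimax construction) is already contained in Theorem \ref{teorema2}.
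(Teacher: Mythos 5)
Your proposal is correct and follows the paper's own route: the corollary is obtained simply by specializing Theorem \ref{teorema2} to the choice $Q_1=Q_0$ (so that $Q_0\cap Q_1=Q_0$ is non-empty and connected and $c(L;Q_0,Q_1)=c(L;Q_0)$), translating the resulting orbits into Arnold chords. Your remark on omitting the borderline energy $k=k_{\mathcal N}(L)$ matches the paper's reason exactly, namely that the possibly degenerate solution there could be a trivial chord.
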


It is well-known (cf. \cite[Theorem 4.2]{Abb13}) that energy level sets above $c_0(L)$ are of (restricted) contact type. It is also known that, if $M\neq \T^2$, every energy level set $E^{-1}(k)$ with $c_u(L)< k\leq c_0(L)$ 
is not of contact type (cf. \cite[Proposition B.1]{Con06}). 

Therefore, Corollary \ref{arnoldchordsupercritical} provides the existence of Arnold chords in a possibly non-contact situation, since $c(L;Q_0)$ might be strictly smaller than $c_0(L)$.

\vspace{2mm}

We end this section noticing that Theorem \ref{teorema2} can be trivially generalized to the case $Q_0\cap Q_1$ not connected. 

\begin{teo}
Suppose $Q_0\cap Q_1 \neq \emptyset$. Then the following hold:

\begin{enumerate}
\item For every $k>c(L;Q_0,Q_1)$ and for every connected component of $\mathcal M_Q$ that does not contain constant paths there exists an Euler-Lagrange orbit with energy $k$ satisfying the 
conormal boundary conditions (\ref{lagrangianformulation1}), which is a global minimizer of $\A_k$ in this connected component. 

\item For every component $\mathcal N$ of $\mathcal M_Q$ containing constant paths and for every $k\in (c(L;Q_0,Q_1), k_{\mathcal N}(L))$ there exists an Euler-Lagrange orbit $\gamma\in \mathcal N$
with energy $k$ satisfying the conormal boundary conditions (\ref{lagrangianformulation1}), which is a global minimizer of $\A_k$ on $\mathcal N$.
If in addition $\pi_l(\mathcal N)\neq 0$ for some $l\geq 1$, then for every $k> k_{\mathcal N}(L)$ there exists an Euler-Lagrange orbit 
$\gamma\in \mathcal N$ with energy $k$ satisfying the conormal boundary conditions (\ref{lagrangianformulation1}).
Finally, if  $k_{\mathcal N}(L)>c(L;Q_0,Q_1)$, then there is such an orbit (possibly degenerate) also with energy $k_{\mathcal N}(L)$. 
\end{enumerate}
\label{teorema3}
\end{teo}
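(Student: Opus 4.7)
The plan is to observe that the proof of Theorem \ref{teorema2} localizes to each connected component of $\mathcal M_Q$ and never uses connectedness of $Q_0\cap Q_1$ in an essential way. Connectedness of the intersection entered only through the statement that there is a single connected component of $\mathcal M_Q$ containing constant paths; for a disconnected intersection, one instead has possibly several such components $\mathcal N$, and the argument has to be run separately on each of them. I would therefore repeat the three-step argument of Theorem \ref{teorema2} component by component.

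Statement 1 is immediate: on any connected component $\mathcal N'$ of $\mathcal M_Q$ disjoint from the constants, lengths of paths in $\mathcal N'$ are bounded away from zero, so by the length estimate \eqref{stimaazione} the $T$-coordinate is bounded below on every sublevel set of $\A_k$, giving completeness via Lemma \ref{completeness1}(1); combined with the Palais-Smale condition supplied by Corollary \ref{compattezza} for $k>c(L;Q_0,Q_1)$, direct minimization yields an Euler-Lagrange orbit that is a global minimizer on $\mathcal N'$, exactly as in Theorem \ref{teorema1}. For statement 2 on a component $\mathcal N$ containing constant paths, one considers the two sub-cases separately: on $(c(L;Q_0,Q_1), k_{\mathcal N}(L))$ one has $\inf_{\mathcal N}\A_k<0$, so Lemma \ref{yesmodification} prevents any minimizing Palais-Smale sequence from having $T_h\to 0$, while Lemma \ref{boundedness} and Corollary \ref{compattezza} yield Palais-Smale compactness at the infimum; for $k>k_{\mathcal N}(L)$ with $\pi_l(\mathcal N)\neq 0$ I would repeat the Lusternik-Fet minimax exactly as in the proof of Theorem \ref{teorema2}, using the uniform lower bound on the length of representatives of a non-trivial class in $\pi_l(\mathcal N)$ to show that the minimax value is strictly positive, which again rules out degeneration of the times.

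The main obstacle is the threshold case $k=k_{\mathcal N}(L)>c(L;Q_0,Q_1)$, where the only extra care is needed. I would pick a sequence $k_n\downarrow k_{\mathcal N}(L)$ and corresponding minimax orbits $(x_n,T_n)$ in $\mathcal N$; either the $T_n$'s stay bounded away from zero and Lemma \ref{lemmalimitatezza} produces the desired orbit, or $T_n\to 0$, in which case Lemma \ref{yesmodification} forces the limiting action to be zero and the curves $x_n$ to accumulate to some point $q\in Q_0\cap Q_1$. The disconnectedness of $Q_0\cap Q_1$ does not affect this step because $q$ automatically lies in whichever component of the intersection is accessible from $\mathcal N$, and the whole construction that follows, namely extending $\gamma_n$ backwards and forwards by $\epsilon_m$ to auxiliary submanifolds $Q_0^{(m)},Q_1^{(m)}$, extracting a subsequential $H^1$-limit via Corollary \ref{compattezza}, and finally extending once more by a fixed $\epsilon>0$ and using equi-H\"older control to pass to the limit in $m$, is purely local around $q$ and carries over word-for-word from the proof of Theorem \ref{teorema2}, producing a (possibly degenerate) Euler-Lagrange orbit with energy $k_{\mathcal N}(L)$ satisfying the conormal boundary conditions.
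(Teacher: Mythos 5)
Your proposal is correct and matches the paper's approach: the paper itself states that Theorem \ref{teorema3} is obtained by trivially generalizing Theorem \ref{teorema2}, and your component-by-component repetition of that proof (minimization on components without constants, minimization below $k_{\mathcal N}(L)$, minimax above it, and the local limiting construction at $k=k_{\mathcal N}(L)$) is exactly the intended argument, with the correct observation that connectedness of $Q_0\cap Q_1$ only ever entered through the count of components containing constant paths.
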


%%%%%%%%%%%%%%%%%%%%%%%%%%%%%%%%%%%%%%%%%%%%%%%%%%%%%

\section{Subcritical energies}
\label{subcriticalorbits}

In this section we study the existence of Euler-Lagrange orbits satisfying the conormal boundary conditions (\ref{lagrangianformulation1}) for subcritical energies $k<c(L;Q_0,Q_1)$. 
As already explained in Chapter \ref{chapter0}, this problem is way harder than the corresponding one for supercritical energies. 
Throughout this section we assume that the submanifolds $Q_0$ and $Q_1$ intersect; we will get back to the case of empty intersection in Section \ref{counterexamples} showing that Theorem \ref{teorema1} is optimal.

We start by considering the following particular case, which should help the reader to understand the general situation later on. Let $Q_0$ and $Q_1$ be two closed connected submanifolds which intersect in one point, say $p$.
We show now that, under the assumption (\ref{condizionetheta}) on the Lagrangian $L$, for every $k\in (e_0(L),c(L;Q_0,Q_1))$ the action functional $\A_k$ exhibits a mountain-pass geometry on the connected component of $\mathcal M_Q$ 
that contains the constant path in $p$, which we denote hereafter with $\mathcal N$.

Since for any $k\in (e_0(L),c(L;Q_0,Q_1))$ the free-time action functional $\A_k$ is unbounded from below, it makes sense to define the following class of paths in $\mathcal N$
$$\Gamma := \Big \{ u :[0,1]\rightarrow \mathcal N \ \Big |\ u(0)=(p,T) \, ,\ \A_k(u(1))<0 \Big \}\,  .$$
Notice that for any $T>0$ we have
$$\A_k(p,T)\ =\ T\, \Big [k-E(p,0)\Big ] \ >\ 0\, ;$$
moreover, $\A_k(p,T)$ goes to zero as $T\rightarrow 0$. Define now 
\begin{equation}
\theta_q(\cdot) := \ d_v L (q,0)[\cdot]\, , \ \ \ \ \forall \ q\in M\, ,
\label{definizionetheta}
\end{equation}
and assume that there exists an open neighborhood $\mathcal U$ of $p$ such that
\begin{equation}
\theta_q \ \equiv \ 0\, , \ \ \ \ \forall \ q \in \mathcal U\, .
\label{condizionetheta}
\end{equation}

Notice that, up to replacing $\mathcal U$ by a smaller neighborhood we might assume $\mathcal U=B_r$ to be an Euclidean Ball with radius $r>0$ and $p$ to be the origin in $\R^n$. Under 
the additional assumption (\ref{condizionetheta}) we show the desired mountain-pass geometry for the action functional $\A_k$. Namely, we prove that there is $\alpha>0$ such that 
$$\max_{s\in[0,1]} \ \A_k(u(s)) \ \geq \ \alpha \, , \ \ \ \ \forall \ u\in \Gamma\, .$$

Here is the scheme of the proof: we first show that if the length of a path $\gamma$ connecting $Q_0$ and $Q_1$ is sufficiently small then the action of $\gamma$ needs to be non-negative. 
Therefore, for every element $u\in \Gamma$ there must be an $s\in [0,1]$ such that $l(u(s))=\epsilon$ for a suitable $\epsilon>0$. Now we get the assertion showing that every path with length 
$\epsilon$  has $\A_k$-action bounded away from zero by a positive constant $\alpha$. 

Since $Q_0$ and $Q_1$ intersect only in $p$, for every sufficiently small $\lambda>0$ there exists $\delta>0$ such that  
$$d(Q_0\setminus B_\delta, Q_1\setminus B_\delta) \ \geq \ \lambda\, ,$$
where $B_\delta$ denotes the ball with radius $\delta$ around $p$. In other words, every path connecting $Q_0$ to $Q_1$ with starting and ending point outside $B_\delta$ has length larger than $\lambda$. It is clear now that, if $\epsilon>0$ is sufficiently small,
then every path $\gamma$ connecting $Q_0$ to $Q_1$ with length $l(\gamma)\leq \epsilon$ is entirely contained in $\mathcal U=B_r$. In fact, at least one between its starting and ending point is contained in 
$B_\delta$ for some $\delta>0$ sufficiently small, say $\gamma(0)\in B_\delta$, and hence by the triangle inequality
\begin{equation}
d(\gamma(t),p) \ < \ d(\gamma(t),\gamma(0)) \ + \ d(\gamma(0),p) \ < \ \epsilon + \delta \ < \ r\, , \ \ \ \ \forall \ t\, .
\label{gammainu}
\end{equation}
A Taylor expansion together with the bound (\ref{secondinequality}) implies
\begin{eqnarray}
L(q,v) &=& L(q,0) + d_vL(q,0)[v] + \frac12 d_{vv} L(q,sv)[v,v] \ \geq \nonumber \\ 
           &\geq& - E(q,0) + \theta_q(v) + a\, |v|^2\, .
\label{stimacontheta}
\end{eqnarray}
Using (\ref{condizionetheta}), (\ref{gammainu}) and (\ref{stimacontheta}) we now compute for every $\gamma=(x,T)$ with length $l(x)\leq \epsilon$ 
\begin{eqnarray*}
\A_k(x,T) &\geq& \int_0^T \Big [-E(\gamma(t),0) + \theta_{\gamma(t)}(\gamma'(t)) + a \, |\gamma'(t)|^2 + k \Big ]\, dt \ \geq \\ 
                &\geq& T\, \big (k-e_0(L)\big ) + \frac{a}{T} \, l(x)^2 
\end{eqnarray*}
which is a non-negative quantity. It follows that for every $u\in \Gamma$ there is $s\in[0,1]$ such that $l(u(s))=\epsilon$; for such $s$ we obtain 
\begin{eqnarray*}
\A_k(u(s)) &\geq& T\, \big (k-e_0(L)\big ) + \frac{a}{T} \, \epsilon^2 \ \geq \ 2\epsilon \sqrt{a(k-e_0(L))} \ =: \alpha
\end{eqnarray*}
as we wished to prove.

\begin{oss}
In the computation above the hypothesis $k>e_0(L)$ can be relaxed assuming only that $k>E(p,0)$. In fact, up to choosing a smaller neighborhood $\mathcal U$ of $p$ (thus, a smaller $\epsilon$), the continuity of the energy implies that 
$$k \ > \ \sup_{q\, \in\, \mathcal U} \ E(q , 0 ) \, .$$
All the estimates above go through just replacing $e_0(L)$ by the supremum above.
\end{oss}

We are now ready to deal with the general case. Let $Q_0,Q_1\subseteq M$ be closed and connected submanifolds with non-empty and connected intersection and define 
\begin{equation}
k_{Q_0\cap Q_1} := \ \max_{q\in Q_0\cap Q_1} E(q,0) \ \ + \max_{q\in Q_0\cap Q_1} \frac{\|\theta_q\|^2}{4a}\, ,
\label{kq0q1}
\end{equation}
where $\theta_q$ is as in (\ref{definizionetheta}), $\|\cdot\|$ is the dual norm on $T^*M$ induced by the Riemannian metric on $M$ and  $a>0$ is as in (\ref{secondinequality}). Lemma \ref{lemmaminimax1} states that, for every 
$k \in (k_{Q_0\cap Q_1},c(L;Q_0,Q_1))$, the action functional $\A_k$ has a mountain-pass geometry on $\mathcal N$. 
Notice that the interval above could be empty; this happens, for instance, when 
$$\max_{q\in Q_0\cap Q_1} E(q,0)\ = \ e_0(L)\, , \ \ \ \ \max_{q\in Q_0\cap Q_1} \frac{\|\theta_q\|^2}{4a} \ = \ \frac{\|\theta\|_\infty^2}{4a}\, ,$$
as the chain of inequalities (\ref{chainofcriticalvalues}) shows. However, this is not always the case as we will show in the next section. 
Finally, notice that in the case $Q_0\cap Q_1=\{p\}$ with $\theta_p=0$ the energy value $k_{Q_0\cap Q_1}$ reduces to the above considered $E(p,0)$.
 
\begin{lemma}
Let $Q_0,Q_1\subseteq M$ be two closed connected submanifolds with non-empty and connected intersection and let $k_{Q_0\cap Q_1}$ be as in (\ref{kq0q1}). Then, for every $k\in (k_{Q_0\cap Q_1},c(L;Q_0,Q_1))$ there exists 
$\alpha>0$ such that 
$$\inf_{u\in \Gamma} \ \max_{s\in [0,1]} \ \A_k(u(s)) \ \geq \ \alpha\, .$$
\label{lemmaminimax1}
\end{lemma}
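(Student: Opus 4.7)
The plan is to extend the model argument for $Q_0\cap Q_1=\{p\}$ with $\theta\equiv 0$ locally, accommodating both the (possibly higher-dimensional) intersection and the non-vanishing magnetic 1-form $\theta$. First I would fix an open neighborhood $U$ of $Q_0\cap Q_1$ in $M$ enjoying two simultaneous properties:
\begin{enumerate}
\item[(a)] there exists $\epsilon_0>0$ such that every absolutely continuous curve $\gamma:[0,T]\to M$ with $\gamma(0)\in Q_0$, $\gamma(T)\in Q_1$ and length $l(\gamma)\leq \epsilon_0$ is entirely contained in $U$;
\item[(b)] the neighborhood is so small that
\begin{equation*}
E_U + \frac{\theta_U^2}{4a} \ < \ k\, , \qquad E_U:=\max_{q\in \overline U} E(q,0)\, ,\quad \theta_U:=\max_{q\in \overline U}\|\theta_q\|\, .
\end{equation*}
\end{enumerate}
Property (a) is purely geometric and follows from the compactness of $Q_0\cap Q_1$ together with the positivity of $\mathrm{dist}(Q_0\setminus U,Q_1\setminus U)$; property (b) follows from continuity of $q\mapsto E(q,0)$ and $q\mapsto\|\theta_q\|$ combined with the strict inequality $k>k_{Q_0\cap Q_1}$, by shrinking $U$.

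Next, for any $(x,T)\in\mathcal N$ whose image is contained in $U$, I would apply the Taylor-expansion estimate \eqref{stimacontheta} together with $\theta_q(v)\geq -\|\theta_q\|\cdot |v|$ and the Cauchy--Schwarz bound $l(x)^2\leq T\int_0^T|\dot\gamma(t)|^2\, dt$ to obtain
\begin{equation*}
\A_k(x,T)\ \geq\ T(k-E_U) \ -\ \theta_U\, l(x)\ +\ \frac{a}{T}\, l(x)^2\, .
\end{equation*}
Minimising the right-hand side in $T$ for fixed $l:=l(x)$ then yields
\begin{equation*}
\A_k(x,T)\ \geq\ l\cdot\bigl(2\sqrt{a(k-E_U)}-\theta_U\bigr)\ =:\ l\cdot C\, ,
\end{equation*}
where $C>0$ by property (b). In particular, any path $\gamma$ of length at most $\epsilon_0$ connecting $Q_0$ and $Q_1$ has non-negative $\A_k$-action.

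To conclude, set $\alpha:=\epsilon_0\cdot C>0$. Given $u\in\Gamma$, the initial point $u(0)$ is a constant path (so $l(u(0))=0$) while the endpoint satisfies $\A_k(u(1))<0$, which by the preceding bound rules out $l(u(1))\leq \epsilon_0$. Continuity of the length functional on $\mathcal M_Q$ then supplies an $s_\ast\in(0,1)$ with $l(u(s_\ast))=\epsilon_0$; by property (a) the underlying curve lies entirely in $U$, so the estimate above yields $\A_k(u(s_\ast))\geq\alpha$, as desired. The main obstacle is decoupling the two requirements placed on $U$: one needs $U$ small enough that $E_U$ and $\theta_U$ retain a definite gap below $k$, yet the small-length threshold $\epsilon_0$ must remain uniform so that containment in $U$ for short paths survives. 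Both can be arranged because $Q_0\cap Q_1$ is compact, and it is precisely the Young-type combination $E_U+\theta_U^2/(4a)$ emerging from the minimisation in $T$ that singles out $k_{Q_0\cap Q_1}$ as the critical threshold in the statement.
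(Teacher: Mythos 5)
Your proposal is correct and follows essentially the same route as the paper's proof: choose a neighborhood $\mathcal U$ of $Q_0\cap Q_1$ on which $k$ strictly dominates $\sup E(q,0)+\sup\|\theta_q\|^2/(4a)$, note that sufficiently short paths from $Q_0$ to $Q_1$ stay in $\mathcal U$, use the Taylor bound \eqref{stimacontheta} and minimize in $T$ to get an action lower bound linear in the length, and then apply the intermediate value argument along any $u\in\Gamma$ to find a point of length exactly $\epsilon$ with action at least $\alpha=(2\sqrt{a(k-c_E)}-c_\theta)\,\epsilon$. The only cosmetic difference is your explicit worry about decoupling the two requirements on $\mathcal U$, which the paper resolves in the same way you do: shrink $\mathcal U$ first to secure the energy--magnetic gap, then pick $\epsilon$ depending on $\mathcal U$.
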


\vspace{-4mm}

\begin{proof}
The proof follows from the one in the particular case with minor adjustments. First, observe that for all $q\in Q_0\cap Q_1$ we have 
$$\A_k(q,T) \ = \ \Big [k - E(q,0)\Big ] \, T \ > \ 0$$ 
and going to zero as $T\rightarrow 0$. Now consider a neighborhood $\mathcal U$ of $Q_0\cap Q_1$ such that 
\begin{equation}
k \ >\ \sup_{q\in \mathcal U} \ E(q,0) \ + \ \sup_{q\in \mathcal U} \ \frac{\|\theta_q\|^2}{4a}\, .
\label{kmaggiorecectheta}
\end{equation}

As in the particular case one shows now that, if $\epsilon >0$ is sufficiently small, then all the paths joining $Q_0$ to $Q_1$ with length less than or equal to $\epsilon$ are entirely contained in $\mathcal U$. 
Pick such an $\epsilon$; using (\ref{stimacontheta}) we compute for every $\gamma=(x,T)$ with $l(x)\leq \epsilon$
\begin{eqnarray*}
\A_k(x,T) &\geq& \int_0^T \Big [-E(\gamma(t),0) + \theta_{\gamma(t)}(\gamma'(t)) + a \, \|\gamma'(t)\|^2 + k \Big ]\, dt \ \geq \\ 
                 &\geq& \left (k - \sup_{q\in \mathcal U} \ E(q,0) \right ) \, T \ + \ \frac{a}{T}\, l(x)^2 \ + \int_0^T \theta_{\gamma(t)}(\gamma'(t))\, dt \ \geq \\ 
                 &\geq& \left (k - \sup_{q\in \mathcal U} \ E(q,0) \right ) \, T \ + \ \frac{a}{T}\, l(x)^2 \ - \ \left (\sup_{q\in \mathcal U} \ \|\theta_q\|\right )\, l(x)\, .
\end{eqnarray*}
To ease the notation let us define 
$$c_E := \  \sup_{q\in \mathcal U} \ E(q,0)\, ,  \ \ \ \ c_\theta := \ \sup_{q\in \mathcal U} \ \|\theta_q\|$$ 
and consider the function of two variables 
$$f:(0,+\infty)\times [0,\epsilon] \rightarrow \R\, ,\ \ \ \ f(T, l) := \ \big (k - c_E\big )\, T \ + \ \frac{a}{T} \, l^2 \ - \ c_\theta \, l\, .$$
For every $l$ fixed the function $f$ attains its minimum in the unique point 
$$T_l\ = \ l \, \frac{\sqrt{a}}{\sqrt{k-c_E}}$$
and there holds 
$$f(T_l,l) \ = \ \left ( 2\sqrt{a(k-c_E)} - c_\theta\right ) \, l\, .$$
This value is non-negative for all $l\in [0,\epsilon]$, provided 
$$2\sqrt{a(k-c_E)} - c_\theta \ >\ 0\, ,$$
which is equivalent to (\ref{kmaggiorecectheta}). Therefore, arguing as in the particular case, we get that for every $u\in \Gamma$ there exists $s\in [0,1]$ such that $l(u(s))=\epsilon$. For this $s$ we readily have
$$\A_k(u(s)) \ \geq \ \left ( 2\sqrt{a(k-c_E)} - c_\theta\right ) \, \epsilon \ =: \alpha\ >\ 0\, ,$$
exactly as we wished to prove.
\end{proof}

\vspace{5mm}

When the intersection $Q_0\cap Q_1$ consists of more than one point one would be tempted to replace the maximum with the minimum of the energy on $Q_0\cap Q_1$ in the definition of $k_{Q_0\cap Q_1}$, hence defining 
\begin{equation}
k_{Q_0\cap Q_1}^- := \ \min_{q\in Q_0\cap Q_1} E(q,0) \ \ + \max_{q\in Q_0\cap Q_1} \frac{\|\theta_q\|^2}{4a}\, ,
\label{ultimovalore}
\end{equation}
and show that the conclusion of Lemma \ref{lemmaminimax1} holds even considering the a priori larger interval $(k_{Q_0\cap Q_1}^-,c(L;Q_0,Q_1))$. This is however not the case, since under these assumptions
there are constant loops with negative $\A_k$-action. Therefore, in the energy range $(k_{Q_0\cap Q_1}^-,k_{Q_0\cap Q_1})$  instead of the class $\Gamma$ one has to consider the class of deformations 
$u=(x,T):[0,1]\times (Q_0\cap Q_1) \rightarrow \mathcal N$ of the space of constant paths into the space of paths with negative $\A_k$-action
$$\Gamma_{Q_0\cap Q_1} := \Big \{u=(x,T) \ \Big | \ x(0,q) = q\, , \ \A_k(u(1,q))< 0 \,, \ \forall q\in Q_0\cap Q_1\Big \}\, .$$

This argument is analogous to the one in \cite{Abb13}, where the case of periodic orbits is considered and $k_{Q_0\cap Q_1}^-$, $k_{Q_0\cap Q_1}$ are replaced by $\min E$, $e_0(L)$ respectively. 

The proof of \cite[lemma 7.2]{Abb13} goes through withouth any change and shows that the class $\Gamma_{Q_0\cap Q_1}$ is non-empty for every $k\in (k_{Q_0\cap Q_1}^-,c(L;Q_1,Q_0))$.
The proof of the following lemma is analogous to the one of Lemma \ref{lemmaminimax1}.

\begin{lemma}
For every $k\in (k_{Q_0\cap Q_1}^-,k_{Q_0\cap Q_1})$ there exists $\alpha>0$ such that for every $u\in \Gamma_{Q_0\cap Q_1}$ there holds 
$$\max_{(s,q)\in [0,1]\times (Q_0\cap Q_1)}\!\!\!\! \A_k(u(s,q)) \ \ \geq \ \alpha\, .$$
\label{lemmaminimax2}
\end{lemma}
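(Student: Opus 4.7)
\emph{Proof plan.} The strategy is to imitate the proof of Lemma \ref{lemmaminimax1}, working locally around a point $q_m \in Q_0 \cap Q_1$ at which $E(\cdot, 0)$ attains its minimum on the intersection, and restricting the two-parameter family $u \in \Gamma_{Q_0 \cap Q_1}$ to the one-parameter slice $s \mapsto u(s, q_m)$. The assumption $k > k_{Q_0 \cap Q_1}^-$ gives in particular
$$k \ > \ E(q_m, 0) + \frac{\|\theta_{q_m}\|^2}{4a},$$
since $\|\theta_{q_m}\|^2 \leq \max_{q \in Q_0 \cap Q_1} \|\theta_q\|^2$. First I would select, by continuity, a small open ball $\mathcal U = B_r(q_m)$ around $q_m$ for which
$$k \ > \ \sup_{\mathcal U} E(\cdot, 0) + \frac{(\sup_{\mathcal U} \|\theta\|)^2}{4a}. \qquad (*)$$

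Next, repeating verbatim the Taylor-expansion computation from the proof of Lemma \ref{lemmaminimax1}, I would derive the local action bound: for every $\gamma = (x, T) \in \mathcal N$ with $x([0, 1]) \subset \mathcal U$ one has
$$\A_k(x, T) \ \geq \ T(k - \sup_{\mathcal U} E) + \frac{a}{T}\, l(x)^2 - \sup_{\mathcal U} \|\theta\| \cdot l(x) \ \geq \ \beta \cdot l(x),$$
after minimizing the right-hand side over $T > 0$, with $\beta := 2\sqrt{a(k - \sup_{\mathcal U} E)} - \sup_{\mathcal U} \|\theta\| > 0$ by $(*)$. I would then fix $\epsilon > 0$ so small (in particular $\epsilon < r/3$) that any $\gamma = (x, T) \in \mathcal N$ whose endpoints both lie in $B_\epsilon(q_m)$ and whose length is at most $\epsilon$ is automatically contained in $\mathcal U$, by the triangle inequality.

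Now, given $u \in \Gamma_{Q_0 \cap Q_1}$, I would examine the one-parameter family $s \mapsto u(s, q_m)$. At $s = 0$ the path $x(0, q_m)$ is the constant at $q_m$, so both endpoint-distances to $q_m$ and the length vanish; at $s = 1$ the action is strictly negative by definition of $\Gamma_{Q_0 \cap Q_1}$. Setting
$$\mu(s) \ := \ \max \Bigl( d(x(s, q_m)(0), q_m),\ d(x(s, q_m)(1), q_m),\ l(x(s, q_m)) \Bigr),$$
which is continuous in $s$ with $\mu(0) = 0$, let $s^* \in (0, 1]$ be the smallest parameter with $\mu(s^*) = \epsilon$. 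Such an $s^*$ must exist: otherwise $\mu(s) < \epsilon$ for every $s$ would force $x(s, q_m) \subset \mathcal U$ throughout the slice, and the Taylor bound would give $\A_k(u(s, q_m)) \geq 0$ for all $s$, contradicting $\A_k(u(1, q_m)) < 0$. At $s^*$ the path $x(s^*, q_m)$ still lies in $\mathcal U$ by construction, and the Taylor bound yields $\A_k(u(s^*, q_m)) \geq \beta \cdot l(x(s^*, q_m))$. The hard part will be to show that $\mu(s^*) = \epsilon$ is actually realized by $l(x(s^*, q_m))$ rather than by one of the two endpoint-distances to $q_m$; equivalently, to rule out that the slice $s \mapsto u(s, q_m)$ escapes the ball $B_\epsilon(q_m)$ by sliding through constant paths based at other points $q' \in Q_0 \cap Q_1$ close to $q_m$, without ever developing length $\epsilon$. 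Once this is excluded, the Taylor bound at $s^*$ gives $\A_k(u(s^*, q_m)) \geq \beta \cdot \epsilon =: \alpha$, which is the desired uniform lower bound. This last step is the genuinely new ingredient compared to the single-point-intersection setting of Lemma \ref{lemmaminimax1}, where the ``lateral'' degree of freedom in $Q_0 \cap Q_1$ is simply absent.
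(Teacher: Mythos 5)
Your preparatory steps are fine: the choice of $q_m$, the inequality $k>E(q_m,0)+\|\theta_{q_m}\|^2/(4a)$, and the local Taylor/parabola estimate on a small ball $\mathcal U$ around $q_m$ are exactly the computation of Lemma \ref{lemmaminimax1}. The genuine gap is the reduction to the single slice $s\mapsto u(s,q_m)$, and the step you defer at the end ("rule out that the slice escapes $B_\epsilon(q_m)$ by sliding through nearly constant paths") is not a technicality left to fill in: it cannot be ruled out. Precisely because $k<k_{Q_0\cap Q_1}$, there are portions of $Q_0\cap Q_1$ near which short paths have negative action (for instance constant paths at points $q$ with $E(q,0)>k$, obtained by taking $T$ large). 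An admissible $u\in\Gamma_{Q_0\cap Q_1}$ can perfectly well, along the slice at $q_m$, keep the paths (nearly) constant with tiny period while their base points drift along $Q_0\cap Q_1$ into such a region, and then make the action negative simply by increasing $T$; the maximum of $\A_k$ over that slice is then of order $T_{\mathrm{slide}}\cdot\sup_{Q_0\cap Q_1}|k-E(\cdot,0)|$, hence arbitrarily small. Concretely, if $Q_0\cap Q_1\cong S^1$ and $E(\cdot,0)$ exceeds $k$ somewhere on it, the family that pushes the circle of constant paths towards a maximum point of $E(\cdot,0)$ and then inflates $T$ does exactly this at the $q_m$-slice; the "expensive" slices are the parameters $q$ that get left behind in the region $\{E(\cdot,0)<k\}$, not the slice at $q_m$. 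So no choice of $\epsilon,\alpha$ works slice-wise, and your function $\mu$ will typically reach $\epsilon$ through the endpoint distances while the length stays tiny.

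This is the very reason the paper abandons the mountain-pass class $\Gamma$ in the range $(k_{Q_0\cap Q_1}^-,k_{Q_0\cap Q_1})$ and works with the $q$-parametrized class $\Gamma_{Q_0\cap Q_1}$: a single path from one constant path to negative action admits no uniform barrier here, only the deformation of the whole family of constant paths does. The paper's own (terse) proof — "analogous to Lemma \ref{lemmaminimax1}", modelled on the periodic-orbit argument of \cite{Abb13} with $k_{Q_0\cap Q_1}^-$, $k_{Q_0\cap Q_1}$ replacing $\min E$, $e_0(L)$ — runs the same length/action estimate, but over the full two-parameter family: one must exploit simultaneously $x(0,q)=q$ and $\A_k(u(1,q))<0$ for \emph{every} $q\in Q_0\cap Q_1$, and the lower bound $\alpha$ is produced by those parameters $q$ whose slices cannot all be slid into the "bad" part of the intersection — a global, essentially topological, constraint on deformations of the whole family of constant paths (in the periodic case of \cite{Abb13} it is the fact that the identity of the manifold of constant loops cannot be deformed into a proper subset). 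To repair your argument you must replace the distinguished slice at $q_m$ by an argument of this global type; as written, the missing step is the actual content of the lemma.
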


\vspace{-5mm}
\noindent We can now define the minimax functions 
\begin{equation}
c:\big (k_{Q_0\cap Q_1}, c(L;Q_0,Q_1 )\big )\ \longrightarrow \ \R\, , \ \ \ \ c(k) := \ \inf_{u\in \Gamma} \ \max_{[0,1]} \ \A_k\circ u
\label{minimaxfunction1}
\end{equation}
\noindent and 
\begin{equation}
c:\big (k_{Q_0\cap Q_1}^-,k_{Q_0\cap Q_1} \big )\ \longrightarrow \ \R\, , \ \ \ \ c(k) := \ \inf_{u\in \Gamma_{Q_0\cap Q_1}} \ \max_{[0,1]} \ \A_k\circ u \, .
\label{minimaxfunction2}
\end{equation}

Lemmas \ref{lemmaminimax1} and \ref{lemmaminimax2} above imply that $c(k)>0$ for all $k$; furthermore, the monotonicity of $\A_k$ in $k$ implies that the minimax functions $c(\cdot)$ are monotonically 
increasing and hence almost everywhere differentiable. In Lemma \ref{struwe} below is a version of an argument of Struwe  (cf. \cite{Str90}), which allows to 
prove the existence of bounded Palais-Smale sequences for every value of $k$ at which the minimax functions $c(\cdot)$ are differentiable, thus overcoming the lack of the Palais-Smale 
condition for $\A_k$ for subcritical energies. The price to pay is that one is able to get compact Palais-Smale sequences only for almost every energy instead for 
every energy. The proof  is analogous to the one in the periodic case; see \cite{Con06} and \cite{Abb13} for further details.

\begin{lemma}
Suppose that $\bar{k}$ is a point at which the minimax function $c(\cdot)$ in (\ref{minimaxfunction1}) or (\ref{minimaxfunction2}) is differentiable. Then $\A_{\bar k}$ admits a bounded Palais-Smale sequence at level $c(\bar{k})$, 
which consists of paths in $\mathcal N$.
\label{struwe} 
\end{lemma}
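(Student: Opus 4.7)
The plan is to run the standard Struwe monotonicity argument adapted to our free--time setting, where the key identity $\A_k(x,T)=\A_{\bar k}(x,T)+(k-\bar k)T$ linearizes the dependence of the action on the energy parameter. Take a sequence $k_n\downarrow \bar k$ and use the definition of the minimax value to pick $u_n$ in $\Gamma$ (or in $\Gamma_{Q_0\cap Q_1}$, in the second case) such that
$$\max_{s}\ \A_{k_n}(u_n(s))\ \leq\ c(k_n)+(k_n-\bar k).$$
I would then focus on the ``almost--maximizing set''
$$B_n:=\Big\{s\ \Big|\ \A_{\bar k}(u_n(s))\geq c(\bar k)-(k_n-\bar k)\Big\}.$$
For $s\in B_n$, writing $(x(s),T(s))=u_n(s)$, the above linearization gives
$$(k_n-\bar k)\,T(s)\ =\ \A_{k_n}(u_n(s))-\A_{\bar k}(u_n(s))\ \leq\ \big(c(k_n)-c(\bar k)\big)+2(k_n-\bar k),$$
so dividing by $k_n-\bar k$ and using the assumption that $c(\cdot)$ is differentiable at $\bar k$ we obtain a uniform bound $T(s)\leq C:=c'(\bar k)+3$ for all $n$ large and all $s\in B_n$.

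The next step is to argue by contradiction that a bounded Palais--Smale sequence must exist at level $c(\bar k)$. Suppose on the contrary that there is no such sequence; then one can find $\epsilon\in(0,c(\bar k)/4)$ such that $\|d\A_{\bar k}(x,T)\|\geq \epsilon$ on the strip
$$S:=\Big\{(x,T)\in\mathcal N\ \Big|\ |\A_{\bar k}(x,T)-c(\bar k)|\leq 2\epsilon,\ T\leq C+1\Big\}.$$
I would then build a locally Lipschitz, bounded pseudo--gradient vector field $X$, obtained by multiplying the conformally rescaled $-\nabla\A_{\bar k}/\sqrt{1+\|\nabla\A_{\bar k}\|^2}$ by a cutoff $\chi$ which equals $1$ on $S$ and vanishes outside a slightly larger neighborhood, so that $d\A_{\bar k}[X]\leq -\epsilon/(2\sqrt{1+\epsilon^2})$ on $S$. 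Denote its complete flow by $\phi_\tau$. For $n$ sufficiently large so that $k_n-\bar k<\epsilon/2$, both endpoints of $u_n$ lie outside $S$: the starting constant paths have $\A_{\bar k}$--action close to $0$, and $u_n(1)$ has $\A_{\bar k}(u_n(1))\leq \A_{k_n}(u_n(1))<0$, so $\phi_\tau$ fixes them (in the $\Gamma_{Q_0\cap Q_1}$ case, the same holds for every point $q$ of the parameter family, since the action of a constant loop at $q$ is $T_0(\bar k-E(q,0))$, which can be made uniformly smaller than $c(\bar k)-2\epsilon$ by a suitable choice of the initial $T_0$). Setting $\tilde u_n(s):=\phi_{\tau_n}(u_n(s))$ with $\tau_n$ of order $(k_n-\bar k)/\epsilon$ and large enough to beat $(k_n-\bar k)\,C$, one checks that $\tilde u_n$ still belongs to the minimax class and that
$$\max_s \ \A_{k_n}(\tilde u_n(s))\ <\ c(k_n),$$
contradicting the definition of $c(k_n)$; hence a Palais--Smale sequence $(x_n,T_n)$ at level $c(\bar k)$ with $T_n\leq C$ must exist, and up to extraction it consists of paths in $\mathcal N$.

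The main obstacle is the interplay between the truncation and the endpoint constraints defining $\Gamma$ (or $\Gamma_{Q_0\cap Q_1}$): one must choose the cutoff $\chi$ so that the flow acts trivially on the reference constant loop(s) that form the ``left valley'' of the mountain pass, and at the same time ensures that the second endpoint stays in the region $\{\A_{k_n}<0\}$. Because the negative gradient flow of $\A_{\bar k}$ only decreases $\A_{\bar k}$ and the transition $\A_{k_n}=\A_{\bar k}+(k_n-\bar k)T$ does not increase it substantially for bounded $T$, this can be arranged by imposing a lower cutoff below $c(\bar k)-2\epsilon$ and an upper cutoff on $T$ at level $C+1$, analogous to the truncation described in Remark~\ref{flussotroncato}. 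The only delicate point specific to our setting is verifying that the uniform time bound $T(s)\leq C$ on $B_n$ is compatible with the upper $T$--cutoff of the pseudo--gradient; the inequality above shows this is automatic provided $n$ is taken large, so the argument closes without further difficulty.
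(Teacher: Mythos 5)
Your overall strategy is the right one and matches the paper's: pick $k_n\downarrow\bar k$, choose almost optimal competitors $u_n$, use the linear dependence $\A_{k_n}=\A_{\bar k}+(k_n-\bar k)T$ together with the differentiability of $c(\cdot)$ to get a uniform period bound on the almost-maximizing set $B_n$, and then deform by a truncated, bounded negative gradient flow of $\A_{\bar k}$ (the truncation near small action, as in Remark \ref{flussotroncato}, is what keeps the flow complete via Lemma \ref{convergenza} and fixes the two ``valleys'' so the minimax class is preserved). Up to that point your argument is essentially the paper's.

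The gap is in the final contradiction. You aim to contradict the definition of $c(k_n)$ by claiming $\max_s\A_{k_n}(\tilde u_n(s))<c(k_n)$, but the deformation only decreases $\A_{\bar k}$, and the points of $u_n$ \emph{outside} $B_n$ are not under control for $\A_{k_n}$: such a point satisfies $\A_{\bar k}(u_n(s))<c(\bar k)-(k_n-\bar k)$ but may have arbitrarily large period $T(s)$, so it lies outside your strip $S$ (indeed outside the support of your cutoff, because of the upper $T$-cutoff at $C+1$) and is not moved at all; its $\A_{k_n}$-value therefore remains whatever it was, and the only available bound is $\A_{k_n}(u_n(s))\le c(k_n)+(k_n-\bar k)$, which is \emph{not} strictly below $c(k_n)$. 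Hence the deformed competitor need not have $\A_{k_n}$-maximum below $c(k_n)$, and no contradiction is obtained at level $k_n$. The correct closing move — the one the paper uses — is to derive the contradiction at level $\bar k$: since $\A_{\bar k}\le\A_{k_n}$, one has $\A_{\bar k}(u_n(1))\le\A_{k_n}(u_n(1))<0$, so $u_n$ (and its deformation, the starting constant paths being fixed by the truncation) is admissible for the minimax defining $c(\bar k)$; points outside $B_n$ already satisfy $\A_{\bar k}\le c(\bar k)-(k_n-\bar k)$ and this is preserved because the flow does not increase $\A_{\bar k}$, while on $B_n$ (which is bounded, so the contradiction hypothesis gives $\|d\A_{\bar k}\|\ge\delta$ there) the definite decrease pushes $\A_{\bar k}$ below $c(\bar k)-(k_n-\bar k)$ for $n$ large. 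This yields $\max_s\A_{\bar k}(\tilde u_n(s))\le c(\bar k)-(k_n-\bar k)<c(\bar k)$, contradicting the definition of $c(\bar k)$. With the contradiction re-targeted in this way (and with the minor point about the starting constant path handled as you indicate, e.g.\ by prepending a segment of constant loops with decreasing period, which does not raise the maximum of the action), your argument closes and coincides with the paper's proof.
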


\begin{proof}
Since $\bar k$ is a point of differentiability for $c(\cdot)$ we have 
\begin{equation}
|c(k)-c(\bar k)| \ \leq \ M \, |k-\bar k|
\label{modulocontinuita}
\end{equation}
for all $k$ sufficiently close to $\bar k$, where $M>0$ is a suitable constant. Let $\{k_h\}$ be a strictly decreasing sequence which converges to 
$\bar{k}$ and set $\epsilon_h:=k_h -\bar{k} \downarrow 0$. For every $h\in \N$ choose $u_h\in \Gamma$ (or $\Gamma_{Q_0\cap Q_1}$) such that 
$$\max_{u_h}\ \A_{k_h}\ \leq \ c(k_h) + \epsilon_h\, .$$

Up to ignoring a finite numbers of $k_h$'s we may suppose that Equation (\ref{modulocontinuita}) is satisfied by every $k=k_h$. If $z=(x,T)\in u_h$ is such that $\A_{\bar{k}}(z)>c(\bar{k})-\epsilon_h$, then 
$$T\ =\ \frac{\A_{k_h}(z)-\A_{\bar{k}}(z)}{k_h-\bar{k}}\ \leq \ \frac{c(k_h)+\epsilon_h - c(\bar{k}) + \epsilon_h}{\epsilon_h}\ \leq \ M+2\, .$$
Moreover, 
$$\A_{\bar{k}}(z)\ \leq \ \A_{k_h}(z)\ \leq \ c(k_h)+\epsilon_h \ \leq \ c(\bar{k}) + (M+1)\epsilon_h$$
and hence
$$u_h \ \subseteq \ \mathcal A_h \ \cup \ \Big \{\A_{\bar{k}} \ \leq \ c(\bar{k}) - \epsilon_h\Big \}\, ,$$
where 
$$\mathcal A_h\ =\ \Big \{ (x,T) \in \mathcal N \ \Big |\ T \leq  M+2\, ,\ \A_{\bar{k}}(x,T)\leq c(\bar{k}) + (M+1)\epsilon_h\Big \}\, .$$ 
Observe that, if $(x,T)\in \mathcal A_h$, then by (\ref{firstinequality}) we have
$$\A_{\bar{k}}(x,T) \ \geq \ \frac{a}{M+2} \, \big \|x'\big \|_2^2 - (M+2)\big |b-\bar{k}\big |$$
and hence 
\begin{eqnarray*}
\big \|x'\big \|_2^2 &\leq& \frac{M+2}{a} \, \Big ( c(\bar{k}) + (M+1)\epsilon_h + (M+2) \big |b - \bar{k}\big |\Big )\, ,
\end{eqnarray*}
which shows that $\mathcal A_h$ is bounded in $\mathcal N$, uniformly in $h$. Let $\Phi$ be the flow of the vector field obtained by multiplying $-\nabla \A_{\bar{k}}$ by a suitable non-negative function, whose role 
is to make the vector field bounded on $\mathcal N$ and vanishing on the sublevel $\big \{\A_{\bar{k}}\leq c(\bar{k})/4 \big\}$, while keeping the uniform decrease condition 
\begin{equation}
\frac{d}{d\sigma}\ \A_{\bar{k}}(\Phi_\sigma (z))\ \leq \ - \frac{1}{2} \, \min \Big \{ \big \|d\A_{\bar{k}}(\Phi_\sigma(z))\big \|^2, 1\Big \}\, ,\ \ \ \text{if}\ \ \A_{\bar{k}} (\Phi_\sigma (z))\ \geq \ \frac{c(\bar{k})}{2}\, .
\label{decrescita}
\end{equation}

Lemma \ref{convergenza} implies that $\Phi$ is well-defined on $[0,+\infty)\times \mathcal N$ and that $\Gamma$ (or $\Gamma_{Q_0\cap Q_1}$) is positively invariant with respect to $\Phi$. Since $\Phi$ maps bounded sets into bounded sets, we have that
\begin{equation}
\Phi\big ([0,1]\times u_h\big )\ \subseteq \ \mathcal B_h \ \cup \ \Big \{\A_{\bar{k}}\ \leq \ c(\bar{k})-\epsilon_h\Big \}
\label{fi}
\end{equation}
for some uniformly bounded set 
\begin{equation}
\mathcal B_h \ \subseteq \ \Big \{ \A_{\bar{k}} \ \leq \ c(\bar{k}) + (M+1)\epsilon_h\Big \}\, .
\label{Bh}
\end{equation}
We claim that there exists a sequence $\{z_h\}\subseteq \mathcal N$ with 
$$z_h \ \in \ \mathcal B_h \ \cap \ \Big \{ \A_{\bar{k}} \ \geq \ c(\bar{k}) - \epsilon_h \Big \}$$
and $\big \|d\A_{\bar{k}}(z_h)\big \|$ infinitesimal. Such a sequence is clearly a bounded Palais-Smale sequence at level $c(\bar{k})$. Assume by contradiction that there exists $\delta \in (0,1)$  such that
$$\big \|d\A_{\bar{k}}\big \| \ \geq \ \delta\, ,\ \ \ \text{on}\ \ \mathcal B_h \ \cap \ \Big \{ \A_{\bar{k}} \ \geq \ c(\bar{k}) - \epsilon_h \Big \}$$
for every $h$ large enough. Together with (\ref{decrescita}), (\ref{fi}) and (\ref{Bh}), this implies that, for $h$ large enough, for any $z\in u_h$ such that 
$$\Phi \big ([0,1]\times \{z\} \big ) \ \subseteq \ \Big \{\A_{\bar{k}}\ \geq \ c(\bar{k})-\epsilon_h\Big \}$$
there holds 
$$\A_{\bar{k}}(\Phi_1(z)) \ \leq \ \A_{\bar{k}}(z) - \frac{1}{2}\, \delta^2 \ \leq \ c(\bar{k}) + (M+1)\epsilon_h - \frac{1}{2}\, \delta^2\, .$$ 
It follows that 
$$\max_{\Phi_1 (u_h)}\ \A_{\bar{k}}\ \leq \ c(\bar{k}) - \epsilon_h$$ 
\noindent for $h$ large enough. Since $\Phi_1(u_h)\in \Gamma$, this contradicts the definition of $c(\bar{k})$.
\end{proof}

\begin{teo}
Let $Q_0,Q_1$ be two closed connected submanifolds such that the intersection $Q_0\cap Q_1$ is non-empty and connected. Then, for almost every $k \in (k_{Q_0\cap Q_1}^-, c(L;Q_0,Q_1))$ 
there is an Euler-Lagrange orbit $\gamma\in \mathcal N$ with energy $k$ which satisfies the conormal boundary conditions (\ref{lagrangianformulation1}) and has action $\A_k(\gamma)=c(k)$. 
\label{teobasseenergie1}
\end{teo}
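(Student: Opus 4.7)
The plan is to exploit the mountain-pass geometries established in Lemmas \ref{lemmaminimax1} and \ref{lemmaminimax2} together with the Struwe monotonicity trick (Lemma \ref{struwe}) and the compactness result for bounded Palais-Smale sequences with times bounded away from zero (Lemma \ref{lemmalimitatezza}). The orbit will be extracted as the limit of a suitable Palais-Smale sequence at level $c(k)$.

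First, I would split the interval $(k_{Q_0\cap Q_1}^-, c(L;Q_0,Q_1))$ into the two open pieces $(k_{Q_0\cap Q_1}^-, k_{Q_0\cap Q_1})$ and $(k_{Q_0\cap Q_1}, c(L;Q_0,Q_1))$; the boundary point $k_{Q_0\cap Q_1}$ has measure zero and can be ignored. On the lower subinterval I work with the class $\Gamma_{Q_0\cap Q_1}$ and the minimax function \eqref{minimaxfunction2}; on the upper one I use $\Gamma$ and the function \eqref{minimaxfunction1}. Both are well defined: $\Gamma_{Q_0\cap Q_1}$ is non-empty for every $k>k_{Q_0\cap Q_1}^-$ by the argument recalled from \cite[Lemma 7.2]{Abb13}, and $\Gamma$ is non-empty for every $k<c(L;Q_0,Q_1)$ by Lemma \ref{unboundedness}. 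By Lemmas \ref{lemmaminimax1} and \ref{lemmaminimax2}, both minimax functions are bounded below by a positive constant, hence strictly positive. Since for fixed $(x,T)\in \mathcal{N}$ the map $k\mapsto \A_k(x,T) = \A_{k_0}(x,T)+(k-k_0)T$ is monotonically increasing, both functions $k\mapsto c(k)$ are monotone increasing and therefore differentiable at almost every $k$ in their domain.

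At each point of differentiability $\bar k$, Lemma \ref{struwe} produces a bounded Palais-Smale sequence $(x_h,T_h)\subseteq \mathcal{N}$ at level $c(\bar k)>0$. Boundedness in $\mathcal{M}$ yields a uniform upper bound $T_h\leq T^*$. Crucially, Lemma \ref{yesmodification} excludes $T_h\to 0$: a Palais-Smale sequence with vanishing times could only occur at level zero, contradicting $c(\bar k)>0$. After extracting a subsequence, I thus obtain $0<T_*\leq T_h \leq T^*<+\infty$, and now Lemma \ref{lemmalimitatezza} applies to give a subsequence converging in the $H^1$-topology to some $(x,T)\in\mathcal{N}$. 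The limit is automatically a critical point of $\A_{\bar k}$ with $\A_{\bar k}(x,T)=c(\bar k)$, and Theorem \ref{equivalence} translates this into the desired Euler-Lagrange orbit with energy $\bar k$ satisfying the conormal boundary conditions \eqref{lagrangianformulation1}. The fact that $(x,T)\in\mathcal{N}$ is automatic from the $H^1$-convergence of a sequence in $\mathcal{N}$.

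The only non-routine ingredient of the argument has already been handled in Lemma \ref{struwe}: the main obstacle is precisely the failure of the Palais-Smale condition for $\A_k$ at subcritical energies, which forces us to content ourselves with almost-every existence rather than existence for every $k$. Everything else is an assembly of the previously established pieces, so the proof reduces to verifying that the hypotheses of Lemma \ref{struwe} hold on each subinterval and that the positivity $c(\bar k)>0$ rules out the only remaining obstruction to compactness, namely $T_h\to 0$.
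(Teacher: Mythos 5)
Your proposal is correct and follows essentially the same route as the paper: the paper's proof likewise takes a point of differentiability of the minimax functions \eqref{minimaxfunction1}, \eqref{minimaxfunction2} (which are already defined separately on the two subintervals), invokes Lemma \ref{struwe} for a bounded Palais-Smale sequence at level $c(k)>0$ (positivity from Lemmas \ref{lemmaminimax1} and \ref{lemmaminimax2}), uses Lemma \ref{yesmodification} to bound the times away from zero, and concludes by the compactness Lemma \ref{lemmalimitatezza} together with the full measure of the set of differentiability points. The only cosmetic difference is that you spell out the non-emptiness of the classes and the measure-zero boundary point $k_{Q_0\cap Q_1}$, which the paper leaves implicit.
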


\begin{proof}
Let $k$ be a point of differentiability for $c(\cdot )$. By Lemma \ref{struwe} above $\A_k$ admits a Palais-Smale sequence $(x_h,T_h)\subseteq \mathcal N$ at level $c(k)$ with uniformly bounded times $\{T_h\}$. At the same time, Lemmas 
\ref{lemmaminimax1} and \ref{lemmaminimax2} imply that $c(k)> 0$, so that by Lemma \ref{yesmodification} we have also that $\{T_h\}$ is bounded away from zero. Therefore, Lemma \ref{lemmalimitatezza} implies that the sequence 
$(x_h,T_h)$ has a limiting point in $\mathcal N$, which gives us the required Euler-Lagrange orbit. The assertion follows noticing 
that the set of points of differentiability for $c(\cdot)$ is a full measure set in $(k_{Q_0\cap Q_1}^-, c(L;Q_0,Q_1))$.
\end{proof}

\vspace{2mm}

\begin{cor}
Let $Q_0\subseteq M$ be a closed connected submanifold. Then for almost every $k\in (k_{Q_0}^-, c(L;Q_0))$ there is an Arnold chord $\gamma\in \mathcal N$ for $Q_0$
with energy $k$. Here $k_{Q_0}^-$ and $c(L;Q_0)$ are obtained from (\ref{ultimovalore}) and (\ref{clh0h1}) by setting $Q_0=Q_1$.
\label{arnoldchord}
\end{cor}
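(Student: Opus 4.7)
The plan is to deduce this corollary as a direct specialization of Theorem \ref{teobasseenergie1} to the case $Q_1=Q_0$. First I would verify that the hypotheses of the theorem are satisfied: since $Q_0$ is a closed connected submanifold, the intersection $Q_0\cap Q_1 = Q_0\cap Q_0 = Q_0$ is non-empty and connected, so we are in the setting where Theorem \ref{teobasseenergie1} applies. The quantities $k_{Q_0}^-$ and $c(L;Q_0)$ are then obtained by inserting $Q_1=Q_0$ into formulas \eqref{ultimovalore} and \eqref{clh0h1}, which are exactly the definitions referenced in the statement.

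Next I would observe that the conormal boundary conditions \eqref{lagrangianformulation1} for the pair $(Q_0,Q_0)$ read
\begin{equation*}
x(0),\, x(1)\in Q_0\, ,\quad d_vL(0,x(0),x'(0))\big|_{T_{x(0)}Q_0}\equiv d_vL(1,x(1),x'(1))\big|_{T_{x(1)}Q_0}\equiv 0\, ,
\end{equation*}
which is precisely the defining property of an Arnold chord for $Q_0$. Thus every Euler-Lagrange orbit $\gamma$ satisfying the conormal boundary conditions in the sense of Theorem \ref{teobasseenergie1} is, by definition, an Arnold chord for $Q_0$.

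Finally I would invoke Theorem \ref{teobasseenergie1} to conclude that for almost every $k\in (k_{Q_0}^-,c(L;Q_0))$ there exists such an orbit $\gamma\in \mathcal N$, where $\mathcal N$ denotes the connected component of $\mathcal M_Q$ containing the constant paths. There is essentially no obstacle here: the whole substance of the argument, namely the mountain-pass geometry established in Lemmas \ref{lemmaminimax1} and \ref{lemmaminimax2}, the Struwe monotonicity argument of Lemma \ref{struwe}, and the compactness of the resulting bounded Palais-Smale sequences via Lemmas \ref{yesmodification} and \ref{lemmalimitatezza}, has already been carried out in the proof of Theorem \ref{teobasseenergie1}. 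The only thing to remark is that in the definitions of $k_{Q_0}^-$ and $c(L;Q_0)$ one can simplify, since $Q_0\cap Q_0=Q_0$ makes $H_0=H_1$ and $c(L;Q_0)=c(L;H_0)$.
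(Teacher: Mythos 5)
Your proposal is correct and matches the paper's intent exactly: the corollary is stated as an immediate specialization of Theorem \ref{teobasseenergie1} to $Q_1=Q_0$ (so that $Q_0\cap Q_1=Q_0$ is non-empty and connected), with the conormal boundary conditions \eqref{lagrangianformulation1} reducing precisely to the definition of an Arnold chord and the energy range becoming $(k_{Q_0}^-,c(L;Q_0))$. No further argument is needed, and your closing remark that $H_0=H_1$ gives $c(L;Q_0)=c(L;H_0)$ via \eqref{clh0h1} is consistent with the paper's definitions.
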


Notice that Theorem \ref{teobasseenergie1} can be trivially generalized to the case of non-connected intersection just considering separately every connected component of $\mathcal M_Q$ containing constant paths and repeating the same argument 
for any such component. 

More precisely, let $Q_0,Q_1\subseteq M$ be such that $Q_0\cap Q_1\neq \emptyset$ and let $\mathcal N$ be a connected component of $\mathcal M_Q$ containing constant paths. 
Denote by $\Omega$ the set of constant paths contained in $\mathcal N$; observe that $\Omega$ need not be connected. Therefore, let $\Omega_1,\Omega_2, ...$ be the connected components of $\Omega$ and for every
$j=1,2,...$ define 
$$k_{\Omega_j} := \ \max _{q\in \Omega_j} \ E(q,0) \ + \ \max_{q\in \Omega_j} \ \frac{\|\theta_q\|^2}{4a}\,, \ \ \ \ k_{\Omega_j}^- := \ \min_{q\in \Omega_j} \ E(q,0) \ + \ \max_{q\in \Omega_j} \ \frac{\|\theta_q\|^2}{4a} \, .$$

For every $k\in (k_{\Omega_j}^-,k_{\Omega_j})$, resp. $k \in (k_{\Omega_j},c(L;Q_0,Q_1))$, we define the minimax classes  $\Gamma_{\Omega_j}, \Gamma_j$  just replacing the whole intersection
$Q_0\cap Q_1$ with $\Omega_j$.  We then define the minimax functions $c_j$ as in (\ref{minimaxfunction1}) and (\ref{minimaxfunction2}) replacing $\Gamma,\Gamma_{Q_0\cap Q_1}$ with $\Gamma_j,\Gamma_{\Omega_j}$ respectively.
Theorem \ref{teobasseenergie1} therefore generalizes to the following

\begin{teo}
Let $\mathcal N$ be a connected component of $\mathcal M_Q$ which contains constant paths. Then, for almost every
$$k\in \left (\min_j\, k_{\Omega_j}^-\, , \, c(L;Q_0,Q_1)\right )$$ 
there is an Euler-Lagrange orbit $\gamma\in \mathcal N$ with energy $k$ satisfying the conormal boundary conditions and with action $\A_k(\gamma)=c(k)$. 
\label{teobasseenergie2}
\end{teo}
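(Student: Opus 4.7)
The plan is to reduce the theorem to Theorem \ref{teobasseenergie1} by running the minimax/Struwe argument separately on each connected component $\Omega_j$ of the space of constant paths contained in $\mathcal N$. A first observation is that $\Omega$ embeds in $Q_0 \cap Q_1$ via the evaluation $q \mapsto \text{const}_q$ and is a closed, hence compact, subset of $Q_0 \cap Q_1$; therefore the index $j$ runs over a finite set $\{1,\dots,N\}$ and all countable unions in what follows are actually finite.

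For each such $j$, I would verify the analogues of Lemmas \ref{lemmaminimax1} and \ref{lemmaminimax2} with $Q_0 \cap Q_1$ replaced by $\Omega_j$. The proofs of those lemmas are essentially local near the intersection: one picks a neighborhood $\mathcal U_j$ of $\Omega_j$ in $M$ on which
$$k \ > \ \sup_{q \in \mathcal U_j} E(q,0) \ + \ \sup_{q \in \mathcal U_j} \frac{\|\theta_q\|^2}{4a}\, ,$$
and shows that any sufficiently short path from $Q_0$ to $Q_1$ passing near $\Omega_j$ stays inside $\mathcal U_j$. Since the components of $\Omega$ are mutually at positive distance, we may choose the $\mathcal U_j$ pairwise disjoint, so a short path approaching $\Omega_j$ cannot simultaneously approach $\Omega_i$ for $i\neq j$. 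The computation of Lemma \ref{lemmaminimax1} then yields $c_j(k) \geq \alpha_j > 0$ on $(k_{\Omega_j}, c(L;Q_0,Q_1))$, and the argument of Lemma \ref{lemmaminimax2}, together with the componentwise version of the non-emptiness statement \cite[Lemma 7.2]{Abb13} for $\Gamma_{\Omega_j}$, gives $c_j(k) \geq \alpha_j > 0$ on $(k_{\Omega_j}^-, k_{\Omega_j})$.

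Once the positivity and the (obvious) monotonicity in $k$ of each $c_j$ is established, $c_j$ is almost everywhere differentiable on $(k_{\Omega_j}^-, c(L;Q_0,Q_1))$. At any point of differentiability $\bar k$, the Struwe monotonicity argument of Lemma \ref{struwe} applies verbatim, producing a Palais-Smale sequence for $\A_{\bar k}$ in $\mathcal N$ at level $c_j(\bar k)$ with uniformly bounded periods. Since $c_j(\bar k) > 0$, Lemma \ref{yesmodification} forces the periods to stay bounded away from zero, and Lemma \ref{lemmalimitatezza} then yields a limiting point $\gamma \in \mathcal N$ which, by Theorem \ref{equivalence}, is an Euler-Lagrange orbit of energy $\bar k$ satisfying the conormal boundary conditions, with action $c_j(\bar k)$.

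To conclude I would take the union: for each $j$ let $\mathcal D_j \subseteq (k_{\Omega_j}^-, c(L;Q_0,Q_1))$ be the measure-zero set of non-differentiability of $c_j$; then
$$\mathcal D \ := \ \bigcup_{j=1}^N \Big( \mathcal D_j \cup \{k_{\Omega_j}^-,\, k_{\Omega_j}\}\Big)$$
still has Lebesgue measure zero, and for every $k \in (\min_j k_{\Omega_j}^-, c(L;Q_0,Q_1)) \setminus \mathcal D$ there exists at least one $j$ with $k > k_{\Omega_j}^-$, to which the construction above applies. The only genuine technical obstacle I anticipate is verifying that the localized mountain-pass geometry survives the replacement of $Q_0 \cap Q_1$ by a single component $\Omega_j$; but as noted above this is purely a matter of choosing disjoint neighborhoods $\mathcal U_j$, after which the quantitative estimates of Lemmas \ref{lemmaminimax1} and \ref{lemmaminimax2} go through unchanged.
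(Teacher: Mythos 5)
Your proposal is correct and follows essentially the same route as the paper, which simply remarks that Theorem \ref{teobasseenergie1} generalizes by repeating the minimax and Struwe arguments (Lemmas \ref{lemmaminimax1}, \ref{lemmaminimax2}, \ref{struwe}, \ref{yesmodification}, \ref{lemmalimitatezza}) with $Q_0\cap Q_1$ replaced by each component $\Omega_j$ of the constant paths in $\mathcal N$, exactly as you do. The only inessential inaccuracy is your claim that there are finitely many $\Omega_j$: the intersection of two closed submanifolds need not have finitely many components, but since it suffices to use a countable subfamily with $k_{\Omega_j}^-$ approaching the infimum, the union of the exceptional null sets is still null and the argument is unaffected.
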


\vspace{2mm}

The minimax functions $c_j$ do not provide in general different critical values of $\A_k$. The only advantage in picking one different minimax class for 
each connected component of $\Omega$ is that one can push the existence result down to $\min k_{\Omega_j}^-$ instead of stopping at $k_\Omega^-$, since $\, \min k_{\Omega_j}^- \leq k_{\Omega}^-\, $
and the inequality might be strict.

%%%%%%%%%%%%%%%%%%%%%%%%%%%%%%%%%%%%%%%%%%%%%%%%%%%%%%%%%%%%%

\section{Counterexamples}
\label{counterexamples}

In this section we provide counterexamples to the existence of Euler-Lagrange orbits satisfying the conormal boundary conditions for subcritical energies. 

We have already seen in the introduction an example of Hamiltonian flow on the standard 2-sphere (which is locally an Euler-Lagrange flow) for which no energy level set contains an orbit 
joining the south pole with the north pole. Here we show examples of ``global'' Euler-Lagrange flows for which a similar phenomenonon happens for subcritical energies. Namely, for every $\epsilon>0$ 
we construct an example of magnetic Lagrangian $L_\epsilon$ on any surface with $c_u(L_\epsilon)\in [\frac12 -\epsilon,\frac12]$ such that there are two points which cannot be connected by Euler-Lagrange 
orbits with energy less than $\frac12 -\epsilon$. This result shows that Contreras' result \cite{Con06} is sharp. 

We then exhibit an example of magnetic flow on $T\T^2$ and closed disjoint submanifolds $Q_0,Q_1$ such that for all $k<c(L;Q_0,Q_1)$ there are no Euler-Lagrange orbits with energy $k$ 
satisfying the conormal boundary conditions, thus proving the sharpness of Theorem \ref{teorema1}. This example also shows that, in case of disjoint submanifolds, we may not expect any existence of 
orbits satisfying the conormal boundary conditions for $k<c(L;Q_0,Q_1)$ even if the submanifolds are ``close'' to each other. Moreover, it also shows that, 
in contrast with the case of periodic orbits (see e.g. \cite{Tai92a} or \cite{CMP04}), we may not expect the existence of orbits which are local minimizer of the free-time Lagrangian action functional, even if $M$ is a surface.

Finally, we provide examples of intersecting submanifolds for which there are no Euler-Lagrange orbits satisfying the conormal boundary conditions for energies below $k_{Q_0\cap Q_1}^-$, 
thus showing that the results obtained in Section \ref{subcriticalorbits} are optimal. 

\vspace{3mm}
Throughout this section $\Sigma$ will be a closed connected orientable surface and $\widetilde \Sigma$ will be its universal cover.  We start by considering the hyperbolic plane 
$$\HH := \Big \{(x_1,x_2) \in \R^2 \ \Big | \ x_2>0\Big\}$$
endowed with the Riemannian metric 
\begin{equation}
g_{(x_1,x_2)} := \ \frac{1}{x_2^2} \, \big (dx_1^2 + dx_2^2\big )\, .
\label{hyperbolicmetric}
\end{equation}
We refer to \cite{BKS91} for generalities and properties of  $(\HH,g)$. We define
\begin{equation}
L:T\HH\longrightarrow \R\, , \ \ \ \ L(q,v) \ = \ \frac12\, \|v\|_q^2 + \theta_q(v)\, ;
\label{hyperboliclagrangian}
\end{equation}
where $\theta_{(x_1,x_2)} =dx_1/x_2$ is the ``canonical primitive'' of the standard area form 
$$\sigma \ = \ \frac{1}{x_2^2}\, dx_1\wedge dx_2\, .$$ 

It is a well-known fact that $c(L)=\frac12$. In fact, the Hamiltonian characterization of the Ma\~n\'e critical value (cf. \cite{CI99} and \cite{BP02}) implies that 
$$c(L) \ = \ \inf_{u\in C^\infty(\HH)} \ \sup_{q\in \HH} \ \frac12 \, \|d_qu-\theta_q\|^2 \ \leq \ \frac12\, ,$$
being $\|\theta_q\|\equiv 1$. On the other hand, consider the clockwise arc-length parametrization $\gamma_r$ of a (hyperbolic) circle with radius $r$ and denote by $D_r$ the disc bounded by $\gamma_r$. 

Using the standard facts 
$$l(\gamma_r) \ = \ 2\pi\, \sinh r\, , \ \ \ \ \operatorname{Area} (D_r) \ = \ 2\pi \big (\cosh r - 1\big )\, ,$$
we readily compute for the action of $\gamma_r$
\begin{eqnarray*}
\A_k(\gamma_r) &=& \int_0^{l(\gamma_r)} \Big [\frac12\, \|\dot \gamma_r(t)\|^2 + k \Big ]\, dt \ + \int_{\gamma_r} \theta \\
                           &=&  \Big (\frac 12 + k \Big )\, l(\gamma_r) - \operatorname{Area} (D_r) \\ 
                           &=& \Big (\frac12 + k \Big )\, 2\pi \, \sinh r -  2\pi\, \big (\cosh r - 1\big )  \\
                           &=& \pi \, \Big (k-\frac12 \Big)\, e^r + f(r)\, ,
\end{eqnarray*}
with $f(r)$ uniformly bounded function of $r$. It follows that,  for every $k<\frac12$ 
$$\A_k(\gamma_r)\longrightarrow -\infty$$
as $r$ goes to infinity, thus showing that $c(L)\geq \frac12$. The restriction of the Euler-Lagrange flow of $L$ as in (\ref{hyperboliclagrangian}) to the energy level $\frac12$ is the celebrated \textit{horocycle flow} of Hedlund (cf. \cite{Hed32} and \cite{BKS91}). 
Its peculiarity relies on the fact that, once projected to a compact quotient of $\HH$, it becomes minimal, meaning that every orbit is dense. For $k<\frac12$, the Euler-Lagrange flow on $E^{-1}(k)$ is periodic 
and the orbits describe circles on $\HH$ with hyperbolic (thus, euclidean) radius going to zero as $k\rightarrow 0$. 

\paragraph{Orbits connecting two points}
Since for every $k<\frac12$ the Euler-Lagrange flow of $L$ as in (\ref{hyperboliclagrangian}) on the energy level $E^{-1}(k)$ is periodic with (projection of the) orbits given by hyperbolic (hence euclidean) circles with radius going to zero as $k\rightarrow 0$, it follows that  
for any pair of points $q_0\neq q_1$ there exists $k_0$ such that, for all $k< k_0$, there are no Euler-Lagrange orbits with energy $k$ connecting them. 

We explain now in details how to embed this example in any compact surface $\Sigma$; this will be used also later on for the other examples. 
Suppose for instance 
$$q_0\ =\ \Big (-\frac12,4\Big )\, ,\ \ \  \ q_1\ =\ \Big (\frac12,4\Big )$$
and let $B_1\subseteq B_2 \subseteq B_3$ be open (hyperbolic) balls around the point $(0,4)$ containing $q_0$ and $q_1$. Without loss of generality we may suppose that all 
Euler-Lagrange orbits with energy less than $k_0$ starting from $q_0$ or $q_1$ are entirely contained in $B_1$. Moreover, we may assume that $B_1$ contains a closed loop with negative $k$-action for all $k< k_0$.
We extend now the 1-form $\theta|_{B_1}$ to be constantly equal to zero outside $B_2$ using a suitable cut-off function and embed $B_3$ on $\Sigma$. In fact, the embedding induces a Riemannian metric on a subset 
$\mathcal U$ of $\Sigma$ which can be extended to a metric on the whole $\Sigma$ and also a 1-form on $\Sigma$ obtained simply by setting the pull-back of $\theta$ to be zero outside $\mathcal U$.
We denote the metric, the 1-form and the points on $\Sigma$ given by the embedding again with $g,\theta,q_0,q_1$ respectively and define the magnetic Lagrangian
\begin{equation}
L:T\Sigma \rightarrow \R\, , \ \ \ \ L(q,v) \ = \ \frac12 \, \|v\|_q^2 + \theta_q(v)\, .
\label{lagrangianasigma}
\end{equation}

\begin{center}
\includegraphics[height=40mm]{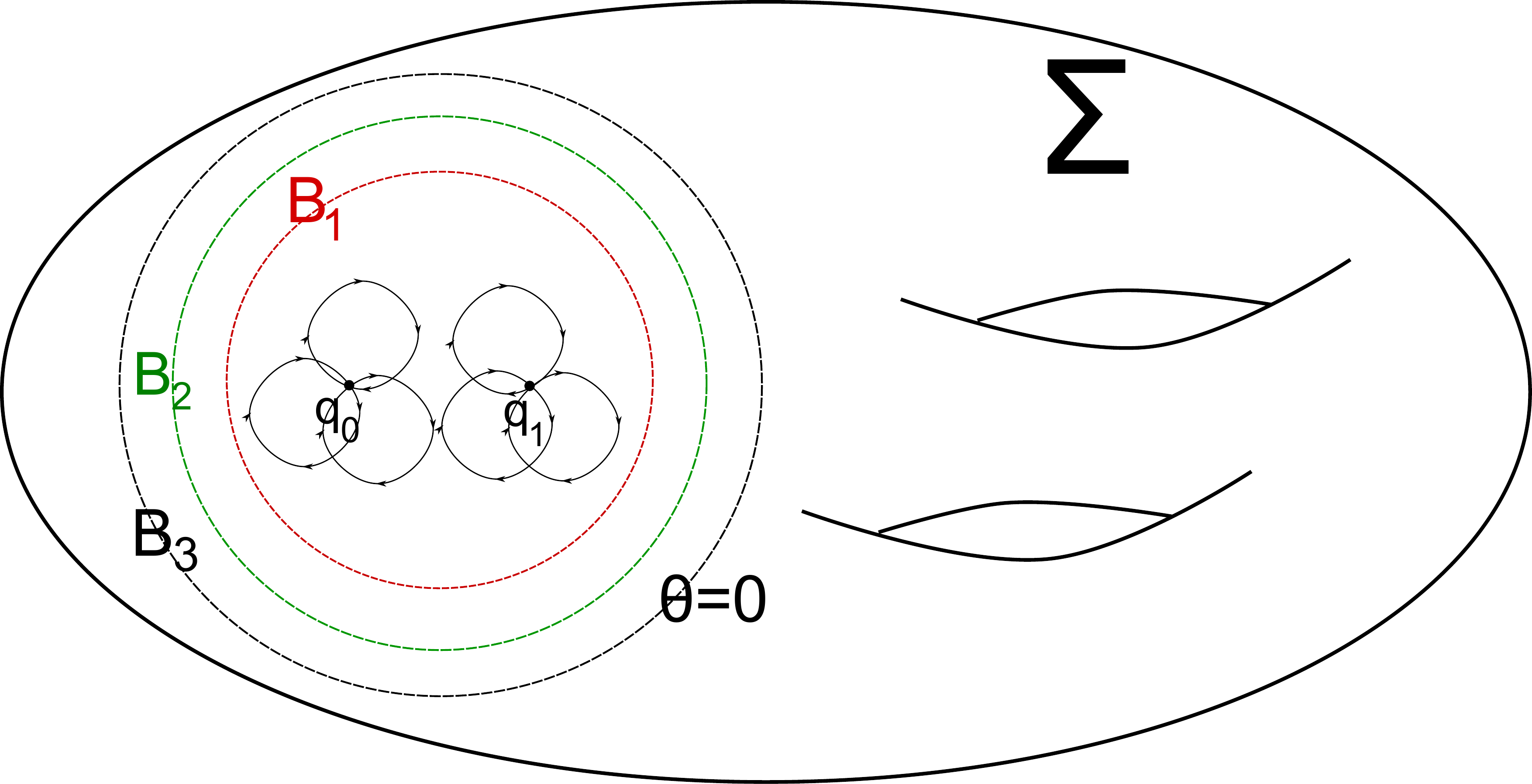}
\end{center} 

By construction there are no Euler-Lagrange orbits with energy $k$ connecting $q_0$ with $q_1$ for every $k<k_0$ and there holds $c_u(L)\geq k_0$; at the same time the Hamiltonian characterization of the Ma\~n\'e critical values
 implies that 
$$c_u(L) \ = \ \inf_{u \in C^\infty(\widetilde \Sigma)} \sup_{\tilde q\in \widetilde \Sigma} \ \frac12\, \|d_{\tilde q} u - \tilde \theta_{\tilde q}\|^2 \ \leq \ \frac12\, ,$$
where $\tilde \theta$ denotes the lift of $\theta$ to $\widetilde \Sigma$, since $\|\theta_q\|\leq 1$ for every $q\in \Sigma$.

\begin{lemma}
Let $\Sigma$ be a closed connected orientable surface. For any $\epsilon >0$ there exist a magnetic Lagrangian $L_\epsilon:T\Sigma \rightarrow \R$, with $c_u(L_\epsilon)\in [\frac12-\epsilon, \frac12]$, and points $q_0\neq q_1\in \Sigma$ such that, 
for all $k< \frac12 -\epsilon$, the energy level $E^{-1}(k)$ carries no Euler-Lagrange orbits joining $q_0$ to $q_1$.
\label{lemmanoq0q1}
\end{lemma}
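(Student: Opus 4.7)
The plan is to make rigorous the construction already sketched: start from $(\HH, g)$ with the hyperbolic magnetic Lagrangian $L$, localize it inside a large hyperbolic disk via a cut-off, and transplant the result into $\Sigma$. Two things then need to be checked: that $c_u(L_\epsilon)$ lies in the announced interval and that no Euler-Lagrange orbit of $L_\epsilon$ at energy below $\frac12 -\epsilon$ can connect the two chosen points.

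First I would use the invariance of the pair $(g,\sigma)$ under the orientation-preserving isometry group of $\HH$ to observe that, for every $k \in (0, \frac12)$, the Euler-Lagrange orbits on $E^{-1}(k)$ are exactly the clockwise-oriented hyperbolic circles of some common hyperbolic radius $r(k)$; moreover $k\mapsto r(k)$ is strictly increasing with $r(k)\to 0$ as $k\to 0^+$ and $r(k)\to +\infty$ as $k\to \frac12^-$ (the limiting flow being Hedlund's horocycle flow at $k=\frac12$). Setting $k_0 = \frac12 -\epsilon$ and $R = r(k_0)$, I would then pick $q_0\neq q_1\in \HH$ with hyperbolic distance $d_g(q_0,q_1)>2R$, so that no hyperbolic circle of radius $\leq R$ contains both points. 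Using the action computation $\A_k(\gamma_r)=\pi(k-\frac12)e^r+f(r)$ performed above the lemma (with $f$ uniformly bounded), I would also fix $r_\#>0$ large enough that $\A_{k_0}(\gamma_{r_\#})<0$; since the map $k\mapsto \A_k(\gamma_{r_\#})$ is affine and strictly increasing on the fixed loop $\gamma_{r_\#}$, this automatically yields $\A_k(\gamma_{r_\#})<0$ for every $k\leq k_0$. Finally I would choose nested hyperbolic balls $B_1\subset \overline{B_1}\subset B_2\subset \overline{B_2}\subset B_3\subset \HH$ large enough to contain both the hyperbolic $2R$-disks around $q_0$ and $q_1$ and the loop $\gamma_{r_\#}$. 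The gluing step is then routine: pick $\phi\in C^\infty(\HH)$ with $0\leq \phi\leq 1$, $\phi\equiv 1$ on $B_1$ and $\phi\equiv 0$ off $B_2$, set $\theta' := \phi\theta$ on $B_3$, embed $B_3$ into $\Sigma$ as an open disk, and extend both the metric and $\theta'$ arbitrarily to the complement (keeping $\theta'$ zero outside the embedded copy of $B_2$). Define $L_\epsilon(q,v) := \tfrac12 \|v\|_q^2 + \theta'_q(v)$.

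To finish, inside $B_1$ the Lagrangians $L_\epsilon$ and $L$ coincide, so any orbit of $L_\epsilon$ starting at $q_0$ with energy $k<k_0$ is, for as long as it stays in $B_1$, a hyperbolic circle of radius $r(k)<R$; such a circle is entirely contained in the hyperbolic $2R$-disk around $q_0$, hence in $B_1$, and is therefore the entire orbit. It cannot pass through $q_1$ because $d_g(q_0,q_1)>2R$. The analogous argument applies to orbits starting at $q_1$, showing that no Euler-Lagrange orbit of $L_\epsilon$ with energy $<k_0$ joins $q_0$ to $q_1$. For the lower bound on $c_u$, the loop $\gamma_{r_\#}$ lies in the embedded disk $B_1\subset\Sigma$, hence is contractible, and its $L_\epsilon$-action equals its $L$-action because the cut-off is identically $1$ along $\gamma_{r_\#}$; this action is negative for every $k<k_0$, so $c_u(L_\epsilon)\geq k_0 = \frac12-\epsilon$. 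Since $|\theta'_q|_q \leq |\theta_q|_q \leq 1$ on all of $\Sigma$, the Hamiltonian characterization of the critical value of the universal cover gives $c_u(L_\epsilon)\leq \frac12$. The only substantive geometric input is the description of the subcritical orbits as complete hyperbolic circles whose radius blows up at criticality; once this is in place the rest is bookkeeping on nested balls and a routine cut-off/embedding.
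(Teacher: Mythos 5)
Your proposal is correct and follows essentially the same route as the paper: the same hyperbolic local model with circle orbits of bounded radius at subcritical energies, the same cut-off-and-embed construction into $\Sigma$, the same distance argument (points farther apart than twice the maximal orbit radius) to rule out connecting orbits, a contractible loop of negative action for the lower bound on $c_u(L_\epsilon)$, and the Hamiltonian characterization with $\|\theta'\|\leq 1$ for the upper bound. The only differences (hyperbolic rather than Euclidean radius/distance, and a single large loop $\gamma_{r_\#}$ with monotonicity of $k\mapsto \A_k$ instead of one loop per energy) are cosmetic.
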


\begin{proof}
The proof follows readily from the example above. Consider the Euler-Lagrange flow on $T\HH$ associated to the Lagrangian in (\ref{hyperboliclagrangian}) and fix $\epsilon>0$, $q_0\in \HH$. We know that, for every $k<\frac12$, the 
restriction of the Euler-Lagrange flow to $E^{-1}(k)$ is periodic and orbits describe hyperbolic (hence, euclidean) circles with the same hyperbolic radius. If we denote by $\rho(q_0,v)$ the euclidean radius of the (projection of the unique) Euler-Lagrange 
orbit through $q_0$ with speed $v$, then we readily have 
$$\rho := \ \max_{k\, \leq\,  \frac12 -\epsilon} \ \max_{\|v\|=k} \ \rho(q_0,v) \ < \ \infty\, .$$

Choose now $q_1\in \HH$ with euclidean distance from $q_0$ larger than $2\rho$ and, repeating the embedding procedure above with appropriate open sets $B_1,B_2,B_3$, 
we end up with a Lagrangian $L_\epsilon:T\Sigma \rightarrow \R$ as in (\ref{lagrangianasigma}) that satisfies the desired properties. 
\end{proof}

\paragraph{Sharpness of Theorem \ref{teorema1}} Lemma \ref{lemmanoq0q1} can be trivially generalized to

\begin{cor}
For every $\epsilon >0$ there exist a magnetic Lagrangian $L_\epsilon:T\Sigma \rightarrow \R$, with $c_u(L_\epsilon)\in [\frac12 -\epsilon,\frac12 ]$, and disjoint closed connected submanifolds 
$Q_0,Q_1\subseteq \Sigma$ such that, for all $k< \frac12-\epsilon$, the energy level $E^{-1}(k)$ carries no Euler-Lagrange orbits connecting $Q_0$ with $Q_1$. In particular, for every $k<\frac12-\epsilon$ there are no Euler-Lagrange orbits 
satisfying the conormal boundary conditions (\ref{lagrangianformulation1}).
\label{corollarynoq0q1}
\end{cor}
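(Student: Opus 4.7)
The plan is to upgrade Lemma \ref{lemmanoq0q1} almost verbatim, exploiting the fact that the obstruction used there (Euler-Lagrange orbits of low energy being hyperbolic circles of uniformly bounded euclidean radius) depends only on the starting \emph{position}, not on whether $Q_0,Q_1$ are points or higher-dimensional submanifolds. Fix $\epsilon>0$ and, as in the proof of Lemma \ref{lemmanoq0q1}, consider the hyperbolic Lagrangian $L$ on $\HH$ from \eqref{hyperboliclagrangian}. Since for every $k<\frac12$ the restriction of its Euler-Lagrange flow to $E^{-1}(k)$ is periodic with projected orbits given by hyperbolic circles, the maximum euclidean radius
\[
\rho(\epsilon):=\sup\Big\{\rho(q,v)\ \Big|\ q\in\HH,\ \tfrac12\|v\|_q^2\leq \tfrac12-\epsilon\Big\}
\]
is finite, provided we restrict $q$ to lie in some fixed bounded region of $\HH$.

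First I would choose a hyperbolic ball $B_1\subseteq\HH$ whose euclidean diameter is strictly larger than $2\rho(\epsilon)$, together with the nested balls $B_1\subseteq B_2\subseteq B_3$ as in the proof of Lemma \ref{lemmanoq0q1}, and perform the same cutoff-and-embed procedure: extend $\theta|_{B_1}$ to a 1-form on $\HH$ supported in $B_2$, embed $(B_3,g|_{B_3})$ isometrically into $\Sigma$, and extend $g$ and $\theta$ smoothly to all of $\Sigma$. This produces the magnetic Lagrangian $L_\epsilon$ of \eqref{lagrangianasigma}. The bound $c_u(L_\epsilon)\leq \frac12$ is automatic from $\|\theta_q\|\leq 1$ via the Hamiltonian characterization of the Ma\~n\'e critical value, and the lower bound $c_u(L_\epsilon)\geq \frac12-\epsilon$ follows because $B_1$ contains a closed loop with negative $k$-action for every $k<\frac12-\epsilon$, exactly as in Lemma \ref{lemmanoq0q1}.

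Second I would pick, inside (the image of) $B_1$, two disjoint closed connected submanifolds $Q_0,Q_1\subseteq\Sigma$ whose mutual euclidean distance (measured in the chart $B_3$) exceeds $2\rho(\epsilon)$. For instance one may take $Q_0,Q_1$ to be two small embedded circles, or two points, placed near antipodal ends of $B_1$. By construction, any Euler-Lagrange orbit of $L_\epsilon$ with energy $k\leq\frac12-\epsilon$ whose starting point lies in $Q_0$ remains inside a hyperbolic circle of euclidean radius at most $\rho(\epsilon)$, hence cannot meet $Q_1$; symmetrically for orbits starting in $Q_1$. Thus $E^{-1}(k)$ carries no Euler-Lagrange orbit connecting $Q_0$ with $Q_1$ for any such $k$. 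The second assertion is then immediate, since any solution of the conormal boundary problem \eqref{lagrangianformulation1} is in particular a path from $Q_0$ to $Q_1$.

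I do not foresee any genuine obstacle: the construction is essentially a cosmetic adjustment of Lemma \ref{lemmanoq0q1}, where the only point to check is that closed connected submanifolds $Q_0,Q_1$ of the required type fit inside $B_1$ with enough euclidean separation, which holds trivially in dimension two.
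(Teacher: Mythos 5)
Your proposal is correct and follows essentially the same route as the paper, which treats this corollary as the "trivial generalization" of Lemma \ref{lemmanoq0q1}: uniformize the euclidean-radius bound over the (compact) starting set, separate $Q_0$ and $Q_1$ by more than $2\rho(\epsilon)$ inside $B_1$, and run the same cutoff-and-embedding construction, the conormal statement being an immediate consequence since any such orbit would in particular connect $Q_0$ to $Q_1$. The only point to state a bit more carefully is that $B_1$ should be chosen large enough that the $\rho(\epsilon)$-neighbourhoods of $Q_0$ and $Q_1$ stay inside $B_1$ (so the orbits never see the cutoff region), which is exactly the kind of adjustment the paper itself makes implicitly.
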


Corollary \ref{corollarynoq0q1} states that below $c_u(L)$ we might not  find Euler-Lagrange orbits connecting two given disjoint submanifolds. The natural question is now to study what happens in the interval $(c_u(L),c(L;Q_0,Q_1))$;
in fact, for every energy in this range every point of $Q_0$ can be joined with every point of $Q_1$ (cf. \cite{Con06}), but it is not clear if such orbits also satisfy the conormal boundary conditions. 

In this sense we show now that Theorem \ref{teorema1} is optimal; namely, we exhibit an example of a Tonelli Lagrangian and disjoint submanifolds $Q_0,Q_1$ such that $c_u(L)<c(L;Q_0,Q_1)$ and for every 
$k<c(L;Q_0,Q_1)$ there are no Euler-Lagrange orbits satisfying the conormal boundary conditions (\ref{lagrangianformulation1}). 

Following \cite{Man96} we first produce a situation where 
$$e_0(L)<c_u(L)<c(L)\, ;$$
think of $\T^2$ as the square $[0,1]^2$ in $\R^2$ with identified sides and equipped with the euclidean metric and consider the magnetic Lagrangian 
\begin{equation}
L:T\T^2 \longrightarrow \R\, , \ \ \ \ L (q,v) \ = \ \frac12 \, \|v\|_q^2 + \psi(y)\, v_x\, ,
\label{teorema1ottimale}
\end{equation}
where $q=(x,y)$, $v=(v_x,v_y)$ and $\psi:[0,1]\rightarrow [0,1]$ is a smooth cut-off function compactly supported in $(0,1)$ and constant in a neighborhood of $\frac12$.

\begin{center}
\includegraphics[height=40mm]{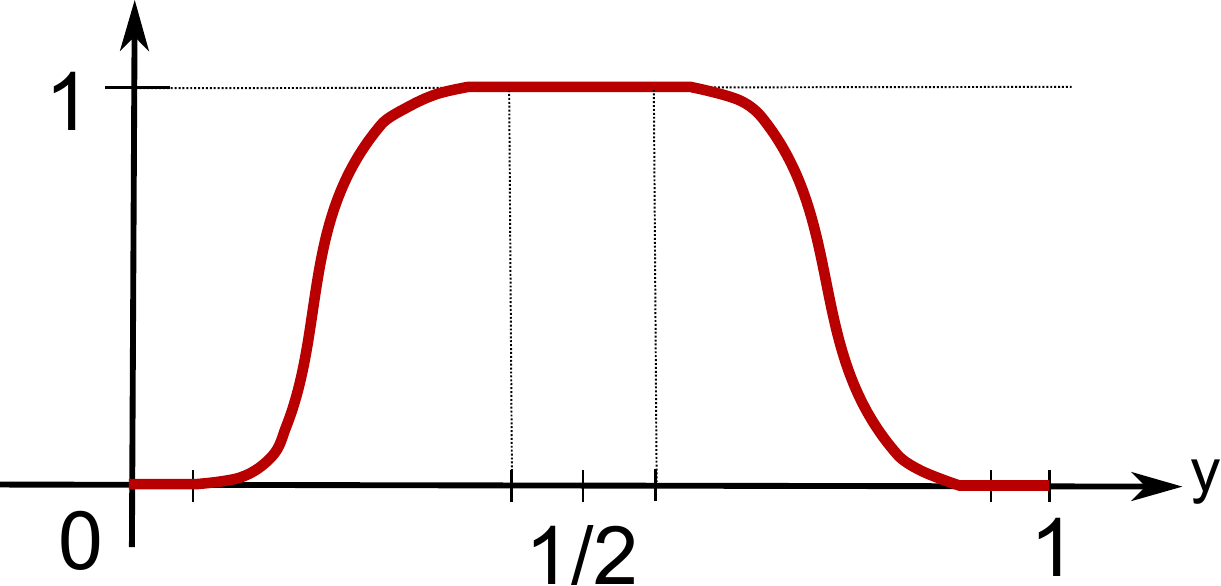}
\end{center}

The Lagrangian in (\ref{teorema1ottimale}) is a magnetic Lagrangian with magnetic 1-form $\theta_q(\cdot) = \psi(y)dx$. It follows that $|\theta_q|=|\psi(y)|$ for every $q=(x,y)$ and hence 
$$c(L) \ = \inf_{u\in C^\infty(\T^2)} \max_{q\in \T^2} \ \frac12 \, \|d_qu-\theta_q\|^2 \ \leq \ \frac12\, .$$

Conversely, consider the path $a:[0,1]\rightarrow \R^2,\, a(t) \ = \ (1-t\, ,\, \frac12)$; it is clear that $a$ is closed as a path in $\T^2$. We now readily compute for $k>0$ 
\begin{eqnarray*}
\A_k(a) &=&  \int_0^1 \left (\frac12 \, \|\dot a(t)\|^2 + \psi(a(t))\dot a_x(t) +k \right )\, dt \ = \\ 
             &=& \int_0^1  \left (\frac12 \, \|(-1,0)\|^2 - \psi\left (1-t\, ,\, \frac 12\, \right ) +k \right )\, dt  \ = \ k \, -\,  \frac12\, ,
\end{eqnarray*}
which is negative for every $k<\frac12$. We may then conclude that $c(L)=\frac12$.

\begin{center}
\includegraphics[height=50mm]{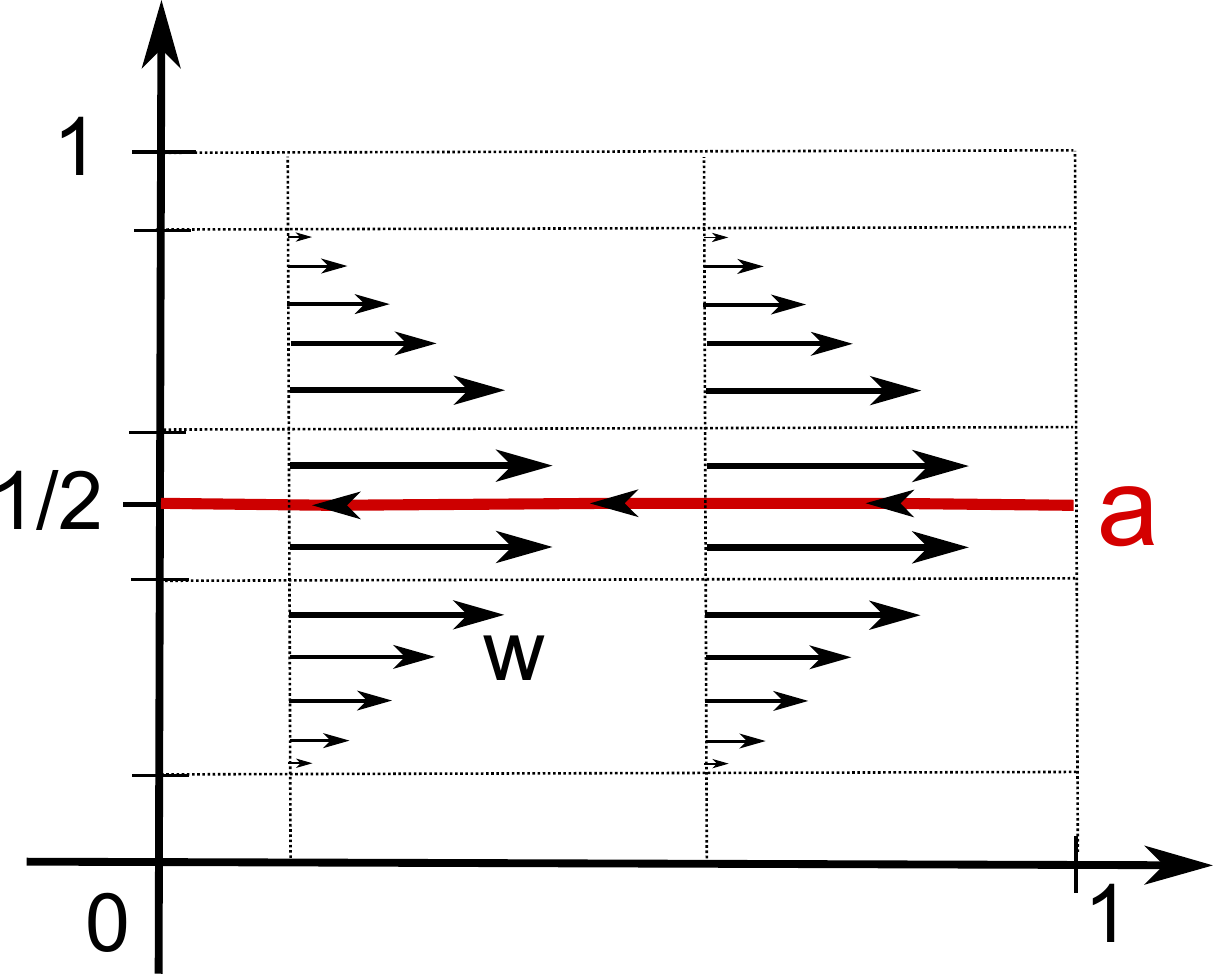}
\end{center}

\noindent Again, by the Hamiltonian characterization of the Ma\~n\'e critical value we have 
$$c_u(L) \ = \inf_{u\in C^\infty(\R^2)} \sup_{q\in \R^2} \ \frac12 \, \|d_qu-\theta_q\|^2\ \leq \ \frac18$$
as one gets by choosing $u:\R^2\rightarrow \R$, $u(x,y) = \frac{x}{2}$
$$\frac12 \, \left\|\frac12 dx - \psi(y)dx\right \|^2\ = \ \frac12 \, \left |\frac12 - \psi(y) \right|^2\ \leq \ \frac18\, , \ \ \ \ \forall (x,y)\in \R^2\, ,$$
since $0 \leq \psi(y)\leq 1$ for every $y$. On the other hand, $c_u(L)>0$ by the following general 

\vspace{2mm}

\begin{lemma}
Consider the magnetic Lagrangian 
$$L:T\Sigma \longrightarrow \R\, , \ \ \ \ L(q,v) \ = \ \frac12 \, \|v\|_q^2 + \eta_q(v)\, ,$$
with $\eta\in \Omega^1(\Sigma)$ non-closed. Then $c_u(L)>0$.
\label{lemmanoexact}
\end{lemma}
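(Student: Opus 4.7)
The strategy is to exhibit a single contractible loop $\gamma$ that, when suitably reparametrized, has negative $\A_k$-action for every sufficiently small $k>0$. By the very definition of the Ma\~n\'e critical value of the universal cover, this immediately gives $c_u(L)>0$.

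First, I would use the non-closedness of $\eta$ to pick a point $q_0\in \Sigma$ at which $(d\eta)_{q_0}\neq 0$. In a small local chart around $q_0$, write $d\eta= f\,dx\wedge dy$ with $f(q_0)\neq 0$, and consider a small embedded circle $\gamma_r$ of radius $r$ bounding a disk $D_r\ni q_0$. Since $\gamma_r$ bounds an embedded disk in $\Sigma$ it is contractible, and Stokes' theorem gives
$$\int_{\gamma_r}\eta \ =\ \pm \int_{D_r} d\eta\, ,$$
which is non-zero for $r$ small. After choosing the orientation appropriately, I may assume $\int_{\gamma_r}\eta = -c$ for some $c>0$.

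Second, I would reparametrize $\gamma_r$ at constant speed on an arbitrary period $T>0$; setting $L:=l(\gamma_r)$, the free-time action becomes
$$\A_k(\gamma_r,T) \ =\ \frac{L^2}{2T} \ -\ c\ +\ kT\, .$$
A direct minimization over $T>0$ (set the derivative to zero) yields the optimal period $T^* = L/\sqrt{2k}$ and the minimum value $L\sqrt{2k}-c$, which is strictly negative provided $k< c^2/(2L^2)$. Hence for every $k\in (0, c^2/(2L^2))$ there is a contractible loop (namely $\gamma_r$ with period $T^*$) having negative $k$-action, which by definition of $c_u(L)$ forces $c_u(L)\geq c^2/(2L^2)>0$.

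There is essentially no obstacle in this argument: everything reduces to translating the non-closedness of $\eta$, via Stokes, into the existence of a contractible loop with $\int_\gamma\eta<0$, and then verifying via a one-variable minimization that the slow-speed parametrizations of such a loop have negative action for small $k$. The only points to keep an eye on are the correct choice of orientation on $\gamma_r$ (so that the integral of $\eta$ along it is negative rather than positive) and the fact that $T^*$ is positive and admissible, both of which are automatic.
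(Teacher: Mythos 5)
Your proof is correct and follows essentially the same route as the paper: one produces, via Stokes' theorem, a small contractible circle bounding a disc over which $\int_\gamma\eta<0$, and then observes that its $\sqrt{2k}$-speed parametrization has action $\sqrt{2k}\,l(\gamma)+\int_\gamma\eta<0$ for $k$ small, forcing $c_u(L)>0$. Your one-variable minimization over $T$ simply recovers the optimal period $T^*=l(\gamma)/\sqrt{2k}$, i.e.\ exactly the parametrization the paper chooses directly.
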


\begin{proof}
Being $\eta$ non-closed, there exists a closed contractible loop $\gamma$ in $\Sigma$ over which the integral of $\eta$ is non-zero. Without loss of generality we suppose $\gamma$ to be the $\sqrt{2k}$-arclength parametrization 
of a circle. Denote with $D$ the disc bounded by $\gamma$ and by $\sigma$ the differential of $\eta$; if we choose the right orientation for $\gamma$ we get
\begin{eqnarray*}
\A_k(\gamma) &=& \int_0^{\frac{l(\gamma)}{\sqrt{2k}}} \Big [\frac12\, \|\dot \gamma(t)\|^2 + k \Big ]\, dt  \ + \ \int_\gamma \eta\ = \ \sqrt{2k}\, l(\gamma)  \ - \ \int_D \sigma\, ,
\end{eqnarray*}
which is a negative quantity for $k$ small enough.
\end{proof}

\vspace{5mm}

\noindent We then conclude that for the Lagrangian $L$ in (\ref{teorema1ottimale}) there holds 
$$0 \ = \ e_0(L) \ <\ c_u(L) \ < \ c(L)\, ,$$
as we wished to prove. We pick now $Q_0$ to be any point in $\T^2$, for instance $(\frac12,0)$ and $Q_1$ to be the circle $\{y=\frac12\}$; by construction we have 
$$c(L;Q_0,Q_1)\ = \ c(L)\ =\  \frac12\, .$$

Since $\pi_0(\mathcal M_Q)\cong \Z$, Theorem \ref{teorema1} implies then that for every $k>\frac12$ there are infinitely many Euler-Lagrange orbits with energy $k$ satisfying the conormal 
boundary conditions (\ref{lagrangianformulation1}). Namely, there is one such solution for every connected component of $\mathcal M_Q$, which is moreover a global minimizer of the free-time 
action functional $\A_k$ in its connected component. Furthermore, for every $k\in \big (c_u(L),c(L;Q_0,Q_1)\big )$ and any point $q_1\in Q_1$ there are infinitely many Euler-Lagrange orbits
with energy $k$ joining $q_0$ with $q_1$; however, none of these can satisfy the conormal boundary conditions for $Q_1$, since this is possible only above energy $\frac12$. In fact, in this case we have 
$\frac12 \, \|\mathcal Pw_{q_1}\|^2 = \frac12$ for every $q_1\in Q_1$ and hence the assertion follows from the obstruction (\ref{obstruction}).
The same counterexample holds clearly for every point of the form $q_0=(\frac12,\epsilon)$ for every $\epsilon>0$, in particular showing that we might not expect to find Euler-Lagrange orbits 
with energy less than $c(L;Q_0,Q_1)$ satisfying the conormal boundary conditions, even if the two submanifolds are ``close'' to each other.
Notice that this example for $q_0=(\frac12,\frac12)$ is not in contradiction with theorem \ref{teobasseenergie2}, since in this case we have 
$$k_{Q_0\cap Q_1}^- \ = \ k_{Q_0\cap Q_1} \ = \ c(L;Q_0,Q_1)\, .$$
The Lagrangian in (\ref{teorema1ottimale}) can be used also to construct an example in which the interval $(c(L;Q_0,Q_1),k_{\mathcal N}(L))$ as in the second statement of theorem \ref{teorema2} is non-empty. 
Just consider as $Q_0$ and $Q_1$ two small (contractible) intersecting circles; then it is clear from the construction that 
$$c(L;Q_0,Q_1) \ = \ c_u(L) \ < \ c(L) \ = \ k_{\mathcal N}(L)\ = \ \frac12 \, .$$ 

\paragraph{Sharpness of theorem \ref{teobasseenergie2}}
We preliminarly show that the condition (\ref{obstruction}) is not sufficient to guarantee the existence of Euler-Lagrange orbits satisfying the conormal boundary conditions, even if one assumes $Q_0\cap Q_1$ non-empty.  
Namely, we construct an example for which the right-hand side of (\ref{obstruction}) is zero and, at the same time, sufficiently low energy level sets carry no Euler-Lagrange orbits satisfying the conormal boundary conditions.
Thus, consider the Euler-Lagrange flow on $T\HH$ defined by the Lagrangian in (\ref{hyperboliclagrangian}) and observe that for any circle $Q$ in $\HH$ we have 

$$\min \ \left \{ \left. \frac 12 \, \|\mathcal P w_{q}\|^2 \ \right | \ q\in Q \right  \}\ = \ 0\, ,$$
where $w_q\in T_q\HH$ is the unique vector representing $\theta_q$ and $\mathcal P$ denotes the orthogonal projection onto $TQ$. A simple computation shows that $w_q=(y_q,0)$ for all $q\in \HH$, where 
$y_q$ is the second coordinate of $q$. Let now $Q_0,Q_1$ be as in the figure below 

\begin{center}
\includegraphics[height=45mm]{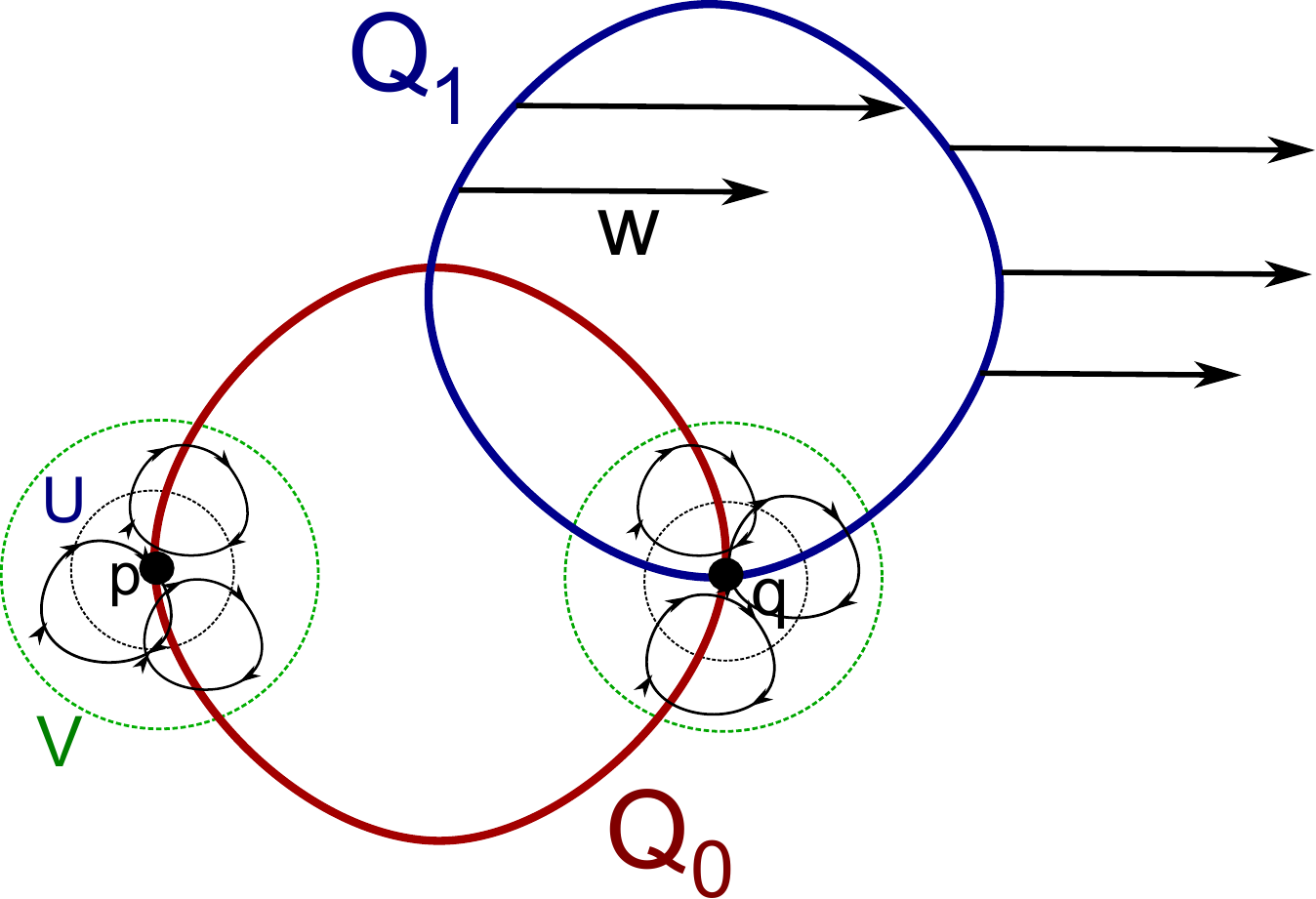}
\end{center}

Here $p$ and $q$ are the (only) points in $Q_0$ where $\theta |_{TQ_0}=0$. Choose now $V$ to be an open neighborhood of $p$ and $q$ as in the picture such that
\begin{equation}
\inf  \ \left \{ \left. \frac 12 \, \|\mathcal P w_{q_1}\|^2 \ \right | \ q_1\in Q_1\cap V \right  \} \ > \ \lambda\, .
\label{no}
\end{equation}

It is clear that, if $\epsilon < \lambda$ is sufficiently small, then all the orbits with energy $k\leq \epsilon$ starting from $Q_0$ and satisfying the conormal boundary conditions for $Q_0$ necessarily 
have starting point in a small neighborhood $U$ of $p$ and $q$. Thus, up to choose a smaller $\epsilon$ (hence, a smaller $U$), we might suppose that the orbits starting from a point in $U\cap Q_0$ with 
energy $k\leq \epsilon$ and satisfying the conormal boundary conditions are entirely contained in $V$. By (\ref{no}) all these orbits cannot satisfy the conormal boundary conditions for $Q_1$.
Hence, for all $k$ sufficiently small, there are no Euler-Lagrange orbits with energy $k$ satisfying the conormal boundary conditions (\ref{lagrangianformulation1}). 
Embedding this local model we get a counterexample for each $\Sigma$.

We extend now the example above to all energies $k<\frac12 -\epsilon$ for every $\epsilon >0$. Thus, consider the Euler-Lagrange flow of the Lagrangian in (\ref{hyperboliclagrangian}) and fix $\epsilon >0$. Consider a smooth 
``circle'' $Q_0$ as in the figure below

\begin{center}
\includegraphics[height=50mm]{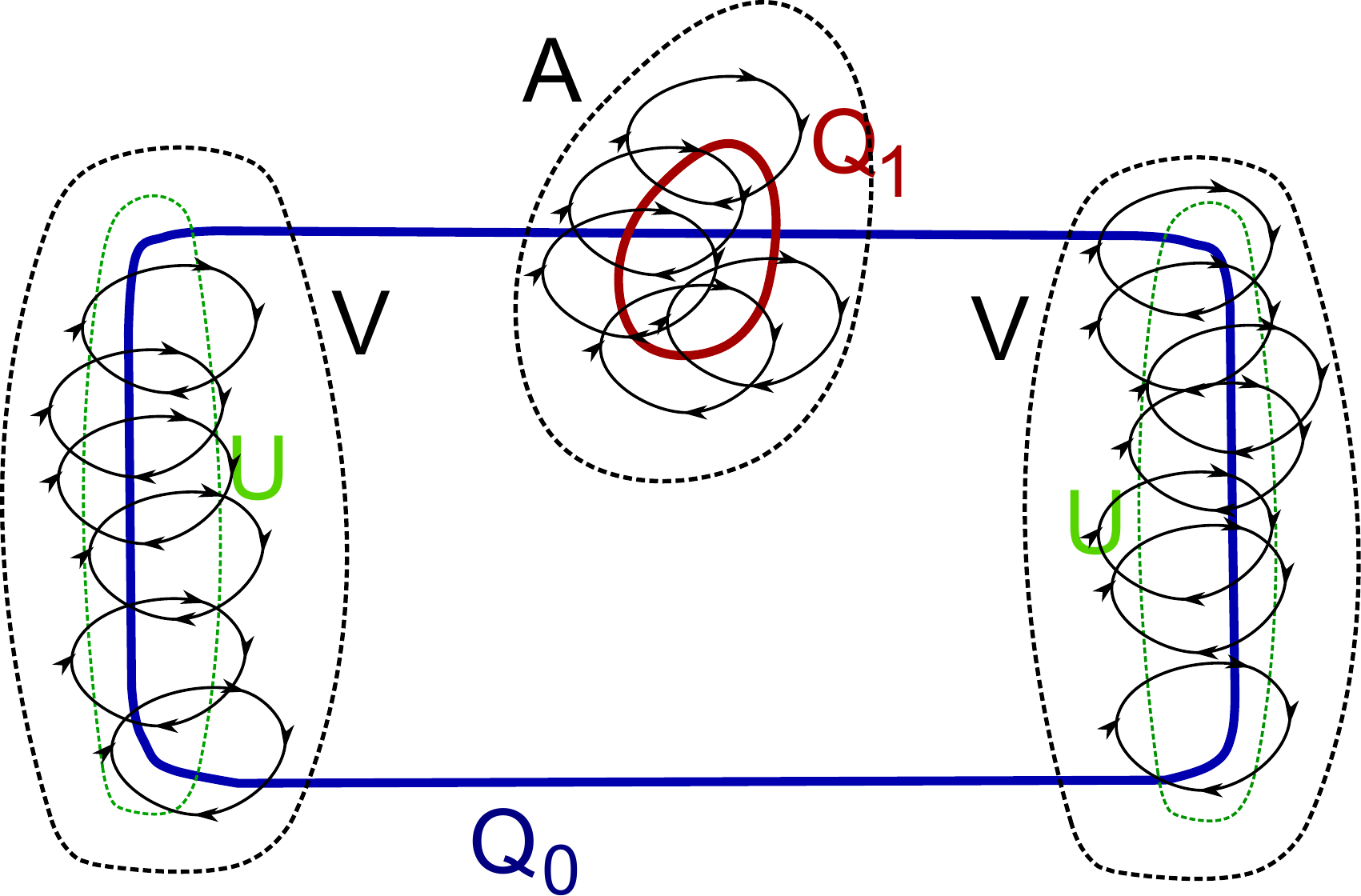}
\end{center}

The ``horizontal edges'' of $Q_0$ are straight segments ``long enough'' such that every Euler-Lagrange orbit with energy $k< \frac12 -\epsilon$ through any point in $Q_0\cap U$ is entirely contained in $V$, where $U,V$ are open sets as in the picture. It follows that all the Euler-Lagrange orbits connecting $Q_0$ to $Q_1$ with energy less than $\frac12-\epsilon$ and satisfying the conormal boundary conditions necessarily start from a point on one of the horizontal sides of $Q_0$.
It is now clear that such orbits do not exist, since $\|\mathcal P w_q\|=1$ for any $q$ on the horizontal edges of $Q_0$ and hence solutions through $q$ might exists only above energy $\frac12$. 
Again, embedding this local example in any surface $\Sigma$ we get the following

\begin{prop}
Let $\Sigma$ be a closed connected orientable surface. For any $\epsilon >0$ there exist $L_\epsilon:T\Sigma \rightarrow \R$, with $c_u(L_\epsilon)\in [\frac12 -\epsilon,\frac12]$, and closed connected submanifolds $Q_0,Q_1\subseteq \Sigma$ 
with $Q_0\cap Q_1\neq \emptyset$ such that for all $k< \frac12-\epsilon$ the energy level $E^{-1}(k)$ carries no Euler-Lagrange orbits satisfying the conormal boundary conditions (\ref{lagrangianformulation1}).
\label{propnoq0q1intersection}
\end{prop}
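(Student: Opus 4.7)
The plan is to carefully implement the construction outlined just before the proposition, making quantitative the heuristic picture. First, I work in the hyperbolic plane $(\HH,g)$ with the magnetic Lagrangian $L(q,v)=\frac12\|v\|_q^2+\theta_q(v)$, $\theta_{(x_1,x_2)}=dx_1/x_2$, and use the explicit description of the Euler--Lagrange flow: for every $k<\frac12$, orbits are traversed hyperbolic circles, with radius depending only on $k$, and the radii shrink uniformly to $0$ as $k\to 0$. Writing $w_q$ for the metric dual of $\theta_q$, one computes $\|w_q\|=1$ identically, so the obstruction \eqref{obstruction} imposes $k\geq \frac12\|\mathcal P w_q\|^2$ wherever $\mathcal P$ projects onto a direction along which $\theta$ is non-degenerate.

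Second, I construct $Q_0$ as a smooth embedded circle that locally looks like a long ``rectangle'' in $\HH$: two short ``vertical'' arcs joined by two long ``horizontal'' straight segments. The vertical arcs meet the two distinguished points $p,q$ where $\theta|_{TQ_0}=0$, and along the horizontal edges the tangent to $Q_0$ is parallel to $\partial_{x_1}$, so $\|\mathcal P w_r\|=1$ for every $r$ on those edges. I then choose $Q_1$ so that $Q_0\cap Q_1\neq\emptyset$ and $Q_1$ meets $Q_0$ only inside the horizontal portion (as in the second figure of the excerpt). I pick small open neighbourhoods $U\supseteq\{p,q\}$ and $V\supseteq U$ such that $\|\mathcal P w_r\|\geq 1-\delta$ for all $r\in Q_1\cap V$, for some $\delta$ depending on $\epsilon$ to be fixed.

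Third, the key quantitative step: I must choose the lengths of the horizontal edges of $Q_0$ large enough that every Euler--Lagrange orbit with energy $k\leq \frac12-\epsilon$ starting from a point of $Q_0$ satisfying the conormal condition at $Q_0$ is trapped in a thin tubular neighbourhood of the horizontal edges, in particular staying inside $V$ and avoiding any excursion into $U$. This is possible because for $k\leq\frac12-\epsilon$ the projected orbits are hyperbolic circles of bounded (Euclidean) radius $\rho=\rho(\epsilon)<\infty$, so the horizontal segments need only be longer than, say, $3\rho$ for the confinement to hold. Consequently, any hypothetical orbit satisfying the conormal condition at both $Q_0$ and $Q_1$ starts on a horizontal edge, where $\|\mathcal P w_r\|=1$, and by \eqref{obstruction} must have energy $\geq\frac12$, a contradiction with $k<\frac12-\epsilon$. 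This is the main obstacle, essentially because one has to simultaneously guarantee confinement (via length of horizontal edges) and a uniform lower bound on $\|\mathcal P w_r\|$ along the portions of $Q_0$ from which all such orbits must emanate.

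Fourth, I embed this local picture into an arbitrary closed orientable $\Sigma$ exactly as in the paragraphs preceding Lemma \ref{lemmanoq0q1}: take a ball $B_1\subseteq B_2\subseteq B_3\subseteq\HH$ containing $Q_0\cup Q_1$ and all the relevant orbits, cut off $\theta$ to vanish outside $B_2$, and embed $B_3$ into $\Sigma$, extending the pulled-back metric arbitrarily to a global Riemannian metric on $\Sigma$. Call the resulting magnetic Lagrangian $L_\epsilon$; the obstruction argument is local and survives the embedding, so no Euler--Lagrange orbit with energy $k<\frac12-\epsilon$ satisfies the conormal boundary conditions \eqref{lagrangianformulation1}. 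Finally, I verify $c_u(L_\epsilon)\in[\frac12-\epsilon,\frac12]$: the upper bound follows from the Hamiltonian characterization $c_u(L_\epsilon)=\inf_u\sup_{\tilde q}\frac12\|d_{\tilde q}u-\tilde\theta_{\tilde q}\|^2\leq\frac12$ since $\|\theta_q\|\leq 1$ everywhere (cut-off preserves this), while the lower bound $c_u(L_\epsilon)\geq\frac12-\epsilon$ follows because the existence of a contractible loop with negative $(L_\epsilon+k)$-action for $k<\frac12-\epsilon$ would contradict (via lifting to $\widetilde\Sigma$) the same sort of confinement/obstruction analysis, or, more cleanly, by including inside $B_1$ hyperbolic horocycle-approximating loops $\gamma_r$ with $\A_k(\gamma_r)\to -\infty$ as $r\to\infty$ only when $k\geq\frac12$, calibrated so the loops used to witness $c_u\geq\frac12-\epsilon$ fit inside $B_1$.
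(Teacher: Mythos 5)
Your overall blueprint is the paper's: the hyperbolic magnetic Lagrangian with $\|w_q\|\equiv 1$, a ``rectangle-like'' $Q_0$ with long horizontal edges on which $\|\mathcal P w\|=1$ and short vertical arcs containing the points $p,q$ where $\theta|_{TQ_0}=0$, a $Q_1$ meeting $Q_0$ in the horizontal portion, the obstruction \eqref{obstruction}, and the cut-off--and--embed procedure with the Hamiltonian characterization giving $c_u(L_\epsilon)\le\frac12$ and large circles inside $B_1$ giving $c_u(L_\epsilon)\ge\frac12-\epsilon$. However, your third step --- the heart of the proof --- is stated backwards, and as written the argument does not exclude the dangerous orbits. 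By the very obstruction you invoke, an orbit with energy $k<\frac12-\epsilon$ satisfying the conormal condition at $Q_0$ must \emph{start} at a point of $Q_0$ with $\tfrac12\|\mathcal P w\|^2\le k$, i.e.\ on the vertical arcs near $p$ and $q$, hence inside $U$; it cannot start on (or near) the horizontal edges. So your claim that such orbits are ``trapped in a thin tubular neighbourhood of the horizontal edges \dots avoiding any excursion into $U$'' is false, and the conclusion drawn from it (``any orbit satisfying both conormal conditions starts on a horizontal edge, whence $k\ge\frac12$'') attacks a scenario that never occurs, while the orbits that actually need to be ruled out --- those emanating from $Q_0\cap U$ --- are left untreated; notice that your hypothesis $\|\mathcal P w_r\|\ge 1-\delta$ on $Q_1\cap V$ is never used in your contradiction.

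The correct assembly (which is the paper's) uses your ingredients in the opposite direction: (i) conormal at $Q_0$ with $k<\frac12-\epsilon$ forces the starting point into $Q_0\cap U$; (ii) since for these energies the projected orbits are circles of uniformly bounded Euclidean radius $\rho(\epsilon)$ in the compact region at hand, taking the horizontal edges of $Q_0$ of length much larger than $\rho(\epsilon)$ confines every such orbit to the neighbourhood $V$ of the vertical arcs; (iii) on $Q_1\cap V$ one has $\|\mathcal P_1 w\|\ge 1-\delta$ with $\delta$ small relative to $\epsilon$ (or $Q_1\cap V=\emptyset$), so the conormal condition at $Q_1$ cannot hold at energy $k<\frac12-\epsilon$ along an orbit contained in $V$; (iv) orbits starting on the horizontal edges are excluded directly by \eqref{obstruction} since $\|\mathcal P w\|=1$ there. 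A small further slip: your parenthetical that $\A_k(\gamma_r)\to-\infty$ as $r\to\infty$ ``only when $k\ge\frac12$'' is reversed --- the action of the large hyperbolic circles diverges to $-\infty$ precisely for $k<\frac12$, and this is what one places inside $B_1$ (circles up to a radius fitting in $B_1$) to certify $c_u(L_\epsilon)\ge\frac12-\epsilon$ after the cut-off and embedding.
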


\vspace{-5mm}

The proposition above implies that Theorem \ref{teobasseenergie2} is optimal, meaning that in general we cannot find Euler-Lagrange orbits satisfying the conormal boundary conditions below $k_{Q_0\cap Q_1}^-$. 
Indeed, fix $\epsilon>0$, consider the last  local example and embed it in an open set $U$ of $\Sigma$. Now consider the Lagrangian $L:T\HH\rightarrow \R$ as in (\ref{hyperboliclagrangian}) replacing $\theta$ with $2 \theta$ and observe that $c(L)=2$. Indeed, we have
$$c(L) \ = \  \inf_{u\in C^\infty(\HH)} \sup_{q\in \HH} \ \frac12 \, \|d_qu-2\theta_q\|^2 \ = \ 4 \inf_{v\in C^\infty(\HH)} \sup_{q\in \HH} \ \frac12 \, \|d_qv-\theta_q\|^2 \ = \ 2\, .$$

Alternatively, one can compute the action of the 2-arclength clockwise parametrization $\gamma_r$ of a (hyperbolic) circle with radius $r$ and show, exactly as done at the beginning of this section, 
that $\A_k(\gamma_r)\rightarrow -\infty$ as $r$ goes to infinity for every $k<2$.

Now, pick $C\subseteq C' \subseteq \HH$ compact sets, with $C$ containing closed loops with negative $k$-action for all $k<2-\epsilon$. Using a suitable smooth cut-off function, define
$$L_2:T\HH\longrightarrow \R\, , \ \ \ \ L_2(q,v) \ = \ \frac12 \, \|v\|_q^2 + \tilde \theta_q(v)$$ 
with $\tilde \theta \equiv 2 \theta$ on $C$ and $\tilde \theta  \equiv  0$ outside $C'$. Clearly there holds $c(L_2)\in [2-\epsilon,2]$. Now, embed this local example in an open set $V$ 
of $\Sigma$ disjoint from $U$. The Riemannian metrics on $U,V$ can be clearly extended to a metric on $\Sigma$. Also, the forms $\theta,\tilde \theta$ on $U,V$ 
can be extended to a 1-form $\mu$ on $\Sigma$, just by setting $\mu$ to be zero outside $U$ and $V$, thus defining a magnetic Lagrangian $L_\epsilon:T\Sigma\rightarrow \R$. By construction there holds 
$$c_u(L_\epsilon)\ = \ c(L_\epsilon;Q_0,Q_1 )\ \in \ [2-\epsilon,2]\, , \ \ \ \  k_{Q_0\cap Q_1} =\ k_{Q_0\cap Q_1}^- =\ \frac12\, .$$

\begin{center}
\includegraphics[height=40mm]{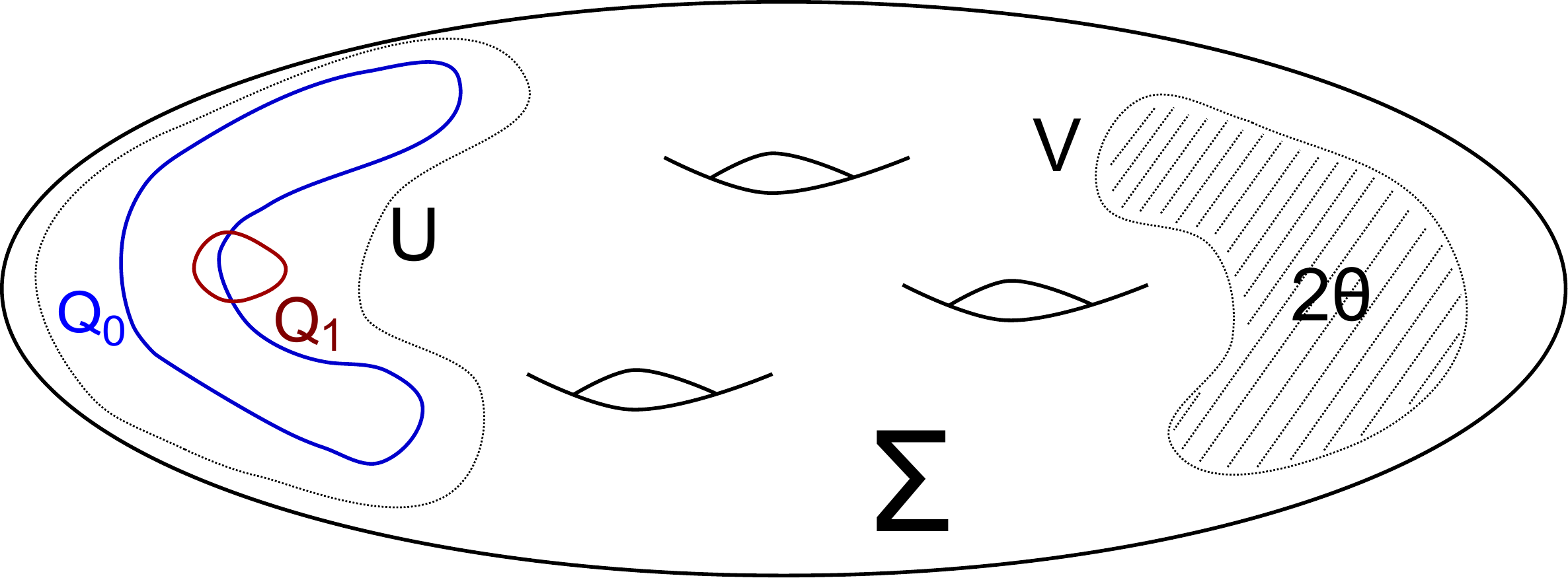}
\end{center}

Theorem \ref{teobasseenergie2} implies then that for almost every $k\in (\frac12,2-\epsilon)$ there is an Euler-Lagrange orbit with energy $k$ satisfying the conormal boundary conditions (\ref{lagrangianformulation1}). At the same time, such orbits
do not exist for $k< \frac12 - \epsilon$.

%%%%%%%%%%%%%%%%%%%%%%%%%%%%%%%%%%%%%%%%%%%%%%%%%%%%%%%%%%%%%
%%%%%%%%%%%%%%%%%%%%%%%%%%%%%%%%%%%%%%%%%%%%%%%%%%%%%%%%%%%%%
%%%%%%%%%%%%%%%%%%%%%%%%%%%%%%%%%%%%%%%%%%%%%%%%%%%%%%%%%%%%%

\chapter{Generalizations}
\label{chapter4}

In this chapter we extend the results of Chapter \ref{chapter3} to a more general setting, in a sense that we are now going to clarify. 
First we shall explain what we mean by the flow of the pair $(L,\sigma)$, with $L$ Tonelli Lagrangian and $\sigma$ closed 2-form on $M$.

Thus, let $(M,g)$ be a closed $n$-dimensional Riemannian manifold, $\sigma\in \Omega^2(M)$ be a closed 2-form on $M$ and $L:TM\rightarrow\R$ be a Tonelli Lagrangian. 
As usual, we denote with $E:TM\rightarrow \R$ the energy function associated to $L$ and with $H:T^*M\rightarrow\R$ the Tonelli Hamiltonian given as the Fenchel dual of $L$.
 
The pair $(L,\sigma)$ defines a flow on $TM$ in the following way. Take an open cover $\{U_i\}$ of $M$ such that $\sigma=d\theta_i$ on $U_i\subset M$. The Lagrangian functions 
$$L+\theta_i:TU_i\longrightarrow\R$$
yield flows on each $TU_i$ via the Euler-Lagrange equation (\ref{eulerlagrange1}). Such flows glue together since on $U_i\cap U_j$, 
$$L+\theta_j-(L+\theta_i)\ =\ \theta_j-\theta_i$$
is a closed form. We then associate to the pair $(L,\sigma)$ the global flow that we get on $TM$ by the gluing procedure. There is also a flow on $T^*M$ associated to $H$ and $\sigma$, that is the Hamiltonian 
flow of $H$ with respect to the \textit{twisted symplectic form}
\begin{equation}
\omega_\sigma:=\ dp \wedge dq + \pi^*\sigma\, .
\label{twistedsymplecticform}
\end{equation}

It is easy to see that the Hamiltonian flow defined by the pair $(H,\omega_\sigma)$ is conjugated to the flow of $(L,\sigma)$ via the Legendre transform $\mathcal L$.
In fact, by the gluing procedure above, it is enough to prove it when $\sigma$ is exact. Thus, let $\sigma=d\theta\in\Omega^1(M)$; if
$$\bar H (q,p)\ =\ H(q,p-\theta_q)\, ,$$
we readily see that the Hamiltonian flow of $H$ with respect to $\omega_{d\theta}$ is conjugated to the Hamiltonian flow of $\bar H$ with respect to $dp\wedge dq$ by the translation map $(q,p)\mapsto (q,p+\theta_q)$.
If $\bar{\mathcal L}:TM\rightarrow T^*M$ is the Legendre transform associated with $\bar H$ and $\bar L:TM\rightarrow \R$  is the Fenchel dual of $\bar H$, it suffices to show that
\begin{equation}\label{lemma1}
\mathcal L(q,p)=\bar{\mathcal L}(q,p+\theta_q)\quad\mbox{and}\quad\bar L(q,v) \ = \ L(q,v) + \theta_q(v)\, . 
\end{equation}
Indeed, we have 
\[v \ = \ d_p \bar H(q,p) \ = \ d_pH(q,p-\theta_q) \ \ \ \Longrightarrow \ \ \ p \ = \ \theta_q + \mathcal L^{-1} (v)\,\]
and the first identity in \eqref{lemma1} follows. For the second identity we observe that
\begin{align*} 
\bar L(q,v) &=\ \langle \theta_q + \mathcal L^{-1} (v),v\rangle - \bar H(q,\theta_q + \mathcal L^{-1}(v)) \ = \\ 
            &=\ \theta_q(v) + \langle \mathcal L^{-1}(v),v\rangle - H(q,\mathcal L^{-1}(v)) \ = \  \theta_q(v) +  L(q,v)\,.
\end{align*}

In particular, trajectories of the flow associated with $(H,\omega_\sigma)$ contained in $H^{-1}(k)$ correspond to trajectories of the flow defined by $(L,\sigma)$ contained in $E^{-1}(k)$. 

\vspace{3mm}

As usual let $\widetilde M$ be the universal cover of $M$. Consider now two closed connected submanifolds $Q_0,Q_1\subseteq M$ and denote by $M_1$ the cover
\begin{equation}
M_1 := \ \bigslant{\widetilde M}{\langle H_0,H_1\rangle}\, ,
\label{m1capitolo5}
\end{equation}
where $H_0$, $H_1$ are, as in Section \ref{manecriticalvalues}, the smallest normal subgroup of $\pi_1(M)$ containing $i_*(\pi_1(Q_0))$, $i_*(\pi_1(Q_1))$ respectively. 
We say that a closed 2-form $\sigma\in \Omega^2(M)$ is $M_1$-\textit{weakly-exact} if its lift $\sigma_1$ to the cover $M_1$ is exact; observe that this definition coincides with the usual definition of 
weak exactness when $M_1=\widetilde M$. We thus consider the flow of the pair $(L,\sigma)$, with $L$ Tonelli Lagrangian and $\sigma$ $M_1$-weakly-exact 2-form, and study the existence of orbits for the flow of $(L,\sigma)$ 
satisfying the ``conormal boundary conditions'' on a given energy level $E^{-1}(k)$. Notice that, since energy level sets are compact, we can assume $L$ to be electromagnetic outside a compact set,
hence satisfying the bounds (\ref{firstinequality}) and (\ref{secondinequality}).

The first problem, when trying to generalize the results of Chapter \ref{chapter3} to this setting, is to make sense of the conormal boundary conditions (\ref{lagrangianformulation1}) 
in this context. Notice that the fact that $\sigma$ is $M_1$-weakly-exact implies that $\sigma$ is exact on $Q_0,Q_1$ (see Section \ref{thefunctionalsk} for further details). Thus, fix two primitives $\theta_0,\theta_1$ of $\sigma$ on $Q_0,Q_1$ respectively. 
We say that an orbit $\gamma:[0,1]\rightarrow M$ of the flow of $(L,\sigma)$ satisfies the \textit{conormal boundary conditions with respect to} $\theta_0,\theta_1$ if
\begin{equation}
\left \{ \begin{array}{l} 
\gamma(0)\in Q_0\, , \ \gamma(1)\in Q_1\, ; \\ \\ 
d_v L(\gamma(0),\dot \gamma(0)) + \theta_0(\gamma(0)) \Big |_{T_{\gamma(0)}Q_0} \!\!\!\!\!\! = \ \ d_v L(\gamma(1),\dot \gamma(1)) + \theta_1(\gamma(1)) \Big |_{T_{\gamma(1)}Q_1} \!\!\!\!\!\! = \ \ 0\, ;
\end{array}\right.
\label{lagrangianformulationtheta0theta1}
\end{equation}

Second, the action functional $\A_k$ is in general not available anymore. We shall then replace $\A_k$ by another functional $S_k:\mathcal M_Q\rightarrow \R$ that detects the orbits of 
the flow of $(L,\sigma)$ with energy $k$ satisfying the boundary conditions (\ref{lagrangianformulationtheta0theta1}). This will be done in the first section under an additional technical assumption on $\sigma$.
In Section \ref{thefunctionalsk} we define $S_k$ and discuss its regularity properties; we then introduce the Ma\~n\'e critical value $c(L,\sigma;Q_0,Q_1)$ which is relevant in this context. 
It is worth to point out that, in contrast with what happens for the critical value in Chapter \ref{chapter3}, $c(L,\sigma;Q_0,Q_1)$ can be infinite. This is for instance the case whenever $\pi_1(M)$ is amenable (cf. \cite{Pat06}).

In Section \ref{thepalaissmalecondition} we study the Palais-Smale condition for $S_k$, showing in particular that $S_k$ satisfies the Palais-Smale condition on subsets of $\mathcal M_Q$ with times bounded and bounded 
away from zero, thus generalizing Lemma \ref{lemmalimitatezza} to this setting. The strength of this result is that it is independent on the finiteness of $c(L,\sigma;Q_0,Q_1)$ (see \cite[section 3]{AB14} 
for the analogous result in the case of periodic orbits). In Section \ref{existenceresults} we then show how the results in Chapter \ref{chapter3} extend to this setting.

Finally, in Section \ref{whensigmaisonlyclosed} we drop the $M_1$-weak-exactness assumption and prove that, under the additional hypothesis that $\pi_l(\mathcal M_Q)\neq 0$ for some $l\geq 1$,
an analogue of the second statement of Theorem \ref{teobasseenergie2weaklyexact} holds also only assuming 
that the pull-back of $\sigma$ to $Q_0$ and $Q_1$ is exact and that we can find primitives $\theta_0$ and $\theta_1$ of $\sigma|_{Q_0}$ and $\sigma|_{Q_1}$ respectively that coincide on the intersection $Q_0\cap Q_1$.

The method used to prove this result is analogous to the one exploited in Chapter \ref{chapter5} to prove the existence of periodic orbits for the flow of the pair $(L,\sigma)$. Roughly speaking, it consists on finding zeros of the 
so-called \textit{action 1-form} $\eta_k$ (namely, the differential of the free-time action functional when $\sigma$ is exact). The importance of $\eta_k$ relies on the fact that it is well-defined, under our assumptions about 
the exactness of $\sigma$ on $Q_0$ and $Q_1$, even if $\sigma$ is only closed and its zeros correspond to the orbits of the flow of $(L,\sigma)$ with energy $k$ that satisfy the boundary conditions (\ref{lagrangianformulationtheta0theta1}). 

We thus show the existence of zeros for $\eta_k$ using the existence of a non-trivial element $\mathfrak U$ in some $\pi_l(\mathcal M_Q)$ to construct a minimax class with associated minimax function $c^{\mathfrak u}$. The monotonicity in $k$
of this minimax function will allow us to prove the existence of critical sequences (the natural replacement of Palais-Smale sequences in this setting) for $\eta_k$ with times bounded and bounded away from zero
for almost every energy by generalizing the \textit{Struwe monotonicity argument} to this setting. We will finally retrieve the existence of the desired zeros of $\eta_k$ using a compactness criterion for critical sequences  
of $\eta_k$ with times bounded and bounded away from zero (cf. Proposition
\ref{prp:pstheta0theta1}), which generalizes the corresponding result for Palais-Smale sequences (cf. Proposition \ref{completezzask}) to this setting.

The rigorous study of the properties of $\eta_k$ will be performed in the next  chapter (cf. Sections \ref{theaction1form} and \ref{ageneralizedpseudogradient}). 

%%%%%%%%%%%%%%%%%%%%%%%%%%%%%%%%%%%%%%%%%%%%%%%%%%%%%%%%%%%%%

\section{The functional $S_k$.}
\label{thefunctionalsk}

Let $L:TM\rightarrow \R$ be a Tonelli Lagrangian, $Q_0,Q_1\subseteq M$ be two closed submanifolds, $M_1$ be the cover of $M$ as in (\ref{m1capitolo5}) and $\sigma\in \Omega^2(M)$ be a $M_1$-weakly-exact form. 
Inspired by the results of Chapter \ref{chapter3}, we try now to get a well-defined functional $S_k$ whose critical points are exactly the orbits of the flow of $(L,\sigma)$ with energy $k$ satisfying the boundary
conditions (\ref{lagrangianformulationtheta0theta1}). Denote by $\lambda$ a primitive of $\sigma_1$ on $M_1$ and observe that the Euler-Lagrange flow associated to the Lagrangian
\begin{equation}
L_{1,\lambda}:TM_1\longrightarrow \R\, , \ \ \ \ L_{1,\lambda}(q,v) \ = \ L_1(q,v) +\lambda_q(v)\, ,
\label{LagrangianM1}
\end{equation}
where $L_1$ is the lift of $L$ to $M_1$, coincides with the lift of the flow of the pair $(L,\sigma)$. Moreover, the exactness of $\sigma_1$ yields primitives of  $\sigma$ on $Q_0$ and $Q_1$. Indeed, denote
with $Q_0^1$ any lift of $Q_0$ to $M_1$; then $\lambda|_{Q_0^1}$ is a primitive of $\sigma_1|_{Q_0^1}$. Now, from the definition of $M_1$ it follows that there exist neighborhoods $\, \mathcal U, \mathcal U^1$ of $Q_0,Q_0^1$ in 
$M,M_1$ respectively such that the covering map $p:\mathcal U^1\rightarrow \mathcal U$ is a diffeomorphism; therefore the 1-form $\lambda|_{Q_0^1}$ descends to a primitive $\theta_0$ of $\sigma|_{Q_0}$. 
Clearly the same argument applies to $Q_1$, thus giving 1-forms $\theta_0,\theta_1$ such that 
\begin{equation}
\theta_j \ = \ (p^*)^{-1} \big (\lambda|_{Q_j^1}\big ) \, , \ \ \ \  d\theta_j \ = \sigma\big |_{Q_j} \ \ \ \ \text{on} \ \ Q_j\ \text{for} \ j=0,1\, .
\label{forme}
\end{equation}

In particular the boundary conditions can be expressed as in (\ref{lagrangianformulationtheta0theta1}) with respect to the ``canonical'' 1-forms $\theta_0,\theta_1$ on $Q_0,Q_1$ given by (\ref{forme}). 
Following the ideas in \cite{Mer10} and \cite{AB14} we fix now a connected component $\mathcal M_\nu$ of $\mathcal M_Q$ and pick a reference path $x_\nu$ in this component. For any element $(x,T)\in \mathcal M_\nu$ we define 
\begin{equation}
S_k(x,T) := \ T\int_0^1 \Big [L\Big (x(s),\frac{x'(s)}{T}\Big ) + k \Big ]\, ds \ + \int_X \sigma \ - \int_{x_0} \theta_0 \ + \int_{x_1} \theta_1\, ,
\label{functionalcbc}
\end{equation}
where $X:[0,1]\times [0,1]\rightarrow M$ is as in the picture below as homotopy from $x_\nu$ to $x$ with starting points in $Q_0$ and ending points in $Q_1$ and, for $j=0,1$, 
$$x_j:[0,1]\longrightarrow Q_j\, , \ \ \ \ x_j(s):=\ X(s,j)\, .$$

\begin{center}
\includegraphics[height=40mm]{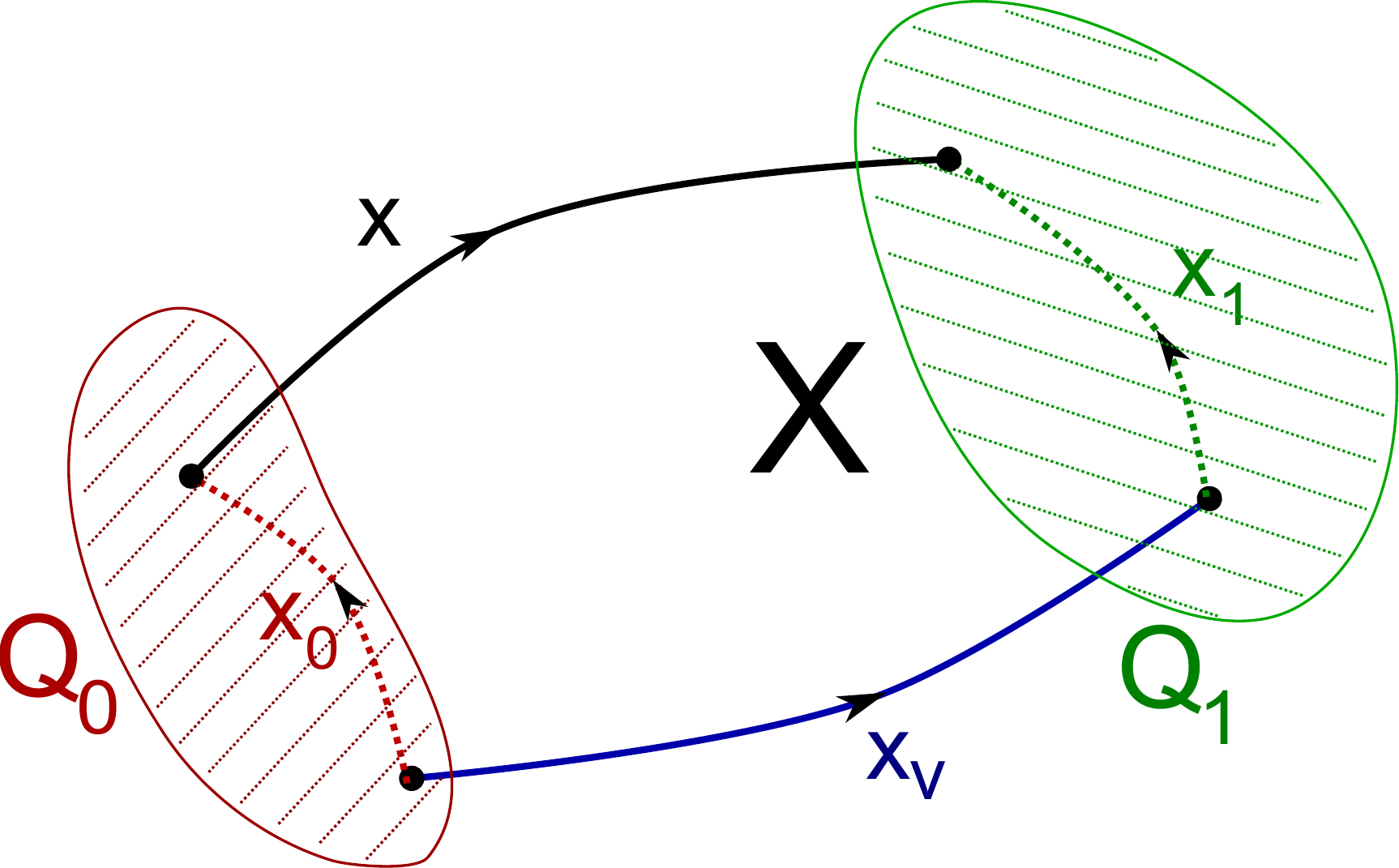}
\end{center}
 
Observe that the first integral on the right-hand side of (\ref{functionalcbc}) is well-defined, since $L$ is assumed to be electro-magnetic outside a compact set. 
At this point it is however not clear that the definition of $S_k$ does not depend on the homotopy $X$.  Actually, $S_k$ will not be well-defined without any additional assumption on the 2-form $\sigma$.

Thus, suppose that $X'$ is another homotopy from $x_\nu$ to $x$ with the same properties of $X$ and consider the cylinder $C:=X\cup \overline{X'}$, where $\overline{X'}$ denotes the homotopy induced by $X'$
connecting $x$ to $x_\nu$. Denote with $\partial C_j$  the part of $\partial C$ contained in $Q_j$. Clearly $\sigma|_C$ is exact; if $\theta$ is a primitive of $\sigma$ on $C$, then by Stokes' theorem 
\begin{eqnarray*} 
 \int_X \sigma \ - \int_{x_0} \theta_0 \ \!\! &+& \!\!\! \int_{x_1} \theta_1 \ -  \ \left (\int_{X'} \sigma  - \int_{x_0'} \theta_0 +  \int_{x_1'} \theta_1 \right) \ = \\
&=& \int_C \sigma \ - \int_{x_0\#{x_0'}^{-1} }\!\!\!\!\!\!\!\!\!\!\!\! \theta_0 \ + \int_{x_1\#x_1'^{-1}} \!\!\!\!\!\!\!\!\!\!\!\! \theta_1 \ = \\ 
&=&  \int_{\partial C}\!\!\! \theta \ - \int_{x_0\#{x_0'}^{-1} }\!\!\!\!\!\!\!\!\!\!\!\! \theta_0 \ + \int_{x_1\#x_1'^{-1}} \!\!\!\!\!\!\!\!\!\!\!\! \theta_1 \ = \\ 
&=& \int_{\partial C_0} \!\!\!\theta \ - \int_{\partial C_1} \!\!\! \theta \ - \int_{x_0\#{x_0'}^{-1} }\!\!\!\!\!\!\!\!\!\!\!\! \theta_0 \ + \int_{x_1\#x_1'^{-1}} \!\!\!\!\!\!\!\!\!\!\!\! \theta_1 \ = \\ 
&=& \int_{x_0\#{x_0'}^{-1} }\!\!\!\!\!\!\!\!\!\!\!\! \big (\theta|_{\partial C_0}- \theta_0\big ) \ +\int_{x_1\#x_1'^{-1}} \!\!\!\!\!\!\!\!\!\!\!\! \big (\theta_1 -  \theta|_{\partial C_1}\big )\ = \ 0\, ,
\end{eqnarray*}
provided that $(\theta-\theta_i)|_{\partial C_i}$ is exact, for $i=0,1$. This is the case, for instance, when 
$$H^1(Q_0,\R)\ =\ H^1(Q_1,\R)\ =\ 0\, .$$
Summarizing, we have proven the following 

\begin{lemma}
Let $L:TM\rightarrow \R$ be a Tonelli Lagrangian, $Q_0,Q_1\subseteq M$ be closed connected submanifolds and $\sigma\in \Omega^2(M)$ be a $M_1$-weakly-exact 2-form on $M$. Furthermore, let $\theta_0,\theta_1$ be as 
in (\ref{forme}) and suppose that the following property holds: for every cylinder $C\subseteq M$ with first part of the boundary $\partial C_0$ contained in $Q_0$ and second part $\partial C_1$ contained in $Q_1$, the pull-back of 
$\sigma$ to $C$ admits a primitive $\theta$ such that 
$$(\theta- \theta_i)|_{\partial C_i}  \ \ \ \text{exact}\, , \ \ \ \ i=0,1\, .$$

Then the functional (\ref{functionalcbc}) is well-defined on every connected component of $\mathcal M_Q$, namely it is independent on the choice of the homotopy $X$ connecting $x$ to $x_\nu$.
\label{buonadefinizionesk}
\end{lemma}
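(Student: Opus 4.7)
The plan is essentially to verify the computation already outlined in the paragraphs preceding the statement, making all the bookkeeping explicit. Given two homotopies $X,X':[0,1]\times[0,1]\to M$ from the reference path $x_\nu$ to $x$, both with $s=0$ side in $Q_0$ and $s=1$ side in $Q_1$, I would first form the cylinder $C:=X\cup \overline{X'}$, noting that its boundary decomposes into two closed loops $\partial C_0=x_0\#(x_0')^{-1}\subseteq Q_0$ and $\partial C_1=x_1\#(x_1')^{-1}\subseteq Q_1$, since the other two boundary pieces (corresponding to $x_\nu$ and $x$) cancel.

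Next I would apply the hypothesis to produce a primitive $\theta$ of $\sigma$ on $C$ satisfying the condition that $(\theta-\theta_i)|_{\partial C_i}$ is exact for $i=0,1$. Using Stokes' theorem and choosing compatible orientations,
$$\int_X\sigma-\int_{X'}\sigma\ =\ \int_C\sigma\ =\ \int_{\partial C}\theta\ =\ \int_{\partial C_0}\theta\ -\int_{\partial C_1}\theta.$$
Subtracting the boundary contributions appearing in the definition \eqref{functionalcbc} yields
$$S_k^{X}(x,T)-S_k^{X'}(x,T)\ =\ \int_{\partial C_0}(\theta-\theta_0)\ +\int_{\partial C_1}(\theta_1-\theta).$$
By the exactness assumption on $(\theta-\theta_i)|_{\partial C_i}$, each integrand is an exact 1-form restricted to a closed loop, so both integrals vanish by Stokes once more. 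This gives $S_k^{X}(x,T)=S_k^{X'}(x,T)$, which is exactly the well-definedness asserted.

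The only genuine obstacle I foresee is the sign and orientation bookkeeping: one must be careful that $\overline{X'}$ inherits the reversed orientation from $X'$, so that the loops $\partial C_0$ and $\partial C_1$ are traversed with the correct signs relative to the induced boundary orientation on $\partial C$. Once this is handled, the rest is a direct application of Stokes' theorem twice — first on the cylinder $C$ itself, and then on each closed loop $\partial C_i$ against the exact form $\theta-\theta_i$. No further analytic input is required, since the $M_1$-weak-exactness hypothesis enters only through the existence of the globally defined primitives $\theta_0,\theta_1$ on $Q_0,Q_1$ via \eqref{forme}, and through the hypothesis ensuring the primitive $\theta$ on $C$ can be compared to them.
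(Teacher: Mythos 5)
Your proposal is correct and follows essentially the same argument as the paper: form the cylinder $C=X\cup\overline{X'}$, apply Stokes' theorem on $C$ with a primitive $\theta$ of $\sigma|_C$, and then use the exactness of $(\theta-\theta_i)|_{\partial C_i}$ over the closed boundary loops in $Q_0$ and $Q_1$ to see the difference of the two expressions vanishes. The orientation bookkeeping you flag is exactly the only delicate point, and it works out as you describe.
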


It is clear from the definition that the functional $S_k$ depends on the choice of the reference path; however, if we pick a different reference path instead of $x_\nu$, the functional $S_k$ changes only by the addition of a constant
and hence its geometric properties remain unchanged. Notice that, when $\sigma=d\eta$ is exact, one can choose 
$$\theta_0 \ = \ \eta|_{Q_0} \, , \ \ \ \ \theta_1\ = \ \eta|_{Q_1}\, , \ \ \ \ \theta \ = \ \eta|_C\, .$$
With these choices $S_k$ reduces to 
$$T \int_0^1 \Big [L\Big (x(s),\frac{x'(s)}{T}\Big ) + k \Big ]\, ds \ + \int_x \eta\ - \int_{x_\nu} \eta\, ,$$
which is (up to a constant) the free-time Lagrangian action functional for $L+\eta$. 

\vspace{3mm}

The functional $S_k$ is of class $C^{1,1}(\mathcal M_Q)$, being the sum of a $C^{1,1}$-functional (the free-time Lagrangian action functional associated to $L$) and of a smooth part. 
The interest on $S_k$ relies on the fact that its critical points are exactly the orbits of the flow of $(L,\sigma)$ with energy $k$ satisfyng the boundary conditions (\ref{lagrangianformulationtheta0theta1}), as we now prove. 

We first explicitly compute the differential of $S_k$, starting with the partial derivative $\partial S_k/\partial T$. By the Lebesgue dominated convergence theorem 
\begin{eqnarray*} 
&& \frac{S_k(x,T+\epsilon)-S_k(x,T)}{\epsilon}  \ = \\ 
					      &=&  \frac 1\epsilon \left ( (T+\epsilon) \int_0^1 \Big [ L\Big (x(s),\frac{x'(s) }{T+\epsilon}\Big ) + k \Big ]\, ds \ - \ T \int_0^1 \Big [L\Big(x(s),\frac{x'(s)}{T}\Big ) + k \Big ]\, ds \right )\\
                                                           &=& k \ + \int_0^1 L\Big (x(s),\frac{x'(s) }{T+\epsilon}\Big ) \, ds \ + \ \frac T\epsilon \int_0^1 \Big [L\Big (x(s),\frac{x'(s) }{T+\epsilon}\Big ) - L\Big(x(s),\frac{x'(s)}{T}\Big )\Big ]\, ds
\end{eqnarray*}
converges as $\epsilon\rightarrow 0$ to 
\begin{eqnarray}
\frac{\partial S_k}{\partial T}(x,T) &=&  k + \int_0^1 \Big [L\Big (x(s),\frac{x'(s) }{T}\Big )- d_vL \Big (x(s),\frac{x'(s)}{T}\Big ) \Big [\frac{x'(s)}{T}\Big ]\Big ] \, ds  = \nonumber \\
                                                        &=&  \int_0^1 \Big [ k - E\Big (x(s),\frac{x'(s) }{T}\Big )\Big ]\, ds\, .
\label{derivataskT}
\end{eqnarray}
Now let $x_r$ be a variation of $x$ and denote by 
$$\zeta (s) := \ \frac{\partial}{\partial r}\Big |_{r=0} x_r(s)\, .$$

For every $r$,  let $X_r:[0,1]\times [0,1] \rightarrow M$ be a homotopy connecting  the reference path $x_\nu$ to $x_r$. Observe that, for every $r$,
$$Y_r:[0,1]\times [0,1]\longrightarrow M \, , \ \ \ \ (\epsilon,s) \longmapsto x_{\epsilon r}(s)$$ 
is a homotopy from $x$ to $x_r$. We denote by 
$$x_j^r(s):= \ X_r(s,j) \, , \ \ \ \ y_j^r(s) := \ Y_r (s,j)\, , \ \ \ \ j=0,1$$
and compute
\begin{eqnarray*}
S_k(x,T)-S_k(x_r,T) &=& T\int_0^1 \!\! \Big [L\big (x,\frac{x'}{T}\Big ) - L\Big (x_r,\frac{x'_r}{T}\Big )\Big ] ds \ +\\
                                &+& \int_{X\cup \overline{X_r}} \!\!\!\!\!\! \sigma \ - \int_{x_0\#(x_0^r)^{-1}} \!\!\!\!\!\!\!\!\!\!\!\!\!\! \theta_0 \ + \int_{x_1\#(x_1^r)^{-1}}\!\!\!\!\!\!\!\!\!\!\!\!\!\! \theta_1 \ = \\
                                 &=& T\int_0^1 \!\! \Big [L\big (x,\frac{x'}{T}\Big ) - L\Big (x_r,\frac{x'_r}{T}\Big )\Big ] ds \ +\\
                                 &-& \int_{Y_r} \sigma \ + \int_{y_0^r} \theta_0 \ - \ \int_{y_1^r} \theta_1
\end{eqnarray*}
The second equality follows from the fact that $X\cup \overline{X_r}$ is a homotopy from $x_r$ to $x$, while $Y_r$ is a homotopy from $x$ to $x_r$, and we know by Lemma \ref{buonadefinizionesk} that the value of $S_k$ is independent of the homotopy.

\begin{center}
\includegraphics[height=45mm]{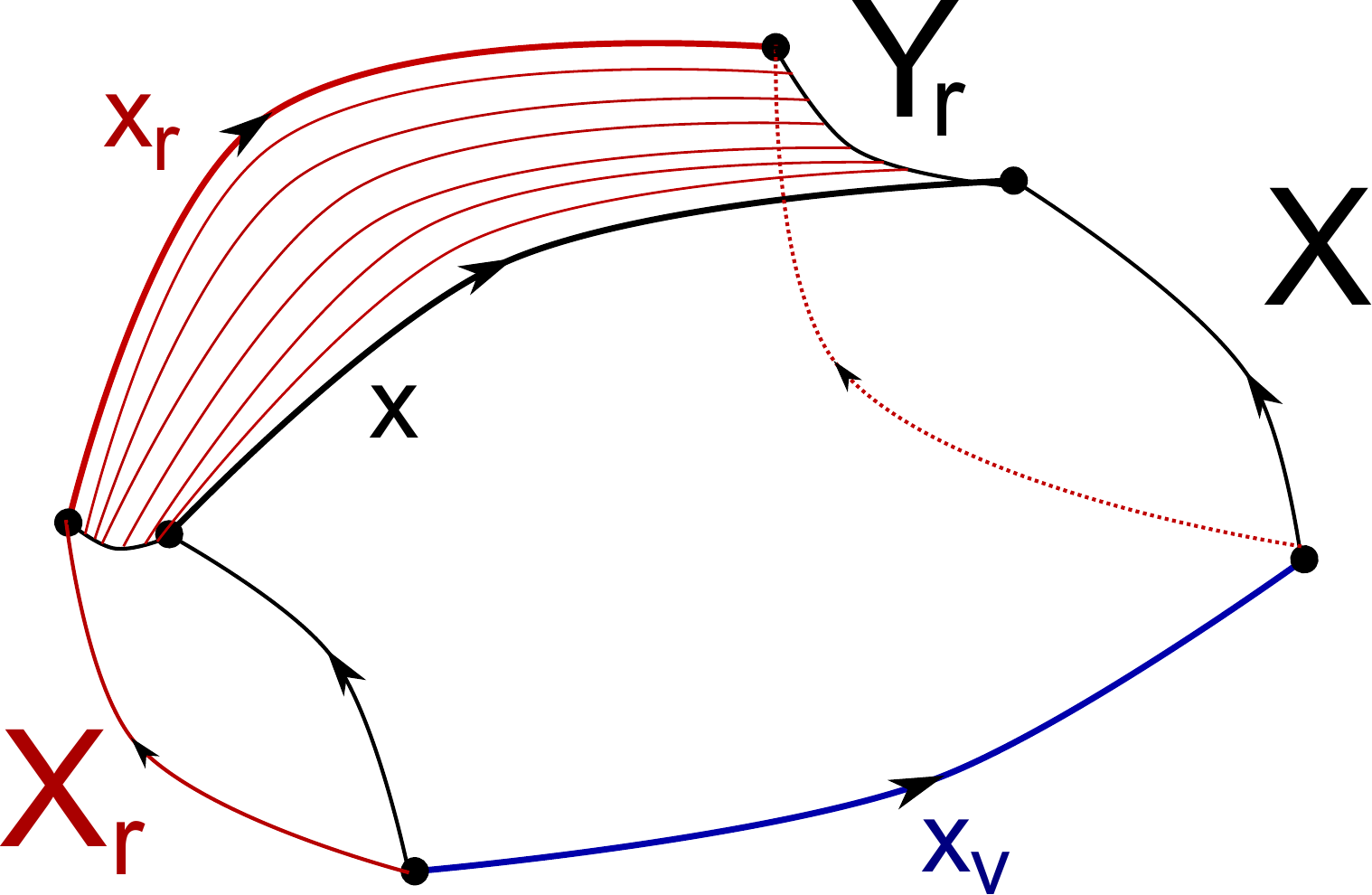}
\end{center}

\noindent Again, by the Lebesgue dominated convergence theorem we get\footnote{One easily checks that $\frac{1}{r}\int_{Y_r}\sigma \rightarrow \int_0^1 \sigma_{x(s)}(\zeta(s),x'(s))\, ds$ for $r\rightarrow 0$.}
\begin{eqnarray*} 
d_x S_k(x,T) \big [(\zeta,0)] &=& \lim_{r \rightarrow 0} \ \frac{S_k(x,T)-S_k(x_r,T)}{r} \ = \\
                                               &=&  T\int_0^1 \Big [d_qL\Big (x(s),\frac{x'(s)}{T} \Big ) \big [\zeta\big ] + d_vL\Big (x(s),\frac{x'(s)}{T} \Big ) \Big [\frac{\dot \zeta}{T}\Big ] \Big ] \, ds \ +\\
                                               &+& \int_0^1 \sigma_{x(s)} (x'(s),\zeta(s))\, ds \ + \ (\theta_0)_{x(0)}\big [\zeta(0)\big ] \ - \ (\theta_1)_{x(1)}\big [\zeta(1)\big ]\, .
\end{eqnarray*}
Therefore, if $(x,T)$ is a critical point for $S_k$, then integrating by parts we get that
\begin{eqnarray*}
0 &= &  T\int_0^1 \Big [d_qL\Big (x(s),\frac{x'(s)}{T} \Big ) - \frac{d}{ds} d_vL\Big (x(s),\frac{x'(s)}{T} \Big ) \Big ] \, \zeta \, ds \ +\\
   &+& \int_0^1 \sigma_{x(s)} (x'(s),\zeta(s))\, ds \ + \ d_vL\Big (x(0),\frac{x'(0)}{T} \Big )\, \zeta (0) + (\theta_0)_{x(0)}\big [\zeta(0)\big ] \ -\\
   &-& d_vL\Big (x(1),\frac{x'(1)}{T}\Big )\, \zeta(1) - (\theta_1)_{x(1)}\big [\zeta(1)\big ]
\end{eqnarray*}
must hold for every $\zeta \in T_x H^1_Q([0,1],M)$. Choosing $\zeta$ with compact support in $(0,1)$ we get that $\gamma(t):= x(t/T)$ is an orbit of the flow of $(L,\sigma)$. 
Choosing now any arbitrary $\zeta$ we get that $\gamma$ satisfies the boundary conditions (\ref{lagrangianformulationtheta0theta1}).
Finally, being $\gamma$ an orbit of the flow of $(L,\sigma)$,  it has constant energy and an easy inspection of the derivative of $S_k$ with respect to the variable $T$ shows that actually 
$E(\gamma,\dot \gamma) = k$. 

\begin{prop}
Suppose that the assumptions of Lemma \ref{buonadefinizionesk} are satisfied. Then the pair $(x,T)\in \mathcal M_Q$ is a critical point of the functional $S_k$ if and only if $\gamma(t)=x(t/T)$ is an orbit of the flow of $(L,\sigma)$ 
satisfying the boundary conditions (\ref{lagrangianformulationtheta0theta1}).
\label{correspondence5}
\end{prop}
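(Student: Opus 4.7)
The proof essentially amounts to formalizing the differential computation that the excerpt has already sketched, then reading off the Euler--Lagrange and boundary equations from the vanishing of $dS_k$ via the Du~Bois-Reymond lemma. The plan is to split the critical point equation $dS_k(x,T)[(\zeta,H)]=0$ into its $T$-component and its $x$-component, and analyse each separately.

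First I would treat the $T$-component. The computation already performed in \eqref{derivataskT} shows
\[\frac{\partial S_k}{\partial T}(x,T)\;=\;\int_0^1\Big[k-E\Big(x(s),\tfrac{x'(s)}{T}\Big)\Big]\,ds.\]
Since the flow of $(L,\sigma)$ preserves the energy $E$ (it is locally an Euler--Lagrange flow of $L+\theta_i$, and $E$ does not see the addition of $\theta_i$), the condition $\partial_T S_k=0$ is equivalent to $E(\gamma(t),\dot\gamma(t))\equiv k$ whenever $\gamma$ is such a flow line, and, conversely, is the precise statement that forces the energy level to be $k$.

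Next I would treat the $x$-component. The computation in the excerpt gives, for every $\zeta\in T_x H^1_Q([0,1],M)$,
\begin{align*}
d_xS_k(x,T)[(\zeta,0)] &= \int_0^1\!\Big[d_qL\Big(x,\tfrac{x'}{T}\Big)\zeta+d_vL\Big(x,\tfrac{x'}{T}\Big)\tfrac{\zeta'}{T}\Big]T\,ds\\
&\quad +\int_0^1\sigma_{x(s)}(x'(s),\zeta(s))\,ds+(\theta_0)_{x(0)}[\zeta(0)]-(\theta_1)_{x(1)}[\zeta(1)].
\end{align*}
Rewriting in the time parameter $t=sT$ with $\gamma(t)=x(t/T)$, the first integral becomes $\int_0^T[d_qL(\gamma,\dot\gamma)\zeta+d_vL(\gamma,\dot\gamma)\dot\zeta]\,dt$. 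Testing first against variations $\zeta$ with compact support in $(0,T)$ and applying Du~Bois--Reymond as in the proof of Theorem~\ref{criticalpoints} (after localising in a bi-bounded chart so that $\sigma$ admits a local primitive $\theta_i$ with $d\theta_i=\sigma$, hence the $\sigma$-term is produced by the Euler--Lagrange equation of $L+\theta_i$), one obtains that $\gamma$ is smooth and satisfies the Euler--Lagrange equation of the pair $(L,\sigma)$, i.e.\ the magnetic equation $\tfrac{d}{dt}d_vL(\gamma,\dot\gamma)=d_qL(\gamma,\dot\gamma)+\sigma_\gamma(\dot\gamma,\cdot)$.

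Having established the interior equation, I would then let $\zeta$ vary freely with $(\zeta(0),\zeta(1))\in T_{x(0)}Q_0\times T_{x(1)}Q_1$. Integration by parts against the already-known interior equation makes all the bulk terms cancel, leaving only the two boundary contributions
\[\Big(d_vL(\gamma(0),\dot\gamma(0))+\theta_0\big|_{\gamma(0)}\Big)[\zeta(0)]\;-\;\Big(d_vL(\gamma(1),\dot\gamma(1))+\theta_1\big|_{\gamma(1)}\Big)[\zeta(1)]\;=\;0.\]
Since $\zeta(0)\in T_{x(0)}Q_0$ and $\zeta(1)\in T_{x(1)}Q_1$ are arbitrary and independent, this is precisely the conormal boundary condition \eqref{lagrangianformulationtheta0theta1} with respect to $\theta_0,\theta_1$. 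The converse direction is routine: if $\gamma$ satisfies the flow equation of $(L,\sigma)$ and the boundary conditions, the same integration by parts carried out in the opposite direction produces $d_xS_k(x,T)[(\zeta,0)]=0$, while the conservation of energy along $\gamma$ forces $\partial_T S_k=0$ by \eqref{derivataskT}. The only mildly delicate point is the interior step: one must check that the ``twist'' term $\int_0^1\sigma_x(x',\zeta)\,ds$ is exactly what a local primitive of $\sigma$ would contribute through $d_v\theta_i$ and $d_q\theta_i$ in the Euler--Lagrange equation of $L+\theta_i$, so that the gluing procedure defining the flow of $(L,\sigma)$ matches what the variational principle produces. This is a local verification and presents no real obstacle once one passes to a chart where $\sigma=d\theta_i$.
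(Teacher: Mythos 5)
Your proposal is correct and follows essentially the same route as the paper: the paper likewise computes $\partial S_k/\partial T$ and $d_xS_k$, tests the vanishing of the latter first against compactly supported variations (with regularity handled as in Theorem~\ref{criticalpoints}) to obtain the interior equation of the flow of $(L,\sigma)$, then against arbitrary $\zeta$ to extract the boundary terms giving \eqref{lagrangianformulationtheta0theta1}, and finally uses conservation of energy together with \eqref{derivataskT} to fix the energy level, with the converse obtained by reversing the integration by parts. Your explicit appeal to the Du Bois-Reymond lemma and the local check against the Euler--Lagrange equation of $L+\theta_i$ only spells out steps the paper leaves implicit.
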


We proceed now to study the relation between $S_k$ and the free-time Lagrangian action functional associated to the Lagrangian $L_{1,\lambda}$ in (\ref{LagrangianM1}) on $M_1$. 

Thus, consider a connected component $\mathcal M_\nu$ of $\mathcal M_Q$ and pick a lift $\tilde x_\nu$ of the reference path $x_\nu$. Moreover, let $(x,T)\in \mathcal M_\nu$ and let $X$ be a homotopy as in the definition of $S_k$ connecting 
the reference path $x_\nu$ to $x$. Denote by 
$$\widetilde X: [0,1]\times [0,1] \longrightarrow M_1$$ 
the unique homotopy obtained by lifting $X$ to $M_1$ and starting at $\tilde x_\nu$. Let now $\tilde x = \widetilde X(1,\cdot)$ be the lift of $x$ induced by the homotopy $\widetilde X$ and let 
$\tilde x_0$, $\tilde x_1$ the other two boundary components of $\widetilde X$. Then, by definition of $\theta_0$, $\theta_1$ we have
\begin{eqnarray*} 
\int_X \sigma \ - \int_{x_0} \theta_0 \ + \int_{x_1}\theta_1 &=& \int_{\widetilde X} \sigma_1 \ - \int_{\tilde x_0} \lambda  \ + \int_{\tilde x_1} \lambda \ = \\ 
                                                                                                 &=& \int_{\partial \widetilde X} \lambda \ - \int_{\tilde x_0} \lambda  \ + \int_{\tilde x_1} \lambda\ \ = \\
                                                                                                 &=&  \int_{\tilde x} \lambda \ - \int_{\tilde x_\nu} \lambda\, ,
\end{eqnarray*}
from which we deduce that 
$$\A_k (\tilde x,T) \ = \ S_k (x,T) \ + \int_{\tilde x_\nu} \lambda \, .$$

Observe that, in case $\mathcal M_\nu$ contains also constant paths (this may happen only if $Q_0$ and $Q_1$ intersect), we can choose $x_\nu$ to be a constant path; this choice yields
$$\A_k(\tilde x,T) \ = \ S_k(x,T)\, , \ \ \ \ \forall \ (x,T) \in \mathcal M_\nu\, .$$

\begin{lemma}
Suppose the assumptions of Lemma \ref{buonadefinizionesk} are satisfied and let $\mathcal M_\nu$ be a connected component of $\mathcal M_Q$. Then there exists a constant $a(\nu)\in \R$ depending only on $\nu$ and 
on the reference path $x_\nu$ such that 
\begin{equation}
\A_k(\tilde x,T) \ = \ S_k (x,T) \ + \ a(\nu)\, , \ \ \ \ \forall \ (x,T)\in \mathcal M_\nu\, .
\label{rapportogenerale}
\end{equation}
Furthermore, if $\mathcal M_\nu$ contains constant paths, we may assume $a(\nu)=0$.
\label{rapportoskak}
\end{lemma}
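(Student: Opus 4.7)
The computation sketched immediately before the lemma statement already does most of the work; my plan is to organize it and verify that the constant $a(\nu)$ really does depend only on the connected component $\nu$ and on the reference path $x_\nu$, and not on the particular $(x,T)\in \mathcal M_\nu$ or on the homotopy $X$ chosen to define $S_k(x,T)$. First I would fix once and for all a lift $\tilde x_\nu$ of $x_\nu$ to $M_1$; this determines a distinguished lift $Q_0^1$ of $Q_0$ containing $\tilde x_\nu(0)$ and a distinguished lift $Q_1^1$ of $Q_1$ containing $\tilde x_\nu(1)$, on which $\theta_0$ and $\theta_1$ are by \eqref{forme} obtained as push-forwards of $\lambda|_{Q_0^1}$ and $\lambda|_{Q_1^1}$.

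Given $(x,T)\in \mathcal M_\nu$, I would pick any homotopy $X:[0,1]\times[0,1]\to M$ from $x_\nu$ to $x$ with side components $x_0\subseteq Q_0$, $x_1\subseteq Q_1$, and lift it uniquely to a homotopy $\widetilde X$ in $M_1$ starting at $\tilde x_\nu$. This determines a lift $\tilde x=\widetilde X(1,\cdot)$ of $x$ and lifts $\tilde x_0,\tilde x_1$ of $x_0,x_1$ lying in $Q_0^1,Q_1^1$ respectively. Using $\sigma_1=d\lambda$ and Stokes' theorem on $\widetilde X$, together with \eqref{forme}, one computes exactly as in the discussion preceding the lemma
\begin{align*}
\int_X \sigma \;-\int_{x_0}\theta_0\;+\int_{x_1}\theta_1
&= \int_{\widetilde X}\sigma_1 \;-\int_{\tilde x_0}\lambda \;+\int_{\tilde x_1}\lambda \\
&= \int_{\partial \widetilde X}\lambda \;-\int_{\tilde x_0}\lambda \;+\int_{\tilde x_1}\lambda
\;=\; \int_{\tilde x}\lambda \;-\int_{\tilde x_\nu}\lambda.
\end{align*}
Since the lift $\tilde x$ has $(\tilde x)'=$ lift of $x'$, the pointwise Lagrangian integrand is unchanged, and adding $T\int_0^1[L(x,x'/T)+k]\,ds$ to both sides gives $\A_k(\tilde x,T)=S_k(x,T)+\int_{\tilde x_\nu}\lambda$, so we set $a(\nu):=\int_{\tilde x_\nu}\lambda$.

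It remains to verify that the right-hand side is the same for every choice of $(x,T)\in\mathcal M_\nu$ and every homotopy $X$. Independence from $X$ (for fixed $(x,T)$) is immediate, since both $S_k(x,T)$ (by Lemma \ref{buonadefinizionesk}) and $\A_k(\tilde x,T)$ are well-defined (the latter because, under the conventions above, two different homotopies from $x_\nu$ to $x$ lift to homotopies in $M_1$ with the same endpoint $\tilde x$, as $\widetilde X$ is determined by its starting lift and the homotopy class with endpoints varying in $Q_0^1$ and $Q_1^1$ is not affected thanks to $M_1=\widetilde M/\langle H_0,H_1\rangle$). Independence from $(x,T)$ is then the statement of the identity itself: $\int_{\tilde x_\nu}\lambda$ depends only on $\tilde x_\nu$.

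Finally, if $\mathcal M_\nu$ contains constant paths (which forces $Q_0\cap Q_1\neq\emptyset$), I would simply choose the reference path $x_\nu\equiv q$ for some $q\in Q_0\cap Q_1$ in $\mathcal M_\nu$; any lift $\tilde x_\nu\equiv \tilde q$ is then constant, so $a(\nu)=\int_{\tilde x_\nu}\lambda=0$. The only delicate point, and the one I would check carefully, is the claim above that $\widetilde X$ is canonically determined once $\tilde x_\nu$ is fixed, using that the subgroup $\langle H_0,H_1\rangle$ has been quotiented out precisely so that deck-translates of $Q_0^1$ (resp.\ $Q_1^1$) agree with $Q_0^1$ (resp.\ $Q_1^1$) on connected components, ensuring the lifts $\tilde x_0,\tilde x_1$ remain in $Q_0^1,Q_1^1$ and that different admissible homotopies land at the same $\tilde x$; this is the step where the specific choice of cover $M_1$ in \eqref{m1capitolo5} is used in an essential way.
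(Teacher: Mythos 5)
Your proof is correct and follows essentially the same route as the paper: fix a lift $\tilde x_\nu$, lift an admissible homotopy $X$ to $M_1$, apply Stokes with $\sigma_1=d\lambda$ and the definition \eqref{forme} of $\theta_0,\theta_1$ to get $\A_k(\tilde x,T)=S_k(x,T)+\int_{\tilde x_\nu}\lambda$, and choose a constant reference path when $\mathcal M_\nu$ contains constant paths so that $a(\nu)=0$. Your additional verification that the lifted endpoint $\tilde x$ is independent of the homotopy (using that loops in $Q_0,Q_1$ lie in $\langle H_0,H_1\rangle$ and hence lift to loops in $M_1$) is a sound elaboration of a point the paper leaves implicit.
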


In contrast with the case of periodic orbits (cf. \cite{Mer10} and \cite{AB14}), the functional $S_k$ is, under the technical assumptions of Lemma \ref{buonadefinizionesk}, well-defined for every connected component of $\mathcal M_Q$, independently of
the fact that a suitable Ma\~{n}\'e critical value is finite or not. In fact, in the periodic case one needs the existence of a bounded primitive for the lift of $\sigma$ to the universal cover in order to show that the integral of $\sigma$ vanishes on any 2-torus. 
This property is crucial to get a well-defined functional for every free-homotopy class of loops. If this condition fails, one can in general define a functional only for contractible loops. 

However, the geometric properties of the functional $S_k$ change drastically when crossing a suitable critical energy value, which is in this case given by
\begin{equation}
c(L,\sigma;Q_0,Q_1) : = \ c(L_{1,\lambda}) \ = \inf_{u\in C^\infty (M_1)} \sup_{q\in M_1} \ H_1(q,d_qu-\lambda_q)\, ,
\label{clsigmah0h1}
\end{equation}
where $H_1$ is the lift of $H$, the Tonelli Hamiltonian given by the Fenchel dual of $L$, to the cover $M_1$. Being the lift of a Tonelli Hamiltonian, $H_1$ satisfies 
$$h_0\, \|p\|_q^2 - h_1\ \leq \ H_1(q,p) \ \leq \ h_0' \, \|p\|_q^2 + h_1'\, , \ \ \ \ \forall \ (q,p)\in T^*M_1$$
for suitable constants $h_0,h_0'>0$, $h_1,h_1'\in \R$; in particular
$$h_0\, \inf_{u\in C^\infty} \sup_{q} \|du-\lambda\|_q^2 \ - \ h_1 \ \leq \ c(L,\sigma;Q_0,Q_1)\ \leq \ h_0'\, \inf_{u\in C^\infty} \sup_{q} \|du-\lambda\|_q^2 \ + \ h_1'$$ 
and this shows that  $c(L,\sigma;Q_0,Q_1)$ is finite if and only if $\sigma_1$ admits a bounded primitive on $M_1$ of the form $\lambda - du$, for some suitable smooth function $u:M_1\rightarrow \R$.

In what follows we refer to the \textit{bounded}, respectively \textit{unbounded case} when the Ma\~n\'e critical value $c(L,\sigma;Q_0,Q_1)$ is finite, respectively infinite.
By the very definition of $c(L,\sigma;Q_0,Q_1)$ and by Lemmas \ref{unboundedness}, \ref{boundedness} and \ref{rapportoskak} we get the following 

\begin{lemma}
For every $k\geq c(L,\sigma;Q_0,Q_1)$ the functional $S_k$ is bounded from below on every connected component of $\mathcal M_Q$. The functional $S_k$ is instead 
unbounded from below on each connected component of $\mathcal M_Q$ for every $k< c(L,\sigma;Q_0,Q_1)$.
\label{boundednesssk}
\end{lemma}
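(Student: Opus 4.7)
The strategy is to reduce both claims to the corresponding statements for the lifted Lagrangian $L_{1,\lambda}:TM_1\to\R$ via the identification provided by Lemma \ref{rapportoskak}. Fix a connected component $\mathcal M_\nu$ of $\mathcal M_Q$, a reference path $x_\nu$, and a lift $\tilde x_\nu$. Since $i_*(\pi_1(Q_j))\subseteq \langle H_0,H_1\rangle$, the submanifolds $Q_0,Q_1$ lift to $M_1$, and the components $Q_0^{(\nu)},Q_1^{(\nu)}$ containing the endpoints of $\tilde x_\nu$ are diffeomorphic copies of $Q_0,Q_1$, hence compact. Lemma \ref{rapportoskak} then gives $S_k(x,T)=\A_k^{L_{1,\lambda}}(\tilde x,T)-a(\nu)$, so the statement about $S_k$ on $\mathcal M_\nu$ translates into a statement about $\A_k^{L_{1,\lambda}}$ on paths in $M_1$ connecting $Q_0^{(\nu)}$ to $Q_1^{(\nu)}$.

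For the bounded case, assume $k\geq c(L,\sigma;Q_0,Q_1)=c(L_{1,\lambda})$. By the very definition of $c(L_{1,\lambda})$, every closed loop in $M_1$ has non-negative $\A_k^{L_{1,\lambda}}$-action. Mimicking the proof of Lemma \ref{boundedness}, I would fix base points $\tilde q_j^*\in Q_j^{(\nu)}$ and a path $\tilde\zeta$ in $M_1$ from $\tilde q_1^*$ to $\tilde q_0^*$; the compactness of $Q_j^{(\nu)}$ yields paths $\tilde\alpha_0,\tilde\alpha_1$ of uniformly bounded action joining any prescribed endpoints of $\tilde x$ in $Q_j^{(\nu)}$ to $\tilde q_j^*$. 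The concatenation $\tilde x\#\tilde\alpha_1\#\tilde\zeta\#\tilde\alpha_0^{-1}$ is a closed loop, so $\A_k^{L_{1,\lambda}}(\tilde x)\geq -C$ for a uniform constant $C$, and therefore $S_k\geq -C-a(\nu)$ on $\mathcal M_\nu$.

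For the unbounded case, assume $k<c(L_{1,\lambda})$, so there exists a closed loop $\tilde\gamma$ in $M_1$ with $\A_k^{L_{1,\lambda}}(\tilde\gamma)<0$. Fix $(x_0,T_0)\in\mathcal M_\nu$ with lift $\tilde x_0$ starting in $Q_0^{(\nu)}$ and join $\tilde x_0(0)$ to $\tilde\gamma(0)$ by a path $\tilde\eta$. Set $\tilde x_n:=(\tilde\eta^{-1}\#\tilde\gamma^n\#\tilde\eta)\#\tilde x_0$, which still starts in $Q_0^{(\nu)}$ and ends at $\tilde x_0(T_0)\in Q_1^{(\nu)}$, with action
\[
\A_k^{L_{1,\lambda}}(\tilde x_n)\ =\ \A_k^{L_{1,\lambda}}(\tilde x_0)+2\A_k^{L_{1,\lambda}}(\tilde\eta)+n\,\A_k^{L_{1,\lambda}}(\tilde\gamma)\ \longrightarrow\ -\infty.
\]
Projecting, $x_n:=p\circ\tilde x_n$ differs from $x_0$ by the attached loop $p(\tilde\eta^{-1}\#\tilde\gamma^n\#\tilde\eta)$, whose class in $\pi_1(M,x_0(0))$ lies in $p_*(\pi_1(M_1))=\langle H_0,H_1\rangle$. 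By the description of $\pi_0(\mathcal M_Q)$ in Lemma \ref{componenticonnesseomegaqoq1} this subgroup acts trivially on connected components, so $x_n\in\mathcal M_\nu$. Since the covering $M_1\to M$ is regular, deck transformations act freely on $M_1$, so the lift of $x_n$ starting at the prescribed point of $Q_0^{(\nu)}$ is uniquely $\tilde x_n$ itself; Lemma \ref{rapportoskak} then delivers $S_k(x_n,T_0)=\A_k^{L_{1,\lambda}}(\tilde x_n,T_0)-a(\nu)\to -\infty$.

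The main obstacle is the bookkeeping in the unbounded case: one must verify that the loops used to drive the action to $-\infty$ really do preserve the connected component $\mathcal M_\nu$, and that the resulting projected path corresponds, under Lemma \ref{rapportoskak}, precisely to the lift $\tilde x_n$ whose action we computed. Both points rest on the identification $p_*(\pi_1(M_1))=\langle H_0,H_1\rangle$ (so that any loop lifted from $M_1$ lies in the ``component-preserving'' subgroup of $\pi_1(M)$ identified in Lemma \ref{componenticonnesseomegaqoq1}) together with the regularity of the cover $M_1\to M$.
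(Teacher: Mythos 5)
Your bounded-below half is correct and is essentially the paper's own argument for Lemma \ref{boundedness} transplanted to $M_1$ via Lemma \ref{rapportoskak}: since $i_*(\pi_1(Q_j))\subseteq\langle H_0,H_1\rangle$, the components $Q_0^{(\nu)},Q_1^{(\nu)}$ of $p^{-1}(Q_j)$ are one-sheeted, hence compact, copies of $Q_0,Q_1$, and closing up $\tilde x$ with connectors of uniformly bounded action and using that $\A_k^{L_{1,\lambda}}\geq 0$ on closed loops of $M_1$ for $k\geq c(L_{1,\lambda})$ gives the uniform lower bound.

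The unbounded-below half has a genuine gap. You attach an arbitrary negative-action loop of $M_1$ (projected to $M$) at the starting point of $x_0$ and claim that, because its class lies in $p_*(\pi_1(M_1))=\langle H_0,H_1\rangle$, Lemma \ref{componenticonnesseomegaqoq1} guarantees that the new path stays in $\mathcal M_\nu$. But that lemma does not say that $\langle H_0,H_1\rangle$ acts trivially on $\pi_0(\mathcal M_Q)$: it identifies $\pi_0(\mathcal M_Q)$ with a set of \emph{double cosets} of $i_*\pi_1(Q_0,q_0)$ and a fixed conjugate of $i_*\pi_1(Q_1,q_1)$ inside $\pi_1(M,q_0)$, so the only moves guaranteed to preserve the component are right multiplication by an element of $i_*\pi_1(Q_0,q_0)$ and left multiplication by an element of $\gamma^{-1}\,i_*\pi_1(Q_1,q_1)\,\gamma$, not multiplication by a general element of the normal closure; the quotient-by-$\langle H_0,H_1\rangle$ picture holds only in special situations such as $\pi_1(M)$ abelian, as in the paper's $\T^2$ examples. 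Concretely, if $\pi_1(M)$ is free on $a,b$, $Q_0$ is a circle representing $a$ and $Q_1$ is a point, then $bab^{-1}\in H_0$, yet attaching it to a constant path changes the double coset and hence the connected component. This is precisely why the paper's Lemma \ref{unboundedness} only attaches loops of the special form $\eta^{-1}\#\delta^n\#\eta$ whose class lies in $i_*\pi_1(Q_0,q_0)$ itself: these are the moves permitted by the double-coset description. To repair your argument you would need to show that for $k<c(L_{1,\lambda})$ a loop of negative $(L_{1,\lambda}+k)$-action can be chosen, up to conjugation by a path, with class in $i_*\pi_1(Q_0)$ or $i_*\pi_1(Q_1)$; your proof does not supply this. (A further, harmless, slip: the total time of $x_n$ is not $T_0$ but $T_0$ plus the times of the attached loops, so the pair should be written $(x_n,T_n)$.)
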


%%%%%%%%%%%%%%%%%%%%%%%%%%%%%%%%%%%%%%%%%%%%%%%%%%%%%%%%%

\section{The Palais-Smale condition for $S_k$}
\label{thepalaissmalecondition}

In this and the next section we assume that $Q_0,Q_1,L,\sigma,\theta_0,\theta_1$ are such that the functional $S_k$ in (\ref{functionalcbc}) is well-defined and
show that $S_k$ satisfies the Palais-Smale condition on subsets of $\mathcal M_Q$ with times bounded and bounded away from zero. 
This result is the analogous of Lemma \ref{lemmalimitatezza} in this setting. Its strength relies on the fact that it does not depend on the finiteness of the Ma\~n\'e critical 
value $c(L,\sigma;Q_0,Q_1)$. 

The analogue for periodic orbits has been proven in \cite{Mer15} by extending to the unbounded case the proof in \cite{Mer10}.
In fact, the unbounded case can be reduced to the bounded one using the following 

\begin{lemma}
If $(x_h,T_h)\subseteq \mathcal M_Q$ is a Palais-Smale sequence for $S_k$ such that the $T_n$'s are bounded from above, then there exist a compact subset 
$K\subseteq M_1$ and, for every $h\in \N$, suitable lifts $x_h^1$ of $x_h$ to $M_1$ such that $x_h^1([0,1])\subseteq K$ for every $h\in \N$.
\label{reductiontobounded}
\end{lemma}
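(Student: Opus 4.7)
The plan is to reduce to the bounded case by exploiting the relation between $S_k$ and the genuine free-time action functional $\A_k$ on the cover $M_1$. Since $\mathcal{M}_Q$ has countably many connected components and $S_k(x_h,T_h)$ is bounded (being a Palais-Smale sequence), after extracting a subsequence I may assume $(x_h,T_h)$ lies in a single connected component $\mathcal{M}_\nu$. Fixing a lift $\tilde x_\nu$ of the reference path $x_\nu$ determines, via homotopy lifting, a lift $\tilde x_h$ of each $x_h$ in $M_1$ whose starting point lies in the fixed preimage $Q_0^1 \subseteq M_1$ of $Q_0$ containing $\tilde x_\nu(0)$. Lemma \ref{rapportoskak} then gives
$$\A_k(\tilde x_h,T_h) \ = \ S_k(x_h,T_h) + a(\nu),$$
so the action $\A_k$ on the lifts is uniformly bounded by some constant $C$.

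Next I would use that $L$ (hence its lift $L_1$) is quadratic at infinity in the fiber variable to exploit the inequality (\ref{firstinequality}) on the cover, obtaining
$$C \ \geq \ \A_k(\tilde x_h,T_h) \ \geq \ \frac{a}{T_h}\,\|\tilde x_h'\|_2^2 \, + \, T_h(k-b)\, .$$
Since $T_h \leq T^* < +\infty$, this yields a uniform bound $\|\tilde x_h'\|_2^2 \leq C'$ (the case $T_h \to 0$ is even easier, since the left-hand side forces $\|\tilde x_h'\|_2^2 \to 0$). By Cauchy--Schwarz the family $\{\tilde x_h\}$ is uniformly $1/2$-H\"older continuous,
$$\operatorname{dist}_{M_1}\!\bigl(\tilde x_h(s),\tilde x_h(s')\bigr)\ \leq\ |s-s'|^{1/2}\,\|\tilde x_h'\|_2\ \leq\ \sqrt{C'}\,|s-s'|^{1/2},$$
so in particular $\operatorname{diam}\bigl(\tilde x_h([0,1])\bigr)\leq \sqrt{C'}$.

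To conclude I would invoke Hopf--Rinow: the Riemannian metric on $M_1$ obtained by lifting $g$ is complete, because $(M,g)$ is compact and therefore complete, and the covering map is a local isometry. Since $Q_0^1$ is isometric to the compact manifold $Q_0$ and every $\tilde x_h(0)\in Q_0^1$, the closed set
$$K \ :=\ \bigl\{q\in M_1 \ \bigl|\ \operatorname{dist}_{M_1}(q,Q_0^1)\leq \sqrt{C'}\bigr\}$$
is compact, and by the diameter bound each $\tilde x_h([0,1])$ is contained in $K$. Setting $x_h^1 := \tilde x_h$ gives the desired lifts.

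The main technical point is the coherent choice of lifts. A priori the Palais-Smale sequence may scatter among different components of $\mathcal{M}_Q$, but since the application of this lemma is to prove compactness of Palais-Smale sequences with times bounded and bounded away from zero (generalizing Lemma \ref{lemmalimitatezza} to the present unbounded setting), it is harmless to pass to a subsequence confined to one component, which makes the constant $a(\nu)$ and the preimage $Q_0^1$ independent of $h$. The only other subtlety is that the inequality (\ref{firstinequality}) was stated for $L$, but it lifts verbatim to $L_1$ since the covering is a local isometry.
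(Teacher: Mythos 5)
Your argument has a genuine gap at its central estimate. The functional $\A_k$ appearing in Lemma \ref{rapportoskak} is the free-time action of the Lagrangian $L_{1,\lambda}=L_1+\lambda$ on $M_1$, where $\lambda$ is a primitive of $\sigma_1$ — it is \emph{not} the action of $L_1$ alone. The quadratic lower bound (\ref{firstinequality}) does lift verbatim to $L_1$, but it does not give
$\A_k(\tilde x_h,T_h)\geq \frac{a}{T_h}\|\tilde x_h'\|_2^2+T_h(k-b)$
unless the term $\int \lambda_{\tilde x_h}(\tilde x_h')\,ds$ can be absorbed, i.e.\ unless $\lambda$ is bounded on $M_1$. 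That is exactly estimate (\ref{solitastimalagrangiana}), which the paper is only allowed to use \emph{after} this lemma: in the unbounded case (infinite $c(L,\sigma;Q_0,Q_1)$, e.g.\ $\pi_1(M)$ amenable and $\sigma_1$ with no bounded primitive), $\lambda_q(v)$ is linear in $v$ with coefficients blowing up along $M_1$, so no uniform constants $a,b$ exist and the displayed inequality is simply false. Since the whole purpose of the lemma is to reduce the unbounded case to the bounded one, your proof is circular precisely where it matters; the same obstruction reappears if you instead try to bound the kinetic term of $S_k$ directly, because then the uncontrolled quantity is $\int_X\sigma-\int_{x_0}\theta_0+\int_{x_1}\theta_1$.

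The paper's proof avoids any use of the (bounded) action value altogether. It uses only the $T$-component of the Palais–Smale condition: by (\ref{derivataskT}) the $\sigma$- and $\theta$-terms of $S_k$ do not depend on $T$, so $\partial S_k/\partial T(x_h,T_h)=k-\frac{1}{T_h}\int_0^{T_h}E(\gamma_h,\dot\gamma_h)\,dt\to 0$, and since the energy $E$ of the Tonelli Lagrangian satisfies $E(q,v)\geq a\|v\|_q^2+b$, Cauchy–Schwarz yields $l(x_h)^2\leq \frac{T_h^2}{a}(\alpha_h+k+b)$, hence uniformly bounded lengths because the $T_h$ are bounded; one then takes lifts meeting a fixed fundamental domain. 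Your final step (coherent lifts starting on a compact lift $Q_0^1$, Hopf–Rinow, a metric neighbourhood of $Q_0^1$ as $K$) would be a perfectly acceptable substitute for the fundamental-domain argument, and the restriction to a subsequence in one component is harmless for the intended application; but you must first obtain the uniform bound on $\|\tilde x_h'\|_2$ (or on $l(x_h)$) from the derivative of $S_k$ in the $T$-direction rather than from the value of $S_k$.
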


\begin{proof}
Clearly it suffices to show that the $x_h$'s have uniformly bounded length, since then one can simply choose for every $h\in \N$ a lift $x_h^1$ of $x_h$ in such a way that 
$x_h^1([0,1])\cap F\neq \emptyset$, where $F\subseteq M_1$ is any fundamental domain for $M$. Since $(x_h,T_h)$ is a Palais-Smale sequence we have in particular that 
\[\alpha_h := \ - \frac{\partial}{\partial T} S_k(x_h,T_h) \ =\ \frac{1}{T_h}\int_0^{T_h} \Big [ E\Big (\gamma_h(t),\dot \gamma_h (t) \Big ) - k\Big ]\, dt \ \longrightarrow \ 0 \, ,\]
where as usual $\gamma_h=(x_h,T_h)$. Since $L$ is a Tonelli Lagrangian, its energy function is also Tonelli and hence satisfies 
$$E(q,v) \ \geq \ a\, \|v\|_q^2 \ + \ b\, , \ \ \ \ \forall \ (q,v)\in TM\, ,$$
for suitable constants $a>0$, $b\in \R$. In particular, we have
\[\alpha_h \ \geq \ \frac{a}{T_h}\, \int_0^{T_h} \|\dot \gamma_h(t)\|^2\, dt \  - \ (b+k) \ \geq \ \frac{a}{T_h^2} \, l(x_h)^2 \ -\ (b+k)\]
by the Cauchy-Schwartz inequality and hence 
$$ l(x_h)^2 \ \leq \ \frac{T_h^2}{a} \, (\alpha_h + k+b)\, .$$
Since the $T_h$'s are bounded and the $\alpha_h$'s infinitesimal, the assertion follows.
\end{proof}

\vspace{5mm}

Lemma \ref{reductiontobounded} implies that we can treat the unbounded case exactly as the bounded one, since any primitive of $\sigma_1$ is clearly bounded on any compact subset of $M_1$.

Therefore, we might suppose that $\sigma_1$ has a bounded primitive $\lambda$ on $M_1$; it follows then that there exist constants $a>0$, $b\in \R$ such that 
\begin{equation}
L_{1,\lambda}(q,v) \ \geq \ a \, \|v\|^2_q + b\, , \ \ \ \ \forall \ (q,v) \in TM_1\, ,
\label{solitastimalagrangiana}
\end{equation}
where $L_{1,\lambda}$ is defined as in (\ref{LagrangianM1}). We show now that Palais-Smale sequences on a connected component $\mathcal M_\nu$ of $\mathcal M_Q$ not containing constant paths 
have automatically $T_h$'s bounded away from zero. 

\begin{lemma}
Let $\mathcal M_\nu$ be a connected component of $\mathcal M_Q$ not containing constant paths and let $(x_h,T_h) \subseteq \mathcal M_\nu$ be a Palais-Smale sequence for $S_k$ at level $c$. Then there exists $T_*>0$ such that 
$T_h\geq T_*$ for every $h\in \N$.
\label{lemmaboundedfromzero}
\end{lemma}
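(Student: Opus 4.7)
The plan is to adapt the argument used to prove statement 2 of Lemma \ref{completeness1} to the functional $S_k$. The crucial observation is that the Palais--Smale condition forces $\partial_T S_k(x_h,T_h)$ to be infinitesimal, which via \eqref{derivataskT} pins down the mean energy along $\gamma_h$ and, combined with the Tonelli growth of $E$, constrains the length of $x_h$ to be of order $T_h$. Since on a component containing no constant paths the length of paths is bounded away from zero, this will immediately rule out $T_h\to 0$.

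First I would introduce $\ell_\nu := \inf\{ l(x)\ |\ (x,T)\in \mathcal M_\nu\}$ and prove that $\ell_\nu>0$. If not, one picks a sequence $(y_h,T_h)\in \mathcal M_\nu$ with $l(y_h)\to 0$; combining the compactness of $Q_0$ with $y_h(0)\in Q_0$, up to a subsequence $y_h(0)\to q\in Q_0$, and then the triangle inequality $\operatorname{dist}(y_h(s),q)\leq l(y_h) + \operatorname{dist}(y_h(0),q)$ forces $y_h\to q$ uniformly on $[0,1]$. Since $y_h(1)\in Q_1$, also $q\in Q_1$, so $q\in Q_0\cap Q_1$. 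The homotopy equivalence $H^1_Q([0,1],M)\hookrightarrow C^0_Q([0,1],M)$ from Section \ref{ahilbertmanifoldofloops} induces a bijection on $\pi_0$, hence for $h$ large the constant path $q$ lies in the same connected component of $\mathcal M_Q$ as $(y_h,T_h)$; as $\mathcal M_\nu$ contains no constant paths by hypothesis, this is a contradiction.

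Next, from $\|dS_k(x_h,T_h)\|\to 0$ and $\|\partial/\partial T\|\equiv 1$ with respect to the product metric $g_\mathcal M$, I would deduce that $\partial_T S_k(x_h,T_h)\to 0$; by \eqref{derivataskT} this means
\[
\frac{1}{T_h}\int_0^{T_h}\!E(\gamma_h(t),\dot\gamma_h(t))\,dt \ \longrightarrow\ k.
\]
Since $L$ is quadratic at infinity, so is $E$, hence there exist $a'>0$ and $b'\in\R$ such that $E(q,v)\geq a'\|v\|_q^2-b'$ for all $(q,v)\in TM$. Using Cauchy--Schwarz, $l(x_h)^2=\left(\int_0^{T_h}\|\dot\gamma_h\|\,dt\right)^2\leq T_h\int_0^{T_h}\|\dot\gamma_h\|^2\,dt$, so
\[
\frac{1}{T_h}\int_0^{T_h}\!E(\gamma_h,\dot\gamma_h)\,dt \ \geq\ \frac{a'}{T_h^2}\,l(x_h)^2 - b'.
\]
The left-hand side being bounded then yields $l(x_h)^2\leq C\,T_h^2$ for some $C>0$, and if $T_{h_j}\to 0$ along a subsequence we would obtain $l(x_{h_j})\to 0$, contradicting $l(x_h)\geq \ell_\nu>0$. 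Hence $T_h\geq T_*$ for some $T_*>0$.

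The main obstacle is the first step: the claim $\ell_\nu>0$ is purely topological and relies on the fact that the $C^0$-closure of $\mathcal M_\nu$ cannot pick up a constant path without $\mathcal M_\nu$ already containing one. Once this is established, the remainder is a routine Tonelli growth estimate combined with the formula for $\partial_T S_k$, and notably the argument does not depend on whether $c(L,\sigma;Q_0,Q_1)$ is finite, so no appeal to Lemma \ref{reductiontobounded} is needed.
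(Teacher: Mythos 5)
Your proof is correct, but it follows a genuinely different route from the paper's. The paper's proof never uses the gradient condition in the $T$-direction: it only uses the bound $S_k(x_h,T_h)\leq c+1$ along the Palais--Smale sequence, rewrites $S_k$ as $\A_k(\tilde x_h,T_h)+a(\nu)$ via the lift to $M_1$ (Lemma \ref{rapportoskak}) and the quadratic lower bound \eqref{solitastimalagrangiana} for $L_{1,\lambda}$ (which rests on a bounded primitive $\lambda$, hence in the unbounded case on the reduction of Lemma \ref{reductiontobounded}), and then invokes --- without proof --- the fact that lengths in $\mathcal M_\nu$ are bounded below by some $\epsilon>0$, arriving at $c+1\geq \frac{a}{T_h}\epsilon^2+(k+b)T_h+a(\nu)$, whose right-hand side blows up as $T_h\rightarrow 0$. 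You instead exploit only $\partial_T S_k(x_h,T_h)\rightarrow 0$ together with \eqref{derivataskT}, the Tonelli growth of $E$ and Cauchy--Schwarz to get $l(x_h)=O(T_h)$ --- this is exactly the estimate the paper itself uses in Lemma \ref{reductiontobounded} (and in Lemma \ref{lem:ps} of Chapter \ref{chapter5}) --- and then contradict the length lower bound, which you actually prove rather than assert (your argument for $\ell_\nu>0$ via compactness of $Q_0$, closedness of $Q_1$ and $\pi_0$-bijectivity of $H^1_Q\hookrightarrow C^0_Q$ is fine; the implicit step that uniform closeness to the constant path $q$ puts $y_h$ in its $C^0$-component is the standard local path-connectedness of $\Omega_{Q_0,Q_1}$, which the paper also uses freely). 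What each approach buys: yours is independent of the action level and of any bounded primitive, so it applies verbatim in the unbounded case and even to sequences with unbounded action but vanishing differential, and it fills in the length claim the paper leaves implicit; the paper's argument, conversely, uses only the upper action bound and no differential information, so it in effect shows the stronger statement that on $\mathcal M_\nu$ every sublevel set of $S_k$ has $T$ bounded away from zero, which is what is needed for completeness of sublevels elsewhere (e.g.\ in Theorem \ref{teorema1weaklyexact}).
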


\begin{proof}
Without loss of generality we may suppose that 
$$c+1 \ \geq \ S_k(x_h,T_h) \, , \ \ \ \ \forall \ h\in \N\, .$$

This implies, using (\ref{solitastimalagrangiana}), Lemma \ref{rapportoskak} and the fact that the length of any path in $\mathcal M_\nu$ is bounded away from zero by a positive constant, say $\epsilon >0$, that 
\begin{eqnarray*} 
c+1 &\geq&  S_k(x_h,T_h) \ = \  \A_k(\tilde x_h,T_h) \ + \ a(\nu) \  \geq \\
        &\geq& \frac{a}{T_h} \int_0^1 \|\tilde x'_h(s)\|^2 \, ds \ + \ (k+b)\, T_h \ +\ a(\nu) \ \geq \\ 
        &\geq& \frac{a}{T_h}\, \epsilon^2 \ +\ (k+b)\, T_h \ +\ a(\nu)\, .
\end{eqnarray*} 
This clearly shows that the $T_h$'s are bounded away from zero. 
\end{proof}

\vspace{5mm}

The next lemma ensures that Palais-Smale sequences with $T_h\rightarrow 0$ may arise only at level $c=0$.
The proof is the same as  for Lemma \ref{yesmodification}.

\begin{lemma}
Let $\mathcal M_\nu$ be a component of $\mathcal M_Q$ that contains constant paths and let $(x_h,T_h)$ be a Palais-Smale sequence for $S_k$ such that $T_h\rightarrow 0$, then $S_k(x_h,T_h)\rightarrow 0$.
\label{palaismalealivello0}
\end{lemma}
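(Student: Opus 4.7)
The plan is to adapt the proof of Lemma \ref{yesmodification} almost verbatim to the functional $S_k$, using Lemma \ref{rapportoskak} as the bridge. Since $\mathcal M_\nu$ contains constant paths, I would pick the reference path $x_\nu$ to be a constant; by the last sentence of Lemma \ref{rapportoskak} this gives $a(\nu)=0$, and hence
$$S_k(x_h,T_h) \ = \ \A_k(\tilde x_h,T_h)$$
for appropriate lifts $\tilde x_h$ to $M_1$ (in the unbounded case, Lemma \ref{reductiontobounded} applied to the bounded sequence $\{T_h\}\subseteq (0,+\infty)$ guarantees that we may choose the lifts so that their images all lie in a common compact $K\subseteq M_1$, on which the primitive $\lambda$ of $\sigma_1$ is bounded). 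Thus it suffices to show $\A_k(\tilde x_h,T_h)\to 0$.

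Next I would observe that the additional terms in the definition \eqref{functionalcbc} of $S_k$ do not depend on $T$, so the partial derivative $\partial S_k/\partial T$ is given by exactly the same formula \eqref{derivataskT} as $\partial\A_k/\partial T$. Consequently the Palais-Smale hypothesis for $S_k$, applied only in the $T$-direction, already yields
$$\alpha_h \ := \ \frac{1}{T_h}\int_0^{T_h}\Big[E(\gamma_h(t),\dot\gamma_h(t))-k\Big]\, dt \ \longrightarrow \ 0\,,$$
where $\gamma_h(t)=x_h(t/T_h)$. This is precisely the starting point of the argument of Lemma \ref{yesmodification}.

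From here the conclusion is routine. The Tonelli quadratic lower bound $E(q,v)\geq a'\|v\|_q^2-b'$ (valid since $L$ is electro-magnetic outside a compact set) combined with $\alpha_h\to 0$ yields $\int_0^{T_h}\|\dot\gamma_h(t)\|^2\, dt=O(T_h)$; then the two-sided quadratic bound $a\|v\|_q^2-b\leq L(q,v)\leq \tilde a\|v\|_q^2+\tilde b$ shows that $\int_0^{T_h}[L(\gamma_h,\dot\gamma_h)+k]\, dt = O(T_h)$. Since on $K$ the primitive $\lambda$ is bounded, the additional term $\int_0^{T_h}\lambda_{\tilde\gamma_h}(\dot{\tilde\gamma}_h)\, dt$ appearing in $\A_k(\tilde x_h,T_h)=\int_0^{T_h}[L_{1,\lambda}(\tilde\gamma_h,\dot{\tilde\gamma}_h)+k]\, dt$ is likewise $O(T_h)$ by Cauchy--Schwarz. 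Hence $\A_k(\tilde x_h,T_h)=O(T_h)\to 0$, which gives the desired conclusion.

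The only non-routine point is to ensure that the lifts $\tilde x_h$ can be chosen so that both the identification of Lemma \ref{rapportoskak} holds and $\lambda$ is uniformly bounded on the relevant portion of $M_1$; the first is automatic from the choice of a constant reference, and the second is taken care of in the unbounded case by Lemma \ref{reductiontobounded}. I do not anticipate any further obstacle.
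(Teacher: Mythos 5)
Your proposal is correct and follows essentially the same route as the paper, whose proof of this lemma is literally ``the same as Lemma \ref{yesmodification}'': you use that $\partial S_k/\partial T$ has the same expression \eqref{derivataskT} as $\partial \A_k/\partial T$ to get $\int_0^{T_h}\|\dot\gamma_h\|^2\,dt=O(T_h)$, and then the two-sided quadratic bounds give $S_k(x_h,T_h)=O(T_h)\to 0$ after reducing, via Lemma \ref{rapportoskak} with a constant reference path (so $a(\nu)=0$) and Lemma \ref{reductiontobounded} in the unbounded case, to a region where the primitive $\lambda$ is bounded. This is exactly the reduction of the unbounded case to the bounded one that the paper itself performs, so nothing further is needed.
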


We end this section showing that $S_k$ satisfies the Palais-Smale condition on subsets of $\mathcal M_Q$ with times bounded and bounded away from zero. 
The proof goes as in Lemma \ref{lemmalimitatezza} and is inspired by the analogous result in the periodic case (cf. \cite{Mer10}). 

\begin{prop}
Let $(x_h,T_h)\subseteq \mathcal M_\nu$ be a Palais-Smale sequence for $S_k$ such that $0<T_*\leq T_h\leq T^*<+\infty$ for every $h\in \N$. Then, passing to a subsequence if necessary, the sequence $(x_h,T_h)$ is convergent in the $H^1$-topology.
\label{completezzask}
\end{prop}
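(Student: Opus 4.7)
The plan is to reduce the statement to Lemma \ref{lemmalimitatezza} by lifting the sequence to the cover $M_1$, where $\sigma_1$ becomes exact and $S_k$ (up to an additive constant) coincides with a free-time Lagrangian action functional associated with the Tonelli Lagrangian $L_{1,\lambda}$ on $M_1$.

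First I would reduce to the bounded case: if $c(L,\sigma;Q_0,Q_1)=+\infty$ we invoke Lemma \ref{reductiontobounded} (noting that its hypothesis $T_h$ bounded from above is satisfied), obtaining a compact set $K\subseteq M_1$ and lifts $x_h^1:[0,1]\to M_1$ with $x_h^1([0,1])\subseteq K$ for every $h$. A primitive $\lambda$ of $\sigma_1$, while possibly unbounded on $M_1$, is bounded together with its derivatives on any compact neighborhood of $K$; in the bounded case we may instead choose $\lambda$ globally bounded from the start. Next, by Lemma \ref{rapportoskak} applied componentwise,
\[
\A_k^{M_1}(x_h^1,T_h) \;=\; S_k(x_h,T_h)\;+\;a(\nu),
\]
where $\A_k^{M_1}$ is the free-time Lagrangian action associated with $L_{1,\lambda}$ on the appropriate Hilbert submanifold $\mathcal M_{Q^1}\subseteq H^1([0,1],M_1)\times(0,+\infty)$, with $Q^1=Q_0^1\times Q_1^1$ the relevant lifts of $Q_0\times Q_1$. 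Since the covering map $p:M_1\to M$ is a local isometry, the induced tangent map identifies $T_{(x_h,T_h)}\mathcal M_Q$ with $T_{(x_h^1,T_h)}\mathcal M_{Q^1}$ isometrically, so $(x_h^1,T_h)$ is itself a Palais-Smale sequence for $\A_k^{M_1}$ at level $S_k(x_h,T_h)+a(\nu)$, with the same bounds $T_*\leq T_h\leq T^*$.

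I would then run the argument of Lemma \ref{lemmalimitatezza} on the lifted sequence. The quadratic-at-infinity growth of $L_{1,\lambda}$ (boundedness of $\lambda$ and its derivatives on $K$, combined with \eqref{firstinequality} and \eqref{secondinequality}) yields uniform bounds on $\|(x_h^1)'\|_{L^2}$ from the inequality $\A_k^{M_1}(x_h^1,T_h)\geq \frac{a}{T^*}\|(x_h^1)'\|_2^2 - T^*|b-k|$. Since the images lie in the compact set $K$, the family $\{x_h^1\}$ is equi-$1/2$-Hölder and, by Ascoli-Arzel\`a, has a subsequence converging uniformly to some $x^1\in C_{Q^1}([0,1],M_1)$; passing to a further subsequence, $x_h^1\rightharpoonup x^1$ weakly in $H^1$. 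Localizing in a bi-bounded chart around $x^1$ turns $(x_h^1,T_h)$ into a Palais-Smale sequence for the pulled-back electromagnetic functional on an open subset of a Hilbert space, and the strict fiberwise convexity $d_{vv}L_{1,\lambda}\geq 2a\cdot\mathrm{Id}$ combined with the vanishing of $d\A_k^{M_1}(x_h^1,T_h)[(x_h^1-x^1,0)]$ forces $(x_h^1)'\to (x^1)'$ in $L^2$, exactly as in the proof of Lemma \ref{lemmalimitatezza}. Projecting back through $p$ gives strong $H^1$ convergence of $(x_h,T_h)$ to $(p\circ x^1,T)\in\mathcal M_\nu$.

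The main obstacle is the unbounded case, where $\sigma_1$ has no globally bounded primitive and $S_k$ is not comparable to a single Lagrangian action on $M_1$. This is precisely what Lemma \ref{reductiontobounded} is designed to overcome: the a priori bound $T_h\leq T^*$, combined with the Tonelli energy estimate, controls the lengths of the $x_h$'s and lets one confine the lifts to a fixed compact region of $M_1$, on which $\lambda$ behaves as if it were globally bounded. Once this confinement is secured, the remainder of the argument is a faithful transcription of Lemma \ref{lemmalimitatezza} and requires no new ideas.
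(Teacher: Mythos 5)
Your proposal is correct and follows essentially the same route as the paper: reduction to the bounded case via Lemma \ref{reductiontobounded}, the relation $S_k=\A_k\circ(\text{lift})+a(\nu)$ from Lemma \ref{rapportoskak} to get the uniform $L^2$-bound on $x_h'$, Ascoli--Arzel\`a, and then the localization/convexity argument of Lemma \ref{lemmalimitatezza}. The only (cosmetic) difference is that you run the final localization step upstairs on $M_1$ and project back, whereas the paper performs it directly on $M$ ``replacing $\A_k$ with $S_k$''; note only that Lemma \ref{rapportoskak} concerns the lifts obtained from the lifted homotopies (which start on the compact lift $Q_0^1$ and hence are automatically confined), not the fundamental-domain lifts of Lemma \ref{reductiontobounded}, a point the paper glosses over as well.
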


\begin{proof}
Without loss of generality we have 
$$c+1 \ \geq \ S_k(x_h,T_h) \, , \ \ \ \ \forall \ h\in \N\, .$$
Using (\ref{rapportogenerale}) and (\ref{solitastimalagrangiana}) we therefore obtain 
\begin{eqnarray*}
c+1     &\geq& S_k(x_h,T_h) \ = \ \A_k(\tilde x_h,T_h) \ + \ a(\nu) \ \geq \\
	&\geq& \frac{a}{T_h} \int_0^1 \|x_h'(s)\|^2 \, ds \ + \ (k+b)\, T_h \ + \ a(\nu)\ \geq \\
	&\geq & \frac{a}{T^*} \, \|x'_h\|_2^2 \ - \ T^*\, |k+b| \ + \ a(\nu)
\end{eqnarray*}
where $\|\cdot \|_2$ denotes the $L^2$-norm with respect to the metric on $M$. Therefore, $\|x'_h\|_2$ is uniformly bounded and hence $\{x_h\}$ is $1/2$-equi-H\"older-continuous
$$\text{dist}\, \big (x_h(s),x_h(s')\big ) \ \leq \ \int_s^{s'} |x'_h(r)|\, dr \ \leq \ |s-s'|^{1/2} \|x'_h\|_2\, .$$

By the Ascoli-Arzel\`a theorem, up to taking a subsequence, $x_h$ converges uniformly to some $x\in C([0,1],M)$. Now one proves that actually $x_h$ converges to $x$ in $H^1$ just repeating 
the proof of Lemma \ref{lemmalimitatezza} replacing $\A_k$ with $S_k$.
\end{proof}

\vspace{5mm}

Combining Proposition \ref{completezzask} with Lemma \ref{boundednesssk} we get the following corollary, which is the analogue of Corollary \ref{compattezza} in this setting

\begin{cor}
If $k>c(L,\sigma;\langle H_0,H_1\rangle )$, then any Palais-Smale sequence $(x_h,T_h)$ for $S_k$  in a given connected component of $\mathcal M_Q$ with $\, T_h\geq T_*>0$ has a converging subsequence. 
As a corollary, for every $k>c(L,\sigma;\langle H_0,H_1\rangle )$, $S_k$ satisfies the Palais-Smale condition on every connected component of $\mathcal M_Q$ not containing constant paths.
\label{compattezzask}
\end{cor}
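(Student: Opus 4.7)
The plan is to mimic closely the proof of Corollary \ref{compattezza}, reducing everything to the upper bound on $T_h$ and then invoking the already-established Proposition \ref{completezzask}. Since that proposition asserts compactness for Palais-Smale sequences with times bounded and bounded away from zero, the hypothesis $T_h \geq T_* > 0$ takes care of the lower bound, and the core of the argument is to produce a uniform upper bound $T_h \leq T^*$ from the condition $k > c(L,\sigma;\langle H_0,H_1\rangle)$.

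First I would observe that $S_k$ depends linearly on $k$ through the free-time integrand in \eqref{functionalcbc}: the magnetic correction $\int_X \sigma - \int_{x_0}\theta_0 + \int_{x_1}\theta_1$ is independent of $k$, so
\[
S_k(x,T) \ = \ S_{k_0}(x,T) \ + \ (k-k_0)\, T, \qquad \forall (x,T) \in \mathcal M_Q,
\]
for every $k_0 \in \R$. Setting $k_0 := c(L,\sigma;\langle H_0,H_1\rangle)$ and solving for $T_h$ gives
\[
T_h \ = \ \frac{S_k(x_h,T_h) - S_{k_0}(x_h,T_h)}{k-k_0}.
\]
The numerator is controlled from above: by the Palais-Smale property, $S_k(x_h,T_h)$ is bounded from above (it converges), while Lemma \ref{boundednesssk} applied to $S_{k_0}$ yields a lower bound for $S_{k_0}(x_h,T_h)$ on the given connected component, since $k_0 \geq c(L,\sigma;Q_0,Q_1)$. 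Dividing by $k-k_0 > 0$ gives a uniform upper bound $T_h \leq T^* < +\infty$, and Proposition \ref{completezzask} then produces a convergent subsequence, proving the first assertion.

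For the second (``corollary'') assertion, I would just observe that the only missing ingredient in order to apply the first assertion is the lower bound $T_h \geq T_* > 0$, which on a connected component of $\mathcal M_Q$ not containing constant paths is automatic: this is exactly the content of Lemma \ref{lemmaboundedfromzero}, whose proof exploits the fact that on such a component the length of paths is bounded away from zero, together with the estimate \eqref{solitastimalagrangiana} (after the reduction to the bounded case via Lemma \ref{reductiontobounded}, which is harmless since Palais-Smale sequences with the $T_h$'s now already shown to be bounded can be lifted to a compact subset of $M_1$).

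There is no genuine obstacle here: all the real work has already been done in the preceding lemmas of Section \ref{thepalaissmalecondition}. The only subtle point — and it is purely bookkeeping — is to ensure that the reduction from the unbounded to the bounded case (Lemma \ref{reductiontobounded}) is permissible when invoking \eqref{solitastimalagrangiana}; this is automatic once one knows the $T_h$'s are bounded, which is precisely what the linear-in-$k$ decomposition above provides.
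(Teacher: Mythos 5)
Your argument is correct and is essentially the paper's: the corollary is obtained there by combining Proposition \ref{completezzask} with Lemma \ref{boundednesssk}, using exactly the linear decomposition $S_k = S_{k_0} + (k-k_0)T$ to bound the times from above, as in the proof of Corollary \ref{compattezza}, and the lower bound on the times in components without constant paths is handled just as you do. (Your side remark on the bounded/unbounded reduction is fine, and in fact even simpler: $k>c(L,\sigma;\langle H_0,H_1\rangle)$ forces this critical value to be finite, which already puts you in the bounded case.)
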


\vspace{-8mm}

%%%%%%%%%%%%%%%%%%%%%%%%%%%%%%%%%%%%%%%%%%%%%%%%%%%%%%%%%%%

\section{Existence results in the $M_1$-weakly-exact case}
\label{existenceresults}

In this section we use the tools introduced in the previous paragraphs to extend the main results of Chapter \ref{chapter3} to the $M_1$-weakly-exact setting, under the assumption that the functional 
$S_k$ as in \eqref{functionalcbc} is well-defined. Thus, let $L:TM\rightarrow \R$ be a Tonelli Lagrangian, $Q_0,Q_1\subseteq M$ closed connected submanifolds, $M_1$ a cover of $M$ defined by \eqref{m1capitolo5}. 
Suppose moreover that $\sigma$ is an $M_1$-weakly-exact 2-form and let $\theta_0$, $\theta_1$ be primitives of $\sigma|_{Q_0},\sigma|_{Q_1}$ respectively as in (\ref{forme}). Lemma \ref{buonadefinizionesk} shows that, 
if for every cylinder $C\subseteq M$ with first part of the boundary $\partial C_0$ contained in $Q_0$ and second part $\partial C_1$ contained in $Q_1$, the pull-back of $\sigma$ to $C$ admits a primitive $\theta$ with
$$(\theta-\theta_i)|_{\partial C_i} -\theta_i \ \ \ \text{exact} \, , \ \ \ \ i=0,1\, ,$$
then the functional $S_k$ as in (\ref{functionalcbc}) is indeed well-defined. Moreover, by Proposition \ref{correspondence5} its critical points correspond to the orbits of the flow of $(L,\sigma)$ with energy $k$ satisfying 
the conormal boundary conditions \eqref{lagrangianformulationtheta0theta1}. 

We start considering supercritical energy levels and prove results analogous to Theorem \ref{teorema1} and \ref{teorema2} in this context. We just sketch the proofs, since they can be obtained from the corresponding 
ones in Chapter \ref{chapter3} with minor adjustments.

\begin{teo}
Suppose $Q_0\cap Q_1 =\emptyset$ and let $k>c(L,\sigma;Q_0,Q_1)$. Then, every connected component of $\mathcal M_Q$ carries an orbit of the flow of $(L,\sigma)$ with energy $k$ satisfying the 
boundary conditions (\ref{lagrangianformulationtheta0theta1}), which is furthermore a global minimizers of $S_k$ among its connected component.
\label{teorema1weaklyexact}
\end{teo}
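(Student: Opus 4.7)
The plan is to mimic the proof of Theorem \ref{teorema1} with the free-time action functional $\A_k$ replaced by the functional $S_k$, exploiting the analogous tools now available in the $M_1$-weakly-exact setting.

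Fix a connected component $\mathcal N$ of $\mathcal M_Q$. Since $k>c(L,\sigma;Q_0,Q_1)$, Lemma \ref{boundednesssk} guarantees that $S_k$ is bounded from below on $\mathcal N$, so $c:=\inf_{\mathcal N} S_k$ is a real number. The goal is to show that this infimum is attained; by Proposition \ref{correspondence5}, a minimizer $(x,T)\in \mathcal N$ immediately yields an orbit $\gamma(t)=x(t/T)$ of the flow of the pair $(L,\sigma)$ with energy $k$ satisfying \eqref{lagrangianformulationtheta0theta1}.

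First I would check that the sublevel sets $\{S_k\leq c+1\}\cap \mathcal N$ are complete. Thanks to Lemma \ref{rapportoskak}, on $\mathcal N$ we have $S_k(x,T)=\A_k(\tilde x,T)+\text{const}$, where $\tilde x$ is a lift of $x$ to $M_1$ and $\A_k$ here is computed with respect to the Lagrangian $L_{1,\lambda}$ of \eqref{LagrangianM1}. Since $\lambda$ may be assumed bounded on the image of such lifts (in the unbounded case, via Lemma \ref{reductiontobounded}; in the bounded case, by the very definition of $c(L,\sigma;Q_0,Q_1)$), an estimate analogous to \eqref{stimaazione} holds and, together with the assumption $Q_0\cap Q_1=\emptyset$ which forces the length of any $x\in \mathcal N$ to be bounded away from zero, yields that $T$ itself is bounded away from zero on sublevels. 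Completeness then follows exactly as in Lemma \ref{completeness0}.

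Next, I would take a minimizing sequence $(x_h,T_h)\subseteq \mathcal N$ for $S_k$ and apply Ekeland's variational principle on the complete sublevel set $\{S_k\leq c+1\}\cap\mathcal N$ to replace it with a Palais-Smale sequence at level $c$. By the completeness just established, the periods $T_h$ are bounded away from zero, so Corollary \ref{compattezzask} (which is nothing but the combination of Proposition \ref{completezzask} with the upper bound on $T_h$ coming from the identity $T_h=(k-c(L,\sigma;Q_0,Q_1))^{-1}[S_k(x_h,T_h)-S_{c(L,\sigma;Q_0,Q_1)}(x_h,T_h)]$, valid exactly as in the proof of Corollary \ref{compattezza}) produces a converging subsequence. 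The limit is the required global minimizer of $S_k$ on $\mathcal N$.

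The main point to take care of, more a bookkeeping issue than a genuine obstacle, is the reduction from the unbounded case to the bounded one: when $c(L,\sigma;Q_0,Q_1)=+\infty$ the primitive $\lambda$ of $\sigma_1$ is not globally bounded, so the estimates that relate $S_k$ to $\A_k^{L_{1,\lambda}}$ have to be localized to a compact subset of $M_1$ containing suitable lifts of the minimizing sequence, via Lemma \ref{reductiontobounded}. Since the only use of the weak-exactness assumption is to guarantee the good definition of $S_k$ (via Lemma \ref{buonadefinizionesk}), everything else in the argument is a direct transcription of the proof of Theorem \ref{teorema1}.
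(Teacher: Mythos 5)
Your argument is correct and takes essentially the same route as the paper, whose proof is precisely the proof of Theorem \ref{teorema1} with $\A_k$ replaced by $S_k$: completeness of the sublevels of $S_k$ on each component (via Lemma \ref{rapportoskak} and the quadratic lower bound, i.e.\ the argument of Lemma \ref{lemmaboundedfromzero}) together with the Palais-Smale condition from Corollary \ref{compattezzask} yield a global minimizer, which is the desired orbit by Proposition \ref{correspondence5}. The only remarks are that your worry about the unbounded case is moot, since the hypothesis $k>c(L,\sigma;Q_0,Q_1)$ already forces this Ma\~n\'e critical value to be finite (so $\lambda$ may be taken bounded and Lemma \ref{reductiontobounded} is not needed), and that invoking Ekeland's principle instead of the paper's corollary of the minimax theorem is just an equivalent way of producing the Palais-Smale sequence.
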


\begin{proof}
The proof is the same as for Theorem \ref{teorema1}, just replacing $\A_k$ with $S_k$. Pick any connected component $\mathcal N$ of $\mathcal M_Q$. An argument as in Lemma \ref{lemmaboundedfromzero} 
shows that the sublevels of $S_k$ on $\mathcal N$ are complete. Moreover, Corollary \ref{compattezzask} implies that $S_k$ satisfies the Palais-Smale condition on $\mathcal N$. It follows that $S_k$ has a global 
minimizer on $\mathcal N$, which gives us the required orbit of the flow of $(L,\sigma)$.
\end{proof}

\vspace{3mm}

When $Q_0\cap Q_1\neq \emptyset$ we have connected components of $\mathcal M_Q$ containing constant paths and, for such components, a conclusion as in the theorem above might not hold. 
In fact, we have seen in Chapter \ref{chapter3} examples of intersecting submanifolds for which there were no Euler-Lagrange orbits satisfying the conormal boundary conditions for any supercritical energy value. 
However, an analogue of Theorem \ref{teorema3} holds also in this case; we state it directly in the general situation, thus without distinguishing between connected and not connected intersection. 
Before stating the theorem we define the energy value $k_{\mathcal N}(L,\sigma)$, which is the natural replacement  in this setting of the value $k_{\mathcal N}(L)$ defined in the third chapter. We thus set for every 
connected component $\mathcal N$ of $\mathcal M_Q$ containing constant paths
$$k_{\mathcal N}(L,\sigma) := \ \inf \ \Big \{k\in \R \ \Big |\ \inf_{\mathcal N} \ S_k \ \geq \ 0 \Big \} \ \geq \ c(L,\sigma;Q_0,Q_1)\, .$$ 

The proof of Theorem \ref{teorema2} and \ref{teorema3} goes through just replacing $\A_k$ with $S_k$, thus giving us corresponding generalizations of those results to this setting.

\begin{teo}
Suppose $Q_0\cap Q_1\neq \emptyset$. Then the following hold:

\begin{enumerate}
\item For every $k>c(L,Q_0,Q_1)$ and for every connected component of $\mathcal M_Q$ that does not contain constant paths there exists an orbit of the flow of $(L,\sigma)$ 
with energy $k$ satisfying the boundary conditions (\ref{lagrangianformulationtheta0theta1}) which is a global minimizer of $S_k$ among its connected component.

\item For every connected component $\mathcal N$ of $\mathcal M_Q$ containing constant paths and 
for every $k\in (c(L,\sigma;Q_0,Q_1), k_{\mathcal N}(L,\sigma))$ there exists an orbit of the flow of $(L,\sigma)$ with energy $k$ satisfying the boundary conditions 
(\ref{lagrangianformulationtheta0theta1}) which is a global minimizer of $S_k$ among $\mathcal N$. Moreover, if $\pi_l(\mathcal N)\neq 0$ for some $l\geq 1$, then there is such an orbit 
for every $k> k_{\mathcal N}(L,\sigma)$; if $k_{\mathcal N}(L,\sigma)>c(L,\sigma;Q_0,Q_1)$, then this holds also at level $k_{\mathcal N}(L,\sigma)$, provided that we allow the solution to be degenerate. 
\end{enumerate}
\label{teorema2weaklyexact}
\end{teo}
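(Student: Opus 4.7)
The plan is to mirror the proofs of Theorems \ref{teorema1}, \ref{teorema2} and \ref{teorema3} working with $S_k$ in place of $\A_k$, using the analytical machinery developed in Section \ref{thepalaissmalecondition}. For statement 1, fix a connected component $\mathcal{N}$ of $\mathcal M_Q$ not containing constant paths. Since $k > c(L,\sigma;Q_0,Q_1)$, Lemma \ref{boundednesssk} says $S_k$ is bounded from below on $\mathcal{N}$, and Corollary \ref{compattezzask} gives the Palais-Smale condition on $\mathcal{N}$ (combining the absence of constant paths with the fact, analogous to Lemma \ref{completeness0}, that sublevels are complete via the bound from Lemma \ref{lemmaboundedfromzero}). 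Therefore the infimum is attained and gives, via Proposition \ref{correspondence5}, the required orbit.

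For statement 2, first assume $k\in (c(L,\sigma;Q_0,Q_1),k_{\mathcal{N}}(L,\sigma))$. By definition of $k_{\mathcal{N}}(L,\sigma)$ we have $c:=\inf_{\mathcal{N}}S_k<0$. Sublevels $\{S_k\le c+\epsilon\}$ are complete for $\epsilon$ small (Lemma \ref{palaismalealivello0} prevents $T_h\to 0$ at a negative level, combined with the usual quadratic lower bound), and Palais-Smale sequences at level $c<0$ have times bounded away from zero again by Lemma \ref{palaismalealivello0}; combined with Corollary \ref{compattezzask} this yields a minimizer. When $\pi_l(\mathcal{N})\neq 0$ for some $l\geq 1$ and $k>k_{\mathcal{N}}(L,\sigma)$, I run a Lusternik-Fet minimax as in the proof of Theorem \ref{teorema2}, statement 3: picking a non-trivial class $\mathcal H\in[S^l,\mathcal{N}]$ and setting
$$c := \inf_{h\in\mathcal H}\ \max_{\zeta\in S^l}\ S_k(h(\zeta)),$$
the lower bound on the lengths of maps in $\mathcal H$ combined with the identity $S_k = S_{k_{\mathcal{N}}(L,\sigma)} + (k-k_{\mathcal{N}}(L,\sigma))\,T$ (via Lemma \ref{rapportoskak}) forces $c>0$ by the same parabola-in-$T$ argument. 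Thus Palais-Smale sequences at level $c$ have times bounded away from zero by Lemma \ref{palaismalealivello0}, Corollary \ref{compattezzask} delivers a critical point, and the truncated negative gradient flow of Remark \ref{flussotroncato} ensures the class remains positively invariant.

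For the borderline case $k = k_{\mathcal{N}}(L,\sigma)$ under the assumption $k_{\mathcal{N}}(L,\sigma)>c(L,\sigma;Q_0,Q_1)$, I follow the approximation scheme at the end of the proof of Theorem \ref{teorema2}: take $k_n\downarrow k_{\mathcal{N}}(L,\sigma)$ and the corresponding critical points $(x_n,T_n)$ given by the previous step. The sequence $c(k_n)=S_{k_n}(x_n,T_n)$ is decreasing and bounded below by $0$, hence converges to some $c_\infty\ge 0$, and an easy computation using \eqref{derivataskT} shows $(x_n,T_n)$ is a Palais-Smale sequence for $S_{k_{\mathcal{N}}(L,\sigma)}$ at level $c_\infty$. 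If the $T_n$'s stay bounded away from zero, Proposition \ref{completezzask} delivers the orbit. Otherwise $T_n\to 0$, Lemma \ref{palaismalealivello0} forces $c_\infty=0$, and up to a subsequence $x_n$ converges uniformly to a point $q\in Q_0\cap Q_1$ (note $(q,0)$ is not an orbit since $k_{\mathcal{N}}(L,\sigma)\ge c(L,\sigma;Q_0,Q_1)>e_0(L)$ in the relevant cases). I then recover a degenerate solution by the same extension trick: extend each $\gamma_n$ by small lengths $\epsilon_m$ on both ends to obtain orbits satisfying conormal conditions for perturbed submanifolds $Q_0^{(m)},Q_1^{(m)}$, apply Proposition \ref{completezzask} (which is independent of the finiteness of the Ma\~n\'e critical value) to extract limits $\gamma^{(m)}$ with $\gamma^{(m)}(0)=q$, extend further by a uniform length $\epsilon$, and use the equi-H\"older bound to pass to the limit $m\to\infty$ in $H^1$.

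The main obstacle is the borderline case: unlike the proof in Chapter \ref{chapter3}, the functional $S_k$ is not literally an action integral but contains extra topological terms, so I must verify that the approximation argument (extension of orbits past $0$ and $T_n$, and the subsequent limiting process) commutes correctly with the choice of reference path defining $S_k$ on each $\mathcal{M}_{Q^{(m)}}$. This is handled by observing that on each perturbed component the difference $S_k-\A_k\circ(\text{lift})$ is a constant (Lemma \ref{rapportoskak}), so the Palais-Smale property transfers, and the required energy and conormal identities for the limit orbit follow pointwise from Proposition \ref{correspondence5} applied locally.
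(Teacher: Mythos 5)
Your proposal is correct and follows exactly the route the paper takes: the paper's own "proof" consists of the remark that the arguments of Theorems \ref{teorema2} and \ref{teorema3} go through verbatim with $\A_k$ replaced by $S_k$, using Lemmas \ref{rapportoskak}, \ref{boundednesssk}, \ref{lemmaboundedfromzero}, \ref{palaismalealivello0} and Proposition \ref{completezzask}/Corollary \ref{compattezzask} in place of their Chapter \ref{chapter3} counterparts, which is precisely what you spell out (including the approximation/extension argument for the borderline level $k_{\mathcal N}(L,\sigma)$). The only cosmetic slip is the parenthetical claim $c(L,\sigma;Q_0,Q_1)>e_0(L)$, which should be $\geq$; the needed inequality $k_{\mathcal N}(L,\sigma)>e_0(L)$ follows anyway from the standing assumption $k_{\mathcal N}(L,\sigma)>c(L,\sigma;Q_0,Q_1)\geq e_0(L)$.
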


We now move to study the existence of orbits for subcritical energies. In virtue of Lemma \ref{reductiontobounded}  
we can treat the bounded and unbounded case at once. We enounce the main result directly in the general case, again without distinguishing between connected and not connected intersection. 

Suppose $Q_0\cap Q_1\neq \emptyset$. Notice that the primitives $\theta_0$ and $\theta_1$ of $\sigma$ on $Q_0$, $Q_1$ given by (\ref{forme}) coincide on $Q_0\cap Q_1$; this allows us to define a primitive $\theta$ of $\sigma$ on 
a small neighborhood of the intersection. Now, let $\mathcal N$ be a connected component of $\mathcal M_Q$ containing constant paths. Denote by $\Omega$ the set of constant paths contained in $\mathcal N$; 
observe that $\Omega$ need not be connected. Therefore, let $\Omega_1,\Omega_2,...$ be the connected components of $\Omega$ and for every $j=1,2, ...$ define 
$$k_{\Omega_j}:= \ \max _{q\in \Omega_j} \ E(q,0) \ + \ \max_{q\in \Omega_j} \ \frac{\|\theta_q\|^2}{4a}\,, \ \ \ \ k_{\Omega_j}^- := \ \min_{q\in \Omega_j} \ E(q,0) \ + \ \max_{q\in \Omega_j} \ \frac{\|\theta_q\|^2}{4a} \, .$$
For every $k\in (k_{\Omega_j}^-,k_{\Omega_j})$, respectively $k \in (k_{\Omega_j},c(L,\sigma;Q_0,Q_1)$, we define the minimax classes  $\Gamma_{\Omega_j}, \Gamma_j$  as
$$\Gamma_j := \Big \{ u :[0,1]\rightarrow \mathcal N \ \Big |\ u(0)=(p,T)\in \Omega_j\times (0,+\infty) \, ,\ S_k(u(1))<0 \Big \}  $$
and 
$$\Gamma_{\Omega_j} := \Big \{u:[0,1]\times \Omega_j\rightarrow \mathcal N \ \Big | \ u(0,q) = (q,T)\, , \ S_k(u(1,q))< 0 \,, \ \forall q\in \Omega_j \Big \}\, .$$
We can now define the minimax functions as in (\ref{minimaxfunction1}) and (\ref{minimaxfunction2}) just replacing $\A_k$ with $S_k$.
Exactly as done in Chapter \ref{chapter3}, one can show that the minimax functions are monotonically increasing in $k$ and strictly positive. Lemma \ref{struwe} goes through word by word replacing $\A_k$ with $S_k$ and allows to prove the following

\begin{teo}
Let $\mathcal N$ be a connected component of $\mathcal M_Q$ which contains constant paths. Then, for almost every
$$k\in \left (\min_j\, k_{\Omega_j}^-\, , \, c(L,\sigma;Q_0,Q_1 )\right )$$ 
there is an orbit of the flow of $(L,\sigma)$ with energy $k$ satisfying the conditions (\ref{lagrangianformulationtheta0theta1}).
\label{teobasseenergie2weaklyexact}
\end{teo}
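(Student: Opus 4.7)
The plan is to mirror the proof of Theorem \ref{teobasseenergie2} word-by-word, replacing the free-time Lagrangian action functional $\A_k$ with the functional $S_k$ defined in \eqref{functionalcbc} and using the analytical machinery developed in Sections \ref{thefunctionalsk} and \ref{thepalaissmalecondition}. By Lemma \ref{reductiontobounded} it is enough to handle the bounded case, so we may assume $\lambda$ is a bounded primitive of $\sigma_1$ on $M_1$ and hence that the estimate \eqref{solitastimalagrangiana} holds.

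First, I would establish the mountain-pass geometry for $S_k$. Fix a connected component $\Omega_j$ of the set of constant paths in $\mathcal N$. Since $\sigma$ is exact on a neighborhood $\mathcal U$ of $\Omega_j$ (being exact on both $Q_0$ and $Q_1$ with primitives agreeing on $Q_0\cap Q_1\supseteq \Omega_j$), the restriction $S_k|_{\mathcal U}$ differs from the free-time action functional for the electro-magnetic Lagrangian $L+\theta$ (with $\theta$ a local primitive of $\sigma$ on $\mathcal U$) only by an additive constant depending on the connected component of $\mathcal M_Q$. Therefore, writing $\theta_q(\cdot)=d_vL(q,0)[\cdot]+\theta(q)[\cdot]$ for the total effective one-form on $\mathcal U$, the pointwise Taylor estimate \eqref{stimacontheta} goes through verbatim. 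Repeating the argument of Lemma \ref{lemmaminimax1} (for $k\in(k_{\Omega_j},c(L,\sigma;Q_0,Q_1))$ on the class $\Gamma_j$) and of Lemma \ref{lemmaminimax2} (for $k\in(k_{\Omega_j}^-,k_{\Omega_j})$ on the class $\Gamma_{\Omega_j}$), one obtains that the corresponding minimax function $c_j(k)$ is strictly positive on the whole interval $(k_{\Omega_j}^-,c(L,\sigma;Q_0,Q_1))$. The non-emptiness of $\Gamma_{\Omega_j}$ below $k_{\Omega_j}$ follows exactly as in \cite[Lemma 7.2]{Abb13} transposed to $S_k$.

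Next, note that the identity $S_k=S_{k'}+(k-k')T$ (which is immediate from \eqref{functionalcbc}) implies that each $c_j(\cdot)$ is monotonically non-decreasing in $k$, hence differentiable at almost every point. At every such point of differentiability $\bar k$, I would run the Struwe monotonicity argument (Lemma \ref{struwe}) with $\A_k$ replaced by $S_k$: the proof goes through unchanged, since the only ingredients used are the monotonicity of $c(\cdot)$ in $k$, the lower bound \eqref{firstinequality} (which translates via \eqref{rapportogenerale} to the analogous lower bound for $S_k$ in terms of the lifted path $\tilde x$ on $M_1$ together with the boundedness of $\lambda$), and the positive completeness of an appropriately truncated negative gradient flow, which is ensured by Lemma \ref{palaismalealivello0} in place of Lemma \ref{convergenza}. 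This yields a bounded Palais-Smale sequence $(x_h,T_h)\subseteq \mathcal N$ for $S_{\bar k}$ at the positive level $c_j(\bar k)$.

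Finally, I extract a convergent subsequence. Since $c_j(\bar k)>0$, Lemma \ref{palaismalealivello0} forces the $T_h$ to be bounded away from zero; the Struwe construction has already provided an upper bound on the $T_h$. Proposition \ref{completezzask} then produces a subsequence converging in $H^1$ to some $(x,T)\in\mathcal N$, which by Proposition \ref{correspondence5} corresponds to an orbit of the flow of $(L,\sigma)$ with energy $\bar k$ satisfying \eqref{lagrangianformulationtheta0theta1}. Ranging $j$ over the (finitely many) connected components of the set of constant paths in $\mathcal N$ covers the full interval $(\min_j k_{\Omega_j}^-,c(L,\sigma;Q_0,Q_1))$ and concludes the proof. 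The main technical obstacle in this programme is the verification that the mountain-pass estimates of Lemmas \ref{lemmaminimax1}--\ref{lemmaminimax2} survive the passage from $\A_k$ to $S_k$: this is non-trivial because $S_k$ is globally defined only up to the ambiguity discussed in Lemma \ref{buonadefinizionesk}, but it becomes routine once one localises in a tubular neighborhood of the constant paths where $\sigma$ admits an honest primitive and $S_k$ coincides, up to a locally constant term, with the electro-magnetic free-time action.
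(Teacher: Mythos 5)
Your proposal follows essentially the same route as the paper, which itself only sketches the argument: define the classes $\Gamma_j$, $\Gamma_{\Omega_j}$ with $S_k$ in place of $\A_k$, check positivity and monotonicity of the minimax functions exactly as in Chapter \ref{chapter3} (using that near the constant paths $S_k$ coincides, via Lemma \ref{rapportoskak} with $a(\nu)=0$, with the lifted electro-magnetic action), run Lemma \ref{struwe} word by word for $S_k$, and conclude with Lemma \ref{palaismalealivello0} and Proposition \ref{completezzask}. This is correct; the only cosmetic remark is that the positive completeness of the truncated flow really rests on the $S_k$-analogue of Lemma \ref{convergenza} (proved by the identical computation) rather than on Lemma \ref{palaismalealivello0} itself, which concerns Palais-Smale sequences.
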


%%%%%%%%%%%%%%%%%%%%%%%%%%%%%%%%%%%%%%%%%%%%%%%%%%%%%%%%

\section{When $\sigma$ is only closed}
\label{whensigmaisonlyclosed}

When $\sigma$ is only closed and does not satisfy the technical assumptions at the beginning of Section \ref{existenceresults}, a functional $S_k$ whose critical points are exactly the orbits of the flow defined by the pair $(L,\sigma)$ and satisfying 
the boundary conditions (\ref{lagrangianformulationtheta0theta1}) is in general not available. However, its differential $\eta_k$ turns to be well-defined just assuming that the restriction of $\sigma$ to $Q_0$ and $Q_1$ is exact 
(observe that this assumption is the only one needed to give a sense to the boundary conditions); moreover, its zeros still correspond to the orbits of the flow defined by $(L,\sigma)$ with energy $k$ satisfying the boundary conditions 
\eqref{lagrangianformulationtheta0theta1}. We call $\eta_k\in \Omega^1(\mathcal M_Q)$ the \textit{action 1-form}. 

Though the action 1-forms considered here and in the next chapter are slightly different, 
they share the same properties. We study these properties rigorously in the next chapter (cf. Sections \ref{theaction1form} and \ref{ageneralizedpseudogradient}).

\vspace{3mm}

Throughout this section we assume that $Q_0\cap Q_1\neq \emptyset$ and that $\sigma$ is a closed 2-form with exact restriction to $Q_0$ and to $Q_1$. We fix two primitives $\theta_0$, $\theta_1$ of $\sigma|_{Q_0}$, $\sigma|_{Q_1}$
respectively and assume that they are both obtained by extending a fixed primitive $\theta$ of $\sigma$ on $Q_0\cap Q_1$ (notice that this is not always possible). We finally assume that there exists $l\geq 1$ such that $\pi_l(\mathcal M_Q)\neq 0$. 

In this setting, we look for orbits of the flow defined by $(L,\sigma)$ with energy $k$ and satisfying the boundary conditions \eqref{lagrangianformulationtheta0theta1} with respect to $\theta_0$, $\theta_1$. 
More precisely, we show that there exists an energy level above which existence of orbits for the flow of $(L,\sigma)$ satisfying the boundary conditions (\ref{lagrangianformulationtheta0theta1})
is guaranteed for almost every energy. This result generalizes the second part of Theorem \ref{teobasseenergie2weaklyexact} to this setting.

To do this we will use a method analogous to the one exploited in Chapter \ref{chapter5} to study the existence of periodic orbits for the flow of $(L,\sigma)$. Namely, we look for zeros of the action 1-form 
using variational methods which are actually similar to the ones already used in the previous chapters. The main difficulty in this setting relies precisely on the lack of the action; this will force us to replace Palais-Smale sequences by
the so-called ``critical sequences'' for $\eta_k$ (cf. Section \ref{theaction1form}) and to prove a compactness criterion for critical sequences with times bounded and bounded away from zero which generalizes Proposition \ref{completezzask}.

\vspace{3mm}

\noindent The computations in Section \ref{thefunctionalsk} show that, under the assumptions above,
\begin{equation}
\eta_k(x,T) := \ d\A_k^L(x,T) \ + \int_0^1 \sigma_{x(s)}(x'(s),\cdot ) \, ds \ + \ (\theta_0)_{x(0)}[\cdot] \ - \ (\theta_1)_{x(1)}[\cdot]
\label{etaktheta0theta1}
\end{equation}
defines a 1-form on $\mathcal M_Q$, called the \textit{action 1-form}. Here $\A_k^L:\mathcal M_Q\rightarrow \R$ denotes the free-time Lagrangian action functional associated to the Tonelli Lagrangian $L$. We sum up the properties of the 
action 1-form in the following lemma; the proof is analogous to the one for the periodic case contained in \cite{AB14} (see also Section \ref{theaction1form}).

\begin{lemma}
The action 1-form $\eta_k\in \Omega^1(\mathcal M_Q)$ is locally exact and locally Lipschitz. Moreover $\gamma=(x,T)\in \mathcal M_Q$ is a  zero of $\eta_k$ if and only if $\gamma$ is an orbit of the flow defined by $(L,\sigma)$ with energy 
$k$ satisfying the boundary conditions (\ref{lagrangianformulationtheta0theta1}).
\label{propertiesofetaktheta0theta1}
\end{lemma}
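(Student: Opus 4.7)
The plan is to establish the three assertions in sequence: regularity of $\eta_k$ as a $1$-form, local exactness together with local Lipschitz continuity, and finally the variational characterisation of its zeros.

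The fact that $\eta_k$ is a genuine $1$-form on $\mathcal M_Q$ is essentially bookkeeping: the differential $d\A_k^L$ is continuous and locally Lipschitz by the free-time analogue of Theorem \ref{regularity} given in Proposition \ref{propregularity}, the bilinear term $\int_0^1 \sigma_{x(s)}(x'(s),\cdot)\, ds$ is smooth in $(x,T)$ and continuous in the tangent variable by the Sobolev control of $x'$ in $L^2$ and of $x$ in $C^0$, while the endpoint terms involving $\theta_0$ and $\theta_1$ are pulled back by the smooth evaluation maps $x\mapsto x(0), x(1)$.

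For local exactness I would work locally around a fixed $(x_*,T_*)\in\mathcal M_Q$. By compactness of $x_*([0,1])$ I can choose an open neighborhood $V\subseteq M$ of this image such that $V$, $V\cap Q_0$ and $V\cap Q_1$ are each either empty or contractible. The Poincar\'e lemma yields a primitive $\alpha\in\Omega^1(V)$ of $\sigma|_V$. On each $V\cap Q_j$ the difference $\theta_j-\alpha|_{V\cap Q_j}$ is closed because both forms have differential $\sigma|_{Q_j}$, and contractibility produces smooth primitives $g_j\in C^\infty(V\cap Q_j)$. On the open subset $\mathcal U\subseteq \mathcal M_Q$ of pairs $(x,T)$ with $x([0,1])\subseteq V$, I would define
$$F(x,T) := \A_k^L(x,T) - \int_x \alpha + g_0(x(0)) - g_1(x(1)).$$
A direct calculation, using the standard variational formula
$$d\Big(\int_x \alpha\Big)[\zeta] = \alpha_{x(1)}[\zeta(1)] - \alpha_{x(0)}[\zeta(0)] - \int_0^1 \sigma_{x(s)}(x'(s),\zeta(s))\, ds$$
and the appropriate choice of signs in $g_0,g_1$, shows $dF=\eta_k$ on $\mathcal U$. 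The local Lipschitz continuity of $\eta_k$ is then immediate: the contribution from $d\A_k^L$ is locally Lipschitz by Proposition \ref{propregularity}, while the remaining pieces in $dF$ are smooth in $(x,T)$.

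To characterise the zeros I would exploit the local description above: a zero of $\eta_k$ at $(x,T)\in\mathcal U$ is exactly a critical point of $F$, which differs from the free-time Lagrangian action of $L$ together with a local primitive of $\sigma$ only by the smooth endpoint terms $g_0(x(0))-g_1(x(1))$. Testing against variations $\zeta$ compactly supported in $(0,1)$ and applying Theorem \ref{criticalpoints} produces the Euler--Lagrange equation of the locally modified Lagrangian, which by the gluing construction at the start of the chapter is precisely the equation of the flow of $(L,\sigma)$; a boot-strap as in the proof of Theorem \ref{criticalpoints} upgrades $\gamma=(x,T)$ to a smooth orbit. Testing against $(0,H)$ and using the analogue of \eqref{derivatainT} gives $E(\gamma,\dot\gamma)\equiv k$. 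Finally, letting $\zeta(0)\in T_{x(0)}Q_0$ and $\zeta(1)\in T_{x(1)}Q_1$ vary freely and integrating by parts the interior term shows that the endpoint contributions of $\eta_k$ collapse to precisely the conormal conditions \eqref{lagrangianformulationtheta0theta1} with respect to $\theta_0$ and $\theta_1$; the converse direction is contained in the same computation.

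The main technical nuisance I foresee is sign-bookkeeping in the local exactness step, in particular pinning down the signs in the definitions of $\alpha$, $g_0$, $g_1$ so that the local primitive $F$ recovers the conormal boundary terms of $\eta_k$ with the correct orientation on both $Q_0$ and $Q_1$; no new analytical input beyond the $C^{1,1}$-regularity already established and the classical Du Bois-Reymond argument used in Theorem \ref{criticalpoints} is required.
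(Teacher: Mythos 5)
Your overall architecture (regularity from Proposition \ref{propregularity}, an explicit local primitive of $\eta_k$, and reduction of the zero-characterisation to the Du Bois--Reymond argument of Theorem \ref{criticalpoints}) is the right one and matches what the paper intends, since it only refers to the periodic case of \cite{AB14} and to Section \ref{theaction1form}. However, the localisation step as you state it fails: compactness of $x_*([0,1])$ does \emph{not} allow you to choose a neighborhood $V\subseteq M$ of the whole image with $V$, $V\cap Q_0$, $V\cap Q_1$ contractible. If $x_*$ runs along a loop which is non-contractible in $M$, every neighborhood of its image contains that loop and cannot be contractible; this by itself is harmless (you only need $\sigma|_V$ exact, which does hold on small neighborhoods of the image because the image of an $H^1$-path is a compact set of dimension one, but this requires an argument, not the Poincar\'e lemma). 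The genuinely fatal point is the boundary data: if the image of $x_*$ meets $Q_0$ along a loop that is non-contractible in $Q_0$ (e.g.\ $M=\T^2$, $Q_0$ a meridian and $x_*$ running once along it before leaving), then $V\cap Q_0$ contains all of $Q_0$ for every neighborhood $V$, and the closed $1$-form $(\theta_0-\alpha)|_{V\cap Q_0}$ has in general a non-zero period, so no primitive $g_0$ exists on $V\cap Q_0$ and your functional $F$ is not defined.

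The repair is to localise in $\mathcal M_Q$ rather than through a single open set of $M$: local exactness is a statement on a neighborhood of $(x_*,T_*)$ in $\mathcal M_Q$, which you may take inside a $C^0$-ball, so the endpoints of all competing paths stay in small contractible neighborhoods $W_0\ni x_*(0)$ in $Q_0$ and $W_1\ni x_*(1)$ in $Q_1$; the primitives $g_j$ of $(\theta_j-\alpha)|_{W_j}$ are then only needed there, where they exist. For the primitive $\alpha$ of $\sigma$ along the path one either invokes the vanishing of $H^2$ of small neighborhoods of the one-dimensional image, or, as the paper does following \cite{AB14}, works in a bi-bounded time-depending chart $\phi:[0,1]\times B_\rho\rightarrow M$ covering the path (as in Section \ref{ahilbertmanifoldofloops}, adapted to $Q$): the pull-back of $\sigma$ to the contractible set $[0,1]\times B_\rho$ admits a primitive which is absorbed into a local Lagrangian, so that in the induced chart of $\mathcal M_Q$ one has $\eta_k=d\A_k^{L_{\psi,\sigma}}$ as in Lemma \ref{lem:lagloc}; local Lipschitz continuity then follows as in Corollary \ref{cor:reg}, and your final step (energy identity from the $T$-derivative, conormal conditions \eqref{lagrangianformulationtheta0theta1} from endpoint variations, smoothness by boot-strap as in Theorem \ref{criticalpoints}) goes through unchanged.
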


Since $\eta_k$ is locally the differential of a $C^1$-functional, its integral over closed loops in $\mathcal M_Q$ depends only on the homotopy class of the loop; in this sense we say that $\eta_k$ is closed, even though in general it is not differentiable.

In order to show the existence of zeros of $\eta_k$, we look for limiting points of critical sequences for $\eta_k$, that is sequences $(x_h,T_h)$ such that 
$$\|\eta_k(x_h,T_h)\|\longrightarrow 0\, .$$

The fact that $\eta_k$ is continuous implies that the set of zeros 
coincides with the set of limiting points of critical sequences. The following proposition provides a compactness criterion for critical sequences of $\eta_k$; the proof is the same as for Proposition \ref{prp:ps2}. Here $e(x)$ 
denotes the kinetic energy of the Sobolev-path $x:[0,1]\rightarrow M$.

\begin{prop}
Suppose $(x_h,T_h)$ is a critical sequence for $\eta_k$ in a connected component of $\mathcal M_Q$ with uniformly bounded times, then:
\begin{enumerate}
\item If $T_h\geq T_*$ for every $h\in \N$, then $(x_h,T_h)$ admits a converging subsequence.
\item If $T_h\rightarrow 0$, then $e(x_h)\rightarrow 0$.
\end{enumerate} 
\label{prp:pstheta0theta1}
\end{prop}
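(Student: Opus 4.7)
The plan is to mimic the strategy used in Proposition \ref{completezzask} (and Lemma \ref{lemmalimitatezza}), but with the bookkeeping adjustment that, since $\eta_k$ is only locally exact, we will have to pass to a local primitive before running the classical Palais-Smale argument on a Hilbert space.

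First I would test $\eta_k(x_h,T_h)$ against the vector field $\partial/\partial T$. Since the extra terms involving $\sigma$, $\theta_0$, $\theta_1$ in \eqref{etaktheta0theta1} depend only on $x$ and not on $T$, only the free-time Lagrangian action contributes and one obtains, using \eqref{derivataskT},
$$\eta_k(x_h,T_h)\Big[\tfrac{\partial}{\partial T}\Big] \ =\ \int_0^1\Big[k - E\Big(x_h(s),\tfrac{x_h'(s)}{T_h}\Big)\Big]\,ds \ =\ o(1)\,.$$
Combined with the quadratic lower bound $E(q,v)\geq a\|v\|_q^2 - b$ (coming from the Tonelli/electromagnetic-at-infinity assumption, together with $\sup_M E(\cdot,0)<\infty$), this yields
$$\frac{a}{T_h^2}\int_0^1\|x_h'(s)\|^2\,ds \ \leq \ k+b+o(1)\,.$$
For part \textit{2}, since $T_h\to 0$ this inequality directly gives $\int_0^1\|x_h'(s)\|^2\,ds=\mathcal O(T_h^2)\to 0$, hence $e(x_h)\to 0$.

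For part \textit{1}, the same bound, with $T_h\in[T_*,T^*]$, produces a uniform $H^1$-bound on $x_h$, so $\{x_h\}$ is $1/2$-equi-H\"older continuous and, by Ascoli-Arzel\`a, a subsequence converges uniformly to some continuous $x\in H^1_Q([0,1],M)$; passing to a further subsequence, $T_h\to T\in[T_*,T^*]$. For $h$ large, $x_h$ lies in the image of a single chart $\phi_*$ of $H^1_Q([0,1],M)$, on whose product with $(0,+\infty)$ the 1-form $\eta_k$ is exact, $\eta_k=d\widehat{\A}_k$, for some $C^{1,1}$-functional $\widehat{\A}_k$ obtained by adding to the pullback of the free-time action of $L$ smooth terms coming from local primitives of $\sigma$, $\theta_0$, $\theta_1$. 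In this chart, $(\zeta_h,T_h)$ (where $\zeta_h:=\phi_*^{-1}(x_h)$) is an honest Palais-Smale sequence for $\widehat{\A}_k$ on the Hilbert space $H^1_W([0,1],\R^n)\times(0,+\infty)$, with $\zeta_h$ bounded in $H^1$ and $T_h\in[T_*,T^*]$. The strong $H^1$-convergence $\zeta_h\to\zeta$ now follows verbatim from the argument in Lemma \ref{lemmalimitatezza}: testing $d\widehat{\A}_k(\zeta_h,T_h)$ against $(\zeta_h-\zeta,0)$, using the growth bounds \eqref{boundsconvergenza} on $d_q\widehat L$ and $d_v\widehat L$, the uniform fiberwise convexity $d_{vv}\widehat L\geq\delta\,\mathrm{Id}$, and the compactness of $H^1\hookrightarrow L^2$, one obtains $\|\zeta_h'-\zeta'\|_2\to 0$.

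The hard part is essentially the only non-routine point, namely passing from the intrinsic statement ``$\|\eta_k(x_h,T_h)\|\to 0$'' to a classical Palais-Smale statement on a Hilbert chart; the smooth boundary-contribution terms $(\theta_0)_{x(0)}[\cdot]-(\theta_1)_{x(1)}[\cdot]$ and the magnetic integral $\int_0^1\sigma_x(x',\cdot)\,ds$ are smooth functionals of $x\in H^1$ (continuous on $L^2\times H^1$ in the second-differential sense), hence act as compact perturbations in the $H^1$-convergence step and do not interfere with the convexity argument. No new analytical difficulty beyond those already handled in Proposition \ref{completezzask} arises.
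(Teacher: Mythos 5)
Your proof is correct and follows essentially the same route as the paper: the paper proves this by the argument of Proposition \ref{prp:ps2}, i.e. testing $\eta_k$ against $\partial/\partial T$ to get $e(x_h)=\mathcal O(T_h^2)$ (Lemma \ref{lem:ps}), which gives statement 2 and the equi-H\"older bound, and then passing via Ascoli--Arzel\`a to a single chart where $\eta_k$ is exact and running the Lemma \ref{lemmalimitatezza} convexity/compact-embedding argument for strong $H^1$-convergence. Your observation that the $\sigma$- and $\theta_0,\theta_1$-terms do not contribute to the $\partial/\partial T$-component and act as compact perturbations in the chart is exactly the bookkeeping the paper relies on.
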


\vspace{-2mm}

The goal will be therefore to prove the existence of critical sequences for $\eta_k$ with times bounded and bounded away from zero; this will be done using a minimax argument analogous to the one exploited in Chapter \ref{chapter5}. 
To bypass the lack of a global action we use the varation of $\eta_k$ along a path $u:[0,1]\rightarrow \mathcal M_Q$ 
$$\Delta S_k(u):[0,1]\rightarrow \R\, ,\ \ \ \ \Delta S_k(u)(s) := \ \int_0^s u^*\eta_k\, .$$

We refer to Section \ref{ageneralizedpseudogradient} for the properties of $\Delta S_k(u)$. Suppose for the moment that $Q_0\cap Q_1$ is connected and 
denote with $\mathcal N$ the connected component of $\mathcal M_Q$ containing the constant paths. 
Consider now a sufficiently small neighborhood $\mathcal U$ of $Q_0\cap Q_1$ and extend the primitive $\theta$ of $\sigma$ on $Q_0\cap Q_1$ to be a primitive on the whole 
$\mathcal U$. By assumption, if $\epsilon>0$ is sufficiently small, then every path joining $Q_0$ to $Q_1$ with kinetic energy less than $\epsilon$ is entirely contained in $\mathcal U$.

\begin{center}
\includegraphics[height=35mm]{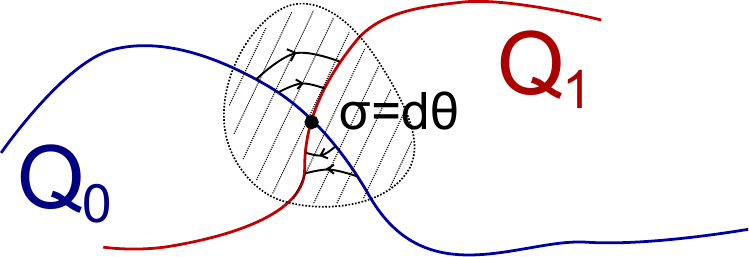}
\end{center}

\noindent It follows that $\eta_k$ is exact on $\mathcal N\cap \{e(x)\leq \epsilon\}$ with primitive $S_k$ given by
$$S_k:\mathcal N\cap \{e(x)\leq \epsilon\} \rightarrow \R\, , \ \ \ \ S_k (x,T) := \ \A_k^L(x,T) \ + \ \int_x \theta\ = \ \A_k^{L_\theta}(x,T)\, ,$$
where $L_\theta(q,v) := L(q,v)+\theta_q(v)$. As in the previous section we set 
$$k_{Q_0\cap Q_1} := \ \max_{q\in Q_0\cap Q_1} \ E(q,0) \ + \ \max_{q\in Q_0\cap Q_1} \ \frac{\|d_vL_\theta(q,0)\|^2}{4a}\, ,$$
where $a>0$ is such that (\ref{secondinequality}) is satisfied for the Lagrangian $L_\theta$. Observe that for any constant loop $(x_0,T)\in \mathcal N$ there holds 
$$S_k(x_0,T) \ = \ \Big [k-E(x_0,0)\Big ] \, T \ > \ 0$$ 
which is positive for every $k>k_{Q_0\cap Q_1}$ and tends to zero for $T\rightarrow 0$. Repeating exactly the proof of Lemma \ref{lemmaminimax1}, one gets that, for every $k>k_{Q_0\cap Q_1}$, $S_k$ is non-negative on $\mathcal N\cap \{e(x)\leq \epsilon\}$ 
and that there exists a positive constant $\alpha_k$ such that 
$$\inf_{e(x)=\epsilon} \ S_k(x,T) \ \geq \ \alpha_k\, .$$

Now fix $k^*>k_{Q_0\cap Q_1}$ and consider the smallest $l\geq 1$ such that $\pi_l(\mathcal N)\neq 0$. Pick a non-zero element $\mathfrak U\in \pi_l(\mathcal N,(x_0,T_0))$, where $(x_0,T_0)$ is a constant path such that
$S_{k^*}(x_0,T_0)<\alpha_{k*}/4$; observe that there exists an open interval $I=I(k^*)$ containing $k^*$ such that $S_k(x_0,T_0)<\alpha_{k^*}/4$ for every $k\in I$. 

For every $l\geq 2$ we interpret $S^l$ as $B^l$ with boundary $S^{l-1}$ identified to a point. With any point $\xi\in B^l$ we associate a path $a_\xi:[0,1]\rightarrow B^l$ such that $a_\xi(0)\in S^{l-1}$ and 
$a_\xi(1)=\xi$ and consider the composition $u_\xi := u\circ a_\xi:[0,1]\rightarrow \mathcal N$. By construction we have $S_k(u_\xi(0))=S_k(x_0,T_0)$ and we can define the minimax value
\begin{equation}
c^{\mathfrak u}:I\longrightarrow \R\, , \ \ \ \ c^{\mathfrak u} (k) := \ \inf_{u\in \mathfrak U} \ \max_{\xi\in B^l} \Big [S_k(x_0,T_0) \ + \Delta S_k(u_\xi)(1)\Big ]\, .
\label{minimaxfunctiontheta0theta1}
\end{equation}

Notice that this definition does not depend on $u_\xi$, since $S^{l-1}$ is path-connected and $\eta_k$ is closed. If $l=1$ then $S^{l-1}$ is not connected; in this case we set 
$a_\xi$ to be the path connecting $-1\in S^{l-1}$ with $\xi$ and define $c^{\mathfrak u}(\cdot)$ exactly as above. 

From the definition it follows that $c^{\mathfrak u}(\cdot)$ is monotonically increasing in $k$, thus almost everywhere differentiable; a proof of this is given in Lemma \ref{lem:mon}.
Moreover, $\mathfrak U$ is invariant under the normalized semi-flow of $-\sharp \eta_k$ truncated on $\mathcal N\cap \{e(x)\leq \epsilon\}$ below $\alpha_{k^*}/2$ (see Section \ref{ageneralizedpseudogradient} for the details) 
and, since $\mathfrak U$ is non-zero, for every $u\in \mathfrak U$ there exists an element $\xi \in B^l$ such that $e(u(\xi))=\epsilon$ (see \cite[Theorem 2.1.8]{Kli78}). 
In particular, $c^{\mathfrak u} (k) \geq \alpha_k$ for every $k\in I$. Hence, up to considering a smaller interval,
\begin{equation}
c^{\mathfrak u} (k) \ > \frac{\alpha_{k^*}}{2}\, , \ \ \ \ \forall \ k \in I\, .
\label{stimacualphak*}
\end{equation}

\begin{prop}
Let $k\in I$ be a point of differentiability for $c^{\mathfrak u}(\cdot)$. Then there exists a critical sequence for $\eta_k$ with times bounded and bounded away from zero.
\label{Struwetheta0theta1}
\end{prop}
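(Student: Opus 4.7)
\textit{Proof plan.} The argument is the analogue in this 1-form setting of the Struwe monotonicity argument (cf.\ Lemma \ref{struwe}), with the role of the action functional played by the variation $\Delta S_{\bar k}$ along paths. The key algebraic fact I would exploit is the pointwise identity
\begin{equation*}
\eta_k \;=\; \eta_{\bar k} + (k-\bar k)\,dT,
\end{equation*}
which follows immediately from $\A_k^L-\A_{\bar k}^L=(k-\bar k)T$ and the fact that the ``magnetic'' contribution to $\eta_k$ in \eqref{etaktheta0theta1} does not depend on $k$. Integrating along a path $u_{h,\xi}$ starting at $(x_0,T_0)$ and ending at $u_h(\xi)$, and using $S_{k_h}(x_0,T_0)=S_{\bar k}(x_0,T_0)+\epsilon_h T_0$, this yields
\begin{equation*}
S_{k_h}(x_0,T_0)+\Delta S_{k_h}(u_{h,\xi})(1) \;=\; S_{\bar k}(x_0,T_0)+\Delta S_{\bar k}(u_{h,\xi})(1) + \epsilon_h\,T(u_h(\xi)).
\end{equation*}

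First, I would use the differentiability of $c^{\mathfrak u}$ at $\bar k$ to fix $M>0$ with $|c^{\mathfrak u}(k)-c^{\mathfrak u}(\bar k)|\leq M|k-\bar k|$ near $\bar k$, pick $k_h\downarrow \bar k$, set $\epsilon_h:=k_h-\bar k$, and choose $u_h\in\mathfrak U$ with $\max_\xi[S_{k_h}(x_0,T_0)+\Delta S_{k_h}(u_{h,\xi})(1)]\leq c^{\mathfrak u}(k_h)+\epsilon_h$. The displayed identity then shows that, at every $\xi\in B^l$ for which $\Psi_h(\xi):=S_{\bar k}(x_0,T_0)+\Delta S_{\bar k}(u_{h,\xi})(1)>c^{\mathfrak u}(\bar k)-\epsilon_h$, one has $T(u_h(\xi))\leq M+2$ and $\Psi_h(\xi)\leq c^{\mathfrak u}(\bar k)+(M+1)\epsilon_h$. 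This is the analogue of the uniform bound on $\mathcal A_h$ in Lemma \ref{struwe}: it confines the ``relevant'' portion of each $u_h$ to a set $\mathcal A_h\subset\mathcal N$ with $T$-coordinate bounded by $M+2$. Combined with \eqref{firstinequality}, $\mathcal A_h$ sits inside a single bounded set of $\mathcal N$, independent of $h$.

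Second, I would suppose by contradiction that no critical sequence for $\eta_{\bar k}$ at level $c^{\mathfrak u}(\bar k)$ with times bounded and bounded away from zero exists, and derive a contradiction by flow deformation. The lower bound on times comes from Proposition \ref{prp:pstheta0theta1}(2): along any critical sequence with $T_h\to 0$ the kinetic energy tends to zero, putting the sequence in the region $\{e(x)\leq\epsilon\}$ on which $\eta_{\bar k}$ is exact with primitive $S_{\bar k}=\A_{\bar k}^{L_\theta}\to 0$; this is incompatible with the strict lower bound $c^{\mathfrak u}(\bar k)>\alpha_{k^*}/2>0$ from \eqref{stimacualphak*}. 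Under the contradiction hypothesis one therefore obtains $\delta>0$ and $T_*>0$ such that $\|\eta_{\bar k}\|\geq\delta$ on the bounded set containing all the $\mathcal A_h$'s with $T$-coordinate in $[T_*,M+2]$. Now I would deform $u_h$ by the normalized pseudo-gradient flow $\Phi_\sigma$ associated with $-\sharp\eta_{\bar k}$ constructed in Section \ref{ageneralizedpseudogradient}, truncated below level $\alpha_{k^*}/4$ so that $\mathfrak U$ is positively invariant (the strict inequality $S_{\bar k}(x_0,T_0)<\alpha_{k^*}/4$ ensures that the boundary constant loop is fixed). Along trajectories, $\Delta S_{\bar k}$ decreases at rate at least $\tfrac12\min\{\|\eta_{\bar k}\|^2,1\}$ (cf.\ \eqref{decrescita}), so flowing for a uniform time $\sigma_0$ independent of $h$ pushes $\Phi_{\sigma_0}(u_h)$ to $\max\bigl[S_{\bar k}(x_0,T_0)+\Delta S_{\bar k}\bigr]\leq c^{\mathfrak u}(\bar k)-\epsilon_h$, contradicting the definition of $c^{\mathfrak u}(\bar k)$.

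The main obstacle will be the construction of the pseudo-gradient flow itself: since $\mathcal M_Q$ is non-complete, $\eta_{\bar k}$ is only locally exact, and we must preserve positive invariance of $\mathfrak U$. The truncation below $\alpha_{k^*}/4$, combined with the non-collapse of trajectories provided by Proposition \ref{prp:pstheta0theta1}(2) applied in the form of Lemma \ref{convergenza} to $S_{\bar k}$ on the exact region, is exactly what is needed to make the flow complete on the connected component $\mathcal N$ and to keep the deformed loops inside $\mathfrak U$; the rigorous construction is the content of Section \ref{ageneralizedpseudogradient}, which I would cite verbatim.
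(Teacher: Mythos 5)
Your proposal is correct and follows essentially the same route as the paper, which proves this proposition by repeating the Struwe-type argument of Proposition \ref{Struwe}: the Lipschitz bound from differentiability, the identity $\eta_k=\eta_{\bar k}+(k-\bar k)\,dT$ yielding the period bound on the almost-maximal set, the deformation by the truncated normalized flow under which $\mathfrak U$ is invariant, the analogue of Lemma \ref{lem:notinwgeneral} keeping the relevant points out of $\{S_{\bar k}\le \alpha_{k^*}/4\}$ (which, via Proposition \ref{prp:pstheta0theta1}, also rules out times tending to zero), and the final contradiction with the definition of $c^{\mathfrak u}$. The only refinement worth making explicit is that the contradiction hypothesis should be taken as the absence of critical sequences in $\{T<M+3\}\setminus\{S_{\bar k}\le \alpha_{k^*}/4\}$, so that the lower bound on $\|\eta_{\bar k}\|$ holds on the whole region actually visited by the deformed paths, rather than only on $\{T_*\le T\le M+2\}$; your own observation that a critical sequence with $T_h\to 0$ eventually enters that sublevel shows the two formulations are equivalent.
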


The proof is exactly the same as for Proposition \ref{Struwe}. In order to exclude that the times tend to zero one shows using (\ref{stimacualphak*}) that the critical sequence can be chosen to lie in the complement of the sublevel $\{S_k\leq \alpha_{k_*}/4\}$,
namely proving that, if a point $\xi_*\in B^l$ almost realizes the maximum of 
$$\xi \ \longmapsto \ S_k(x_0,T_0) \ + \ \Delta S_k(u_\xi)(1)\, ,$$
then $u(\xi_*)\notin \{S_k\leq \alpha_{k_*}/4\}$ (cf. Lemma \ref{lem:notinwgeneral}). To ensure that the times are bounded one uses the well-known fact that subsets of $\mathcal N$ 
with bounded times are mapped by the time 1-flow into subsets with bounded times (cf. [Mer10, Lemma 5.7] or Lemma \ref{lem:ac-distper}). Propositions  \ref{prp:pstheta0theta1} and \ref{Struwetheta0theta1} imply immmediately the following

\begin{teo}
Let $(M,g)$ be a closed connected Riemannian manifold, $Q_0,Q_1\subseteq M$ be closed connected submanifolds with $Q_0\cap Q_1 \neq \emptyset$ connected, $L:TM\rightarrow \R$ be a Tonelli Lagrangian on $M$, $\sigma\in \Omega^2(M)$ be a closed 2-form with 
exact restriction to $Q_0$ and $Q_1$ and $\theta_0$,$\theta_1$ primitives of $\sigma|_{Q_0}$, $\sigma|_{Q_1}$ respectively. Assume furthermore that $\theta_0$ and $\theta_1$ coincide on $Q_0\cap Q_1$. Then, for almost every $k>k_{Q_0\cap Q_1}$ there exists an orbit for the flow defined by $(L,\sigma)$
with energy $k$ satisfying the boundary conditions (\ref{lagrangianformulationtheta0theta1}).
\label{teotheta0theta1closed}
\end{teo}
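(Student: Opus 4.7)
The plan is to prove this by finding zeros of the action $1$-form $\eta_k$ defined in \eqref{etaktheta0theta1}: by Lemma \ref{propertiesofetaktheta0theta1} these correspond exactly to orbits of the flow of $(L,\sigma)$ with energy $k$ satisfying the boundary conditions \eqref{lagrangianformulationtheta0theta1}. Since no global primitive of $\eta_k$ is available, I would look for limits of \emph{critical sequences} $(x_h,T_h)$ for $\eta_k$, that is, sequences with $\|\eta_k(x_h,T_h)\|\to 0$, and invoke the compactness criterion of Proposition \ref{prp:pstheta0theta1}: a critical sequence in a given connected component of $\mathcal M_Q$ with times bounded and bounded away from zero admits a convergent subsequence.

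First, I would exploit the standing assumption that $\theta_0$ and $\theta_1$ agree on $Q_0\cap Q_1$ to extend their common restriction to a primitive $\theta$ of $\sigma$ defined on an open neighborhood $\mathcal U$ of $Q_0\cap Q_1$. Choosing $\epsilon>0$ small enough so that every path in $\mathcal N$ with kinetic energy $e(x)\le\epsilon$ is contained in $\mathcal U$, the $1$-form $\eta_k$ becomes exact on $\mathcal N\cap\{e(x)\le\epsilon\}$ with primitive $S_k(x,T)=\A_k^{L_\theta}(x,T)$, where $L_\theta(q,v):=L(q,v)+\theta_q(v)$. Running verbatim the argument of Lemma \ref{lemmaminimax1} applied to $L_\theta$, one obtains for every $k>k_{Q_0\cap Q_1}$ a positive constant $\alpha_k$ so that $S_k\ge 0$ on $\mathcal N\cap\{e(x)\le\epsilon\}$ and $\inf_{e(x)=\epsilon}S_k\ge\alpha_k$.

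Next, I fix $k^*>k_{Q_0\cap Q_1}$, choose a constant loop $(x_0,T_0)\in\mathcal N$ with $S_{k^*}(x_0,T_0)<\alpha_{k^*}/4$, and select, by hypothesis, a non-zero element $\mathfrak U\in\pi_l(\mathcal N,(x_0,T_0))$. On a small open interval $I\ni k^*$ all these conditions persist uniformly, and I would define the minimax function $c^{\mathfrak u}:I\to\R$ exactly as in \eqref{minimaxfunctiontheta0theta1} using the variation $\Delta S_k(u_\xi)(1)=\int_{u_\xi}\eta_k$ along paths $u_\xi=u\circ a_\xi$ starting at the base point. The non-triviality of $\mathfrak U$ (as in \cite[Theorem 2.1.8]{Kli78}) forces every $u\in\mathfrak U$ to contain a point of kinetic energy $\epsilon$, which together with the lower bound on $S_k$ at that level yields $c^{\mathfrak u}(k)>\alpha_{k^*}/2$ on $I$; monotonicity of $c^{\mathfrak u}$ in $k$ is immediate from the monotonicity of $\eta_k$ in $k$, so $c^{\mathfrak u}$ is differentiable almost everywhere on $I$. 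At each point of differentiability I would mimic the Struwe monotonicity argument of Proposition \ref{Struwetheta0theta1}, using a flow obtained by normalizing a pseudo-gradient of $-\sharp\eta_k$ suitably truncated below level $\alpha_{k^*}/4$ inside $\mathcal N\cap\{e(x)\le\epsilon\}$, to produce a critical sequence $(x_h,T_h)$ for $\eta_k$ with uniformly bounded times. Combining the strict inequality $c^{\mathfrak u}(k)>\alpha_{k^*}/2$ with claim $(2)$ of Proposition \ref{prp:pstheta0theta1} prevents $T_h\to 0$, and claim $(1)$ then yields a convergent subsequence whose limit is the desired orbit. Since the interval $I$ can be chosen around any $k^*>k_{Q_0\cap Q_1}$, a standard covering argument produces a full-measure set of good energies.

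The main obstacle will be the construction of the truncated pseudo-gradient flow for the $1$-form $\eta_k$ and the verification of the Struwe step in this setting: one must check that a point $\xi_*\in B^l$ almost realizing the maximum of $\xi\mapsto S_k(x_0,T_0)+\Delta S_k(u_\xi)(1)$ necessarily lies outside the sublevel $\{S_k\le\alpha_{k^*}/4\}$ (the analogue of Lemma \ref{lem:notinwgeneral}), and that the time-$1$ map of the chosen flow preserves the bound on periods (the analogue of the Merry lemma \cite[Lemma 5.7]{Mer10}). Both ingredients are produced for the free-period case in Sections \ref{theaction1form} and \ref{ageneralizedpseudogradient}; transcribing them to $\mathcal M_Q$, where the endpoint variations now contribute the boundary terms $(\theta_0)_{x(0)}-(\theta_1)_{x(1)}$, is essentially routine but is where the care is needed.
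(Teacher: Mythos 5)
Your proposal is correct and follows essentially the same route as the paper: local exactness of $\eta_k$ near $Q_0\cap Q_1$ via the common primitive $\theta$, the mountain-pass estimate from Lemma \ref{lemmaminimax1} applied to $L_\theta$, the minimax function \eqref{minimaxfunctiontheta0theta1} built from a non-trivial class in $\pi_l(\mathcal N)$, the Struwe monotonicity argument (Proposition \ref{Struwetheta0theta1}) to get critical sequences with times bounded and bounded away from zero, and the compactness criterion of Proposition \ref{prp:pstheta0theta1}. The "obstacles" you flag (the analogue of Lemma \ref{lem:notinwgeneral} and the period bound under the time-$1$ flow) are exactly the ingredients the paper imports from Sections \ref{theaction1form} and \ref{ageneralizedpseudogradient}.
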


The theorem above clearly generalizes to the case $Q_0\cap Q_1$ non-connected. Consider a connected component $\mathcal N$ of $\mathcal M_Q$ containing constant paths and denote with $\Omega$ the set of constant paths contained 
in $\mathcal N$. Since $\Omega$ might not be connected we consider its connected components $\Omega_1,\Omega_2,...$ and for every $j=1,2,...$ we define $k_{\Omega_j}$, $\mathfrak U_j$, $c^{\mathfrak u}_j(\cdot)$ exactly as above 
just considering $x_0\in \Omega_j$ and replacing $Q_0\cap Q_1$ everywhere with $\Omega_j$. 

The argument above goes through word by word proving the following

\begin{teo}
Suppose the assumptions of Theorem \ref{teotheta0theta1closed} are satisfied (except the connected intersection one) and let $\mathcal N$ be a connected component of $\mathcal M_Q$ containing constant paths. Then, for almost every 
$$k\in \left (\min_j \ k_{\Omega_j}\, , \, +\infty \right )$$
there exists an orbit $\gamma\in \mathcal N$ for the flow of $(L,\sigma)$ with energy $k$ satisfying the boundary conditions (\ref{lagrangianformulationtheta0theta1}).
\label{teotheta0theta1closed2}
\end{teo}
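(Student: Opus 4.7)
The plan is to reduce the statement to a componentwise version of Theorem \ref{teotheta0theta1closed}, running the minimax argument separately at each connected component $\Omega_j$ of the set of constant paths contained in $\mathcal N$. I fix $k^* \in (\min_j k_{\Omega_j}, +\infty)$ and choose an index $j_0$ with $k^* > k_{\Omega_{j_0}}$; such an index exists by definition of the minimum. Since $\theta_0$ and $\theta_1$ agree on $Q_0\cap Q_1$ by hypothesis, their common restriction extends to a smooth primitive $\theta$ of $\sigma$ on some open neighborhood $\mathcal U_{j_0}\subseteq M$ of $\Omega_{j_0}$. By continuity, I can choose $\epsilon>0$ so small that every path $(x,T)\in\mathcal N$ with endpoints near $\Omega_{j_0}$ and $e(x)\leq\epsilon$ is entirely contained in $\mathcal U_{j_0}$. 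On the resulting open subset $\mathcal W_{j_0}\subseteq\mathcal N$, the action 1-form $\eta_k$ is exact with local primitive
$$S_k(x,T) \ := \ \A_k^L(x,T) \ + \ \int_x \theta \ = \ \A_k^{L_\theta}(x,T).$$

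Next, I apply the calculation of Lemma \ref{lemmaminimax1} to the Tonelli Lagrangian $L_\theta$, using the definition of $k_{\Omega_{j_0}}$, to produce for every $k>k_{\Omega_{j_0}}$ a positive constant $\alpha_k$ such that $S_k(x,T)\geq\alpha_k$ whenever $(x,T)\in\mathcal W_{j_0}$ and $e(x)=\epsilon$. I then pick a constant loop $(x_0,T_0)$ with $x_0\in\Omega_{j_0}$ and $T_0$ so small that $S_{k^*}(x_0,T_0)<\alpha_{k^*}/4$; by continuity this inequality persists on an open interval $I=I(k^*)\ni k^*$. Using $\pi_l(\mathcal N)\neq 0$, I pick a non-trivial class $\mathfrak U_{j_0}\in\pi_l(\mathcal N,(x_0,T_0))$ and define the minimax function
$$c^{\mathfrak u}_{j_0}:I \longrightarrow \R, \qquad c^{\mathfrak u}_{j_0}(k) \ := \ \inf_{u\in\mathfrak U_{j_0}} \ \max_{\xi\in B^l} \, \Big[ S_k(x_0,T_0) \ + \ \Delta S_k(u_\xi)(1) \Big],$$
exactly as in (\ref{minimaxfunctiontheta0theta1}). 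Non-triviality of $\mathfrak U_{j_0}$ guarantees that every $u\in\mathfrak U_{j_0}$ contains some $\xi$ with $e(u(\xi))=\epsilon$, so $c^{\mathfrak u}_{j_0}(k)\geq\alpha_k$; after shrinking $I$ if needed, $c^{\mathfrak u}_{j_0}(k)>\alpha_{k^*}/2$ for every $k\in I$.

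The monotonicity of $c^{\mathfrak u}_{j_0}$ in $k$ is immediate from the definition of $\eta_k$ and gives almost-everywhere differentiability on $I$. At any point of differentiability $k\in I$ I invoke the analogue of Proposition \ref{Struwetheta0theta1}, obtained by generalising Struwe's monotonicity argument to the action 1-form setting: it yields a critical sequence $(x_h,T_h)$ for $\eta_k$ with uniformly bounded times. The strict positivity $c^{\mathfrak u}_{j_0}(k)>\alpha_{k^*}/2$, together with Proposition \ref{prp:pstheta0theta1}(2), prevents $T_h\to 0$; hence the times are also bounded away from zero, and Proposition \ref{prp:pstheta0theta1}(1) delivers a convergent subsequence. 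Its limit is a zero of $\eta_k$, which by Lemma \ref{propertiesofetaktheta0theta1} is the sought orbit for the flow of $(L,\sigma)$ with energy $k$ satisfying (\ref{lagrangianformulationtheta0theta1}). Passing from ``for each $k^*$, almost every $k$ in $I(k^*)$'' to ``almost every $k$ in $(\min_j k_{\Omega_j},+\infty)$'' is then a routine application of the Lindelöf property: extract a countable subcover of $(\min_j k_{\Omega_j},+\infty)$ by the intervals $I(k^*)$, and discard the countable union of the corresponding negligible sets of non-differentiability points.

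The main obstacle is, as in the connected-intersection case, the derivation of the Struwe-type statement Proposition \ref{Struwetheta0theta1}: since no global primitive of $\eta_k$ exists on $\mathcal N$, one must run the monotonicity deformation using the normalised pseudo-gradient of $-\sharp\eta_k$ truncated below level $\alpha_{k^*}/2$ on $\mathcal W_{j_0}$, and simultaneously control the variation $\Delta S_k$ along the resulting semiflow and the periods of the approximating sequences, exploiting the local Lipschitz and closedness properties of $\eta_k$ recorded in Lemma \ref{propertiesofetaktheta0theta1} and developed in Sections \ref{theaction1form} and \ref{ageneralizedpseudogradient}. All remaining ingredients of the argument are direct transcriptions of the corresponding ones in Theorem \ref{teotheta0theta1closed}, with $\Omega_{j_0}$ playing the role previously played by $Q_0\cap Q_1$.
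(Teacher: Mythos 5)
Your proof is correct and takes essentially the same route as the paper: the paper's own argument for Theorem \ref{teotheta0theta1closed2} consists precisely in repeating the construction of Section \ref{whensigmaisonlyclosed} with $Q_0\cap Q_1$ replaced by the component $\Omega_{j_0}$, defining $k_{\Omega_{j_0}}$, $\mathfrak U_{j_0}$ and $c^{\mathfrak u}_{j_0}$ componentwise and reusing Propositions \ref{Struwetheta0theta1} and \ref{prp:pstheta0theta1}. Your extra remarks (localising the primitive $\theta$ near $\Omega_{j_0}$ and the countable-cover argument passing from each interval $I(k^*)$ to almost every $k$ in $\left(\min_j k_{\Omega_j},+\infty\right)$) merely make explicit steps the paper leaves implicit.
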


%%%%%%%%%%%%%%%%%%%%%%%%%%%%%%%%%%%%%%%%%%%%%%%%%%%%%%%%
%%%%%%%%%%%%%%%%%%%%%%%%%%%%%%%%%%%%%%%%%%%%%%%%%%%%%%%%
%%%%%%%%%%%%%%%%%%%%%%%%%%%%%%%%%%%%%%%%%%%%%%%%%%%%%%%%

\chapter{Periodic orbits for the flow of $(L,\sigma)$}
\label{chapter5}

In this chapter we focus on the existence of periodic orbits for the flow of the pair $(L,\sigma)$ (cf. the introduction of Chapter \ref{chapter4}), with $L:TM\rightarrow \R$ Tonelli Lagrangian and $\sigma$ closed 2-form on $M$. 
Recall that a manifold $M$ is \textit{aspherical} if all the homotopy groups $\pi_l(M)$, $l\geq 2$, vanish. Here we prove that, if $M$ is not aspherical, then for almost every $k$ larger than the maximum of the energy on 
the zero-section there exists a contractible periodic orbit with energy $k$. This result extends the celebrated Lusternik-Fet theorem \cite{FL51} about the existence of one closed contractible geodesic on every closed non-aspherical Riemannian 
manifold to this setting and it is the outcome of joint work with Gabriele Benedetti. 

The proof we give here is somehow different to the one contained in \cite{AB14}, since there the cases $l=2$ and $l>2$ were treaten separately. Here we use a slightly different construction that allows to treat both cases at once.

\vspace{3mm}

In Section \ref{theaction1form} we define the action-1 form $\eta_k$ rigorously, check its regularity property and prove a crucial compactness criterion for its critical sequences.

In Section \ref{ageneralizedpseudogradient} we discuss the properties of the negative gradient flow associated to $\eta_k$ and we exploit the interesting geometry of $\eta_k$ on the subset of contractible loops given 
by loops with small length.

In Section \ref{theminimaxclass} we use the assumption that $\pi_l(M)\neq 0$ to construct a suitable minimax class of functions $B^{l-1}\rightarrow \mathcal M_0$ mapping the boundary $S^{l-2}$ into the submanifold of constant loops and an associated minimax function.

Finally, in Section \ref{ageneralizedlyusternikfettheorem} we prove the main theorem of this chapter by generalizing the so-called \textit{Struwe monotonicity argument} to this setting.

%%%%%%%%%%%%%%%%%%%%%%%%%%%%%%%%%%%%%%%%%%%%%%%%%%%%%%

\section{The action 1-form}
\label{theaction1form}

For a given Tonelli Lagrangian $L:TM\rightarrow\R$ we denote as usual with $H:T^*M\rightarrow\R$ the Hamiltonian given by the Fenchel dual of $L$ and with $E:TM\rightarrow\R$ the energy function associated to $L$. 
Since $L$ can be assumed, without loos of generality, to be quadratic at infinity, we also have that $E$ is quadratic at infinity; in particular
\begin{equation}
E_0\, \|v\|_q^2 - E_1 \ \leq \ E(q,v) \, , \quad \forall \ (q,v)\in TM\,,
\label{disuguaglianzeE}
\end{equation}
for suitable constants $E_0>0$, $E_1\in \R$. This estimate will be essential later on to prove the crucial Lemma \ref{lem:ps}. Let now $\sigma \in \Omega^2(M)$ be a closed 2-form. 

We aim to find closed orbits for the flow of $(L,\sigma)$ on a given energy level $E^{-1}(k)$, using variational methods on the product Hilbert manifold $\mathcal M=H^1(\T,M)\times (0,+\infty)$ of Sobolev loops with arbitrary period
of definition, even though in this generality a free-period Lagrangian action functional is not available. We denote as usual with $\mathcal M_0$ the connected component of $\mathcal M$ given by contractible loops. 

For any Sobolev loop $x\in H^1(\T,M)$  we define  
$$l(x) := \int_0^1 \|x'(s)\|\, ds \, , \ \ \ \ e(x) := \int_0^1 \|x'(s)\|^2\, ds$$
as the \textit{length}, respectively the \textit{kinetic energy} of $x$. The key ingredient of our discussion is that the periodic orbits of the flow defined by $(L,\sigma)$ on $E^{-1}(k)$ 
are in one to one correspondence with the zeros of a suitable 1-form $\eta_k\in \Omega^1(\mathcal M)$. As a first step, we consider the free-period Lagrangian action functional associated with $L$
$$\A^L_k:\mathcal M\rightarrow\R\, , \ \ \ \ \A^L_k(x,T):=\ T\,\int_0^1 \Big[L\Big (x(s),\frac{x'(s)}{T}\Big )+k\Big]\, ds\,.$$

We proceed now to define $\eta_k$. When $\sigma$ is exact with primitive $\theta$, then $\eta_k=d\A^{L_\theta}_k$ is simply the differential of the free-period Lagrangian action functional associated with the Tonelli Lagrangian $L_\theta(q,v):=L(q,v)+\theta_q(v)$. 
A computation shows that
$$d\A^{L_\theta}_k (x,T) \ = \ d\A^L_k(x,T) \ + \ \tau^\sigma_x\, ,$$
where 
\begin{equation}
\tau^\sigma_x[\xi]:=\int_0^1\sigma_{x(s)}(x'(s),\xi(s))\, ds\,
\label{tauk}
\end{equation}

In particular $\tau^\sigma$ does not depend on the particular choice of $\theta$ and it is well-defined even if $\sigma$ is not exact. Thus, in the general case we set
\begin{equation}\label{etak}
\eta_k(x,T)\ :=\ d\A^L_k(x,T) \ +\ \tau^\sigma_x \,.
\end{equation}
We easily compute for future applications
\begin{equation}\label{par-T}
\eta_k(x,T) \left[\frac{\partial}{\partial T}\right]\, =\, k-\int_0^1 E\Big(x(s),\frac{x'(s)}{T}\Big )\, ds\, =\,k-\frac{1}{T}\int_0^T E\big(\gamma(t),\dot \gamma(t)\big)\, dt\, .
\end{equation}
A proof of the following lemma can be found in \cite[Lemma 2.2]{AB14}.

\begin{lemma}\label{lem:lagloc}
Let $\psi$ be a bi-bounded time-dependent chart for $M$ and let $\Psi_{\mathcal M}$ be the associated local chart of $\mathcal M$. Then, there exists a smooth function 
$$L_{\psi,\sigma}:\T\times TB_\rho^n\times\R^+\longrightarrow\R$$
such that $\Psi_{\mathcal M}^*\eta_k=d\A^{L_{\psi,\sigma}}_k$, where $\A^{L_{\psi,\sigma}}_k:H^1(\T,B^n_\rho)\times\R^+\rightarrow\R$ is defined by
$$\A^{L_{\psi,\sigma}}_k(\xi,T):=\ T\,\int_0^1 \Big[L_{\psi,\sigma}\Big (s,\xi(s),\frac{\xi'(s)}{T},T\Big )+k\Big]\, ds\,.$$

Furthermore, for any $T_->0$ and for any $T\geq T_-$ the family of functions $L_{\psi,\sigma}(\cdot,\cdot,\cdot,T)$ is Tonelli and quadratic at infinity uniformly in $T$. 
\end{lemma}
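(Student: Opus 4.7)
The plan is to pull back each summand of $\eta_k = d\A^L_k + \tau^\sigma$ through the chart $\Psi_{\mathcal M}$ and to realize the result as the differential of a single free-period Lagrangian action. Since $d$ commutes with pullback, $\Psi_{\mathcal M}^* d\A^L_k = d(\Psi_{\mathcal M}^* \A^L_k)$, and a direct chain-rule substitution shows that $\Psi_{\mathcal M}^*\A^L_k = \A^{L_\psi}_k$ with
$$L_\psi(s,q,v,T) := L\Bigl(\psi(s,q),\;\frac{\partial_s\psi(s,q)}{T} + D_q\psi(s,q)[v]\Bigr).$$
This is precisely the writing in which the Tonelli structure of $L$ transfers cleanly to $L_\psi$ via the bi-bounded $D_q\psi$.

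For the $\tau^\sigma$ piece the crux is cohomological: by the K\"unneth formula $H^2(\T \times B^n_\rho;\R) \cong H^2(\T;\R) = 0$, so the closed 2-form $\psi^*\sigma$ admits a global primitive $\vartheta \in \Omega^1(\T \times B^n_\rho)$, which I decompose as $\vartheta = \alpha(s,q)\,ds + \beta(s,q)$, with $\beta$ a time-dependent $1$-form on $B^n_\rho$. The classical integration-by-parts identity on the loop space (which closes up thanks to $\xi(0) = \xi(1)$), combined with the identity $\psi^*\sigma = d\vartheta$, shows that, for a suitable choice of sign,
$$\Phi(\xi) := \pm\int_0^1\bigl[\alpha(s,\xi(s)) + \beta(s,\xi(s))[\xi'(s)]\bigr]\,ds$$
satisfies $d\Phi = \Psi_{\mathcal M}^*\tau^\sigma$. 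Defining
$$L_{\psi,\sigma}(s,q,v,T) := L_\psi(s,q,v,T) \pm \Bigl(\frac{\alpha(s,q)}{T} + \beta(s,q)[v]\Bigr)$$
with the matching sign, the correction contributes exactly $\Phi$ to the action, so that $\A^{L_{\psi,\sigma}}_k = \A^{L_\psi}_k + \Phi$ and consequently $d\A^{L_{\psi,\sigma}}_k = \Psi_{\mathcal M}^*\eta_k$, which is the first claim.

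The uniform Tonelli conclusion then follows cleanly. The correction $\pm(\alpha/T + \beta[v])$ is affine in $v$ and smooth in $(s,q)$, so it affects neither strict fiberwise convexity nor the quadratic growth at infinity. For $L_\psi$ itself the chain rule gives $\partial_v^2 L_\psi = D_q\psi^\top\,\partial_v^2 L(\psi,\partial_s\psi/T + D_q\psi\cdot v)\,D_q\psi$; the bi-boundedness of $\psi$ provides uniform two-sided bounds on $D_q\psi$, the quadratic-at-infinity hypothesis on $L$ gives a uniform positive lower bound on $\partial_v^2 L$ for large $|v|$, and $\partial_s\psi/T$ stays bounded once $T \geq T_->0$, so all constants are uniform in that range. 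The same estimates yield the quadratic growth $L_{\psi,\sigma}(s,q,v,T) \geq c\,|v|^2 - c'$ uniformly in $T \geq T_-$, and hence superlinearity.

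The only step carrying real mathematical content, and the one I would single out as the main obstacle to a clean write-up, is the invocation of $H^2(\T\times B^n_\rho;\R)=0$ to produce $\vartheta$ together with the verification by integration by parts that the loop-space functional $\Phi$ is a primitive of $\Psi_{\mathcal M}^*\tau^\sigma$ (up to a universal sign). Everything else is careful bookkeeping through the chain rule. This local primitive device is precisely what allows $\eta_k$ to be analysed, inside each chart, as the differential of a Tonelli Lagrangian action, even when $\sigma$ is only closed and admits no global primitive on $M$.
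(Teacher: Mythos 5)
Your proposal is correct and follows essentially the same route as the argument the paper cites (\cite[Lemma 2.2]{AB14}): pull back $d\A_k^L$ via the chart to get the Lagrangian $L_\psi$, use that $\T\times B^n_\rho$ has trivial second cohomology to choose a primitive $\vartheta=\alpha\,ds+\beta$ of $\psi^*\sigma$, and absorb the resulting holonomy term $\alpha/T+\beta[v]$ into the Lagrangian, with the affine-in-$v$ correction and the bi-boundedness of $\psi$ giving the uniform Tonelli and quadratic-at-infinity properties for $T\geq T_-$. No gaps worth flagging.
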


Given the local representation of $\eta_k$ of Lemma \ref{lem:lagloc}, one just adapts the computations in \cite[Lemma 3.1]{AS09} to get the following

\begin{cor}\label{cor:reg}
The $1$-form $\eta_k$ is locally Lipschitz.
\end{cor}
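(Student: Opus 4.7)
Since being locally Lipschitz is a purely local property, we can restrict attention to a neighborhood of an arbitrary point $(x_0,T_0)\in \mathcal{M}$. Choose a bi-bounded time-dependent chart $\psi$ of $M$ such that $x_0$ lies in the image of the induced chart $\Psi_{\mathcal{M}}$, and fix a relatively compact neighborhood $U$ of $(x_0,T_0)$ whose closure is contained in $\Psi_{\mathcal{M}}(H^1(\T,B^n_{\rho}))\times [T_-,T_+]$ for some $0<T_-<T_+<+\infty$. By Lemma \ref{lem:lagloc} we have $\Psi_{\mathcal{M}}^*\eta_k=d\A^{L_{\psi,\sigma}}_k$ on $U$. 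Hence the problem reduces to proving that the free-period action functional associated with a smooth, $T$-parametrized family $L_{\psi,\sigma}(\cdot,\cdot,\cdot,T)$ of Tonelli Lagrangians which is quadratic at infinity uniformly for $T\in [T_-,T_+]$ has locally Lipschitz continuous differential on the open subset $H^1(\T,B^n_{\rho})\times (T_-,T_+)$ of the Hilbert space $H^1(\T,\R^n)\times \R$.

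The plan is to follow the scheme already employed for the fixed-time case in Theorem \ref{regularity} and for the free-time case in Proposition \ref{propregularity}, adapted in order to keep track of the dependence on $T$. The uniform quadratic-at-infinity condition gives bounds of the form
\[
\Big|\partial^2 L_{\psi,\sigma}/\partial v^2\Big|\leq c_1,\quad \Big|\partial^2 L_{\psi,\sigma}/\partial q\partial v\Big|\leq c_1(1+|v|),\quad \Big|\partial^2 L_{\psi,\sigma}/\partial q^2\Big|\leq c_1(1+|v|^2),
\]
and analogous bounds for the mixed derivatives involving $T$, all holding uniformly for $T\in[T_-,T_+]$, $q\in \overline{B^n_\rho}$ and $v\in\R^n$. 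Integration against an $H^1$ variation $(\xi,H)$ and the Cauchy-Schwarz inequality then yield, along the same lines as in the proofs of Theorem \ref{regularity} and Proposition \ref{propregularity}, that the Gateaux differential $d\A^{L_{\psi,\sigma}}_k(\xi,T)$ exists, is jointly continuous, and depends in a uniformly Lipschitz manner on $(\xi,T)$ on any bounded subset of $H^1(\T,B^n_\rho)\times[T_-+\epsilon,T_+-\epsilon]$.

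The main (but mild) obstacle compared with the proofs in Chapter \ref{Preliminaries} is to control the additional terms coming from the differentiation in the $T$-direction: a direct computation using (\ref{par-T}) and the uniform quadratic bounds on $E$ and on the derivatives of $L_{\psi,\sigma}$ shows that also $\partial\A^{L_{\psi,\sigma}}_k/\partial T$ is locally Lipschitz in $(\xi,T)$, because the integrand and its derivatives are controlled by $1+|\xi'|^2/T^2$, a quantity that remains bounded on the prescribed neighborhood. Combining the bounds on the $\xi$-derivative and on the $T$-derivative via the mean value theorem gives the local Lipschitz continuity of $d\A^{L_{\psi,\sigma}}_k$, and pulling back to $\mathcal{M}$ concludes the proof.
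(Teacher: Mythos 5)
Your argument is correct and follows essentially the same route as the paper, which likewise invokes the local representation $\Psi_{\mathcal M}^*\eta_k=d\A_k^{L_{\psi,\sigma}}$ from Lemma \ref{lem:lagloc} and then adapts the Lipschitz estimates for the action differential (the computations of \cite[Lemma 3.1]{AS09}, i.e.\ the scheme of Theorem \ref{regularity} and Proposition \ref{propregularity}) using the uniform quadratic bounds in $T\in[T_-,T_+]$. Only a cosmetic point: in the infinite-dimensional setting you should take a \emph{bounded} (convex) neighborhood rather than a ``relatively compact'' one, since the mean value theorem argument only needs boundedness of the second Gateaux differential there.
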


Since $\eta_k$ is locally the differential of a $C^1$-functional, the integral of $\eta_k$ over a closed path $u:\T\rightarrow \mathcal M$ depends only on the free-homotopy class of $u$. In this sense, we say that $\eta_k$ is \textit{closed}, 
even if $\eta_k$ is in general not Fr\'ech\'et differentiable.

One can show that $\eta_k$ is \textit{exact} on $\mathcal M_0$ if and only if $\sigma$ is weakly-exact and that $\eta_k$ is exact on the whole $\mathcal M$ if the lift of $\sigma$ to the universal cover admits a bounded primitive, i.e.\
if $\sigma$ is a so called \textit{bounded weakly-exact 2 form} (see \cite{Mer10} or \cite{AB14}).

The following lemma states that the zeros of $\eta_k$ are in correspondence with the periodic orbits of the flow defined by $(L,\sigma)$ with energy $k$. The proof can be easily obtained from the one of Theorem \ref{criticalpoints}.

\begin{lemma}
Let $(M,g)$ be a Riemannian manifold, $\sigma$ be a closed $2$-form on $M$ and $L:T^*M\rightarrow\R$ be a Tonelli Lagrangian. Then, $\gamma=(x,T)$ is a zero of $\eta_k$ if and only if 
$\gamma$ is a periodic orbit of the flow of $(L,\sigma)$ with energy $k$.
\label{zeridietak}
\end{lemma}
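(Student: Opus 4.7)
The plan is to reduce the statement to the classical correspondence between critical points of a Lagrangian action functional and solutions of the associated Euler-Lagrange system, which is already available as Theorem \ref{criticalpoints}. The key tool is the local representation of $\eta_k$ provided by Lemma \ref{lem:lagloc}: around any $(x,T)\in\mathcal M$ there is a bi-bounded time-dependent chart $\psi$ and a smooth, fiberwise Tonelli Lagrangian $L_{\psi,\sigma}:\T\times TB^n_\rho\times\R^+\rightarrow\R$ such that $\Psi_{\mathcal M}^*\eta_k=d\A^{L_{\psi,\sigma}}_k$. Therefore the question of whether $\eta_k$ vanishes at $(x,T)$ is, in a neighbourhood, exactly the question of whether the local free-period action $\A^{L_{\psi,\sigma}}_k$ has a critical point.

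First I would deal with the spatial variations, i.e. test $\eta_k(x,T)$ against pairs $(\zeta,0)$. Using the local chart, $d_\xi\A^{L_{\psi,\sigma}}_k(\xi,T)[\zeta]=0$ for every $\zeta$, and then Theorem \ref{criticalpoints} applied to the Lagrangian $L_{\psi,\sigma}(\cdot,\cdot,\cdot,T)$ (which is Tonelli and quadratic at infinity for this fixed $T$) gives that the curve $s\mapsto\xi(s)$ is smooth and solves the associated Euler-Lagrange equation on the chart. The next step is to identify this local Euler-Lagrange equation with the equation of the flow of $(L,\sigma)$. By the construction of $L_{\psi,\sigma}$ in Lemma \ref{lem:lagloc}, on a small enough chart the form $\sigma$ is exact, say $\sigma=d\theta$, and by polarising the identity $\Psi_{\mathcal M}^*(d\A^L_k+\tau^\sigma)=d\A^{L_{\psi,\sigma}}_k$ against variations supported in the chart one checks that $L_{\psi,\sigma}$ and $L+\theta$ differ only by a closed term along any curve, so their Euler-Lagrange equations coincide. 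Thus $\gamma(t):=x(t/T)$ solves locally the Euler-Lagrange equation of $L+\theta$, which is precisely the definition of the flow of $(L,\sigma)$ given in the introduction of Chapter \ref{chapter4}.

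Next I would handle the $T$-variation separately, since this is what produces the energy condition. From formula \eqref{par-T} one has
\[
\eta_k(x,T)\Big[\frac{\partial}{\partial T}\Big]\;=\;k\,-\,\frac{1}{T}\int_0^T E(\gamma(t),\dot\gamma(t))\,dt.
\]
Once we know from the previous paragraph that $\gamma$ is a trajectory of the flow of $(L,\sigma)$, the energy $E$ is a first integral (this is a general property of flows defined by a local Lagrangian $L+\theta$, since $E$ does not depend on $\theta$ and is preserved by the Euler-Lagrange flow of $L+\theta$). Therefore the integrand is the constant $E(\gamma,\dot\gamma)$, and the vanishing of $\eta_k$ along $\partial/\partial T$ forces $E(\gamma,\dot\gamma)\equiv k$.

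Finally, the converse direction is a direct computation: if $\gamma=(x,T)$ is a periodic orbit of the flow of $(L,\sigma)$ with $E(\gamma,\dot\gamma)\equiv k$, then on every chart $\gamma$ satisfies the Euler-Lagrange equation of a local primitive $L+\theta$, hence by the same identification $L_{\psi,\sigma}\leftrightarrow L+\theta$ it is a critical point of $\A^{L_{\psi,\sigma}}_k$ along spatial variations, so $\eta_k$ vanishes on all $(\zeta,0)$; the energy condition $E\equiv k$ makes the right-hand side of \eqref{par-T} zero, so $\eta_k$ also vanishes on $\partial/\partial T$, and by linearity it vanishes on the whole tangent space. The main obstacle I expect is the bookkeeping in the identification of $L_{\psi,\sigma}$ with a local primitive $L+\theta$, because the representative Lagrangian in Lemma \ref{lem:lagloc} is built abstractly and its equality with $L+\theta$ holds only up to terms that integrate to closed forms; one has to check carefully that these extra terms do not alter the Euler-Lagrange equation on the chart. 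This is however a routine verification carried out for instance in \cite{AB14}.
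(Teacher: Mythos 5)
Your proposal is correct and follows essentially the same route the paper intends: the paper's proof is just the remark that the statement "can be easily obtained from the one of Theorem \ref{criticalpoints}", i.e.\ use the local exactness of $\eta_k$ (Lemma \ref{lem:lagloc}, or equivalently a local primitive $L+\theta$ of the pair $(L,\sigma)$) to invoke the regularity and Euler--Lagrange argument of Theorem \ref{criticalpoints}, and then read off the energy condition from the $T$-derivative \eqref{par-T} exactly as in Theorem \ref{equivalence}. Your extra identification of $L_{\psi,\sigma}$ with a local primitive $L+\theta$ is the routine bookkeeping carried out in \cite{AB14}, so no gap.
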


In view of this result, the aim of the following sections will be to show that the set of zeros of $\eta_k$ is non-empty. The mechanism we will use to construct zeros of $\eta_k$ is to look at the limit points of \textit{critical sequences}, i.e. sequences 
$(x_h,T_h)\subset\mathcal M$ with
$$\big \|\eta_k(x_h,T_h)\big \| \ \longrightarrow \ 0\, ,$$
which are the generalization of Palais-Smale sequences to this setting. Since $\eta_k$ is a continuous $1$-form, we see that the set of limit points of critical sequences coincides with the set of zeros of $\eta_k$. Therefore, the first step is to know 
under which hypotheses a critical sequence has a limit point. Clearly, if $T_h\rightarrow 0$ or $T_h\rightarrow \infty$, the limit point set is empty. The following proposition shows that the converse is also true.

\begin{prop}\label{prp:ps2}
A critical sequence $(x_h,T_h)$ for $\eta_k$ such that $0<T_*\leq T_h\leq T^*<\infty$ has a converging subsequence.
\label{compattezzagen}
\end{prop}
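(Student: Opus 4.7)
The plan is to mimic the proof of Lemma \ref{lemmalimitatezza} for the global functional $\A_k$, exploiting the local exactness of $\eta_k$ provided by Lemma \ref{lem:lagloc} to replace the action-based arguments with their local analogues. First I would extract a uniform $H^1$-bound for $x_h$. Testing $\eta_k(x_h,T_h)$ against $\partial/\partial T$ and using (\ref{par-T}), together with $\|\eta_k(x_h,T_h)\|\to 0$, yields
\[
\frac{1}{T_h}\int_0^{T_h} E\bigl(\gamma_h(t),\dot\gamma_h(t)\bigr)\,dt \ = \ k + o(1).
\]
Combining this with the quadratic lower bound (\ref{disuguaglianzeE}) and the uniform two-sided bound $T_*\leq T_h\leq T^*$ yields $\|x_h'\|_2^2 \leq C$ for some constant $C$ independent of $h$. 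Consequently $\{x_h\}$ is $1/2$-equi-H\"older continuous, so by Ascoli--Arzel\`a a subsequence (still denoted $x_h$) converges uniformly to some continuous loop $x\in C^0(\T,M)$.

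Next I would localize. By uniform convergence, for $h$ large enough, $x_h$ lies in the domain of a single bi-bounded time-dependent chart $\Psi_{\mathcal M}$ and can be written as $x_h=\Psi_{\mathcal M}(\xi_h,T_h)$ with $\xi_h\in H^1_W([0,1],B^n_\rho)$. By Lemma \ref{lem:lagloc}, in this chart $\eta_k$ coincides with the differential of the functional $\A^{L_{\psi,\sigma}}_k$, whose integrand $L_{\psi,\sigma}(\cdot,\cdot,\cdot,T)$ is Tonelli and quadratic at infinity uniformly in $T\geq T_*$. Thus $(\xi_h,T_h)$ becomes a genuine Palais--Smale sequence for the local functional $\A^{L_{\psi,\sigma}}_k$, with $\xi_h\to \xi$ uniformly and weakly in $H^1$ (by the $H^1$-bound).

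The final step upgrades the weak $H^1$-convergence to strong $H^1$-convergence. I would test $d\A^{L_{\psi,\sigma}}_k(\xi_h,T_h)$ against $(\xi_h-\xi,0)$, which tends to zero because this is a Palais--Smale sequence and $\xi_h-\xi$ is bounded in $H^1$. The polynomial growth bounds on the derivatives of $L_{\psi,\sigma}$ (uniform in $T\in[T_*,T^*]$), combined with the uniform convergence of $\xi_h$ to $\xi$, force the $\partial_q L_{\psi,\sigma}$ term to be infinitesimal. What remains is the vertical term, which by the uniform fiberwise convexity $d_{vv}L_{\psi,\sigma}\geq \delta\,\mathrm{Id}$ controls $\|\xi_h'-\xi'\|_2^2$ from above, modulo a term involving $d_vL_{\psi,\sigma}(s,\xi_h,\xi'/T_h)$ which converges strongly in $L^2$ (by the linear growth of $d_vL_{\psi,\sigma}$ in $v$) and is tested against $\xi_h'-\xi'\rightharpoonup 0$. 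This yields $\xi_h\to \xi$ strongly in $H^1$, hence $(x_h,T_h)\to(x,T)$ in $\mathcal M$.

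The main obstacle is purely notational rather than conceptual: since no global primitive of $\eta_k$ exists, one cannot start from a uniform action bound as in Lemma \ref{lemmalimitatezza}. However, this is compensated for free because the proof only uses the action bound to derive the $H^1$-bound on $x_h'$, and here the same bound is obtained directly from the $T$-component of the critical equation via (\ref{par-T}) and (\ref{disuguaglianzeE}). Once in a chart, Lemma \ref{lem:lagloc} reduces the problem verbatim to the one already solved in Lemma \ref{lemmalimitatezza}.
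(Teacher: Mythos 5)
Your proposal is correct and follows essentially the same route as the paper: the uniform bound on $e(x_h)$ via the $\partial/\partial T$-component of the critical equation is exactly Lemma \ref{lem:ps}, and the paper likewise passes by Ascoli--Arzel\`a to a chart where $\eta_k$ is exact (Lemma \ref{lem:lagloc}) and then concludes by the argument of Lemma \ref{lemmalimitatezza}. The only difference is that you spell out the localization and the convexity step, which the paper compresses into a citation.
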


\noindent For the proof we will need the following 

\begin{lemma}\label{lem:ps}
If $(x_h,T_h)$ is a critical sequence, then there exists $C>0$ such that
\begin{equation}\label{en-per}
e(x_h)\ \leq\ CT_h^2.  
\end{equation}
In particular $e(x_h)\rightarrow 0$ if $T_h$ goes to zero.
\end{lemma}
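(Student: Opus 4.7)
The plan is to test the critical sequence condition against the vertical direction $\partial/\partial T$, which is a unit vector for the product metric $g_{\mathcal M}=g_{H^1}+dT^2$. Since $\|\eta_k(x_h,T_h)\|\to 0$, we have in particular
$$\left|\eta_k(x_h,T_h)\Big[\tfrac{\partial}{\partial T}\Big]\right|\ \leq\ \|\eta_k(x_h,T_h)\|\ \longrightarrow\ 0.$$
Using the explicit formula \eqref{par-T} for the partial derivative in $T$ this translates into
$$\frac{1}{T_h}\int_0^{T_h} E(\gamma_h(t),\dot\gamma_h(t))\,dt\ =\ k+o(1),$$
where as usual $\gamma_h(t)=x_h(t/T_h)$. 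This is the only consequence of criticality that I will use.

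Next I will invoke the quadratic lower bound \eqref{disuguaglianzeE}, namely $E(q,v)\geq E_0\|v\|_q^2-E_1$, to estimate
$$k+o(1)\ \geq\ \frac{E_0}{T_h}\int_0^{T_h}\|\dot\gamma_h(t)\|^2\,dt\ -\ E_1.$$
A straightforward change of variables $t=sT_h$ converts the integral into an expression in terms of $x_h$: since $x_h'(s)=T_h\,\dot\gamma_h(sT_h)$, one obtains
$$\int_0^{T_h}\|\dot\gamma_h(t)\|^2\,dt\ =\ \frac{1}{T_h}\int_0^1\|x_h'(s)\|^2\,ds\ =\ \frac{e(x_h)}{T_h}.$$
Substituting back gives
$$\frac{E_0\,e(x_h)}{T_h^2}\ \leq\ k+E_1+o(1),$$
so for $h$ large enough we conclude $e(x_h)\leq C T_h^2$ with, for instance, $C:=(k+E_1+1)/E_0$, which proves \eqref{en-per}. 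The final claim that $e(x_h)\to 0$ whenever $T_h\to 0$ is then immediate. No step here is a serious obstacle: the whole argument rests on the fact that the $T$-component of $\eta_k$ controls the average energy, together with the Tonelli quadratic lower bound on $E$.
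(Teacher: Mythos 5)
Your proof is correct and follows essentially the same route as the paper: test the critical sequence against $\partial/\partial T$, use \eqref{par-T} to identify this component with $k$ minus the mean energy, and apply the quadratic lower bound \eqref{disuguaglianzeE}; your change of variables to the $t$-parametrization is just a cosmetic difference from the paper's computation in the $s$-variable. The only trivial remark is that the constant $C$ obtained for $h$ large can be enlarged to cover the finitely many remaining indices, which is implicit in both arguments.
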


\begin{proof}
Since $(x_h,T_h)$ is a critical sequence for $\eta_k$, using (\ref{disuguaglianzeE}) and (\ref{par-T}) we obtain
\begin{eqnarray*}
o(1) &=& - \eta_k(x_h,T_h)\left [\frac{\partial}{\partial T}\right ] \ =  \int_0^1 E\Big (x_h(s),\frac{x_h'(s)}{T_h}\Big )\, ds  \ - \ k \ \geq \\ 
        &\geq& \int_0^1 \Big (E_0\, \frac{\|x_h'(s)\|^2}{T_h^2} -E_1\Big )\, ds \ - \ k \ = \ \frac{E_0}{T_h^2} \, e(x_h) \ - \ \big (k+E_1\big )\, .
\end{eqnarray*}
This  clearly implies 
$$e(x_h) \ \leq \ \frac{T_h^2}{E_0} \, \big (k+E_1 + o(1) \big )$$
and hence (\ref{en-per}) follows.
\end{proof}

\vspace{5mm}

\begin{proof} [Proposition \ref{prp:ps2}]
Since the period is bounded, we know by Lemma \ref{lem:ps}, that $e(x_h)$ is also bounded. 
Hence, the curves $x_h$ are $1/2$-equi-H\"older-continuous. By the Ascoli-Arzel\`a theorem, up to taking a subsequence they converge uniformly to a continuous curve $x$. Thus, the $x_h$'s eventually belong 
to the image of a local chart for $H^1(\T,M)$, where we know the 1-form $\eta_k$ to be exact.  Now, arguing as in the proof of Lemma \ref{lemmalimitatezza} the thesis follows.
\end{proof}

\vspace{5mm}

At this point we need a mechanism to produce critical sequences for $\eta_k$ with periods bounded and bounded away from zero. We will do this via a minimax method; for the argument we look for a vector field on $\mathcal M$ 
which generalizes the negative-gradient vectof field of the action functional $\A_k^{L_\theta}$ when $\sigma=d\theta$ is exact. In the next section, we discuss the properties of this vector field in the general case. 

%%%%%%%%%%%%%%%%%%%%%%%%%%%%%%%%%%%%%%%%%%%%%

\section{A truncated gradient}
\label{ageneralizedpseudogradient}

We know that when the $1$-form $\tau^\sigma$ in (\ref{tauk}) is non-exact, we cannot define a global primitive of $\eta_k$ on $\mathcal M$. However,
if $u:[0,1]\rightarrow \mathcal M$ is of class $C^1$, we can define the variation $\Delta S_k(u):[0,1]\rightarrow\R$ of $\eta_k$ along the path $u$ by
\begin{equation}
\Delta S_k(u)(s)\ :=\ \eta_k(u|_{[0,s]})\ = \ \int_0^s u^*\eta_k\,.  
\label{variazionelungou}
\end{equation}

Then, since $\eta_k$ is closed, we extend the definition of $\Delta S_k$ to any continuous path by uniform approximation with paths of class $C^1$.
Observe that $\Delta S_k(u)(0)=0$ and if $u$ takes values in a region where $\eta_k$ is exact with primitive $S_k$, then there holds
\begin{equation}\label{eq:var}
\Delta S_k(u)(s)\ =\ S_k(u(s))\ -\ S_k(u(0))\,. 
\end{equation}

The next lemma describes how $\Delta S_k$ changes under deformation of paths in $\mathcal M$ with the first endpoint fixed. The proof follows from the fact that $\eta_k$ is a closed form.

\begin{lemma}\label{lem:dec}
Let $R>0$ and let $u:[0,R]\times[0,1]\rightarrow \mathcal M$ be a homotopy of paths. Denote by $u_r:=u(r,\cdot)$ and $u^s:=u(\cdot,s)$ the paths in $\mathcal M$ obtained keeping one of the 
variables fixed. If $u^0$ is constant, then for every $s\in[0,1]$
\begin{equation}\label{eq:dec}
\Delta S_k(u_R)(s)=\Delta S_k(u_0)(s)+\Delta S_k(u^s)(R)\,.
\end{equation}
\end{lemma}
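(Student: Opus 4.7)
The plan is to reduce the identity to the fact that $\eta_k$ is closed, in the sense explained right before the lemma: its integral over any null-homotopic loop vanishes. The homotopy $u$ itself will provide the null-homotopy of the relevant boundary loops.

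First, I would reduce to the case of a $C^1$ homotopy $u$. The general case follows by uniform approximation, since by construction $\Delta S_k$ depends continuously on the path in the $C^0$-topology (it was defined on continuous paths precisely as the uniform limit of $\Delta S_k$ on $C^1$ approximations); the continuity of $\eta_k$ (Corollary~\ref{cor:reg}) ensures that all four quantities appearing in \eqref{eq:dec} pass to the limit.

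Next, for a fixed $s\in[0,1]$ I would introduce the piecewise-$C^1$ loop $\ell_s$ in $\mathcal M$ obtained by going counterclockwise around the image under $u$ of the rectangle $[0,R]\times[0,s]$. Concretely, $\ell_s$ is the concatenation of the four paths
\begin{itemize}
\item $r\mapsto u(r,0)$ for $r\in[0,R]$ (which is constant by hypothesis on $u^0$),
\item $u_R\big|_{[0,s]}$,
\item the reversal of $u^s$,
\item the reversal of $u_0\big|_{[0,s]}$.
\end{itemize}
Because $u$ maps the contractible set $[0,R]\times[0,s]$ continuously into $\mathcal M$, the loop $\ell_s$ is null-homotopic in $\mathcal M$ (an explicit null-homotopy is $(\lambda,\cdot)\mapsto \ell_{\lambda s}$, with $\ell_0$ reducing to the constant loop $u(0,0)$). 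By the closedness of $\eta_k$ recalled in Section~\ref{theaction1form}, the integral of $\eta_k$ along $\ell_s$ vanishes.

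Finally, I would split this vanishing integral into its four contributions and read off \eqref{eq:dec}. The first contribution is $\Delta S_k(u^0)(R)$, which equals zero because $u^0$ is constant (so $(u^0)^*\eta_k\equiv 0$). Taking orientations into account, the remaining three give
\begin{equation*}
\Delta S_k(u_R)(s)\ -\ \Delta S_k(u^s)(R)\ -\ \Delta S_k(u_0)(s)\ =\ 0,
\end{equation*}
which is precisely the claimed identity. I do not foresee a genuine obstacle here: the argument is essentially Stokes' theorem on a rectangle for the closed $1$-form $\eta_k$, and the only mild care required is the $C^1$-approximation step, which is standard in view of how $\Delta S_k$ was extended to continuous paths.
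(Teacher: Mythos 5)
Your proof is correct and takes the same route as the paper, which simply observes that the identity follows from the closedness of $\eta_k$; you have merely spelled out that argument by integrating $\eta_k$ over the (contractible) boundary loop of the rectangle $[0,R]\times[0,s]$ and using that the edge $u^0$ contributes zero. The sign bookkeeping and the reduction to $C^1$ paths by uniform approximation are both handled correctly.
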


We now proceed to define a generalized pseudo-gradient. First we consider the vector field $-\sharp\eta_k$, where $\sharp$ is the duality between $T\mathcal M$ and $T^*\mathcal M$ given by the metric $g_{\mathcal M}$
in (\ref{productmetric}). By Corollary \ref{cor:reg} $-\sharp\eta_k$ is locally Lipschitz and hence we have local existence and uniqueness for solutions of the associated Cauchy problem. 

However, since $\|\eta_k\|$ is not bounded on $\mathcal M$, solutions of the Cauchy problem could escape to infinity in finite time, thus having a maximal interval of definition of finite length. To avoid this problem we consider the bounded conformal vector field 
\begin{equation}
X_k:=\frac{-\sharp\eta_k}{\sqrt{1+\|\eta_k\|^2}}\,.
\end{equation}
We define $\Phi^{k}$ as the local flow of $X_k$ on $\mathcal M$ generated by the maximal solutions 
$$u_{(x,T)}:(R^-_{(x,T)},R^+_{(x,T)})\longrightarrow\mathcal M$$
of the Cauchy problem with initial condition $u(0)=(x,T)$. Here $(x,T)$ is some element in $\mathcal M$ and $R_{(x,T)}^-,R^+_{(x,T)}$ are numbers in $\R^+\cup\{+\infty\}$. By definition, we have 
$$\Phi^{k}_r(x,T)\ =\ u_{(x,T)}(r)\ =: (x(r),T(r))\, .$$

Actually we are interested in the behavior of the local flow only in forward time, so hereafter we forget about $R^-_{(x,T)}$ and study the properties of $\Phi^k$ for positive times. As we will see in the next lemma,
the only source of incompleteness for $\Phi^{k}$ is that the map $r\mapsto T(r)$ has $0$ as a limit point.
\vspace{2mm}

\begin{prop}\label{pro:comp}
Let $u:[0,R)\rightarrow \mathcal M$ be a maximal flow-line of $\Phi^k$ and suppose that $R<+\infty$. Then necessarily 
\begin{equation}\label{eq:liminf}
\liminf_{r\rightarrow R}\ T(r)=0\,.
\end{equation}
In this case there exist a constant $C>0$ and a sequence $r_h\rightarrow R$ such that 
$$T(r_h)\longrightarrow 0\quad \text{and} \quad e(x(r_h))\ \leq \ C\, T(r_h)^2\, .$$
\end{prop}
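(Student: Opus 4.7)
The argument has the flavour of a standard ``escape to the boundary'' result for flows of bounded locally Lipschitz vector fields on a non-complete Hilbert manifold, paired with the energy-period identity \eqref{par-T}. First I would remark that $\|X_k\|\leq 1$, so that the flow line $u=(x,T)$ is $1$-Lipschitz with respect to $g_{\mathcal M}$; in particular $r\mapsto T(r)$ is $1$-Lipschitz and hence Cauchy as $r\to R^-$, giving a well-defined limit $T_\infty:=\lim_{r\to R^-}T(r)\in[0,+\infty)$.

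Next I would argue by contradiction that $T_\infty=0$. If instead $T_\infty>0$, then $T(r)\geq T_*>0$ on some left neighborhood of $R$. Since $H^1(\T,M)$ is complete and $[T_*,+\infty)$ is complete in $\R$, the closed subset $H^1(\T,M)\times[T_*,+\infty)\subseteq\mathcal M$ is complete with respect to $g_{\mathcal M}=g_{H^1}+dT^2$. The Cauchy flow line therefore converges to some $u_\infty\in\mathcal M$, and Corollary \ref{cor:reg} guarantees that $X_k$ is locally Lipschitz near $u_\infty$. Standard local existence for ODEs then lets me extend the flow past $R$, contradicting the maximality of $R$.

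For the quantitative part, the key observation is that at any $r$ at which $T'(r)\leq 0$, identity \eqref{par-T} combined with the Tonelli-type lower bound \eqref{disuguaglianzeE} yields
\[
k\,\geq\,\int_0^1 E\!\left(x(r)(s),\frac{x(r)'(s)}{T(r)}\right)ds\,\geq\,\frac{E_0}{T(r)^2}\,e(x(r))\,-\,E_1\,,
\]
and hence $e(x(r))\leq C\,T(r)^2$ with $C:=(k+E_1)/E_0$. To manufacture such instants $r=r_h$, for each sufficiently large $h\in\N$ I would set $r_h:=\sup\{r\in[0,R)\, :\, T(r)\geq T(0)/h\}$. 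Since $T(r)\to 0$ while $T(0)>0$, this supremum is attained at a point $r_h<R$ with $T(r_h)=T(0)/h$, and the inequality $T(r)<T(r_h)$ for $r$ just above $r_h$ forces $T'(r_h)\leq 0$. The sequence $\{r_h\}$ is non-decreasing and must converge to $R$, since otherwise $T$ would vanish at some $r_\infty<R$.

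The only delicate point I anticipate is the completeness argument: one has to ensure that the only obstruction to extending the flow past $R$ is the escape $T\to 0$, and this hinges on the product structure of $g_{\mathcal M}$, which isolates the non-complete factor $(0,+\infty)$. Once this is secured, the remainder is a quantitative reading of identity \eqref{par-T} at descending-$T$ times.
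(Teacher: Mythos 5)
Your proposal is correct and follows essentially the same route as the paper: the $\liminf$ claim is proved by contradiction using the bound $\|X_k\|<1$, the completeness of $H^1(\T,M)\times[T_*,+\infty)$ and local existence for the locally Lipschitz field $X_k$, and the energy bound comes from evaluating \eqref{par-T} together with \eqref{disuguaglianzeE} at instants where $\frac{dT}{dr}\leq 0$. The only difference is cosmetic: you observe that $T(r)$ actually has a full limit and construct the sequence $r_h$ explicitly via the superlevel sets $\{T\geq T(0)/h\}$, a detail the paper asserts without spelling out.
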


\begin{proof}
Suppose that $R<+\infty$ and assume by contradiction that $T(r)\geq T_*$ for every $r\in[0,R)$. Observe that
\begin{equation*}
\left\|\frac{d}{dr}u\right\| \ =\ \left\|\frac{-\sharp\eta_k}{\sqrt{1+\|\eta_k\|^2}}\right\| \ =\ \frac{\|\eta_k\|}{\sqrt{1+\|\eta_k\|^2}}\ <\ 1\,.
\end{equation*}

Since the derivative of $u$ is bounded by the above inequality and $H^1(\T,M)\times[T_*,+\infty)$ is complete, there exists the limit 
$$u_*:= \ \lim_{r\rightarrow R}u(r)\, .$$

As $X_k$ is locally Lipschitz, there exists a neighbourhood $\mathcal U$ of $u_*$, such that the solutions to the Cauchy problem with initial data in $\mathcal U$ all exist in a small fixed interval $[0,r_{u_*}]$. This yields a contradiction as soon as $u(r)\in\mathcal U$ and $R-r<r_{u_*}$. Suppose now that \eqref{eq:liminf} holds. In this case there exists a sequence $r_h\rightarrow R$ such that 
$$T(r_h)\longrightarrow 0 \quad \text{and} \quad \frac{dT}{dr}(r_h)\ \leq \ 0\, .$$
Using \eqref{disuguaglianzeE} and (\ref{par-T}), we then find
\begin{equation*}
0 \ \geq \  \frac{dT}{dr}(r_h) \ =\  -(\eta_k)_{u(r_h)}\left[\frac{\partial }{\partial T}\right] \ \geq \ E_0\, \frac {e(x(r_h))}{T(r_h)^2}-E_1-k\,,
\end{equation*}
which gives the required bound for the energy. 
\end{proof}

\vspace{5mm}

The following lemma is about the $1/2$-H\"older norm of the flow-lines of $\Phi^k$ and will be used in the proof of Proposition \ref{Struwe}.

\begin{lemma}\label{lem:ac-distper}
If $u:[0,R]\rightarrow\mathcal M$ is a flow line of $\Phi^{k}$, then
\begin{equation*}
-\Delta S_k(u)(R)\ \geq \ \frac{d_{\mathcal M}(u(R),u(0))^2}{R}\,, 
\end{equation*}
where $d_{\mathcal M}$ denotes the distance induced by the metric $g_{\mathcal M}$ in (\ref{productmetric}). In particular,
\begin{equation}\label{eq:ac-per}
-\Delta S_k(u)(R)\ \geq \ \frac{(T(R)-T(0))^2}{R}\,.
\end{equation}
\end{lemma}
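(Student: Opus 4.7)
The plan is to differentiate $\Delta S_k(u)(r)$ directly along the flow-line and obtain a pointwise lower bound for $-\frac{d}{dr}\Delta S_k(u)$ in terms of $\|u'(r)\|^2$. Concretely, since $u$ is an integral curve of $X_k=-\sharp\eta_k/\sqrt{1+\|\eta_k\|^2}$, the definition of $\Delta S_k$ gives
\[
\frac{d}{dr}\Delta S_k(u)(r)\ =\ \eta_k(u(r))[u'(r)]\ =\ -\,\frac{\|\eta_k(u(r))\|^2}{\sqrt{1+\|\eta_k(u(r))\|^2}},
\]
while
\[
\|u'(r)\|^2\ =\ \frac{\|\eta_k(u(r))\|^2}{1+\|\eta_k(u(r))\|^2}.
\]
The elementary inequality $\sqrt{1+a}\leq 1+a$ (valid for $a\geq 0$) applied to $a=\|\eta_k(u(r))\|^2$ then yields the pointwise bound $-\frac{d}{dr}\Delta S_k(u)(r)\geq \|u'(r)\|^2$.

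Integrating this from $0$ to $R$ and applying the Cauchy--Schwarz inequality on $[0,R]$,
\[
-\Delta S_k(u)(R)\ \geq\ \int_0^R \|u'(r)\|^2\,dr\ \geq\ \frac{1}{R}\left(\int_0^R \|u'(r)\|\,dr\right)^{\!2}\! .
\]
The length of the curve $u$ dominates the Riemannian distance between its endpoints, so $\int_0^R\|u'(r)\|\,dr\geq d_{\mathcal M}(u(0),u(R))$, and the first inequality of the lemma follows.

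For the second inequality I would simply note that the Riemannian metric $g_{\mathcal M}=g_{H^1}+dT^2$ is a product, so the projection $\mathcal M\to(0,+\infty)$, $(x,T)\mapsto T$, is $1$-Lipschitz with respect to $d_{\mathcal M}$ and the Euclidean distance on $\R$. Consequently $d_{\mathcal M}(u(0),u(R))\geq |T(R)-T(0)|$, and substituting into the first bound yields \eqref{eq:ac-per}. I do not foresee any substantial obstacle: the only points requiring care are the elementary inequality used to relate the two integrands and the fact that the $T$-projection is $1$-Lipschitz, both of which are immediate from the definitions.
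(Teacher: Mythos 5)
Your proof is correct and follows essentially the same route as the paper: both express $-\frac{d}{dr}\Delta S_k(u)$ as $\|\eta_k\|^2/\sqrt{1+\|\eta_k\|^2}$, bound it below by $\|u'\|^2$ (you via $\sqrt{1+a}\leq 1+a$, the paper via $\sqrt{1+\|\eta_k\|^2}\geq 1$ after writing the integrand as $\sqrt{1+\|\eta_k\|^2}\,\|X_k\|^2$), then conclude with Cauchy--Schwarz, length $\geq$ distance, and the fact that $g_{\mathcal M}$ is a product metric so the $T$-projection is $1$-Lipschitz.
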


\begin{proof}
We just compute
\begin{eqnarray*}
-\Delta S_k(u)(R) &=& -\int_0^R\eta_k(u)\left(\frac{du}{dr}\right)dr\ \ = \ -\int_0^R \eta_k(u) \left (X_k(u)\right )\, dr \ = \\ 
                            &=& -\int_0^R \eta_k(u) \left (\frac{-\sharp \eta_k(u)}{\sqrt{1+\|\eta_k(u)\|^2}} \right ) \, dr \ = \int_0^R \frac{\|\eta_k(u)\|^2}{\sqrt{1+\|\eta_k(u)\|^2}} \, dr\ = \\
                            &=& \int_0^R \sqrt{1+\|\eta_k(u)\|^2} \cdot \|X_k(u)\|^2 \, dr \ \geq \ \int_0^R \|X_k(u)\|^2\, dr \ = \\
                            &=& \int_0^R\left\|\frac{du}{dr}\right\|^2\, dr \geq \ \frac{1}{R}\left(\int_0^R\left\|\frac{du}{dr}\right\|\, dr\right)^2\ \geq\ \frac{d_{\mathcal M}(u(R),u(0))^2}{R}\,, 
\end{eqnarray*}
where the penultimate inequality follows from the Cauchy-Schwarz inequality. To obtain \eqref{eq:ac-per} we just observe that 
$$|T(R)-T(0)|\ \leq \ d_{\mathcal M}(u(R),u(0))\, .$$
as $d_{\mathcal M}$ is a product distance.
\end{proof}

\vspace{5mm}

Lemma \ref{lem:ps} and Proposition \ref{pro:comp} show that the only source of non-completeness of the local flow $\Phi^k$ are trajectories that go closer and closer to the subset of constant loops. In particular, this yields that 
$\Phi^{k}$ is positively complete on $\mathcal M\setminus\mathcal M_0$. The case of $\mathcal M_0$ is more delicate and requires to take a deeper look to $\eta_k$ close to the set of constant loops.
We will do this for $k>e_0(L)$ obtaining two outcomes.

First, $\eta_k$ admits a positive primitive $S_k$ on the subset of loops with small kinetic energy and such a function has an interesting geometry, which will be exploited for the minimax method. 
Second, we will improve Proposition \ref{pro:comp} and show that on the flow lines with finite maximal interval of definition $S_k\rightarrow 0$.

\noindent Inside $\mathcal M_0$ we single out the submanifold of constant loops 
$$M_0 :=\ M\times \R^+\, .$$
For every $\delta>0$ consider the neighbourhood $\mathcal V_\delta$ of $M_0$ given by
\begin{equation}
\mathcal V_\delta :=\ \Big\{(x,T)\ \Big |\ e(x)<\delta\,\Big\}\,.
\end{equation}

If $\delta$ is sufficiently small, then there is a deformation retraction of $\mathcal V_\delta$ onto $M_0$ which fixes the period. Such a deformation can be obtained for example by considering the negative gradient flow of the function $e$ on $H^1(\T,M)$.
The deformation yields a capping disc $D_x$ for $x$ and, hence, an explicit primitive $S_k$ for $\eta_k$ on $\mathcal V_\delta$ defined by 
\begin{equation}
S_k(x,T) :=\ \A^L_k(x,T)\ +\ \int_{D_x}\sigma\,.
\label{primitiveonvdelta}
\end{equation}

\begin{lemma}
If $\delta$ is sufficiently small, there exists $\Theta_0>0$ such that
\begin{equation}\label{eq:area}
\left|\int_{D_x}\sigma\right|\ \leq\ \Theta_0\, l(x)^2\, , \ \ \ \ \forall \ (x,T)\in\mathcal V_\delta\, .
\end{equation}
In particular there exists $B>0$ such that
\begin{equation}\label{eq:estab}
S_k(x,T)\ \leq\ B\,\frac{e(x)}{T}\, +\, (B+k)T\, +\, \Theta_0\, l(x)^2\,.
\end{equation}
\end{lemma}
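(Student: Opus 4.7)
The plan is to reduce the estimate \eqref{eq:area} to an isoperimetric-type inequality on a small geodesic ball, where $\sigma$ has a primitive vanishing to first order, and then combine it with the quadratic growth of $L$ to obtain \eqref{eq:estab}. The key observation is that $l(x)^2\le e(x)$ by the Cauchy--Schwarz inequality, so for any $(x,T)\in\mathcal V_\delta$ the image of $x$ is contained in the ball $B_{\sqrt\delta}(x(0))$. Choosing $\delta$ strictly smaller than the square of the injectivity radius of $(M,g)$ puts such loops inside a uniformly normal neighbourhood of any of their points.

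First, I would construct the capping disc explicitly via the Riemannian exponential map at $x(0)$:
\[
D_x(s,r)\ :=\ \exp_{x(0)}\!\bigl(r\cdot\exp_{x(0)}^{-1}(x(s))\bigr)\,,\quad (s,r)\in[0,1]^2\,.
\]
This defines a continuous family of discs depending smoothly on $x$ (with $D_x(\cdot,0)\equiv x(0)$ and $D_x(\cdot,1)=x$), and it yields a deformation retraction of $\mathcal V_\delta$ onto $M_0$ whose induced primitive on $\mathcal V_\delta$ differs from the one in \eqref{primitiveonvdelta} by at most a quantity that will turn out to be of the same order $O(l(x)^2)$, so that the estimate holds for either choice. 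One could alternatively fix a smooth family of primitives $\theta^p$ of $\sigma$ on the balls $B_{\sqrt\delta}(p)$, $p\in M$, chosen so that $\theta^p_p=0$, which by compactness of $M$ satisfies $\|\theta^p_q\|\le C\,d(p,q)$ uniformly in $p$ and $q\in B_{\sqrt\delta}(p)$, and then apply Stokes' theorem to $\int_{D_x}\sigma=\int_{D_x}d\theta^{x(0)}=\int_x\theta^{x(0)}$.

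Second, I would carry out the area estimate. By the Gauss lemma, $|\partial_r D_x(s,r)|=d(x(0),x(s))\le l(x)$, while a standard Jacobi-field computation on the small ball yields $|\partial_s D_x(s,r)|\le C\|x'(s)\|$ for a constant $C$ depending only on a sectional curvature bound of $(M,g)$. Combining these with $\|\sigma\|_\infty<\infty$ (which is finite since $M$ is compact) we get
\[
\left|\int_{D_x}\sigma\right|\ \le\ \|\sigma\|_\infty\int_0^1\!\!\int_0^1 |\partial_s D_x|\,|\partial_r D_x|\,ds\,dr\ \le\ C\|\sigma\|_\infty\,l(x)\int_0^1\|x'(s)\|\,ds\ \le\ \Theta_0\,l(x)^2\,,
\]
which is precisely \eqref{eq:area}. (Using the primitive $\theta^{x(0)}$ instead, the same bound follows from $|\int_x\theta^{x(0)}|\le\int_0^1\|\theta^{x(0)}_{x(s)}\|\|x'(s)\|\,ds\le C\,l(x)\cdot l(x)$.)

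Finally, for \eqref{eq:estab}, I would invoke the quadratic growth of $L$ at infinity: since $L$ is electromagnetic outside a compact set one has $L(q,v)\le B\|v\|_q^2+B$ for a suitable $B>0$. Then
\[
\A_k^L(x,T)\ =\ T\!\int_0^1\!\Big[L\Big(x(s),\frac{x'(s)}{T}\Big)+k\Big]ds\ \le\ B\,\frac{e(x)}{T}+(B+k)T\,,
\]
and adding \eqref{eq:area} to this inequality gives \eqref{eq:estab}. The main technical point is the uniform Jacobi-field bound on $|\partial_s D_x|$, but on a ball strictly inside the injectivity radius this is standard and the constant depends only on the geometry of $(M,g)$, not on the particular loop $x$.
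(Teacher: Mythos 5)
Your proof is correct, but it takes a different route from the paper on the first half: the paper disposes of \eqref{eq:area} in one line by citing Lemma 7.1 of Abbondandolo's survey, whereas you give a self-contained proof by building the capping disc as a geodesic cone over $x(0)$ and estimating $|\partial_r D_x|\le l(x)$ and $|\partial_s D_x|\le C\|x'(s)\|$ (or, equivalently, by integrating a local primitive $\theta^{x(0)}$ with $\|\theta^{x(0)}_q\|\le C\,d(x(0),q)$ along $x$). That argument is sound — on a ball strictly inside the injectivity radius the bound on $d\exp$ follows from compactness of $M$, and $l(x)\le\sqrt{e(x)}<\sqrt\delta$ guarantees the loop sits in such a ball — and it has the advantage of making the constant $\Theta_0$ explicit and the lemma independent of the external reference. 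The one loose end is your remark that your geodesic-cone primitive ``differs from \eqref{primitiveonvdelta} by a quantity of order $O(l(x)^2)$'': you do not prove this, and it is cleaner to observe that it is in fact \emph{zero}. Two capping discs of the same loop glue to a sphere, and when both discs are contained in a contractible subset (a small metric ball, which holds for the cone and, for $\delta$ small, for the disc produced by the retraction of $\mathcal V_\delta$ onto $M_0$) that sphere is null-homotopic, so $\int\sigma$ over it vanishes because $\sigma$ is closed; hence your estimate applies verbatim to the $S_k$ of \eqref{primitiveonvdelta}. The second half of your argument — $L(q,v)\le B(1+\|v\|_q^2)$ giving $\A^L_k(x,T)\le B\,e(x)/T+(B+k)T$ and then adding \eqref{eq:area} to get \eqref{eq:estab} — is exactly the computation in the paper.
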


\begin{proof}
The estimate \eqref{eq:area} is exactly Lemma 7.1 in [Abb13]. Since $L$ is a Tonelli Lagrangian electromagnetic at infinity we can find $B>0$ such that 
$$L(q,v)\ \leq\ B(1+\|v\|^2)\, , \ \ \ \ \forall \ (q,v)\in TM$$
and we readily compute 
\begin{align*}
S_k(x,T)&=\ T\int_0^1\Big[L\Big (x(s),\frac{x'(s)}{T}\Big )+k\Big]\, ds\ +\ \int_{D_{x}}\sigma\\
&\leq\ T\int_0^1\Big[B\, \frac{\|x'(s)\|^2}{T^2}+B+k\Big]\, ds\ +\ \Theta_0\,l(x)^2\\
&\leq\  B\, \frac{e(x)}{T}\, +\, (B+k)T\, +\, \Theta_0\, l(x)^2
\end{align*}
as we wished to prove.
\end{proof}

\vspace{3mm}

It is worth to point out that \eqref{eq:area}, with a different $\Theta_0$, holds more generally for any closed 2-form on $M$, as this remark will be used in the proof of the next lemma.
If we want more informations on the behaviour of  $S_k$, we have to restrict the range of energies we consider. Indeed, notice that on $M_0$ the function $S_k$ reduces to
\begin{equation}\label{eq:skonm}
S_k(x_0,T)\ =\ T\big [L(x_0,0)+k\big ]\ =\ T\big [k-E(x_0,0)\big ]\,.
\end{equation}

Hence, if $k>e_0(L)$, then for every $x_0\in M$ the function $T\mapsto S_k(x_0,T)$ is increasing in $T$ and tends to zero as $T$ goes to $0$. Moreover, in the same energy range we have a positive lower bound for $S_k$ on $\partial \mathcal V_\delta$
as the following lemma shows. 

\begin{lemma}\label{lem:low}
If $k>e_0(L)$, there exists $\delta_k>0$ such that for every $\delta\in(0,\delta_k)$ there exists $\varepsilon_{k,\delta}>0$ such that
\begin{equation*}
\inf_{\partial \mathcal V_\delta}S_k\ \geq\ \varepsilon_{k,\delta}\,. 
\end{equation*}
\end{lemma}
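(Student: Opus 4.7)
The plan is to exploit the fact that, although $S_k$ is built from the Lagrangian action plus the area of the capping disc with respect to $\sigma$, one can absorb the "linear part" of $L$ near the zero section into the area term by enlarging $\sigma$ by an exact 2-form. Concretely, define the Lagrangian 1-form $\theta^L$ on $M$ by $\theta^L_q(v):=d_vL(q,0)[v]$. A Taylor expansion in the fiber, combined with the uniform convexity bound \eqref{secondinequality}, yields a constant $a>0$ such that
\begin{equation*}
L(q,v)\ \geq\ -E(q,0)\ +\ \theta^L_q(v)\ +\ a\|v\|_q^2,\qquad \forall\, (q,v)\in TM.
\end{equation*}

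Integrating this inequality and using $T\int_0^1\|x'(s)/T\|_{x(s)}^2\, ds=e(x)/T$, for every $(x,T)\in\mathcal V_\delta$ I would obtain
\begin{equation*}
\A^L_k(x,T)\ \geq\ \big (k-e_0(L)\big )\, T\ +\ \frac{a}{T}\, e(x)\ +\ \int_x\theta^L\, .
\end{equation*}
Since $x$ bounds the capping disc $D_x\subset M$, Stokes' theorem gives $\int_x\theta^L=\int_{D_x}d\theta^L$. Plugging this into the definition \eqref{primitiveonvdelta} of $S_k$ yields
\begin{equation*}
S_k(x,T)\ \geq\ \big (k-e_0(L)\big )\, T\ +\ \frac{a}{T}\, e(x)\ +\int_{D_x}\big (\sigma+d\theta^L\big )\, .
\end{equation*}

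The 2-form $\sigma+d\theta^L$ is closed, so the estimate \eqref{eq:area} (which, as noted after its proof, holds verbatim for every closed 2-form up to a different constant) provides $\Theta_1>0$ with $\big |\int_{D_x}(\sigma+d\theta^L)\big |\leq \Theta_1 l(x)^2\leq \Theta_1\, e(x)$ for all $(x,T)\in\mathcal V_\delta$, provided $\delta$ is small enough that the capping procedure of \eqref{primitiveonvdelta} is well-defined. On the level set $\partial\mathcal V_\delta=\{e(x)=\delta\}$ I therefore get
\begin{equation*}
S_k(x,T)\ \geq\ \big (k-e_0(L)\big )\, T\ +\ \frac{a\,\delta}{T}\ -\ \Theta_1\,\delta\, .
\end{equation*}

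The last key step is to minimize the right-hand side in $T\in(0,+\infty)$: since $k>e_0(L)$ the function $T\mapsto (k-e_0(L))T+a\delta/T$ attains its minimum $2\sqrt{a(k-e_0(L))\delta}$ at $T=\sqrt{a\delta/(k-e_0(L))}$, so
\begin{equation*}
\inf_{\partial\mathcal V_\delta}S_k\ \geq\ 2\sqrt{a(k-e_0(L))\delta}\ -\ \Theta_1\delta\ =\ \sqrt{\delta}\,\Big (2\sqrt{a(k-e_0(L))}-\Theta_1\sqrt{\delta}\Big )\, .
\end{equation*}
This is strictly positive as soon as $\delta<\delta_k:=\min\{\delta_0,\, a(k-e_0(L))/\Theta_1^2\}$, where $\delta_0$ is the threshold needed to define the primitive $S_k$, and one can take for instance $\varepsilon_{k,\delta}:=\sqrt{a(k-e_0(L))\delta}$. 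I do not expect a real obstacle in any of these steps; the only subtlety is the trick of adding $d\theta^L$ to $\sigma$ to kill the first-order term in the Taylor expansion of $L$, which is precisely what lets the quadratic-in-$T$ minimization dominate the error coming from the capping disc.
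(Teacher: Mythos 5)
Your proof is correct and follows essentially the same route as the paper's: the same fiberwise Taylor expansion absorbing the linear term $d_vL(\cdot,0)$ into a closed form, the same area estimate \eqref{eq:area} (in its version for arbitrary closed 2-forms) applied over the capping disc, and the same minimization of $(k-e_0(L))T+a\,\delta/T$ in $T$. The only, harmless, difference is bookkeeping: you keep the kinetic energy $e(x)=\delta$ in the coercive term and bound the error by $l(x)^2\le e(x)$, whereas the paper expresses everything through $l(x)$; your variant gives the uniform positive lower bound on $\partial\mathcal V_\delta$ directly.
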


\begin{proof}
We first prove that there exist $L_2,\Theta_1>0$ such that, for $\delta$ sufficiently small, every $(x,T)\in\mathcal V_\delta$ satisfies
\begin{equation}
\A^L_k(x,T)\ \geq\ l(x)\sqrt{L_2(k-e_0(L))}-\Theta_1l(x)^2\,,
\label{cicici}
\end{equation}
where $\A_k^L$ is the free-period Lagrangian action functional associated with $L$. Consider the smooth one-form on $M$
$$\theta(q)[v] := d_vL(q, 0)[v]\, .$$
By taking a Taylor expansion and by using the bound (\ref{secondinequality}), we get the estimate
\begin{eqnarray*}
L(q, v) &=& L(q, 0) + d_vL(q, 0)[v] +\frac12 \, d_{vv}L(q,sv)[v,v] \ \geq\\
            &\geq& - E(q,0) + \theta(q)[v] + a \, \|v\|_q^2\, .
\end{eqnarray*}
For a fixed $(x,T)\in \mathcal V_\delta$ we then compute 
\begin{eqnarray*}
\A_k(x,T) &\geq& \int_0^T \Big [-E(\gamma(t),0)+\theta(\gamma(t))[\dot \gamma(t)] + a\, \|\dot \gamma(t)\|^2 + k\Big ]\, dt \ \geq \\ 
                &\geq& \big [k-e_0(L)\big ]\, T \ + \int_0^T \gamma^*\theta \ + \ a\int_0^T \|\dot \gamma(t)\|^2\, dt \ \geq \\
                &\geq& \big [k-e_0(L)\big ]\, T \ - \ \Theta_1\, l(x)^2 + \frac aT\, l(x)^2\, ,
\end{eqnarray*}
where $\Theta_1>0$ is such that \eqref{eq:area} holds for $d\theta$ integrated over the ``canonical'' capping disc for $x$. For $l(x)$ fixed, the last expression in $T$ attains its minimum at 
$$T\ = \ l(x)\, \sqrt{\frac{a}{k-e_0(L)}}\, , $$
where it equals to
$$2\sqrt{a(k-e_0(L))} \, l(x) - \Theta_1\, l(x)^2\, .$$
The choice $L_2=a/4$ proves then \eqref{cicici}. Combining \eqref{cicici} with \eqref{eq:area},we get
\begin{equation*}
S_k(x,T)\ \geq\ l(x)\sqrt{L_2(k-e_0(L))}-(\Theta_0+\Theta_1)l(x)^2,
\end{equation*}
which is positive if $l(x)$ is small and positive. The thesis follows.
\end{proof}

\vspace{5mm}

\noindent We define the sublevel sets $\mathcal W_\delta',\mathcal W_\delta \subseteq \mathcal V_\delta$ of $S_k$ as follows
$$\mathcal W'_{\delta}:=\ \Big \{S_k<\varepsilon_{k,\delta}/4\Big \}\, , \ \  \ \ \mathcal W_{\delta}:=\ \Big \{S_k<\varepsilon_{k,\delta}/2\Big \}\, .$$

Thanks to Lemma \ref{lem:low}, we have that $\mathcal W_\delta\cap \partial\mathcal V_\delta=\emptyset$; moreover, $M_0\cap \mathcal W'_{\delta}\neq \emptyset$. 
Because of \eqref{eq:skonm}, these sets play an important role as far as critical sequences and flow lines are concerned.

\begin{lemma}\label{lem:wdelta}
If $k>e_0(L)$, then for every $\delta>0$ sufficiently small there holds:
\begin{enumerate}[\itshape i)]
 \item if $(x_h,T_h)$ is a critical sequence for $\eta_k$ in $\mathcal M_0$ such that $T_h\rightarrow 0$, then $(x_h,T_h)\subset \mathcal W'_\delta$ for every $h$ sufficiently large;
 \item If a flow line of $\Phi^k$ is not defined for all positive times, then it enters $\mathcal W'_\delta$.
\end{enumerate}
\end{lemma}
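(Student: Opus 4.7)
The plan is to exploit the estimate \eqref{eq:estab} for the primitive $S_k$ on $\mathcal V_\delta$ to show that in both scenarios $S_k$ becomes arbitrarily small along the sequence of points under consideration. Fix $\delta>0$ small enough so that Lemma \ref{lem:low} applies, so that the deformation retraction onto $M_0$ used to define $S_k$ via \eqref{primitiveonvdelta} is available, and so that the threshold $\varepsilon_{k,\delta}>0$ is at our disposal.

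For statement \emph{i)}, let $(x_h,T_h)\subset\mathcal M_0$ be a critical sequence for $\eta_k$ with $T_h\to 0$. By Lemma \ref{lem:ps} there exists $C>0$ such that $e(x_h)\leq CT_h^2$, and the Cauchy--Schwarz inequality gives $l(x_h)^2\leq e(x_h)\leq CT_h^2$. In particular $e(x_h)\to 0$, so $(x_h,T_h)\in\mathcal V_\delta$ for all $h$ large. Substituting these bounds into \eqref{eq:estab} yields
$$S_k(x_h,T_h)\ \leq\ B\,\frac{e(x_h)}{T_h}+(B+k)T_h+\Theta_0\, l(x_h)^2\ \leq\ \bigl(BC+B+k+\Theta_0 C\bigr)T_h\,,$$
which tends to zero. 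Hence for $h$ sufficiently large $S_k(x_h,T_h)<\varepsilon_{k,\delta}/4$, i.e. $(x_h,T_h)\in\mathcal W'_\delta$.

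For statement \emph{ii)}, let $u:[0,R)\to\mathcal M$ be a maximal flow line of $\Phi^k$ with $R<+\infty$. Proposition \ref{pro:comp} provides a sequence $r_h\to R$ and a constant $C>0$ with $T(r_h)\to 0$ and $e(x(r_h))\leq C\,T(r_h)^2$. By the same reasoning as above, $u(r_h)\in\mathcal V_\delta$ for large $h$, and the estimate \eqref{eq:estab} gives
$$S_k(u(r_h))\ \leq\ \bigl(BC+B+k+\Theta_0 C\bigr)T(r_h)\ \longrightarrow\ 0\,,$$
so $u(r_h)\in\mathcal W'_\delta$ for all $h$ sufficiently large; in particular the trajectory enters $\mathcal W'_\delta$.

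The essential point, and the main obstacle, is conceptual rather than computational: one needs to know that the kinetic energy of the curves appearing in both situations decays at least \emph{quadratically} in the period. For critical sequences this is Lemma \ref{lem:ps}, which uses the quadratic lower bound \eqref{disuguaglianzeE} on $E$ together with the explicit expression \eqref{par-T}; for incomplete flow lines the same quadratic bound is provided by the second half of Proposition \ref{pro:comp}. Once these bounds are secured, the linear-in-$T$ estimate \eqref{eq:estab} for $S_k$ immediately forces $S_k\to 0$ and closes both arguments.
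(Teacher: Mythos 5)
Your proof is correct and follows essentially the same route as the paper's: Lemma \ref{lem:ps} (respectively the quadratic energy bound from Proposition \ref{pro:comp}) gives $e\leq CT^2$, which places the points in $\mathcal V_\delta$ and, via \eqref{eq:estab} together with $l(x)^2\leq e(x)$, forces $S_k$ below $\varepsilon_{k,\delta}/4$ once the period is small. The only cosmetic difference is that you absorb the $\Theta_0\,l^2\leq\Theta_0 C T^2$ term into a linear bound (valid since $T\to 0$), while the paper keeps it quadratic; the conclusion is identical.
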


\begin{proof}
We prove the first statement. By Inequality \eqref{en-per} in Lemma \ref{lem:ps}, there exists $C>0$ such that $e(x_h)\leq C T_h^2$. Therefore, $(x_h,T_h)\in \mathcal V_\delta$ for $h$ big enough and we can use Inequality \eqref{eq:estab} to obtain
\begin{equation}
S_k(x_h,T_h)\ \leq\ B\,\frac{CT_h^2}{T_h}\, +\, (B+k)T_h\, +\, \Theta_0\, CT_h^2\,,
\end{equation}
which goes to zero as $h$ goes to infinity. We prove the second statement. Let $(x,T)\in\mathcal M_0\setminus\mathcal W'_\delta$ and let $[0,R_{(x,T)})$ be the maximal interval of definition of the flow line 
$$r\mapsto (x(r),T(r)):= \ \Phi_r^k(x,T)\, .$$
Suppose that $R_{(x,T)}<+\infty$. By Proposition \ref{pro:comp}, there exists $r_h\rightarrow R$ such that 
$$T(r_h)\longrightarrow 0\ \ \ \ \text{and} \ \ \ \ e(x(r_h))\leq C T(r_h)^2\, .$$

By the same argument as in the proof of the first statement, we have $(x_h,T_h)\in \mathcal W_\delta'$, for large $h$. This completes the proof, as $\mathcal W'_\delta$ is positively $\Phi^k$-invariant.
\end{proof}

\vspace{5mm}

Thanks to the previous lemma, in order to make $\Phi^{k}$ positively complete, we stop the flow lines entering $\mathcal W'_\delta$. 
Consider a smooth cut-off function $\kappa_\delta:\R^+\rightarrow[0,1]$ with 
$$\kappa_\delta^{-1}(0)\ =\ \big (0,\varepsilon_{k,\delta}/4\big ]\, , \quad \,\kappa_\delta^{-1}(1)\ =\ \big [ \varepsilon_{k,\delta}/2,+\infty\big ) .$$
We use this function to define $\hat\kappa_\delta:\mathcal M_0\rightarrow[0,1]$ as follows:
\[\hat\kappa_\delta\ = \begin{cases}
\ 1 & \text{on} \ \mathcal M_0\setminus\mathcal V_\delta\,,\\
\ \kappa_\delta\circ S_k & \text{on} \ \mathcal V_\delta\,. 
\end{cases}
\]
Finally, define the vector field $X_{k,\delta}:=\hat\kappa_\delta X_{k}$ and denote its semi-flow by $\Phi^{k,\delta}$.

\begin{lemma}\label{lem:com}
The time-dependent semi-flow $\Phi^{k,\delta}$ is complete on $\mathcal M_0$.
\end{lemma}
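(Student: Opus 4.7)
The plan is to argue by contradiction: assume $u:[0,R)\rightarrow \mathcal M_0$ is a maximal flow line of $\Phi^{k,\delta}$ with $R<+\infty$. A preliminary observation is that $X_{k,\delta}$ is locally Lipschitz, being the product of the locally Lipschitz vector field $X_k$ (cf. Corollary~\ref{cor:reg}) and the smooth function $\hat\kappa_\delta$, so the associated Cauchy problem enjoys local existence and uniqueness.

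The argument then splits into two cases according to whether the orbit $u$ meets the closed set $\{\hat\kappa_\delta=0\}$, which contains $\overline{\mathcal W'_\delta}$. If there exists $r^*\in[0,R)$ with $u(r^*)\in\{\hat\kappa_\delta=0\}$, then $X_{k,\delta}$ vanishes at $u(r^*)$; by uniqueness the constant trajectory $r\mapsto u(r^*)$ solves the Cauchy problem with initial datum $u(r^*)$ at time $r^*$, so $u$ must be constant on $[r^*,R)$ and therefore extends constantly to $[0,+\infty)$, contradicting maximality.

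In the opposite case $\hat\kappa_\delta(u(r))>0$ for every $r\in[0,R)$, and I would exploit the fact that $X_{k,\delta}$ is just a conformal slowing down of $X_k$ by reparametrizing time. Setting $\tau(r):=\int_0^r \hat\kappa_\delta(u(s))\, ds$ produces a strictly increasing $C^1$-diffeomorphism of $[0,R)$ onto $[0,\tau(R^-))$, and a direct computation gives $\frac{dv}{d\tau}=X_k(v)$ for $v:=u\circ \tau^{-1}$, so that $v$ is a flow line of $\Phi^k$. Since $\hat\kappa_\delta\leq 1$ and $R<+\infty$, we have $\tau(R^-)\leq R<+\infty$. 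Let $\bar v:[0,R')\rightarrow \mathcal M_0$ denote the maximal extension of $v$. If $R'>\tau(R^-)$, then $\bar v$ is continuous at $\tau(R^-)$, hence $u(r)=\bar v(\tau(r))$ admits a limit in $\mathcal M_0$ as $r\rightarrow R^-$, and by the local existence theorem for $\Phi^{k,\delta}$ we can prolong $u$ past $R$, contradicting maximality. If instead $R'=\tau(R^-)<+\infty$, then $\bar v$ is a maximal flow line of $\Phi^k$ with finite interval of definition, and Lemma~\ref{lem:wdelta}$(ii)$ forces $\bar v$ to enter $\mathcal W'_\delta\subseteq \{\hat\kappa_\delta=0\}$; consequently $u$ itself enters $\{\hat\kappa_\delta=0\}$, contradicting the standing assumption of this second case.

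The only slightly delicate point is the reparametrization step together with the use of the maximal-extension dichotomy for $\Phi^k$; the rest is essentially bookkeeping with the elementary observation that multiplying $X_k$ by the bounded non-negative cut-off $\hat\kappa_\delta$ can only slow down the flow, and hence cannot shrink its interval of definition, while introducing a stationary region exactly where the previous completeness obstruction was located by Lemma~\ref{lem:wdelta}.
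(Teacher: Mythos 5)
Your proof is correct and takes essentially the same route as the paper: the paper simply notes that $\Phi^{k,\delta}$ has the same flow lines as $\Phi^{k}$, possibly traveled at a lower speed, so a trajectory either never meets $\mathcal W'_\delta$ (and is then complete by Lemma \ref{lem:wdelta}) or is eventually constant where $X_{k,\delta}=0$; your time reparametrization $\tau(r)=\int_0^r\hat\kappa_\delta(u(s))\,ds$ merely makes that "lower speed" statement precise. One cosmetic remark: $\hat\kappa_\delta=\kappa_\delta\circ S_k$ is not smooth but only locally Lipschitz (since $S_k$ is merely $C^{1,1}$), which is all that your uniqueness and local-existence arguments actually require.
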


\begin{proof}
The flow $\Phi^{k,\delta}$ has the same flow lines as $\Phi^{k}$, possibly traveled at a lower speed. Hence, if a flow-line does not intersect $\mathcal W'_\delta$, it is defined for all positive times by Lemma \ref{lem:wdelta}. On the other hand, if it
intersects $\mathcal W'_\delta$, it is eventually constant since $X_{k,\delta}=0$ on $\mathcal W'_\delta$ and hence the trajectory is defined for all positive times.
\end{proof}

\vspace{5mm}

Summarizing, the non-completeness of $\Phi^k$ can be overcome by truncating it near the manifold of constant loop, namely by multiplying the vector field $X_k$ by a cut-off function $\hat \kappa_\delta$, whose role is to make 
the flow-lines constant if the kinetic energy is sufficiently small. We will see that this is not restrictive for our goals.

%%%%%%%%%%%%%%%%%%%%%%%%%%%%%%%%%%%%%%%%%%%%%%%%%%%%%%%%%%%

\section{The minimax class}
\label{theminimaxclass}

In this section we are going to see that, by Identity \eqref{eq:skonm} and Lemma \ref{lem:low}, the $1$-form $\eta_k$ exhibits a mountain pass geometry on some space $\mathfrak U$ of continuous maps 
$$(B^{l-1},S^{l-2})\longrightarrow (\mathcal M_0,M_0)\, ,$$
provided that $k>e_0(L)$. The set $\mathfrak U$ must enjoy the following two properties:
\begin{itemize}
 \item it is positively invariant under the action of $\Phi^{k,\delta}$;
 \item all its elements are based in $M_0\cap\mathcal W'_\delta$ and intersect $\partial \mathcal V_\delta$, meaning that for all $u\in \mathfrak U$ there holds $u(S^{l-2})\subseteq M_0\cap \mathcal W_\delta'$ and
 $u(\xi)\in \partial \mathcal V_\delta$ for some  $\xi \in B^{l-1}$.
\end{itemize}

We construct $\mathfrak U$ under the hypothesis that $\pi_l(M)\neq0$ for some $l\geq 2$. Thus, let $\mathfrak u\in \pi_l(M)\setminus \{0\}$  and fix $k^*>e_0(L)$. 
Let $I=I(k^*)\subseteq (e_0(L),+\infty)$ be a bounded open interval containing $k^*$; observe that there exists $T_0>0$ such that
\begin{equation}
S_k(p,T_0)\ <\ \frac{\epsilon_{k,\delta}}{4}\, , \ \ \ \ \forall \ p\in M\, , \ \forall \ k\in I\, .
\label{condizionemathfraku}
\end{equation}

In order to achieve the invariance under the action of $\Phi^{k,\delta}$, the class $\mathfrak U$ of maps $(B^{l-1},S^{l-2})\rightarrow (\mathcal M_0,M_0)$ will be therefore built in such a way that 
$$u(S^{l-2})\ \subseteq \  M\times \{T_0\}\, , \ \ \ \ \forall \ u\in \mathfrak U\, .$$

We start showing a decomposition of $S^l$ into a family of loops parametrized on $B^{l-1}$, such that to every point in $\partial B^{l-1}\cong S^{l-2}$ there corresponds a constant loop; this construction is analogous to the 
one used in the proof of Theorem 2.4.20 in \cite{Kli95}. Look at $B^{l-1}$ as the half equator in $S^l\subseteq \R^{l+1}$ given by
$$B^{l-1} \ = \ \Big \{(x_0,...,x_l)\in \R^{l+1} \ \Big |\ x_0\geq 0 \, , \ x_1=0\Big \}\, .$$

For every $p=(p_0,0,p_2,...,p_l)=(p_0,p')\in B^{l-1}$ consider the circle defined as the intersection of $S^l$ with the plane $\{x_i=p_i \ |\ i=2,...,l\}$ and parametrize it with 
$$\gamma_p :[0,1]\rightarrow S^l\, , \ \ \ \ \gamma_p(t) := \ (\sqrt{1-\|p'\|^2} \, \cos 2\pi t\, , \, \sqrt{1-\|p'\|^2}\,  \sin 2\pi t \, , p_2\, , \, ...\, , \, p_l \big )\, .$$
Notice that $\gamma_p$ is the constant loop in $p$ for every $p\in \partial B^{l-1}$.

\begin{center}
\includegraphics[height=50mm]{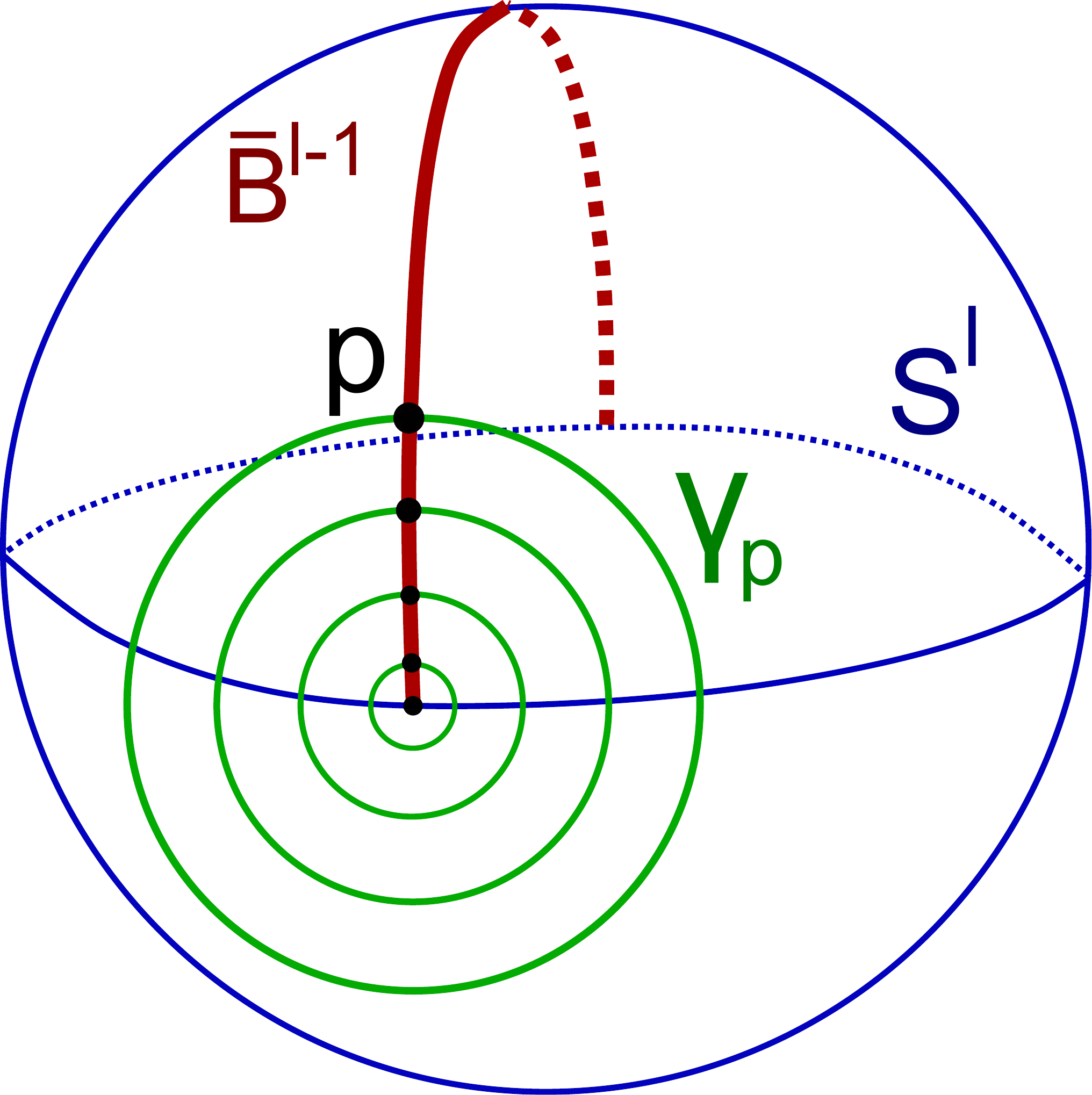}
\end{center}

\noindent Therefore, given any continuous map $f:S^l\rightarrow M$ we can define a map 
$$F\ =\ F(f):(B^{l-1},S^{l-2})\longrightarrow (C^0(\T,M),M)$$ 
just by setting $F(p) := f\circ \gamma_p(\cdot)$. The converse is also true; namely, to any continuous map $F:(B^{l-1},S^{l-2})\rightarrow (C^0(\T,M),M)$ we can associate a map $f(F):S^l\rightarrow M$ and these operations turn to be the inverse of each other
(cf. \cite[Page 180]{Kli95}). Moreover, homotopies of $f$ are mapped into homotopies of $F=F(f)$, and viceversa, through this correspondence.
%Conversely, every continuous map $F$ from $(B^{l-1},S^{l-2})$ into $(H^1(\T,M),M)$ determines a map 
%$f=f(F):S^l\rightarrow M$ such that $F=F(f(F))$ and $f=(f(F(f))$. In fact, every $q\in S^l$ can be viewed as an element of the form $\gamma_p(t)$, for uniquely determined
%$p\in \bar B^{l-1}$ and $t\in [0,1]$, except those belonging to $\partial B^{l-1}$; for them we have however $c_q(t)\equiv q$ and hence we may define 
%$$f\ = \ f(F): S^l \longrightarrow M \, , \ \ \ \ f(q):= F(p)(t)\, .$$
%Moreover, homotopies $f_s:S^l\rightarrow M$, $s\in [0,1]$, of $f=f_0$ are mapped through this identification into homotopies $F_s=F(f_s)$ of $F=F_0$ and the converse is also true. 
Now define the class $\mathfrak U$ of maps 
$$u\ = \ (x,T):(B^{l-1},S^{l-2})\longrightarrow (\mathcal M_0,M_0)$$ 
such that $f(x)\in \mathfrak u$ and $u(S^{l-2})\subseteq M\times \{T_0\}$, with $T_0$ as in (\ref{condizionemathfraku}). 

The class $\mathfrak U$ just defined is clearly non-empty, as one readily sees taking a smooth function $f\in \mathfrak u$ and considering $u=(F(f),T)$.

Moreover, it follows from Theorem 2.1.8 in \cite{Kli78} and the fact that $\mathfrak u\in \pi_l(M)$ is non trivial that for every element $u\in \mathfrak U$ there exists $\xi \in B^{l-1}$ such that $u(\xi)\in \partial \mathcal V_\delta$.
Indeed, suppose that the image of $u$ is entirely contained in $\mathcal V_\delta$; then, there would exist a homotopy from $u$ to a map $\tilde u$ with image entirely contained in $M_0$, which would imply $[f(\tilde u)]=0$. 
Since the homotopy from $u$ to $\tilde u$ yields a homotopy from $f(u)$ to $f(\tilde u)$, this would actually imply $[f(u)]=0$ which is a contradiction.

Finally, the class $\mathfrak U$ is clearly invariant under the action of $\Phi^{k,\delta}$.

\vspace{3mm}

We are now ready to define the minimax function $c^{\mathfrak u}:I\rightarrow (0,+\infty)$. Suppose for the moment that $l\geq 3$; for every $\xi \in B^{l-1}$ consider a path $a_\xi:[0,1]\rightarrow B^{l-1}$
connecting a point on the boundary $S^{l-2}$ to $\xi$ and, for every $u\in \mathfrak U$, define the composition $u_\xi:=u\circ a_\xi$. Now set
$$\widetilde {S}_k(u,\xi) := \ S_k(u_\xi(0)) \ + \ \Delta S_k(u_\xi)(1)\, ,$$
with $S_k(\cdot)$ primitive of $\eta_k$ on $\mathcal V_\delta$ as in (\ref{primitiveonvdelta}) and $\Delta S_k(u_\xi)(1)$ variation of $\eta_k$ along the path $u_\xi$ given by (\ref{variazionelungou}), and
\begin{equation}
c^{\mathfrak u} (k) := \ \inf_{u\in \mathfrak U} \ \max_{\xi\in B^{l-1}} \ \widetilde{S}_k(u,\xi)\, .
\label{minimaxlmaggiore2}
\end{equation}

Notice that, fixed $u\in \mathfrak U$, the value $\widetilde S_k(u,\xi)$ depends only on the point $\xi$ and not on the path used to join $S^{l-2}$ with $\xi$. Indeed, let $a_\xi':[0,1]\rightarrow B^{l-1}$ be another path connecting the boundary with $\xi$; 
since $l\geq 3$, $S^{l-2}$ is path-connected and hence there exists a path $\delta:[0,1]\rightarrow S^{l-2}$ from $a_\xi(0)$ to $a_\xi'(0)$. 

\begin{center}
\includegraphics[height=45mm]{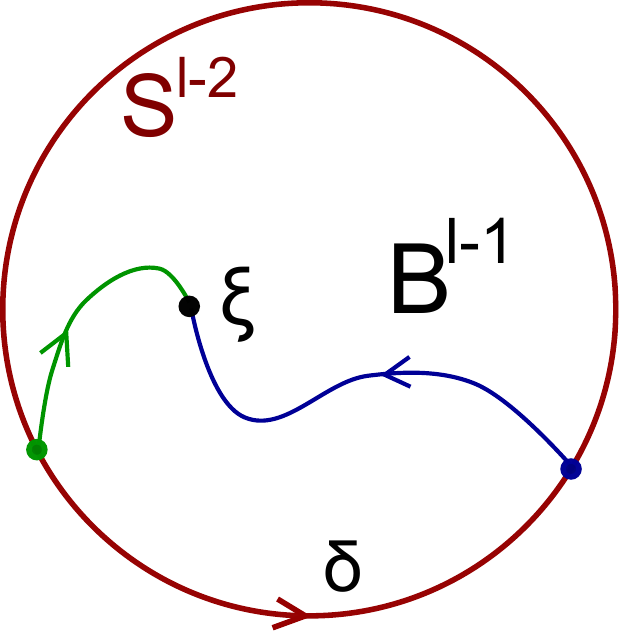}
\end{center}

\noindent Denote with $u_{\xi}':=u\circ a_\xi'$, $u_\delta := u\circ \delta$; the fact that $\eta_k$ is a closed form implies that 
\begin{eqnarray*}
\Delta S_k(u_\xi)(1) &=& \int_0^1 u_\xi^*\eta_k \  = \ \Delta S_k (u_{\xi}'\#u_\delta )(2) \ = \int_0^{2} (u_{\xi}'\#u_\delta)^* \eta_k \ = \\ 
                                     &=& \int_0^1 u_\delta^* \eta_k \ + \ \int_0^{1} (u_{\xi}')^*\eta_k \ = \ S_k(u_\xi'(0)) - S_k(u_\xi(0)) + \Delta S_k (u_{\xi}')(1)
\end{eqnarray*}
\noindent which yields 
$$S_k(u_\xi(0)) + \Delta S_k(u_\xi)(1) \, = \, S_k(u_\xi'(0)) + \Delta S_k (u_\xi')(1) \,  .$$

One could a priori repeat the same construction also for $l=2$; however, since $S^{l-2}$ is not connected anymore, the value $\widetilde S_k(u,\xi)$ might depend (and in general it does) on the path we use to connect $\xi$ with the boundary. In this case we therefore 
choose $a_\xi:[0,1]\rightarrow B^1$ such that $a_\xi(0)=-1$, $a_\xi(1)=\xi$, set $u_\xi:=u\circ a_\xi$ and define as above the minimax function $c^{\mathfrak u}(\cdot)$ by
\begin{equation*}
c^{\mathfrak u} (k) := \ \inf_{u\in \mathfrak U} \ \max_{\xi\in B^1} \ \widetilde S_k(u,\xi)\, .
\end{equation*}

With this definition we can treat both the cases $l=2$ and $l\geq 3$ at once. In the next lemma we prove a crucial monotonicity property for the function $c^{\mathfrak u}$.

\begin{lemma}\label{lem:mon}
If $k_1,k_2\in I$ are such that $k_1<k_2$, then 
\begin{equation*}
c^{\mathfrak u}(k_1)\ \leq\ c^{\mathfrak u}(k_2)\,.
\end{equation*}
\end{lemma}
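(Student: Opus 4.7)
The plan is to reduce the inequality to a pointwise comparison $\widetilde S_{k_1}(u,\xi)\leq \widetilde S_{k_2}(u,\xi)$ for every $u\in\mathfrak U$ and every $\xi\in B^{l-1}$. Once this is established, taking maxima and then infima will immediately yield the conclusion. Observe first that the class $\mathfrak U$ does not depend on $k$, only on the interval $I$ containing both $k_1$ and $k_2$ via the fixed base-period $T_0$ (cf.\ (\ref{condizionemathfraku})), so minimizing over $\mathfrak U$ is meaningful for both energies.

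The key algebraic fact is that the $k$-dependence of $\eta_k$ is very mild. From the definitions (\ref{etak}) and the trivial identity $\A_k^L(x,T)=\A_0^L(x,T)+kT$, I would compute
\[
\eta_k\ -\ \eta_{k_1}\ =\ (k-k_1)\, dT,
\]
since $\tau^\sigma$ is independent of $k$. Integrating this identity along the path $u_\xi=(x_\xi,T_\xi):[0,1]\rightarrow\mathcal M_0$ (using the definition (\ref{variazionelungou}) of $\Delta S_k$) yields
\[
\Delta S_{k_2}(u_\xi)(1)\ -\ \Delta S_{k_1}(u_\xi)(1)\ =\ (k_2-k_1)\bigl(T_\xi(1)-T_\xi(0)\bigr)\ =\ (k_2-k_1)\bigl(T_\xi(1)-T_0\bigr),
\]
where in the last equality I used that $u_\xi(0)\in M\times\{T_0\}\subseteq M_0$ (since the base of $a_\xi$ lies in $S^{l-2}$, and hence by definition of $\mathfrak U$ has period $T_0$).

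Next I would use the explicit form (\ref{eq:skonm}) of the primitive $S_k$ on constant loops to obtain
\[
S_{k_2}(u_\xi(0))\ -\ S_{k_1}(u_\xi(0))\ =\ T_0\,(k_2-k_1).
\]
Summing the two contributions gives
\[
\widetilde S_{k_2}(u,\xi)\ -\ \widetilde S_{k_1}(u,\xi)\ =\ (k_2-k_1)\, T_\xi(1)\ \geq\ 0,
\]
because $T_\xi(1)>0$ as $u_\xi(1)\in \mathcal M_0$. Taking the maximum over $\xi\in B^{l-1}$ and then the infimum over $u\in\mathfrak U$ yields $c^{\mathfrak u}(k_1)\leq c^{\mathfrak u}(k_2)$, as required.

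There is no genuine obstacle here; the main thing to be careful about is that the comparison is truly pointwise in $(u,\xi)$, which hinges on the fact that the endpoint $u_\xi(0)$ has the same (and in particular $k$-independent) period $T_0$ for both values of the energy. This is exactly what the constraint $u(S^{l-2})\subseteq M\times\{T_0\}$ built into the definition of $\mathfrak U$ guarantees, and it is why $T_0$ was fixed uniformly on the interval $I$ at the outset.
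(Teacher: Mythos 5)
Your proof is correct and follows essentially the same route as the paper: the identity $\eta_{k_2}-\eta_{k_1}=(k_2-k_1)\,dT$ integrated along $u_\xi$, combined with the $k$-dependence of $S_k$ at the constant base loop of period $T_0$, gives $\widetilde S_{k_2}(u,\xi)-\widetilde S_{k_1}(u,\xi)=(k_2-k_1)T_\xi(1)>0$ pointwise, and the inf-max over the $k$-independent class $\mathfrak U$ concludes. Your write-up is in fact slightly more explicit than the paper's about the contribution $S_{k_2}(u_\xi(0))-S_{k_1}(u_\xi(0))=(k_2-k_1)T_0$, which the paper absorbs into the inequality $(k_2-k_1)(T(1)-T_0)>-(k_2-k_1)T_0$.
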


\begin{proof}
We have $\eta_{k_2}-\eta_{k_1}=(k_2-k_1)\, dT$. Integrating this along ${u_\xi}$, we get
\begin{equation*}
\Delta S_{k_2}(u_\xi)(1)-\Delta S_{k_1}(u_\xi)(1)\ =\ (k_2-k_1)(T(1)-T_0)\,.
\end{equation*}
\noindent This implies that, for every $u\in\mathfrak U$, we have
\begin{equation*}
\widetilde S_{k_1}(u,\xi)\ <\ \widetilde S_{k_2}(u,\xi)\,,\quad \forall \ \xi \in B^{l-1}\, ,
\end{equation*}
\noindent since clearly there holds
 \[(k_2-k_1)(T(1)-T_0)\ >\ - (k_2-k_1)T_0\, .\]
\noindent The assertion follows then taking the inf-max over $\mathfrak U$.
\end{proof}

\vspace{5mm}

We will use the family $\mathfrak U$ in the next section to construct a critical sequence for $\eta_k$ with periods bounded and bounded away from zero whenever $k\in I$ is a point of differentiability for $c^{\mathfrak u}(\cdot)$, using a generalization of 
the \textit{Struwe monotonicity argument} \cite{Str90}. To exclude that the periods tend to zero we need the following lemma, which states that, if $\xi_*\in B^{l-1}$ almost realizes the maximum of  
$$\xi \ \longmapsto \ \widetilde S_k(u,\xi)\, ,$$
\noindent then $u(\xi_*)\notin\mathcal W_\delta$.  

\begin{lemma}
Fix $k\in I$ and $u\in\mathfrak U$. If $\xi_*\in B^{l-1}$ is such that
\begin{equation}\label{ine:notinwgeneral}
\widetilde S_k(u,\xi_*)\ >\ \max_{\xi\in B^{l-1}}\ \widetilde S_k(u,\xi)\ -\ \frac{\varepsilon_{k,\delta}}{2}\,,
\end{equation}
\noindent then $u(\xi_*)\notin \mathcal W_\delta$.
\label{lem:notinwgeneral}
\end{lemma}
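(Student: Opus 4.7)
The plan is to argue by contradiction: assuming $u(\xi_*)\in\mathcal W_\delta$, I will produce a point $\xi_0\in B^{l-1}$ with
$\widetilde S_k(u,\xi_0) > \widetilde S_k(u,\xi_*)+\varepsilon_{k,\delta}/2$, contradicting \eqref{ine:notinwgeneral}. The geometric idea is that, on a connected component of $u^{-1}(\mathcal V_\delta)$, the function $\widetilde S_k(u,\cdot)$ differs from $S_k\circ u$ only by a constant, so the lower bound $S_k\geq\varepsilon_{k,\delta}$ on $\partial\mathcal V_\delta$ from Lemma \ref{lem:low} transfers immediately to a lower bound on $\widetilde S_k(u,\cdot)$ at the boundary of that component.

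First I would isolate the component $C_*$ of the open set $u^{-1}(\mathcal V_\delta)\subset B^{l-1}$ containing $\xi_*$, and check that $\partial C_*\neq\emptyset$. The non-triviality of $\mathfrak u\in\pi_l(M)$ produces, as recalled in Section \ref{theminimaxclass} via \cite[Theorem 2.1.8]{Kli78}, some $\xi_1\in B^{l-1}$ with $u(\xi_1)\in\partial\mathcal V_\delta$, so $C_*\subsetneq B^{l-1}$; as $B^{l-1}$ is connected and $C_*$ is open and non-empty, it cannot be closed, hence $\partial C_*\neq\emptyset$. For any $\xi_0\in\partial C_*$ one then has $u(\xi_0)\in\partial\mathcal V_\delta$: indeed $u(\xi_0)\in\overline{\mathcal V_\delta}$ by continuity, and $u(\xi_0)\in\mathcal V_\delta$ would force a whole neighbourhood of $\xi_0$ into $u^{-1}(\mathcal V_\delta)$, so that $\xi_0$ would be absorbed into $C_*$, a contradiction.

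The key step is showing that there is a constant $c=c_{C_*}$ with
$$\widetilde S_k(u,\xi)\ =\ S_k(u(\xi))\ +\ c\, , \qquad \forall \, \xi \in \overline{C_*}\, .$$
Given $\xi_1,\xi_2\in C_*$, I pick a path $\gamma\subset C_*$ from $\xi_1$ to $\xi_2$; the juxtaposition $a_{\xi_1}\#\gamma$ is a path from the reference base to $\xi_2$, so by the path-independence of $\widetilde S_k$ (established in Section \ref{theminimaxclass} from the closedness of $\eta_k$ and the contractibility of $B^{l-1}$, and valid both for $l\geq 3$ and for $l=2$ with fixed base $-1$),
$$\widetilde S_k(u,\xi_2)\ -\ \widetilde S_k(u,\xi_1)\ =\ \int_\gamma u^*\eta_k\, .$$
Since $u\circ\gamma\subset \mathcal V_\delta$ and $S_k$ is a primitive of $\eta_k$ on $\mathcal V_\delta$, this integral equals $S_k(u(\xi_2))-S_k(u(\xi_1))$, giving the claim on $C_*$; by continuity of both sides the identity extends to $\overline{C_*}$ (the function $S_k$ extends continuously to $\overline{\mathcal V_\delta}$, as already implicit in Lemma \ref{lem:low}).

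The contradiction now is immediate. Fixing any $\xi_0\in\partial C_*$, Lemma \ref{lem:low} gives $S_k(u(\xi_0))\geq \varepsilon_{k,\delta}$, while $u(\xi_*)\in\mathcal W_\delta$ gives $S_k(u(\xi_*))<\varepsilon_{k,\delta}/2$; subtracting the two instances of the identity above,
$$\widetilde S_k(u,\xi_0)\ -\ \widetilde S_k(u,\xi_*)\ =\ S_k(u(\xi_0))\ -\ S_k(u(\xi_*))\ >\ \frac{\varepsilon_{k,\delta}}{2}\, ,$$
which contradicts \eqref{ine:notinwgeneral}. The only delicate point in the whole argument is the path-independence invoked to derive the constancy of $\widetilde S_k(u,\cdot) - S_k\circ u$ on $C_*$: once one is confident that any $\gamma\subset C_*$ may be freely prepended or appended to the chosen $a_\xi$ without changing the resulting value, thanks to the contractibility of $B^{l-1}$ and the closedness of $\eta_k$, the rest is a clean bookkeeping of the two available bounds on $S_k$ at $\xi_0$ and $\xi_*$.
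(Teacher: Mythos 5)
Your argument is correct, and it reaches the conclusion by a genuinely different route than the one in the text. The paper argues entirely along one-dimensional paths: it first picks $\bar\xi$ with $u(\bar\xi)\in\partial\mathcal V_\delta$ and reparametrizes the connecting path so that it stays in $\mathcal V_\delta$ until the final instant, obtaining $\max_\xi\widetilde S_k(u,\xi)\geq\varepsilon_{k,\delta}$ and hence $\widetilde S_k(u,\xi_*)\geq\varepsilon_{k,\delta}/2$; then, for $l\geq 3$, it invokes path-independence to \emph{re-choose} the path $u_{\xi_*}$ entirely inside $\mathcal V_\delta$, so that $\widetilde S_k(u,\xi_*)=S_k(u(\xi_*))<\varepsilon_{k,\delta}/2$, while for $l=2$ it uses a last-exit-time argument along the fixed path issuing from $-1$ and compares the values at the exit point and at $\xi_*$. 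You instead work with the connected component $C_*$ of $u^{-1}(\mathcal V_\delta)$ through $\xi_*$, show that $\widetilde S_k(u,\cdot)-S_k\circ u$ is constant there (using exactness of $\eta_k$ on $\mathcal V_\delta$ and path-independence), and compare $\xi_*$ with a point of $\partial C_*$, which necessarily maps to $\partial\mathcal V_\delta$ where Lemma \ref{lem:low} applies. What your version buys is uniformity in $l$ and robustness: in particular you never need the claim that, when $u(\xi_*)\in\mathcal W_\delta$, the path to $\xi_*$ can be taken inside $\mathcal V_\delta$ — that requires the component of $u^{-1}(\mathcal V_\delta)$ through $\xi_*$ to reach $S^{l-2}$, which is not automatic, so your component/boundary argument (which is in effect a higher-dimensional version of the paper's $l=2$ last-exit argument) is the safer formulation. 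What the paper's version buys is brevity and the fact that it manipulates only the objects ($u_\xi$, $\Delta S_k(u_\xi)$) that reappear verbatim in the Struwe-type argument of Proposition \ref{Struwe}. Two small points you flag or use implicitly are indeed fine: path-independence for $l=2$ with base point $-1$ holds because any two paths in $B^1$ from $-1$ to $\xi$ are homotopic with fixed endpoints and $\eta_k$ integrates to zero over contractible loops (this is also what makes the paper's $l=2$ minimax well defined), and the continuity of $S_k$ up to $\partial\mathcal V_\delta$ that you use at the boundary point of $C_*$ is exactly what Lemma \ref{lem:low} already presupposes.
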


\begin{proof}
By assumption  there exists $\bar \xi\in B^{l-1}$ such that $u(\bar \xi)\in \partial \mathcal V_\delta$. Without loss of generality we may suppose that the path $u_{\bar \xi}|_{[0,1)}$ is 
entirely contained in $\mathcal V_\delta$; it follows
$$\max_{\xi \in B^{l-1}} \ \widetilde S_k(u,\xi) \ \geq \ \widetilde S_k (u,\bar\xi) \ = \ S_k(u(\bar\xi))\  \geq \ \varepsilon_{k,\delta}$$
and hence in particular by \eqref{ine:notinwgeneral}
$$\widetilde S_k(u,\xi_*) \ \geq \ \frac{\varepsilon_{k,\delta}}{2}\, .$$
Suppose $l\geq 3$; if $u(\xi_*)$ would belong to $\mathcal W_\delta$ then we could choose the path $u_{\xi_*}$ to be entirely contained in $\mathcal V_\delta$; this would imply that 
$$\widetilde S_k(u,\xi_*) \ = \ S_k(u(\xi_*)) \ < \ \frac{\varepsilon_{k,\delta}}{2}$$
which is clearly a contradiction. Suppose now $l=2$ and $u(\xi_*)\in \mathcal W_\delta$; in this case we have two possibilities: either $u_{\xi_*}$ is entirely contained in $\mathcal V_\delta$, which yields a contradiction exactly as above,
or there exists $s\in (0,1]$ such that $u_{\xi_*}(s)\in \partial \mathcal V_\delta$ and $u_{\xi_*}|_{(s,1]}\subseteq \mathcal V_\delta$. We now compute using the definition
\begin{eqnarray*}
\widetilde S_k(u,u_{\xi_*}(s)) - \widetilde S_k(u,\xi_*) &=&  \Delta S_k(u_{\xi_*})(s) - \Delta S_k(u_{\xi_*})(1) \ = \\ 
         &=& S_k(u_{\xi_*}(s)) - S_k(u(\xi_*))\ > \ \frac{\varepsilon_{k,\delta}}{2}\, .
\end{eqnarray*}
It follows that 
$$\max_{\xi \in B^{1}} \ \widetilde S_k(u,\xi) \ \geq \ \widetilde S_k(u,u_{\xi_*}(s))\ > \ \widetilde S_k(u,\xi_*) + \frac{\varepsilon_{k,\delta}}{2}\, .$$
in contradiction with the assumption.
\end{proof}

%%%%%%%%%%%%%%%%%%%%%%%%%%%%%%%%%%%%%%%%%%%%%%%%%%%%%%%%%%%

\section{A generalized Lusternik-Fet theorem}
\label{ageneralizedlyusternikfettheorem}

In this section, building on the results of the previous paragraphs, we generalize the Lusternik and Fet theorem \cite{FL51} to the setting considered in this chapter.

\begin{teo}[Generalized Lusternik-Fet theorem]
Let $(M,g)$ be a closed connected Riemannian manifold, $L:TM\rightarrow \R$ be a Tonelli Lagrangian and $\sigma$ be a closed 2-form. If $\pi_l(M)\neq0$ for some $l\geq 2$, then for 
almost every $k>e_0(L)$ there exists a contractible periodic orbit for the flow of the pair $(L,\sigma)$ with energy $k$.
\label{generalizedlyusternikfettheorem}
\end{teo}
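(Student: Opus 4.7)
The plan is to apply the abstract minimax scheme developed in Sections \ref{theaction1form}--\ref{theminimaxclass} directly to the $1$-form $\eta_k$ on $\mathcal M_0$. Fixing $k^{*}>e_0(L)$ and a non-trivial class $\mathfrak u\in\pi_l(M)$, one constructs the family $\mathfrak U$ and the minimax function $c^{\mathfrak u}(k)$ on an open interval $I\ni k^{*}$ exactly as in Section \ref{theminimaxclass}. By Lemma \ref{lem:mon}, $c^{\mathfrak u}$ is monotonically non-decreasing and therefore differentiable at almost every $k\in I$; since $k^{*}$ is arbitrary, it suffices to produce, at every point of differentiability of $c^{\mathfrak u}$, a zero of $\eta_k$ in $\mathcal M_0$, which by Lemma \ref{zeridietak} is the desired contractible periodic orbit of energy $k$.

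The core step is a Struwe-type monotonicity argument. Fix a differentiability point $\bar k\in I$, pick a decreasing sequence $k_h\downarrow\bar k$ with $\epsilon_h:=k_h-\bar k\downarrow 0$, and near-minimizers $u_h\in\mathfrak U$ with $\max_{\xi\in B^{l-1}}\widetilde S_{k_h}(u_h,\xi)\leq c^{\mathfrak u}(k_h)+\epsilon_h$. Following the scheme of Lemma \ref{struwe}, differentiability of $c^{\mathfrak u}$ at $\bar k$ yields $M>0$ such that on those $\xi$ realizing $\widetilde S_{\bar k}(u_h,\xi)>c^{\mathfrak u}(\bar k)-\epsilon_h$ the period $T$ of $u_h(\xi)$ is uniformly bounded: indeed, $T\,(k_h-\bar k)=\widetilde S_{k_h}(u_h,\xi)-\widetilde S_{\bar k}(u_h,\xi)\leq (M+2)\epsilon_h$ gives $T\leq M+2$. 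Deforming $u_h$ by the truncated semi-flow $\Phi^{\bar k,\delta}$ of Lemma \ref{lem:com}, whose positive invariance on $\mathfrak U$ follows from the freezing of the base $M\times\{T_0\}$ inside $\mathcal W'_\delta$, one shows that if no critical sequence at level $c^{\mathfrak u}(\bar k)$ existed with bounded times, then $\Phi^{\bar k,\delta}_1(u_h)\in\mathfrak U$ would eventually satisfy $\max\widetilde S_{\bar k}\circ\Phi^{\bar k,\delta}_1(u_h)<c^{\mathfrak u}(\bar k)-\epsilon_h$, contradicting the definition of $c^{\mathfrak u}(\bar k)$. This yields a critical sequence $(x_h,T_h)$ for $\eta_{\bar k}$ with $T_h\leq T^{*}$.

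To rule out $T_h\to 0$, I invoke Lemma \ref{lem:notinwgeneral}: the near-optimizers can be chosen so that the selected points lie outside the sublevel $\mathcal W_\delta=\{S_{\bar k}<\varepsilon_{\bar k,\delta}/2\}$, hence the resulting critical sequence never enters $\mathcal W'_\delta$. Combined with Lemma \ref{lem:ps} and part \emph{i)} of Lemma \ref{lem:wdelta}, any critical sequence in $\mathcal M_0$ with $T_h\to 0$ would eventually sit inside $\mathcal W'_\delta$, which is incompatible with the construction. Therefore $0<T_{*}\leq T_h\leq T^{*}$, and Proposition \ref{prp:ps2} supplies a subsequence converging to a zero $(x,T)$ of $\eta_{\bar k}$, completing the proof.

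The main technical obstacle is the careful implementation of the Struwe deformation argument in the absence of a global primitive of $\eta_k$: only the path-variation $\Delta S_k$ is available on $\mathcal M_0$, so one must combine Lemma \ref{lem:dec} (to compare $\widetilde S_{\bar k}$ on $u_h$ and on $\Phi^{\bar k,\delta}_1(u_h)$ via the decomposition of the homotopy) with Lemma \ref{lem:ac-distper} (to keep the deformation inside a region of bounded period and to produce the uniform decrease $\Delta S_{\bar k}\leq -\tfrac12\min\{\|\eta_{\bar k}\|^2,1\}$ along flow segments where $\|\eta_{\bar k}\|$ is bounded from below). This is the place where the non-exactness of $\eta_k$ genuinely complicates the classical picture and is the delicate heart of the argument.
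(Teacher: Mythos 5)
Your proposal is correct and follows essentially the same route as the paper: the minimax class $\mathfrak U$ and monotone function $c^{\mathfrak u}$ of Section \ref{theminimaxclass}, the Struwe monotonicity argument of Proposition \ref{Struwe} with the truncated flow $\Phi^{k,\delta}$, Lemma \ref{lem:notinwgeneral} to stay out of $\mathcal W_\delta$ and Lemma \ref{lem:ac-distper} to bound the periods, and finally Proposition \ref{prp:ps2} to extract the zero of $\eta_k$. The only cosmetic deviation is attributing the uniform decrease estimate to Lemma \ref{lem:ac-distper} rather than to the definition of $X_k$ together with \eqref{eq:var}--\eqref{eq:dec}, which does not affect the argument.
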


\vspace{1mm}

In virtue of Proposition \ref{prp:ps2} and of the fact that a monotone function is almost everywhere differentiable it will suffice to show the following 

\begin{prop}
Fix $k^*>e_0(L)$ and let $c^{\mathfrak u}:I\rightarrow (0,+\infty)$ as defined in (\ref{minimaxlmaggiore2}). If $k\in I$ is a point of differentiability for $c^{\mathfrak u}$, then there exists a critical sequence $(x_h,T_h)\in \mathcal M_0$ with periods
bounded and bounded away from zero.
\label{Struwe}
\end{prop}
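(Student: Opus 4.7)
The plan is to run a generalized Struwe monotonicity argument, exploiting the fact that a monotone real function is almost everywhere differentiable. Fix $k\in I$ a point of differentiability of $c^{\mathfrak u}$ and set $M:=|(c^{\mathfrak u})'(k)|$, so that for every $\kappa$ close enough to $k$ one has $|c^{\mathfrak u}(\kappa)-c^{\mathfrak u}(k)|\le (M+1)|\kappa-k|$. Choose a decreasing sequence $k_h\downarrow k$ with $k_h\in I$, set $\epsilon_h:=k_h-k\downarrow 0$, and pick $u_h\in\mathfrak U$ such that
\[
\max_{\xi\in B^{l-1}}\widetilde S_{k_h}(u_h,\xi)\ \le\ c^{\mathfrak u}(k_h)+\epsilon_h\ \le\ c^{\mathfrak u}(k)+(M+1)\epsilon_h.
\]
Since $\eta_{k_h}-\eta_k=(k_h-k)\,dT$ and $u_h(a_\xi(0))\in M\times\{T_0\}$, integration of this identity along $u_h\circ a_\xi$ yields
\[
\widetilde S_{k_h}(u_h,\xi)-\widetilde S_k(u_h,\xi)\ =\ \epsilon_h\bigl(T(u_h(\xi))-T_0\bigr).
\]

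The key consequence is a period bound on the ``almost-critical'' set $A_h:=\{\xi\in B^{l-1}\mid \widetilde S_k(u_h,\xi)\ge c^{\mathfrak u}(k)-\epsilon_h\}$: on $A_h$ one has $T(u_h(\xi))\le T_0+M+2$, and simultaneously $\widetilde S_k(u_h,\xi)\le c^{\mathfrak u}(k)+(M+1)\epsilon_h$. Outside $A_h$ the value of $\widetilde S_k(u_h,\xi)$ is already at least $\epsilon_h$ below $c^{\mathfrak u}(k)$. Now I would argue by contradiction: assume there is no critical sequence $(x_h,T_h)\in\mathcal M_0$ for $\eta_k$ with periods in a bounded interval $[T_*,T^*]$ at level $c^{\mathfrak u}(k)$. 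Then by Proposition \ref{prp:ps2} and a standard compactness argument there exist $\alpha,\beta>0$ with $\|\eta_k\|\ge\beta$ on the set
\[
\mathcal Z\ :=\ \{(x,T)\in\mathcal M_0\mid |\widetilde S_k(u_h,\xi)-c^{\mathfrak u}(k)|\le\alpha,\ T\le T_0+M+3\}
\]
(the dependence on the base point is harmless because, along a flow line of $\Phi^{k,\delta}$, changes in $\widetilde S_k$ are given by $\Delta S_k$, which is intrinsic). Applying the positively complete flow $\Phi^{k,\delta}_R$ for a time $R>2/\beta^2$ sufficiently large, which by construction preserves $\mathfrak U$, and using that $\Phi^{k,\delta}$ maps $\{T\le T_0+M+2\}$ into $\{T\le T_0+M+3\}$ on the relevant interval (Lemma \ref{lem:ac-distper} and the fact that $\|X_{k,\delta}\|\le 1$), the Cauchy--Schwarz computation of Lemma \ref{lem:ac-distper} yields a uniform decrease
\[
\Delta S_k(\Phi^{k,\delta}_{\cdot}(u_h(\xi)))(R)\ \le\ -2\epsilon_h\qquad\text{for every }\xi\in A_h,
\]
provided $h$ is large enough, and $\widetilde S_k$ does not increase outside $A_h$. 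Composing $u_h$ with $\Phi^{k,\delta}_R$ therefore produces a new element of $\mathfrak U$ whose $\widetilde S_k$-maximum is strictly below $c^{\mathfrak u}(k)-\epsilon_h$, contradicting the definition of $c^{\mathfrak u}(k)$. Hence a critical sequence at level $c^{\mathfrak u}(k)$ with uniformly bounded periods must exist.

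It remains to show that the periods $T_h$ of such a critical sequence can be taken bounded away from zero. Here I would use the construction through the truncation $\hat\kappa_\delta$: the flow $\Phi^{k,\delta}$ is frozen on $\mathcal W'_\delta=\{S_k<\varepsilon_{k,\delta}/4\}$, and by the choice \eqref{condizionemathfraku} the initial boundary values $u_h(\xi)=(p,T_0)$ for $\xi\in S^{l-2}$ already sit inside $\mathcal W'_\delta$; in particular any point $\xi_*\in B^{l-1}$ realizing a value close to the $\widetilde S_k$-maximum must satisfy the hypothesis of Lemma \ref{lem:notinwgeneral}, so $u_h(\xi_*)\notin\mathcal W_\delta$. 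If $u_h(\xi_*)\in\mathcal V_\delta$ then $S_k(u_h(\xi_*))\ge\varepsilon_{k,\delta}/2>0$, and the estimate \eqref{eq:estab} together with the period upper bound forces $T(u_h(\xi_*))\ge T_*>0$; if instead $u_h(\xi_*)\notin\mathcal V_\delta$ then $e(x_h)\ge\delta$, so the estimate $e(x_h)\le C T_h^2$ of Lemma \ref{lem:ps} (which by Proposition \ref{prp:ps2}'s Cauchy--Schwarz step applies also to critical sequences) yields $T_h\ge\sqrt{\delta/C}>0$. Choosing the critical sequence as $(x_h,T_h)=u_h(\xi_*^{(h)})$ with $\xi_*^{(h)}$ realizing the $\widetilde S_k$-maximum of the flowed element (which one can now show is genuinely close to a critical point by the quantified decrease argument) completes the proof.

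The main obstacle I anticipate is the intrinsic bookkeeping: because $\eta_k$ is only closed and not globally exact, one must always work with the \emph{variation} $\Delta S_k$ along flow lines rather than with a global primitive, so the ``decrease of the action'' along $\Phi^{k,\delta}$ has to be read off Lemma \ref{lem:ac-distper} and carefully combined with the identity $\widetilde S_{k_h}-\widetilde S_k=\epsilon_h(T-T_0)$, which is the only source of the period bound. Everything else is a formal transcription of the classical Struwe scheme.
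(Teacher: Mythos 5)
Your proposal follows the same Struwe-type blueprint as the paper, but the way you set up the contradiction does not survive the central difficulty of this chapter, namely that $\eta_k$ has no global primitive on $\mathcal M_0$ (it is exact there only when $\sigma$ is weakly exact). Consequently ``a critical sequence at level $c^{\mathfrak u}(k)$'' and the set $\mathcal Z=\{|\widetilde S_k(u_h,\xi)-c^{\mathfrak u}(k)|\le\alpha,\ T\le T_0+M+3\}$ are not well defined as subsets of $\mathcal M_0$: the value $\widetilde S_k(u_h,\xi)$ depends on $u_h$ and on the chosen path $a_\xi$, not only on the point $u_h(\xi)$, and your parenthetical that $\Delta S_k$ is intrinsic along flow lines does not make membership in $\mathcal Z$ a property of the point. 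Moreover, Proposition \ref{prp:ps2} does not yield a lower bound $\|\eta_k\|\ge\beta$ on any set; the only mechanism available is the tautological one (a set containing no critical sequence has positive infimum of $\|\eta_k\|$), and to use it you must also exclude sequences in your region with $T_h\to 0$, which your level window was supposed to do but cannot, being ill-posed. The paper's proof dissolves all of this by never localizing in level: one assumes no critical sequence lies in the genuine subset $\{T<A+3\}\setminus\mathcal W_\delta$, obtains $\|\eta_k\|\ge\sqrt{\varepsilon_*}$ there, flows only for time $1$, and plays the \emph{fixed} gap $\varepsilon_*$ against the \emph{shrinking} window of width $(A+2)\lambda_m$; Lemma \ref{lem:notinwgeneral} is invoked inside the deformation (to see that in alternative \textit{(b)} the flow lines avoid $\mathcal W_\delta$, hence are untruncated and subject to \eqref{eq:ac-per}), not a posteriori on an abstractly given critical sequence, and the region $\{T<A+3\}\setminus\mathcal W_\delta$ then delivers both period bounds at once. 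Your closing sentence, asserting that near-maximizers of the flowed elements ``can now be shown'' to be almost critical, is exactly the step that is never needed in this organization and is not proved in yours.

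Two further steps are mis-justified as written. First, with a long flow time $R>2/\beta^2$, the bound $\|X_{k,\delta}\|\le 1$ does not keep the period below $T_0+M+3$; the correct control is \eqref{eq:ac-per} of Lemma \ref{lem:ac-distper} combined with the fact that in case \textit{(b)} the total decrease of $\widetilde S_k$ is at most the window width $O(\epsilon_h)$, giving $|T(r)-T(0)|\le\sqrt{R\cdot O(\epsilon_h)}$ --- which is precisely why the paper can take $r\in[0,1]$ and conclude $T<A+3$. Second, in your final paragraph the branch $u_h(\xi_*)\in\mathcal V_\delta\setminus\mathcal W_\delta$ is wrong as argued: \eqref{eq:estab} is an \emph{upper} bound for $S_k$, so together with a period upper bound it cannot force $T\ge T_*$; what one needs is Lemma \ref{lem:ps} again ($e(x_h)\le CT_h^2$ for a critical sequence), so that $T_h\to0$ would give $S_k(x_h,T_h)\to0<\varepsilon_{k,\delta}/2$ via \eqref{eq:estab} (this is Lemma \ref{lem:wdelta}, part \textit{i)}), contradicting $u_h(\xi_*)\notin\mathcal W_\delta$. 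Finally, a minor slip: your identity $\widetilde S_{k_h}-\widetilde S_k=\epsilon_h(T-T_0)$ omits the base-point contribution $S_{k_h}(u_\xi(0))-S_k(u_\xi(0))=\epsilon_h T_0$; the correct difference is $\epsilon_h T$, as in the paper and consistently with Lemma \ref{lem:mon} --- harmless for the period bound, but the bookkeeping should be fixed.
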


\begin{proof} By assumption there exists a positive constant $A>0$ such that for every $k'\geq k$ sufficiently close to $k$ 
\begin{equation}
c^{\mathfrak u}(k')\ -\ c^{\mathfrak u}(k)\ \leq\ A\, (k'-k)\,.  
\label{stimalipschitz}
\end{equation}

Consider a sequence $k_m\downarrow k$ and denote by $\lambda_m:=k_m-k\downarrow 0$. Clearly we may suppose (\ref{stimalipschitz}) to hold for every $k_m$. Take a corresponding $u_m=(x_m,T_m)\in\mathfrak U$ with
\begin{equation*}
\max_{\xi \in B^{l-1}}\ \widetilde S_{k_m}(u_m,\xi)\ <\  c^{\mathfrak u}(k_m)\ +\ \lambda_m\, .
\end{equation*}
By the very definition of $c^{\mathfrak u}(k)$, the subset of all $\xi\in B^{l-1}$ such that
\begin{equation}
\widetilde S_{k}(u_m,\xi)\ >\ c^{\mathfrak u}(k)\ -\ \lambda_m\,
\label{maggiorecu-lambdam1}
\end{equation}
\noindent is non-empty. As in the previous section denote by $(u_m)_\xi:=u_m\circ a_\xi$ the composition of  $u_m$ with a path $a_\xi:[0,1]\rightarrow B^{l-1}$ connecting $S^{l-2}$ to $\xi$. We now readily compute 
\begin{eqnarray*}
\widetilde S_k(u_m,\xi) &=& S_k((u_m)_\xi(0)) + \Delta S_k((u_m)_\xi)(1)\ = \ S_k ((u_m)_\xi(0)) \ +  \int_0^1 (u_m)_\xi^*\eta_k \ = \\ 
                                 &=& S_k((u_m)_\xi(0)) \ + \int_0^1 (u_m)_\xi^*d\A_k^L \ + \int_0^1 (u_m)_\xi^*\tau^\sigma \ = \\
                                 &=&  S_k((u_m)_\xi(0)) \ + \ \A_k^L((u_m)_\xi(1)) \ - \ \A_k^L((u_m)_\xi(0)) \ + \int_0^1 (u_m)_\xi^*\tau^\sigma \ = \\
                                 &=& \A_k^L(u_{m}(\xi)) + \int_0^1 (u_m)_\xi^*\tau^\sigma\, ,
\end{eqnarray*}
where we have used the fact that $S_k\equiv \A_k^L$ on $M_0$ and that $(u_m)_\xi(1)=u_m(\xi)$. Here $\A_k^L$ is the free-period Lagrangian action functional associated to $L$. 
Notice that the integral in the last expression is independent on $k$; hence, it follows that 
$$\widetilde S_{k_m}(u_{m},\xi)-\widetilde S_k(u_{m},\xi) \ = \  \A_{k_m}^L(u_{m}(\xi))-  \A_k^L(u_{m}(\xi))\ = \ \lambda_m\, T_{m}(\xi)\, .$$
\noindent Therefore, if $\xi\in B^{l-1}$ satisfies (\ref{maggiorecu-lambdam1}), we get
\begin{equation*}
T_{m}(\xi)\, =\, \frac{\widetilde S_{k_m}(u_{m},\xi)-\widetilde S_{k}(u_{m},\xi)}{\lambda_m}\, \leq \, \frac{c^{\mathfrak u}(k_m)+\lambda_m-\big (c^{\mathfrak u}(k)-\lambda_m\big )}{\lambda_m}\, \leq \, A+2\, , 
\end{equation*}
\noindent and at the same time, by (\ref{stimalipschitz}),
$$\widetilde S_k(u_{m},\xi) \ \leq\ \widetilde S_{k_m}(u_m,\xi) \ <\ c^{\mathfrak u}(k_m) + \lambda_m \ < \ c^{\mathfrak u} (k) + (A+1)\, \lambda_m\, .$$
\noindent Summing up, for every $m\in \N$ and every $\xi \in B^{l-1}$ either
\begin{equation}
\widetilde S_{k}(u_m,\xi)\ \leq\ c^{\mathfrak u}(k)-\lambda_m\, ,
\label{primaalternativa}
\end{equation}
\noindent or 
\begin{equation}
\widetilde S_k(u_m,\xi) \in \Big(c^{\mathfrak u}(k)-\lambda_m\,,\ c^{\mathfrak u}(k)+(A+1)\lambda_m\Big)\ \ \ \text{and} \ \ \ T_{m}(\xi)\ <\ A+2\, .
\label{secondaalternativa}
\end{equation}
\noindent Consider now for every $r\in[0,1]$ the element $u_m^r\in\mathfrak U$ given by
\begin{equation*}
u_{m}^r(\xi):=\ \Phi^{k,\delta}_r(u_{m}(\xi))\, , \ \ \ \ \forall \ \xi\in B^{l-1}\, .
\end{equation*}
\noindent Equation \eqref{eq:dec} in Lemma \ref{lem:dec} implies that the map 
$$r\ \longmapsto \ \widetilde S_{k}(u^r_{m},\xi)$$
\noindent is decreasing. Combining this fact with (\ref{primaalternativa}) and (\ref{secondaalternativa}) we obtain that 
\begin{equation}
\max_{(s,\zeta)}\ \widetilde S_{k}(u^r_{m},\xi)\ <\ c^{\mathfrak u}(k)+(A+1)\lambda_m\,, \quad \forall\, r\in[0,1]
\label{fofofo}
\end{equation}
\noindent and the following dichotomy holds: either,
\begin{itemize}
 \item[\itshape (a)] $\widetilde S_{k}(u_{m}^1,\xi)\ \leq\  c^{\mathfrak u}(k)-\lambda_m$,
\end{itemize}
\noindent or 
\begin{itemize}
 \item[\itshape (b)] $\widetilde S_{k}(u_m^r,\xi)\ \in\ \Big(c^{\mathfrak u}(k)-\lambda_m,\, c^{\mathfrak u}(k)+(A+1)\lambda_m\Big)$, for every $r\in[0,1]$.
\end{itemize}
\noindent Suppose the second alternative holds. Then (\ref{fofofo}) implies 
\begin{equation}
\widetilde S_{k}(u_{m}^r,\xi)\ >\ c^{\mathfrak u}(k) - \lambda_m\ >\ \max_{\xi\in B^{l-1}}\ \widetilde S_{k}(u_{m}^r,\xi) - (A+2)\, \lambda_m\,.
\end{equation}
\noindent By Lemma \ref{lem:notinwgeneral}, $u^r_{m}(\xi)\notin \mathcal W_\delta$, provided that 
$$(A+2)\, \lambda_m\ \leq\ \frac{\varepsilon_{k,\delta}}{2}\, ,$$
which is true for $m$ big enough. This implies that $r\mapsto u^r_{m}(\xi)$ is a genuine flow line of the untruncated flow $\Phi^k$. Applying \eqref{eq:ac-per} we get that 
$$T^r_{m}(\xi)\ \leq\ \big |T^r_{m}(\xi)-T_{m}(\xi)\big |\, +\, T_{m}(\xi)\ \leq\ \sqrt{r(A+2)\lambda_m}+A+2\ <\ A+3\,,$$
where the last inequality is true for $m$ sufficiently large. After this preparation, we claim that there exists a critical sequence $(x_h,T_h)$ contained in $\{T<A+3\}\setminus \mathcal W_\delta$. 
The claim readily implies the statement of the proposition.

To prove the claim we argue by contradiction and suppose that such a critical sequence does not exist. Then we can find $\varepsilon_*>0$ such that on $\{T<A+3\}\setminus \mathcal W_\delta$
$$\|\eta_k\|\ \geq\ \|X_k\| \ \geq \ \sqrt{\varepsilon_*}\, .$$
\noindent If $\xi\in B^{l-1}$ satisfies the alternative \textit{(b)} above, by \eqref{eq:var} we have
\begin{equation*}
\widetilde S_k(u_{m}^1,\xi)-\widetilde S_k(u_{m},\xi) \ = -\int_0^1\big | \eta_k(X_k)\big |^2\, dr\ \leq \ - \varepsilon_*\,,
\end{equation*}
where we have used the fact that $r\mapsto u^r_{m}(\xi)$ is a flow line contained in $\{T<A+3\}\setminus \mathcal W_\delta$. Therefore, it follows that 
\begin{equation}
\widetilde S_k(u_{m}^1,\xi)\ \leq\ \widetilde S_k(u_{m},\xi)-\varepsilon_*\ \leq\ c^{\mathfrak u}(k)+(A+1)\lambda_m-\varepsilon_*\,.
\end{equation}
\noindent On the other hand, we have by assumption 
$$\widetilde S_k(u_{m}^1,\xi)\ > \ c^{\mathfrak u}(k)-\lambda_m\, .$$
\noindent Hence, the set of $\xi\in B^{l-1}$ satisfying the alternative \textit{(b)} is empty as soon as 
$$(A+1)\lambda_m-\varepsilon_*\ <\ -\lambda_m\, .$$

Therefore, for $m$ big enough, all the $\xi\in B^{l-1}$ satisfy \textit{(a)}. Since $u^1_m$ belongs to $\mathfrak U$, this contradicts the definition of $c^{\mathfrak u}(k)$ and finishes the proof.
\end{proof}

%%%%%%%%%%%%%%%%%%%%%%%%%%%%%%%%%%%%%%%%%%%%%%%%%%%%%
%%%%%%%%%%%%%%%%%%%%%%%%%%%%%%%%%%%%%%%%%%%%%%%%%%%%%
%%%%%%%%%%%%%%%%%%%%%%%%%%%%%%%%%%%%%%%%%%%%%%%%%%%%%

\chapter{Oscillating magnetic fields on $\T^2$}
\label{chapter6}

In this chapter we deal with a particular kind of magnetic flows on $T\T^2$. Recall that, in the setting of Chapter \ref{chapter5}, the magnetic flow of the pair $(g,\sigma)$ on $T\T^2$
is the flow associated with the pair $(E_{kin},\sigma)$, where $E_{kin}$ is the kinetic energy associated with the Riemannian metric $g$ and $\sigma$ is a closed 2-form on $\T^2$.

Hereafter we assume the 2-form $\sigma$ to be non exact and \textit{oscillating}; by this we mean that the density of $\sigma$
with respect to the area form takes both positive and negative values. Up to changing the orientation of $\T^2$, we may clearly assume that $\sigma$ has positive integral over $\T^2$.
Under these assumptions we generalize the main result in \cite{AMMP14} (for $M=\T^2$) to this setting, thus proving the following:

\begin{teo*}
Let $g$ be a Riemannian metric on $\T^2$ and let $\sigma$ be a non-exact oscillating 2-form. Then there exists $\tau_+(g,\sigma)>0$ such that, for almost every $k\in (0,\tau_+(g,\sigma))$, 
$E_{kin}^{-1}(k)$ carries infinitely many geometrically distinct closed magnetic geodesics.
\end{teo*}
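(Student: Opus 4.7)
The plan is to find, for almost every $k$ in a suitable energy interval $(0,\tau_+(g,\sigma))$, infinitely many zeros of the action $1$-form $\eta_k$ on $\mathcal M = H^1(\T,\T^2)\times(0,+\infty)$, and then show that these zeros cannot all be iterates of finitely many periodic orbits. The first step is to invoke the results of Taimanov and of Contreras--Macarini--Paternain to produce, for every sufficiently small $k>0$, a closed magnetic geodesic $\alpha_k$ which is a strict local minimizer of the action. This yields the constant $\tau_+(g,\sigma)>0$. If $\alpha_k$ is contractible, the argument of \cite{AB15a} applies verbatim (on $\mathcal M_0$, where $\eta_k$ is exact by \cite{Mer10}, we have a genuine primitive and we can run the scheme of \cite{AMMP14}), so the substantial new case is when $\alpha_k$ is non-contractible, and I will concentrate on this.

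Assume therefore $\alpha_k$ lies in some non-trivial component $\mathcal M'\neq\mathcal M_0$. Since $\T^2$ has abelian fundamental group, $\pi_1(\mathcal M')\cong\Z^2$ and, as $\eta_k$ is not exact on $\mathcal M'$, there is a generator $\beta$ of $\pi_1(\mathcal M',\alpha_k)$ on which $\eta_k$ integrates non-trivially. For each iterate $\alpha_k^n$ consider the set of loops in $\mathcal M$ based at $\alpha_k^n$ in the homotopy class of (an appropriate push-forward of) $\beta$. I would then define, following the philosophy of Chapter \ref{chapter5} and of Section \ref{whensigmaisonlyclosed}, the associated minimax value
\begin{equation*}
c_n^{\beta}(k) \;:=\; \inf_{u}\;\max_{s\in[0,1]}\Bigl[S_k(\alpha_k^n)+\Delta S_k(u)(s)\Bigr],
\end{equation*}
where the infimum runs over the loops at $\alpha_k^n$ homotopic to $\beta$ and $\Delta S_k$ is the variation of $\eta_k$ along the path $u$. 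The naive construction fails to give a monotone minimax function in $k$, because $\alpha_k$ need not depend continuously on $k$; the decisive technical point is to modify the classes (for instance by extending each $u$ by a short segment joining a fixed reference loop to $\alpha_k^n$ and passing to a reduced action defined independently of $k$, as in \cite{AMMP14}) so that $k\mapsto c_n^{\beta}(k)$ becomes monotone in $k$ for every $n$. This is the main obstacle and the place where a careful adaptation of the Struwe-type setup of Chapter \ref{chapter5} is required.

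Once monotonicity is established, the Struwe monotonicity argument (in the form used in Proposition \ref{Struwetheta0theta1} and Proposition \ref{Struwe}) produces, for almost every $k\in(0,\tau_+(g,\sigma))$ and every $n\in\N$, a critical sequence for $\eta_k$ with periods bounded and bounded away from zero; by the compactness criterion of Proposition \ref{prp:pstheta0theta1} such sequences converge to zeros $\gamma_n^k$ of $\eta_k$, that is to closed orbits of the magnetic flow at energy $k$ in the class of $\beta$ based at $\alpha_k^n$. The fact that each $\gamma_n^k$ is of mountain-pass type, together with Proposition \ref{persistenceoflocalminimizers} (iterates of strict local minimizers are strict local minimizers) and Proposition \ref{iterationofmountainpasses} (a sufficiently high iterate of a periodic orbit cannot be a mountain pass), gives an a priori bound on how many $\gamma_n^k$ can be iterates of a single primitive closed orbit.

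Finally, to conclude that the family $\{\gamma_n^k\}_{n\in\N}$ contains infinitely many geometrically distinct closed magnetic geodesics, I would rule out the possibility that all the $\gamma_n^k$ are ``low'' iterates of a common finite set of orbits. If the mountain passes are eventually non-contractible this is immediate from a homotopy-class/winding count (as sketched in the introduction for the $M=\T^2$ case). If they are contractible one applies Bangert's ``pulling one loop at a time'' technique \cite{Ban80} on $\mathcal M_0$, where $\eta_k$ is exact and the primitive of the action tends to $-\infty$ along suitable families, forcing infinitely many geometrically distinct critical points. Combining both exclusions yields infinitely many distinct closed magnetic geodesics at energy $k$ for almost every $k\in(0,\tau_+(g,\sigma))$, which is the statement of the theorem.
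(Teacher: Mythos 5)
Your proposal follows the paper's overall strategy (Taimanov/CMP local minimizer, reduction to the non-contractible case, minimax over loops in the class of a generator $\mathfrak b_n$ on which $\eta_k$ is non-exact, Struwe monotonicity plus the compactness criterion for critical sequences, and the iteration results to rule out finitely many orbits), but it leaves precisely the new technical content of the $\T^2$ case as an acknowledged gap. You write that the ``decisive technical point'' is to modify the classes so that $k\mapsto c_n^{\beta}(k)$ becomes monotone, and you only gesture at a fix (``extending each $u$ by a short segment joining a fixed reference loop to $\alpha_k^n$ \dots as in \cite{AMMP14}''). This gesture does not work as stated: the connecting segment and the base point $\alpha_k^n$ both depend on $k$ in a possibly discontinuous way, and since $\eta_k$ is not exact on the component containing $\alpha_k^n$ there is no global ``reduced action defined independently of $k$'' to fall back on, so neither the value of the minimax nor its monotonicity is under control. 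The paper resolves this differently: one fixes $k^*$ with $\alpha_{k^*}$ a strict local minimizer, works on a whole interval $I(k^*)$ with the \emph{sets} $M_k$ of local minimizers of the \emph{local} primitive $S_k^{\alpha_{k^*}}$ inside a fixed neighborhood $\mathcal V$ of $\T\cdot\alpha_{k^*}$ (non-empty and compact by Lemma \ref{rococo}, via \eqref{Mk}), lets the minimax paths in $\mathcal P_n(k)$ start and end in $M_k^n$ and closes them up inside $\mathcal U_{\alpha_{k^*}^n}$ so that the juxtaposition is homotopic to $\mathfrak b_n$, and then proves monotonicity of $c_n$ via Lemma \ref{circolo} (elements of $M_{k_1}$ can be joined to $M_{k_0}$ inside sublevels of $S_{k_0}^{\alpha_{k^*}}$). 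Without some construction of this kind your minimax functions need not be monotone, the Struwe argument does not apply, and the proof does not go through; so as written there is a genuine gap at the heart of the argument.

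Two smaller points. First, in the final step the non-contractible case does not need Bangert's technique at all: in the paper (Theorem \ref{teoremafinale}) one assumes finitely many orbits $\gamma_1,\dots,\gamma_l$, takes $n_*$ from Proposition \ref{iterationofmountainpasses}, and chooses $n$ so that the free homotopy class of $\alpha_{k^*}^n$ differs from those of $\gamma_j^m$ for $m<n_*$; the zero produced by Proposition \ref{Struwesurface} lies in that class and outside $M_k^n$, hence would have to be a high iterate, contradicting that high iterates are not mountain passes. Your ``homotopy-class/winding count'' points in this direction but should be made explicit, and Bangert's argument belongs only to the contractible branch already delegated to \cite{AB15a}. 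Second, you should also dispose of the trivial cases where $\alpha_{k^*}$ is not strict or where $M_k$ consists of infinitely many circles, since these directly give infinitely many closed magnetic geodesics and are needed to justify the standing assumptions ($\alpha_{k^*}$ strict, $\#M_k<\infty$) under which the minimax machinery is run.
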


This result is the outcome of joint work with Gabriele Benedetti and complements our main theorem in \cite{AB15a}, where the same problem on surfaces with genus larger than one is considered. Extending
this to the case $M=S^2$ represents a challenging open problem. The key ingredient of our discussion is that the closed magnetic geodesics on $E_{kin}^{-1}(k)$ correspond to the zeros of the action 1-form 
\begin{equation}\label{etak}
\eta_k(x,T) :=\ d\A^{E_{kin}}_k(x,T)\ +\int_0^1\sigma_{x(s)}(x'(s),\cdot)\, ds\, ,
\end{equation}
where $\A_k^{E_{kin}}$ is the free-period Lagrangian action functional associated with the kinetic energy $E_{kin}$. The first goal is therefore to prove the existence of infinitely many zeros of $\eta_k$; we will do this by using variational
methods similar to the one used in the proof of the generalized Lusternik-Fet theorem. The second step is then to show that the corresponding closed orbits are not the iterates of finitely many closed orbits. 

\vspace{3mm}

In Section \ref{localminimizersonsurfaces} we recall the existence of local minimizers of the action on surfaces for sufficiently low energy levels. This has been proven by Taimanov in \cite{Tai92b,Tai92a,Tai93} and, 
indipendently, by Contreras, Macarini and Paternain in \cite{CMP04}. We also observe that all the local properties that hold for the free-period Lagrangian action functional continue to hold in this setting; in particular,
local minimizers of the action continue to be local minimizers of the action when iterated and the zeros of $\eta_k$ cease to be of mountain pass nature if iterated sufficiently many times.

In Section \ref{theminimaxclasses} we use the existence of the Taimanov local minimizer and its iterates to construct minimax classes for the action 1-form $\eta_k$. Since the Taimanov's local minimizer might not 
depend continuously on $k$, we will have to consider slightly different minimax classes than the natural ones to achieve the crucial monotonicity property of the associated minimax functions.

In Section \ref{astruwetypemonotonicityargument} we prove the main theorem of this chapter, by suitably extending the \textit{Struwe monotonicity argument} and the main idea of \cite{AMMP14} to this setting.

%%%%%%%%%%%%%%%%%%%%%%%%%%%%%%%%%%%%%%%%%%%%%%%%%%%%%%%%%%%

\section{Local minimizers on $\T^2$} 
\label{localminimizersonsurfaces}

The mechanism we will use to construct zeros of $\eta_k$ is, exactly as in Chapter \ref{chapter5}, to look at the limit points of critical sequences for $\eta_k$. We will do this via a minimax method. The first step in this direction
is to  explain what we mean by \textit{local minimizer of the action}. Observe preliminarly that, being $\pi_2(\T^2)=0$, the form $\sigma$ is weakly-exact and hence the action 1-form $\eta_k$ is exact on the connected component 
$\mathcal M_0$  of $\mathcal M$ given by contractible loops (cf. \cite{Mer10}) with primitive given by 
\begin{equation}
\A_k^{E_{kin}}(x,T) \ + \int_{C(x)} \sigma\, ,
\label{primitiveonm0}
\end{equation} 
where $C(x)$ is any capping disc for $x$. It turns out that $\eta_k$ is exact on $\mathcal M\setminus \mathcal M_0$  if and only if $\sigma$ is exact (see again \cite{Mer10}).
However, $\eta_k$ is exact on small neighborhoods of a non-contractible closed magnetic geodesic; this allows to talk about local minimizers of the action also when $\eta_k$ is not globally exact, as we now show.

Therefore, suppose that $\gamma=(x,T)\in \mathcal M$ is a non-contractible closed magnetic geodesic with energy $k$ or, equivalently, a zero of $\eta_k$. Since $\eta_k$ is invariant under the 
$\T$-action on $\mathcal M$ given by changing the base point of a loop, we have that the whole circle $\T\cdot \gamma = \{ (x(\tau+\cdot,T)) |\tau \in \T\}$ is contained in the set of zeros of $\eta_k$. 

Thus, consider a sufficiently small open neighborhood $\, \mathcal U_\gamma$ of $\T\cdot \gamma$ and observe that for any loop $(y,S) \in \mathcal U_\gamma$ 
we can join $x$ to $y$ with a path $Z(y)$ entirely contained in $\mathcal U_\gamma$. This yields a well-defined primitive of $\eta_k$ on $\mathcal U_\gamma$
\begin{equation}
S_k^\gamma:\mathcal U_\gamma \longrightarrow \R\, , \ \ \ \ S_k^\gamma(y,S) := \ \A_k^{E_{kin}}(y,S) \ + \int_{Z(y)} \sigma\, .
\label{localprimitiveofetak}
\end{equation}

Whenever a closed magnetic geodesic $\gamma$ is given, we fix once for all an open neighborhood $\mathcal U_\gamma\supseteq T\cdot \gamma$ on which $\eta_k$ is exact with primitive $S_k^\gamma$ as in \eqref{localprimitiveofetak}.

\begin{defn}
We say that a non-contractible $\gamma=(x,T)\in \mathcal M$ is a $\mathsf{local\ minimizer}$ of the action if there exist an open neighborhood $\mathcal V_\gamma\subseteq \mathcal U_\gamma$ of $\T\cdot \gamma$ 
such that 
$$S_k^\gamma(y,S) \ \geq \ S_k^\gamma(x,T) \, , \ \ \ \ \forall \ (y,S) \in \mathcal V_\gamma\, .$$
\noindent Moreover, we say that the local minimizer $\gamma=(x,T)$ is $\mathsf{strict}$ if
$$S_k^\gamma(y,S) \ > \ S_k^\gamma(x,T) \, , \ \ \ \ \forall \ (y,S) \in \mathcal V_\gamma \setminus \T\cdot \gamma\, .$$
\label{localminimizers/strict}
\end{defn}

Observe that the functional $S_k^\gamma$ in (\ref{localprimitiveofetak}) can be extended to a well-defined $\N$-equivariant functional on the (disjoint) union of open neighborhoods of $\T\cdot \gamma^n$ 
\begin{equation}
S_k^\gamma: \bigcup_{n\in \N} \ \mathcal U_{\gamma^n} \ \longrightarrow \ \R\, .
\label{localfunctional}
\end{equation}

For all $n\in \N$, the open neighborhood $\mathcal U_{\gamma^n}$ of $\T\cdot \gamma^n$ is chosen to contain the set $(\mathcal U_\gamma)^n$, given by iterating $n$-times each loop in $\mathcal U_\gamma$, 
and to be sufficiently small in such a way that $\eta_k$ is exact on it. For the rest of the chapter, whenever a closed magnetic geodesic $\gamma$ is given, we suppose the $\mathcal U_{\gamma^n}$'s to be fixed.
Recall that $\N$-equivariance for $S_k^\gamma$ means that the following property holds
$$S_k^\gamma(x^m,mT)\ = \ m\, S_k^\gamma(x,T)\, , \ \ \ \ \forall \ (x,T)\in \bigcup_n \ \mathcal U_{\gamma_n}\, , \ \forall \ m\in \N\, .$$

The functional $S_k^\gamma$ actually coincides, up to a constant, with the free-period Lagrangian action functional $\A_ k^{L_\theta}$ associated with  
$$L_\theta(q,v):=\ E_{kin}(q,v)+\theta_q(v)\, ,$$
where $\theta$ is a local primitive of $\sigma$ on a small open neighborhood of the image of $\gamma$. In particular, all the local properties that hold for the free-period Lagrangian action functional continue to hold for 
the functional $S_k^\gamma$; here we recall briefly the ones that are relevant to our discussion (for the details we refer to \cite{AB15a}). We start with the so-called \textit{persistence of local minimizers}, which ensures that a local minimizer of the action continues
to be a local minimizer of the action also when iterated. 

\begin{prop}
If $\gamma$ is a (strict) local minimizer of the action, then for every $n\geq 1$ the $n$-th iterate $\gamma^n$ is also a (strict) local minimizer of the action.
\label{persistenceoflocalminimizers}
\end{prop}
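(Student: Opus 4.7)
My plan is first to reduce to an autonomous question. Since $\gamma$ is non-contractible, every iterate $\gamma^n$ is too, and on the neighborhood $\mathcal U_{\gamma^n}$ the $\N$-equivariant primitive $S_k^\gamma$ of $\eta_k$ from \eqref{localfunctional} is well-defined. Picking a tubular neighborhood of the image of $\gamma$ in $\T^2$ together with a primitive $\theta$ of $\sigma$ on it, the Lagrangian $L_\theta(q,v):=E_{kin}(q,v)+\theta_q(v)$ is autonomous and Tonelli, and $S_k^\gamma$ agrees up to an additive constant with the free-period action functional $\A_k^{L_\theta}$ on every $\mathcal U_{\gamma^n}$. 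The proposition therefore reduces to the classical persistence of local minimizers for the free-period action functional of an autonomous Tonelli Lagrangian.

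For this I would run a Bangert-type cut-and-paste. Given $(y,S)\in\mathcal U_{\gamma^n}$ close to $\gamma^n=(x^n,nT)$, parametrize $y$ on $[0,S]$ and pick cutting times $0=\tau_0<\tau_1<\ldots<\tau_n=S$ with $\tau_i\approx iT$; a smooth dependence of $\tau_i$ on $y$ can be arranged by taking $\tau_i$ to be the $i$-th intersection of $y$ with a local codimension-one slice through $x(0)$ transverse to $\gamma$, produced by the implicit function theorem. The endpoints $y(\tau_i)$ all lie near $x(0)$, so one can connect $y(\tau_i)$ back to $y(\tau_{i-1})$ by a short action-minimizing path $\delta_i$ and form closed loops
\[\hat c_i\ :=\ y\big|_{[\tau_{i-1},\tau_i]}\ \#\ \delta_i\ \in\ \mathcal U_\gamma\,.\]
Additivity of $\A_k^{L_\theta}$ under concatenation and the local minimality of $\gamma$ give
\[\A_k^{L_\theta}(y,S)\ =\ \sum_{i=1}^n\A_k^{L_\theta}(\hat c_i)\,-\,\sum_{i=1}^n\A_k^{L_\theta}(\delta_i)\ \geq\ \A_k^{L_\theta}(\gamma^n)\,-\,\sum_{i=1}^n\A_k^{L_\theta}(\delta_i)\,.\]
For the strict case one argues by contradiction: if a sequence $(y_h,S_h)\notin\T\cdot\gamma^n$ converged to $\gamma^n$ with $\A_k^{L_\theta}(y_h,S_h)\leq\A_k^{L_\theta}(\gamma^n)$, strictness of $\gamma$ would force each $\hat c_{h,i}$ to lie on $\T\cdot\gamma$ for $h$ large, and the vanishing of the connecting arcs $\delta_{h,i}$ would then force $(y_h,S_h)\in\T\cdot\gamma^n$ itself.

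The main technical obstacle will be showing that $\sum_i\A_k^{L_\theta}(\delta_i)$ is negligible, and more precisely dominated by the positive contribution coming from local minimality. The natural device is the Ma\~n\'e action potential $\Phi_k(p,q):=\inf_\delta\A_k^{L_\theta}(\delta)$, where $\delta$ runs over all absolutely continuous paths from $p$ to $q$; choosing $\delta_i$ to almost realize $\Phi_k(y(\tau_i),y(\tau_{i-1}))$ gives $\A_k^{L_\theta}(\delta_i)=\mathcal O(\mathrm{dist}(y(\tau_i),y(\tau_{i-1})))$, which tends to zero as $(y,S)\to\gamma^n$. Balancing this first-order error against the (quadratic) lower bound coming from the strict local minimality of $\gamma$ is the Bangert-type bookkeeping carried out in \cite{AB15a,AMP13,AMMP14}, and this is where the bulk of the technical work is concentrated.
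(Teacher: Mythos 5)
There is a genuine gap, and it is structural rather than technical. The paper does not prove this proposition by a cut-and-paste estimate at all: it invokes \cite[Lemma 3.1]{AMP13} and immediately stresses that the statement is \emph{special to orientable surfaces}, citing Hedlund's examples \cite{Hed32} in dimension $\geq 3$ and \cite[Example 9.7.1]{KH95} on the Klein bottle as counterexamples. Your argument never uses $\dim M=2$ or orientability anywhere (a transverse codimension-one slice, connecting arcs and the Ma\~n\'e potential exist on any manifold), so if it were complete it would prove a statement that is false on $\T^3$ and on the Klein bottle. The place where it breaks is exactly the step you flag as ``bookkeeping'': from $\A_k^{L_\theta}(y,S)\geq n\,\A_k^{L_\theta}(\gamma)-\sum_i\A_k^{L_\theta}(\delta_i)$ with $\sum_i\A_k^{L_\theta}(\delta_i)\to 0^+$ you cannot conclude $\A_k^{L_\theta}(y,S)\geq \A_k^{L_\theta}(\gamma^n)$, and the proposed balancing goes the wrong way: the connecting-arc error is of \emph{first} order in the distance from $(y,S)$ to $\T\cdot\gamma^n$, while the excess of nearby loops over a (even nondegenerate) strict local minimum is at best of \emph{second} order, so near $\gamma^n$ the error dominates the gain; a mere (strict) local minimizer carries no quantitative modulus at all, and in the non-strict case the argument yields nothing. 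Bangert's ``pulling one loop at a time'' bookkeeping in \cite{AMMP14,AB15a,AMP13} is used there for a different purpose (pushing minimax values down, excluding low iterates of mountain passes), not to absorb errors of this kind.

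The proof that does work (the one the paper cites) avoids connecting arcs altogether: a loop $C^0$-close to $\gamma^n$ lies in a tubular neighborhood of the image of $\gamma$, which on an \emph{orientable surface} is an annulus, and there one cuts and reassembles the competitor at genuine crossing points (with $\gamma$, its translates in the annulus cover, or with itself), where the action is exactly additive with zero error; winding-number and crossing arguments in the annulus then produce comparison loops near $\gamma$ with action $\leq \A_k^{L_\theta}(\gamma)$, contradicting (strict) local minimality. This is precisely where two-dimensionality and orientability enter, and why Hedlund's and the Klein-bottle examples do not contradict the statement. Your reduction of $S_k^{\gamma}$ to $\A_k^{L_\theta}$ on $\mathcal U_{\gamma^n}$ is correct and matches the paper's setup, but the core of the argument needs to be replaced by the surface-topological one.
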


The proof in \cite[Lemma 3.1]{AMP13} goes through without any change. It is worth to point out that this result holds only in dimension 2 and in the orientable case. Counterexamples to this for the free-period Lagrangian action functional associated
with the kinetic energy  in dimension bigger than two or on non-orientable surfaces are described in \cite{Hed32} and \cite[Example 9.7.1]{KH95}, respectively. 

The second property that will be needed later on is the fact that, roughly speaking, critical points of the action cease to be of mountain-pass type 
if iterated sufficiently many times. In other words, zeros of $\eta_k$ are not of mountain-pass nature if iterated sufficiently many times. This fact is proven in \cite{AMMP14} (see also \cite{AB15a}) and, in contrast with the persistence of 
local minimizers, holds in any dimension.

\begin{prop}
Let $\T \cdot \gamma$ be contained in the set of zeros of $\eta_k$ and denote with $S_k^\gamma$ the local primitive of $\eta_k$ as in \eqref{localfunctional}. 
Assume moreover that, for every $n\in \N$, $\T\cdot \gamma^n$ is an isolated circle in the set of zeros of $\eta_k$. Then, for all $n\in\N$ sufficiently large there exists a neighborhood $\mathcal W\subseteq \mathcal U_{\gamma^n}$ of $\T \cdot \gamma^n$ such that  
the following holds: if $\gamma_0, \gamma_1 \in \{S_k^\gamma<S_k^\gamma(\gamma^n)\}\subseteq \mathcal U_{\gamma^n}$ are contained in the same connected component of 
$$\{S_k^\gamma<S_k^\gamma(\gamma^n)\} \ \cup \ \mathcal W\, ,$$
\noindent then they are contained in the same connected component of $\{S_k^\gamma<S_k^\gamma(\gamma^n)\}$.
\label{iterationofmountainpasses}
\end{prop}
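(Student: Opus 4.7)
The plan is to reduce the proposition to the iteration analysis carried out in \cite[Section 1]{AMP13} and \cite{AMMP14}, exploiting the fact that on $\mathcal U_{\gamma^n}$ the local primitive $S_k^\gamma$ coincides, up to an additive constant, with the genuine free-period Lagrangian action functional associated with the magnetic Tonelli Lagrangian $L_\theta(q,v):=E_{kin}(q,v)+\theta_q(v)$, where $\theta$ is a local primitive of $\sigma$ along the image of $\gamma$. Consequently the Gromoll--Meyer splitting and the Bangert/Bott iteration formulas for indices and nullities apply verbatim.

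First I would fix a $\T$-equivariant Gromoll--Meyer chart around the isolated critical circle $\T\cdot\gamma^n$. In this chart $S_k^\gamma-c_n$, with $c_n:=S_k^\gamma(\gamma^n)$, splits as the sum of a non-degenerate quadratic form on the direct sum of the positive and negative eigenspaces of the Hessian at $\gamma^n$ and of a $C^2$, $\T$-invariant function $f_n$ on the finite-dimensional kernel, which itself contains the tangent direction to $\T\cdot\gamma^n$. Under this reduction, the connected components of a small sublevel of $S_k^\gamma$ at height $c_n$ near $\T\cdot\gamma^n$ are determined by the sublevel structure of $f_n$ together with the direction chosen in the negative eigenspace of the quadratic part.

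Second, I would apply the iteration inequalities for the magnetic free-period action (cf.\ \cite{AMMP14}) to this local picture. The argument splits according to the value of the mean index $\bar\imath(\gamma)$: when $\bar\imath(\gamma)>0$, Bott's iteration formula forces the Morse index of $\gamma^n$ to grow linearly with $n$, so the negative eigenspace eventually has dimension at least two and any would-be mountain-pass connection between two nearby sublevel components can be smoothed out by an explicit local deformation through this negative space; when $\bar\imath(\gamma)=0$, the finer common-index-jump analysis of \cite{AMMP14}, combined with the two-dimensionality of the base surface $\T^2$, forces the kernel function $f_n$ to attain a local minimum at the origin for every sufficiently large $n$, which again rules out the appearance of new mountain-pass connections at the level $c_n$.

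Finally, choosing $\mathcal W$ to be a sufficiently small $\T$-invariant tubular neighborhood of $\T\cdot\gamma^n$ inside the Gromoll--Meyer chart, the local structure described above implies that the inclusion $\{S_k^\gamma<c_n\}\hookrightarrow\{S_k^\gamma<c_n\}\cup\mathcal W$ is injective on $\pi_0$, which is exactly the statement of the proposition. The main obstacle is the degenerate case $\bar\imath(\gamma)=0$: establishing the local-minimum property of $f_n$ is the technical heart of the argument in \cite{AMMP14} and genuinely relies on the two-dimensional geometry of $\T^2$; in the present non-exact setting, however, the analysis is purely local near $\T\cdot\gamma^n$ and thus transfers without modification once one works with the local primitive $\theta$ of $\sigma$ on $\mathcal U_{\gamma^n}$.
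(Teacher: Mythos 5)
Your reduction step is exactly the one the thesis uses: near $\T\cdot\gamma^n$ the form $\eta_k$ is exact and the local primitive $S_k^{\gamma}$ is, up to an additive constant, the free-period Lagrangian action functional of $L_\theta=E_{kin}+\theta$ for a local primitive $\theta$ of $\sigma$, so the statement is purely local and the exact-case analysis applies; at that point the thesis does not reprove the lemma but simply invokes \cite{AMMP14} (see also \cite{AB15a}, \cite{AMP13}). Your treatment of the case $\bar\imath(\gamma)>0$ (index of $\gamma^n$ eventually $\geq 2$, connections smoothed through the negative directions) is also consistent with the cited argument.

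The genuine gap is in the case $\bar\imath(\gamma)=0$. You claim that for all sufficiently large $n$ the kernel function $f_n$ has a local minimum at the origin, i.e.\ that $\gamma^n$ is a local minimizer of $S_k^{\gamma}$ (the quadratic part being positive semidefinite when the index vanishes). This cannot work: by the $\N$-equivariance $S_k^{\gamma}(x^m,mT)=m\,S_k^{\gamma}(x,T)$, if $\gamma$ is \emph{not} a local minimizer then no iterate $\gamma^n$ is one either, since any nearby $(y,S)$ with $S_k^{\gamma}(y,S)<S_k^{\gamma}(\gamma)$ produces $(y^n,nS)$ near $\gamma^n$ with strictly smaller action. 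But the only situation in which the proposition has content is precisely when $\gamma$ is an index-zero, degenerate, non-minimizing (mountain-pass type) circle, and there your claimed conclusion is false for every $n$. The cited proof does not proceed via local minimality of high iterates: what it yields is the weaker, and sufficient, statement that for large $n$ no new connections between components of $\{S_k^{\gamma}<S_k^{\gamma}(\gamma^n)\}$ can be created through a small neighborhood of $\T\cdot\gamma^n$ (a statement about the degree-one local invariants of the critical circle under iteration), and no common-index-jump machinery appears in \cite{AMMP14}. Relatedly, your appeal to the two-dimensionality of $\T^2$ is misplaced and contradicts the thesis itself, which remarks explicitly that Proposition \ref{iterationofmountainpasses} holds in any dimension; it is the persistence of local minimizers, Proposition \ref{persistenceoflocalminimizers}, and not this statement, that requires $\dim M=2$ and orientability.
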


Finally, if $\gamma$ is a strict local minimizer of the action with energy $k$, then we might find neighborhoods of $\T\cdot \gamma$ on whose boundary the infimum of $S_k^\gamma$ is strictly larger than $S_k^\gamma(\gamma)$. 
We refer to \cite[Lemma 4.3]{AMP13} for the easy proof.

\begin{prop}
Let $\gamma$ be a strict local minimizer of the action with energy $k$ and let $S_k^\gamma$ be the local primitive of $\eta_k$ as in \eqref{localfunctional}. Then, there exist an open neighborhood $\mathcal V$ of $\T\cdot \gamma$ such that 
the following holds
\begin{equation}
\inf_{\partial \mathcal V} \ S_k^\gamma \ > \ S_k^\gamma(\gamma)\, .
\label{inequalitystrictlocalminimizer}
\end{equation}
\label{strictlocalminimizer}
\end{prop}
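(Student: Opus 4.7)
My plan is to argue by contradiction. Since $\gamma$ is a strict local minimizer, I may start from the neighborhood $\mathcal V_\gamma\subseteq\mathcal U_\gamma$ appearing in Definition \ref{localminimizers/strict}, where $S_k^\gamma > S_k^\gamma(\gamma)$ off $\T\cdot\gamma$; in particular the conclusion being sought would give $\inf_{\partial\mathcal V}S_k^\gamma\geq S_k^\gamma(\gamma)$ automatically, and the whole content of the proposition is the strict inequality. I would further shrink $\mathcal V_\gamma$ so that all elements $(y,S)\in\overline{\mathcal V_\gamma}$ satisfy $T/2\leq S\leq 2T$, where $(x,T)=\gamma$; this will be crucial in order to invoke the compactness criterion of Proposition \ref{prp:ps2}.

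First, I would fix a sufficiently small $\rho>0$ and set
\[
\mathcal V \ := \ \Big\{\gamma'\in\mathcal M\ \Big|\ d_{\mathcal M}(\gamma',\T\cdot\gamma)<\rho\Big\}\ \subseteq\ \mathcal V_\gamma,
\]
where $d_{\mathcal M}$ is the distance induced by the metric $g_{\mathcal M}$ in \eqref{productmetric}. Assume by contradiction that $\inf_{\partial\mathcal V}S_k^\gamma=S_k^\gamma(\gamma)$ (the $\geq$ is automatic by strict local minimality). Then one can pick a minimizing sequence $\gamma_n\in\partial\mathcal V$ with $S_k^\gamma(\gamma_n)\downarrow S_k^\gamma(\gamma)$.

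Second, I would upgrade $\{\gamma_n\}$ to a critical sequence for $\eta_k$. The natural tool is Ekeland's variational principle applied to the continuous functional $S_k^\gamma$ on the complete metric space $\partial\mathcal V$ (completeness follows from the fact that the times stay in $[T/2,2T]$ and from the completeness of $H^1(\T,M)$); this yields $\tilde\gamma_n\in\partial\mathcal V$ with $S_k^\gamma(\tilde\gamma_n)\to S_k^\gamma(\gamma)$ and $\|d(S_k^\gamma|_{\partial\mathcal V})(\tilde\gamma_n)\|\to 0$. By the Lagrange multiplier rule there exist scalars $\mu_n\in\R$ such that
\[
\big\|\eta_k(\tilde\gamma_n)\ -\ \mu_n\cdot dF(\tilde\gamma_n)\big\|\ \longrightarrow\ 0,
\]
where $F(\gamma') := d_{\mathcal M}(\gamma',\T\cdot\gamma)^2$ is the square-distance function whose $\rho^2$-level set is $\partial\mathcal V$. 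Since $\tilde\gamma_n$ minimizes $S_k^\gamma$ on $\partial\mathcal V$ while $S_k^\gamma\geq S_k^\gamma(\gamma)$ inside $\mathcal V$ with equality on $\T\cdot\gamma$, the multipliers $\mu_n$ are non-negative and bounded (a descent argument pushing $\tilde\gamma_n$ towards $\T\cdot\gamma$ along $-\nabla F$ would otherwise give $S_k^\gamma$-values strictly below $S_k^\gamma(\gamma)$, contradicting local minimality); passing to a subsequence, $\mu_n\to\mu\geq 0$.

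Third, I would apply the compactness Proposition \ref{prp:ps2}. Since the periods of the $\tilde\gamma_n$ lie in $[T/2,2T]$ and $\|\eta_k(\tilde\gamma_n)\|$ is bounded (being close to $\mu_n\|dF(\tilde\gamma_n)\|$), a minor adaptation of the proof of Proposition \ref{prp:ps2} produces a subsequence converging in $H^1$ to some $\tilde\gamma\in\partial\mathcal V$ with $S_k^\gamma(\tilde\gamma)=S_k^\gamma(\gamma)$. By strict local minimality on $\mathcal V_\gamma$, this forces $\tilde\gamma\in\T\cdot\gamma$, i.e.\ $d_{\mathcal M}(\tilde\gamma,\T\cdot\gamma)=0$; but simultaneously $\tilde\gamma\in\partial\mathcal V$ means $d_{\mathcal M}(\tilde\gamma,\T\cdot\gamma)=\rho>0$, a contradiction.

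The main obstacle is the third paragraph: Proposition \ref{prp:ps2} is stated only for genuine critical sequences ($\|\eta_k(\tilde\gamma_n)\|\to 0$), while Ekeland delivers a constrained critical sequence with a Lagrange-multiplier error term $\mu_n dF(\tilde\gamma_n)$. One must either verify that this perturbation is harmless for the $H^1$-compactness argument (the perturbation is a smooth, $H^1$-bounded 1-form and does not affect the higher-order term involving $d_{vv}E_{kin}$ that drives the compactness proof of Proposition \ref{prp:ps2}), or else replace Ekeland by a direct deformation argument along a pseudo-gradient of $S_k^\gamma$ tangent to $\partial\mathcal V$, as in the proof of \cite[Lemma 4.3]{AMP13}.
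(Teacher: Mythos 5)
Your overall strategy — argue by contradiction, produce an almost-critical sequence at level $S_k^\gamma(\gamma)$ at distance $\rho$ from $\T\cdot\gamma$, pass to a limit via the local compactness of critical sequences, and contradict strictness — is the standard argument behind this statement; the paper itself gives no proof and simply refers to \cite[Lemma 4.3]{AMP13}, whose proof is of exactly this type. However, as written your proof has a genuine gap, and it is the one you flag yourself: Proposition \ref{prp:ps2} applies to critical sequences, i.e.\ $\|\eta_k(\tilde\gamma_h)\|\rightarrow 0$, whereas Ekeland's principle constrained to the sphere $\partial\mathcal V$ only gives $\|\eta_k(\tilde\gamma_h)-\mu_h\, dF(\tilde\gamma_h)\|\rightarrow 0$. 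Your control of the multipliers is not established: the ``descent argument'' needs a quantitative second-order (or uniform Lipschitz) estimate for $S_k^\gamma$ along inward geodesics which is uniform along the sequence; since $(\tilde\gamma_h)$ is only bounded in $H^1$, not precompact, this uniformity is precisely what is missing, so one cannot conclude that the $\mu_h$ are bounded (let alone that the perturbed sequence enjoys the $H^1$-compactness of Proposition \ref{prp:ps2}). In addition, the Lagrange multiplier rule presupposes that the squared distance to $\T\cdot\gamma$ is $C^1$ near radius $\rho$, which also deserves a word of justification.

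The gap disappears with a small change of the domain on which you apply Ekeland. Under the contradiction hypothesis $\inf_{\partial\mathcal V}S_k^\gamma=S_k^\gamma(\gamma)$, apply Ekeland to $S_k^\gamma$ on the closed annulus $A_\rho:=\{\rho/2\leq d_{\mathcal M}(\cdot,\T\cdot\gamma)\leq 3\rho/2\}$, which for $\rho$ small is contained in $\mathcal U_\gamma\cap\mathcal V_\gamma$, is complete (it is closed in $\mathcal M$, and the product distance induced by \eqref{productmetric} keeps the periods in a compact subinterval of $(0,+\infty)$), and satisfies $\inf_{A_\rho}S_k^\gamma=S_k^\gamma(\gamma)$ by local minimality. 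Starting from a minimizing sequence on the middle sphere with $S_k^\gamma\leq S_k^\gamma(\gamma)+\epsilon_h^2$, the Ekeland points $\tilde\gamma_h$ lie within distance $\epsilon_h$ of the middle sphere, hence in the interior of $A_\rho$; therefore no multiplier appears and $\|\eta_k(\tilde\gamma_h)\|\leq\epsilon_h$, since $\eta_k=dS_k^\gamma$ there. Their periods are bounded and bounded away from zero, so Proposition \ref{prp:ps2} applies verbatim and gives a subsequence converging to some $\tilde\gamma$ with $d_{\mathcal M}(\tilde\gamma,\T\cdot\gamma)=\rho$ and $S_k^\gamma(\tilde\gamma)=S_k^\gamma(\gamma)$; this alone (no criticality of $\tilde\gamma$ is needed) contradicts Definition \ref{localminimizers/strict}. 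With this modification your argument is complete and coincides in substance with the proof of \cite[Lemma 4.3]{AMP13} to which the paper defers; your alternative suggestion of a pseudo-gradient deformation tangent to the sphere would also work, but is heavier than necessary.
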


\vspace{-6mm}

As already mentioned, we will prove the existence of zeros for $\eta_k$ via a minimax method. The starting point will be the existence of local minimizers of the action for low energies, which we now recall.
Consider the family of Taimanov functionals
$$\mathcal T_k:\mathcal F_+\longrightarrow \R\, ,$$
where $k\in(0,+\infty)$ and $\mathcal F_+$ is the space of positively oriented (possibly with boundary and not necessarily connected) embedded surfaces in $\T^2$ (in
\cite{Tai92b,Tai92a,Tai93} Taimanov considers the so-called \textit{films}):
\begin{equation}
\mathcal T_k(\Pi) := \ \sqrt{2k}\cdot l(\partial \Pi) + \int_\Pi \sigma\,,
\label{Taimanovfunctional}
\end{equation}
\noindent where $l(\partial \Pi)$ denotes the length of the boundary of $\Pi$. Observe that $\emptyset \in \mathcal F_+$ and
\begin{equation}
\mathcal T_k (\emptyset) \ = \ 0 \, , \ \ \ \ \mathcal T_k(\T^2) \ = \ \int_{\T^2} \sigma \ >\ 0\, ;
\label{positivi}
\end{equation}
\noindent moreover the family $\big \{\mathcal T_k\big \}$ is increasing in $k$ and each $\mathcal T_k$ is bounded from below since
$$\mathcal T_k(\Pi)\ \geq \ -\Vert \sigma\Vert_\infty\cdot \operatorname{area}_g(\T^2)\, .$$
\noindent Define now the value
$$\tau_+(M,g,\sigma):= \ \inf\big \{ k\ \big |\ \inf \mathcal T_k\geq 0\ \big\} \ = \ \sup \big \{ k\ \big  |\ \inf \mathcal T_k< 0\ \big\}\, .$$

The functionals $\mathcal T_k$ can be lifted to any finite cover $p':M'\rightarrow \T^2$, thus giving rise to the set of values $\tau_+(M',g,\sigma)$. We can then define the \textit{Taimanov critical value} as
\begin{equation}
\tau_+(g,\sigma):=\ \sup\Big\{\, \tau_+(M',g,\sigma)\ \Big |\ p':M'\rightarrow \T^2 \mbox{ finite cover }\Big\}\, .
\end{equation}

In \cite{CMP04} it was shown that, when $\sigma=d\theta$ is exact, the Taimanov critical value coincides with the Ma\~n\'e critical value of the abelian cover $c_0(L_\theta)$.
When $\sigma$ is not exact, $\tau_+(g,\sigma)$ is positive exactly when $\sigma$ is oscillating (cf. \cite[Lemma 6.2]{AB15a}).

\begin{lemma}
Let $\sigma$ be a non-exact oscillating 2 form on $(\T^2,g)$, then 
$$\tau_+(g,\sigma)\ >\ 0\, .$$
\end{lemma}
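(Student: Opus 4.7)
The plan is to exploit the oscillating hypothesis to produce, on $\T^2$ itself (no cover needed), a small embedded surface $\Pi$ on which $\int_\Pi \sigma < 0$ and whose boundary is so short that the Taimanov functional $\mathcal T_k(\Pi)$ becomes strictly negative for all sufficiently small $k>0$. This immediately forces $\tau_+(\T^2,g,\sigma)>0$, and since $\tau_+(g,\sigma)$ is defined as a supremum over all finite covers (including the trivial one), the conclusion follows.

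Concretely, I would first fix the orientation so that $\int_{\T^2}\sigma>0$, write $\sigma=f\,\mu_g$, and use the oscillating assumption to pick a point $p\in\T^2$ with $f(p)<0$. By continuity of $f$, there is $\rho>0$ and $c>0$ such that $f(q)\leq -c$ for every $q$ in the geodesic disc $D_\rho(p)$. Then for every $r\in(0,\rho)$ the embedded disc $D_r:=D_r(p)$, oriented coherently with $(\T^2,g)$, belongs to $\mathcal F_+$ and satisfies
\begin{equation*}
\int_{D_r}\sigma \;=\; \int_{D_r} f\,\mu_g \;\leq\; -c\cdot\operatorname{area}_g(D_r) \;=\; -c\,\pi r^2\big(1+o(1)\big),
\qquad l(\partial D_r)\;=\;2\pi r\big(1+o(1)\big),
\end{equation*}
as $r\downarrow 0$, with the error terms depending only on $g$ near $p$.

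Plugging these estimates into the definition \eqref{Taimanovfunctional} of $\mathcal T_k$ gives
\begin{equation*}
\mathcal T_k(D_r) \;\leq\; 2\pi r\sqrt{2k}\,(1+o(1)) \;-\; c\,\pi r^2\,(1+o(1)),
\end{equation*}
so $\mathcal T_k(D_r)<0$ as soon as $\sqrt{2k}<\tfrac{c\,r}{2}(1+o(1))$. Hence, fixing $r$ small enough and then choosing $k_0=k_0(r)>0$ small, we obtain $\inf \mathcal T_k\leq\mathcal T_k(D_r)<0$ for every $k\in(0,k_0)$. By the very definition of $\tau_+(\T^2,g,\sigma)$ this gives $\tau_+(\T^2,g,\sigma)\geq k_0>0$, and since the trivial cover is admissible in the supremum defining $\tau_+(g,\sigma)$, we conclude $\tau_+(g,\sigma)\geq \tau_+(\T^2,g,\sigma)>0$.

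There is no serious obstacle: the argument is essentially a matching of the $O(r)$ perimeter term against the $O(r^2)$ area term, made possible by the fact that $\sqrt{2k}\to 0$ as $k\to 0$. The only minor care is in choosing $D_r$ small enough to lie in a region where $f$ is bounded away from $0$, which is why the oscillating hypothesis (giving a point with $f<0$) is essential; the non-exactness of $\sigma$ is not used at this step and only enters later in the chapter.
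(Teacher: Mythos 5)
Your argument is correct and is essentially the same as the one the paper relies on: the thesis does not prove this lemma itself but refers to \cite[Lemma 6.2]{AB15a}, and the argument there is exactly yours --- a small positively oriented embedded disc around a point where the density of $\sigma$ is negative has a fixed negative $\sigma$-integral, so $\mathcal T_k$ of that disc becomes negative as $k\downarrow 0$, giving $\inf\mathcal T_k<0$ and hence $\tau_+(\T^2,g,\sigma)>0$ already on the trivial cover. Your side remarks are also accurate: once $r$ is fixed the balancing of the $O(r)$ boundary term against the $O(r^2)$ area term is not even needed (only $\int_{D_r}\sigma<0$ matters), and non-exactness plays no role in this step, consistently with the paper's observation that positivity of $\tau_+$ is equivalent to the oscillating condition.
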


We can now state the main theorem about the existence of local minimizers for the action on  $\T^2$. It is worth to point out that this result holds also for any closed connected orientable surface. For the proof we refer to \cite[Lemma 6.4]{AB15a}. 

\begin{teo}
Let $g$ be a Riemannian metric on $\T^2$, $\sigma \in \Omega^2(\T^2)$ be a non-exact oscillating form. Then, for every $k<\tau_+(g,\sigma)$ there exists a closed magnetic geodesic $\alpha_k$ on $\T^2$ with energy $k$ which is a local minimizer of the action. 
\label{Taimanov92}
\end{teo}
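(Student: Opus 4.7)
The plan is to construct $\alpha_k$ as the boundary of a minimizer of Taimanov's functional $\mathcal T_k$ from \eqref{Taimanovfunctional}. By the very definition of $\tau_+(g,\sigma)$, the hypothesis $k<\tau_+(g,\sigma)$ gives a finite cover $p':M'\to \T^2$ on which the lifted functional $\mathcal T_k^{M'}:\mathcal F_+(M')\to\R$ attains strictly negative values. Since $\mathcal T_k^{M'}(\emptyset)=0$, any minimizer must be non-empty.

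First I would show that the infimum is attained. For any minimizing sequence $\{\Pi_h\}\subseteq\mathcal F_+(M')$, the two-sided bound
$$-\Vert\sigma\Vert_\infty\cdot\operatorname{area}_g(M')\leq \mathcal T_k^{M'}(\Pi_h)\leq 0$$
forces $l(\partial \Pi_h)$ to stay uniformly bounded. Viewing each $\Pi_h$ as the indicator of a set of finite perimeter on the compact surface $M'$, the standard BV compactness theorem yields an $L^1$-convergent subsequence with limit $\Pi_*$; lower semicontinuity of the perimeter and $L^1$-continuity of $\Pi\mapsto \int_\Pi\sigma$ give $\mathcal T_k^{M'}(\Pi_*)=\inf \mathcal T_k^{M'}<0$. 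In particular $\Pi_*$ is neither $\emptyset$ nor $M'$, the latter because $\mathcal T_k^{M'}(M')=\int_{M'}\sigma>0$ by our normalization $\int_{\T^2}\sigma>0$.

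Next I would derive the Euler--Lagrange equation by taking smooth compactly supported normal variations of $\partial \Pi_*$. The first-variation formulas for length and for $\int_\Pi\sigma$ give the prescribed-curvature equation $\sqrt{2k}\,\kappa_g=f$ at every smooth point of $\partial\Pi_*$, where $\kappa_g$ is the signed geodesic curvature and $f$ is the density of $\sigma$ with respect to the area form. Reparametrizing any connected component of $\partial \Pi_*$ at constant speed $\sqrt{2k}$ and projecting it down to $\T^2$ yields a closed curve $\alpha_k$ solving the magnetic-geodesic equation with energy exactly $k$. Smoothness of $\partial\Pi_*$ follows from the regularity theory for two-dimensional perimeter minimizers with bounded prescribed curvature.

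Finally I would transfer the minimality to the action $1$-form $\eta_k$. For $(y,S)$ in a sufficiently small neighbourhood of $\T\cdot \alpha_k$ in $\mathcal M$, I would lift $y$ to a loop $\tilde y$ on $M'$ close to the component $\tilde \alpha_k$ of $\partial\Pi_*$ above $\alpha_k$, and replace $\tilde\alpha_k$ by $\tilde y$ in $\partial\Pi_*$ to obtain a competitor surface $\Pi(y)\in\mathcal F_+(M')$. A direct computation gives
$$\mathcal T_k^{M'}(\Pi(y))-\mathcal T_k^{M'}(\Pi_*)=\sqrt{2k}\big[l(y)-l(\alpha_k)\big]+\int_{C(y)}\sigma,$$
where $C(y)\subseteq M'$ is the thin cylinder cobounding $\tilde y$ and $\tilde\alpha_k$. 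Minimality of $\Pi_*$ forces the left-hand side to be non-negative. On the other hand, the elementary inequality $\A_k^{E_{kin}}(y,S)\geq \sqrt{2k}\,l(y)$, combined with the formula $S_k^{\alpha_k}(y,S)-S_k^{\alpha_k}(\alpha_k,T_{\alpha_k})=\A_k^{E_{kin}}(y,S)-\A_k^{E_{kin}}(\alpha_k,T_{\alpha_k})+\int_{C(y)}\sigma$ and with the identity $\A_k^{E_{kin}}(\alpha_k,T_{\alpha_k})=\sqrt{2k}\,l(\alpha_k)$, yields $S_k^{\alpha_k}(y,S)\geq S_k^{\alpha_k}(\alpha_k,T_{\alpha_k})$, proving that $\alpha_k$ is a local minimizer of the action with energy $k$.

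The main obstacle will be the regularity step: passing from a BV minimizer to a smooth embedded $1$-manifold for $\partial\Pi_*$ requires the full two-dimensional regularity theory for perimeter minimizers with prescribed curvature, and the explicit matching between the Taimanov functional on competitor surfaces and the local primitive $S_k^{\alpha_k}$ of $\eta_k$ demands a careful geometric comparison along cobounding cylinders.
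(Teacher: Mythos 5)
The paper does not actually prove Theorem \ref{Taimanov92}: it invokes Taimanov's theorem on minimizers of $\mathcal T_k$ (proved in \cite{Tai92a,Tai93}, and via geometric measure theory in \cite{CMP04}) and refers to \cite[Lemma 6.4]{AB15a} for the local-minimality statement. Your sketch is essentially an attempt to reconstruct that cited proof along the lines of \cite{CMP04}, and the existence part (BV compactness, lower semicontinuity, exclusion of $\emptyset$ and of $M'$ via $\int_{M'}\sigma>0$), the first variation giving $\sqrt{2k}\,\kappa_g=f$, and the regularity of the boundary are acceptable at citation level. The bookkeeping in the last step ($\A_k^{E_{kin}}(y,S)\geq\sqrt{2k}\,l(y)$ with equality at $\alpha_k$, and the identification of $\int_{C(y)}\sigma$ with the correction term in the local primitive $S_k^{\alpha_k}$ of \eqref{localprimitiveofetak}) is also correct.

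The genuine gap is the comparison inequality $\mathcal T_k^{M'}(\Pi(y))\geq\mathcal T_k^{M'}(\Pi_*)$, which is exactly where the real work lies. Definition \ref{localminimizers/strict} requires the inequality $S_k^{\alpha_k}(y,S)\geq S_k^{\alpha_k}(\alpha_k,T_{\alpha_k})$ for \emph{all} $(y,S)$ in an $H^1\times(0,+\infty)$-neighbourhood of $\T\cdot\alpha_k$; such a $y$ need not be embedded and may cross itself, $\alpha_k$, and the other components of $\partial\Pi_*$. Hence ``replacing $\tilde\alpha_k$ by $\tilde y$ in $\partial\Pi_*$'' does not in general produce an element of $\mathcal F_+(M')$ (a set of finite perimeter with smooth boundary), but an integer-multiplicity object whose density is $2$ or $-1$ wherever $\tilde y$ backtracks or crosses into or out of $\Pi_*$. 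Minimality of $\Pi_*$ among sets gives no control over such competitors, and you cannot repair this by relaxing the admissible class: on $\Z$-valued BV functions the functional is unbounded from below whenever $\inf\mathcal T_k^{M'}<0$ (evaluate it on $N\chi_{\Pi_*}$), which is precisely the regime $k<\tau_+(g,\sigma)$. So the displayed identity-plus-minimality argument does not yield the inequality as stated. Closing the gap requires a genuinely local argument: for instance confining the comparison to a thin tubular annulus around $\alpha_k$, decomposing the excess multiplicity into its level sets and controlling each of them by an isoperimetric-type estimate of the form $\sqrt{2k}\,\operatorname{Per}(E)\geq\big|\int_E\sigma\big|$ for sets $E$ contained in a sufficiently thin annulus, or alternatively a calibration/field-of-extremals argument around the embedded solution $\alpha_k$. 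This is the nontrivial content of \cite{CMP04} and \cite[Lemma 6.4]{AB15a} that the present paper quotes instead of proving.
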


This result follows directly from Taimanov's theorem about the existence of global minimizers for $\mathcal T_k$, which states that for every $k<\tau_+(g,\sigma)$ there is a smooth positively oriented embedded surface $\Pi$, which is a global minimizer of 
$\mathcal T_k$ on the space of positively oriented surfaces on a finite cover $M'$ and with $\mathcal T_k(\Pi)<0$. Each boundary component of $\Pi$ is then a closed magnetic geodesic for the magnetic flow lifted to $M'$; notice that the boundary of $\Pi$ 
is non-empty by (\ref{positivi}). The proofs are contained in \cite{Tai92a} (case $M=S^2$) and in \cite{Tai93} (general case). We also refer to \cite{CMP04} for a new proof using methods coming from geometric measure theory.

\vspace{3mm}

The local minimizer of the action $\alpha_k$ need \textit{not} (and in general will not) be contractible. However, if it happens to be contractible, then the existence of a global primitive of $\eta_k$ on $\mathcal M_0$ allows to prove the main theorem of this chapter
by using literally the same argument as in \cite{AB15a}.

Hereafter we will therefore assume that the Taimanov's local minimizer is non-contractible. We explain now briefly how the existence of a local minimizer (with its iterates) yields minimax classes for $\eta_k$. Observe that, for all $n\in \N$ 
$$\pi_1(\mathcal M,\alpha_k^n)\ \cong \ \Z\times \Z\, ,$$
as every connected component of $H^1(\T,\T^2)$ is homeomorphic to 
$$\T^2 \times \Big  \{\gamma \in H^1(\T,\R^2)\ \Big | \int_0^1 \gamma(t)\, dt = 0\Big \}\, .$$
If we write $\alpha_k^n=(x_k^n,n\,T_k)$, one generator of $\pi_1(\mathcal M,\alpha_k^n)$ is given by 
\begin{equation}
\mathfrak a_n:[0,1]\rightarrow \ \mathcal M\, , \ \ \ \ \mathfrak a_n(s)(\cdot) := \big (x_k^n(s+\cdot),nT_k\big )
\label{beta0n}
\end{equation}
and corresponds to the ``change of base-point'' in $\alpha_k^n$. Observe that 
$$\int_{\mathfrak a_n} \sigma \ = \ 0$$
and hence, since $\eta_k$ is non exact on $\mathcal M\setminus \mathcal M_0$, there exists another generator of $\pi_1(\mathcal M,\alpha_k^n)$, say $\mathfrak b_n$, such that
$$\int_{\mathfrak b_n} \sigma \ \neq \ 0\, .$$

One then considers, for all $n\in \N$, the class of loops in $\mathcal M$ based at $\alpha_k^n$ and homotopic to $\mathfrak b_n$ and defines a corresponding minimax function using the integration of $\eta_k$ along paths as in section \ref{theminimaxclass}. 
The problem is that this construction could yield minimax functions which do not depend monotonically on $k$, since the Taimanov's local minimizer could a priori depend on $k$ in a non-continuous fashion.

Therefore, we shall modify the minimax classes to retrieve the desired monotonicity. This will be the goal of the next section.

%%%%%%%%%%%%%%%%%%%%%%%%%%%%%%%%%%%%%%%%%%%%%%%%%%%%%%%%%%

\section{The minimax classes}
\label{theminimaxclasses}

For any $k\in (0,\tau_+(g,\sigma))$, let $\alpha_k\in \mathcal M$ be a local minimizer of the action with energy $k$. As already pointed out, we may suppose $\alpha_k$  to be 
non-contractible, as otherwise we could prove the main theorem of this chapter exactly as done in \cite{AB15a}.

Fix now $k^*\in (0,\tau_+(g,\sigma))$; if the Taimanov's local minimizer $\alpha_{k^*}$ is not strict, then there exists a sequence of local minimizers of the action approaching $\alpha_{k^*}$, which are all 
closed magnetic geodesics with energy $k^*$. Thus, hereafter we may suppose without loss of generality the local minimizer $\alpha_{k^*}$ to be strict.

For every $n\in \N$ let $\mathcal U_{\alpha_{k^*}^n}$ be an open neighborhood of $\T\cdot \alpha_{k^*}^n$ as in the definition of the $\N$-equivariant local primitive $S_k^{\alpha_{k^*}}$ 
of $\eta_k$ given by \eqref{localfunctional}. By Proposition \ref{strictlocalminimizer} we may find an open neighborhood $\mathcal V\subseteq \mathcal U_{\alpha_{k^*}}$ of $\T\cdot \alpha_{k^*}$ such that 
$$\inf_{\partial \mathcal V}\ S_{k^*}^{\alpha_{k^*}}\ > \ S_{k *}^{\alpha_{k *}}(\alpha_{k^*})\, .$$

Lemma 3.1 in \cite{AMMP14} implies now that there exists an open intervall $I$ containing $k^*$ such that for all $k\in I$ the set 
\begin{equation}
M_k:= \ \overline{\Big \{ \text{local minimizers of} \ S_k^{\alpha_{k^*}} \ \text{in} \ \mathcal V\Big \}}
\label{Mk}
\end{equation}
is non-empty and compact (see also \cite[Lemma 8.1]{AB15a}). With $M_k^n\subseteq \mathcal U_{\alpha_{k^*}^n}$ we denote the set given by iterating $n$-times every element in $M_k$.

\begin{lemma}
There exists an open interval $I=I(k^*)\subseteq (0,\tau_+(g,\sigma))$ containing $k^*$ and which has the following properties:
\begin{enumerate}
\item For every $k\in I$ the set $M_k$ in \eqref{Mk} is a non-empty compact set.
\item For every $k\in I$ there holds
$$\sup_{k'\in I} \ \max_{M_{k'}} \ S_{k'}^{\alpha_{k^*}} \ < \ \inf_{\partial \mathcal V} \ S_k^{\alpha_{k^*}}\, .$$
\end{enumerate}	
\label{rococo}
\end{lemma}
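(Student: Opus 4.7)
The plan is to take $I$ as the intersection of three open intervals: the interval $I_0$ already provided by Lemma 3.1 of \cite{AMMP14}, which secures property 1, together with two further intervals $I_1$ and $I_2$ that will be chosen so as to achieve property 2. Introduce the abbreviations $B := S_{k^*}^{\alpha_{k^*}}(\alpha_{k^*})$ and $A := \inf_{\partial \mathcal V} S_{k^*}^{\alpha_{k^*}}$. By Proposition \ref{strictlocalminimizer} (applied to $\alpha_{k^*}$, which is a strict local minimizer), $A > B$, and I fix $\delta > 0$ with $A - B > 3\delta$.

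The first shrinking exploits the elementary identity
\[ S_{k}^{\alpha_{k^*}}(x,T) - S_{k^*}^{\alpha_{k^*}}(x,T) \ = \ (k - k^*)\, T, \]
which follows from the explicit form of the local primitive in \eqref{localprimitiveofetak}. Since $\mathcal V$ is an open neighborhood of $\T \cdot \alpha_{k^*}$ in $\mathcal M$, the periods of its elements are uniformly bounded on $\partial \mathcal V$ by some constant $C$. Hence $|{\inf_{\partial \mathcal V} S_k^{\alpha_{k^*}}} - A| \leq C\,|k - k^*|$, and I can find an open interval $I_1 \ni k^*$ on which $\inf_{\partial \mathcal V} S_k^{\alpha_{k^*}} > A - \delta$.

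The second shrinking rests on two structural properties of $M_k$ which are part of the content of Lemma 3.1 in \cite{AMMP14}: (i) $M_{k^*}$ consists of critical points of $S_{k^*}^{\alpha_{k^*}}$ lying in $\mathcal V$ on which the action is constantly equal to $B$ (this is because, after possibly shrinking $\mathcal V$, the strict local minimality of $\alpha_{k^*}$ forces every local minimizer in $\mathcal V$ to realize the infimum of $S_{k^*}^{\alpha_{k^*}}$ over $\mathcal V$, namely $B$); and (ii) the set-valued map $k \mapsto M_k$ is upper-semicontinuous at $k^*$, so that for every neighborhood $\mathcal W \subseteq \mathcal V$ of $M_{k^*}$ there is an open interval $I_{\mathcal W} \ni k^*$ with $M_k \subseteq \mathcal W$ for all $k \in I_{\mathcal W}$. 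Choosing $\mathcal W$ small enough that $S_{k^*}^{\alpha_{k^*}} < B + \delta/2$ on $\mathcal W$, and using the same identity together with the uniform period bound on $\mathcal W$, I obtain an open interval $I_2 \ni k^*$ on which $\max_{M_{k'}} S_{k'}^{\alpha_{k^*}} < B + \delta$.

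Setting $I := I_0 \cap I_1 \cap I_2$ then yields property 2: for every pair $k, k' \in I$,
\[ \max_{M_{k'}} S_{k'}^{\alpha_{k^*}} \ < \ B + \delta \ < \ B + 2\delta \ \leq \ A - \delta \ < \ \inf_{\partial \mathcal V} S_k^{\alpha_{k^*}}. \]
The main subtlety I expect to face is cleanly extracting statements (i) and (ii) above from Lemma 3.1 of \cite{AMMP14}; both rely on the strict minimality of $\alpha_{k^*}$ and on $\mathcal V$ being small enough that a deformation-theoretic argument prevents the family $M_k$ from drifting towards $\partial \mathcal V$ as $k$ varies near $k^*$. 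Once these two structural facts are in hand, the two continuity arguments above are essentially automatic consequences of the linear dependence of $S_k^{\alpha_{k^*}}$ on the energy and the compactness of $M_{k^*}$.
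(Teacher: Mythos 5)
The thesis itself gives no proof of this lemma: it is imported from \cite[Lemma 3.1]{AMMP14} and \cite[Lemma 8.1]{AB15a}, so your attempt has to be measured against the content of those results. Your mechanism for part 2 --- the identity $S_{k}^{\alpha_{k^*}}(x,T)-S_{k^*}^{\alpha_{k^*}}(x,T)=(k-k^*)T$ combined with a uniform period bound near $\T\cdot\alpha_{k^*}$ --- is indeed the right way to transfer estimates between nearby energies (and the unproved period bound is harmless, since $\mathcal V$ may be chosen bounded in the $T$-direction). The genuine gap is your structural claim (i) together with its justification. Strict local minimality of $\alpha_{k^*}$ does \emph{not} force every local minimizer of $S_{k^*}^{\alpha_{k^*}}$ lying in $\mathcal V$ to realize the value $B$: strictness only says that points of $\mathcal V_{\alpha_{k^*}}\setminus\T\cdot\alpha_{k^*}$ have strictly larger action, and already in one variable a function with a strict local minimum can have other local minima arbitrarily close by whose critical values are larger --- possibly larger than $\inf_{\partial\mathcal V}S_{k^*}^{\alpha_{k^*}}$; ``possibly shrinking $\mathcal V$'' does not repair this, because $\mathcal V$ is fixed once and for all before $M_k$ is defined. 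Similarly, upper semicontinuity of $k\mapsto M_k$ is not automatic for sets of \emph{local} minimizers and is not what the cited lemma asserts, so both pillars (i) and (ii) of your second shrinking are left unsupported.

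What the cited lemma actually provides --- and what a correct proof should use --- is that $M_k$ arises as the set of minimizers of $S_k^{\alpha_{k^*}}$ restricted to $\overline{\mathcal V}$, produced by the direct method together with the Palais--Smale property of the local action (periods on $\mathcal V$ bounded and bounded away from zero); the boundary inequality of Proposition \ref{strictlocalminimizer} forces these minimizers into the open set $\mathcal V$, so they are genuine local minimizers, and this is also what gives non-emptiness and compactness in part 1, which cannot be obtained by your soft arguments alone. With that description neither (i) nor (ii) is needed: every element of $M_{k'}$ has the common value $\min_{\overline{\mathcal V}}S_{k'}^{\alpha_{k^*}}\le S_{k'}^{\alpha_{k^*}}(\alpha_{k^*})=B+(k'-k^*)T_{k^*}$, where $T_{k^*}$ is the period of $\alpha_{k^*}$, while $\inf_{\partial\mathcal V}S_k^{\alpha_{k^*}}\ge A-C\,|k-k^*|$, and shrinking the interval gives the uniform strict inequality of part 2. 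As written, your proof rests on an unproved (and, under the literal reading of \eqref{Mk} as \emph{all} local minimizers in $\mathcal V$, false) description of $M_{k^*}$, so it is incomplete.
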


\vspace{-4mm}

For every $n\in \N$ let $\mathfrak a_n$ be the generator of $\pi_1(\mathcal M,\alpha_{k^*}^n)$ given by \eqref{beta0n} and let $\mathfrak b_n$ be another generator, over which the integral of $\sigma$ does not vanish. 
Observe that there is no loop $u:[0,1]\rightarrow \mathcal U_{\alpha_{k^*}^n}$ based at $\alpha_{k^*}^n$ which is homotopic to $\mathfrak b_n$, as otherwise $\eta_k$ would be non exact on $\mathcal U_{\alpha_{k^*}^n}$.
We would like to define the minimax class $\mathcal P_n(k)$ as the set of paths in $\mathcal M$ starting in and ending at $M_k^n$ which are ``homotopic'' to $\mathfrak b_n$. To do this we have to close the considered 
paths within $\mathcal U_{\alpha_{k^*}^n}$, as we now show.

For every element $\gamma\in M_k$ we consider a path $\delta_k^\gamma:[0,1]\rightarrow \mathcal V$ such that $\delta_k^\gamma(0)=\gamma$ and $\delta_k^\gamma(1)=\alpha_{k^*}$; namely 
$\delta_k^\gamma$ is connecting $\gamma$ with the Taimanov's local minimizer $\alpha_{k^*}$ and is entirely contained in $\mathcal V$. For every $n\in \N$ we denote with $\delta_k^{\gamma^n}$ the path
in $\mathcal U_{\alpha_{k^*}^n}$ connecting $\gamma^n$ with $\alpha_{k^*}^n$ which is given by iterating $n$-times every loop of $\delta_k^\gamma$. 

Consider now a path $u:[0,1]\rightarrow \mathcal M$ starting and ending in $M_k^n$; then
$$u(0) \ = \ \gamma_0^n\, , \ \ \ \ u(1)\ = \ \gamma_1^n\, ,$$
for some $\gamma_0,\gamma_1\in M_k$ and hence we have that the juxtaposition
$$\mathcal J(u):= \ \delta_k^{\gamma_0^n}\# u \# \left (\delta_k^{\gamma_1^n}\right )^{-1}$$
is a loop based at $\alpha_{k^*}^n$. We thus set
\begin{equation}
\mathcal P_n(k) := \ \Big \{u:[0,1]\rightarrow \mathcal M \ \Big |\ u(0),u(1)\in M_k^n\, , \ \big [\mathcal J(u)\big ] = [\mathfrak b_n]\Big \}\, .
\label{minimaxclass}
\end{equation}
As already done  in Chapter \ref{chapter5}, for every $u\in \mathcal P_n(k)$ we set 
$$\widetilde S_k(u,s) := \ S_k^{\alpha_{k^*}}(u(0)) \ + \int_0^s u^*\eta_k\, , \ \ \ \ \forall s\in [0,1]$$
and define $c_n:I\rightarrow \R$ by
\begin{equation}
c_n(k) := \inf_{u\in \mathcal P_n(k)} \max_{s\in [0,1]} \ \widetilde S_k(u,s)\, .
\label{minimaxfunctionetaknotexact}
\end{equation}

We show now, using an argument similar to the one in \cite{AMMP14}, that the minimax functions $c_n$ are monotonically increasing in $k$. The first step to prove this is to show that, given $k_0<k_1 \in I$, there always exist 
paths $w$ entirely contained in $\mathcal V$ which connect $M_{k_0}$ to a given element $\gamma\in M_{k_1}$ and such that $S_{k_0}^{\alpha_{k^*}}\circ w \leq S_{k_0}(\gamma)$.

\begin{lemma} Let $k_0<k_1 \in I$. For every $\gamma\in M_{k_1}$ there exists a continuous path $w:[0,1]\rightarrow \mathcal V$ such that $w(0)\in M_{k_0}$, $w(1)=\gamma$ and 
$$S_{k_0}^{\alpha_{k^*}}\circ w \ \leq \ S_{k_0}^{\alpha_{k^*}}(\gamma)\, .$$
\label{circolo}
\end{lemma}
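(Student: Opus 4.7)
The plan is to exploit the monotonicity $S_{k_0}^{\alpha_{k^*}} \leq S_{k_1}^{\alpha_{k^*}}$ (which follows from $\eta_{k_1}-\eta_{k_0} = (k_1-k_0)\,dT$ and $T>0$) together with the ``energy barrier'' supplied by Lemma \ref{rococo}(2), to trap a sublevel set of $S_{k_0}^{\alpha_{k^*}}$ inside $\mathcal V$ and then minimize. Write $S:=S_{k_0}^{\alpha_{k^*}}$ for brevity. First I would observe that, since $\gamma\in M_{k_1}$,
\[
S(\gamma)\ \leq\ S_{k_1}^{\alpha_{k^*}}(\gamma)\ \leq\ \max_{M_{k_1}} S_{k_1}^{\alpha_{k^*}}\ <\ \inf_{\partial\mathcal V}S,
\]
where the first inequality uses $k_0<k_1$ and the strict one is exactly Lemma \ref{rococo}(2). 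Hence the sublevel $\{S\leq S(\gamma)\}\cap\mathcal V$ is bounded away from $\partial\mathcal V$; let $A$ be the path-connected component of this sublevel which contains $\gamma$. By construction $A$ is contained in the interior of $\mathcal V$, and any continuous path inside $A$ will automatically satisfy $S\circ w\leq S(\gamma)$.

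Next I would produce an element of $M_{k_0}\cap A$ by variational minimization of $S$ on $A$. Since $A\subseteq\mathcal U_{\alpha_{k^*}}$ is bounded away from the subset of constant loops, the times on $A$ are bounded away from zero, and Palais–Smale holds for $S$ on subsets with bounded (and bounded-away-from-zero) times, exactly as in Proposition \ref{completezzask}. Thus a minimizing sequence in $A$ has a convergent subsequence $\gamma_n\to\gamma_0\in\overline A\cap\mathcal V$, with $S(\gamma_0)=\inf_A S=:\mu$. I claim $\gamma_0$ is in fact a local minimizer of $S$ on $\mathcal V$: if it were not, one could use the (truncated) negative pseudo-gradient flow of $S$ starting at $\gamma_0$ to produce, in every neighbourhood of $\gamma_0$, points $\gamma'$ with $S(\gamma')<\mu$; since the flow orbit from $\gamma_0$ to such a $\gamma'$ stays in $\mathcal V$ (by the level-set trapping above) and in $\{S\leq S(\gamma)\}$, and since $\gamma_0\in\overline A$, the orbit together with a short arc from $\gamma_n$ to $\gamma_0$ would place $\gamma'$ in $A$, contradicting $\mu=\inf_A S$. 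Consequently $\gamma_0$ is a local minimizer of the action and hence $\gamma_0\in M_{k_0}$ by the definition \eqref{Mk}.

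The remaining step is to show $\gamma_0\in A$ (not only in $\overline A$) and to produce the path $w$. Because $\gamma_0$ is a local minimizer, in any sufficiently small convex chart-neighbourhood $U$ of $\gamma_0$ in $\mathcal V$ every straight-line segment based at $\gamma_0$ has $S$-values $\leq S(\gamma_n)\leq S(\gamma)$ (up to shrinking $U$, using continuity of $S$ and $\mu\leq S(\gamma)$; if $\mu=S(\gamma)$, then $\gamma$ itself lies in every neighbourhood of $M_{k_0}$ and the argument is simpler). Thus for $n$ large the segment from $\gamma_n$ to $\gamma_0$ lies in $\{S\leq S(\gamma)\}\cap\mathcal V$, so $\gamma_0\in A$. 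Now since $A$ is path-connected, choose any continuous path $w:[0,1]\to A$ with $w(0)=\gamma_0$ and $w(1)=\gamma$; by the definition of $A$ this $w$ satisfies $S\circ w\leq S(\gamma)$, which is the required inequality.

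The main obstacle I anticipate is the verification that the minimum of $S$ on $A$ is actually attained in $A$ and is a local minimizer in $\mathcal V$: the sublevel set $\{S\leq S(\gamma)\}\cap\mathcal V$ is only locally closed in $\mathcal M$ and Palais–Smale for $S=S_k^{\alpha_{k^*}}$ has to be invoked in the precise form of Proposition \ref{completezzask} (times bounded and bounded away from zero inside $\mathcal U_{\alpha_{k^*}}$). Once that is in place, the barrier inequality of Lemma \ref{rococo}(2) does all the geometric work, preventing the minimizing sequence, the limit $\gamma_0$, and the connecting path from ever escaping $\mathcal V$.
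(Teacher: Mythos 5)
Your overall strategy (the barrier from Lemma \ref{rococo} plus minimization of $S:=S_{k_0}^{\alpha_{k^*}}$ in a sublevel component, using Palais--Smale) is the same as the paper's, but there is a genuine gap at the point where you work with the closed sublevel $\{S\le S(\gamma)\}$ and its path-component $A$. Every delicate step of your argument ultimately needs the strict inequality $\mu:=\inf_A S<S(\gamma)$: with it, continuity alone shows that short arcs from $\gamma_n$ to nearby points stay in the sublevel, that $\gamma_0\in A$, and that a nearby point $\gamma'$ with $S(\gamma')<\mu$ lies in $A$, giving your contradiction. You never rule out $\mu=S(\gamma)$, and your fallback sentence (``then $\gamma$ itself lies in every neighbourhood of $M_{k_0}$ and the argument is simpler'') is unjustified: a priori $A$ could consist entirely of points at level $S(\gamma)$, in which case the minimizer you produce need not belong to $M_{k_0}$ at all. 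The missing ingredient is precisely the first observation in the paper's proof: $\gamma\in M_{k_1}$ is a zero of $\eta_{k_1}$, i.e.\ a periodic orbit of energy $k_1\neq k_0$, hence it is \emph{not} a critical point of $S$ (since $\eta_{k_1}-\eta_{k_0}=(k_1-k_0)\,dT$ does not vanish). Being a regular point of the level set $\{S=S(\gamma)\}$, $\gamma$ can be joined, inside $\{S\le S(\gamma)\}$, to a point $\beta$ with $S(\beta)<S(\gamma)$; the paper then minimizes over the connected component of the \emph{open} sublevel $\{S<S(\gamma)\}$ containing $\beta$. That component is open and path-connected, it stays in $\mathcal V$ by the barrier, the minimizer (which exists by Palais--Smale on $\mathcal V$) is an interior point and hence a local minimizer, so it lies in $M_{k_0}$, and all the connectivity issues you struggle with disappear.

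A second, related flaw: to show that the limit $\gamma_0$ of your minimizing sequence is a local minimizer, you apply the negative pseudo-gradient flow \emph{starting at} $\gamma_0$; if $\gamma_0$ happens to be a critical point of saddle type (critical but not a local minimizer), this flow is stationary and produces no points with $S<\mu$, so the dichotomy ``local minimizer or the flow decreases from $\gamma_0$'' is false as stated. Once $\mu<S(\gamma)$ is available this step can be repaired by pure continuity (nearby lower points are joined to $\gamma_n\in A$ by short arcs lying in $\{S<S(\gamma)\}$), but as written the argument does not close; adding the regular-point observation and passing to the open sublevel reduces your proof to the paper's.
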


\vspace{-8mm}

\begin{proof}
The element $\gamma$ is a periodic orbit of energy $k_1$ and in particular is not a critical point of $S_{k_0}^{\alpha_{k^*}}$. Set $a:=S_{k_0}^{\alpha_{k^*}}(\gamma)$; being a regular point of the hypersurface $(S_{k_0}^{\alpha_{k^*}})^{-1}(a)$, 
$\gamma$ can be connected to a point $\beta\in \mathcal V\cap \{S_{k_0}^{\alpha_{k^*}}<a\}$ by a continuous path which is entirely contained in the sublevel set $\{S_{k_0}^{\alpha_{k^*}}\leq a\}$. By Lemma \ref{rococo} above
$$a\ = \ S_{k_0}^{\alpha_{k^*}}(\gamma) \ < \ \inf_{\partial \mathcal V} \ S_{k_0}^{\alpha_{k^*}}$$
and hence the connected component of $\{S_{k_0}^{\alpha_{k^*}}\leq a\}$ which contains $\gamma$ is contained in $\mathcal V$. Since $S_{k_0}^{\alpha_{k^*}}$ satisfies the Palais-Smale condition on $\mathcal V$, the above 
fact ensures the existence of a global minimizer $\delta$ of the restriction of $S_{k_0}^{\alpha_{k^*}}$ to the connected component of $\{S_{k_0}^{\alpha_{k^*}}<a\}$ that contains $\beta$. Such a $\delta$ belongs to $M_{k_0}$ 
and can be connected to $\beta$ by a continuous path in $\{S_{k_0}^{\alpha_{k^*}}<a\}$. We conclude that there exists a continuous path 
$w:[0,1]\longrightarrow \{S_{k_0}^{\alpha_{k^*}}\leq a\}$ such that $w(0)=\delta \in M_{k_0}$ and $w(1)=\gamma$.
\end{proof}

\vspace{2mm}

\begin{lemma}
For every $n\in \N$, the minimax function $k\longmapsto c_n(k)$ in \eqref{minimaxfunctionetaknotexact} is monotonically increasing. 
\label{monotonicityetakexact}
\end{lemma}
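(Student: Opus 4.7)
Fix $k_0<k_1$ in $I$ and let $u\in\mathcal{P}_n(k_1)$ be a quasi-minimizer, i.e.\ $\max_{s}\widetilde{S}_{k_1}(u,s)<c_n(k_1)+\varepsilon$ for a given $\varepsilon>0$. Write $u(0)=\gamma_0^{\,n}$ and $u(1)=\gamma_1^{\,n}$ with $\gamma_0,\gamma_1\in M_{k_1}$. Applying Lemma \ref{circolo} twice, I obtain continuous paths $w_0,w_1:[0,1]\to\mathcal{V}$ with $w_i(0)\in M_{k_0}$, $w_i(1)=\gamma_i$ and $S_{k_0}^{\alpha_{k^*}}\circ w_i\leq S_{k_0}^{\alpha_{k^*}}(\gamma_i)$. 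Iterating each loop in these paths $n$-times produces paths $w_0^{\,n},w_1^{\,n}:[0,1]\to\mathcal{V}^n\subseteq\mathcal{U}_{\alpha_{k^*}^n}$ with $S_{k_0}^{\alpha_{k^*}}\circ w_i^{\,n}\leq S_{k_0}^{\alpha_{k^*}}(\gamma_i^{\,n})$, thanks to the $\N$-equivariance of $S_{k_0}^{\alpha_{k^*}}$. Then define
\[
\widetilde{u}\ :=\ w_0^{\,n}\#\,u\,\#\,(w_1^{\,n})^{-1}\, ,
\]
which is a path in $\mathcal{M}$ starting at $w_0^{\,n}(0)\in M_{k_0}^{\,n}$ and ending at $w_1^{\,n}(0)\in M_{k_0}^{\,n}$.

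The first thing to check is that $\widetilde{u}\in\mathcal{P}_n(k_0)$. By construction $\mathcal{J}(\widetilde{u})$ differs from $\mathcal{J}(u)$ only by pre- and post-concatenation with loops $\rho_0,\rho_1$ based at $\alpha_{k^*}^n$ which lie entirely in $\mathcal{V}^n\subseteq\mathcal{U}_{\alpha_{k^*}^n}$ (they are built from the $\delta$-paths and the $w_i^{\,n}$-paths, all contained in $\mathcal{V}^n$). Since $\eta_k$ is exact on $\mathcal{U}_{\alpha_{k^*}^n}$, the integral of $\tau^\sigma$ over any loop in $\mathcal{V}^n$ vanishes, which means that the image of the inclusion-induced homomorphism $\pi_1(\mathcal{V}^n,\alpha_{k^*}^n)\to\pi_1(\mathcal{M},\alpha_{k^*}^n)\cong\Z\oplus\Z$ is contained in the subgroup generated by $\mathfrak{a}_n$ (the change-of-basepoint loop). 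Thus $[\mathcal{J}(\widetilde{u})]=[\mathfrak{b}_n]$ modulo $\langle\mathfrak{a}_n\rangle$, and up to reparametrizing each iterate of the local minimizers in $M_{k_0}$ by the circle action (which does not affect the value of the action 1-form or the functional $\widetilde{S}_{k_0}$) one achieves the equality $[\mathcal{J}(\widetilde{u})]=[\mathfrak{b}_n]$ in $\pi_1(\mathcal{M},\alpha_{k^*}^n)$.

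The second step is the action estimate $\max_s\widetilde{S}_{k_0}(\widetilde{u},s)\leq\max_s\widetilde{S}_{k_1}(u,s)$. Split $[0,1]$ into three consecutive pieces corresponding to $w_0^{\,n}$, $u$ and $(w_1^{\,n})^{-1}$. On the first piece, since $\eta_{k_0}$ is exact on $\mathcal{V}^n$ with primitive $S_{k_0}^{\alpha_{k^*}}$, I get
\[
\widetilde{S}_{k_0}(\widetilde{u},s)\ =\ S_{k_0}^{\alpha_{k^*}}(w_0^{\,n}(\cdot))\ \leq\ S_{k_0}^{\alpha_{k^*}}(\gamma_0^{\,n})\ =\ S_{k_1}^{\alpha_{k^*}}(\gamma_0^{\,n})-(k_1-k_0)\,T(u(0))\ \leq\ \widetilde{S}_{k_1}(u,0).
\]
On the middle piece, using the identity $\eta_{k_1}-\eta_{k_0}=(k_1-k_0)\,dT$, a direct computation gives
\[
\widetilde{S}_{k_0}(\widetilde{u},s)\ =\ \widetilde{S}_{k_1}(u,s')-(k_1-k_0)\,T(u(s'))\ \leq\ \widetilde{S}_{k_1}(u,s'),
\]
where $s'$ is the corresponding parameter in $u$, since $T(u(s'))>0$. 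On the third piece, again using that $\eta_{k_0}$ has primitive $S_{k_0}^{\alpha_{k^*}}$ on $\mathcal{V}^n$ together with $S_{k_0}^{\alpha_{k^*}}\circ w_1^{\,n}\leq S_{k_0}^{\alpha_{k^*}}(\gamma_1^{\,n})$, the value of $\widetilde{S}_{k_0}(\widetilde{u},\cdot)$ only decreases from its value at the end of the middle piece, which is already bounded by $\max_s\widetilde{S}_{k_1}(u,s)$.

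Combining the two steps, $c_n(k_0)\leq\max_s\widetilde{S}_{k_0}(\widetilde{u},s)\leq\max_s\widetilde{S}_{k_1}(u,s)<c_n(k_1)+\varepsilon$, and letting $\varepsilon\to 0$ yields $c_n(k_0)\leq c_n(k_1)$.

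The main technical obstacle is the homotopy-class argument in step 1 above: one must track carefully how the modification of endpoints in $M_k^{\,n}$ interacts with the definition of $\mathcal{J}(\cdot)$ via the auxiliary paths $\delta_k^\gamma$, and check that no genuine $\mathfrak{b}_n$-component is introduced. This is where the exactness of $\eta_k$ on $\mathcal{U}_{\alpha_{k^*}^n}$ (hence on the chosen neighborhood $\mathcal{V}^n$ of $M_k^{\,n}$) and the specific way the generators $\mathfrak{a}_n,\mathfrak{b}_n$ of $\pi_1(\mathcal{M},\alpha_{k^*}^n)$ distinguish themselves via the integral of $\sigma$ become essential.
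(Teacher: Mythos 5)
Your proof is correct and follows essentially the same route as the paper's: join the endpoints $u(0),u(1)\in M_{k_1}^n$ to $M_{k_0}^n$ by (iterates of) the paths from Lemma \ref{circolo}, concatenate, and estimate $\widetilde S_{k_0}$ piecewise using the exactness of $\eta_{k_0}$ on $\mathcal U_{\alpha_{k^*}^n}$ together with $\eta_{k_1}-\eta_{k_0}=(k_1-k_0)\,dT$. The only difference is that you explicitly check the membership condition $[\mathcal J(\widetilde u)]=[\mathfrak b_n]$, correcting a possible $\mathfrak a_n$-component by inserting rotation loops along the $\T$-orbit of the endpoint (harmless for the action bounds since $\widetilde S_k$ and $T$ are $\T$-invariant), a point the paper's proof passes over when it simply asserts $v\in\mathcal P_n(k_0)$.
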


\begin{proof}
Let $k_0<k_1 \in I$. Consider $u\in \mathcal P_n(k_1)$; by the definition of the minimax class, $u(0)$ is the $n$-th iterate of an element in $M_{k_1}$
and by Lemma \ref{circolo} can be joined, within $\mathcal U_{\alpha_{k^*}^n}$, with the $n$-th iterate of some element of $M_{k_0}$ by a path entirely contained in $\{S_{k_0}^{\alpha_{k^*}}\leq S_{k_0}^{\alpha_{k^*}}(u(0))\}$.
The same holds also for $u(1)$. By concatenation we obtain a path $v:[0,1]\rightarrow \mathcal M$ such that $v(0),v(1)\in M_{k_0}^n$, $v_{[1/3,2/3]} = u\big (3\, (\cdot -1/3)\big )$ and 
\begin{equation}
v\big ([0,1/3]\big ) \ \subseteq \ \big \{ S_{k_0}^{\alpha_{k^*}}\leq S_{k_0}^{\alpha_{k^*}}(u(0))\}\, , \ \ v\big ([2/3,1]\big ) \ \subseteq \ \big \{ S_{k_0}^{\alpha_{k^*}}\leq S_{k_0}^{\alpha_{k^*}}(u(1))\}\, .
\label{grugrugru}
\end{equation}
Thus, $v\in \mathcal P_n(k_0)$ and, by \eqref{grugrugru}, it satisfies for $s\in [0,1/3]$
$$\widetilde S_{k_0}(v,s) \ = \ S_{k_0}^{\alpha_{k^*}}(v(0)) \ + \int_0^s v^*\eta_{k_0}\ = \ S_{k_0}^{\alpha_{k^*}}(v(s))\ \leq \ S_{k_0}^{\alpha_{k^*}}(u(0)) \ \leq \ S_{k_1}^{\alpha_{k^*}}(u(0))\, ;$$
for $s\in [1/3,2/3]$ there holds 
\begin{eqnarray*}
\widetilde S_{k_0}(v,s) &=& S_{k_0}^{\alpha_{k^*}}(v(0)) \ + \int_0^s u^*\eta_{k_0}\ = \ S_{k_0}^{\alpha_{k^*}}(v(1/3))\ + \int_{1/3}^s v^*\eta_{k_0} \ \leq  \\
                                      &\leq& S_{k_1}^{\alpha_{k^*}}(u(0)) \ + \int_0^{3(s-1/3)} \!\!\!\!\!\!\!\!\!\!\! u^*\eta_{k_0} \ \leq \ \widetilde S_{k_1}\big (u,3(s-1/3)\big )\, .
\end{eqnarray*}
Finally, for $s\in [2/3,1]$ we have
\begin{eqnarray*}
\widetilde S_{k_0}(v,s) &=& S_{k_0}^{\alpha_{k^*}}(v(0)) \ + \int_0^s u^*\eta_{k_0}\ \leq \ S_{k_1}^{\alpha_{k^*}}(u(0)) \ + \int_0^1 u^*\eta_{k_0} \ + \int_{2/3}^{3(s-2/3)} \!\!\!\!\!\!\!\!\!\!\! v^*\eta_{k_0}\ \leq \\ 
                                      &\leq& \widetilde S_{k_1}(u,1) \ + \ S_{k_0}^{\alpha_{k^*}}(v(3(s-2/3))) \ - \ S_{k_0}^{\alpha_{k^*}}(v(2/3)) \ \leq \ \widetilde S_{k_1}(u,1)\, ,
\end{eqnarray*}
as it follows from \eqref{grugrugru} (observe that $v(2/3)=u(1))$. Summarizing, we have that 
$$\max_{s\in [0,1]} \ \widetilde S_{k_0}(v,s) \ \leq \ \max_{s\in [0,1]} \ \widetilde S_{k_1}(u,s)$$ 
and hence taking the infimum over all $v\in \mathcal P_n(k_0)$ we get 
$$c_n(k_0) \ \leq \  \max_{s\in [0,1]} \ \widetilde S_{k_1}(u,s)\, .$$
By taking the infimum over all $u\in \mathcal P_n(k_1)$ we conclude that $c_n(k_0)\leq c_n(k_1)$.
\end{proof}

%%%%%%%%%%%%%%%%%%%%%%%%%%%%%%%%%%%%%%%%%%%%%%

\section{A Struwe-type monotonicity argument}
\label{astruwetypemonotonicityargument}

In this section, building on the results of the previous ones, we prove the main theorem of this chapter. Namely, we show that the following holds

\begin{teo}
Let $g$ be a Riemannian metric on $\T^2$ and let $\sigma$ be a non-exact oscillating 2-form on $\T^2$. Then, for almost every $k\in (0,\tau_+(g,\sigma))$ 
the energy level $E^{-1}(k)$ carries infinitely many geometrically distinct closed magnetic geodesics.
\label{teo6}
\end{teo}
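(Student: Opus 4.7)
The plan is to follow the strategy outlined at the end of the chapter, producing for almost every $k \in (0,\tau_+(g,\sigma))$ a countable family of zeros $\gamma_n(k)$ of $\eta_k$, one per minimax class $\mathcal P_n(k)$, and then ruling out the possibility that these zeros are iterates of only finitely many geometrically distinct orbits. Fix $k^* \in (0,\tau_+(g,\sigma))$ and use Theorem \ref{Taimanov92} to produce a local minimizer $\alpha_{k^*}$ of the action with energy $k^*$. Since the contractible case can be handled verbatim as in \cite{AB15a}, assume $\alpha_{k^*}$ is non-contractible and, after a preliminary reduction, strict. Let $I = I(k^*)$ be the neighborhood of $k^*$ supplied by Lemma \ref{rococo}, so that the sets $M_k$ and $M_k^n$ are well defined, the minimax classes $\mathcal P_n(k)$ of \eqref{minimaxclass} are non-empty, and by Lemma \ref{monotonicityetakexact} each minimax function $c_n : I \to \mathbb R$ is monotonically increasing.

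The first core step is to show that for almost every $k \in I$, and for every $n \in \mathbb N$, the functional $\eta_k$ admits a critical sequence $(x_h,T_h)$ in the homotopy class indexed by $\mathcal P_n(k)$ whose periods are bounded and bounded away from zero, with $\widetilde S_k \to c_n(k)$. This is a Struwe-type monotonicity argument entirely parallel to Proposition \ref{Struwe}: at any differentiability point of $c_n$, choose a sequence of quasi-minimizing $u_m \in \mathcal P_n(k_m)$ for a decreasing $k_m \downarrow k$, deform by the truncated semi-flow of $-\sharp\eta_k$ (using Lemma \ref{lem:dec} and the local Lipschitz regularity from Corollary \ref{cor:reg}), and use the differentiability to bound the periods from above exactly as in \eqref{fofofo}. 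Boundedness of the periods from zero is automatic here, because every path in $\mathcal P_n(k)$ lies in the free homotopy class $n[\alpha_{k^*}] \ne 0$, so that the loops have length bounded below and Lemma \ref{lem:ps} forces $T \ge T_*(n) > 0$. Applying Proposition \ref{prp:ps2} to the critical sequence yields a zero $\gamma_n(k) \in \mathcal M$ of $\eta_k$, i.e.\ a closed magnetic geodesic of energy $k$, satisfying $\widetilde S_k(\gamma_n(k)) = c_n(k)$ and sitting in the free homotopy class $n[\alpha_{k^*}]$. Since this must be arranged simultaneously for all $n \in \mathbb N$, we restrict to the full-measure set of $k \in I$ at which all $c_n$ are differentiable.

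The second, and technically harder, core step is to show that the family $\{\gamma_n(k)\}_{n \in \mathbb N}$ represents infinitely many geometrically distinct orbits. Here one argues by contradiction: if only finitely many geometrically distinct orbits occur, every $\gamma_n(k)$ is some iterate $\beta_{i_n}^{p_n}$ of one of finitely many primitive orbits $\beta_1,\dots,\beta_r$. Because $\gamma_n(k)$ lies in the free homotopy class $n[\alpha_{k^*}]$ and the classes $[\beta_i] \in \pi_1(\mathbb T^2) = \mathbb Z^2$ are finite in number, necessarily $[\beta_{i_n}]$ is a fixed rational multiple of the primitive of $[\alpha_{k^*}]$, and the exponents $p_n$ must tend to infinity with $n$. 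This is the purely topological argument alluded to in the chapter introduction, and it is exactly where the use of $M = \mathbb T^2$ (as opposed to $S^2$) enters. The conclusion then follows from Proposition \ref{iterationofmountainpasses}: it remains to check that each $\gamma_n(k)$ is in fact a mountain-pass critical point in the sense of that proposition, so that the circle $\mathbb T \cdot \gamma_n(k)$ is accumulated by two disjoint components of the strict sublevel of $S_k^{\alpha_{k^*}}$ inside $\mathcal U_{\gamma_n(k)}$. This will be deduced from the structure of $\mathcal P_n(k)$: the class consists of paths whose endpoints lie in the local minimizer set $M_k^n$ where $\widetilde S_k$ is strictly less than $c_n(k)$, so that a standard deformation argument near $\gamma_n(k)$ (as in \cite[Lemma 3.8]{AMMP14}) produces the two required valleys.

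The main obstacle will be precisely this last verification, namely promoting the minimax value $c_n(k)$ to a genuine mountain pass in the sense that triggers Proposition \ref{iterationofmountainpasses}. This requires ensuring that the two endpoints of a nearly optimal path in $\mathcal P_n(k)$ sit in genuinely different components of the sublevel set inside a small neighborhood of $\mathbb T \cdot \gamma_n(k)$; the homotopical condition $[\mathcal J(u)] = [\mathfrak b_n]$, together with the non-triviality of $\int_{\mathfrak b_n} \sigma$ and the local exactness of $\eta_k$ on $\mathcal U_{\gamma_n(k)^{}}$, is what makes this possible, but executing it cleanly — and simultaneously for all large $n$ so as to feed into Proposition \ref{iterationofmountainpasses} — is the delicate point of the proof.
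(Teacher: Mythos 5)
Your proposal is correct and follows essentially the same route as the paper: Taimanov's (strict, non-contractible) local minimizer $\alpha_{k^*}$ and its iterates generate the classes $\mathcal P_n(k)$ with monotone minimax functions $c_n$, the Struwe argument plus Proposition \ref{prp:ps2} yields zeros of $\eta_k$ in the free homotopy classes $n[\alpha_{k^*}]$ for almost every $k$, and the finiteness assumption is then contradicted via Proposition \ref{iterationofmountainpasses} together with the fact that on $\T^2$ these zeros would have to be arbitrarily high iterates. The ``delicate point'' you flag (promoting the minimax level to a genuine mountain pass so that Proposition \ref{iterationofmountainpasses} applies) is handled in the paper at the same level of detail you indicate, i.e.\ by the standard AMMP-type deformation argument, so this is not a gap relative to the paper's own proof.
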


The theorem is a trivial consequence of Proposition \ref{Struwesurface} and Theorem \ref{teoremafinale} below.  In the previous section we introduced a sequence of minimax functions $c_n:I\rightarrow \R$, where $I$ is an 
open interval around a fixed $k^*\in (0,\tau_+(g,\sigma))$, and showed a crucial monotonicity property for the functions $c_n$. This will allow us to prove the existence of zeros for the action 1-form $\eta_k$ for every 
$k\in I$ at which all the minimax functions are differentiable. It is a well-known fact that monotone functions are almost everywhere differentiable; in particular, the set of points in $I$ at which every function $c_n$ is 
differentiable is a full measure set in $I$. 

Observe preliminarly that, if the set $M_k$ in \eqref{Mk} consists of infinitely many circles, then we immediately get the existence of infinitely many geometrically distinct closed magnetic geodesics with energy $k$. 
Therefore, we may also assume that $k\in I$ is such that $\# M_k<\infty$, meaning that the set $M_k$ consists of only finitely many circles; clearly, all the elements in $M_k$ (hence, all their iterates) are strict $k$-local minimizers of the action. 
Thus, we define the set 
\begin{equation}
J\, =\, J(k^*) := \, \Big \{k\in I\ \Big | \ \# M_k <\infty\, , \ c_n \ \text{differentiable at} \ k, \, \forall \, n\in \N\Big \}
\label{definizioneJ}
\end{equation}
and prove that, for every $k\in J$, the minimax functions $c_n$ yield zeros of $\eta_k$ by showing the existence of critical sequences for $\eta_k$ with periods bounded and bounded away from zero. The assertion follows then 
from Proposition \ref{prp:ps2}.

To exclude that the zeros of $\eta_k$ detected by the $c_n$'s are contained in $M_k$ we use an argument analogous to the one used in Proposition \ref{Struwe} to exclude that the periods of the critical sequence tend to zero.
More precisely, since $\# M_k<\infty$, all the elements in $M_k$ are strict local minimizers of the action and so are also their iterates. Let now $n\in \N$ be fixed and consider the set $M_k^n$; from Proposition 
\ref{strictlocalminimizer} it follows that for every $\gamma\in M_k^n$ there exists a small open neighborhood $\mathcal V_n(\gamma)\subseteq \mathcal U_{\alpha_{k^*}^n}$ of $\T\cdot \gamma$ such that
$$S_k^{\alpha_{k^*}}(\gamma) \ = \ \inf_{\partial \mathcal V_n(\gamma)} \ S_k \ - \ \epsilon_n(\gamma,k)\, ,$$
for some positive $\epsilon_n(\gamma,k)>0$. By the very definition of the minimax class, every element $u\in \mathcal P_n(k)$ with starting point in $\T\cdot \gamma$ has to intersect $\partial \mathcal V_n(\gamma)$.

Now arguing as in the proof of Lemma \ref{lem:notinwgeneral} one shows that there exists an open neighborhood $\mathcal W_n(\gamma)\subseteq \mathcal V_n(\gamma)$ of $\T\cdot \gamma$ such that, if $s_*\in [0,1]$ satisfies
$$\widetilde S_k(u,s_*)\ \geq \ \max_{s\in [0,1]} \ \widetilde S_k(u,s) \ - \ \frac{\epsilon_n(\gamma,k)}{2}\, ,$$
then necessarily $u(s)\notin \mathcal W_n(\gamma)$. Since by assumption $M_k^n$ consists of finitely many circles, repeating the same procedure for every one of them we end up with a small open neighborhood $\mathcal W_n$ 
of $M_k^n$ such that, if $s_*\in [0,1]$ almost realizes the maximum of $s\longmapsto \widetilde S_k(u,s)$, then necessarily $u(s_*)\notin \mathcal W_n$.

Finally, we show in Proposition \ref{Struwesurface} that the periods of the critical sequences are bounded from above by suitably generalizing the \textit{Struwe monotonicity argument} \cite{Str90} to this setting. The ideas behind the 
proof of the proposition are exactly the same as for Proposition \ref{Struwe} and are based on the well-known fact that the time-1 flow of $-\sharp \eta_k$ maps subsets of $\mathcal M$ with bounded periods into subsets with bounded periods (cf.
\cite[Lemma 5.7]{Mer10} or Lemma \ref{lem:ac-distper}).

Notice that critical sequences for $\eta_k$ have clearly periods bounded away from zero, since we are working only with non-contractible loops (cf. Lemma \ref{lem:ps}). 

\begin{prop}
For every $k\in J$ and for every $n\in \N$ there exists a zero for $\eta_k$ which is not contained in $M_k^n$.
\label{Struwesurface}
\end{prop}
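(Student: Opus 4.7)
The plan is to adapt the Struwe-type monotonicity argument of Proposition \ref{Struwe} to the present setting, with the local action $\widetilde S_k(u,\cdot)$ along paths in $\mathcal P_n(k)$ replacing the globally defined primitive $S_k$, and with the neighborhood $\mathcal W_n$ of $M_k^n$ introduced in the discussion preceding the proposition playing the role of $\mathcal W'_\delta$. The outcome will be a critical sequence for $\eta_k$ lying in the complement of $\mathcal W_n$ and with periods bounded and bounded away from zero; by Proposition \ref{prp:ps2} this sequence has a converging subsequence whose limit is a zero of $\eta_k$ not contained in $M_k^n$.

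First, using the differentiability of $c_n$ at $k$, I would fix a constant $A>0$ such that $c_n(k')-c_n(k)\leq A(k'-k)$ for every $k'\geq k$ sufficiently close to $k$. Pick a sequence $k_m\downarrow k$ with $\lambda_m:=k_m-k\downarrow 0$, and choose $u_m\in\mathcal P_n(k_m)$ with
\[
\max_{s\in[0,1]}\widetilde S_{k_m}(u_m,s)\ <\ c_n(k_m)+\lambda_m.
\]
Because $\eta_{k_m}-\eta_k=(k_m-k)\,dT$, integration along $u_m$ gives $\widetilde S_{k_m}(u_m,s)-\widetilde S_k(u_m,s)=\lambda_m\bigl(T_m(s)-T_m(0)\bigr)$, so at every $s$ with $\widetilde S_k(u_m,s)>c_n(k)-\lambda_m$ the period satisfies $T_m(s)\leq A+2+T_m(0)$; since $u_m(0)\in M_k^n$ and $\#M_k<\infty$, the quantity $T_m(0)$ is bounded independently of $m$. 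Hence either $\widetilde S_k(u_m,s)\leq c_n(k)-\lambda_m$ or $\widetilde S_k(u_m,s)\in(c_n(k)-\lambda_m,\,c_n(k)+(A+1)\lambda_m)$ and $T_m(s)\leq A+C$ for a uniform constant $C$.

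Second, I would run the truncated normalized negative-gradient flow $\Phi^k$ of $\eta_k$, cut off to vanish on a small neighborhood of $M_k^n$ (analogously to the definition of $\Phi^{k,\delta}$ in Section \ref{ageneralizedpseudogradient}). The crucial point is that every element of $M_k^n$ is a zero of $\eta_k$, so $\Phi^k$ fixes $M_k^n$ pointwise; consequently the family $\mathcal P_n(k)$ is positively invariant under the map $u\mapsto \Phi^k_r\circ u$, since endpoints stay in $M_k^n$ and the homotopy class of the concatenation $\mathcal J(u)$ is preserved by the ambient homotopy provided by the flow. Setting $u_m^r:=\Phi^k_r\circ u_m$ and exploiting the monotonicity along flow lines given by Lemma \ref{lem:dec}, the dichotomy above upgrades to: either $\widetilde S_k(u_m^1,s)\leq c_n(k)-\lambda_m$ for every $s$, or the flow-line $r\mapsto u_m^r(s)$ stays inside the strip $(c_n(k)-\lambda_m,\,c_n(k)+(A+1)\lambda_m)$ with periods bounded by $A+C+1$ (using Lemma \ref{lem:ac-distper} to control the variation of the period along the flow).

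Third, I would combine this with the neighborhood $\mathcal W_n$ constructed before the statement of the proposition: since each $\gamma\in M_k^n$ is a strict local minimizer, Lemma \ref{lem:notinwgeneral} adapted to $\widetilde S_k$ yields that if $s_*$ almost realizes $\max_s \widetilde S_k(u_m^r,s)$ within $\varepsilon_n/2$ then $u_m^r(s_*)\notin\mathcal W_n$. Assume by contradiction that there is no critical sequence for $\eta_k$ inside $\{T<A+C+1\}\setminus\mathcal W_n$; then $\|\eta_k\|$ is bounded below by some $\sqrt{\varepsilon_*}>0$ there, and along the flow line starting at such a point one gets $\widetilde S_k(u_m^1,s)\leq\widetilde S_k(u_m,s)-\varepsilon_*$. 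Hence alternative (b) is impossible for $m$ large, so every $s$ satisfies $\widetilde S_k(u_m^1,s)\leq c_n(k)-\lambda_m$, contradicting the definition of $c_n(k)$ since $u_m^1\in\mathcal P_n(k)$. The main obstacle, and the step that requires the most care, is the verification that the truncated flow genuinely preserves the class $\mathcal P_n(k)$, i.e.\ that the homotopy class of $\mathcal J(u_m^r)$ relative to $\alpha_{k^*}^n$ does not jump when endpoints of $u_m^r$ move within $M_k^n$; this is handled by noting that $M_k^n$ is fixed pointwise by the flow, so the closing paths $\delta_k^{\gamma^n}$ are used with the same endpoint throughout the deformation.
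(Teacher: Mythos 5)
There is a genuine gap in the step you yourself flag as the delicate one. Your competitors $u_m$ are chosen in $\mathcal P_n(k_m)$, so their endpoints lie in $M_{k_m}^n$, \emph{not} in $M_k^n$: the elements of $M_{k_m}$ are local minimizers of $S_{k_m}^{\alpha_{k^*}}$, i.e.\ zeros of $\eta_{k_m}$, and in general they are not zeros of $\eta_k$. Hence the flow $\Phi^k$ (truncated or not) does not fix the endpoints of $u_m$, the closing paths $\delta_k^{\gamma^n}$ used in the definition of $\mathcal J$ are not even available for those endpoints, and the deformed paths $u_m^r$ need not belong to $\mathcal P_n(k)$. Since the final contradiction must be with the definition of $c_n(k)$, you need a competitor in $\mathcal P_n(k)$; with your construction the conclusion ``$u_m^1\in\mathcal P_n(k)$'' is unjustified, so the argument does not close. (Your identity $\widetilde S_{k_m}(u_m,s)-\widetilde S_k(u_m,s)=\lambda_m(T_m(s)-T_m(0))$ is also slightly off, because the base term $S_k^{\alpha_{k^*}}(u(0))$ depends on $k$ as well; the correct difference is $\lambda_m T_m(s)$. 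This only changes constants in the period bound, so it is a minor point compared with the class-membership issue, and note that the boundedness of $T_m(0)$ should be argued via $u_m(0)\in M_{k_m}^n\subseteq(\mathcal V)^n$, not via $u_m(0)\in M_k^n$.)

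The missing ingredient is exactly the device already used to prove monotonicity of $c_n$: before flowing, one must convert $u_h\in\mathcal P_n(k_h)$ into a path $v_h\in\mathcal P_n(k)$ by concatenating, at both endpoints, connecting paths from $M_{k_h}^n$ to $M_k^n$ provided by Lemma \ref{circolo} (iterated $n$ times), which stay inside $\mathcal U_{\alpha_{k^*}^n}$ and do not increase the local action $S_k^{\alpha_{k^*}}$; this preserves the dichotomy estimates and the homotopy condition on $\mathcal J$. Only after this conversion are the endpoints genuine zeros of $\eta_k$ in $M_k^n$, so that $v_h^r:=\Phi^k_r\circ v_h$ remains in $\mathcal P_n(k)$, and the rest of your Struwe-type argument (monotonicity along flow lines via Lemma \ref{lem:dec}, period control via Lemma \ref{lem:ac-distper}, exclusion of $\mathcal W_n$ via the analogue of Lemma \ref{lem:notinwgeneral}, and the final contradiction with $c_n(k)$) goes through as in the paper. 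Aside from this missing reduction, your outline follows the same route as the paper's proof.
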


\begin{proof}
Fix $k\in J$ and $n\in \N$. In virtue of Proposition \ref{prp:ps2} it suffices to show the existence of a critical sequence for $\eta_k$ with periods bounded and bounded away from zero. Any limit point of such a critical sequence will be then  a zero 
of $\eta_k$; this concludes the proof since by the observation above such a zero does not lie in $M_k^n$. 

Thus, choose a strictly decreasing sequence $k_h\downarrow k$ and set $\lambda_h:=k_h-k$. Since $k\in J$, without loss of generality we may suppose that for all $h\in \N$ there holds
\begin{equation}
 c_n(k_h) - c_n(k) \ \leq \ M\, \lambda_h\, .
\label{lipschitzcontinuityk}
\end{equation}
\noindent For every $h\in \N$ choose $u_h=(x_h,T_h)\in \mathcal P_n(k_h)$ such that 
$$\max_{s\in [0,1]} \ \widetilde S_{k_h}(u_h,s) \ < \ c_n(k_h) \ + \ \lambda_h\, .$$
\noindent Suppose that for a certain $s\in[0,1]$ there holds
\begin{equation}
\widetilde S_{k}(u_h,s)\ >\ c_n(k)\ -\ \lambda_h\, ;
\label{maggiorecu-lambdam}
\end{equation}
\noindent then it follows 
$$T_h(s) \ = \ \frac{\widetilde S_{k_h}(u_h,s) - \widetilde S_k(u_h,s)}{\lambda_h} \ \leq \ \frac{c_n(k_h)+\lambda_h-c_n(k)+\lambda_h}{\lambda_h} \ \leq \ M + 2$$
\noindent and at the same time, using (\ref{lipschitzcontinuityk}),
\begin{eqnarray*}
\widetilde S_k(u_h,s) &\leq& \widetilde S_{k_h}(u_h,s) \ < \ c_n (k) + (M+1)\, \lambda_h\, .
\end{eqnarray*}
\noindent Summing up, for every $h\in \N$ and every $s \in[0,1]$ either
\begin{equation*}
\widetilde S_{k}(u_h,s)\ \leq\ c_n(k)-\lambda_h\, ,
\end{equation*}
\noindent or 
\begin{equation*}
\widetilde S_k(u_h,s) \in \Big(c_n(k)-\lambda_h\,,\ c_n(k)+(M+1)\lambda_h\Big)\ \ \ \text{and} \ \ \ T_h(s)\ <\ M+2\, .
\end{equation*}

By the very definition of the minimax class $\mathcal P_n(k_h)$ we have that $u_h(0),u_h(1)\in M_{k_h}^n$ for every $h\in \N$. Lemma \ref{circolo} implies now that $u_h(0)$ and $u_h(1)$ can be joined to elements in $M_k^n$ 
with paths entirely contained in $\mathcal U_{\alpha_{k^*}^n}$ and without increasing the local action in \eqref{localfunctional}, thus giving rise to paths $v_h\in \mathcal P_n(k)$ that also satisfy the above dichotomy, meaning that either
\begin{equation}
\widetilde S_{k}(v_h,s)\ \leq\ c_n(k)-\lambda_h\, ,
\label{primaalternativak}
\end{equation}
\noindent or 
\begin{equation}
\widetilde S_k(v_h,s) \in \Big(c_n(k)-\lambda_h\,,\ c_n(k)+(M+1)\lambda_h\Big)\ \ \ \text{and} \ \ \ T_h(s)\ <\ M+2\, .
\label{secondaalternativak}
\end{equation}

Up to taking a subsequence if necessary, we may also suppose that all the paths $v_h$ start from the same circle in $M_k^n$ and end in the same circle in $M_k^n$, meaning that there exist $\alpha_0,\alpha_1\in M_k^n$ such that 
$$v_h(i)\ \in \ \T\cdot \alpha_i \, ,\ \ \ \ \text{for} \  \ i =0,1\, .$$
\noindent Consider now for every $r\in[0,1]$ the element $u_h^r\in\mathcal P_n(k)$ given by
\begin{equation*}
v_h^r(s):=\ \Phi^k_r(v_h(s))\, , \ \ \ \ \forall \ s\in[0,1]\, ,
\end{equation*}
where $\Phi^k$ denotes the local flow of the vector field $X_k$ conformally equivalent to $-\sharp \eta_k$ (cf. Section \ref{ageneralizedpseudogradient}). Equation \eqref{eq:dec} in Lemma \ref{lem:dec} implies now that the map
$$r\ \longmapsto \ \widetilde S_{k}(v^r_h,s)$$
\noindent is decreasing. Combining this fact with (\ref{primaalternativak}) and (\ref{secondaalternativak}) we obtain that 
\begin{equation}
\max_{s\in [0,1]}\ \widetilde S_{k}(v^r_h,s)\ <\ c_n(k)+(M+1)\lambda_h\,, \quad \forall\, r\in[0,1]
\label{rururu}
\end{equation}
\noindent and the following dichotomy holds: either,
\begin{itemize}
 \item[\itshape (a)] $\widetilde S_{k}(v_h^1,s)\ \leq\  c_n(k)-\lambda_h$,
\end{itemize}
\noindent or 
\begin{itemize}
 \item[\itshape (b)] $\widetilde S_{k}(v_h^r,s)\ \in\ \Big(c_n(k)-\lambda_h,\, c_n(k)+(M+1)\lambda_h\Big)$, for every $r\in[0,1]$.
\end{itemize}
\noindent Suppose the second alternative holds. Then from (\ref{rururu}) we get that
\begin{equation}
\widetilde S_{k}(v_h^r,s)\ >\ c_n(k) - \lambda_h\ >\ \max_{s\in [0,1]}\ \widetilde S_{k}(v_h^r,s) - (M+1)\, \lambda_h\,,
\end{equation}
which implies that $v^r_h(s)\notin \mathcal W_n$ for every $h$ large enough, where $\mathcal W_n$ is a suitable neighborhood of $M_n^k$. Furthermore, applying \eqref{eq:ac-per} we get that 
$$T^r_h(s)\ \leq\ \big |T^r_h(s)-T_h(s)\big |\, +\, T_h(s)\ \leq\ \sqrt{r(M+2)\lambda_h}+M+2\ <\ M+3\,,$$
where the last inequality is true for $h$ sufficiently large. After this preparation, we claim that there exists a critical sequence $(x_h,T_h)$ contained in $\{T<M+3\}\setminus \mathcal W_n$. 

To prove the claim we argue by contradiction and suppose that such a critical sequence does not exist. Then we can find $\varepsilon_*>0$ such that on $\{T<M+3\}\setminus \mathcal W_n$
$$\|\eta_k\|\ \geq\ \|X_k\| \ \geq \ \sqrt{\varepsilon_*}\, .$$
\noindent If $s\in [0,1]$ satisfies the alternative \textit{(b)} above, by \eqref{eq:var} we have
\begin{equation*}
\widetilde S_k(v_h^1,s)- \widetilde S_k(v_h,s) \ = -\int_0^1\big | \eta_k(X_k)\big |^2\, dr\ \leq \ - \varepsilon_*\,,
\end{equation*}
\noindent and hence
\begin{equation}
\widetilde S_k(v_h^1,s)\ \leq\ \widetilde S_k(v_h,s)-\varepsilon_*\ \leq\ c_n(k)+(M+1)\lambda_h-\varepsilon_*\,.
\end{equation}
\noindent On the other hand, we have by assumption 
$$\widetilde S_k(v_h^1,s)\ > \ c_n(k)-\lambda_h\, .$$
\noindent Hence, the set of $s\in [0,1]$ satisfying the alternative \textit{(b)} is empty as soon as 
$$(M+1)\lambda_h-\varepsilon_*\ <\ -\lambda_h\, .$$

Therefore, for $h$ big enough, all the $s\in[0,1]$ satisfy the alternative \textit{(a)}. Since $v^1_h$ belongs to $\mathcal P_n(k)$, this contradicts the definition of $c_n(k)$ and finishes the proof.
\end{proof}

\vspace{5mm} 

With the next result we conclude the proof of Theorem \ref{teo6}. The argument is identical to the one in \cite{AMMP14} (see also \cite{AB15a} for the non-exact oscillating case on high genus surfaces) and it is
based on Proposition \ref{iterationofmountainpasses}, which states that zeros of $\eta_k$ cease to be of mountain-pass type if iterated sufficiently many times.

\begin{teo}
In the hypotheses of Theorem \ref{teo6}, let $k^*\in  (0,\tau_+(g,\sigma))$ be such that the Taimanov's local minimizer $\alpha_{k^*}$ is strict. 
Moreover, for every $n\in \N$, let $c_n:I\rightarrow \R$ be the minimax function as defined in  \eqref{minimaxfunctionetaknotexact}.
Then, for almost every $k\in I$ the energy level $E^{-1}(k)$ carries infinitely closed magnetic geodesics.
\label{teoremafinale}
\end{teo}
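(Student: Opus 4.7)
My plan is to restrict attention to $k\in J=J(k^*)$, which is a full measure subset of $I$ by Lebesgue's differentiation theorem and by the openness of $\{k\ |\ \#M_k<\infty\}$ (this latter follows from Lemma \ref{rococo} together with the fact that $k\mapsto\alpha_k$ varies ``semi-continuously''). For each $n\in\mathbb{N}$, Proposition \ref{Struwesurface} yields a closed magnetic geodesic $\gamma_n=\gamma_n(k)$ with energy $k$ such that $\gamma_n\notin M_k^n$, and whose action equals $c_n(k)$. I will show that the sequence $\{\gamma_n\}_{n\in\mathbb{N}}$ contains infinitely many geometrically distinct orbits, which gives the claim. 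Arguing by contradiction, suppose there are only finitely many geometrically distinct closed magnetic geodesics $\gamma^{(1)},\dots,\gamma^{(N)}$ of energy $k$; then $\gamma_n=(\gamma^{(i_n)})^{m_n}$ for some $i_n\in\{1,\dots,N\}$, $m_n\in\mathbb{N}$. Passing to a subsequence, we may assume $i_n\equiv i$ is constant and abbreviate $\gamma:=\gamma^{(i)}$, so that $\gamma_n=\gamma^{m_n}$.

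The crucial step is to show that $m_n\to+\infty$. This is where the topological argument alluded to in the introduction of the chapter enters. Since $\pi_1(\mathbb{T}^2)=\mathbb{Z}^2$ is abelian, conjugacy classes coincide with elements of $\pi_1(\mathbb{T}^2)$, and the path-components of $H^1(\mathbb{T},\mathbb{T}^2)$, hence of $\mathcal{M}$, are in bijection with $\mathbb{Z}^2$; the free-homotopy class of a loop is therefore locally constant on $\mathcal{M}$. Any $u\in\mathcal{P}_n(k)$ satisfies $u(0)\in M_k^n$, so $u(0)$ is the $n$-th iterate of a loop lying in $\mathcal{V}\subseteq\mathcal{U}_{\alpha_{k^*}}$; in particular its free-homotopy class is $n\cdot[\alpha_{k^*}]\in\mathbb{Z}^2$. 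As $u$ is a continuous path in $\mathcal{M}$, every loop $u(s)$ lies in the same component, and consequently the critical point $\gamma_n=\gamma^{m_n}$ has free-homotopy class $n\cdot[\alpha_{k^*}]$. Comparing with the intrinsic identity $[\gamma^{m_n}]=m_n\cdot[\gamma]$ and using the assumption that $\alpha_{k^*}$ (and hence $\gamma$) is non-contractible, so that $[\gamma]\in\mathbb{Z}^2\setminus\{0\}$, the equality $m_n[\gamma]=n[\alpha_{k^*}]$ forces $m_n$ to grow linearly in $n$; in particular $m_n\to+\infty$.

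With $m_n\to+\infty$ in hand, we apply Proposition \ref{iterationofmountainpasses} to the base orbit $\gamma$: for all sufficiently large $n$ there exists a neighborhood $\mathcal{W}_n\subseteq\mathcal{U}_{\gamma^{m_n}}$ of $\mathbb{T}\cdot\gamma^{m_n}=\mathbb{T}\cdot\gamma_n$ such that the inclusion
\[
\{S_k^\gamma<S_k^\gamma(\gamma^{m_n})\}\hookrightarrow\{S_k^\gamma<S_k^\gamma(\gamma^{m_n})\}\cup\mathcal{W}_n
\]
induces a bijection on connected components. This ``anti-mountain-pass'' property, combined with the truncated negative pseudo-gradient semi-flow $\Phi^{k,\delta}$ of $\eta_k$ (which preserves $\mathcal{P}_n(k)$ since it acts by composition with a homotopy keeping the endpoints in $M_k^n$), allows to deform any nearly-optimal $u\in\mathcal{P}_n(k)$ in a neighborhood of $\gamma_n$ into a path $u'\in\mathcal{P}_n(k)$ with $\max_s\widetilde{S}_k(u',s)\leq c_n(k)-\varepsilon$ for some $\varepsilon>0$ independent of $u$. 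This contradicts the definition of $c_n(k)$ as the infimum of $\max_s\widetilde{S}_k(u,s)$ over $u\in\mathcal{P}_n(k)$, completing the argument.

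The main obstacle will be the rigorous execution of the deformation in the third step: one must carry out the bypass inside $\mathcal{U}_{\gamma^{m_n}}$ while preserving both the endpoint constraint $u'(0),u'(1)\in M_k^n$ and the global homotopy condition $[\mathcal{J}(u')]=[\mathfrak{b}_n]$, and one must ensure that an $\varepsilon$-gap (uniform in the minimising sequence) is created. This is a quantitative refinement of the qualitative statement of Proposition \ref{iterationofmountainpasses}, but it follows the standard pattern of \cite[Section 4]{AMMP14}, where the same scheme is implemented in the exact case; the non-contractibility of $\gamma_n$ established in the second paragraph is precisely what renders the topological step tractable, avoiding the need for a Bangert-type ``pulling one loop at a time'' argument.
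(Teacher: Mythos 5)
Your proposal follows essentially the same route as the paper: restrict to the set $J$ of \eqref{definizioneJ}, use Proposition \ref{Struwesurface} to produce, for every $n$, a zero of $\eta_k$ outside $M_k^n$ lying in the connected component of $\alpha_{k^*}^n$, use the non-contractibility of $\alpha_{k^*}$ (free homotopy classes in $\Z\times\Z$) to force these zeros to be high iterates under the finiteness assumption, and then contradict Proposition \ref{iterationofmountainpasses}; the deformation argument you defer to \cite{AMMP14} is exactly what the paper compresses into the sentence that $\beta_n$ is of mountain-pass type while $\gamma_{j_n}^{m_n}$ is not.

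One correction to your reduction: you claim $J$ is a full measure subset of $I$ via an alleged openness/semicontinuity of $\{k\ |\ \# M_k<\infty\}$, which is neither proved in the paper nor needed, and even if that set were open it need not be conull, so this step as stated is unjustified. The paper avoids the issue trivially: if $\# M_k=\infty$ then $M_k$ already consists of infinitely many circles of closed magnetic geodesics in the class of $\alpha_{k^*}$ (which, $[\alpha_{k^*}]$ being non-torsion, are automatically geometrically distinct), so there is nothing to prove for such $k$; combined with the almost-everywhere differentiability of the monotone functions $c_n$, this yields the statement for almost every $k\in I$ without any claim on the measure of $J$ itself.
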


\begin{proof}
If $k\in I$ is such that $\# M_k =\infty$ then there is nothing to prove. Therefore, it is enough to show that for all $k\in J$ the energy level $E^{-1}(k)$ carries infinitely many geometrically distinct closed magnetic geodesics,
with $J\subseteq I$ as in (\ref{definizioneJ}).

Pick $k\in J$ and assume by contradiction that $E^{-1}(k)$ has only finitely many closed magnetic geodesics. Then, the zero set of $\eta_k$ consists of finitely many circles 
$$\T \cdot \gamma_1\, , \ \T\cdot \gamma_2\, , \ldots \, , \ \T\cdot \gamma_l\, ,$$
\noindent together with their iterates $\T\cdot \gamma_j^n$. By Proposition \ref{iterationofmountainpasses} we can find a natural number $n_*$ such that 
$\gamma_j^n$ is not of mountain-pass type for any $1\leq j\leq l$ and for any $n\geq n_*$. 

Since $\alpha_{k_*}$ is non-contractible there exists $n\in\N$ such that $\alpha_{k_*}^n$ is not freely homotopic to $\gamma_j^m$, for every $1\leq j\leq l$ and $1\leq m\leq n_*-1$.  
Then Proposition \ref{Struwesurface} implies that, for every $n\geq n_*$, there exists a zero $\beta_n$ for $\eta_k$ in the connected component $\mathcal M_{\alpha_{k^*}^n}$ 
of $\mathcal M$ containing $\alpha_{k^*}^{n}$ which lies outside $M_k^{n}$. Thanks to our finiteness assumption 
$$\beta_n \ = \ \gamma_{j_n}^{m_n}$$
for some $j_n\in \{1,...,l\}$ and some integer $m_n\geq n_*$. This yields an obvious contradiction, since $\beta_n$ is of mountain-pass type while $\gamma_{j_n}^{m_n}$  is not.
\end{proof}

%%%%%%%%%%%%%%%%%%%%%%%%%%%%%%%%%%%%%%%%%%%%%%
%%%%%%%%%%%%%%%%%%%%%%%%%%%%%%%%%%%%%%%%%%%%%%
%%%%%%%%%%%%%%%%%%%%%%%%%%%%%%%%%%%%%%%%%%%%%%

\appendix

\chapter{Reminders}

%%%%%%%%%%%%%%%%%%%%%%%%%%%%%%%%%%%%%%%%%%%%%%%%%%%%%%%%%%%

\section{Homotopy theory.}
\label{homotopytheory}

In this section, following \cite{Hat02}, we recall the basic notions on homotopy theory needed throughout the thesis. 
For any $n\in \N$ let $I^n=[0,1]^n$ be the $n$-dimensional cube (i.e. the product of $n$ copies of the interval); its boundary $\partial I^n$ is the subspace consisting of points 
with at least one coordinate equal to 0 or 1. For a pointed space $(X,x_0)$ we define $\pi_n(X,x_0)$ as the set of homotopy classes of maps 
$$f:(I^n,\partial I^n)\longrightarrow (X,x_0)\, ,$$
where homotopies are required to satisfy $ f_t(\partial I^n)=x_0$ for all $t\in [0,1]$. This definition extends to the case $n=0$ by taking $I^0$ to be a point and $\partial I^0$ to be the empty-set, 
so that $\pi_0(X,x_0)$ is just the set of path-components of $X$. When $n\geq 2$, a sum operation in $\pi_n(X,x_0)$ (generalizing the composition on $\pi_1$) is defined by 
$$(f+g)(s_1,s_2,...,s_n):= \left \{ \begin{array}{l} f(2s_1,s_2,...,s_n)\ \ \ \ \ \ \ \ \ s_1\in [0,\frac{1}{2}]\, ;\\ \\ g(2s_1-1,s_2,...,s_n)\ \ \ \ s_1\in [\frac{1}{2},1]\, ;\end{array}\right.$$

\begin{lemma}
$\pi_n(X,x_0)$ is a group for any $n\in \N$, abelian for $n\geq 2$.
\end{lemma}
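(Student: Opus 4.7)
The plan is to first check that the operation $+$ descends to homotopy classes, then verify the group axioms, and finally establish commutativity for $n\geq 2$ via a geometric rotation argument that exploits the extra dimension.

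First I would show that $+$ is well-defined on $\pi_n(X,x_0)$. If $f\simeq f'$ and $g\simeq g'$ via homotopies $F$, $G$ that fix $\partial I^n$ at $x_0$, then $F+G$ (defined by the same splitting formula in the $s_1$-direction) is a homotopy from $f+g$ to $f'+g'$. All three group axioms for $n\geq 1$ are then verified by standard reparametrizations of the $s_1$-coordinate, exactly as in the case $n=1$, since the definition only involves the first coordinate. Explicitly: associativity $(f+g)+h\simeq f+(g+h)$ follows from a piecewise linear homotopy that slides the subdivision points $\{1/4,1/2\}$ to $\{1/2,3/4\}$ in the $s_1$-variable; the identity element is the constant map $c_{x_0}$, and $f+c_{x_0}\simeq f\simeq c_{x_0}+f$ again by reparametrization; the inverse of $f$ is $\bar f(s_1,\ldots,s_n):=f(1-s_1,s_2,\ldots,s_n)$, and the homotopy $f+\bar f\simeq c_{x_0}$ is given by the standard ``shrinking'' formula
\[
H_t(s_1,\ldots,s_n)=\begin{cases} f(2s_1(1-t),s_2,\ldots,s_n) & s_1\in[0,1/2],\\ f((2-2s_1)(1-t),s_2,\ldots,s_n) & s_1\in[1/2,1].\end{cases}
\]
All of these are genuine homotopies rel $\partial I^n$, so the group axioms hold.

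The main obstacle, and the only step that uses $n\geq 2$, is to prove commutativity $f+g\simeq g+f$. Here I would use the extra coordinate $s_2$ to interchange the two ``half-cubes'' on which $f$ and $g$ are supported. Concretely, thinking of $I^n$ as $I^2\times I^{n-2}$, the map $f+g$ is supported on the partition of the unit square $I^2$ (with coordinates $(s_1,s_2)$) into the left and right halves, with the values on $\partial I^2$ being $x_0$. Because $f$ and $g$ send the entire boundary $\partial I^n$ to $x_0$, each of them can be compressed to a smaller sub-square of $I^2$ and extended by $x_0$ outside, without changing the homotopy class. The homotopy then proceeds in four stages: (i) shrink the supports of $f$ on the left half and $g$ on the right half into two small disjoint sub-squares lying in the bottom-left and bottom-right corners of $I^2$, respectively; (ii) slide the support of $f$ upward along the left edge and $g$ upward along the right edge; (iii) translate $f$'s support horizontally to the upper-right and $g$'s support to the upper-left (the two sub-squares never collide because they move in disjoint strips); (iv) reverse the first two stages to expand them back to the full left and right halves, now with $g$ on the left and $f$ on the right. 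Each stage is a continuous deformation supported in $I^2$ and constant on $\partial I^2$, hence gives a valid element of $\pi_n(X,x_0)$. Composing the four stages yields the desired homotopy $f+g\simeq g+f$.

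The step I expect to be delicate is stage (iii) of the commutativity argument: one must ensure that during the swap the two small sub-squares remain disjoint and stay inside $I^2$, so that the resulting map is well-defined and continuous. This is precisely where $n\geq 2$ is used, since the argument fails in $n=1$ because there is no room in a one-dimensional interval to move two disjoint sub-intervals past each other. Once this geometric picture is set up, the construction can be written down explicitly as an isotopy of $I^2$ supported away from $\partial I^2$, and the formal verification that $f+g\simeq g+f$ rel $\partial I^n$ is routine.
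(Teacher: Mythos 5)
Your argument is correct and is exactly the standard one (reparametrization in the $s_1$-coordinate for well-definedness and the group axioms, inverse by reversing $s_1$, and the swap-the-supports homotopy using the extra coordinate $s_2$ for commutativity when $n\geq 2$); the paper gives no proof of this lemma, quoting it as a standard fact from Hatcher, and Hatcher's proof is precisely the one you outline, including the observation that stage (iii) is where $n\geq 2$ is essential. The only caveat is that the group structure really requires $n\geq 1$ --- $\pi_0(X,x_0)$ is merely a pointed set --- which your proof implicitly respects since the sum formula uses the $s_1$-coordinate.
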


\noindent We may also view $\pi_n(X,x_0)$ as homotopy classes of maps 
$$f:(S^n,s_0)\longrightarrow (X,x_0)\, ,$$
where homotopies are through maps of the same form; in this interpretation of $\pi_n(X,x_0)$ the sum $f+g$ is the composition 
$$S^n \ \stackrel{c}{\longrightarrow}\ S^n \vee S^n \ \stackrel{f\vee g}{\longrightarrow} \ X$$
where $c$ collapses the equator $S^{n-1}$ in $S^n$ to a point and we choose the basepoint $s_0$ to lie in this $S^{n-1}$. If $X$ is path-connected, then different choices of the base point $x_0$ 
always produce isomorphic groups $\pi_n(X,x_0)$, just as for $\pi_1(X,x_0)$, so one is justified to write $\pi_n(X)$ instead of $\pi_n(X,x_0)$. Given a path $\gamma:[0,1]\rightarrow X$ from $x_0$ 
to another point $x_1$, we may associate to each map $f:(I^n,\partial I^n)\rightarrow (X,x_0)$ a new map 
$$\gamma f :(I^n,\partial I^n)\longrightarrow (X,x_1)$$ 
by shrinking the domain of $f$ to a smaller concentric cube in $I^n$, then inserting the path $\gamma$ on each radial segment in the shell between this smaller cube and $\partial I^n$ (this definition
needs some notational adjustments in the case $n=1$). A homotopy of $\gamma$ or $f$ through maps fixing $\partial I$ or $\partial I^n$, respectively, yields an homotopy of $\gamma f$ through maps 
$(I^n,\partial I^n)\rightarrow (X,x_0)$. Here are three other basic properties: 
\begin{enumerate}
\item $\gamma (f+g) \sim \gamma f + \gamma g$.
\item $(\gamma \eta) f \sim \gamma (\eta f)$.
\item $1f\sim f$.
\end{enumerate} 
\noindent In virtue of the properties above the change-of-basepoint transformation 
$$\beta_\gamma : \pi_n(X,x_1)\longrightarrow \pi_n(X,x_0)\, ,\ \ \ \beta([f]):= [\gamma f]$$
is an isomorphism with inverse $\beta_{\gamma^{-1}}$, where $\gamma^{-1}$ denotes the inverse path of $\gamma$. If $\gamma$ is a loop at the base point $x_0$, then the map 
$$\pi_1(X,x_0)\longrightarrow \text{Aut}(\pi_n(X,x_0))\, ,\ \ \ \ [\gamma]\longmapsto \beta_\gamma$$
is a group homomorphism, called the \textit{action} of $\pi_1(X,x_0)$ on $\pi_n(X,x_0)$; in the case $n=1$ this is the action of $\pi_1$ onto itself by inner automorphisms.

\begin{prop}
The following hold:

\begin{enumerate}
\item A covering projection $p:(\tilde{X},\tilde{x}_0)\rightarrow (X,x_0)$ induces isomorphisms 
$$p_*:\pi_n(\tilde{X},\tilde{x}_0)\rightarrow \pi_n(X,x_0)$$
for any $n\geq 2$.
\item Let $\{X_\alpha\}_{\alpha\, \in\, \mathcal A}$ be any collection of path-connected spaces, then 
$$\pi_n \left ( \prod_{\alpha\, \in \, \mathcal A} \ X_\alpha \right )\ \cong \ \prod_{\alpha\, \in \, \mathcal A} \pi_n (X_\alpha )$$
for all $n\in \N$.
\end{enumerate}
\end{prop}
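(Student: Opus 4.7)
The plan is to prove the two statements separately, using in both cases the universal properties that characterize the relevant constructions.

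For statement $1$, I would rely on the homotopy lifting property of the covering projection $p$, together with the fact that for $n\geq 2$ the sphere $S^n$ and the cylinder $S^n\times I$ are simply connected. To show surjectivity of $p_*$, I would start from a class $[f]\in \pi_n(X,x_0)$ represented by $f:(S^n,s_0)\rightarrow (X,x_0)$ and apply the lifting criterion for maps into covers: since $f_*(\pi_1(S^n,s_0))=0\subseteq p_*(\pi_1(\tilde X,\tilde x_0))$, there exists a unique continuous lift $\tilde f:(S^n,s_0)\rightarrow (\tilde X,\tilde x_0)$ with $p\circ \tilde f=f$, and then $p_*[\tilde f]=[f]$. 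For injectivity, suppose $\tilde f,\tilde g:(S^n,s_0)\rightarrow (\tilde X,\tilde x_0)$ satisfy $p_*[\tilde f]=p_*[\tilde g]$: a based homotopy $H:S^n\times I\rightarrow X$ from $p\circ\tilde f$ to $p\circ\tilde g$ lifts, again by the simple-connectedness of $S^n\times I$ and the lifting criterion, to a unique $\tilde H:S^n\times I\rightarrow \tilde X$ starting at $\tilde f$; uniqueness of path lifts along $\{s_0\}\times I$ forces $\tilde H$ to be based at $\tilde x_0$, and uniqueness of lifts forces $\tilde H(\cdot,1)=\tilde g$, yielding $[\tilde f]=[\tilde g]$.

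For statement $2$, I would use the universal property of the product in the category of pointed topological spaces. Fix a basepoint $x=(x_\alpha)\in \prod_\alpha X_\alpha$ (which is legitimate since each $X_\alpha$ is path-connected, so the choice is immaterial up to the change-of-basepoint isomorphism discussed above). Continuous maps $f:S^n\rightarrow \prod_\alpha X_\alpha$ correspond bijectively to families $(f_\alpha)_{\alpha\in\mathcal A}$ of continuous maps $f_\alpha:S^n\rightarrow X_\alpha$ via the projections $\pi_\alpha$, and the same holds for based homotopies $S^n\times I\rightarrow \prod_\alpha X_\alpha$. Hence the map
\[
\Phi:\pi_n\Big(\prod_\alpha X_\alpha,x\Big)\longrightarrow \prod_\alpha \pi_n(X_\alpha,x_\alpha),\qquad [f]\longmapsto \big([\pi_\alpha\circ f]\big)_\alpha
\]
is a well-defined bijection, and I would check that it is a group homomorphism for $n\geq 1$ by noting that the sum $f+g$ in $\pi_n$ is defined componentwise on the first coordinate of $I^n$, so that $\pi_\alpha\circ(f+g)=(\pi_\alpha\circ f)+(\pi_\alpha\circ g)$; for $n=0$ it is a bijection of pointed sets, which is all that is required.

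The main obstacle in the whole argument is really the careful invocation of the lifting criterion in statement $1$: one must verify simultaneously that the lifted sphere and the lifted homotopy are based at $\tilde x_0$ and end at the correct map, and this is where the uniqueness of lifts along the distinguished basepoint curve $\{s_0\}\times I$ enters in an essential way. Once this is handled correctly, both statements follow in a straightforward manner from standard covering-space theory and the universal property of products.
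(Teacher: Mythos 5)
Your proof is correct. Note that the paper itself does not prove this proposition: it is recalled, without proof, from Hatcher's textbook (the reference the appendix explicitly follows), and your argument is precisely the standard one found there — the lifting criterion applied to $S^n$ and $S^n\times I$, which are simply connected for $n\geq 2$, with uniqueness of lifts and of the lifted path along $\{s_0\}\times I$ guaranteeing basedness and the correct endpoint, together with the componentwise description of maps and homotopies into a product for statement 2. As a side remark, statement 1 could also be deduced from the long exact sequence of a fibration proved immediately afterwards in the same appendix, since a covering has the homotopy lifting property and its fiber is discrete, so $\pi_n(F)=0$ for $n\geq 1$ forces $p_*$ to be an isomorphism for $n\geq 2$; your direct argument is equally valid and self-contained.
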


Statement 1 in the above proposition implies obviously that $\pi_n(X,x_0)=0$ for $n\geq 2$, whenever $X$ has a contractible universal cover; this applies for example to $S^1$ and more generally to the 
$n$-dimensional torus $\T^n$. Very useful generalizations of the homotopy groups $\pi_n(X,x_0)$ are the \textit{relative homotopy groups} $\pi_n(X,A,x_0)$ for a pair $(X,A)$ with basepoint $x_0\in A$. 
To define these, regard $I^{n-1}$ as the face of $I^n$ with last coordinate equal to zero, say $s_n=0$, and let $J^{n-1}$ be defined as 
\begin{equation}
J^{n-1}:=\ \overline{\partial I^n \setminus I^{n-1}}\, ,
\label{jn-1}
\end{equation}
that is the closure of the union of the remaining faces of $I^n$. Then $\pi_n(X,A,x_0)$ is defined to be the set of homotopy classes of maps 
$$(I^n,\partial I^n,J^{n-1}) \ \longrightarrow \ (X,A,x_0)\, ,$$
with homotopies through maps of the same form. It is worth noticing that this definition does not extend naturally to the case $n=0$; we leave $\pi_0(X,A,x_0)$ undefined, since we are not interesting in it. Observe that 
$$\pi_n(X,x_0,x_0)\ =\ \pi_n(X,x_0)$$
so that absolute homotopy groups are a special case of relative homotopy groups. A sum operation is defined in $\pi_n(X,A,x_0)$ by the same formulas as for $\pi_n(X,x_0)$, except that the coordinate $s_n$  now plays a special
role and is no longer available for the sum operation. Thus $\pi_n(X,A,x_0)$ is a group for any $n\geq 2$, abelian for $n\geq 3$; for $n=1$ we have $I^1= [0,1],\, I^0= \{0\},\, J^0 = \{1\}$, so that $\pi_1(X,A,x_0)$ is the set of homotopy classes of maps 
$$([0,1], \{0\},\{1\}) \ \longrightarrow (X,A,x_0)\, ,$$
i.e. of  paths in $X$ from a varying point in $A$ to the fixed basepoint $x_0\in A$, and hence in general this is not a group in any natural way.  Just as elements of $\pi_n(X,x_0)$ can be regarded as homotopy classes of maps 
$(S^n,s_0)\rightarrow (X,x_0)$, there is an alternative definition of $\pi_n(X,A,x_0)$ as the set of homotopy classes of maps 
$$(D^n,S^{n-1},s_0)\ \longrightarrow \ (X,A,x_0)\, ,$$
since collapsing $J^{n-1}$ to a point converts $(I^n,\partial I^n,J^{n-1})$ into $(D^n,S^{n-1},s_0)$. From this viewpoint, addition is done via the map $c:D^n\rightarrow D^n\vee D^n$ collapsing $D^{n-1}\subseteq D^n$ to a point.
A useful and conceptually enlightening reformulation of what it means for element of $\pi_n(X,A,x_0)$ to be trivial is the following 

\vspace{3mm}

\begin{lemma}[Compression criterion]
A map $f:(D^n,S^{n-1},s_0)\rightarrow (X,A,x_0)$ represents zero in $\pi_n(X,A,x_0)$ if and only if $f$ is homotopic relative to $S^{n-1}$ to a map $g$ with image contained in $A$.
\end{lemma}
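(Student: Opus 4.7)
The statement is the standard compression criterion and the proof splits naturally along the two implications, with the nontrivial content concentrated on one side.

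The plan is to handle the easy direction first. Assume $f$ is homotopic rel $S^{n-1}$ to a map $g$ with $g(D^n)\subseteq A$. Since the homotopy is relative to $S^{n-1}$ and $s_0\in S^{n-1}$ is fixed with value $x_0$, this is automatically a homotopy of triple maps $(D^n,S^{n-1},s_0)\to (X,A,x_0)$, so $[f]=[g]$ in $\pi_n(X,A,x_0)$. I then contract $g$ to the constant map at $x_0$ through maps of triples by using the straight-line contraction of $D^n$ to the basepoint, $H_t(x):=(1-t)x+ts_0$. Composing yields $G_t:=g\circ H_t$, which satisfies $G_0=g$, $G_1=c_{x_0}$, $G_t(D^n)\subseteq g(D^n)\subseteq A$ (so in particular $G_t(S^{n-1})\subseteq A$), and $G_t(s_0)=g(s_0)=x_0$. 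Hence $[g]=0$ and therefore $[f]=0$.

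For the converse, suppose $[f]=0$, and let $F:D^n\times I\to X$ be a homotopy of triple maps with $F_0=f$, $F_1\equiv x_0$, $F_t(S^{n-1})\subseteq A$, and $F_t(s_0)=x_0$ for every $t\in I$. The key geometric input is that the subcomplex
\[
Y\ :=\ (D^n\times\{1\})\cup (S^{n-1}\times I)\ \subseteq \ D^n\times I
\]
is a strong deformation retract of $D^n\times I$, and moreover the standard retraction $r$ (obtained by projecting from a point just below the cylinder onto the ``upper cap'') fixes the circle $S^{n-1}\times\{0\}$ throughout the deformation. I will therefore take the canonical homotopy $\Psi:D^n\times I\times I\to D^n\times I$ from $\mathrm{id}$ to $r$, which in particular fixes $S^{n-1}\times\{0\}$ at each time, and define
\[
\widetilde F(x,\tau)\ :=\ F\!\left(\Psi\big((x,0),\tau\big)\right).
\]
By construction $\widetilde F_0=F|_{D^n\times\{0\}}=f$, the restriction $\widetilde F_\tau|_{S^{n-1}}=f|_{S^{n-1}}$ for all $\tau$ (so the homotopy is rel $S^{n-1}$), and $g:=\widetilde F_1$ is obtained by applying $F$ to points in $Y$; but $F(D^n\times\{1\})=\{x_0\}\subseteq A$ and $F(S^{n-1}\times I)\subseteq A$, so $g(D^n)\subseteq A$, as desired.

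The only delicate step, and hence the main obstacle, is producing the deformation retraction $\Psi$ of $D^n\times I$ onto $Y$ that keeps $S^{n-1}\times\{0\}$ pointwise fixed and simultaneously respects the basepoint $s_0\times\{0\}$; this requires writing down the retraction explicitly (e.g.\ by radial projection from a pole in $\R^n\times\R$ strictly below the cylinder) and checking the fixed-point property on $S^{n-1}\times\{0\}$. Once this is in place, the two implications assemble into the statement.
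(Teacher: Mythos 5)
Your proof is correct and follows essentially the same route as the paper: the easy direction contracts $D^n$ to the basepoint $s_0$ and composes with $g$, and the converse restricts the null-homotopy $F$ to a family of $n$-disks sweeping from $D^n\times\{0\}$ to $(D^n\times\{1\})\cup(S^{n-1}\times I)$, all with common boundary $S^{n-1}\times\{0\}$. Your explicit deformation retraction $\Psi$ (radial projection from a point below the cylinder) is just a concrete realization of the disk family the paper invokes, so the arguments coincide.
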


\begin{proof}
If $f$ is homotopic to such a map $g$, then $[f]=[g]$ in $\pi_n(X,A,x_0)$, and $[g]=0$ via the homotopy obtained by composing $g$ with a deformation retraction of $D^n$ onto $s_0$. Conversely, 
if $[f]=0$ via a homotopy $F:D^n\times I \longrightarrow X$, by restricting $F$ to a family of $n$-disks in $D^n\times I$ starting with $D^n\times \{0\}$ and ending with 
$$\big (D^n\times \{1\} \big ) \ \cup \ \big ( S^{n-1}\times I \big )\, ,$$
all the disks in the family having the same boundary, then we get an homotopy from $f$ to a map with image contained in $A$ and stationary on $S^{n-1}$.
\end{proof}

\vspace{5mm}

Denote by $i:(A,x_0)\hookrightarrow (X,x_0)$, $j:(X,x_0,x_0)\hookrightarrow (X,A,x_0)$ the canonical inclusions and by $i_*,j_*$ the induced maps on $\pi_n$; then the following holds

\begin{teo}[Exact sequence in relative homotopy] We have an exact sequence on relative homotopy groups given by
$$ ... \rightarrow \pi_n(A,x_0) \stackrel{i_*}{\longrightarrow} \pi_n(X,x_0) \stackrel{j_*}{\longrightarrow} \pi_n(X,A,x_0) \stackrel{\partial}{\longrightarrow} \pi_{n-1}(A,x_0) \rightarrow ... \rightarrow \pi_0(X,x_0)\, ,$$
where $\partial$ is defined by restricting maps $(I^n,\partial I^n,J^{n-1})\rightarrow (X,A,x_0)$ to $I^{n-1}$, or equivalently by restricting maps $(D^n,S^{n-1},s_0)\rightarrow (X,A,x_0)$ to $S^{n-1}$.
\end{teo}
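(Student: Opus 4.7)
The plan is to verify the three standard pieces: $\partial$ is well-defined (and is a homomorphism for $n\geq 2$), and exactness holds at each of the three positions $\pi_n(A,x_0)$, $\pi_n(X,x_0)$, $\pi_n(X,A,x_0)$. First I would check that the restriction map $\partial[f]:=[f|_{I^{n-1}}]$ is well-defined: if $F:(I^n\times I,\partial I^n\times I,J^{n-1}\times I)\to (X,A,x_0)$ is a homotopy between representatives $f_0,f_1$, then $F|_{I^{n-1}\times I}$ takes values in $A$ and restricts to the basepoint on $\partial I^{n-1}\times I$, hence realizes a homotopy between $f_0|_{I^{n-1}}$ and $f_1|_{I^{n-1}}$ in $\pi_{n-1}(A,x_0)$. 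The fact that $\partial$ is a homomorphism for $n\geq 2$ is then immediate from the formula for the sum using the first coordinate $s_1$, which survives under the restriction to $s_n=0$.

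For exactness at $\pi_n(X,x_0)$: the composition $j_*\circ i_*$ sends a map $f:(I^n,\partial I^n)\to (A,x_0)$ to itself viewed in $(X,A,x_0)$, and this has image inside $A$, so by the compression criterion it represents zero. Conversely, if $j_*[f]=0$ for some $f:(I^n,\partial I^n)\to (X,x_0)$, the compression criterion produces a homotopy (rel $\partial I^n$, noting that $J^{n-1}$ together with $I^{n-1}$ is $\partial I^n$) of $f$ to a map $g$ with image in $A$; then $g$ represents a class in $\pi_n(A,x_0)$ whose image under $i_*$ is $[f]$.

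For exactness at $\pi_n(X,A,x_0)$: if $[f]\in\pi_n(X,x_0)$, then $\partial j_*[f]$ is represented by $f|_{I^{n-1}}$, which is constantly $x_0$ since $I^{n-1}\subset\partial I^n$, so $\partial\circ j_*=0$. Conversely, if $[f]\in\pi_n(X,A,x_0)$ satisfies $\partial[f]=0$, pick a nullhomotopy $G:I^{n-1}\times I\to A$ of $f|_{I^{n-1}}$ rel $\partial I^{n-1}$. Attach $G$ to $f$ along $I^{n-1}\subset\partial I^n$ to deform $f$ (through maps of triples) to a map $\tilde f$ whose restriction to $I^{n-1}$ is constantly $x_0$; equivalently, $\tilde f$ sends all of $\partial I^n$ to $x_0$, so $\tilde f\in\pi_n(X,x_0)$ and $j_*[\tilde f]=[f]$. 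The key technical point here is that the deformation produced by $G$ stays inside the image of $(X,A,x_0)$ as a map of triples, which follows since $G$ has values in $A$.

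Finally for exactness at $\pi_{n-1}(A,x_0)$: given $[f]\in\pi_n(X,A,x_0)$, the class $i_*\partial[f]=[f|_{I^{n-1}}]$ viewed in $X$ is nullhomotopic, a nullhomotopy being provided by $f$ itself after sliding $I^{n-1}\subset\partial I^n$ across $I^n$ to $J^{n-1}$ (which $f$ collapses to $x_0$); this gives $i_*\circ\partial=0$. Conversely, given $g:(I^{n-1},\partial I^{n-1})\to (A,x_0)$ with $i_*[g]=0$, choose a nullhomotopy $H:I^{n-1}\times I\to X$ rel $\partial I^{n-1}$ from $g$ to the constant map $x_0$. Viewing $I^n=I^{n-1}\times I$ with $I^{n-1}=I^{n-1}\times\{0\}$ and $J^{n-1}=(I^{n-1}\times\{1\})\cup(\partial I^{n-1}\times I)$, the map $H$ defines an element of $\pi_n(X,A,x_0)$ whose image under $\partial$ is $[g]$. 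The main obstacle in writing this out in detail is just keeping track of the parametrizations so that the homotopies are genuinely through maps of triples; no deep idea is required, only the compression criterion together with care in decomposing $\partial I^n$ into the face $I^{n-1}$ and the complementary cube $J^{n-1}$.
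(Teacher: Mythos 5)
The paper states this theorem without proof — it is a standard fact recalled in the appendix from Hatcher \cite{Hat02} — and your sketch is precisely the standard argument from that source, with all four steps correct: well-definedness of $\partial$ (using $\partial I^{n-1}\subseteq J^{n-1}$), the compression criterion at $\pi_n(X,x_0)$, the tack-on/untack deformation at $\pi_n(X,A,x_0)$, and the slab construction $I^n\cong I^{n-1}\times I$ at $\pi_{n-1}(A,x_0)$. The only thing you do not address is the tail of the sequence where $\pi_1(X,A,x_0)$ and the $\pi_0$'s are merely pointed sets; there exactness is read as ``image equals preimage of the basepoint class'' (as the paper's remark following the statement points out), and your constructions apply verbatim in that range.
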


\begin{oss}
Near the end of the sequence, where group structures are not defined, exactness still makes sense: the image of one map is the kernel of the next, those elements mapping to the homotopy class of the constant map.
\end{oss}

A map $p:E\rightarrow B$ is said to have the \textit{homotopy lifting property} with respect to a space $X$ if given an homotopy $g_t:X\rightarrow B$ and a map $\tilde{g}_0:X\rightarrow E$ lifting $g_0$ (that is $p\circ \tilde{g}_0=g_0$), then 
there exists a homotopy 
$$\tilde{g}_t:X\longrightarrow E$$
lifting $g_t$. From a formal viewpoint, this can be regarded as a special case of the lift extension property for a pair $(Z,A)$, which asserts that every map $Z\rightarrow B$ 
has a lift $Z\rightarrow E$ extending a given lift defined on the subspace $A\subseteq Z$; the case 
$$(Z,A)\ =\ (X\times I,X\times \{0\})$$
is exactly the homotopy lifting property. A \textit{fibration} is a map $p:E\rightarrow B$ having the homotopy lifting property with respect to all spaces $X$; for example the projection 
$$p:B\times F \longrightarrow B$$
is a fibration, since we can choose lifts of the form $\tilde{g}_t(x)=(g_t(x),h(x))$, where $\tilde{g}_0=(g_0(x),h(x))$, but is not a covering map unless $F$ is a discrete space. 

\begin{teo}
Suppose that $p:E\rightarrow B$ has the homotopy lifting property with respect to any disk $D^k$, $k\geq 0$. Choose $b_0\in B$ and $e_0\in F:=p^{-1}(b_0)$; then 
$$p_*:\pi_n (E,F,e_0) \longrightarrow \pi_n(B,b_0)$$
is an isomorphism for any $n\geq 1$. If $B$ is in addtion path-connected, then there is an exact sequence 
$$... \rightarrow \pi_n(F,e_0)\rightarrow \pi_n(E,e_0)\stackrel{p_*}{\longrightarrow} \pi_n(B,b_0) \rightarrow \pi_{n-1}(F,e_0) \rightarrow ... \rightarrow \pi_0(E,e_0)\rightarrow 0\, .$$
\end{teo}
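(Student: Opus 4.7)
The plan is to prove the isomorphism $p_*: \pi_n(E,F,e_0)\to \pi_n(B,b_0)$ first, and then deduce the long exact sequence by splicing this isomorphism into the standard long exact sequence of the pair $(E,F)$. The technical engine behind everything is a mildly strengthened form of the homotopy lifting property: namely, from HLP with respect to disks one obtains HLP for all CW-pairs $(X,A)$. This follows by a standard induction over the cells of $X\setminus A$, since every attaching map factors through a disk. I will freely use this stronger form below without further comment.

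For surjectivity of $p_*$, given $f:(I^n,\partial I^n)\to (B,b_0)$, I will exploit the deformation retraction of $I^n$ onto $J^{n-1}$. Picking a homotopy $h_t:I^n\to I^n$ with $h_0=\mathrm{id}$, $h_1(I^n)\subset J^{n-1}$, and $h_t|_{J^{n-1}}=\mathrm{id}$, the formula $G(x,t)=f(h_{1-t}(x))$ gives a homotopy $G:I^n\times I\to B$ with $G_0\equiv b_0$, $G_1=f$, and $G|_{J^{n-1}\times I}\equiv b_0$. Now define a preliminary lift on the CW-subcomplex $I^n\times\{0\}\cup J^{n-1}\times I$ by the constant value $e_0$; this covers $G$ there. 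The pair HLP extends this to a lift $\tilde G:I^n\times I\to E$, and I set $\tilde f:=\tilde G_1$. Then $p\tilde f=f$ exactly, $\tilde f(\partial I^n)\subset F$ because $p\tilde f(\partial I^n)=\{b_0\}$, and $\tilde f(J^{n-1})=\{e_0\}$ by the boundary condition imposed at the outset, so $[\tilde f]\in\pi_n(E,F,e_0)$ with $p_*[\tilde f]=[f]$. Injectivity is entirely parallel: given two lifts $\tilde f_0,\tilde f_1$ of $p$-related maps via a homotopy $H:I^n\times I\to B$ of pairs, the pre-lift on $I^n\times\{0,1\}\cup J^{n-1}\times I$ defined by $\tilde f_0,\tilde f_1$ and the constant $e_0$ extends to a lift $\tilde H$ by the pair HLP, and $\tilde H$ is automatically a homotopy of triples $(I^n,\partial I^n,J^{n-1})\to(E,F,e_0)$.

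Having established the isomorphism, I will write down the long exact sequence in relative homotopy for the pair $(F,E)$ with basepoint $e_0$ (already stated earlier in the appendix) and substitute $\pi_n(E,F,e_0)\cong\pi_n(B,b_0)$. A brief verification using the definitions shows that under this identification the map $j_*:\pi_n(E,e_0)\to\pi_n(E,F,e_0)$ becomes the absolute map $p_*:\pi_n(E,e_0)\to\pi_n(B,b_0)$, while the connecting homomorphism $\partial$ becomes $\partial\circ p_*^{-1}:\pi_n(B,b_0)\to\pi_{n-1}(F,e_0)$, producing exactly the sequence in the statement. Finally, to terminate the sequence with a surjection onto $0$ at $\pi_0(E,e_0)$, I need $\pi_0(F,e_0)\to\pi_0(E,e_0)$ to be onto. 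Here I use path-connectedness of $B$: given $e\in E$, choose a path $\gamma:I\to B$ from $p(e)$ to $b_0$; applying HLP to $I=D^1$ with initial lift $e$ produces $\tilde\gamma:I\to E$ with $\tilde\gamma(0)=e$ and $p\tilde\gamma(1)=b_0$, so $\tilde\gamma(1)\in F$ lies in the same path-component of $E$ as $e$, which is what is needed.

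The main obstacle, and the step where one has to be careful, is the surjectivity/injectivity argument for $p_*$: the naive approach of simply lifting $f$ via HLP does not automatically produce a lift respecting the condition $\tilde f(J^{n-1})=\{e_0\}$, and one must deform $f$ first or, equivalently, prescribe the lift on the "L-shaped" subcomplex $I^n\times\{0\}\cup J^{n-1}\times I$ before invoking HLP for CW-pairs. Once the pair HLP is in hand, both surjectivity and injectivity become short and symmetric, and everything afterward is bookkeeping.
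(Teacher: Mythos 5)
Your proof is correct and follows essentially the same route as the paper's: upgrade the disk HLP to the HLP for CW pairs, lift with the constant value $e_0$ prescribed on $J^{n-1}$ to get bijectivity of $p_*:\pi_n(E,F,e_0)\rightarrow\pi_n(B,b_0)$, and then splice this isomorphism into the long exact sequence of the pair $(E,F)$, handling the surjectivity of $\pi_0(F,e_0)\rightarrow\pi_0(E,e_0)$ by lifting a path. The only cosmetic difference is in the surjectivity step: the paper lifts $f$ directly, viewing the last coordinate of $I^n$ as the homotopy parameter for the pair $(I^{n-1},\partial I^{n-1})$ with the constant lift on $J^{n-1}$, whereas you first deform $f$ to the constant map via the retraction of $I^n$ onto $J^{n-1}$ and then lift that homotopy rel the L-shaped subcomplex -- the same pair-HLP in slightly different packaging (and, as a trivial remark, lifting the path at the end uses the HLP for $D^0$ rather than $D^1$).
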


The proof will use a relative form of the homotopy lifting property above; the map $p:E\rightarrow B$ is said to have the \textit{homotopy lifting property for a pair} $(X,A)$ if each homotopy 
$f_t:X\rightarrow B$ lifts to a homotopy $\tilde{g}_t:X\rightarrow E$ starting with a given $\tilde{g}_0$ and extending a given lift $\tilde{g}_t:A\rightarrow E$. In other words, the 
homotopy lifting property for a pair $(X,A)$ is the lift extension property for 
$$(X\times I, X\times \{0\} \ \cup \ A\times I )\, .$$

The homotopy lifting property for $D^k$ is equivalent to the homotopy lifting property for the pair $(D^k,\partial D^k)$, since the pairs 
$$(D^k\times I,D^k\times \{0\})\, , \ \ \ \ (D^k\times I,D^k\times \{0\}\ \cup \ \partial D^k \times I)$$
are homeomorphic. This implies that the homotopy lifting property for disks is equivalent to the homotopy lifting property for all CW-pairs $(X,A)$.

\vspace{6mm}

\begin{proof}
First we show that $p_*$ is onto; represent an element of $\pi_n(B,b_0)$ by a map 
$$f:(I^n,\partial I^n)\longrightarrow (B,b_0)\, .$$

The costant map to $e_0$ provides a lift of $f$ to $E$ over the subspace $J^{n-1}\subseteq I^n$, so the relative homotopy lifting property for $(I^{n-1},\partial I^{n-1})$ 
extends this to a lift 
$$\tilde{f}:I^n \rightarrow E$$ 
which satisfies $\tilde{f}(\partial I^n)\subseteq F$, since $f(\partial I^n)=b_0$. Then $\tilde{f}$ represents an element of $\pi_n(E,F,e_0)$ with $p_*([\tilde{f}])=[f]$ since $p\tilde{f}=f$.
Given now two maps 
$$\tilde{f}_0,\tilde{f}_1 : (I^n,\partial I^n,J^{n-1}) \longrightarrow (E,F,e_0)$$
\noindent such that $p_*([\tilde{f}_0])=p_*([\tilde{f}_1])$, let 
$$G:(I^n\times I,\partial I^n\times I) \longrightarrow (B,b_0)$$
be an homotopy from $p\circ \tilde{f}_0$ to $p\circ \tilde{f}_1$. We have a partial lift $\tilde{G}$ given by $\tilde{f}_0$ on $I^n\times \{0\}$, $\tilde{f}_1$ on $I^n\times \{1\}$ and the constant map $e_0$ on $J^{n-1}\times I$; 
after permuting the last two coordinates of $I^n\times I$, the relative homotopy lifting property gives an extension of this partial lift to a full lift $\tilde{G}:I^n\times I \rightarrow E$ which gives a homotopy 
$$\tilde{f}_t :(I^n,\partial I^n,J^{n-1})\longrightarrow (E,F,e_0)$$
from $\tilde{f}_0$ to $\tilde{f}_1$. For the last statement of the theorem we plug $\pi_n(B,b_0)$ in for $\pi_n(E,F,e_0)$ 
in the relative homotopy exact sequence for the pair $(E,F)$; the map $\pi_n(E,e_0)\rightarrow \pi_n(E,F,e_0)$ in the exact sequence then becomes the composition 
$$\pi_n(E,e_0)\longrightarrow \pi_n(E,F,e_0) \stackrel{p_*}{\longrightarrow} \pi_n(B,b_0)\, ,$$
which is just $p_*:\pi_n(E,e_0)\rightarrow \pi_n(B,b_0)$. The surjectivity of $\pi_0(F,e_0)\rightarrow \pi_0(E,e_0)$ comes from the hypothesis that $B$ is path-connected since 
a path in $E$ from an arbitrary point $e\in E$ to $F$ can be obtained by lifting a path in $B$ from $p(e)$ to $b_0$.
\end{proof}

%%%%%%%%%%%%%%%%%%%%%%%%%%%%%%%%%%%%%%%%%%%%%%%%%%%%%%%%%%%%%%%%%%%%%%%%%%%

\section{Conormal bundles.}
\label{conormalbundles}

In this section we recall the definition and the basic properties of the conormal bundle to a submanifold $Q$ of a given manifold $M$. For the details we refer to \cite{AS09}.

\begin{defn}
Let $Q\subseteq M$ be a submanifold. The $\mathsf{conormal\ bundle}$ of $Q$ is 
\begin{eqnarray*}
N^*Q &:=& \Big \{x\in T^*M \ \Big |\ \pi^*(x)\in Q,\ x[\zeta] = 0 \ \ \forall \zeta \in T_{\pi^*(x)}Q\Big \}\ =\\
&=& \Big \{ (q,p)\in T^*M\ \Big |\ q\in Q,\ T_qQ\subseteq \ker p\Big \}\, .
\end{eqnarray*}
\end{defn}

The conormal bundle has a natural structure of vector bundle over $Q$ of dimension $\text{codim}\, Q$; in particular, $N^*Q$ is a smooth $n$-dimensional submanifold of $T^*M$. Observe that, 
if $Q=M$, the conormal bundle is the zero section $\mathbb O_{T^*M} = M\times \{0\}$ while, if $Q=\{q\}$, the conormal bundle coincides with $T_q^*M$.

\begin{lemma}
If $Q\subseteq M$ is a smooth submanifold then the Liouville 1-form $\lambda$ vanishes identically on $N^*Q$; in particular $N^*Q$ is a Lagrangian submanifold of $T^*M$. Furthermore, the Liouville vector field $\eta$ is tangent to $N^*Q$.
\label{conlemma}
\end{lemma}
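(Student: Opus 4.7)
The plan is to verify both assertions by a direct pointwise computation, reducing each of them to the defining relation of the conormal bundle.

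First I would show that $\lambda|_{N^*Q} \equiv 0$. Fix a point $(q,p) \in N^*Q$ and a tangent vector $\zeta \in T_{(q,p)}(N^*Q)$. By the very definition of the Liouville form one has
\[
\lambda_{(q,p)}(\zeta) \;=\; p\bigl[d\pi^*(q,p)[\zeta]\bigr].
\]
Since $N^*Q$ projects onto $Q$ via $\pi^*$, the curve representing $\zeta$ lies in $N^*Q$ and hence its projection to $M$ lies entirely in $Q$; differentiating gives $d\pi^*(q,p)[\zeta] \in T_qQ$. But by the defining condition of the conormal bundle, $T_qQ \subseteq \ker p$, so $\lambda_{(q,p)}(\zeta)=0$. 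This is the key step and it is essentially tautological once one unwinds definitions. Taking $\omega = d\lambda$ one immediately concludes that $\omega$ vanishes on $N^*Q$; combining this with
\[
\dim N^*Q \;=\; \dim Q + \mathrm{codim}\, Q \;=\; n \;=\; \tfrac{1}{2}\dim T^*M,
\]
shows that $N^*Q$ is Lagrangian.

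For the last assertion, I would exploit the fact that $N^*Q \to Q$ is a vector subbundle of $T^*M|_Q$, while the Liouville vector field $\eta$ is, in canonical coordinates $(q,p)$, given by $\eta = \sum_i p_i\,\partial_{p_i}$, i.e.\ the infinitesimal generator of fiberwise dilations on $T^*M$. Since the kernel condition $T_qQ \subseteq \ker p$ defining $N^*Q$ is preserved under multiplying $p$ by any scalar, the dilation semigroup $(q,p) \mapsto (q, e^t p)$ leaves $N^*Q$ invariant; differentiating at $t=0$ yields $\eta|_{(q,p)} \in T_{(q,p)}(N^*Q)$. Equivalently, in a local trivialization adapted to $Q$ (in which $N^*Q$ is linear in the fiber coordinates corresponding to the conormal directions), the coordinate expression for $\eta$ is manifestly tangent.

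I do not expect any significant obstacle: the only minor subtlety is the first step, namely recognising that the tangent space to $N^*Q$ at $(q,p)$ projects into $T_qQ$ under $d\pi^*$, but this is immediate because $\pi^*(N^*Q) \subseteq Q$. Everything else is either dimension counting or a one-line observation about dilations preserving the linear subbundle $N^*Q$.
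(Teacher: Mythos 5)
Your proof is correct. The first half coincides with the paper's (which simply declares the vanishing of $\lambda$ on $N^*Q$ ``obvious from the definitions''; your unwinding via $d\pi^*(T_{(q,p)}N^*Q)\subseteq T_qQ\subseteq\ker p$ is exactly that observation, and the isotropy-plus-dimension count for the Lagrangian claim is the standard completion). Where you genuinely diverge is the tangency of the Liouville vector field: the paper argues symplectically, using the defining relation $\imath_\eta\omega=\lambda$ to get $\omega(x)[\eta(x),\zeta]=\lambda(x)[\zeta]=0$ for all $\zeta\in T_xN^*Q$, so that $\eta(x)$ lies in the symplectic orthogonal of $T_xN^*Q$, which equals $T_xN^*Q$ precisely because $N^*Q$ is Lagrangian. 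You instead identify $\eta$ with the generator of the fiberwise dilations $(q,p)\mapsto(q,e^tp)$ and note that $N^*Q$, being a vector subbundle of $T^*M|_Q$, is invariant under this flow, so differentiating at $t=0$ gives tangency. Your route is more elementary and does not use the Lagrangian property at all — it works for any subset of $T^*M$ that is fiberwise a cone, so it is in that sense more robust; the paper's route, on the other hand, only uses that $\lambda$ vanishes on the submanifold, so it shows the stronger general fact that $\eta$ is tangent to every Lagrangian submanifold on which $\lambda$ vanishes, whether or not it is a conormal bundle. The only small implicit step on your side is the coordinate formula $\eta=\sum_i p_i\,\partial_{p_i}$, i.e.\ that the vector field characterized by $\imath_\eta\omega=\lambda$ is indeed the dilation generator; this is standard and immediate to verify in canonical coordinates, so it is not a gap.
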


\begin{proof}
The first statement follows obviously from the definitions of $\lambda$ and $N^*Q$; now, if $x\in N^*Q$, then for every $\zeta\in T_xN^*Q$ we have
$$\omega(x)[\eta(x),\zeta]\ =\ \lambda(x)[\zeta]\ =\ 0\, .$$

Then $\eta(x)$ belongs to the (symplectic) orthogonal space of $T_xN^*Q$; but, since $N^*Q$ is Lagrangian, the orthogonal of $T_xN^*Q$ is itself and hence $\eta(x)\in T_x N^*Q$.
\end{proof}

\vspace{4mm}

Lemma \ref{conlemma} states that every conormal bundle is Lagrangian; in fact, the Liouville 1-form $\lambda$ vanishes identically on $N^*Q$. The converse, under an additional completeness hypothesis, is also true, as the 
theorem below states (cf. [AS09]). 

%Here we need the following description of a Lagrangian linear subspace $L$ of $T^*\R^n$: let $X\subseteq \R^n$ such that 
%$$L \cap (\R^n\times \{0\})\ =\ X\times \{0\}$$
%and choose a linear complement $Y$ of $X$ in $\R^n$. Moreover let 
%$$(\R^n)^*\ =\ Y^\perp \oplus X^\perp$$
%be the corresponding decomposition of the dual of $\R^n$; then 
%$$L\cap (Y\times Y^\perp)\ =\ \{0\}\, .$$
%Indeed, if $(q,p)\in L$, then the fact that $L$ is Lagrangian and contains $X\times \{0\}$ implies that for every $x\in X$ we have 
%$$0\ =\ \omega \big ( (q,p),(x,0)\big )\ =\ p\, [x]$$
%and hence $p\in X^\perp$. If in addition $(q,p)\in Y\times Y^\perp$ then in particular $q\in Y$ and $p\in Y^\perp$; therefore $p\in X^\perp\cap Y^\perp$ and hence 
%$p=0$. It follows that $(q,0)\in L$ and this is equivalent to $q\in X$ and hence to $q=0$ because of the definition of $X$ and $Y$. In particular we deduce that 
%$L$ is graph of a linear map from $X\times X^\perp$ into $Y\times Y^\perp$.

\begin{teo}
Let $L\subseteq T^*M$ be a n-dimensional smooth submanifold such that the Liouville 1-form $\lambda$ vanishes identically on $L$; then the intersection $R = L\cap \mathbb O_{T^*M}$  is a smooth 
submanifold. Moreover, if $L$ is a closed subset of $T^*M$, then $L=N^*R$.
\end{teo}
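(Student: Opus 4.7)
The first statement is local around each $x_0 = (q_0, 0) \in R$, so I would fix such a point and work in local Darboux coordinates on $T^*M$ centered at $x_0$. Since $\omega|_L = d\lambda|_L = 0$ and $\dim L = n$, $L$ is Lagrangian. Setting $V_0 := T_{x_0}L \cap T_{x_0}\mathbb O_{T^*M}$ and $k := \dim V_0$, self-$\omega$-orthogonality of $T_{x_0}L$ combined with $V_0 \subset T_{x_0}L$ yields $T_{x_0}L \subset V_0^\omega$. Choosing coordinates $q = (q', q'')$, $p = (p', p'')$ adapted to $V_0 = \mathrm{span}(\partial/\partial q'_1, \ldots, \partial/\partial q'_k)$, this inclusion reads $T_{x_0}L \subset \{\zeta_{p'} = 0\}$, and an elementary dimension count shows that the projection $T_{x_0}L \to \mathbb R^k \oplus (\mathbb R^{n-k})^*$ sending $(\zeta_{q'}, \zeta_{q''}, \zeta_{p'}, \zeta_{p''})$ to $(\zeta_{q'}, \zeta_{p''})$ is a linear isomorphism. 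By the implicit function theorem, $L$ is then locally the graph of smooth functions $q'' = q''(q', p'')$, $p' = p'(q', p'')$ over the parameters $(q', p'')$ near $(0,0)$.

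The next and main step is to exploit $\lambda|_L = 0$. On the one hand, exactly as in the proof of Lemma \ref{conlemma}, $\eta$ is tangent to $L$, so the Liouville flow $\phi_t(q,p) = (q, e^t p)$ preserves $L$; in our parametrization this amounts to the reparametrization $(q', p'') \mapsto (q', e^t p'')$ and therefore to the identities
$$q''(q', e^t p'') \ = \ q''(q', p''), \qquad p'(q', e^t p'') \ = \ e^t\, p'(q', p'').$$
The first forces $q''$ to be homogeneous of degree $0$ in $p''$, which for a smooth function defined at $p''=0$ means $q''$ depends on $q'$ alone: $q'' = q''(q')$. Plugging this back into $\lambda|_L = p'\,dq' + p''\, dq'' \equiv 0$ and collecting the coefficients of $dq'$ gives the explicit formula $p'(q',p'') = -p''\!\cdot\!\partial_{q'}q''(q')$. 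Consequently, locally around $x_0$,
$$L \ = \ \bigl\{\,(q',\,q''(q'),\,-p''\!\cdot\!\partial_{q'}q''(q'),\,p'')\,\bigr\},$$
and intersecting with $\{p = 0\}$ yields $R$ as the graph of $q''$, a smooth $k$-dimensional submanifold of $\mathbb O_{T^*M} \cong M$. Moreover the explicit formula above is precisely the defining equation of the conormal bundle of this graph, so $L = N^*R$ in a neighbourhood of $x_0$. This local $\lambda|_L=0$ rigidity is the main obstacle in the argument: without it, the a priori two-parameter family of graphs would not collapse to the one-parameter conormal family.

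Finally, assume $L$ is closed. Since $\eta$ is tangent to $L$, the Liouville flow $\phi_t$ preserves $L$ for all $t \in \mathbb R$; hence for any $(q,p) \in L$, letting $t \to -\infty$ and invoking closedness yields $(q, 0) \in L$, i.e.\ $q \in R$. Thus $\pi^*(L) \subset R$, and the reverse inclusion is trivial. It remains to check that for every $q \in R$ the fiber $L\cap T^*_q M$ coincides with $N^*_q R$. By the local result just established the two agree on a neighbourhood of $0 \in T^*_q M$; both sets are invariant under $p \mapsto sp$ for all $s \geq 0$ ($N^*_q R$ because it is a linear subspace, $L\cap T^*_qM$ by $\phi_t$-invariance for all $t \in \mathbb R$ together with the closedness-extended limit $s = 0$). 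Scaling up and down therefore propagates the equality to the entire fiber, so $L = N^*R$.
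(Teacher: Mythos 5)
The paper itself does not prove this theorem: it only states it and defers to \cite{AS09}, so there is no internal proof to compare against. Your argument is correct and self-contained, and it follows the standard route (Lagrangian $\Rightarrow$ local graph over $(q',p'')$ via the linear algebra of $V_0=T_{x_0}L\cap T_{x_0}\mathbb O_{T^*M}$, then conicality from tangency of the Liouville field, then $\lambda|_L=0$ to pin down $p'=-p''\cdot\partial_{q'}q''$, then fiberwise rescaling in the closed case). Two small points are worth making explicit in a final write-up. First, the isomorphism $T_{x_0}L\to\R^k\oplus(\R^{n-k})^*$ is not a pure dimension count: you need that the kernel of the projection meets $T_{x_0}L$ only in $\{\zeta_{p'}=\zeta_{p''}=0,\ \zeta_{q'}=0\}\cap T_{x_0}L\subseteq V_0\cap\mathrm{span}(\partial_{q''})=0$, which uses the definition of $V_0$; the ingredients are all in place in your text, but the sentence as written hides this step. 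Second, tangency of $\eta$ to $L$ by itself only gives \emph{local} invariance of $L$ under the Liouville flow; in the first (local) part this is all you use, since the identities $q''(q',e^tp'')=q''(q',p'')$, $p'(q',e^tp'')=e^tp'(q',p'')$ follow by integrating the pointwise tangency (Euler) relations on a product neighbourhood for $t\le 0$, but the global statement ``$\phi_t$ preserves $L$ for all $t\in\R$'' used in the last paragraph requires the usual open--closed argument along integral curves, which is exactly where the closedness of $L$ enters (in addition to its role in the limit $t\to-\infty$). With these two clarifications the proof is complete.
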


\backmatter 

\addcontentsline{toc}{chapter}{Bibliography}

\bibliography{tesidottoratobibliografia}
\bibliographystyle{amsalpha}

\end{document}